\documentclass[12pt]{amsart}

\topmargin=-0.3in
\oddsidemargin=0.175in
\evensidemargin=0.175in
\textheight=9in
\textwidth=6.14in

\title[Non-invariant radial part formulas]
{Differential-difference operators and radial part formulas for non-invariant elements}
\author{Hiroshi ODA}
\address{Faculty of Engineering, Takushoku University,
815-1, Tatemachi, Hachioji-shi, Tokyo 193-0985, Japan}
\email{hoda@la.takushoku-u.ac.jp}

\usepackage{amsmath,amssymb,amsthm,enumitem,mathrsfs,xypic}%,graphicx

%%%%%%%%% for temporary comments %%%%%%%%%%
\usepackage[hypertex]{hyperref}

\numberwithin{equation}{section}
\theoremstyle{plain}
 \newtheorem{thm}{Theorem}[section]
 \newtheorem{cor}[thm]{Corollary}
 \newtheorem{lem}[thm]{Lemma}
 \newtheorem{prop}[thm]{Proposition}
\theoremstyle{definition}
 \newtheorem{defn}[thm]{Definition}
 \newtheorem{exmp}[thm]{Example}
 
\theoremstyle{remark}
 \newtheorem{rem}[thm]{Remark}

\allowdisplaybreaks[4]

%%%%  math macros
\DeclareMathOperator{\Ad}{Ad}
\DeclareMathOperator{\ad}{ad}
\DeclareMathOperator{\gr}{gr}
\DeclareMathOperator{\Ind}{Ind}
\DeclareMathOperator{\Mat}{Mat}
\DeclareMathOperator{\Trace}{Trace}
\DeclareMathOperator{\Hom}{Hom}
\DeclareMathOperator{\End}{End}
\DeclareMathOperator{\Ker}{Ker}
\DeclareMathOperator{\Coker}{Coker}
\DeclareMathOperator{\Coim}{Coim}
\DeclareMathOperator{\Image}{Im}
\DeclareMathOperator{\sgn}{sgn}
\DeclareMathOperator{\Lie}{Lie}
\DeclareMathOperator{\Real}{Re}
\DeclareMathOperator{\Imaginary}{Im}
\DeclareMathOperator{\spec}{spec}
\DeclareMathOperator{\id}{id}
\DeclareMathOperator{\ev}{ev}
\DeclareMathOperator{\PW}{PW}
\DeclareMathOperator{\tPW}{\widetilde{\PW}}

\newcommand{\Ksp}{{\widehat K_{\text{\rm sp}}}}
\newcommand{\Kqsp}{{\widehat K_{\text{\rm qsp}}}}
\newcommand{\Km}{{\widehat K_M}}
\newcommand{\Kf}{{\text{\rm $K$-finite}}}
\newcommand{\single}{{\text{\rm single}}}
\newcommand{\double}{{\text{\rm double}}}
\newcommand{\fl}{{\text{\rm fl}}}
\newcommand{\fd}{{\text{\rm fd}}}
\newcommand{\triv}{{\text{\rm triv}}}
\newcommand{\sign}{{\text{\rm sgn}}}
\newcommand{\op}{{\text{\rm op}}}
\newcommand{\CM}{{\text{\rm CM}}}
\newcommand{\cpt}{{\text{\rm c}}}
\newcommand{\reg}{{\text{\rm reg}}}
\newcommand{\der}{\partial}
\newcommand{\Lieg}{{\mathfrak g}}
\newcommand{\Lieh}{{\mathfrak h}}
\newcommand{\Lien}{{\mathfrak n}}
\newcommand{\Liea}{{\mathfrak a}}
\newcommand{\Lieb}{{\mathfrak b}}
\newcommand{\Liek}{{\mathfrak k}}
\newcommand{\Liem}{{\mathfrak m}}
\newcommand{\Lies}{{\mathfrak s}}
\newcommand{\CC}{{\mathbb C}}
\newcommand{\RR}{{\mathbb R}}
\newcommand{\ZZ}{{\mathbb Z}}
\newcommand{\ang}[2]{\langle{#1},{#2}\rangle}
\newcommand{\sang}[2]{({#1},{#2})}
\newcommand{\simarrow}{\xrightarrow{\smash[b]{\lower 0.8ex\hbox{$\sim$}}}}
\newcommand{\ttt}{{2\to2}}
\newcommand{\oto}{{1\to1}}
\newcommand{\trans}{{{}^{\mathrm t}}}
\newcommand{\Ximin}{\Xi^{\text{\normalfont min}}}
\newcommand{\Ximax}{\Xi^{\text{\normalfont max}}}
\newcommand{\Xiind}{\Xi_{\text{\normalfont Ind}}}
\newcommand{\tGammaind}{\tilde\Gamma_{\text{\normalfont Ind}}}

\newcommand{\wrad}{{\text{\normalfont w-rad}}}
\newcommand{\rad}{{\text{\normalfont rad}}}
\newcommand{\gammamin}{\gamma^{\text{\normalfont min}}}
\newcommand{\gammaind}{\gamma_{\text{\normalfont Ind}}}
\newcommand{\Nmin}{\mathscr N^{\text{\normalfont min}}}

\newcommand{\Xiwrad}{\Xi_\wrad}
\newcommand{\Xirad}{\Xi_\rad}

\newcommand{\CCh}{\mathscr C_{\text{\normalfont Ch}}}
\newcommand{\Cwrad}{\mathscr C_\wrad}
\newcommand{\Crad}{\mathscr C_\rad}
\newcommand{\Mod}{\text{-}\mathsf{Mod}}
\newcommand{\HC}{{(\Lieg_\CC,K)\Mod^\fl}}
\newcommand{\FD}{{\mathbf H\Mod^\fd}}

\newcommand{\threeset}[4]{{\vphantom)}_{#1}^{\vphantom{#3}}{(#2)}^{#3}_{#4}}
\newcommand{\bthreeset}[4]{{\vphantom{\bigr)}}_{#1}^{\vphantom{#3}}{\bigl(#2\bigl)}^{#3}_{#4}}

\begin{document}
\maketitle

\begin{abstract}
The classical radial part formula for the invariant differential operators and the
$K$-invariant functions on a Riemannian symmetric space $G/K$
is generalized to some non-invariant cases by use of Cherednik operators and a graded Hecke algebra $\mathbf H$
naturally attached to $G/K$.
We introduce a category $\mathscr C_{\text{\normalfont rad}}$ whose object is
a pair of a $((\operatorname{Lie}G)_{\mathbb C},K)$-module and an $\mathbf H$-module 
satisfying some axioms which are formally the same as
the generalized Chevalley restriction theorem and the generalized radial part formula.
Various pairs of analogous notions in the representation theories for $G$ and $\mathbf H$,
such as the Helgason-Fourier transform and the Opdam-Cherednik transform,
are unified in terms of $\mathscr C_{\text{\normalfont rad}}$.
We construct natural functors which send an $\mathbf H$-module to a $((\operatorname{Lie}G)_{\mathbb C},K)$-module 
and have some universal properties intimately related to $\mathscr C_{\text{\normalfont rad}}$.
%(Preliminary draft: version 0.5 (02/25/2014))
\end{abstract}

\tableofcontents

\section{Introduction}
Let $G$ be a real Lie group with a semisimple  Lie algebra $\Lieg$
and $\Lieg=\Liek+\Liea+\Lien$ a fixed Iwasawa decomposition of $\Lieg$.
We assume that the adjoint action $\Ad(g)$ by any element $g\in G$ is
an inner automorphism of the complexification $\Lieg_\CC$ of $\Lieg$
and that the closed subgroup $K:=N_G(\Liek)=\{g\in G;\,\Ad(g)(\Liek)\subset \Liek\}$
is compact.
(These assumptions are automatically satisfied if $G$ is connected and its center is finite.)
The \emph{Harish-Chandra homomorphism} $\gamma$
is the map of the universal enveloping algebra $U(\Lieg_\CC)$
of $\Lieg_\CC$ into the symmetric algebra $S(\Liea_\CC)$ of $\Liea_\CC$
defined by
\begin{equation}\label{eq:HC-homo}
\begin{aligned}
\gamma: U(\Lieg_\CC)&=\bigl(
\Lien_\CC U(\Lieg_\CC)+U(\Lieg_\CC)\Liek_\CC
\bigr)\oplus U(\Liea_\CC)\\
&\xrightarrow{\text{projection}}
U(\Liea_\CC)=S(\Liea_\CC)
\xrightarrow{\text{shift by }-\rho}
S(\Liea_\CC),
\end{aligned}
\end{equation}
where $S(\Liea_\CC)$ is identified with the algebra of
holomorphic polynomial functions on the dual space $\Liea_\CC^*$
of $\Liea_\CC$
and
$\rho:=\frac12\Trace(\ad_{\Lien}|_\Liea)\in\Liea_\CC^*$.
We have two different $G$-action $\ell(\cdot)$, $r(\cdot)$ on $C^\infty(G)$
defined by
\[
\ell(g)f(x)=f(g^{-1}x),\qquad
r(g)f(x)=f(xg)
\]
for $f \in C^\infty(G)$ and $g\in G$.
Their differential actions are denoted by the same symbols.
Thus, on the $\ell(G)$-module
\[
C^\infty(G/K)\simeq\{f(x) \in C^\infty(G);\,f(xk)=f(x)
\quad\text{for }k\in K\},
\]
$U(\Lieg_\CC)^K$ (the subalgebra of $\Ad(K)$-invariants in $U(\Lieg_\CC)$)
naturally acts by $r(\cdot)$.

Suppose $\lambda\in\Liea_\CC^*$.
A unique function $\phi_\lambda\in
C^\infty(G/K)$ satisfying
\begin{equation}\label{eq:spherical}
\left\{
\begin{aligned}
&r(\Delta)\phi_\lambda=\gamma(\Delta)(\lambda)\phi_\lambda
\quad \text{for }\Delta\in U(\Lieg_\CC)^K, \\
&\ell(K)\text{-invariant},\\
&\phi_\lambda(1 K)=1
\end{aligned}
\right.
\end{equation}
is called a \emph{spherical function}.
Let $A$ and $N$ be respectively the analytic subgroups of $\Liea$ and $\Lien$.
Then from the global Iwasawa decomposition $G=NAK$
we have $G/K\simeq NA$.
Let 
\[\gamma_0 : C^\infty(G/K)\longrightarrow C^\infty(A)\]
be the natural restriction map
and $W$ the Weyl group for $(\Lieg,\Liea)$.
Then the second property in \eqref{eq:spherical}
implies $\gamma_0(\phi_\lambda)\in C^\infty(A)^W$.
By \cite{HO}, Heckman and Opdam started their studies
on the \emph{systems of hypergeometric differential equations},
which are certain modification of the system of differential equations
satisfied by $\gamma_0(\phi_\lambda)$.
At the early stage the existence of such modification had been quite non-trivial.
But after a while
\emph{Cherednik operator}s introduced by \cite{Ch1}
turned out to provide an elegant method to construct the modified systems.
(This idea is due to \cite{He:Heckman}, in which
\emph{Heckman operator}s play the same role as Cherednik operators.)
In this context, a key fact is that
the Cherednik operator $\mathscr T: S(\Liea_\CC)\rightarrow
\End_\CC C^\infty(A)$ with a special parameter (Definition \ref{defn:Chered})
satisfies
for $\Delta\in U(\Lieg_\CC)^K$ and
$f\in C^\infty(G/K)^{\ell(K)}$
a \emph{radial part formula}
\begin{equation}\label{eq:rad1}
\gamma_0\bigl(r(\Delta)f\bigr)
=
\mathscr T\bigl(\gamma(\Delta)\bigr) \gamma_0(f),
\end{equation}
or equivalently,
\begin{equation}\label{eq:rad2}
\gamma_0\bigl(\ell(\Delta)f\bigr)
=
\mathscr T\bigl(\gamma(\theta\Delta)\bigr) \gamma_0(f).
\end{equation}
Here $\theta$ is the Cartan involution of $G$ leaving $K$ invariant.
(The equivalence easily follows from the equality $f(\theta g^{-1})=f(g)$ for $f\in C^\infty(G/K)^{\ell(K)}$.)
In the first part of this paper (\S\S\ref{sec:Ch}, \ref{sec:GHA}--\ref{sec:radII})
we try to generalize
\eqref{eq:rad1} and \eqref{eq:rad2} 
for non-$K$-invariant $\Delta$ and $f$.

First, we consider \eqref{eq:rad1} concerns the radial part of
the action of $r(U(\Lieg_\CC)^K)$ on $C^\infty(G/K)$
and generalize it
to the case where $f\in C^\infty(G/K)$ is no longer $K$-invariant (\S\ref{sec:radI}). 
For example,
\eqref{eq:rad1} holds for
any $\Delta\in U(\Lieg_\CC)^K$ and
any $K$-finite $f \in C^\infty(G/K)$ 
such that all $K$-types in $\ell(U(\Liek_\CC))f$ are
\emph{single-petaled} (Theorem \ref{thm:radC}).
A single-petaled $K$-type is a special kind of $K$-type
introduced by \cite{Oda:HC}.
We denote the set of single-petaled $K$-types by $\Ksp$
(Definition \ref{defn:K-types}).
In \cite{Op:Cherednik} Opdam studies
the following system of differential-difference equations:
\begin{equation}\label{eq:Ch-sys}
\mathscr T(\Delta)\varphi=\Delta(\lambda)\varphi
\quad
\forall\Delta\in S(\Liea_\CC)^W.
\end{equation}
Here $\lambda\in\Liea^*_\CC$ is any fixed spectral parameter and the unknown function $\varphi\in C^\infty(A)$ is not necessarily $W$-invariant.
The \emph{graded Hecke algebra} $\mathbf H$
attached to the Iwasawa decomposition of $G=NAK$ (Definition \ref{defn:H})
contains $S(\Liea_\CC)$
and the group algebra $\CC W$ of $W$ as subalgebras
and $\mathscr T$ naturally extends  to
an algebra homomorphism $\mathbf H\to\End_\CC C^\infty(A)$.
With respect to this $\mathbf H$-module structure
the solution space
$\mathscr A(A, \lambda)$ of \eqref{eq:Ch-sys}
is generically an irreducible submodule of $C^\infty(A)$.
The harmonic analysis
developed by Opdam decomposes $C^\infty_\cpt(A)$
(the space of compactly supported $C^\infty$ functions on $A$)
into the direct integral of $\mathscr A(A, \lambda)$'s (Proposition \ref{prop:FourierH}).
His theory is surprisingly similar to the theory of
the \emph{Helgason-Fourier transform} for $C^\infty_\cpt(G/K)$ (Proposition \ref{prop:FourierG})
where the 
$G$-module\[
\mathscr A(G/K,\lambda)
:=\bigl\{
f \in C^\infty(G/K);\,r(\Delta)f=\gamma(\Delta)(\lambda)f
\quad\text{for }\Delta \in U(\Lieg_\CC)^K
\bigr\}
\]
has a role of constitutional unit of $C^\infty(G/K)$.
This $G$-module is known as
the solution space for a maximal system of invariant differential
operators on $G/K$ (cf.~\cite{Hel4}).
As a direct link between $\mathscr A(A, \lambda)$ and $\mathscr A(G/K,\lambda)$,
we have a linear bijection:
\begin{equation}\label{eq:inviso}
\gamma_0 :
\mathscr A(G/K,\lambda)^{\ell(K)}
\simarrow
{\mathscr A(A, \lambda)}^W;\quad
\phi_\lambda\longmapsto \gamma_0(\phi_\lambda).
\end{equation}
This comes from
the Chevalley restriction theorem,
Harish-Chandra's celebrated exact sequence
\begin{equation}\label{eq:HC-isom}
0\to (U(\Lieg_\CC)\Liek_\CC)^K \to 
U(\Lieg_\CC)^K \xrightarrow{\gamma}
S(\Liea_\CC)^W\to 0,
\end{equation}
and \eqref{eq:rad1}.
Now a generalization of Chevalley restriction theorem
given in \cite{Oda:HC} asserts that
$\gamma_0$ naturally induces for each $V\in\Ksp$ 
a linear bijection
\begin{equation}\label{eq:Ch0}
\Gamma^V_0:
\Hom_K(V, C^\infty(G/K))\simarrow
\Hom_W(V^M, C^\infty(A)).
\end{equation}
Here $M$ is the centralizer of $A$ in $K$
and $V^M$ is the $M$-fixed part of $V$ (see \S\ref{sec:Ch}).
This,
combined with \eqref{eq:HC-isom}
and our generalization of \eqref{eq:rad1},
produces a link stronger than \eqref{eq:inviso}.
That is,
for each $V\in\Ksp$ it holds that
\begin{equation}\label{eq:spec-cor}
\Gamma^V_0:
\Hom_K(V, \mathscr A(G/K,\lambda))\simarrow
\Hom_W(V^M, \mathscr A(A, \lambda))
\end{equation}
(Corollary \ref{cor:spec-cor}).
This means the generalized Chevalley restriction theorem
respects the spectra.

Secondly, we consider \eqref{eq:rad2}
concerns the radial part of the action of $\ell(U(\Lieg_\CC))$ on $C^\infty(G/K)$
and generalize it to the case
where neither operators nor functions are assumed $K$-invariant (\S\ref{sec:radII}). 
By use of a certain involutive automorphism $\theta_{\mathbf H}$
of $\mathbf H$ (Definition \ref{defn:HCartan}), we can rewrite \eqref{eq:rad2} as
\[
\gamma_0\bigl(\ell(\Delta)f\bigr)
=
\mathscr T\bigl(\theta_{\mathbf H}\gamma(\Delta)\bigr) \gamma_0(f).
\]
So it is natural to think of the $\mathbf H$-action $\mathscr T(\theta_{\mathbf H}\cdot)$ on $C^\infty(A)$ as the radial counterpart of the $G$-action $\ell(\cdot)$ on $C^\infty(G/K)$.
With respect to these module structures
of $C^\infty(G/K)$ and $C^\infty(A)$
let us apply the Frobenius reciprocity to
both the sides of \eqref{eq:Ch0}.
We then get a linear bijection
\begin{equation*}
\Gamma_0:
\Hom_{\Lieg_\CC,K}(U(\Lieg_\CC)\otimes_{U(\Liek_\CC)} V,\, C^\infty(G/K)_{\Kf})\simarrow
\Hom_{\mathbf H}(\mathbf H\otimes_{\CC W}V^M,\, C^\infty(A)).
\end{equation*}
Here the subscript ``$\Kf$'' indicates the subspace consisting of $K$-finite vectors.
To formulate our generalization, we need also to generalize the Harish-Chandra homomorphism $\gamma$.
Suppose $E$ is another single-petaled $K$-type.
Then we can define a natural map
\[
\Gamma^E_V:
\Hom_K(E, U(\Lieg_\CC)\otimes_{U(\Liek_\CC)} V)
\rightarrow
\Hom_W(E^M, \mathbf H\otimes_{\CC W} V^M)
\]
(\S\ref{sec:HC}).
The case where $V$ is the trivial $K$-type $\CC_\triv$ is studied in \cite{Oda:HC}
and the case where $E=V=\CC_\triv$ reduces to $\gamma$.
Let us now state our generalization of \eqref{eq:rad2}.
For any
$\Phi\in  
\Hom_{\Lieg_\CC,K}(U(\Lieg_\CC)\otimes_{U(\Liek_\CC)} V,\, C^\infty(G/K)_{\Kf})
\simeq\Hom_K(V, C^\infty(G/K))$
and any $\Psi\in \Hom_K(E, U(\Lieg_\CC)\otimes_{U(\Liek_\CC)} V)$
it holds that
\begin{equation}\label{eq:RPF}
\Gamma^E_0(\Phi\circ\Psi)
=\Gamma_0(\Phi)\circ\Gamma^E_V(\Psi).
\end{equation}
In other words, if we take a basis $\{v_1,\ldots,v_n\}$ of $V$
so that $\{v_1,\ldots,v_m\}$ is a basis of $V^M$ $(m\le n)$
and if we write for any $e\in E^M$ 
\[
\Psi[e]=\sum_{i=1}^n D_i\otimes v_i \text{ with }D_i\in U(\Lieg_\CC),\qquad
\Gamma^E_V(\Psi)[e]=\sum_{i=1}^{m} h_i\otimes v_i \text{ with }h_i\in \mathbf H,
\]
then
\[
\gamma_0\biggl(\sum_{i=1}^n \ell(D_i)\Phi[v_i]\biggr)
=
\sum_{i=1}^{m}\mathscr T(\theta_{\mathbf H}h_i)\gamma_0(\Phi[v_i]).
\]
Actually we have a further generalization of \eqref{eq:RPF} to
the case where $E$ and $V$ are
\emph{quasi-single-petaled} $K$-types (Definition \ref{defn:K-types}).
The complete results will be stated in
Theorem \ref{thm:radD}.
For simplicity, in this introductory section
we shall state all results 
without using the notion of quasi-single-petaled $K$-types.

In section \ref{sec:cor} we define a natural correspondence
\begin{equation}\label{eq:Cinftycor}
\Ximin_0 :
\bigl\{\mathbf H\text{-submodule of } C^\infty(A) \bigr\}
\rightarrow
\bigl\{(\Lieg_\CC,K)\text{-submodule of } C^\infty(G/K)_\Kf \bigr\}
\end{equation}
using \eqref{eq:Ch0}.
For example, $\Ximin_0$ maps
a unique irreducible ${\mathbf H}$-submodule $X_{\mathbf H}(\lambda)$ of $\mathscr A(A, \lambda)$
to a unique irreducible $(\Lieg_\CC,K)$-submodule $X_G(\lambda)$ of $\mathscr A(G/K,\lambda)_\Kf$
(Theorem \ref{thm:Xpair}).
(The module structures of
$\mathscr A(A, \lambda)$ and $\mathscr A(G/K,\lambda)_\Kf$ 
will be studied in \S\ref{sec:modstr}.)
If $\mathscr X$ is an $\mathbf H$-submodule of $C^\infty(A)$
then for each $V\in\Ksp$ the linear map $\Gamma^V_0$
induces a linear bijection 
\begin{equation*}
\Gamma^V_0:
\Hom_K(V, \Ximin_0(\mathscr X))\simarrow
\Hom_W(V^M, \mathscr X)
\end{equation*}
(Theorem \ref{thm:multcor} (iii)).
This property comes from \eqref{eq:RPF}.
Now we can develop
a similar story for the pair
$\bigl(U(\Lieg_\CC)\otimes_{U(\Liek_\CC)}\CC_\triv,\,\mathbf H\otimes_{\CC W}\CC_\triv\bigr)$ instead of 
$\bigl(C^\infty(G/K)_\Kf,\,C^\infty(A)\bigr)$.
Namely, we can define a correspondence
\begin{equation}\label{corr:Ximin}
\Ximin:
\bigl\{\mathbf H\text{-submodule of } \mathbf H\otimes_{\CC W}\CC_\triv \bigr\}
\rightarrow
\bigl\{(\Lieg_\CC,K)\text{-submodule of } U(\Lieg_\CC)\otimes_{U(\Liek_\CC)}\CC_\triv \bigr\}
\end{equation}
for which $\Gamma^V_{\CC_\triv}$ has the same property with $\Gamma^V_0$.
Motivated by this parallelism,
we introduce a new category $\Crad$ (Definitions \ref{defn:CCh}, \ref{defn:Cwrad} and \ref{defn:Crad}).
An object $\mathcal M\in \Crad$, which we call a \emph{radial pair},
is a pair of a $(\Lieg_\CC,K)$-module $\mathcal M_G$
and an $\mathbf H$-module $\mathcal M_{\mathbf H}$ 
satisfying a set of axioms which are
formally the same as the generalized Chevalley restriction theorem 
and the second type of radial part formula.
Some parts of the axioms are as follows:
To each $V\in\Ksp$ there attach a linear map
\[
\tilde\Gamma^V_{\mathcal M}:
\Hom_K(V,\mathcal M_G)\to\Hom_W(V^M,\mathcal M_{\mathbf H})
\]
and a subspace $\Hom_K^\ttt(V,\mathcal M_G)$ of $\Hom_K(V,\mathcal M_G)$ 
such that the restriction of $\tilde\Gamma^V_{\mathcal M}$ to $\Hom_K^\ttt(V,\mathcal M_G)$ gives a bijection
\[
\tilde\Gamma^V_{\mathcal M}:
\Hom_K^\ttt(V,\mathcal M_G)\simarrow\Hom_W(V^M,\mathcal M_{\mathbf H})
\]
(cf.~\eqref{eq:Ch0}) and for any
$\Phi\in \Hom_K^\ttt(V,\mathcal M_G)$,
$E\in\Ksp$ and $\Psi\in \Hom_K(E, U(\Lieg_\CC)\otimes_{U(\Liek_\CC)} V)$
it holds that
\[
\tilde \Gamma^E_{\mathcal M}(\Phi\circ\Psi)
=\tilde \Gamma_{\mathcal M}(\Phi)\circ\Gamma^E_V(\Psi)
\]
(cf.~\eqref{eq:RPF});
Here $\tilde \Gamma_{\mathcal M}(\Phi)$ is a morphism in
$\Hom_{\mathbf H}(\mathbf H\otimes_{\CC W}V^M,\mathcal M_{\mathbf H})$
identified with $\tilde \Gamma_{\mathcal M}^V(\Phi)\in \Hom_W(V^M,\mathcal M_{\mathbf H})$
by the Frobenius reciprocity.
In many important cases
$\Hom_K^\ttt(V,\mathcal M_G)=\Hom_K(V,\mathcal M_G)$ (cf.~Remark \ref{rem:RadR}).
For any $\mathcal M=(\mathcal M_G,\mathcal M_{\mathbf H})\in\Crad$
a correspondence
\[
\Ximin_{\mathcal M}:
\bigl\{\mathbf H\text{-submodule of } \mathcal M_{\mathbf H} \bigr\}
\rightarrow
\bigl\{(\Lieg_\CC,K)\text{-submodule of } \mathcal M_G \bigr\}.
\]
is defined.
If $\mathscr X$ is an $\mathbf H$-submodule of $\mathcal M_{\mathbf H}$
then $(\Ximin_{\mathcal M}(\mathscr X),\mathscr X)$ is again a radial pair
(Theorem \ref{thm:multcor} (ii)).

Besides $\bigl(C^\infty(G/K)_\Kf,\,C^\infty(A)\bigr)$ and $\bigl(U(\Lieg_\CC)\otimes_{U(\Liek_\CC)}\CC_\triv,\,\mathbf H\otimes_{\CC W}\CC_\triv\bigr)$,
there are many natural radial pairs.
For example, \eqref{eq:spec-cor}
implies $\bigl(\mathscr A(G/K,\lambda)_\Kf,\, \mathscr A(A,\lambda)\bigr)\in\Crad$
(Example \ref{exmp:AA}).
Also, $B(\lambda)=\bigl(B_G(\lambda)_\Kf,\,B_{\mathbf H}(\lambda)\bigr)\in\Crad$
(Theorem \ref{thm:R3B})
where $B_G(\lambda)$ and $B_{\mathbf H}$ are
the minimal spherical principal series representations for $G$ and $\mathbf H$
induced from the same character $-\lambda\in\Liea_\CC^*$.
In \S\S\ref{sec:sps}--\ref{sec:F}
we shall see various pairs of analogous notions
appearing in the representation theories for $G$ and $\mathbf H$
can be peacefully packed in the category $\Crad$.
In \S\ref{sec:Poisson} we define
an $\mathbf H$-homomorphism $\mathcal P_{\mathbf H}^\lambda : B_{\mathbf H}(\lambda)\to \mathscr A(A,\lambda)$
analogous to the \emph{Poisson transform} $\mathcal P_G^\lambda : B_G(\lambda)\to\mathscr A(G/K,\lambda)$, and then prove
they constitute a morphism in $\Crad$:
\[
\mathcal P^\lambda=(\mathcal P_{\mathbf H}^\lambda,\mathcal P_{G}^\lambda):\,
\bigl(B_G(\lambda)_\Kf,\,B_{\mathbf H}(\lambda)\bigr)
\to
\bigl(\mathscr A(G/K,\lambda)_\Kf,\, \mathscr A(A,\lambda)\bigr)
\]
(Theorem \ref{thm:Poisson}). As a morphism in $\Crad$ (cf.~Definition \ref{defn:CCh}),
$\mathcal P^\lambda$ satisfies for any $V\in\Ksp$ and $\Phi\in\Hom_K(V, B_G(\lambda))$
\[
\Gamma^V_0\bigl(\mathcal P_{G}^\lambda\circ\Phi\bigr)
=
\mathcal P_{\mathbf H}^\lambda\circ\tilde\Gamma^V_{B(\lambda)}(\Phi).
\]
In \S\ref{sec:intertwining} we study a \emph{Knapp-Stein type intertwining operator}
\[
\Tilde{\mathcal A_G}(w,\lambda):\,
B_G(\lambda)\to B_G(w\lambda)
\quad (w\in W)
\]
and its analogue $\Tilde{\mathcal A_{\mathbf H}}(w,\lambda)$ in the category $\mathbf H\Mod$
of $\mathbf H$-modules.
Of course they constitute a morphism in $\Crad$ (Theorem \ref{thm:it}).
In \S\ref{sec:F} we study the relation between
the Helgason-Fourier transform $\mathcal F_G$
and the \emph{Opdam-Cherednik transform} $\mathcal F_{\mathbf H}$ introduced
respectively in \cite{Hel1} and \cite{Op:Cherednik}.
The Paley-Wiener theorems, the inversion formulas and the Plancherel formulas for both transforms
can be successfully combined in $\Crad$
(Theorem \ref{thm:F}).

In \S\ref{sec:Ch2} we prove 
the generalized Chevalley restriction theorem holds
for the class $\mathscr A$ of analytic functions  (Theorem \ref{thm:Chanal}).
This result implies that
$\bigl(\mathscr A(G/K),\, \mathscr A(A)\bigr)\in\Crad$
(Corollary \ref{cor:AApair}) and that
the correspondence \eqref{eq:Cinftycor}
restricts to
\begin{equation}\label{eq:Analcor}
\Ximin_0 :
\bigl\{\mathbf H\text{-submodule of } \mathscr A(A) \bigr\}
\rightarrow
\bigl\{(\Lieg_\CC,K)\text{-submodule of } \mathscr A(G/K)_\Kf \bigr\}.
\end{equation}
In \S\S\ref{sec:Xirad}--\ref{sec:Xi}
we shall construct three functors $\Xirad$, $\Ximin$ and $\Xi$,
each of which sends an $\mathbf H$-module $\mathscr X$ to a $(\Lieg,K)$-module $\mathscr Y$
such that $(\mathscr Y,\mathscr X)\in\Crad$.
If $\mathscr X\in\mathbf H\Mod$ then there is a sequence of
surjective $(\Lieg,K)$-homomorphisms
\[
\Xirad(\mathscr X)\twoheadrightarrow
\Ximin(\mathscr X)\twoheadrightarrow
\Xi(\mathscr X).
\]
These functors have their own universal properties.
First, the functor $\mathbf H\Mod\ni\mathscr X\mapsto(\Xirad(\mathscr X),\mathscr X)\in\Crad$
is left adjoint to the functor
$\Crad\ni (\mathcal M_G,\mathcal M_{\mathbf H})
\mapsto \mathcal M_{\mathbf H}\in\mathbf H\Mod$ (Theorem \ref{thm:Xirad}).
If $\mathscr X\in\mathbf H\Mod$ has a \emph{central character} (Definition \ref{defn:ccic})
then $\Xirad(\mathscr X)\in(\Lieg_\CC,K)\Mod$
has a corresponding \emph{infinitesimal character} (Theorem \ref{thm:charcorr}).
For a finite-dimensional $\mathscr X\in\mathbf H\Mod$
the length of $\Xirad(\mathscr X)$ is finite (Theorem \ref{thm:FD2HC}).
Secondly, $\Ximin$ is a functor extending the correspondence \eqref{corr:Ximin} (Corollary \ref{cor:Ximinext}).
If $\mathscr X\in\mathbf H\Mod$ has finite dimension,
then $\Ximin(\mathscr X)$ can be embedded into
the $G$-module induced from $\mathscr X$ viewed naturally as an $MAN$-module (Theorem \ref{thm:minind}).
Using this realization we can prove that
if $(\cdot,\cdot)^{\mathbf H}$ is an invariant
sesquilinear form on
two finite-dimensional $\mathbf H$-modules $\mathscr X_1,\mathscr X_2$ (cf.~Definition \ref{defn:sesquiHinv})
then there exists a natural invariant
sesquilinear form $(\cdot,\cdot)^{G}$ on $\Ximin(\mathscr X_1)\times \Ximin(\mathscr X_2)$
(Theorem \ref{thm:sesquilift}).
Finally, $\Xi$ extends the correspondence \eqref{eq:Analcor} (Corollary \ref{cor:Xi} (iii)). 
If $\mathcal M=(\mathcal M_G, \mathcal M_{\mathbf H})\in\Crad$
then there exists a unique $(\Lieg_\CC,K)$-homomorphism
$\mathcal I_G:\Ximin_{\mathcal M}(\mathcal M_{\mathbf H})\to\Xi(\mathcal M_{\mathbf H})$
such that
\[
(\mathcal I_G,\id_{\mathcal M_{\mathbf H}}) :
\bigl(\Ximin_{\mathcal M}(\mathcal M_{\mathbf H}),\mathcal M_{\mathbf H}\bigr)
\to \bigl(\Xi(\mathcal M_{\mathbf H}),\mathcal M_{\mathbf H}\bigr)
\]
is a morphism in $\Crad$ (Theorem \ref{thm:Xiunivprop} (ii)).
If $(\cdot,\cdot)^{\mathbf H}$ is as above
then $(\cdot,\cdot)^{G}$ induces a sesquilinear form on $\Xi(\mathscr X_1)\times \Xi(\mathscr X_2)$
(Theorem \ref{thm:Xiindform}).
This form is non-degenerate when $(\cdot,\cdot)^{\mathbf H}$ is non-degenerate.
In \S\ref{sec:SL2R}
we shall restrict ourselves to the case of $G=SL(2,\RR)$ and
completely describe the behaviors of these functors
for the irreducible $\mathbf H$-modules
(Theorem \ref{thm:SL2Rfunctors}).
Our three functors are closely related to the functors
given by \cite{AS, EFM, CT}.
Roughly speaking, our functors are right inverses of their functors.
The author wants to discuss
the relations between them in a subsequent paper.

Now, let us return to the first part of the paper
and consider the infinitesimal (or ``tangential'')
version of radial part formulas.
Let $\Lies$ be the $-1$-eigenspace of $\theta$ in $\Lieg$.
For the \emph{Cartan motion group} $G_\CM:=K\ltimes\Lies$
and the \emph{rational Dunkl operator}s introduced by \cite{Dun},
we have similar results to the case of $G/K$.
Using the $K$-module isomorphism
\begin{equation}\label{eq:K-isom}
C^\infty(G/K)\simarrow
C^\infty(\Lies)\,;\quad
f\longmapsto f(\exp\cdot).
\end{equation}
and the $W$-module isomorphism
\begin{equation}\label{eq:W-isom}
C^\infty(A)\simarrow C^\infty(\Liea)\,;\quad
\varphi\longmapsto \varphi(\exp\cdot),
\end{equation}
we identify $\gamma_0$ with the natural restriction map
$C^\infty(\Lies)\rightarrow C^\infty(\Liea)$.
Via the Killing form $B(\cdot,\cdot)$ of $\Lieg$,
$S(\Lies_\CC)$ is identified with the algebra $\mathscr P(\Lies)$ of polynomial functions on $\Lies$,
and $S(\Liea_\CC)$ with $\mathscr P(\Liea)$.
Thus $\gamma_0$ induces a map $S(\Lies_\CC)\rightarrow S(\Liea_\CC)$,
which is also denoted by $\gamma_0$.
For $X \in \Lies$ let $\der(X)$ denote
the $X$-directional derivative operator on $\Lies$.
Extend the linear map $\der : \Lies\rightarrow \End_\CC C^\infty(\Lies)$
to an algebra homomorphism $\der : S(\Lies_\CC)\rightarrow \End_\CC C^\infty(\Lies)$.
In \cite{Je} de Jeu gives a simple proof for the fact that
the Dunkl operator
$\mathscr D : S(\Liea_\CC)\rightarrow
\End_\CC C^\infty(\Liea)$
with a special parameter (Definition~\ref{defn:Dunkl}) satisfies a radial part formula
\begin{equation}\label{eq:rad3}
\gamma_0\bigl(\der(\Delta)f\bigr)=
\mathscr D\bigl(\gamma_0(\Delta)\bigr)\gamma_0(f)
\end{equation}
for $\Delta\in S(\Lies_\CC)^K$ and $f\in C^\infty(\Lies)^K$.
In \S\ref{sec:Dunkl} 
we show that
\eqref{eq:rad3} holds for more general combinations of $\Delta$ and $f$ (Theorems~\ref{thm:radA}, \ref{thm:radB}).
But we do not try to for maximally possible combinations
because in this paper our stress is persistently on the case of $G/K$.
Nevertheless, after recalling
the generalized Chevalley restriction theorem in \S\ref{sec:Ch},
we shall discuss the case of $G_\CM/K$ first in \S\ref{sec:Dunkl}.
One reason for it is the easiness:
de Jeu's simple method still works in our generalization without any change.
Another reason is that the resulting radial part formulas will be good prototypes
for the Riemannian symmetric space case in subsequent sections.
\medskip

\noindent
{\bfseries Acknowledgments.}
This paper was written during the stay of the author at MIT.
He would like to express his gratitude to the Department of Mathematics of MIT
for their help and hospitality, especially to David A.~Vogan, Jr. 
The author is also grateful to Sigurdur Helgason, Fulton Gonzalez, and Dan Ciubotaru
for valuable discussions.

\section{The Chevalley restriction theorem, I}\label{sec:Ch}
In this section we review a generalization
of the Chevalley restriction theorem given in \cite{Oda:HC}.
Let $\Sigma$ be
the restricted root system for $(\Lieg,\Liea)$, 
and $\Sigma^+$ the positive system
of $\Sigma$ corresponding to $\Lien$.
The root space for each $\alpha\in\Sigma$ is denoted by
$\Lieg_\alpha$ and we fix $E_\alpha\in\Lieg_\alpha$ so that
$-B(E_\alpha,\theta E_\alpha)=\frac{2}{B(H_\alpha,H_\alpha)}$.
Here for $\mu\in \Liea_\CC^*$, $H_\mu$ denotes a unique element in $\Liea_\CC$
such that $\mu(H)=B(H_\mu,H)$ for $H\in\Liea$.
Putting $Z_\alpha=E_\alpha+\theta E_\alpha \in \Liek$,
we introduce some classes of $K$-types.
\begin{defn}[$K$-types]\label{defn:K-types}
(i)
In this paper by the terminology ``$K$-type''
we mean an irreducible unitary representation of $K$
or its equivalence class.
The set of $K$-types is denoted by $\widehat K$.

\noindent
(ii) $V\in \widehat K$ is called \emph{$M$-spherical}
if $V^M\ne\{0\}$.
The set of $M$-spherical $K$-types is denoted by $\Km$.

\noindent
(iii) For $V \in \Km$ put
\begin{equation}\label{eq:VMsingle}
V^M_\single
=\bigl\{
v\in V^M;\,
Z_\alpha(Z_\alpha^2+4)v=0\quad\text{for any }\alpha\in\Sigma
\bigr\}.
\end{equation}
$V\in \Km$ is called \emph{quasi-single-petaled}
if $V^M_\single\ne\{0\}$.
The set of quasi-single-petaled $K$-types is denoted by $\Kqsp$.

\noindent
(iv)
$V \in \Km$ is called \emph{single-petaled}
if $V^M_\single=V^M$.
The set of single-petaled $K$-types is denoted by $\Ksp$.
\end{defn}
\begin{rem}
For $V\in\Km$, $V^M$ is naturally a $W$-module.
Its subspace $V^M_\single$ is $W$-stable
since the definition \eqref{eq:VMsingle}
is independent of the choice of $E_\alpha$'s (cf.~\cite[Remark 1.2]{Oda:HC}).
\end{rem}
Let $\mathscr F$ be either one of the following function classes on $G/K$ or $A$:
$C^\infty$ (smooth functions), $C^\infty_\cpt$ (smooth functions with compact support), $\mathscr P$ (polynomial functions on $\Lies\simeq G/K$ or $\Liea\simeq A$).
\begin{thm}[the generalized Chevalley restriction theorem \cite{Oda:HC}]\label{thm:Ch}
Suppose $V\in\Km$.

\noindent{\normalfont (i)}
The map
\begin{equation}\label{eq:GammaV0}
\begin{aligned}
\Gamma_0^V :\,&
 \Hom_K(V,\mathscr F(G/K))
\longrightarrow
\Hom_W(V^M,\mathscr F(A))\,;\\
&\Phi\longmapsto
\bigl(
\varphi:
V^M\hookrightarrow
V
\xrightarrow{\Phi} \mathscr F(G/K)
\xrightarrow{\gamma_0} \mathscr F(A)
\bigr)
\end{aligned}\end{equation}
is well defined.

\noindent{\normalfont (ii)}
Let $p^V : V\rightarrow V^M$ be the orthogonal projection
with respect to a $K$-invariant inner product of\/ $V$.
Then for any $\Phi\in \Hom_K(V,\mathscr F(G/K))$,
$\gamma_0\circ\Phi[v]=\Gamma_0^V(\Phi)\circ p^V[v]$.

\noindent{\normalfont (iii)}
$\Gamma_0^V$ is injective.

\noindent{\normalfont (iv)}
Define
\[
V^M_\double=V^M\cap
\sum
\bigl\{
m\,
Z_\alpha(Z_\alpha^2+4) V;\,
\alpha\in\Sigma, m\in M
\bigr\}.
\]
Then $V^M=V^M_\single\oplus V^M_\double$ is a direct sum decomposition into two $W$-submodules.
A $W$-submodule $U\subset V^M$ satisfies the condition
\[
\Image \Gamma_0^V
\supset
\bigl\{
\varphi\in \Hom_W(V^M, \mathscr F(A));\,
\varphi[U]=\{0\}
\bigr\}
\]
if and only if $U\supset V^M_\double$.
In particular
$\Image \Gamma_0^V$ contains
any $\varphi\in \Hom_W(V^M, \mathscr F(A))$ such that
$\varphi\bigl[ V^M_\double\bigr]=\{0\}$.

\noindent{\normalfont (v)}
$\Gamma_0^V$ is surjective
if and only if $V\in\Ksp$.
\end{thm}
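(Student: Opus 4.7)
The plan is to analyze $\Gamma_0^V$ through the Cartan-type decomposition $G = KAK$ (so $G/K \simeq K\cdot A$), reducing the assertion to a problem about extending $V^*$-valued functions from $A$ to $G$ in a $K$-equivariant way; the subtlety is in how the extension interacts with the walls of a Weyl chamber.

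For parts (i) and (ii), I would first reformulate $\Phi\in\Hom_K(V,\mathscr F(G/K))$ as a $K$-equivariant, right-$K$-invariant function $\tilde\Phi:G\to V^*$ via $\tilde\Phi(x)[v]=\Phi[v](x)$. Since $M$ centralizes $A$, right-$K$-invariance gives $\tilde\Phi(a)=\tilde\Phi(ama^{-1}\cdot a)=m\cdot\tilde\Phi(a)$ for $m\in M$, so $\tilde\Phi|_A$ takes values in $(V^*)^M\simeq(V^M)^*$. This immediately yields (ii), since only $p^V[v]\in V^M$ can pair nontrivially with $\tilde\Phi(a)$. For $W$-equivariance one transports the $K$-equivariance of $\tilde\Phi$ through $N_K(\Liea)/M=W$. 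Part (iii) (injectivity) is then immediate: if $\Gamma_0^V(\Phi)=0$ then $\tilde\Phi$ vanishes on $A$, and $K$-equivariance together with $G=KA K$ forces $\tilde\Phi\equiv 0$.

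For the surjectivity claim in (iv) and (v), I would work in the opposite direction: given $\varphi\in\Hom_W(V^M,\mathscr F(A))$ viewed as $\tilde\varphi:A\to(V^M)^*$, first extend it over the regular set $A_\reg$ by
\[
\tilde\Phi(ka)=k\cdot\tilde\varphi(a)\in V^*,
\]
which is well-defined on the open dense Cartan cell because the stabilizer of a regular $a\in A$ under the left $K$-action on $G/K$ is exactly $M$, and $\tilde\varphi(a)$ is $M$-fixed. This map is automatically $K$-equivariant and right-$K$-invariant on the regular locus; $W$-equivariance of $\tilde\varphi$ guarantees its consistency across the different Weyl chambers in $A_\reg$. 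The entire content of the theorem is therefore the question of extending $\tilde\Phi$ smoothly (resp.\ polynomially, with compact support) across the walls.

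The key local analysis takes place at a codimension-one wall $\ker\alpha\subset\Liea$ for $\alpha\in\Sigma$. There the stabilizer of $a\in\exp(\ker\alpha)$ jumps from $M$ to the subgroup of $K$ generated by $M$ and the root vector $Z_\alpha=E_\alpha+\theta E_\alpha$; for $\tilde\Phi$ to extend continuously, $\tilde\varphi(a)$ must be fixed by this larger stabilizer. Because $\exp(\RR Z_\alpha)$ is a circle, this forces a compatibility between the derivatives of $\tilde\varphi$ in the $H_\alpha$-direction and the action of the one-parameter group generated by $Z_\alpha$ on $V^M\subset V$. Expanding $\tilde\varphi$ in powers of $\alpha(H)$ transverse to the wall and matching against the $\mathfrak{sl}_2$-triple built from $\alpha$, one finds that the obstruction to smooth extension at order $k$ involves $Z_\alpha^k$ acting on $V^M$, but only odd powers survive (by parity under the Weyl reflection $s_\alpha$), and the first nontrivial obstruction is precisely the operator $Z_\alpha(Z_\alpha^2+4)$ acting on $V^M$. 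This is why $V^M_\double$ is exactly the obstruction space: a $\varphi$ vanishing on the image of each $Z_\alpha(Z_\alpha^2+4)$ (equivalently, on $V^M_\double$) meets every wall compatibility, and conversely any $V^M_\double$-relation would violate smoothness at some wall. Hence $\Image\Gamma_0^V$ contains every $\varphi$ with $\varphi[V^M_\double]=0$, yielding (iv); and $\Gamma_0^V$ is surjective iff $V^M_\double=0$, i.e.\ $V^M=V^M_\single$, i.e.\ $V\in\Ksp$, which is (v).

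The main obstacle is making the wall-extension argument rigorous in each function class $\mathscr F$ simultaneously. For $\mathscr P$, one can argue by degree induction, using that the $\mathfrak{sl}_2$ generated by $\alpha$ acts locally finitely; for $C^\infty$ and $C^\infty_\cpt$, one needs to solve the transverse system of ODEs across each wall with the correct initial data inherited from $\tilde\varphi$, patch the resulting germs $K$-equivariantly using a partition of unity subordinate to a $K$-invariant cover, and verify that smoothness and (if relevant) compact support are preserved. The decomposition $V^M=V^M_\single\oplus V^M_\double$ into $W$-submodules also needs justification: this follows from the fact that each $Z_\alpha(Z_\alpha^2+4)$ commutes with the $W$-action on $V^M$ up to reindexing the roots, and from the semisimplicity of the $\mathfrak{sl}_2$-action generated by $\{Z_\alpha\}$ on the finite-dimensional space $V$.
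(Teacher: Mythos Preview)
This theorem is not proved in the present paper; it is quoted from \cite{Oda:HC}. The only related argument here is the extension to $\mathscr F=\mathscr A$ in \S\ref{sec:Ch2}, whose proof reduces the analytic case to the polynomial one via a separation-of-variables device (Lemma~\ref{lem:sepvar}): write $\varphi=\sum c_i\varphi_i$ with $c_i\in\mathscr A(\Liea)^W$ and $\varphi_i$ in the polynomial $\Hom_W$, then lift each factor separately using the invariant Chevalley theorem and the polynomial case of the present theorem. This strongly suggests that \cite{Oda:HC} first establishes the case $\mathscr F=\mathscr P$ by algebraic means and then bootstraps the other function classes, rather than carrying out a direct wall-extension argument in each class as you propose.

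Your approach is geometrically natural, and parts (i)--(iii) go through as you say. But the argument for (iv) has a genuine gap. You identify $Z_\alpha(Z_\alpha^2+4)$ as the \emph{first} nontrivial obstruction to extending across the wall $\ker\alpha$, yet you never explain why it is the \emph{only} one: a priori the transverse Taylor expansion imposes a compatibility condition at every odd order, involving higher odd powers of $Z_\alpha$, and you must show that once the cubic condition holds on $V^M$ all higher ones follow automatically. That step is the heart of the theorem and is not obvious from what you have written. Two related problems: when $\dim\Lieg_\alpha>1$ or $2\alpha\in\Sigma$, the stabilizer of a generic wall point is strictly larger than the group generated by $M$ and a single circle $\exp(\RR Z_\alpha)$, so your rank-one $\mathfrak{sl}_2$ picture is incomplete; and the ``only if'' direction of (iv) requires, for each $W$-submodule $U\not\supset V^M_\double$, exhibiting an explicit $\varphi$ with $\varphi[U]=\{0\}$ that fails to lie in $\Image\Gamma_0^V$, which your sketch does not supply.
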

\begin{rem}
In view of \eqref{eq:K-isom} and \eqref{eq:W-isom},
$\Gamma_0^V$ is naturally identified with
the following map:
\begin{equation*}%\label{eq:varGamma}
\begin{aligned}
\Gamma_0^V :\,&
 \Hom_K(V,\mathscr F(\Lies))
\longrightarrow
\Hom_W(V^M,\mathscr F(\Liea))\,;\\
&\Phi\longmapsto
\bigl(
\varphi:
V^M\hookrightarrow
V
\xrightarrow{\Phi} \mathscr F(\Lies)
\xrightarrow{\gamma_0} \mathscr F(\Liea)
\bigr).
\end{aligned}
\end{equation*}
In fact,
\cite{Oda:HC}
originally proved Theorem~\ref{thm:Ch}
in this form. 
\end{rem}
\begin{defn}\label{defn:22}
{\normalfont (i)}
For $V\in\Km$ we put
\[
\Hom_K^{\ttt}(V,\mathscr F(G/K))
=\bigl\{
\Phi\in \Hom_K(V,\mathscr F(G/K));\,
\Gamma^V_0(\Phi)\bigl[ V^M_\double\bigr]=\{0\}
\bigr\}.
\]
$\Hom_K^{\ttt}(V,\mathscr F(\Lies))\simeq \Hom_K^{\ttt}(V,\mathscr F(G/K))$
is similarly defined.
(This symbol comes from
the analogy between $\Hom_K^{\ttt}(V,\mathscr F(G/K))$
and $\Hom_K^\ttt(V, U(\Lieg_\CC)\otimes_{U(\Liek_\CC)}\CC_\triv)$.
The latter will be defined in \S\ref{sec:HC}.)

\noindent{\normalfont (ii)}
We define the map
\begin{equation*}
\begin{aligned}
\tilde\Gamma_0^V :\,&
 \Hom_K(V,\mathscr F(G/K))
\longrightarrow
\Hom_W(V^M_\single,\mathscr F(A))\,;\\
&\Phi\longmapsto
\bigl(
\varphi:
V^M_\single\hookrightarrow
V
\xrightarrow{\Phi} \mathscr F(G/K)
\xrightarrow{\gamma_0} \mathscr F(A)
\bigr).
\end{aligned}\end{equation*}
This is also identified with $\tilde \Gamma_0^V :\,
 \Hom_K(V,\mathscr F(\Lies))
\longrightarrow
\Hom_W(V^M_\single,\mathscr F(\Liea))$.
\end{defn}

\begin{rem}\label{rem:22}
{\normalfont (i)}
If $V\in \Ksp$ then $\Hom_K^{\ttt}(V,\mathscr F(G/K))=\Hom_K(V,\mathscr F(G/K))$.
If $V\in \Km\setminus\Kqsp$ then $\Hom_K^{\ttt}(V,\mathscr F(G/K))=\{0\}$.

\noindent
{\normalfont (ii)}
If $V\in\Km$ then it follows from Theorem \ref{thm:Ch} that
the restriction of $\tilde\Gamma^V_0$ to $\Hom_K^\ttt$ gives a linear bijection
\begin{equation}\label{eq:TGammaV0}
\tilde\Gamma_0^V :\,
 \Hom_K^\ttt(V,\mathscr F(G/K))
\simarrow
\Hom_W(V^M_\single,\mathscr F(A)).
\end{equation}
\end{rem}

The following lemma will be used repeatedly to deduce generalized radial part formulas:
\begin{lem}\label{lem:qsp}
Suppose $V\in \Km$ and
let $p^V$ be as in {\normalfont Theorem \ref{thm:Ch} (ii)}.
Suppose $\alpha\in\Sigma$ and $X_\alpha\in\Lieg_\alpha$.

\noindent{\normalfont (i)}
$(X_\alpha+\theta X_\alpha)^2v
=|\alpha|^2B(X_\alpha,\theta X_\alpha)(1-s_\alpha)v$ for $v\in V^M_\single$,
where $|\alpha|=\sqrt{B(H_\alpha,H_\alpha)}$
and $s_\alpha\in W$ is the reflection corresponding to $\alpha$.

\noindent{\normalfont (ii)}
$p^V\bigl((X_\alpha+\theta X_\alpha)^2v\bigr)\in V^M_\double$
for $v\in V^M_\double$.
\end{lem}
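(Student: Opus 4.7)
My plan is to analyze both parts through the eigenspace decomposition of $Z := X_\alpha + \theta X_\alpha \in \Liek$ acting on $V$, using the geometric fact that a suitable exponential of $Z$ represents the reflection $s_\alpha$ on $V^M$.

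For part (i), I would first rescale: setting $b := B(X_\alpha, \theta X_\alpha) < 0$ and $c := \sqrt{-b|\alpha|^2/2}$, the element $\tilde X_\alpha := X_\alpha/c$ satisfies $-B(\tilde X_\alpha, \theta \tilde X_\alpha) = 2/|\alpha|^2$, so $\tilde Z := \tilde X_\alpha + \theta \tilde X_\alpha$ is an admissible ``$Z_\alpha$'' by the independence-of-choice remark after Definition \ref{defn:K-types}. Hence $\tilde Z(\tilde Z^2 + 4) v = 0$ for $v \in V^M_\single$, and since $\tilde Z$ is skew-Hermitian on $V$ this forces a decomposition $v = v_0 + v_+$ with $\tilde Z v_0 = 0$ and $(\tilde Z^2 + 4) v_+ = 0$, yielding $\tilde Z^2 v = -4(v - v_0)$. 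Next I would verify that $m_\alpha := \exp((\pi/2)\tilde Z) \in K$ represents $s_\alpha$ in $W$: with $\tilde Y := \tilde X_\alpha - \theta \tilde X_\alpha \in \Lies$, the commutation relations $[\tilde Z, H_\alpha] = -|\alpha|^2 \tilde Y$ and $[\tilde Z, \tilde Y] = (4/|\alpha|^2) H_\alpha$ show that $\ad(\tilde Z)$ has eigenvalues $\pm 2i$ on $\mathrm{span}\{H_\alpha, \tilde Y\}$ and vanishes on the $\alpha$-hyperplane of $\Liea$, so $\Ad(m_\alpha)$ acts on $\Liea$ as $s_\alpha$. Because $v \in V^M$, the ambiguity of the representative modulo $M$ is irrelevant, and $s_\alpha v = m_\alpha v = e^{(\pi/2)\tilde Z} v = v_0 - v_+ = 2v_0 - v$; substituting yields $\tilde Z^2 v = -2(1 - s_\alpha) v$, and multiplication by $c^2$ produces $(X_\alpha + \theta X_\alpha)^2 v = |\alpha|^2 b (1 - s_\alpha) v$.

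For part (ii), my plan is first to establish that $V^M_\single$ and $V^M_\double$ are orthogonal in $V^M$ under the restricted $K$-invariant inner product. The key observation is that each $Z_\alpha \in \Liek$ acts as a skew-Hermitian operator on $V$, hence so does $Z_\alpha(Z_\alpha^2 + 4)$; writing $v \in V^M_\double$ as $\sum_j m_j Z_{\alpha_j}(Z_{\alpha_j}^2 + 4) w_j$ and moving the operator onto $u \in V^M_\single$ by the adjoint (after absorbing each unitary $m_j$ using $M$-invariance of $u$) forces $\langle u, v\rangle = 0$. The map $T : V^M \to V^M$, $T v := p^V((X_\alpha + \theta X_\alpha)^2 v)$, is self-adjoint because both $p^V$ and $(X_\alpha + \theta X_\alpha)^2$ are self-adjoint on $V$; by (i), $T$ preserves $V^M_\single$, so by self-adjointness it also preserves its orthogonal complement $V^M_\double$, as required.

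The most delicate point is the identification of $\exp((\pi/2)\tilde Z)$ with a representative of $s_\alpha$ modulo $M$; keeping sign conventions straight in the two-dimensional $\mathfrak{sl}_2$-type picture for $\ad(\tilde Z)$ is where the calculation is tightest. Everything else reduces to eigenspace bookkeeping for (i) and the standard self-adjointness argument for (ii).
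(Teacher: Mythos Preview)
Your proof is correct and follows essentially the same route as the paper: for (i) the paper simply cites \cite[(3.10)]{Oda:HC}, whose content is precisely the eigenspace computation and the identification of $\exp\bigl(\tfrac{\pi}{2}\tilde Z\bigr)$ with a representative of $s_\alpha$ that you spell out; for (ii) the paper quotes $V^M_\single\perp V^M_\double$ from \cite[Lemma~3.3]{Oda:HC} and then uses self-adjointness of $(X_\alpha+\theta X_\alpha)^2$ together with (i) to show $\bigl(v_1,\,p^V((X_\alpha+\theta X_\alpha)^2 v_2)\bigr)=0$ for $v_1\in V^M_\single$, $v_2\in V^M_\double$, which is exactly your ``self-adjoint $T$ preserves the orthogonal complement'' argument unwound.
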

\begin{proof}
(i) is only a restatement of \cite[(3.10)]{Oda:HC}.
To show (ii) let $(\cdot,\cdot)$ be a $K$-invariant inner product of $V$.
Then the proof of \cite[Lemma~3.3]{Oda:HC} shows
$V^M_\single \perp V^M_\double$ with respect to $(\cdot,\cdot)$.
If $v_1\in V^M_\single$ and $v_2\in V^M_\double$ then
\begin{align*}
\bigl(v_1, p^V\bigl((X_\alpha+\theta X_\alpha)^2 v_2\bigr)\bigr)
&=\bigl(v_1, (X_\alpha+\theta X_\alpha)^2 v_2\bigr)\\
&=\bigl((X_\alpha+\theta X_\alpha)^2 v_1,  v_2\bigr)\\
&=|\alpha|^2B(X_\alpha,\theta X_\alpha)
\bigl((1-s_\alpha)v_1, v_2\bigr)\\
&=0.
\end{align*}
Hence $p^V\bigl((X_\alpha+\theta X_\alpha)^2 v_2\bigr)
\in V^M\cap (V^M_\single)^\perp=V^M_\double$.
\end{proof}
\section{Dunkl operators}\label{sec:Dunkl}
Put $R:=2\Sigma$, $R^+:=2\Sigma^+$, $R_1:=R \setminus 2R$, and $R_1^+:=R_1\cap R^+$.
Then $R$ and $R_1$ are root systems sharing the same Weyl group $W$ with $\Sigma$.
In \cite{Dun} Dunkl introduces the following operators:
\begin{defn}[Dunkl operators]\label{defn:Dunkl}
Suppose $\mathbf k : W\backslash R_1\rightarrow \CC$
is a \emph{multiplicity function}, namely, a $\CC$-valued function on $R_1$ which is
constant on each $W$-orbit.
For $\xi\in\Liea$ define $\mathscr D_{\mathbf k}(\xi)\in\End_\CC C^\infty(\Liea)$ by
\[
\mathscr D_{\mathbf k}(\xi)=
\der(\xi)+\sum_{\alpha\in R_1^+}
\mathbf k(\alpha)
\frac{\alpha(\xi)}{\alpha}(1-s_\alpha)
\]
where $\der(\xi)$ is the $\xi$-directional derivative operator.
When $\mathbf k$ is a special multiplicity function $\mathbf m_0 : W\backslash R_1 \rightarrow\CC$ specified by
\begin{equation*}
\mathbf m_0(\alpha)=
\begin{cases}
\frac12 \dim \Lieg_{\alpha/2} & \text{if }2\alpha\notin R,\\
\frac12 \bigl(\dim \Lieg_{\alpha/2}+ \dim \Lieg_\alpha\bigr) & \text{if }2\alpha\in R,
\end{cases}
\end{equation*}
we use the brief symbol $\mathscr D$
for $\mathscr D_{\mathbf m_0}$.
\end{defn}
\begin{rem}
There is no significant meaning in using $R$ or $R_1$ instead of $\Sigma$.
We do so only for the compatibility with the case of Cherednik operators (Definition \ref{defn:Chered}).
\end{rem}
The following are well-known properties of
Dunkl operators:
\begin{prop}[\cite{Dun}]\label{prop:Dunkl}
{\normalfont (i)}
For $\xi,\eta\in\Liea$,
$\mathscr D_{\mathbf k}(\xi)\mathscr D_{\mathbf k}(\eta)
=\mathscr D_{\mathbf k}(\eta)\mathscr D_{\mathbf k}(\xi)$.

\noindent
{\normalfont (ii)}
For $\xi\in\Liea$ and $w\in W$,
$w \mathscr D_{\mathbf k}(\xi) w^{-1}= \mathscr D_{\mathbf k}(w\xi)$.

\noindent
{\normalfont (iii)}
Let $\{\xi_1,\ldots,\xi_\ell\}$ be an orthonormal basis of
the Euclidean space $(\Liea, B(\cdot,\cdot))$
and put $L_\Liea=\sum_{i=1}^\ell \xi_i^2\in S(\Liea_\CC)$.
Then
\[
\mathscr D_{\mathbf k}(L_\Liea)
=
\der(L_\Liea)+
\sum_{\alpha\in R^+_1} \mathbf k(\alpha)
\Bigl(
\frac2{\alpha}\der(H_\alpha)-\frac{|\alpha|^2}{\alpha^2}(1-s_\alpha)
\Bigr).
\]
\end{prop}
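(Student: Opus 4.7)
The plan is to prove (ii) and (i) first as independent statements and then derive (iii); note that (i) is what makes $\mathscr D_{\mathbf k}(L_\Liea)$ unambiguously defined, via the algebra homomorphism $S(\Liea_\CC)\to\End_\CC C^\infty(\Liea)$ extending $\xi\mapsto\mathscr D_{\mathbf k}(\xi)$, and hence is needed before (iii) can even be stated.

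For (ii), I would use that $W$ acts on $C^\infty(\Liea)$ by $(wf)(x)=f(w^{-1}x)$, so $w\partial(\xi)w^{-1}=\partial(w\xi)$ because $w$ is a linear isometry, $w$ conjugates multiplication by $\alpha^{-1}$ to multiplication by $(w\alpha)^{-1}$, and $ws_\alpha w^{-1}=s_{w\alpha}$. Substituting $\beta=w\alpha$ and using the $W$-invariance of $\mathbf k$, the conjugated expression becomes a sum over $w(R_1^+)$; the invariance of each summand under $\beta\leftrightarrow-\beta$ (since $s_{-\beta}=s_\beta$ and $\frac{(-\beta)(w\xi)}{-\beta}=\frac{\beta(w\xi)}{\beta}$) then permits regrouping back over $R_1^+$ to recover $\mathscr D_{\mathbf k}(w\xi)$.

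For (i), write $\mathscr D_{\mathbf k}(\xi)=\partial(\xi)+T(\xi)$ with $T(\xi)=\sum_\alpha\mathbf k(\alpha)\frac{\alpha(\xi)}{\alpha}(1-s_\alpha)$ and expand the commutator into $[\partial(\xi),\partial(\eta)]=0$, a \emph{mixed} piece $[\partial(\xi),T(\eta)]-[\partial(\eta),T(\xi)]$, and a \emph{pure-difference} piece $[T(\xi),T(\eta)]$. The key preliminary identities are $[\partial(\xi),s_\alpha]=-\alpha(\xi)\,s_\alpha\partial(\alpha^\vee)$ (which comes from $s_\alpha\xi=\xi-\alpha(\xi)\alpha^\vee$) and $[\partial(\xi),\alpha^{-1}]=-\alpha(\xi)\alpha^{-2}$; applying these, the $\alpha$-summand of the mixed piece becomes $\alpha(\xi)\alpha(\eta)\bigl[-\alpha^{-2}(1-s_\alpha)+\alpha^{-1}s_\alpha\partial(\alpha^\vee)\bigr]$, which is symmetric in $(\xi,\eta)$, so the mixed piece vanishes termwise. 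It remains to establish $[T(\xi),T(\eta)]=0$. Using $s_\alpha\circ\beta^{-1}=(s_\alpha\beta)^{-1}\circ s_\alpha$, I expand and collect the coefficients of $1$, of $s_\alpha$ and $s_\beta$, and of $s_\alpha s_\beta$. The main obstacle---Dunkl's original combinatorial identity---is the cancellation of these coefficients; this follows from a reduction to the rank-two root subsystems spanned by $\alpha,\beta$, on which the identity becomes a direct finite check (essentially a partial-fraction decomposition argument).

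For (iii), assuming (i), expand $\mathscr D_{\mathbf k}(L_\Liea)=\sum_i\mathscr D_{\mathbf k}(\xi_i)^2=\sum_i\bigl(\partial(\xi_i)^2+\partial(\xi_i)T_i+T_i\partial(\xi_i)+T_i^2\bigr)$ with $T_i=\sum_\alpha\mathbf k(\alpha)\frac{\alpha(\xi_i)}{\alpha}(1-s_\alpha)$. The $\partial^2$-terms sum to $\partial(L_\Liea)$. For the two cross terms, apply Leibniz to move $\partial(\xi_i)$ past $\alpha^{-1}$ (the derivatives of $\alpha^{-1}$ give, via $\sum_i\alpha(\xi_i)^2=|\alpha|^2$, the contribution $-\tfrac{|\alpha|^2}{\alpha^2}(1-s_\alpha)$) and use $s_\alpha\partial(\xi_i)=\partial(s_\alpha\xi_i)s_\alpha$ together with $\sum_i\alpha(\xi_i)\xi_i=H_\alpha$ and $s_\alpha H_\alpha=-H_\alpha$; the $s_\alpha$-contributions from the two cross terms cancel, while the surviving derivative contribution is $\tfrac{2}{\alpha}\partial(H_\alpha)$. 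Finally, the remaining sum $\sum_iT_i^2$ of pure multiplication-and-reflection operators must vanish; this follows either by applying the rank-two cancellation of (i) pointwise in $i$ (after contracting via $\sum_i\alpha(\xi_i)\beta(\xi_i)=B(H_\alpha,H_\beta)$), or more cleanly by observing that (i) forces $\sum_i\mathscr D_{\mathbf k}(\xi_i)^2$ to be $W$-invariant and independent of the orthonormal basis, whereupon the lower-order terms are uniquely determined to be the ones already computed.
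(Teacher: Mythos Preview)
The paper does not give a proof of this proposition at all: it is stated as a citation from Dunkl's original paper \cite{Dun}, and the text moves immediately to Theorem~\ref{thm:radA}. So there is no ``paper's own proof'' to compare against; your proposal is a self-contained write-up of a result the paper takes for granted.

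That said, your argument is the standard one and is essentially correct. Parts (ii) and (i) are fine as sketched; in (i) you have correctly isolated the only nontrivial step (the vanishing of $[T(\xi),T(\eta)]$) and named the right mechanism (reduction to rank-two subsystems). In (iii) your cross-term computation is right: after using $\sum_i\alpha(\xi_i)\xi_i=H_\alpha$, $\sum_i\alpha(\xi_i)^2=|\alpha|^2$, and $|\alpha|^2\partial(\alpha^\vee)=2\partial(H_\alpha)$, the first-order terms involving $s_\alpha$ cancel and one is left with exactly $\tfrac{2}{\alpha}\partial(H_\alpha)-\tfrac{|\alpha|^2}{\alpha^2}(1-s_\alpha)$, provided $\sum_i T_i^2=0$.

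One caveat: your second proposed justification for $\sum_i T_i^2=0$ (``$W$-invariance and basis-independence uniquely determine the lower-order terms'') does not work as stated --- those properties alone would still allow $\sum_i T_i^2$ to be any $W$-invariant degree-zero operator, not necessarily zero. You should stick with your first route: after contracting via $\sum_i\alpha(\xi_i)\beta(\xi_i)=B(H_\alpha,H_\beta)$, the diagonal terms $\alpha=\beta$ vanish because $(1-s_\alpha)\tfrac{1}{\alpha}(1-s_\alpha)=\tfrac{1}{\alpha}(1+s_\alpha)(1-s_\alpha)=0$, and the off-diagonal terms cancel by the same rank-two combinatorial identity that underlies (i). This is genuine extra work, not a formal consequence of (i) or (ii).
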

Now let us state a generalization of \eqref{eq:rad3}
for $f\notin C^\infty(\Lies)^K$.
\begin{thm}[the radial part formula]\label{thm:radA}
Suppose $V\in \Km$,
$\Delta \in S(\Lies_\CC)^K$
and $\Phi\in\Hom_K(V,C^\infty(\Lies))$.

\noindent{\normalfont (i)}
We have
\begin{align*}
\der(\Delta) \circ \Phi &\in \Hom_K(V,C^\infty(\Lies)),\\
\mathscr D\bigl(\gamma_0(\Delta)\bigr)\circ \Gamma_0^V(\Phi)
&\in \Hom_W(V^M,C^\infty(\Liea)),\\ 
\mathscr D\bigl(\gamma_0(\Delta)\bigr)\circ \tilde\Gamma_0^V(\Phi)
&\in \Hom_W(V^M_\single,C^\infty(\Liea)),
\end{align*}
and it holds that
\begin{equation}\label{eq:rad4}
\tilde \Gamma_0^V\bigl(
\der(\Delta) \circ \Phi
\bigr)
=\mathscr D\bigl(\gamma_0(\Delta)\bigr)\circ \tilde \Gamma_0^V(\Phi).
\end{equation}

\noindent{\normalfont (ii)}
If $\Phi\in\Hom_K^\ttt(V,C^\infty(\Lies))$
then
$\der(\Delta) \circ \Phi\in\Hom_K^\ttt(V,C^\infty(\Lies))$.
Hence for such $\Phi$
\begin{equation}\label{eq:rad5}
\Gamma_0^V\bigl(
\der(\Delta) \circ \Phi
\bigr)
=\mathscr D\bigl(\gamma_0(\Delta)\bigr)\circ \Gamma_0^V(\Phi).
\end{equation}

\noindent{\normalfont (iii)}
If $V\in\Ksp$ then \eqref{eq:rad5} always holds.
\end{thm}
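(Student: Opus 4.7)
The plan is to follow de Jeu's proof \cite{Je} of the classical radial part formula for $K$-invariant functions, inserting Lemma \ref{lem:qsp} to handle the non-invariance of $\Phi$ on $V^M_\single$ (for part (i)) and on $V^M_\double$ (for part (ii)).

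The easy containments in (i) are immediate: $\partial(\Delta)\circ\Phi\in\Hom_K(V,C^\infty(\Lies))$ because $\partial(\Delta)$ commutes with the $K$-action on $C^\infty(\Lies)$ when $\Delta\in S(\Lies_\CC)^K$, and the $W$-equivariance of the Dunkl-side expressions is Proposition \ref{prop:Dunkl}(ii) combined with the $W$-stability of $V^M$ and $V^M_\single$ noted after Definition \ref{defn:K-types}. By Theorem \ref{thm:Ch}(ii), the identity \eqref{eq:rad4} reduces to the pointwise statement
\[
\gamma_0\bigl(\partial(\Delta)\Phi[v]\bigr)
=\mathscr D(\gamma_0(\Delta))\,\gamma_0(\Phi[v])
\qquad(v\in V^M_\single),
\]
and (iii) is then immediate from (i) because $V^M=V^M_\single$ for $V\in\Ksp$, whence $\tilde\Gamma_0^V=\Gamma_0^V$.

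To establish the pointwise identity I would first verify it for $\Delta=L_\Lies$. Choosing a $B$-orthonormal basis of $\Lies$ adapted to $\Lies=\Liea\oplus\bigoplus_{\alpha\in R_1^+}\bigl(\Lies\cap(\Lieg_\alpha+\Lieg_{-\alpha})\bigr)$ gives $L_\Lies=L_\Liea+\sum_{\alpha,j}c_{\alpha,j}(X_{\alpha,j}+\theta X_{\alpha,j})^2$ with suitable positive constants $c_{\alpha,j}$. At a regular point $H\in\Liea'$, the $K$-equivariance $\Phi[v]\bigl(\Ad(\exp tZ)H\bigr)=\Phi[\exp(-tZ)v](H)$ allows one to expand each $\partial((X_{\alpha,j}+\theta X_{\alpha,j})^2)\Phi[v](H)$ via the polar chart $K\times\Liea^\reg\to\Lies^\reg$: the Jacobian of this chart yields the classical radial term $\propto\frac{1}{\alpha(H)}\partial(H_\alpha)\Phi[v](H)$, and the remaining ``vertical'' contribution is $\Phi[(X_{\alpha,j}+\theta X_{\alpha,j})^2 v](H)$. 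For $v\in V^M_\single$, Lemma \ref{lem:qsp}(i) evaluates the vertical term as $|\alpha|^2 B(X_{\alpha,j},\theta X_{\alpha,j})\Phi[(1-s_\alpha)v](H)$, which after summation over $j$ matches the Dunkl correction $-|\alpha|^2(1-s_\alpha)/\alpha^2$ in Proposition \ref{prop:Dunkl}(iii).

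For a general $\Delta\in S(\Lies_\CC)^K$ I would induct on degree. Since $\mathscr D$ and $\partial$ are algebra homomorphisms and $\gamma_0\colon S(\Lies_\CC)^K\simarrow S(\Liea_\CC)^W$ is an algebra isomorphism, if the formula holds for $\Delta_1,\Delta_2$ it also holds for $\Delta_1\Delta_2$ by two applications (using $\partial(\Delta_2)\Phi\in\Hom_K$); thus one only has to check it on a set of Chevalley generators, each of which is treated by the same polar-coordinate computation as $L_\Lies$, with the iterated vertical contributions again controlled by Lemma \ref{lem:qsp}(i). Part (ii) is proved by the same argument with $V^M_\single$ replaced by $V^M_\double$: Lemma \ref{lem:qsp}(ii) shows that $p^V((X_\alpha+\theta X_\alpha)^2 w)\in V^M_\double$ for $w\in V^M_\double$, so the hypothesis $\Gamma^V_0(\Phi)\bigl[V^M_\double\bigr]=\{0\}$ propagates through $\partial(\Delta)$. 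The main obstacle will be the higher-degree inductive step, where one must verify that at each order the iterated vertical contributions still land in the correct summand of $V^M=V^M_\single\oplus V^M_\double$; the two parts of Lemma \ref{lem:qsp} are engineered precisely to close this induction, which is what makes de Jeu's method transfer without essential change.
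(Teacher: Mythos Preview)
Your treatment of the case $\Delta=L_\Lies$ is essentially the paper's Lemma~\ref{lem:DunklL}, and the easy containments and the deduction of (iii) from (i) are fine.  The gap is in your passage to general $\Delta$.

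You propose to reduce, via multiplicativity, to a set of Chevalley generators of $S(\Lies_\CC)^K$ and then handle each generator ``by the same polar-coordinate computation as $L_\Lies$''.  But the polar computation for $L_\Lies$ works precisely because the only vertical contributions are of the form $p^V\bigl((X_\alpha+\theta X_\alpha)^2 v\bigr)$, which is exactly what Lemma~\ref{lem:qsp} controls.  A higher-degree $K$-invariant generator (degree $4,6,\ldots$) produces, in polar coordinates, vertical terms that are \emph{not} iterated squares in a single $Z_\alpha$: you get mixed products of $Z_\alpha$'s for different roots, odd powers acting on $(V^M)^\perp$, and coefficients depending on the radial variable in a way that must be tracked.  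Lemma~\ref{lem:qsp} says nothing about $p^V(Z_\alpha Z_\beta\cdots v)$ for $\alpha\ne\beta$, so the claim that it is ``engineered precisely to close this induction'' is not justified.  This is exactly the difficulty de~Jeu's method is designed to avoid.

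The paper does \emph{not} compute any generator beyond $L_\Lies$.  Instead it uses the commutator identities (Lemmas~\ref{lem:L1} and~\ref{lem:L2})
\[
\partial(\Delta)=\frac{1}{d!}\Bigl(\ad\frac{\partial(L_\Lies)}{2}\Bigr)^{\!d}\underline{\Delta},
\qquad
\mathscr D\bigl(\gamma_0(\Delta)\bigr)=\frac{1}{d!}\Bigl(\ad\frac{\mathscr D(L_\Liea)}{2}\Bigr)^{\!d}\underline{\gamma_0(\Delta)}
\]
for homogeneous $\Delta$ of degree $d$.  Since multiplication by $\Delta$ obviously intertwines $\gamma_0$ with multiplication by $\gamma_0(\Delta)$, and since the $L_\Lies$ case is already established, one restricts both sides of the first identity to $\Liea$ and uses Lemma~\ref{lem:DunklL} at each occurrence of $\partial(L_\Lies)$ to arrive at the second identity applied to $\gamma_0(\Phi[v])$.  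Part~(ii) follows the same way: if $\Phi\in\Hom_K^\ttt$, then both $\partial(L_\Lies)\circ\Phi$ (by Lemma~\ref{lem:DunklL}) and $\underline{\Delta}\circ\Phi$ (trivially) stay in $\Hom_K^\ttt$, so every term in the expanded commutator does too.  Thus the entire argument rests on the single computation for $L_\Lies$; no other generator is ever touched directly.
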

On the other hand, as a generalization of \eqref{eq:rad3}
for $\Delta \notin S(\Lies_\CC)^K$ we have
\begin{thm}[the radial part formula]\label{thm:radB}
Suppose $V\in \Km$,
$\Phi\in\Hom_K(V,S(\Lies_\CC))$
and $f \in C^\infty(\Lies)^K$.
Then $\Gamma_0^V(\Phi)\in \Hom_W(V^M, S(\Liea_\CC))$
is naturally defined by the identifications $S(\Lies_\CC)\simeq\mathscr P(\Lies)$
and $S(\Liea_\CC)\simeq\mathscr P(\Liea)$.
Let $\der(\Phi)f$ denote the map
\[V \ni v \longmapsto \der(\Phi[v])f \in C^\infty(\Lies).\]
Let $\mathscr D\bigl(\Gamma_0^V(\Phi)\bigr) \gamma_0(f)$ denote
the map
\[V^M \ni v \longmapsto \mathscr D\bigl(\Gamma_0^V(\Phi)[v]\bigr) \gamma_0(f) \in C^\infty(\Liea).\]

\noindent{\normalfont (i)}
We have
\begin{align*}
\der(\Phi)f &\in \Hom_K(V, C^\infty(\Lies)),\\
\mathscr D\bigl(\Gamma_0^V(\Phi)\bigr) \gamma_0(f)
&\in \Hom_W(V^M, C^\infty(\Liea)),
\end{align*}
and it holds that
\begin{equation}\label{eq:rad6}
\Gamma_0^V\bigl(\der(\Phi)f\bigr)[v]
=
\mathscr D\bigl(\Gamma_0^V(\Phi)\bigr) \gamma_0(f) [v].
\quad\text{for }v\in V^M_\single.
\end{equation}

\noindent{\normalfont (ii)}
If $\Phi\in\Hom_K^\ttt(V,S(\Lies_\CC))
\,\bigl(\,:=
\Hom_K^\ttt(V,\mathscr P(\Lies))\,\bigr)$,
then
$\der(\Phi)f\in\Hom_K^\ttt(V,C^\infty(\Lies))$.
Hence for such $\Phi$
\begin{equation}\label{eq:rad7}
\Gamma_0^V\bigl(\der(\Phi)f\bigr)
=
\mathscr D\bigl(\Gamma_0^V(\Phi)\bigr) \gamma_0(f).
\end{equation}

\noindent{\normalfont (iii)}
If $V\in\Ksp$ then \eqref{eq:rad7} always holds.
\end{thm}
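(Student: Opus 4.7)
The argument parallels Theorem~\ref{thm:radA} with the roles of $\Delta$ and $f$ interchanged, and uses de Jeu's reduction together with Lemma~\ref{lem:qsp}. The equivariance statements in (i) are formal: for $k\in K$, $K$-invariance of $f$ and $K$-equivariance of $\der$ give
\[
\der(\Phi[kv])f=\der(\Ad(k)\Phi[v])f=k\cdot\der(\Phi[v])f,
\]
so $\der(\Phi)f\in\Hom_K(V,C^\infty(\Lies))$; similarly, $W$-invariance of $\gamma_0(f)$, $W$-equivariance of $\Gamma_0^V(\Phi)$, and Proposition~\ref{prop:Dunkl}(ii) put $\mathscr D(\Gamma_0^V(\Phi))\gamma_0(f)$ in $\Hom_W(V^M,C^\infty(\Liea))$.

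The core of (i) is the identity \eqref{eq:rad6} on $V^M_\single$. I would decompose $\Lies=\Liea\oplus\Lies^\perp$ orthogonally with respect to $B$, where $\Lies^\perp$ is spanned by $Y_\alpha:=X_\alpha-\theta X_\alpha$ for $\alpha\in\Sigma^+$, $X_\alpha\in\Lieg_\alpha$; correspondingly $S(\Lies_\CC)\cong S(\Liea_\CC)\otimes S(\Lies^\perp_\CC)$ and $\gamma_0$ is the projection annihilating the positive-degree part of the second factor. For $H\in\Liea_\reg$, differentiating $f(\exp(tZ_\alpha)H)=f(H)$ twice — using $[Z_\alpha,H]=-\alpha(H)Y_\alpha$ and $[Z_\alpha,Y_\alpha]=-2B(X_\alpha,\theta X_\alpha)H_\alpha$ — yields the rank-one identities $\der(Y_\alpha)f(H)=0$ and $\der(Y_\alpha)^2f(H)=-\frac{2B(X_\alpha,\theta X_\alpha)}{\alpha(H)}\der(H_\alpha)f(H)$. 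Inductive expansion of $\Phi[v]$ through the tensor decomposition then yields
\[
\gamma_0\bigl(\der(\Phi[v])f\bigr)=\mathscr D\bigl(\gamma_0(\Phi[v])\bigr)\gamma_0(f)+(\text{corrections}),
\]
where each correction is essentially $Z_\alpha^2=(X_\alpha+\theta X_\alpha)^2$ acting on a residual $V$-factor of $\Phi[v]$. When $v\in V^M_\single$, Lemma~\ref{lem:qsp}(i) replaces $Z_\alpha^2$ by $|\alpha|^2 B(X_\alpha,\theta X_\alpha)(1-s_\alpha)$; summing over a basis of each $\Lieg_\alpha$, these absorb exactly into the $(1-s_\alpha)/\alpha^2$-terms of $\mathscr D$ with coefficient $\mathbf m_0(2\alpha)$, and \eqref{eq:rad6} follows.

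For (ii), the same computation performed with $w\in V^M_\double$ produces corrections of the form $p^V\circ Z_\alpha^2$ applied to residual $V$-factors of $\Phi[w]$, each of which lies in $V^M_\double$ by Lemma~\ref{lem:qsp}(ii); since $\Gamma_0^V(\Phi)[V^M_\double]=\{0\}$ by hypothesis, every correction vanishes along with the main Dunkl term $\mathscr D(\Gamma_0^V(\Phi)[w])\gamma_0(f)=0$, so $\Gamma_0^V(\der(\Phi)f)[w]=0$ and $\der(\Phi)f\in\Hom_K^\ttt$. Part (iii) is immediate since $V\in\Ksp$ forces $V^M_\double=\{0\}$ and $\Hom_K^\ttt=\Hom_K$. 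The main obstacle is cleanly organizing the inductive ``de Jeu reduction'' in paragraph~2 — separating the higher-order $Y_\alpha$-derivatives into a Dunkl-like leading term plus $Z_\alpha^2$-corrections on residual $V$-factors — so that single-petaledness, via Lemma~\ref{lem:qsp}(i), is precisely what forces each correction to collapse into the Dunkl difference term.
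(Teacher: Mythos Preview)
Your equivariance arguments and the use of Lemma~\ref{lem:qsp} for (iii) are fine, but the core inductive step in (i) has a real gap. The rank-one identities you derive for $\der(Y_\alpha)f$ and $\der(Y_\alpha)^2 f$ are only the first two; for $\Phi[v]$ of arbitrary degree in the $\Lies^\perp$-factor you need all mixed products $\der(Y_{\alpha_1})\cdots\der(Y_{\alpha_k})f$ at $H\in\Liea$, and these do not organize into a ``Dunkl term plus $Z_\alpha^2$-corrections'' in any visible way. More seriously, once $v\in V^M$ is fixed, $\Phi[v]$ is just a polynomial in $S(\Lies_\CC)$; there are no ``residual $V$-factors'' on which $Z_\alpha^2$ can act. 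The bridge you would need---using $K$-equivariance to write $(\ad Z_\alpha)^2\Phi[v]=\Phi[Z_\alpha^2 v]$ and then relating this to the $\Lies^\perp$-part of $\Phi[v]$---is absent from your sketch, and even with it the bookkeeping beyond degree~$2$ is formidable. Your claim that the corrections ``absorb exactly into the $(1-s_\alpha)/\alpha^2$-terms of $\mathscr D$'' is also unsupported: Lemma~\ref{lem:qsp}(i) produces $(1-s_\alpha)$ acting on $v$, hence on $\Gamma_0^V(\Phi)[v]$, whereas the difference terms in $\mathscr D(\Gamma_0^V(\Phi)[v])\gamma_0(f)$ act on intermediate functions of $a$; there is no reason these should match term by term.

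The paper avoids all of this by the de~Jeu Laplacian trick, which is not the ``inductive expansion'' you describe. For $\Phi[v]$ homogeneous of degree $d$, Lemma~\ref{lem:L2} gives $\der(\Phi[v])=\tfrac{1}{d!}\bigl(\ad\tfrac{\der(L_\Lies)}{2}\bigr)^d\underline{\Phi[v]}$; one expands this as a binomial sum of terms $\der(L_\Lies)^{d-d_1}\,\underline{\Phi[v]}\,\der(L_\Lies)^{d_1}f$, applies the Laplacian case of Theorem~\ref{thm:radA} (Lemma~\ref{lem:DunklL}) first with $V=\CC_\triv$ to handle $\der(L_\Lies)^{d_1}f$ and then with the given $V$ to pass $\gamma_0$ through each outer $\der(L_\Lies)$, and finally reassembles via Lemma~\ref{lem:L1} to obtain $\mathscr D(\Gamma_0^V(\Phi)[v])\gamma_0(f)$. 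Single-petaledness and Lemma~\ref{lem:qsp} enter only inside Lemma~\ref{lem:DunklL}, where the computation is genuinely quadratic and your rank-one identities do suffice. Part~(ii) then follows because each intermediate $\Psi_{d_1}:v\mapsto\underline{\Phi[v]}\,(\der(L_\Lies)/2)^{d_1}f$ inherits $\Hom_K^\ttt$ from $\Phi$, and Lemma~\ref{lem:DunklL} preserves $\Hom_K^\ttt$.
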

From now on we shall prove these theorems by following the method of de Jeu \cite{Je},
which uses only three simple lemmas.
\begin{lem}\label{lem:DunklL}
Let $\{X_1,\ldots,X_{\dim\Lies}\}$ be an orthonormal basis of
the Euclidean space $(\Lies, B(\cdot,\cdot))$
and put $L_\Lies=\sum_{i=1}^{\dim\Lies} X_i^2\in S(\Lies_\CC)$.
Then {\normalfont Theorem \ref{thm:radA}} for $\Delta=L_\Lies$ is true.
\end{lem}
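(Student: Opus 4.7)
The plan is to follow the method of de Jeu \cite{Je}: decompose $L_\Lies$ into $L_\Liea$ plus transverse pieces indexed by $\beta\in\Sigma^+$, compute each transverse second derivative on $\Phi[v]$ using $K$-equivariance, and match the result against the formula for $\mathscr D(L_\Liea)$ in Proposition \ref{prop:Dunkl}(iii). The well-definedness assertions of Theorem \ref{thm:radA}(i) are immediate: $\partial(L_\Lies)\circ\Phi\in\Hom_K(V,C^\infty(\Lies))$ since $L_\Lies\in S(\Lies_\CC)^K$; the $W$-equivariance of $\mathscr D(L_\Liea)$ from Proposition \ref{prop:Dunkl}(ii) gives the remaining well-definedness claims (recall $V^M_\single$ is $W$-stable); and $\gamma_0(L_\Lies)=L_\Liea$ follows by choosing the basis $\{X_i\}$ adapted to $\Lies=\Liea\oplus\Liea^\perp$.

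For the core identity \eqref{eq:rad4}, for each $\beta\in\Sigma^+$ I fix a basis $E_{\beta,j}\in\Lieg_\beta$ ($j=1,\dots,d_\beta:=\dim\Lieg_\beta$) with $-B(E_{\beta,j},\theta E_{\beta,k})=\frac{2}{|\beta|^2}\delta_{jk}$ and set $Z_{\beta,j}:=E_{\beta,j}+\theta E_{\beta,j}\in\Liek$. Then $\{\tfrac{|\beta|}{2}(E_{\beta,j}-\theta E_{\beta,j})\}$ is $B$-orthonormal in $\Lies\cap(\Lieg_\beta+\Lieg_{-\beta})$, giving
\[
L_\Lies=L_\Liea+\sum_{\beta\in\Sigma^+}\frac{|\beta|^2}{4}\sum_j(E_{\beta,j}-\theta E_{\beta,j})^2.
\]
Using the brackets $[Z_{\beta,j},H]=-\beta(H)(E_{\beta,j}-\theta E_{\beta,j})$ and $[E_{\beta,j},\theta E_{\beta,j}]=-\tfrac{2}{|\beta|^2}H_\beta$ (so $[Z_{\beta,j},[Z_{\beta,j},H]]=-\tfrac{4\beta(H)}{|\beta|^2}H_\beta$), I differentiate the $K$-equivariance identity $\Phi[v](\Ad(e^{tZ_{\beta,j}})H)=\Phi[e^{-tZ_{\beta,j}}v](H)$ twice at $t=0$; a second-order Taylor expansion of the curve inside $\Lies$ yields
\[
\partial((E_{\beta,j}-\theta E_{\beta,j})^2)\Phi[v](H)=\frac{1}{\beta(H)^2}\Phi[Z_{\beta,j}^2v](H)+\frac{4}{|\beta|^2\beta(H)}\partial(H_\beta)\Phi[v](H)
\]
for $H\in\Liea^\reg$. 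Summing over $j$ and $\beta$ (noting $\sum_j Z_{\beta,j}^2\in U(\Liek_\CC)$ is $M$-invariant, hence $\sum_j Z_{\beta,j}^2v\in V^M$) and restricting to $\Liea$ produces an explicit formula for $\gamma_0(\partial(L_\Lies)\Phi[v])(H)$. For $v\in V^M_\single$, Lemma \ref{lem:qsp}(i) gives $Z_{\beta,j}^2v=-2(1-s_\beta)v$; reindexing by $\alpha=2\beta\in R_1^+$—absorbing the $(2\beta)$-contribution into the $\beta$-one when $2\beta\in\Sigma$ and using $s_{2\beta}=s_\beta$, $H_{2\beta}=2H_\beta$, $|2\beta|^2=4|\beta|^2$, $\mathbf m_0(2\beta)=\tfrac{1}{2}(d_\beta+d_{2\beta})$—the formula collapses to exactly $\mathscr D(L_\Liea)\varphi_v$ via Proposition \ref{prop:Dunkl}(iii), where $\varphi_v=\gamma_0\Phi[v]$. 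Both sides are smooth, so the equality extends from $\Liea^\reg$ to all of $\Liea$, establishing \eqref{eq:rad4}.

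For (ii), I repeat the same expansion (without invoking Lemma \ref{lem:qsp}(i)) for $v\in V^M_\double$ and $\Phi\in\Hom_K^\ttt$: the terms involving $\varphi_v$ and $\partial(H_\beta)\varphi_v$ vanish since $\varphi_v=0$ (and $H_\beta\in\Liea$); for the remaining term, Lemma \ref{lem:qsp}(ii) combined with the linearity of the $K$-invariant orthogonal projection $p^V$ shows $\sum_j Z_{\beta,j}^2v\in V^M_\double$ (it already lies in $V^M$ by $M$-invariance, so it equals its own $p^V$-image), whence $\varphi_{\sum_j Z_{\beta,j}^2v}=0$. Hence $\partial(L_\Lies)\Phi\in\Hom_K^\ttt$ and \eqref{eq:rad5} follows. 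Part (iii) is immediate: $V\in\Ksp$ forces $V^M_\double=0$ and $\Hom_K^\ttt=\Hom_K$. The main obstacle is the second-order differentiation step: one must carefully extract both the pure second-derivative $\partial((E_{\beta,j}-\theta E_{\beta,j})^2)$ and the first-order $\partial(H_\beta)$-correction arising from the curvature of $t\mapsto\Ad(e^{tZ_{\beta,j}})H$ in $\Lies$—the latter being precisely what produces the $\tfrac{2}{\alpha}\partial(H_\alpha)$-piece of $\mathscr D(L_\Liea)$—and then align the multiplicity bookkeeping of $\Lieg_\beta$- versus $\Lieg_{2\beta}$-contributions with Dunkl's $\mathbf m_0$ on $R_1^+$.
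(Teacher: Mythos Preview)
Your proof is correct and follows essentially the same approach as the paper's: decompose $L_\Lies$ along $\Liea$ and the root spaces, compute the transverse second derivatives via the $K$-equivariance $\Phi[v](\Ad(e^{tZ})H)=\Phi[e^{-tZ}v](H)$, then invoke Lemma~\ref{lem:qsp} and Proposition~\ref{prop:Dunkl}(iii). The only differences are cosmetic---the paper normalizes its root vectors by $-B(X_\alpha^{(i)},\theta X_\alpha^{(i)})=1$ rather than $2/|\beta|^2$, cites \cite[Lemma~3.10]{Oda:HC} for the analogue of your displayed second-derivative identity (presenting it as \eqref{eq:Lrad1} already reindexed by $R_1^+$), and for part~(ii) applies Lemma~\ref{lem:qsp}(ii) directly to the $p^V$-term in \eqref{eq:Lrad1} rather than going through the $M$-invariance of $\sum_j Z_{\beta,j}^2$.
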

\begin{proof}
Suppose $V\in\Km$ and $\Phi\in\Hom_K(V,C^\infty(\Lies))$.
The first assertion of Theorem \ref{thm:radA} (i) is trivial.
Although \eqref{eq:rad4} for $\Delta=L_\Lies$ is equivalent
to \cite[Lemma 3.10]{Oda:HC}, we recall the outline of its proof.
For each $\alpha\in\Sigma^+$
choose an orthonormal basis $\bigl\{X_\alpha^{(1)},\ldots,X_\alpha^{(\dim\Lieg_\alpha)}\bigr\}$
of the Euclidean space $(\Lieg_\alpha,-B(\cdot,\theta\cdot))$.
Let $v\in V^M$ and $H\in\Liea$.
Then a direct calculation (cf. the proof of \cite[Lemma 3.10]{Oda:HC}) leads to
\begin{equation}\label{eq:Lrad1}
\begin{aligned}
\der(L_\Lies)\Phi[v](H)
&=
\der(L_\Liea)\Phi[v](H)
+\sum_{\alpha\in R_1^+}
\frac{2\mathbf m_0(\alpha)}{\alpha(H)}\der(H_\alpha)\Phi[v](H)\\
&\qquad+\sum_{\alpha\in\Sigma^+}\sum_{i=1}^{\dim\Lieg_\alpha}
\frac{\Phi\Bigl[p^V\Bigl(\bigl(X_\alpha^{(i)}+\theta X_\alpha^{(i)}\bigr)^2v\Bigr)\Bigr](H)}{2\alpha(H)^2}.
\end{aligned}
\end{equation}
If $v\in V^M_\single$, then Lemma~\ref{lem:qsp} (i) reduces
the right-hand side of \eqref{eq:Lrad1} to
\[
\der(L_\Liea)\Phi[v](H)
+\sum_{\alpha\in R_1^+}
\mathbf m_0(\alpha)\Bigl(
\frac{2}{\alpha(H)}\der(H_\alpha)\Phi[v](H)
-|\alpha|^2\frac{\Phi[v](H)-\Phi[v](s_\alpha H)}{\alpha(H)^2}
\Bigr),
\]
which equals $\mathscr D(L_\Liea)\Phi[v](H)$ by Proposition~\ref{prop:Dunkl} (iii).
Since $\gamma_0(L_\Lies)=L_\Liea$, we get \eqref{eq:rad4} for $\Delta=L_\Lies$.

In order to show (ii) for $\Delta=L_\Lies$,
suppose $\Gamma_0^V(\Phi)\bigl[V^M_\double\bigr]=\{0\}$.
Then by virtue of Lemma~\ref{lem:qsp} (ii)
the right-hand side of \eqref{eq:Lrad1} is $0$ for $v\in V^M_\double$.
This means $\Gamma_0^V(\der(L_\Lies)\circ\Phi)\bigl[V^M_\double\bigr]=\{0\}$,
proving (ii) for $\Delta=L_\Lies$.

Finally (iii) is immediate from (i) or (ii) (cf.~Remark~\ref{rem:22} (i)).
\end{proof}
\begin{lem}[\cite{He:Dunkl, Je}]\label{lem:L1}
Suppose $p$ is a homogeneous element of $S(\Liea_\CC)$ with degree $d$.
Identifying $p$ with a polynomial function on $\Liea$,
let $\underline p\in \End_\CC C^\infty(\Liea)$ denote
the multiplication operator by $p$.
Then in the algebra $\End_\CC C^\infty(\Liea)$ the following identity holds:
\[
\frac1{d!} \biggl( \ad\frac{\mathscr D_{\mathbf k}(L_\Liea)}2
\biggr)^d \underline p
=
\mathscr D_{\mathbf k}(p).
\]
\end{lem}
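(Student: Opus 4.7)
The plan is to prove the identity by induction on $d$, with the base case $d = 1$ being a direct computation from the explicit form of $\mathscr D_{\mathbf k}(L_\Liea)$ in Proposition \ref{prop:Dunkl} (iii), and the inductive step relying only on the commutativity of Dunkl operators in Proposition \ref{prop:Dunkl} (i). By linearity it suffices to treat $p = \xi_1 \cdots \xi_d$ with each $\xi_i \in \Liea$. Set $T = \mathscr D_{\mathbf k}(L_\Liea)/2$.

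For the base case I compute $[T, \underline{\xi}]$ term by term from Proposition \ref{prop:Dunkl} (iii). The derivative part gives $\frac{1}{2}[\der(L_\Liea), \underline{\xi}] = \der(\xi)$ via $[\der(\xi_i)^2,\underline\xi] = 2 B(\xi_i,\xi)\der(\xi_i)$ and summing over an orthonormal basis. For each $\alpha \in R_1^+$, the commutator $[\frac{2}{\alpha}\der(H_\alpha),\underline{\xi}]$ equals $\frac{2\alpha(\xi)}{\alpha}$, while the commutator $[\frac{|\alpha|^2}{\alpha^2}(1 - s_\alpha),\underline{\xi}]$ is computed from the conjugation rule $s_\alpha \underline{\xi} = \underline{s_\alpha\xi}\,s_\alpha$ together with $s_\alpha\xi - \xi = -\frac{2\alpha(\xi)}{|\alpha|^2}\alpha$, which after a short calculation contributes $-\frac{2\alpha(\xi)}{\alpha}s_\alpha$. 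Assembling everything and dividing by $2$ reproduces exactly the defining formula $\mathscr D_{\mathbf k}(\xi) = \der(\xi) + \sum_{\alpha\in R_1^+}\mathbf k(\alpha)\frac{\alpha(\xi)}{\alpha}(1 - s_\alpha)$.

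For the inductive step, write $p = q\xi$ with $q = \xi_1\cdots\xi_{d-1}$ homogeneous of degree $d-1$, and apply the Leibniz formula for the derivation $\ad T$:
\[
(\ad T)^d (\underline q \cdot \underline \xi)
= \sum_{k=0}^{d} \binom{d}{k} (\ad T)^k \underline q \cdot (\ad T)^{d-k} \underline{\xi}.
\]
The key point is that almost every summand vanishes. By the base case, $(\ad T)\underline{\xi} = \mathscr D_{\mathbf k}(\xi)$, and this commutes with $T$ by Proposition \ref{prop:Dunkl} (i); hence $(\ad T)^{d-k}\underline{\xi} = 0$ whenever $d-k \ge 2$, i.e.\ whenever $k \le d-2$. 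By the inductive hypothesis, $(\ad T)^{d-1}\underline q = (d-1)!\,\mathscr D_{\mathbf k}(q)$, and the same commutativity gives $(\ad T)^d\underline q = 0$, killing the $k = d$ term. Only $k = d-1$ survives, and it contributes
\[
\binom{d}{d-1}(d-1)!\,\mathscr D_{\mathbf k}(q)\,\mathscr D_{\mathbf k}(\xi)
= d!\,\mathscr D_{\mathbf k}(q\xi) = d!\,\mathscr D_{\mathbf k}(p),
\]
where the second equality uses that $\mathscr D_{\mathbf k}$ extends to a commutative algebra homomorphism $S(\Liea_\CC)\to\End_\CC C^\infty(\Liea)$ (itself a consequence of Proposition \ref{prop:Dunkl} (i)). Dividing by $d!$ closes the induction.

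The only genuinely delicate point is the base case sign bookkeeping, particularly tracking $[s_\alpha,\underline\xi]$ and matching the resulting coefficients to the definition of $\mathscr D_{\mathbf k}(\xi)$; once that is settled, the inductive step is purely formal and rests entirely on the fact that $\ad T$ is nilpotent on multiplication operators of bounded degree, because $T$ lies in the commutative subalgebra generated by the Dunkl operators.
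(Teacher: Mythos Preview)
The paper does not supply its own proof of this lemma; it is quoted from \cite{He:Dunkl, Je}. Your argument is correct and is essentially the standard proof found in those references: establish the degree-one case $[\mathscr D_{\mathbf k}(L_\Liea)/2,\underline\xi]=\mathscr D_{\mathbf k}(\xi)$ by direct computation from Proposition~\ref{prop:Dunkl}~(iii), then induct via the Leibniz rule using that Dunkl operators commute (Proposition~\ref{prop:Dunkl}~(i)).

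One small wording issue in your base case: the commutator $[\frac{|\alpha|^2}{\alpha^2}(1-s_\alpha),\underline\xi]$ itself equals $+\frac{2\alpha(\xi)}{\alpha}s_\alpha$; the minus sign you quote appears only after applying the minus sign in front of this term in the formula of Proposition~\ref{prop:Dunkl}~(iii). Your final assembly is correct, but the intermediate sentence conflates ``equals'' with ``contributes.''
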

\begin{lem}\label{lem:L2}
Suppose $P$ is a homogeneous element of $S(\Lies_\CC)$ with degree $d$.
Identifying $P$ with a polynomial function on $\Lies$,
let $\underline P\in \End_\CC C^\infty(\Lies)$ denote
the multiplication operator by $P$.
Then in the algebra $\End_\CC C^\infty(\Lies)$ the following identity holds:
\[
\frac1{d!} \biggl( \ad\frac{\der(L_\Lies)}2
\biggr)^d \underline P
=
\der(P).
\]
\end{lem}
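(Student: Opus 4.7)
The plan is to mimic the proof of Lemma~\ref{lem:L1} from \cite{He:Dunkl, Je}, but in the simpler setting where the reflection/difference term is absent. Writing $T=\frac12\der(L_\Lies)$, I would argue as follows.

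First I would handle the degree-one case. For any $X\in\Lies$, under the identification $S(\Lies_\CC)\simeq\mathscr P(\Lies)$ via the Killing form, $\underline X$ is multiplication by the linear function $Y\mapsto B(X,Y)$, so $[\der(X_i),\underline X]=B(X_i,X)$ (a scalar). Using $\der(X)=\sum_i B(X,X_i)\der(X_i)$, a direct computation gives
\[
[T,\underline X]
=\tfrac12\sum_i[\der(X_i)^2,\underline X]
=\sum_i B(X,X_i)\der(X_i)=\der(X).
\]

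Next I would observe that $[T,\der(X)]=0$ for every $X\in\Lies$, since both $T$ and $\der(X)$ lie in the commutative subalgebra of $\End_\CC C^\infty(\Lies)$ generated by the constant-coefficient operators $\der(X_1),\ldots,\der(X_{\dim\Lies})$. Hence $(\ad T)^k\underline X=0$ for all $k\ge 2$.

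I would then proceed by induction on the degree $d$, noting the base case $d=0$ is trivial and the case $d=1$ was just proven. By linearity of both sides it suffices to treat $P=X\cdot Q$ with $X\in\Lies$ and $Q$ homogeneous of degree $d-1$. Since $\ad T$ is a derivation of $\End_\CC C^\infty(\Lies)$,
\[
(\ad T)^d(\underline X\,\underline Q)
=\sum_{k=0}^{d}\binom{d}{k}(\ad T)^k(\underline X)\cdot(\ad T)^{d-k}(\underline Q).
\]
By the previous step only $k=0,1$ contribute. The inductive hypothesis gives $(\ad T)^{d-1}\underline Q=(d-1)!\,\der(Q)$; as $\der(Q)$ again lies in the constant-coefficient commutative subalgebra, $(\ad T)^d\underline Q=0$. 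Hence
\[
(\ad T)^d(\underline X\,\underline Q)
=d\cdot\der(X)\cdot(d-1)!\,\der(Q)
=d!\,\der(X Q)=d!\,\der(P),
\]
and dividing by $d!$ yields the lemma.

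There is no serious obstacle here: the whole computation is markedly simpler than that of Lemma~\ref{lem:L1} precisely because $T$ commutes with every $\der(X)$, so the higher commutators $(\ad T)^k\underline X$ vanish for $k\ge 2$. The only care needed is to correctly identify $P\in S(\Lies_\CC)$ with its associated polynomial function on $\Lies$ via the Killing form, so that $\underline P$ and $\der(P)$ refer to consistent objects.
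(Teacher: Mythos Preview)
Your proof is correct. The paper itself offers no argument beyond ``By an elementary calculation,'' so your induction on the degree via the Leibniz rule for $\ad T$, together with the base computation $[T,\underline X]=\der(X)$ and the observation that $(\ad T)^k\underline X=0$ for $k\ge2$, is exactly the kind of elementary calculation intended; it is also the natural specialization of the argument for Lemma~\ref{lem:L1} to the case where the difference part is absent.
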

\begin{proof}
By an elementary calculation.
\end{proof}
\begin{proof}[Proof of {\normalfont Theorem~\ref{thm:radA}}]
Suppose $V\in\Km$,
$\Delta\in S(\Lies_\CC)^K$,
and $\Phi\in\Hom_K(V,C^\infty(\Lies))$.
We may assume $\Delta$ is homogeneous with degree $d$.
By Lemma~\ref{lem:L2} we have for $v\in V$
\begin{equation}\label{eq:derDelta}
\der(\Delta) \Phi[v]
=\frac1{d!} \biggl(\biggl(\ad\frac{\der(L_\Lies)}2\biggr)^d\underline\Delta\biggr)\Phi[v].
\end{equation}
Now let $v\in V^M_\single$.
Then restricting both sides of \eqref{eq:derDelta} to $\Liea$ we have
\begin{align*}
\gamma_0\bigl(
\der(\Delta) \Phi[v]
\bigr)
&=\frac1{d!}
\biggl(\biggl(\ad\frac{\mathscr D(L_\Liea)}2\biggr)^d
\underline{\gamma_0(\Delta)}\biggr)
\gamma_0\bigl(
\Phi[v]
\bigr)
&(\because \text{Lemma~\ref{lem:DunklL}})\\
&=\mathscr D\bigl(\gamma_0(\Delta)\bigr)\gamma_0\bigl(
\Phi[v]
\bigr),
&(\because \text{Lemma~\ref{lem:L1}})
\end{align*}
which proves Theorem \ref{thm:radA} (i) and hence (iii).

In general, if $\Psi\in\Hom_K^\ttt(V,C^\infty(\Lies))$,
then $\der(L_\Lies)\circ \Psi\in\Hom_K^\ttt(V,C^\infty(\Lies))$ by Lemma \ref{lem:DunklL}
and
clearly $\underline \Delta\circ \Psi\in\Hom_K^\ttt(V,C^\infty(\Lies))$.
Hence it follows from
\eqref{eq:derDelta} 
$\Phi\in\Hom_K^\ttt(V,C^\infty(\Lies))$
implies $\der(\Delta)\circ\Phi\in\Hom_K^\ttt(V,C^\infty(\Lies))$.
We thus get Theorem \ref{thm:radA} (ii).
\end{proof}
\begin{proof}[Proof of {\normalfont Theorem \ref{thm:radB}}]
Suppose $V\in \Km$,
$\Phi\in\Hom_K(V,S(\Lies_\CC))$,
and $f \in C^\infty(\Lies)^K$.
We may assume
$\Phi[v]$ for any $v\in V$ is a homogeneous element with a constant degree $d$.
The first assertion of Theorem \ref{thm:radB} (i) is clear.
By Lemma \ref{lem:L2} we have for $v\in V$
\begin{equation}\label{eq:derPhi}
\begin{aligned}
\der\bigl(\Phi[v]\bigr)f
&=\frac1{d!} \biggl(\biggl(\ad\frac{\der(L_\Lies)}2\biggr)^d\underline{\Phi[v]}\biggr)f\\
&=\sum_{d_1=0}^d\frac{(-1)^{d_1}}{(d-d_1)!d_1!}
\biggl(\frac{\der(L_\Lies)}2\biggr)^{d-d_1}
\!\!\!\underline{\Phi[v]}
\biggl(\frac{\der(L_\Lies)}2\biggr)^{d_1}
\!\!f.
\end{aligned}
\end{equation}
For $d_1=0,\ldots,d$ define $\Psi_{d_1}\in\Hom_K(V,C^\infty(\Lies))$ by
\[
V\ni v\longmapsto \underline{\Phi[v]}
\biggl(\frac{\der(L_\Lies)}2\biggr)^{d_1}\!\!f.
\]
Then by Lemma~\ref{lem:DunklL} with $V=\CC_\triv$ (the trivial $K$-type) we have
\begin{equation}\label{eq:resPsi}
\Gamma_0^V(\Psi_{d_1})[v]
=
\underline{\Gamma_0^V(\Phi)[v]}
\biggl(\frac{\mathscr D(L_\Liea)}2\biggr)^{d_1}\!\!\gamma_0(f)
\quad\text{for }v\in V^M.
\end{equation}
Now if $v\in V^M_\single$ then the restriction of \eqref{eq:derPhi} is calculated to be
\begin{align*}
\gamma_0\bigl(
\der\bigl(&\Phi[v]\bigr)f
\bigr)\\
&=\sum_{d_1=0}^d\frac{(-1)^{d_1}}{(d-d_1)!d_1!}
\biggl(\frac{\mathscr D(L_\Liea)}2\biggr)^{d-d_1}
\!\!\Gamma_0^V(\Psi_{d_1})[v]
&&(\because\text{Lemma~\ref{lem:DunklL}})\\
&=\sum_{d_1=0}^d\frac{(-1)^{d_1}}{(d-d_1)!d_1!}
\biggl(\frac{\mathscr D(L_\Liea)}2\biggr)^{d-d_1}
\!\!\underline{\Gamma_0^V(\Phi)[v]}
\biggl(\frac{\mathscr D(L_\Liea)}2\biggr)^{d_1}\!\!\gamma_0(f)
&&(\because\eqref{eq:resPsi})\\
&=\frac1{d!} \biggl(\biggl(\ad\frac{\mathscr D(L_\Liea)}2\biggr)^d
\underline{\Gamma_0^V(\Phi)[v]}\biggr)\gamma_0(f)\\
&=
\mathscr D\bigl(
\Gamma_0^V(\Phi)[v]
\bigr)\gamma_0(f).
&&(\because\text{Lemma~\ref{lem:L1}})
\end{align*}
It proves \eqref{eq:rad6} and therefore (iii).

In order to show (ii)
suppose $\Phi\in\Hom_K^\ttt(V,S(\Lies_\CC))$.
Then by \eqref{eq:resPsi} we have $\Psi_{d_1}\in\Hom_K^\ttt(V,C^\infty(\Lies))$
for $d_1=0,\ldots,d$.
Since
\[
\der(\Phi)f=\sum_{d_1=0}^d \frac{(-1)^{d_1}}{(d-d_1)!d_1!}
\biggl(\frac{\der(L_\Lies)}2\biggr)^{d-d_1}
\!\!\!\circ\Psi_{d_1},
\]
it follows from Lemma \ref{lem:DunklL} that $\der(\Phi)f\in\Hom_K^\ttt(V,C^\infty(\Lies))$.
We thus get (ii).
\end{proof}

\section{Graded Hecke algebras and Cherednik operators}\label{sec:GHA}
Let $\Pi$ be the system of simple roots in $R_1=2\Sigma\setminus4\Sigma$
corresponding to the positive system $R_1^+=2\Sigma^+\setminus4\Sigma^+$.
\begin{defn}[graded Heck algebras \cite{Lu}]\label{defn:H}
Let $\mathbf k : W\backslash R_1\rightarrow\CC$ be a multiplicity function.
Then there exists uniquely (up to equivalence) an algebra $\mathbf H_{\mathbf k}$ over $\CC$
with the following properties:\smallskip

\noindent{\normalfont (i)}
$\mathbf H_{\mathbf k}\simeq S(\Liea_\CC)\otimes\CC W$ as a $\CC$-linear space;

\noindent{\normalfont (ii)}
The maps $S(\Liea_\CC)\rightarrow \mathbf H_{\mathbf k}, \varphi\mapsto \varphi\otimes1$ and
 $\CC W\rightarrow \mathbf H_{\mathbf k}, w\mapsto1\otimes w$ are algebra homomorphisms;

\noindent{\normalfont (iii)}
$(\varphi\otimes1)\cdot(1\otimes w)=\varphi\otimes w$ for any $\varphi\in S(\Liea_\CC)$ and $w\in W$;

\noindent{\normalfont (iv)}
$(1\otimes s_\alpha)\cdot(\xi\otimes1)
 =s_\alpha(\xi)\otimes s_\alpha-\mathbf k(\alpha)\,\alpha(\xi)$
for any $\alpha\in\Pi$ and $\xi\in\Liea_\CC$.
\smallskip

We call $\mathbf H_{\mathbf k}$ the graded Hecke algebra
associated to the data $(\Liea, \Pi, \mathbf k)$.
\end{defn}
By (ii) we identify $S(\Liea_\CC)$ and $\CC W$
with subalgebras of $\mathbf H_{\mathbf k}$.
Then (iv) is simply written as
\begin{equation}\label{eq:Hrel}
s_\alpha\cdot\xi
 =s_\alpha(\xi)\cdot s_\alpha
 - \mathbf k(\alpha)\,\alpha(\xi)\quad\forall\alpha\in\Pi\ \forall\xi\in\Liea_\CC.
\end{equation}
It is well known that the center of $\mathbf H_{\mathbf k}$
equals $S(\Liea_\CC)^W$ (cf.~\cite[Theorem 6.5]{Lu}).
Define the multiplicity function $\mathbf m_1 : W\backslash R_1\rightarrow \CC$ by
\begin{equation*}
\mathbf m_1(\alpha)=\begin{cases}
\frac12 \dim \Lieg_{\alpha/2} & \text{if }2\alpha\notin R,\\
\frac12 \dim \Lieg_{\alpha/2}+ \dim \Lieg_\alpha & \text{if }2\alpha\in R,
\end{cases}
\end{equation*}
and put $\mathbf H=\mathbf H_{\mathbf m_1}$.
We consider $\mathbf H$ is a special graded Hecke algebra attached to the Iwasawa decomposition $G=NAK$.

Now recall $R=2\Sigma$ and $R^+=2\Sigma^+$.
By \eqref{eq:W-isom} we identify $C^\infty(\Liea)$ with $C^\infty(A)$,
so that $\End_\CC C^\infty(A)\ni \der(\xi)$ $(\xi\in\Liea_\CC) $.
For $\mu\in \Liea_\CC^*$ let $e^{\mu}\in C^\infty(A)$ denote
the function $a \mapsto \exp{\mu(\log a)}$.
\begin{defn}[Cherednik operators \cite{Ch1}]\label{defn:Chered}
Suppose $\mathbf k : W\backslash R\rightarrow \CC$ is a multiplicity function
and put $\rho_{\mathbf k}=\frac12\sum_{\alpha\in R^+}\mathbf k(\alpha) \alpha$.
For $\xi\in\Liea$ define $\mathscr T_{\mathbf k}(\xi)\in\End_\CC C^\infty(A)$ by
\[
\mathscr T_{\mathbf k}(\xi)=
\der(\xi)+\sum_{\alpha\in R^+}
\mathbf k(\alpha)\frac{\alpha(\xi)}{1-e^{-\alpha}}(1-s_\alpha)
-\rho_{\mathbf k}(\xi).
\]
If $\mathbf k$ equals
the multiplicity function $\mathbf m : W\backslash R \rightarrow\CC$ defined by
\begin{equation*}
\mathbf m(\alpha)=\frac12\dim \Lieg_{\alpha/2},
\end{equation*}
then we use the brief symbol $\mathscr T$
for $\mathscr T_{\mathbf m}$.
\end{defn}
The following are fundamental properties of Cherednik operators:
\begin{prop}\label{prop:Cherednik}
{\normalfont (i)}
For $\xi,\eta\in\Liea$,
$\mathscr T_{\mathbf k}(\xi)\mathscr T_{\mathbf k}(\eta)
=\mathscr T_{\mathbf k}(\eta)\mathscr T_{\mathbf k}(\xi)$.

\noindent
{\normalfont (ii)}
Let\/ $\mathbf k_1 : W\backslash R_1\rightarrow\CC$ be the multiplicity function defined by
\[
\mathbf k_1(\alpha)=\begin{cases}
\mathbf k(\alpha) & \text{if }2\alpha\notin R,\\
\mathbf k(\alpha)+2\mathbf k(2\alpha) & \text{if }2\alpha\in R.
\end{cases}
\]
Then
$s_\alpha \mathscr T_{\mathbf k}(\xi)= \mathscr T_{\mathbf k}(s_\alpha(\xi))s_\alpha-\mathbf k_1(\alpha)\alpha(\xi)$
for $\xi\in\Liea$ and $\alpha\in \Pi$.
Hence
$\mathscr T_{\mathbf k} : \Liea\rightarrow \End_\CC C^\infty(A)$
uniquely extends to an algebra homomorphism
of\/ $\mathbf H_{\mathbf k_1}$ into $\End_\CC C^\infty(A)$.

\noindent
{\normalfont (iii)}
Let $L_\Liea\in S(\Liea_\CC)$ be as in {\normalfont Proposition~\ref{prop:Dunkl} (iii)}.
Then
\begin{equation}\label{eq:ChredLap}
\mathscr T_{\mathbf k}(L_\Liea)
=
\der(L_\Liea)+
\sum_{\alpha\in R^+} \mathbf k(\alpha)
\Bigl(
\coth \frac{\alpha}2 \der(H_\alpha)-\frac{|\alpha|^2}{4\sinh^2 \frac{\alpha}2}(1-s_\alpha)
\Bigr)
+B(H_{\rho_{\mathbf k}},H_{\rho_{\mathbf k}}).
\end{equation}
\end{prop}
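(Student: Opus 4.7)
The plan is to establish (i), (ii), (iii) in this order. Commutativity (i) is needed to extend $\mathscr T_{\mathbf k}$ from $\Liea$ to an algebra homomorphism on $S(\Liea_\CC)$; the cross-relation (ii) then matches exactly the defining relation \eqref{eq:Hrel} of $\mathbf H_{\mathbf k_1}$; and the explicit Laplacian formula (iii) is most efficiently obtained by writing $\mathscr T_{\mathbf k}(L_\Liea)=\sum_i \mathscr T_{\mathbf k}(\xi_i)^2$ via (i). The main obstacle is the combinatorial cancellation behind (i); it reduces to rank-two root subsystems, but must be verified by case analysis, as in the classical treatment of \cite{Ch1}.

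For (i), decompose $\mathscr T_{\mathbf k}(\xi)=\der(\xi)+T(\xi)-\rho_{\mathbf k}(\xi)$ where $T(\xi):=\sum_{\alpha\in R^+}\mathbf k(\alpha)\frac{\alpha(\xi)}{1-e^{-\alpha}}(1-s_\alpha)$. The scalar $\rho_{\mathbf k}(\xi)$ commutes with everything and $[\der(\xi),\der(\eta)]=0$, so the commutator reduces to $[\der(\xi),T(\eta)]-[\der(\eta),T(\xi)]+[T(\xi),T(\eta)]$. Using $[\der(\xi),(1-e^{-\alpha})^{-1}]=\alpha(\xi)\,e^{-\alpha}(1-e^{-\alpha})^{-2}$ and $s_\alpha\der(\xi)=\der(s_\alpha\xi)s_\alpha$, the linear piece becomes a single sum over $\alpha\in R^+$. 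Each unordered pair $\{\alpha,\beta\}\subset R^+$ in the quadratic piece contributes terms antisymmetric in $(\xi,\eta)$ involving $\frac{1}{(1-e^{-\alpha})(1-e^{-\beta})}$; standard partial-fraction identities, together with the observation that only the root subsystem generated by $\alpha$ and $\beta$ is relevant, reduce the cancellation to the rank-two cases ($A_1\!\times\! A_1$, $A_2$, $B_2$/$BC_2$, $G_2$), each handled by direct calculation.

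For (ii), compute $s_\alpha\mathscr T_{\mathbf k}(\xi)-\mathscr T_{\mathbf k}(s_\alpha\xi)\,s_\alpha$ for $\alpha\in\Pi$. Since $s_\alpha$ permutes $R^+\setminus R^+_\alpha$ with $R^+_\alpha:=R^+\cap\{\alpha,2\alpha\}$, the terms in $T$ indexed by $\beta\in R^+\setminus R^+_\alpha$ cancel upon reindexing. The residual contributions, from $\beta\in R^+_\alpha$ and from $\rho_{\mathbf k}$, collapse via the identity $(1-e^{-\alpha})^{-1}+(1-e^{\alpha})^{-1}=1$ to the scalar $-\mathbf k_1(\alpha)\alpha(\xi)$, with $\mathbf k_1$ exactly as defined in the statement (the coefficient of $\mathbf k(2\alpha)$ being $2$ since both $\alpha$ and $2\alpha$ appear in the residual sum when $2\alpha\in R$). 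Combining this cross-relation with (i), which extends $\xi\mapsto\mathscr T_{\mathbf k}(\xi)$ to an algebra homomorphism $S(\Liea_\CC)\to\End_\CC C^\infty(A)$, and with the natural action of $\CC W$ on $C^\infty(A)$, reproduces all of the defining relations of $\mathbf H_{\mathbf k_1}$ in Definition \ref{defn:H}; hence a unique algebra homomorphism $\mathbf H_{\mathbf k_1}\to\End_\CC C^\infty(A)$ extending $\mathscr T_{\mathbf k}$ exists.

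For (iii), use (i) to write $\mathscr T_{\mathbf k}(L_\Liea)=\sum_i \mathscr T_{\mathbf k}(\xi_i)^2$ and expand. The $\der\cdot\der$ diagonal yields $\der(L_\Liea)$. The $\der\cdot T+T\cdot\der$ contribution, combined with the $\der\cdot\rho_{\mathbf k}+\rho_{\mathbf k}\cdot\der$ terms, simplifies via $\sum_i\alpha(\xi_i)\xi_i=H_\alpha$ and $\tfrac{2}{1-e^{-\alpha}}-1=\coth(\alpha/2)$ to $\sum_{\alpha\in R^+}\mathbf k(\alpha)\coth(\alpha/2)\der(H_\alpha)$. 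The diagonal ($\alpha=\beta$) contribution of $T\cdot T$ collapses via $(1-e^{-\alpha})(1-e^{\alpha})=-4\sinh^2(\alpha/2)$ to $-\sum_{\alpha\in R^+}\mathbf k(\alpha)\frac{|\alpha|^2}{4\sinh^2(\alpha/2)}(1-s_\alpha)$; the off-diagonal $T\cdot T$ terms cancel by the same rank-two identity used in (i), which is the reason (i) is proved first. Finally, the diagonal of $\rho_{\mathbf k}^2$ contributes the scalar $B(H_{\rho_{\mathbf k}},H_{\rho_{\mathbf k}})$, completing the formula \eqref{eq:ChredLap}.
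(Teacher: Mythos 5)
The paper does not prove Proposition~\ref{prop:Cherednik}: it simply cites \cite{Ch1} and \cite{Op:Cherednik} for (i)--(ii) and \cite{Shapira} for (iii). So your proposal is doing genuinely more work than the paper; with that in mind, here is an assessment of the argument itself.

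Your treatment of (ii) is correct. The re-indexing over $R^+\setminus R^+_\alpha$ via $\beta\mapsto s_\alpha\beta$, the partial-fraction identity $(1-e^{-\alpha})^{-1}+(1-e^{\alpha})^{-1}=1$ for the residual $\beta\in R^+_\alpha$, and $s_\alpha\rho_{\mathbf k}=\rho_{\mathbf k}-\mathbf k(\alpha)\alpha-2\mathbf k(2\alpha)\alpha$ (with the obvious modification when $2\alpha\notin R$) do produce exactly $-\mathbf k_1(\alpha)\alpha(\xi)$, and the identification of the relations of $\mathbf H_{\mathbf k_1}$ is as you say. Your sketch of (i) follows the standard Cherednik strategy of reduction to rank two and is plausible, though of course the "direct calculation" in each of $A_1\times A_1$, $A_2$, $B_2/BC_2$, $G_2$ is where the real content is.

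Your bookkeeping in (iii), however, has a genuine error, and the proof as described would not close. Write $\mathscr T_{\mathbf k}(\xi)=\der(\xi)+T(\xi)-\rho_{\mathbf k}(\xi)$ as you do and expand $\sum_i\mathscr T_{\mathbf k}(\xi_i)^2$. The term $-\sum_{\alpha\in R^+}\mathbf k(\alpha)\frac{|\alpha|^2}{4\sinh^2(\alpha/2)}(1-s_\alpha)$ does \emph{not} arise from the $\alpha=\beta$ diagonal of $T\cdot T$. It arises from the Leibniz commutator in $\der\cdot T$: since $\der(H_\alpha)\bigl[\tfrac{1}{1-e^{-\alpha}}\bigr]=-\tfrac{|\alpha|^2 e^{-\alpha}}{(1-e^{-\alpha})^2}=-\tfrac{|\alpha|^2}{4\sinh^2(\alpha/2)}$, the piece of $\sum_i(\der(\xi_i)T(\xi_i)+T(\xi_i)\der(\xi_i))$ you describe is incomplete and actually equals $\sum_\alpha\mathbf k(\alpha)\bigl(\tfrac{2}{1-e^{-\alpha}}\der(H_\alpha)-\tfrac{|\alpha|^2}{4\sinh^2(\alpha/2)}(1-s_\alpha)\bigr)$; after subtracting $2\der(H_{\rho_{\mathbf k}})$ from $\der\cdot\rho_{\mathbf k}+\rho_{\mathbf k}\cdot\der$, one gets both the $\coth$ term and the $\sinh^{-2}$ term. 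The true $\alpha=\beta$ diagonal of $T\cdot T$, computed using $(1-s_\alpha)\tfrac{1}{1-e^{-\alpha}}(1-s_\alpha)=(1-s_\alpha)$, is $\sum_\alpha\mathbf k(\alpha)^2\tfrac{|\alpha|^2}{1-e^{-\alpha}}(1-s_\alpha)$, which is quadratic in $\mathbf k(\alpha)$ and so cannot be the linear term you ascribe to it. Moreover your expansion omits the mixed term $T\cdot\rho_{\mathbf k}+\rho_{\mathbf k}\cdot T=-2T(H_{\rho_{\mathbf k}})=-\sum_{\alpha,\beta}\mathbf k(\alpha)\mathbf k(\beta)\beta(H_\alpha)\tfrac{1}{1-e^{-\alpha}}(1-s_\alpha)$; what actually needs to be shown (and does use the rank-two cancellation you invoke for (i)) is that the \emph{entire} quadratic-in-$\mathbf k$ contribution $\sum_i T(\xi_i)^2-2T(H_{\rho_{\mathbf k}})$ vanishes. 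As written, your accounting would not reproduce \eqref{eq:ChredLap}, because you would obtain the $\sinh^{-2}$ term twice (once spuriously from $T\cdot T$, once correctly from the Leibniz part you dropped) and you would be left with the uncancelled quadratic terms in $\mathbf k$.
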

\begin{proof}
Proposition~\ref{prop:Cherednik} (i), (ii) are given in \cite{Ch1}
(see also \cite{Op:Cherednik}).
(iii) is calculated in \cite{Shapira}.
\end{proof}
\begin{rem}
If $\mathbf k=\mathbf m$ then
$\rho_{\mathbf k}=\rho$, $\mathbf k_1=\mathbf m_1$ and $\mathbf H_{\mathbf k_1}=\mathbf H$.
\end{rem}

\section{Radial part formula, I}\label{sec:radI}
The next theorem is a generalization of \eqref{eq:rad1}
to some cases where $f$ is no longer $K$-invariant.
The tangential counterpart of this theorem is Theorem~\ref{thm:radA}.
\begin{thm}[the radial part formula]\label{thm:radC}
Suppose $V\in \Km$,
$\Delta \in U(\Lieg_\CC)^K$,
and $\Phi\in\Hom_K(V,C^\infty(G/K))$.

\noindent{\normalfont (i)}
We have
\begin{align*}
r(\Delta) \circ \Phi &\in \Hom_K(V,C^\infty(G/K)),\\
\mathscr T\bigl(\gamma(\Delta)\bigr)\circ \Gamma_0^V(\Phi)
&\in \Hom_W(V^M,C^\infty(A)),\\
\mathscr T\bigl(\gamma(\Delta)\bigr)\circ \tilde\Gamma_0^V(\Phi)
&\in \Hom_W(V^M_\single,C^\infty(A)),
\end{align*}
and it holds that
\begin{equation}\label{eq:rad8}
\tilde\Gamma_0^V\bigl(
r(\Delta) \circ \Phi
\bigr)
=\mathscr T\bigl(\gamma(\Delta)\bigr)\circ \tilde\Gamma_0^V(\Phi).
\end{equation}

\noindent{\normalfont (ii)}
If $\Phi\in\Hom_K^\ttt(V,C^\infty(G/K))$
then
$r(\Delta) \circ \Phi
\in\Hom_K^\ttt(V,C^\infty(G/K)).$
Hence for such $\Phi$
\begin{equation}\label{eq:rad9}
\Gamma_0^V\bigl(
r(\Delta) \circ \Phi
\bigr)
=\mathscr T\bigl(\gamma(\Delta)\bigr)\circ \Gamma_0^V(\Phi).
\end{equation}

\noindent{\normalfont (iii)}
If $V\in\Ksp$ then \eqref{eq:rad9} always holds.
\end{thm}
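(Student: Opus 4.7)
The plan is to imitate the proof of Theorem~\ref{thm:radA} in the curved Riemannian symmetric space setting, using a Casimir-type element $\Omega\in U(\Lieg_\CC)^K$ with $\gamma(\Omega)=L_\Liea-B(H_\rho,H_\rho)$ in place of the flat Laplacian $L_\Lies$. Accordingly the proof splits into two stages parallel to Section~\ref{sec:Dunkl}: first establish the theorem for $\Delta=\Omega$ by direct calculation on $A_{\reg}$, then bootstrap to arbitrary $\Delta\in U(\Lieg_\CC)^K$ by a commutator argument based on Lemma~\ref{lem:L1}.

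For the Casimir case, I would use the Iwasawa expansion of $\Omega$ in terms of $L_\Liea$, $H_\rho$, and quadratic combinations of orthonormal bases $\{X_\alpha^{(i)}\}$ of each $(\Lieg_\alpha,-B(\cdot,\theta\cdot))$, and then evaluate $r(\Omega)\Phi[v]$ at $a\in A_{\reg}$. The identity $\Ad(a^{-1})X_\alpha^{(i)}=e^{-\alpha(\log a)}X_\alpha^{(i)}$, combined with the $K$-intertwining property of $\Phi$ (which lets us replace the $\Liek$-contributions $r(X_\alpha^{(i)}+\theta X_\alpha^{(i)})\Phi[v]$ by $\Phi[Z_\alpha^{(i)}v]$, where $Z_\alpha^{(i)}:=X_\alpha^{(i)}+\theta X_\alpha^{(i)}$), produces the curved analog of \eqref{eq:Lrad1}: the rational coefficients $2/\alpha(H)$ and $1/(2\alpha(H)^2)$ are replaced by $\coth\tfrac{\alpha}{2}$ and $|\alpha|^2/(4\sinh^2\tfrac{\alpha}{2})$, matching \eqref{eq:ChredLap}. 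For $v\in V^M_\single$, Lemma~\ref{lem:qsp}(i) turns the remaining $\Phi[p^V((Z_\alpha^{(i)})^2 v)]|_A$-terms into reflection parts $(1-s_\alpha)\Phi[v]|_A$, giving exactly $\mathscr T(\gamma(\Omega))\tilde\Gamma_0^V(\Phi)[v]$; for $v\in V^M_\double$, Lemma~\ref{lem:qsp}(ii) keeps each $p^V((Z_\alpha^{(i)})^2 v)$ inside $V^M_\double$, so that it is annihilated by $\Gamma_0^V(\Phi)$ whenever $\Phi\in\Hom_K^\ttt$. This proves parts (i)--(iii) for $\Delta=\Omega$.

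For the bootstrap, assume $\gamma(\Delta)\in S(\Liea_\CC)^W$ is homogeneous of degree $d$. The Cherednik analog of Lemma~\ref{lem:L1}, whose proof uses only the relation $[\mathscr T(L_\Liea),\underline\xi]=2\der(\xi)$ for $\xi\in\Liea$ (itself immediate from \eqref{eq:ChredLap}), gives
\[
\mathscr T(\gamma(\Delta))=\tfrac1{d!}\bigl(\ad\tfrac{\mathscr T(L_\Liea)}{2}\bigr)^{d}\underline{\gamma(\Delta)}.
\]
To propagate \eqref{eq:rad8} from $\Omega$ to $\Delta$, I would induct on $d$: applying the Casimir case to the shifted map $r(D)\circ\Phi\in\Hom_K(V,C^\infty(G/K))$ for $D\in U(\Lieg_\CC)^K$ of smaller degree yields $\gamma_0\circ r(\Omega D)\circ\Phi|_{V^M_\single}=\mathscr T(\gamma(\Omega))\circ\gamma_0\circ r(D)\circ\Phi|_{V^M_\single}$, and unwinding the iterated commutator above gives \eqref{eq:rad8}. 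The same argument restricted to $V^M_\double$ proves the $\Hom_K^\ttt$-stability in (ii); part (iii) is immediate from (i) or (ii) by Remark~\ref{rem:22}(i). The principal obstacle is exactly this bootstrap step: $U(\Lieg_\CC)$ admits no direct analog of the multiplication-by-polynomial operators used in Lemma~\ref{lem:L2}, so the commutator identity cannot be lifted verbatim to the non-commutative side. The resolution is to perform the induction entirely after $\gamma_0$-restriction, where both sides live in $\End_\CC C^\infty(A)$ and Lemma~\ref{lem:L1} takes over, with the Casimir case from stage one providing the base intertwining between $r(\Omega)$ and $\mathscr T(L_\Liea)$.
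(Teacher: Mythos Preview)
Your Casimir calculation is essentially the one the paper carries out (see the Proposition at the end of \S\ref{sec:radI}). The bootstrap to general $\Delta$, however, has a genuine gap: the Cherednik analogue of Lemma~\ref{lem:L1} that you invoke is false. Already for $d=1$ the claimed identity $\tfrac12[\mathscr T(L_\Liea),\underline\xi]=\mathscr T(\xi)$ fails---the constant $-\rho(\xi)$ in $\mathscr T(\xi)$ cannot arise as a commutator, and the term $[\coth\tfrac\alpha2\,\partial(H_\alpha),\underline\xi]=\alpha(\xi)\coth\tfrac\alpha2$ contributes a non-constant multiplication operator not present in $\mathscr T(\xi)$. (Even in the Dunkl case the identity underlying Lemma~\ref{lem:L1} is $\tfrac12[\mathscr D(L_\Liea),\underline\xi]=\mathscr D(\xi)$, not $\partial(\xi)$; what makes the rational case work is the homogeneity of the coefficients $1/\alpha$ and $1/\alpha^2$, which $\coth\tfrac\alpha2$ and $\sinh^{-2}\tfrac\alpha2$ do not share.) Without such a formula your induction on degree at best proves the result for polynomials in $\Omega$, which in rank $\ge2$ is far from all of $U(\Lieg_\CC)^K$; the ``resolution'' you sketch---working after restriction---cannot close this, since on the $A$-side you have no independent access to $\gamma_0\circ r(\Delta)$ except through the formula you are trying to prove.

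The paper's route to general $\Delta$ is entirely different and does not use Lemma~\ref{lem:L1} at all. Working locally at $x\in A_-$, one writes both $r(\Delta)$ and $\mathscr T(\gamma(\Delta))$, acting on the column vector $(\Gamma^V_{0,x}(\Phi)[v_i])_i$, in the form $\partial(\gamma(\Delta)(\cdot-\rho))+E$ with the error $E$ a matrix over $\mathscr M\otimes\partial(S(\Liea_\CC))$, where $\mathscr M$ is the maximal ideal of convergent power series in $\{e^\alpha:\alpha\in\Sigma^+\}$ (Lemmas~\ref{lem:abstRad} and~\ref{lem:abstChered}). Commutativity of $r(\Delta)$ with $r(L_\Lieg)$ and of $\mathscr T(\gamma(\Delta))$ with $\mathscr T(\gamma(L_\Lieg))$, combined with the Casimir case, forces the difference $S$ of the two error matrices to satisfy $[\partial(\gamma(L_\Lieg)(\cdot-\rho)),S]=ST-US$ with $S,T,U$ all having entries in $\mathscr M\otimes\partial(S(\Liea_\CC))$. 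The key Lemma~\ref{lem:Ltrick}---a minimal-weight argument in the $e^\lambda$-expansion---shows any such $S$ must vanish. A block refinement of this argument (splitting $V^M=V^M_\single\oplus V^M_\double$) yields (i), (ii) and (iii) simultaneously.
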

\begin{cor}\label{cor:spec-cor}
Suppose $\lambda\in\Liea_\CC^*$.
For each $V\in\Km$ put
\[
\Hom_K^{\ttt}(V,\mathscr A(G/K,\lambda))
=
\Hom_K(V,\mathscr A(G/K,\lambda))
\cap
\Hom_K^{\ttt}(V,\mathscr C^\infty(G/K)).
\]
Then $\tilde\Gamma^V_0$ 
induces a linear bijection
\begin{equation}\label{eq:spec-cor2}
\tilde\Gamma^V_0:
\Hom_K^\ttt(V, \mathscr A(G/K,\lambda))\simarrow
\Hom_W(V^M_\single, \mathscr A(A, \lambda)).
\end{equation}
In particular, if $V\in\Ksp$ then \eqref{eq:spec-cor} holds. 
\end{cor}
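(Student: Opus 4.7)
The plan is to show that the bijection \eqref{eq:TGammaV0} of Remark \ref{rem:22} (ii) restricts to the asserted bijection between the $\lambda$-eigenspaces. The two ingredients will be the radial part formula of Theorem \ref{thm:radC} and the Harish-Chandra surjection in \eqref{eq:HC-isom}.

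First I would verify that $\tilde\Gamma^V_0$ carries $\Hom_K^\ttt(V,\mathscr A(G/K,\lambda))$ into $\Hom_W(V^M_\single,\mathscr A(A,\lambda))$. If $\Phi$ lies in the source, then for any $\Delta\in U(\Lieg_\CC)^K$ one has $r(\Delta)\circ\Phi=\gamma(\Delta)(\lambda)\,\Phi$; applying \eqref{eq:rad9} (available because $\Phi\in\Hom_K^\ttt$) yields
\[
\gamma(\Delta)(\lambda)\,\Gamma^V_0(\Phi)=\mathscr T(\gamma(\Delta))\circ\Gamma^V_0(\Phi).
\]
Since by \eqref{eq:HC-isom} every $p\in S(\Liea_\CC)^W$ occurs as $\gamma(\Delta)$ for some $\Delta\in U(\Lieg_\CC)^K$, this means $\Gamma^V_0(\Phi)[v]\in\mathscr A(A,\lambda)$ for every $v\in V^M$, and in particular the restriction $\tilde\Gamma^V_0(\Phi)$ takes values in $\mathscr A(A,\lambda)$.

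Injectivity of the restricted map is automatic from the injectivity of \eqref{eq:TGammaV0}. For surjectivity, given $\varphi\in\Hom_W(V^M_\single,\mathscr A(A,\lambda))$ I would let $\Phi\in\Hom_K^\ttt(V,C^\infty(G/K))$ be the unique preimage supplied by \eqref{eq:TGammaV0} and then show $\Phi$ actually takes values in $\mathscr A(G/K,\lambda)$. Fix $\Delta\in U(\Lieg_\CC)^K$ and set $\Psi:=r(\Delta)\circ\Phi-\gamma(\Delta)(\lambda)\,\Phi$. By Theorem \ref{thm:radC} (ii) we have $r(\Delta)\circ\Phi\in\Hom_K^\ttt$, so $\Psi\in\Hom_K^\ttt(V,C^\infty(G/K))$. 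Computing via \eqref{eq:rad8},
\[
\tilde\Gamma^V_0(\Psi)=\mathscr T(\gamma(\Delta))\circ\varphi-\gamma(\Delta)(\lambda)\,\varphi=0,
\]
because $\gamma(\Delta)\in S(\Liea_\CC)^W$ and $\varphi$ lands in $\mathscr A(A,\lambda)$. Injectivity of \eqref{eq:TGammaV0} forces $\Psi=0$, i.e.\ $r(\Delta)\Phi[v]=\gamma(\Delta)(\lambda)\Phi[v]$ for every $v\in V$; as this holds for all $\Delta$, $\Phi\in\Hom_K(V,\mathscr A(G/K,\lambda))$. The ``in particular'' statement is immediate from Remark \ref{rem:22} (i): for $V\in\Ksp$ we have $\Hom_K^\ttt(V,\cdot)=\Hom_K(V,\cdot)$ and $V^M_\single=V^M$.

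No step is genuinely difficult; the only point demanding a little care is the two-sided use of Theorem \ref{thm:radC} (ii), first to transport the $G$-eigenequation to a $\mathscr T$-eigenequation on $C^\infty(A)$, and then to pull a vanishing on $V^M_\single$ back to a $G$-eigencondition on $C^\infty(G/K)$. The fact that the defining operator sets $U(\Lieg_\CC)^K$ and $S(\Liea_\CC)^W$ match up correctly relies crucially on the surjectivity of $\gamma$ in \eqref{eq:HC-isom}.
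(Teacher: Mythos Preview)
Your proof is correct and is exactly the argument the paper has in mind: the paper's one-line proof cites precisely the three ingredients you use---Remark \ref{rem:22} (ii) for the ambient bijection on $C^\infty$, Theorem \ref{thm:radC} for transporting the eigenequations in both directions, and \eqref{eq:HC-isom} for matching the operator sets---and your write-up is simply the unpacking of that reference.
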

\begin{proof}
By Remark \ref{rem:22} (ii), \eqref{eq:HC-isom}, and the above theorem.
\end{proof}

The rest of this section is devoted to the proof of Theorem \ref{thm:radC}.
Let $\mathscr R$ be the algebra of those power series of $\{e^{\alpha};\,\alpha\in\Sigma^+\}$
which absolutely converge on
\[A_-:=\{e^H;\,H\in\Liea\text{ with }\alpha(H)<0\text{ for any }\alpha\in\Sigma^+\}.\]
Each element $c\in\mathscr R$ is uniquely expanded as
\[
c=\sum_{\lambda\in \ZZ_{\ge0}\Sigma^+} c_\lambda e^\lambda\quad\text{with }c_\lambda\in\CC.
\]
Using this expansion we put
\[\spec c=\{\lambda;\,c_\lambda\ne0\}.\]
This is a subset of $\ZZ_{\ge0}\Sigma^+$.
In the argument below, the maximal ideal
\[
\mathscr M:=\{c\in\mathscr R;\, \spec c\not\ni 0\}
\]
 has important roles.

Suppose $x\in A_-$ and put $\bar x=xK\in G/K$.
Let $C^\infty_{wx}$ ($w\in W$) be the space of germs of $C^\infty$ functions at $wx$, that is
\[C^\infty_{wx}=\varinjlim_{\mathscr U;\,wx\in\mathscr U\subset A,\text{ open}}C^\infty(\mathscr U).\]
If we put $C^\infty(Wx)=\bigoplus_{w\in W}C^\infty_{wx}$,
then $W$ naturally acts on $C^\infty(Wx)$.
Since $x$ is a regular point of $A$,
for any $W$-module $U$
\begin{equation}\label{eq:restId}
\Hom_W(U, C^\infty(Wx))\simarrow \Hom_\CC(U, C^\infty_{x})
\end{equation}
by restriction.
Let us think of $\mathscr R\otimes \partial(S(\Liea_\CC))\otimes \End_\CC U^*$
as the tensor product of the ring $\mathscr R\otimes \partial(S(\Liea_\CC))$
of differential operators on $A_-$ with coefficients in $\mathscr R$ and the endomorphism ring $\End_\CC U^*$ for the dual space $U^*$ of $U$.
(In this paper $\otimes$ is taken over $\CC$ unless otherwise specified.)
Then $\mathscr R\otimes \partial(S(\Liea_\CC))\otimes \End_\CC U^*$ acts on
$\varphi\in\Hom_\CC(U, C^\infty_{x})$ by
\[
((D\otimes\tau)\varphi)[u]=D(\varphi[\,\trans\tau(u)])
\quad\text{with }D\in \mathscr R\otimes \partial(S(\Liea_\CC)),
\tau\in \End_\CC U^*, u\in U,
\]
where $\trans\tau \in \End_\CC U$ is the transpose of $\tau$.

Now, if we put
\[
C^\infty({K\bar x}):=\varinjlim_{\mathscr V\supset K\bar x\text{; open}}C^\infty(\mathscr V),
\]
then $\ell(U(\Lieg_\CC))$, $\ell(K)$ and $r\bigl(U(\Lieg_\CC)^K\bigr)$ naturally act on it. 
For any $V\in \Km$
define a localized restriction map 
\begin{align*}
\Gamma^V_{0,Wx}:\,& \Hom_K(V, C^\infty({K\bar x}))
\longrightarrow
\Hom_W(V^M, C^\infty(Wx))\,\bigl(\,\simeq \Hom_\CC(V^M, C^\infty_x)\,\bigr)
\,;\\
&\Phi\longmapsto
\bigl(
\varphi:
V^M\hookrightarrow
V
\xrightarrow{\Phi} C^\infty({K\bar x})
\xrightarrow{\gamma_0} C^\infty(Wx)\ (\text{ or }C^\infty_x\,)\,
\bigr).
\end{align*}
When we consider the target space is $\Hom_\CC(V^M, C^\infty_x)$,
we use the symbol $\Gamma^V_{0,x}$ instead of $\Gamma^V_{0,Wx}$.
\begin{lem}\label{lem:localCh}
$\Gamma^V_{0,x}$ (or equivalently $\Gamma^V_{0,Wx}$) is a bijection.
\end{lem}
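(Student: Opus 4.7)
The plan is to deduce bijectivity of $\Gamma^V_{0,x}$ from a local slice decomposition near the regular orbit $K\bar x$. First I would verify that $x\in A_-$ is a regular element of $A$ and that the stabilizer of $\bar x\in G/K$ under the $K$-action equals $M$. A short argument: if $k\in K\cap xKx^{-1}$, write $k=xk''x^{-1}$ with $k''\in K$; applying $\theta$ (which is the identity on $K$ and inversion on $A$) gives $xk''x^{-1}=x^{-1}k''x$, so $k''$ commutes with $x^2$, which is again regular, whence $k''\in Z_K(x^2)=M$ and therefore $k=xk''x^{-1}=k''\in M$. The slice theorem then produces, for a sufficiently small open neighborhood $U$ of $x$ in $A$, a $K$-equivariant diffeomorphism
\[
K\times_M U\simarrow \mathscr V,\qquad [k,u]\longmapsto k\bar u,
\]
where $\mathscr V$ is a $K$-invariant open neighborhood of $K\bar x$ in $G/K$ and $M$ acts on $K$ by right translation and trivially on $U$ (since $M$ centralizes $A$).

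Next I would translate $K$-equivariance into data supported on the slice. Any $\Phi\in\Hom_K(V, C^\infty(\mathscr V))$ is determined by its restriction to the slice $\{[1,u]:u\in U\}$, namely by the linear map $V\ni v\mapsto\Phi[v]([1,\cdot])\in C^\infty(U)$. Since $[m,u]=[1,u]$ for every $m\in M$, $K$-equivariance forces $\Phi[m^{-1}v]([1,u])=\Phi[v]([1,u])$ for all $m\in M$, so this map factors through the $M$-projection $p^V:V\twoheadrightarrow V^M$, yielding $\tilde\Phi:V^M\to C^\infty(U)$. Conversely, any $\tilde\Phi:V^M\to C^\infty(U)$ extends to a unique $K$-equivariant map by the formula
\[
\Phi[v]([k,u]):=\tilde\Phi\bigl(p^V(k^{-1}v)\bigr)(u);
\]
well-definedness on equivalence classes follows from the $M$-invariance of $p^V$, and $K$-equivariance is immediate from the definition. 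Passing to the direct limit as $U$ shrinks to $\{x\}$ (and $\mathscr V$ to $K\bar x$) produces a bijection $\Hom_K(V, C^\infty(K\bar x))\simarrow\Hom_\CC(V^M, C^\infty_x)$.

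Finally, I would check that this bijection is exactly $\Gamma^V_{0,x}$: under the slice identification the inclusion $A\hookrightarrow G/K$ near $x$ corresponds to $u\mapsto[1,u]$, so for $v\in V^M$ (where $p^V(v)=v$) the germ of the restriction $\gamma_0(\Phi[v])$ at $x$ equals $\tilde\Phi(v)$, which is precisely $\Gamma^V_{0,x}(\Phi)[v]$. The equivalence with $\Gamma^V_{0,Wx}$ is then \eqref{eq:restId}. The main obstacle is the slice decomposition itself---pinning down that the isotropy at $\bar x$ is $M$ and that $A$ is genuinely transverse to $K\bar x$, both of which rest on the regularity of $x$; once the tubular neighborhood $K\times_M U$ is in hand, the rest of the argument is essentially the tautology that $K$-equivariant sheaves on $K\times_M U$ correspond to $M$-equivariant sheaves on $U$, specialized via $p^V$ to our setting.
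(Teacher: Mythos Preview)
Your proposal is correct and follows essentially the same approach as the paper: both use the local diffeomorphism $K\mathscr U\simeq K/M\times\mathscr U$ (which is your slice $K\times_M U$, since $M$ acts trivially on $U\subset A$) together with the formula $\Phi[v](ky)=\varphi[p^V(k^{-1}v)](y)$ to pass between $K$-equivariant data and $V^M$-valued germs. The paper simply asserts the slice decomposition without spelling out the isotropy computation you provide.
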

\begin{proof}
Let $p^V:V\to V^M$ be as in Theorem \ref{thm:Ch} (ii).
For any open neighbourhood $\mathscr V\subset G/K$ of $K\bar x$,
we can take a sufficiently small open neighbourhood
$\mathscr U\subset A_-$ of $x$ so that
$K\mathscr U\simeq K/M\times \mathscr U\subset \mathscr V$.
If $\Phi\in \Hom_K(V, C^\infty(K\mathscr U))$
then for $v\in V$ and $(kM, y)\in K/M\times \mathscr U$ we have
\[
\Phi[v](ky)=\Phi[k^{-1}v](y)=\Phi[p^V(k^{-1}v)](y)=\Gamma^V_{0,\mathscr U}(\Phi)[p^V(k^{-1}v)](y).
\]
This shows the injectivity of $\Gamma^V_{0,x}$.
Conversely, for any $\varphi\in \Hom_\CC(V^M, C^\infty(\mathscr U))$,
\[
\Phi[v](ky):=\varphi[p^V(k^{-1}v)](y)
\qquad\bigl(v\in V,(kM, y)\in K/M\times\mathscr U \bigr)
\]
defines its lift. This shows the surjectivity.
\end{proof}
\begin{lem}\label{lem:abstRad}
Suppose $V\in \Km$. For any $\Delta\in U(\Lieg_\CC)^K$
there exists a unique $E \in \mathscr M\otimes \partial(S(\Liea_\CC))\otimes \End_\CC (V^M)^*$
such that
\[
\Gamma^V_{0,x}\bigl(r(\Delta)\circ\Phi\bigr)
=
\bigl(\partial(\gamma(\Delta)(\cdot-\rho))+E\bigr)\,\Gamma^V_{0,x}(\Phi)
\quad\text{for any }\Phi\in\Hom_K(V, C^\infty({K\bar x})).
\]
\end{lem}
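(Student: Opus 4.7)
The proof has two parts: uniqueness and existence of $E$.

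Uniqueness is immediate from Lemma \ref{lem:localCh}. Since $\Gamma^V_{0,x}$ is bijective onto $\Hom_\CC(V^M, C^\infty_x)$, any two elements of $\mathscr R \otimes \partial(S(\Liea_\CC)) \otimes \End_\CC(V^M)^*$ satisfying the required identity coincide as operators. The direct-sum decomposition $\mathscr R = \CC\cdot 1 \oplus \mathscr M$ then uniquely splits such an operator into its constant-coefficient part and its $\mathscr M$-valued complement, so once the constant part is pinned down as $\partial(\gamma(\Delta)(\cdot-\rho))$, the remainder $E$ is forced.

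For existence, my plan is to compute the radial part directly using the Iwasawa PBW isomorphism $U(\Lieg_\CC) \simeq U(\Lien_\CC) \otimes U(\Liea_\CC) \otimes U(\Liek_\CC)$. Writing $\Delta = \sum_i c_i\, N_i A_i Q_i$, right-$K$-invariance of $\Phi[v]$ kills the $Q_i \ne 1$ terms via $r(Q_i)\Phi[v] = \epsilon(Q_i)\Phi[v]$, where $\epsilon$ is the augmentation of $U(\Liek_\CC)$. For each PBW monomial $N \in U(\Lien_\CC)$ of $\Liea$-weight $w(N) = \sum \alpha_j \in \ZZ_{\ge 0}\Sigma^+$ (vanishing iff $N = 1$), the identity $a\exp Z = \exp(\Ad(a)Z)\,a$ for $Z\in\Lien_\CC$, combined with $\Ad(a)|_{\Lieg_\alpha} = e^\alpha(a)\id$, rewrites $r(N)$ at $a \in A$ as $e^{w(N)}(a)$ times a left-translation differential operator that commutes with $r(A)$. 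Since $r(A)$ for $A \in U(\Liea_\CC)$ acts on $\gamma_0(\Phi[v])$ as $\partial(A)$, the $N_i = 1$ summands contribute exactly $\partial(p)\gamma_0(\Phi[v])(a)$, where $p \in S(\Liea_\CC)$ is the $U(\Liea_\CC)$-component of $\Delta$ in the splitting of \eqref{eq:HC-homo}; by the paper's shift convention $p = \gamma(\Delta)(\cdot-\rho)$, supplying the required leading part.

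The remaining $N_i \ne 1$ summands each carry a coefficient $e^{w(N_i)}(a) \in \mathscr M$ and must assemble into $E$. The main obstacle is to show that this assembly genuinely lies in $\mathscr M \otimes \partial(S(\Liea_\CC)) \otimes \End_\CC(V^M)^*$: for each $v \in V^M$, the residual expression must be rewritten as a finite sum of $\mathscr M$-valued multiples of $\partial(p')\gamma_0(\Phi[v'])(a)$ with $p' \in S(\Liea_\CC)$ and $v' \in V^M$. To handle this I would use the commutation $\ell(k)\circ(\text{left-differentiation})\circ\ell(k)^{-1} = (\text{left-differentiation by }\Ad(k))$ to reabsorb the $\Liek_\CC$-directional derivatives of $\Phi$ into the $K$-action on $V$ via the $K$-equivariance of $\Phi$; the residual $\Lies$-directional derivatives at $a$ can then be transferred to $\Liea$-derivatives of $\gamma_0(\Phi[v'])$ via the local product description $K\bar x \simeq K/M \times (\text{neighbourhood of }x)$ that underlies Lemma \ref{lem:localCh}. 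Real-analyticity of $\Ad$ in Iwasawa coordinates finally guarantees that the accumulated coefficients lie in the power-series ring $\mathscr R$ rather than in a larger space of smooth germs.
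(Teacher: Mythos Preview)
Your overall strategy is sound, and you correctly isolate the leading term $\partial(\gamma(\Delta)(\cdot-\rho))$. But the final paragraph contains a genuine confusion that, as written, does not work.

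After pulling out the factor $e^{w(N)}(a)\in\mathscr M$, you are left with left-derivatives of $\Phi[v]$ by elements of $U(\Lien_\CC)$ at the point $a$. You then split each root vector $X_\alpha$ as $\tfrac12(X_\alpha+\theta X_\alpha)+\tfrac12(X_\alpha-\theta X_\alpha)$ and claim the $\Liek_\CC$-half is reabsorbed into the $K$-action on $V$ while the $\Lies_\CC$-half is ``transferred to $\Liea$-derivatives via the local product description.'' This last claim is wrong. In the chart $K/M\times U\to K\bar x$, $(kM,a')\mapsto ka'K$, the tangent directions at $a$ are $\Liek/\Liem$ (coming from left $K$-translation) and $\Liea$; the $\Lies$-vector $X_\alpha-\theta X_\alpha$ lies in \emph{neither} summand. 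A direct computation shows that on $C^\infty(G/K)$ one has
\[
L_{X_\alpha-\theta X_\alpha}\bigl|_a \;=\; -\coth\alpha(\log a)\, L_{X_\alpha+\theta X_\alpha}\bigl|_a,
\]
so the non-$\Liea$ part of $\Lies$ transfers back to a $\Liek$-direction with an $\mathscr R$-coefficient (not in $\mathscr M$), which must again be fed into the $V$-action. No $\Liea$-derivative is produced at this step; the only $\partial(S(\Liea_\CC))$-contributions come from the explicit $A_i$ factors in your PBW expansion. Your recursion is therefore: $\Lien$-directions $\to$ $\Liek$-directions (with $\mathscr R$-coefficients) $\to$ action on $V^M$, not $\Lies\to\Liea$ as you wrote. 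You also do not address that iterating this for higher-order $N$ means each subsequent left-derivative differentiates the accumulated $\mathscr R$-coefficients, which must be tracked.

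The paper avoids this detour. Rather than first converting $r(X_\alpha)$ to $e^{\alpha}L_{X_\alpha}$ and then decomposing $L_{X_\alpha}$ via $\Liek\oplus\Lies$, it uses the single identity (valid for $y\in A_-$)
\[
X_\alpha \;=\; \frac{e^{\alpha}}{1-e^{2\alpha}}\,\Ad(y^{-1})(X_\alpha+\theta X_\alpha)\;-\;\frac{e^{2\alpha}}{1-e^{2\alpha}}\,(X_\alpha+\theta X_\alpha),
\]
which expresses $X_\alpha$ directly as (coefficient in $\mathscr M$) times an element of $\Ad(y^{-1})\Liek_\CC$, modulo $\Liek_\CC$. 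Since $r(\Ad(y^{-1})D')$ at $y$ equals $\ell({}^{\mathrm t}D')$ and $r(\Liek_\CC)$ kills $C^\infty(G/K)$, an induction on the order of $\Delta$ yields
\[
\Delta\equiv\gamma(\Delta)(\cdot-\rho)+\sum_i c_i(y)\,\Ad(y^{-1})(D'_i)\,D''_i\pmod{U(\Lieg_\CC)\Liek_\CC}
\]
with $c_i\in\mathscr M$, $D'_i\in U(\Liek_\CC)$, $D''_i\in S(\Liea_\CC)$. The desired $E$ is then read off as $\sum_i c_i\otimes\partial(D''_i)\otimes {}^{\mathrm t}(p^V\circ\pi_V({}^{\mathrm t}D'_i)|_{V^M})$. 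With your $\Lies\to\Liek$ step corrected, your route and the paper's are equivalent (indeed $e^\alpha\cdot\tfrac12(1-\coth\alpha)=\frac{e^\alpha}{1-e^{2\alpha}}$), but the paper's is one step instead of two and makes the induction transparent.
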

\begin{rem}\label{rem:matform}
{\normalfont (i)}
$\mathscr M\otimes \partial(S(\Liea_\CC))$ is an ideal of $\mathscr R\otimes \partial(S(\Liea_\CC))$.

\noindent
{\normalfont (ii)}
$\gamma(\Delta)(\cdot-\rho)$ is nothing but the second summand
of $\Delta$ in the direct sum decomposition
$U(\Lieg_\CC)=\bigl(\Lien_\CC U(\Lieg_\CC)+U(\Lieg_\CC)\Liek_\CC
\bigr)\oplus U(\Liea_\CC)$.

\noindent
{\normalfont (iii)}
Let $\{v_1,\ldots,v_{m'}\}$ and $\{v_{m'+1},\ldots,v_m\}$ be bases of $V^M_\single$
and $V^M_\double$ respectively.
Then $\{v_1,\ldots,v_{m}\}$ is a basis of $V^M$.
Let $\{v^*_1,\ldots,v^*_{m}\}\subset (V^M)^*$ be the dual basis of $\{v_1,\ldots,v_{m}\}$.
With repect to these bases, we can express any element of $\mathscr R\otimes \partial(S(\Liea_\CC))\otimes \End_\CC (V^M)^*$ in a matrix form.
More precisely, the correspondence
\[
\sum_{i,j}D_{ij}\otimes(v_i^*\otimes v_j)\longmapsto (D_{ij})
\]
gives an algebra isomorphism $\mathscr R\otimes \partial(S(\Liea_\CC))\otimes \End_\CC (V^M)^*\simarrow \Mat(m, m;\mathscr R\otimes \partial(S(\Liea_\CC)))$.
Moreover, if we identify $\varphi \in \Hom_\CC(V^M, C^\infty_x)$
with a column vector $\trans(\varphi[v_1],\ldots,\varphi[v_m])\in (C^\infty_x)^m$,
then the action of $\mathscr R\otimes \partial(S(\Liea_\CC))\otimes \End_\CC (V^M)^*$
reduces to the left multiplication.
In the proof of Theorem \ref{thm:radC}
we use the following matrix expression of $E$:
\begin{equation*}
E=\begin{pmatrix}
E_\single & P \\
Q & E_\double
\end{pmatrix}.
\end{equation*}
Here the matrix is divided into four blocks according to the 
division of the basis
\[\{v_1,\ldots,v_{m}\}=\{v_1,\ldots,v_{m'}\} \sqcup \{v_{m'+1},\ldots,v_m\}.\]
\end{rem}
\begin{proof}[Proof of {\normalfont Lemma~\ref{lem:abstRad}}]
Suppose $X_\alpha\in\Lieg_\alpha$ ($\alpha\in\Sigma^+$).
For any $y=e^H \in A_-$
we have
\[\begin{aligned}
\Ad(y^{-1})(X_\alpha+\theta X_\alpha)
&=e^{-\alpha(H)}X_\alpha + e^{\alpha(H)}\theta X_\alpha\\
&=(e^{-\alpha(H)}-e^{\alpha(H)})X_\alpha + e^{\alpha(H)}(X_\alpha+\theta X_\alpha)\end{aligned}\]
and therefore
\[
X_\alpha=\frac{e^{\alpha(H)}}{1-e^{2\alpha(H)}}\Ad(y^{-1})(X_\alpha+\theta X_\alpha)
-\frac{e^{2\alpha(H)}}{1-e^{2\alpha(H)}}(X_\alpha+\theta X_\alpha).
\]
Hence if $D\in U(\Lieg_\CC)$ is given,
we can take $c_i\in\mathscr M$, $D'_i\in U(\Liek_\CC)$,
and $D''_i\in S(\Liea_\CC)$ ($i=1,\ldots,q$) so that it holds that
\begin{equation}\label{eq:radialmod}
D\equiv\gamma(D)(\cdot-\rho)
+\sum_{i=1}^{q}c_i(y)\Ad(y^{-1})(D'_i)D''_i
\pmod{U(\Lieg_\CC)\Liek_\CC}
\end{equation}
for any $y\in A_-$.
(This is shown by induction on the order of $D$.
A more detailed argument can be found in the proof of \cite[Ch.\,II, Proposition~5.23]{Hel4}.)
Applying \eqref{eq:radialmod} to $D=\Delta$,
we have for any $\Phi\in\Hom_K(V, C^\infty({K\bar x}))$ and $v\in V^M$
\[
\gamma_{0,x}\bigl(r(\Delta)\Phi[v]\bigr)
=
\partial(\gamma(\Delta)(\cdot-\rho))\gamma_{0,x}\bigl(\Phi[v]\bigr)
+\sum_{i=1}^{q}c_i\,\partial(D''_i)\,\gamma_{0,x}\bigl(\ell(\trans\! D'_i)\Phi[v]\bigr),
\]
where $\gamma_{0,x}$ is the restriction map $C^\infty({K\bar x})\to C^\infty_x$
and $\trans\cdot$ is the anti-automorphism of $U(\Lieg_\CC)$
defined by $\trans X=-X$ for $X\in\Lieg_\CC$.
Let $\pi_V$ be the $U(\Liek_\CC)$-action on $V$.
Since $\gamma_{0,x}\circ\Phi=\gamma_{0,x}\circ\Phi\circ p^V$, we have for $i=1,\ldots,q$
\[
\gamma_{0,x}\bigl(\ell(\trans\! D'_i)\Phi[v]\bigr)
=
\gamma_{0,x}\bigl(\Phi\bigl[\pi_V(\trans\! D'_i)v\bigr]\bigr)
=
\gamma_{0,x}\bigl(\Phi\bigl[p^V\!\circ\pi_V(\trans\! D'_i)v\bigr]\bigr).
\]
Hence we can take
\[
E=\sum_{i=1}^{q}c_i\otimes\partial(D''_i)\otimes {}^{\mathrm t}\bigl(p^V\!\circ\pi_V(\trans\! D'_i)\bigr|_{V^M}\bigr)
\]
in the lemma.
The uniqueness is clear from the surjectivity of $\Gamma^V_{0,x}$.
\end{proof}
Let $\tilde{\mathscr R}$ be the subalgebra of $\mathscr R$ generated by
$\frac1{1-e^{-2\alpha}}\in\mathscr M$ ($\alpha\in\Sigma^+$).
If we identify $\tilde{\mathscr R}$ with a subalgebra of $\End_\CC C^\infty(Wx)$,
then it clearly has the following properties:
\[\left\{\begin{aligned}
&w\,\tilde{\mathscr R}\,w^{-1}=\tilde{\mathscr R}\quad\text{for }w\in W,\\
&\bigl[\partial(\xi), \tilde{\mathscr R}\bigr]\subset \tilde{\mathscr R}
\quad\text{for }\xi\in \Liea.
\end{aligned}\right.
\]
Hence it follows from Definition~\ref{defn:Chered} that
\[
\mathscr T(D)-\partial(D(\cdot-\rho))
\in (\tilde{\mathscr R}\cap\mathscr M)\,\partial(S(\Liea_\CC))\,W
\quad\text{for }D\in S(\Liea_\CC).
\]
This implies the next lemma, which can be considered as the Cherednik operator version of Lemma~\ref{lem:abstRad}.
\begin{lem}\label{lem:abstChered}
Suppose $U$ is a $W$-module.
For any $\Delta\in S(\Liea_\CC)^W$, 
the action of $\mathscr T(\Delta)$ on $C^\infty(Wx)$ induces 
its action on
$\Hom_W(U,C^\infty(Wx))\simeq\Hom_\CC(U,C^\infty_x)$.
On this action there exists a unique $F\in \mathscr M\otimes \partial(S(\Liea_\CC))\otimes \End_\CC U^*$
such that
\[
\mathscr T(\Delta)\varphi
=
\bigl(\partial(\Delta(\cdot-\rho))+F\bigr)\varphi
\quad\text{for any }\varphi\in\Hom_\CC(U, C^\infty_x).
\]
\end{lem}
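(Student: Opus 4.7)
The plan is to parallel the proof of Lemma~\ref{lem:abstRad}, using the decomposition of $\mathscr T(\Delta)$ displayed in the paragraph preceding the lemma in place of the congruence~\eqref{eq:radialmod}. First I would record why $\mathscr T(\Delta)$ actually acts on $\Hom_W(U,C^\infty(Wx))$: since $S(\Liea_\CC)^W$ is the center of $\mathbf H$ and $\mathscr T : \mathbf H \to \End_\CC C^\infty(A)$ is an algebra homomorphism (Proposition~\ref{prop:Cherednik}(ii) applied to $\mathbf H=\mathbf H_{\mathbf m_1}$), $\mathscr T(\Delta)$ commutes with each $w \in W$ acting on $C^\infty(A)$, hence preserves the $W$-equivariant subspace. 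Under the identification~\eqref{eq:restId}, this descends to an endomorphism of $\Hom_\CC(U, C^\infty_x)$.

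The main step is to expand, using the displayed inclusion just before the lemma,
\[
\mathscr T(\Delta) = \partial\bigl(\Delta(\cdot-\rho)\bigr) + \sum_{i=1}^{q} c_i\,\partial(p_i)\,w_i
\]
in $\End_\CC C^\infty(A)$, as a finite sum with $c_i \in \tilde{\mathscr R}\cap\mathscr M$, $p_i \in S(\Liea_\CC)$, $w_i \in W$. A class $\varphi \in \Hom_W(U,C^\infty(Wx))$ is determined by its germ $\psi : u \mapsto \varphi[u]|_x \in C^\infty_x$. The operator $w_i$ on $C^\infty(Wx)$ shuffles components by pulling the value at $x$ back from the value at $w_i^{-1}x$, and the $W$-equivariance $\varphi[w_i u] = w_i\cdot\varphi[u]$ converts this into $(w_i\varphi[u])|_x = \psi(w_i u)$. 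Reading off the coefficient at $x$ gives
\[
(\mathscr T(\Delta)\varphi)[u]|_x = \partial\bigl(\Delta(\cdot-\rho)\bigr)\psi(u) + \sum_i c_i\,\partial(p_i)\,\psi(w_i u),
\]
so the conclusion holds with $F = \sum_{i=1}^q c_i\otimes\partial(p_i)\otimes \trans w_i$. Since each $c_i \in \mathscr M$, this $F$ lies in $\mathscr M \otimes \partial(S(\Liea_\CC)) \otimes \End_\CC U^*$ as required, and uniqueness follows exactly as in Lemma~\ref{lem:abstRad} from the surjectivity in~\eqref{eq:restId}.

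The only delicate point is the bookkeeping in the previous paragraph: one must check that the action of $w_i$ on $C^\infty(Wx)$, once transported through the $W$-equivariance identification, matches the transpose $\trans w_i \in \End_\CC U^*$ in the convention used for the action of $\mathscr R\otimes\partial(S(\Liea_\CC))\otimes\End_\CC U^*$ on $\Hom_\CC(U,C^\infty_x)$. This is the same subtlety encountered at the end of Lemma~\ref{lem:abstRad}, and once it is settled the present lemma is a direct substitution of the known expansion of $\mathscr T(\Delta)$.
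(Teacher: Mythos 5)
Your proposal is correct and takes essentially the same approach as the paper, which sketches this lemma only by stating the expansion $\mathscr T(D)-\partial(D(\cdot-\rho)) \in (\tilde{\mathscr R}\cap\mathscr M)\,\partial(S(\Liea_\CC))\,W$ in the preceding paragraph and asserting that the lemma follows. You supply exactly the bookkeeping the paper leaves implicit — the centrality argument, the reading-off of the germ at $x$ via $W$-equivariance, and the identification of $\trans w_i$ with the correct transpose in the $\End_\CC U^*$ convention — and this checks out.
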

Suppose $V\in \Km$ and $\Delta\in U(\Lieg_\CC)^K$.
Let $E_\Delta$ be the $E$ of Lemma \ref{lem:abstRad} and
\begin{equation*}
E_\Delta=\begin{pmatrix}
E_\single & P \\
Q & E_\double
\end{pmatrix}
\end{equation*}
its matrix expression by Remark \ref{rem:matform} (iii).
Moreover let $F_{\gamma(\Delta)}$ be the $F$ of Lemma \ref{lem:abstChered}
for the $W$-module $U=V^M_\single$ and $\gamma(\Delta)\in S(\Liea_\CC)^W$.
This can also be expressed in the matrix form
with respect to the basis $\{v_1,\ldots,v_{m'}\}$.
By comparing Lemma~\ref{lem:abstRad} with Lemma~\ref{lem:abstChered}
we can see:
Theorem~\ref{thm:radC} (i) asserts that $E_\single=F_{\gamma(\Delta)}$ and $P=0$;
Theorem~\ref{thm:radC} (ii) is equivalent to $Q=0$.
(Note that (iii) in the theorem is a corollary of (i).)

It is very interesting that if we confirm these things by some concrete calculations
for one special case where $\Delta=L_\Lieg$ (the Casimir element of $\Lieg$),
then all the other cases follow from it.
Let us see this mechanism first.

Assume it is proved that we can take
\begin{equation*}
E_{L_\Lieg}=\begin{pmatrix}
F_{\gamma(L_\Lieg)} & 0\\
0 & L_\double
\end{pmatrix}
\end{equation*}
as the $E$ of Lemma~\ref{lem:abstRad} for $\Delta=L_\Lieg$.
On the one hand, the commutativity
\[
\bigl[r(L_\Lieg),\,r(\Delta)\bigr]=0
\]
implies
\[
\Biggl[
\partial(\gamma(L_\Lieg)(\cdot-\rho))+\begin{pmatrix}
F_{\gamma(L_\Lieg)} & 0\\
0 & L_\double
\end{pmatrix},\,
\partial(\gamma(\Delta)(\cdot-\rho))+\begin{pmatrix}
E_\single & P \\
Q & E_\double
\end{pmatrix}
\Biggr]=0.
\]
This reduces to
\begin{gather}
\bigl[
\partial(\gamma(L_\Lieg)(\cdot-\rho))+F_{\gamma(L_\Lieg)},\,
\partial(\gamma(\Delta)(\cdot-\rho))+E_\single
\bigr]=0,\label{eq:commu1}\\
\bigl(\partial(\gamma(L_\Lieg)(\cdot-\rho))+F_{\gamma(L_\Lieg)}\bigr)\,P
-P\,\bigl(\partial(\gamma(L_\Lieg)(\cdot-\rho))+L_\double \bigr)
=0,\label{eq:commu21}\\
\bigl(\partial(\gamma(L_\Lieg)(\cdot-\rho))+L_\double \bigr)\,Q
-
Q\,\bigl(\partial(\gamma(L_\Lieg)(\cdot-\rho))+F_{\gamma(L_\Lieg)}\bigr)
=0,\label{eq:commu2}\\
\bigl[
\partial(\gamma(L_\Lieg)(\cdot-\rho))+L_\double,\,
\partial(\gamma(\Delta)(\cdot-\rho))+E_\double
\bigr]=0.\notag%\label{eq:commu22}
\end{gather}
On the other hand, the commutativity
\[
\bigl[
\mathscr T(\gamma(L_\Lieg)),\,
\mathscr T(\gamma(\Delta))
\bigr]=0\]
implies
\begin{equation}\label{eq:commu3}
\bigl[
\partial(\gamma(L_\Lieg)(\cdot-\rho))+F_{\gamma(L_\Lieg)},\,
\partial(\gamma(\Delta)(\cdot-\rho))+F_{\gamma(\Delta)}
\bigr]=0.
\end{equation}
From \eqref{eq:commu1} and \eqref{eq:commu3}
we have
\begin{equation*}
\bigl[
\partial(\gamma(L_\Lieg)(\cdot-\rho)),\,
E_\single-F_{\gamma(\Delta)}
\bigr]=
\bigl[
E_\single-F_{\gamma(\Delta)},\,
F_{\gamma(L_\Lieg)}
\bigr],
\end{equation*}
from \eqref{eq:commu21}
\begin{equation*}
\bigl[
\partial(\gamma(L_\Lieg)(\cdot-\rho)),\,
P
\bigr]
=
PL_\double-
F_{\gamma(L_\Lieg)}P,
\end{equation*}
and from \eqref{eq:commu2}
\begin{equation*}
\bigl[
\partial(\gamma(L_\Lieg)(\cdot-\rho)),\,
Q
\bigr]=
QF_{\gamma(L_\Lieg)}-L_\double Q.
\end{equation*}
Now applying the next lemma to these relations,
we can get $E_\single-F_{\gamma(\Delta)}=0$, $P=0$
and $Q=0$.

\begin{lem}\label{lem:Ltrick}
Suppose matrices $S\in\Mat(k,\ell;\mathscr M\otimes \partial(S(\Liea_\CC)))$,
$T\in\Mat(\ell,\ell;\mathscr M\otimes \partial(S(\Liea_\CC)))$ and
$U\in\Mat(k,k;\mathscr M\otimes \partial(S(\Liea_\CC)))$ satisfy
\begin{equation}\label{eq:matrel}
\bigl[
\partial(\gamma(L_\Lieg)(\cdot-\rho)),\,
S
\bigr]=
ST-US.
\end{equation}
Then $S=0$.
\end{lem}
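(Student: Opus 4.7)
The plan is a leading-term argument based on the power-series expansion in $e^\lambda$ ($\lambda \in \ZZ_{\ge 0}\Sigma^+$). I begin with a standard fact about the Casimir: $\gamma(L_\Lieg)(\cdot-\rho) \in S(\Liea_\CC)$ is a polynomial of degree two whose quadratic part is $L_\Liea$, so
\[
P := \partial(\gamma(L_\Lieg)(\cdot-\rho)) = \partial(L_\Liea) + \partial(H) + c
\]
for some $H \in \Liea_\CC$ and some $c \in \CC$. From the basic commutation $[\partial(\xi), e^\lambda] = \lambda(\xi) e^\lambda$ (as operators on $C^\infty(A)$) a short computation gives
\[
[P, e^\lambda] = e^\lambda\bigl(2\partial(H_\lambda) + B(H_\lambda,H_\lambda) + \lambda(H)\bigr) = e^\lambda\, \partial(Q_\lambda),
\]
where $Q_\lambda := 2H_\lambda + B(H_\lambda,H_\lambda) + \lambda(H) \in S(\Liea_\CC)$. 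Its degree-one part $2H_\lambda$ is nonzero whenever $\lambda \neq 0$, so $Q_\lambda$ is a nonzero element of the polynomial ring $S(\Liea_\CC)$.

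Next, I expand
\[
S = \sum_\lambda e^\lambda S_\lambda, \qquad T = \sum_\mu e^\mu T_\mu, \qquad U = \sum_\nu e^\nu U_\nu,
\]
where each sum ranges over $\ZZ_{\ge 0}\Sigma^+\setminus\{0\}$ (the constant terms vanish since $S,T,U$ lie in $\mathscr M \otimes \partial(S(\Liea_\CC))$), and each coefficient $S_\lambda,T_\mu,U_\nu$ is a matrix with entries in the commutative polynomial ring $\partial(S(\Liea_\CC))$. An induction on the order of differential operators, based on $\partial(\xi) e^\mu = e^\mu(\partial(\xi) + \mu(\xi))$, shows that for any constant-coefficient operator $D$ we have $D \cdot e^\mu = e^\mu \cdot \widetilde D$ with $\widetilde D$ again constant-coefficient. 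Consequently every monomial $e^\nu$ appearing in $ST$ or in $US$ has $\nu = \lambda + \mu$ for some $\lambda$ in the support of $S$ and some nonzero $\mu$ in the support of $T$ (respectively $U$); in particular $\nu$ is strictly greater, in the partial order on $\ZZ_{\ge 0}\Sigma^+$, than every element of the support of $S$.

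To conclude, suppose for contradiction that $S \neq 0$ and choose $\lambda_0$ to be a minimal element of the support of $S$ (minimal elements exist because any antichain below a given $\lambda_0$ in $\ZZ_{\ge 0}\Sigma^+$ is finite). Since $P$ and $S_\lambda$ are both constant-coefficient, $[P, S_\lambda] = 0$, and the first paragraph gives that the $e^{\lambda_0}$-coefficient of $[P,S]$ equals $\partial(Q_{\lambda_0})\, S_{\lambda_0}$, while by the previous paragraph the $e^{\lambda_0}$-coefficients of $ST$ and of $US$ both vanish. The hypothesis \eqref{eq:matrel} therefore forces
\[
\partial(Q_{\lambda_0})\, S_{\lambda_0} = 0 \quad\text{in } \Mat(k,\ell;\partial(S(\Liea_\CC))).
\]
Since $\lambda_0 \neq 0$, $Q_{\lambda_0}$ is a nonzero element of the integral domain $S(\Liea_\CC)$ and hence not a zero divisor, so $S_{\lambda_0} = 0$, contradicting the minimality of $\lambda_0$; hence $S = 0$. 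The one genuinely delicate point is justifying that moving each $e^\mu$ past the constant-coefficient matrices on the left preserves the exponent $\lambda+\mu$ and never produces lower-exponent terms, which is precisely what makes the $e^{\lambda_0}$-coefficient of the right-hand side vanish cleanly.
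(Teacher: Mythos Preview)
Your proof is correct and follows essentially the same approach as the paper: expand in the $e^\lambda$'s, compute the commutator $[\partial(\gamma(L_\Lieg)(\cdot-\rho)),e^\lambda]$ and observe it is nonzero for $\lambda\ne0$, then take a minimal exponent $\lambda_0$ in the support of $S$ and compare coefficients. One small slip in your middle paragraph: it is not true that each exponent $\nu$ in $ST$ or $US$ is strictly greater than \emph{every} element of the support of $S$ (only greater than \emph{some} such element, namely the $\lambda$ in $\nu=\lambda+\mu$); fortunately your final paragraph uses only this weaker fact, which is exactly what is needed to rule out $\nu=\lambda_0$.
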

\begin{proof}
In general, any $S\in \Mat(k,\ell;\mathscr R\otimes \partial(S(\Liea_\CC)))$
is uniquely expanded as
\[
S=\sum_{\lambda\in \ZZ_{\ge0}\Sigma^+} e^\lambda S_\lambda \quad\text{with }S_\lambda\in\Mat(k,\ell;\partial(S(\Liea_\CC)))
\]
in the obvious way. Using this expansion we define
\[
\spec S=\{\lambda \in \ZZ_{\ge0}\Sigma^+;\,S_\lambda\ne0\}.
\]
The condition $S\in \Mat(k,\ell;\mathscr M\otimes \partial(S(\Liea_\CC)))$ is equivalent to $0\notin\spec S$.
This is also equivalent to
\[
\spec\bigl[\partial(\gamma(L_\Lieg)(\cdot-\rho)),\,S\bigr] = \spec S,
\]
because a direct calculation shows 
\[
\bigl[
\partial(\gamma(L_\Lieg)(\cdot-\rho)),\,
e^\lambda
\bigr]
=
\bigl[
\partial(L_\Liea-2H_\rho),\,
e^\lambda
\bigr]
=
e^\lambda\bigl(2\partial(H_\lambda)+B(H_\lambda,H_\lambda-2H_\rho)\bigr)
\]
and this is non-zero unless $\lambda=0$.

Now suppose $S,T$ and $U$ are as in the lemma and
assume $S\ne0$. 
Then there exists
a minimal weight $\lambda_0\ne0$ in $\spec S$
with respect to the partial order $\preceq$ in the root lattice defined by
\[
\lambda \preceq \mu \Longleftrightarrow \mu-\lambda \in \ZZ_{\ge0}\Sigma^+.
\]
The above argument shows
the `$\spec$' of the left-hand side of \eqref{eq:matrel} must contain $\lambda_0$.
But
it is easy to see that the `$\spec$' of the right-hand side of \eqref{eq:matrel}
does not contain $\lambda_0$, a contradiction.
\end{proof}

To make the above argument effective,
we must prove Theorem~\ref{thm:radC} for $\Delta=L_\Lieg$.
It is enough to show the following:
\begin{prop}
Let $x\in A_-$ and put $\bar x=xK\in G/K$ as before.
Suppose $V\in \Km$ and
$\Phi\in\Hom_K(V,C^\infty(K\bar x))$.
Then $r(L_\Lieg)\Phi[v]=\ell(L_\Lieg)\Phi[v]$.
If $\gamma_{0,Wx}$ stands for the restriction map $C^\infty({K\bar x})\to C^\infty(Wx)$,
then for any $v\in V^M_\single$ it holds that
\begin{equation*}
\gamma_{0,Wx}\bigl(\ell(L_\Lieg)\Phi[v]\bigr)=
\mathscr T\bigl(\gamma(L_\Lieg)\bigr)\gamma_{0,Wx}(\Phi[v]).
\end{equation*}
Moreover, if
\[
\Phi\in \Hom_K^\ttt(V,C^\infty(K\bar x))
:=\bigl\{
\Phi\in \Hom_K(V,C^\infty(K\bar x));\,
\Gamma^V_{0,Wx}(\Phi)\bigl[ V^M_\double\bigr]=\{0\}
\bigr\},
\]
then $\ell(L_\Lieg)\circ\Phi\in \Hom_K^\ttt(V,C^\infty(K\bar x))$. 
\end{prop}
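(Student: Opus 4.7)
My plan mirrors the proof of Lemma \ref{lem:DunklL}, the tangential prototype. The starting point is to observe that since $L_\Lieg\in Z(U(\Lieg_\CC))$, both $r(L_\Lieg)$ and $\ell(L_\Lieg)$ commute with every left and right translation on $G$, so each is a bi-invariant differential operator and is thus determined by its symbol at $e$. A direct calculation gives $(\ell(Z)f)(e)=(r(\trans Z)f)(e)$ for all $Z\in U(\Lieg_\CC)$, and $\trans L_\Lieg=L_\Lieg$ (because $L_\Lieg=\sum_i X_iY_i$ in $B$-dual bases and $\sum_i[X_i,Y_i]=0$ for a semisimple $\Lieg$); hence $r(L_\Lieg)\Phi[v]=\ell(L_\Lieg)\Phi[v]$ holds globally.

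The core of the proof is an explicit radial-part formula for $r(L_\Lieg)$ on $K$-equivariant sections. Using the $B$-orthogonal decomposition $\Lieg=\Liek\oplus\Lies$ we have $L_\Lieg=L_\Lies-L_\Liek$, and $r(L_\Liek)\Phi[v]=0$ by right-$K$-invariance, so it suffices to treat $r(L_\Lies)$ at $y=e^H\in A_-$. Expanding
\[
L_\Lies=L_\Liea+\tfrac{1}{2}\sum_{\alpha\in\Sigma^+,\,i}(X_\alpha^{(i)}-\theta X_\alpha^{(i)})^2
\]
in an orthonormal basis $\{X_\alpha^{(i)}\}$ of $\Lieg_\alpha$ with respect to $-B(\cdot,\theta\cdot)$, and applying at $y$ the Iwasawa substitution
\[
X_\alpha=\frac{e^{\alpha(H)}}{1-e^{2\alpha(H)}}\Ad(y^{-1})(X_\alpha+\theta X_\alpha)-\frac{e^{2\alpha(H)}}{1-e^{2\alpha(H)}}(X_\alpha+\theta X_\alpha)
\]
from the proof of Lemma \ref{lem:abstRad}, together with the $K$-equivariance identity $\ell(Z_\alpha^{(i)})\Phi[v]=\Phi[\pi_V(Z_\alpha^{(i)})v]$ (with $Z_\alpha^{(i)}=X_\alpha^{(i)}+\theta X_\alpha^{(i)}$) and the vanishing $r(Y)\Phi[v]=0$ for $Y\in\Liek_\CC$, I expect the group analog of formula \eqref{eq:Lrad1}:
\begin{align*}
\gamma_{0,Wx}\bigl(r(L_\Lies)\Phi[v]\bigr)(y)
&=\Bigl[\der(L_\Liea)+\sum_{\alpha\in\Sigma^+}\dim\Lieg_\alpha\,\coth(\alpha(H))\,\der(H_\alpha)\Bigr]\gamma_{0,Wx}(\Phi[v])(y)\\
&\quad+\sum_{\alpha\in\Sigma^+,\,i}\frac{\gamma_{0,Wx}\bigl(\Phi\bigl[p^V\bigl((X_\alpha^{(i)}+\theta X_\alpha^{(i)})^2 v\bigr)\bigr]\bigr)(y)}{2\sinh^2\alpha(H)}.
\end{align*}
This is the delicate step and the main obstacle of the proof: one must expand $(X_\alpha^{(i)}-\theta X_\alpha^{(i)})^2$ modulo $\Lien_\CC U(\Lieg_\CC)+U(\Lieg_\CC)\Liek_\CC$, carefully tracking the second-order Iwasawa remainder to produce exactly the $\coth$- and $\sinh^{-2}$-coefficients of the Cherednik operator.

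With the above display in hand the remaining assertions are automatic. For $v\in V^M_\single$, Lemma \ref{lem:qsp}(i) combined with the normalization $B(X_\alpha^{(i)},\theta X_\alpha^{(i)})=-1$ gives $\sum_i(X_\alpha^{(i)}+\theta X_\alpha^{(i)})^2 v=-|\alpha|^2\dim\Lieg_\alpha\,(1-s_\alpha)v$; substituting and comparing with the explicit formula for $\mathscr T(L_\Liea)$ in Proposition \ref{prop:Cherednik}(iii) and $\gamma(L_\Lieg)=L_\Liea-B(H_\rho,H_\rho)$ yields $\gamma_{0,Wx}\bigl(\ell(L_\Lieg)\Phi[v]\bigr)=\mathscr T\bigl(\gamma(L_\Lieg)\bigr)\gamma_{0,Wx}(\Phi[v])$. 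Finally, if $\Phi\in\Hom_K^\ttt(V,C^\infty(K\bar x))$ and $v\in V^M_\double$, the differential-operator part of the above display vanishes because $\gamma_{0,Wx}(\Phi[v])=0$, while Lemma \ref{lem:qsp}(ii) places $p^V((X_\alpha^{(i)}+\theta X_\alpha^{(i)})^2 v)$ in $V^M_\double$, so the angular correction vanishes as well by the hypothesis on $\Phi$; this proves $\ell(L_\Lieg)\circ\Phi\in\Hom_K^\ttt(V,C^\infty(K\bar x))$.
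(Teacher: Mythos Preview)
Your plan is correct and matches the paper's proof in all essential respects: both reduce to computing the radial part of $(X_\alpha-\theta X_\alpha)^2$ at $y=e^H$, then invoke Lemma~\ref{lem:qsp} and Proposition~\ref{prop:Cherednik}(iii) exactly as you do. The only organizational difference is that the paper works with $\ell(L_\Lieg)$ throughout (using $\ell(L_\Liem)\Phi[v]=0$ for $v\in V^M$ and the substitution $X_\alpha-\theta X_\alpha\equiv -\coth\alpha(H)\,(X_\alpha+\theta X_\alpha)\pmod{\Ad(y)\Liek_\CC\,U(\Lieg_\CC)}$), whereas you pass to $r(L_\Lies)$ via $r(L_\Liek)\Phi[v]=0$ and the $\Ad(y^{-1})$-version from Lemma~\ref{lem:abstRad}; these are dual bookkeepings of the same identity and yield the same display.

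One small slip: you say the expansion is ``modulo $\Lien_\CC U(\Lieg_\CC)+U(\Lieg_\CC)\Liek_\CC$''. For the $r$-action on right-$K$-invariant functions only $U(\Lieg_\CC)\Liek_\CC$ is killed; $\Lien_\CC$ does not vanish under $r$. What actually happens is that after substituting, the surviving left factor is $\Ad(y^{-1})Z_\alpha$, and $r(\Ad(y^{-1})Z_\alpha)f(y)=-\ell(Z_\alpha)f(y)=-\Phi[Z_\alpha v](y)$, which is how the $(X_\alpha+\theta X_\alpha)^2$ term reappears. This does not affect your argument.
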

\begin{proof}
The first assertion is clear
since $L_\Lieg$ is a central element of $U(\Lieg_\CC)$
and $\trans L_\Lieg=L_\Lieg$.
Suppose $y=e^H$ ($H\in\Liea$)  is in a neighborhood of $Wx$. 
We may assume $y$ is a regular point in $A$.
Let $X_\alpha\in\Lieg_\alpha$ ($\alpha\in\Sigma$)
and normalize it so that $-B(X_\alpha,\theta X_\alpha)=1$.
From
$\Ad(y)(X_\alpha+\theta X_\alpha)=\cosh \alpha(H) (X_\alpha+\theta X_\alpha)
+\sinh \alpha(H) (X_\alpha-\theta X_\alpha)$
we have
\[
X_\alpha-\theta X_\alpha
\equiv -\coth\alpha(H) (X_\alpha+\theta X_\alpha)
\pmod{\Ad(y)(X_\alpha+\theta X_\alpha) U(\Lieg_\CC)}
\]
and hence
\[
\begin{aligned}
(X_\alpha-\theta X_\alpha)^2
&\equiv -\coth\alpha(H) (X_\alpha+\theta X_\alpha)(X_\alpha-\theta X_\alpha)\\
&=-2\coth\alpha(H) H_\alpha -\coth\alpha(H) (X_\alpha-\theta X_\alpha)(X_\alpha+\theta X_\alpha)\\
&\equiv -2\coth\alpha(H) H_\alpha + \coth^2\alpha(H)(X_\alpha+\theta X_\alpha)^2\\
&\qquad\qquad\qquad\qquad\qquad \pmod{\Ad(y)(X_\alpha+\theta X_\alpha) U(\Lieg_\CC)}.
\end{aligned}
\]
Now suppose $v\in V^M$. Then
\[
\begin{aligned}
\bigl\{
\ell(X_\alpha +&\theta X_\alpha)^2
-
\ell(X_\alpha -\theta X_\alpha)^2
\bigr\}\Phi[v](y)\\
&=
\bigl\{
2\coth\alpha(H) \ell(H_\alpha)
+(1-\coth^2\alpha(H))\ell(X_\alpha +\theta X_\alpha)^2
\bigr\}\Phi[v](y)\\
&=
2\coth\alpha(H) \ell(H_\alpha) \Phi[v](y)
-\frac{1}{\sinh^2\alpha(H)}
\Phi\bigl[(X_\alpha +\theta X_\alpha)^2 v\bigr](y)\\
&=
-\coth\alpha(H) \partial(H_{2\alpha}) \gamma_{0,Wx}(\Phi[v])(y)
-\frac{1}{\sinh^2\alpha(H)}
\Phi\bigl[(X_\alpha +\theta X_\alpha)^2 v\bigr](y).
\end{aligned}
\]
Let $L_\Liea\in S(\Liea_\CC)$ be as in Proposition~\ref{prop:Dunkl}.
Let $\Liem=\Lie M$ and choose an orthonormal basis 
$\bigl\{Y_1,\dots,Y_{\dim\Liem}\bigr\}$ of the Euclidean space $(\Liem,-B(\cdot,\cdot))$.
Put $L_\Liem=-\sum_{i=1}^{\dim\Liem} Y_i^2$. Then
$\ell(L_\Liem)\Phi[v]=\Phi[L_\Liem v]=0$.
For each $\alpha\in\Sigma^+$ take a basis $\bigl\{X_\alpha^{(1)},\ldots,X_\alpha^{(\dim\Lieg_\alpha)}\bigr\}$
of $\Lieg_\alpha$ 
as in the proof of Lemma~\ref{lem:DunklL}.
Since
\[
\begin{aligned}
L_\Lieg&=
L_\Liem+L_\Liea
-\sum_{\alpha\in\Sigma^+}\sum_{i=1}^{\dim\Lieg_\alpha}
(X_\alpha^{(i)} \theta X_\alpha^{(i)}+\theta X_\alpha^{(i)} X_\alpha^{(i)})\\
&=
L_\Liem+L_\Liea
-\frac12\sum_{\alpha\in\Sigma^+}\sum_{i=1}^{\dim\Lieg_\alpha}
\Bigl\{
\bigl(X_\alpha^{(i)} +\theta X_\alpha^{(i)}\bigr)^2
-
\bigl(X_\alpha^{(i)} -\theta X_\alpha^{(i)}\bigr)^2
\Bigr\},
\end{aligned}
\]
we get
\begin{multline}\label{eq:Lg}
\ell(L_\Lieg) \Phi[v](y)=
\left\{
\der(L_\Liea)
+\sum_{\alpha\in\Sigma^+}\frac{\dim\Lieg_\alpha}2
\coth\alpha(H) \der(H_{2\alpha})
\right\}
\gamma_{0,Wx}(\Phi[v])(y)\\
+
\frac12\sum_{\alpha\in\Sigma^+}\sum_{i=1}^{\dim\Lieg_\alpha}
\frac{1}{\sinh^2\alpha(H)}
\Phi\bigl[(X_\alpha^{(i)} +\theta X_\alpha^{(i)})^2 v\bigr](y).
\end{multline}
If $v\in V^M_\single$, then from Lemma \ref{lem:qsp} (i) we have
\begin{align*}
\Phi\bigl[(X_\alpha^{(i)} +\theta X_\alpha^{(i)})^2 v\bigr](y)
&=
-|\alpha^2|
\Phi\bigl[(1-s_\alpha) v\bigr](y)\\
&=
-|\alpha^2|(1-s_\alpha)\,
\gamma_{0,Wx}(\Phi[v])(y)
\end{align*}
and therefore
\begin{align*}
\ell&(L_\Lieg) \Phi[v](y)\\
&=\left\{
\der(L_\Liea)
+\sum_{\alpha\in\Sigma^+}\frac{\dim\Lieg_\alpha}2
\Bigl(
\coth\alpha(H) \der(H_{2\alpha})
-\frac{|\alpha|^2}{\sinh^2\alpha(H)}(1-s_\alpha)
\Bigr)
\right\}
\gamma_{0,Wx}(\Phi[v])(y)\\
&=
\left\{
\der(L_\Liea)
+\sum_{\beta\in R^+}\mathbf m(\beta)
\Bigl(
\coth\frac{\beta(H)}2 \der(H_{\beta})
-\frac{|\beta|^2}{4\sinh^2\frac{\beta(H)}2}(1-s_{\beta})
\Bigr)
\right\}
\gamma_{0,Wx}(\Phi[v])(y)\\
&=\mathscr T\bigl(L_\Liea-|\rho|^2\bigr) \gamma_{0,Wx}(\Phi[v])(y)
\qquad\qquad\qquad\qquad\quad(\because \eqref{eq:ChredLap})\\
&=\mathscr T\bigl(\gamma(L_\Lieg)\bigr) \gamma_{0,Wx}(\Phi[v])(y).
\qquad\qquad\qquad\qquad\qquad(\because \gamma(L_\Lieg)=L_\Liea-|\rho|^2)
\end{align*}
This proves the second assertion.
Finally, if $\Phi\in \Hom_K^\ttt(V,C^\infty(K\bar x))$ and $v\in V^M_\double$,
then it follows from Theorem \ref{thm:Ch} (ii) and Lemma \ref{lem:qsp} (ii) that
\[
\Phi\bigl[(X_\alpha^{(i)} +\theta X_\alpha^{(i)})^2 v\bigr](y)
=\Phi\bigl[p^V\bigl((X_\alpha^{(i)} +\theta X_\alpha^{(i)})^2 v\bigr)\bigr](y)
=0.
\]
Thus, in this case, the right-hand side of \eqref{eq:Lg} vanishes.
\end{proof}

\section{Harish-Chandra homomorphisms}\label{sec:HC}
The radial part formula given in the last section
concerns the action of $r(U(\Lieg_\CC)^K)$ on $C^\infty(G/K)$.
Its formulation is relatively simple since operators are always $K$-invariant.
In the next section we shall develop another kind of radial part formula
concerning the action of $\ell(U(\Lieg_\CC))$ on $C^\infty(G/K)$,
in which we treat the case where both operators and functions are non-$K$-invariant.
To do so, we must first prepare a non-$K$-invariant generalization of
the Harish-Chandra homomorphism.

In general, for a $(\Lieg_\CC, K)$-module $\mathscr Y$ we put $\Gamma(\mathscr Y)=(\mathscr Y/\Lien_\CC\mathscr Y)^M$.
Since the $0$th $\Lien_\CC$-homology $\mathscr Y/\Lien_\CC\mathscr Y$ of $\mathscr Y$ is $(\Liem_\CC+\Liea_\CC, M)$-module,
its $M$-fixed part $\Gamma(\mathscr Y)$ is naturally an $\Liea_\CC$-module.
But in various contexts it is useful to endow $\Gamma(\mathscr Y)$ with a shifted (dotted) $\Liea_\CC$-module structure.
That is, we let $\xi\in\Liea_\CC$ act on $y\in \Gamma(\mathscr Y)$ by $\xi\cdot y=(\xi-\rho(\xi))y$.
Note there is a natural linear surjective map
\begin{equation}\label{eq:nhomogamma}
\gamma^{\mathscr Y}:\,\mathscr Y \twoheadrightarrow \mathscr Y/\Lien_\CC\mathscr Y
\twoheadrightarrow \Gamma(\mathscr Y)=(\mathscr Y/\Lien_\CC\mathscr Y)^M
\end{equation}
where the second map in this composition is the projection to the isotypic component of the trivial representation of $M$.
If there is no fear of confusion,
we use a brief symbol $\gamma$ for $\gamma^{\mathscr Y}$
because when $\mathscr Y=U(\Lieg_\CC)\otimes_{U(\Liek_\CC)}\CC_\triv$
this map essentially coincides with $\gamma$ of \eqref{eq:HC-homo} (Example \ref{exmp:triv}).

For any $V\in\Km$ we define 
a $(\Lieg_\CC,K)$-module
$P_G(V)=U(\Lieg_\CC)\otimes_{U(\Liek_\CC)}V$.
If $(V^M)^\perp$ denotes the orthogonal compliment of $V^M$ with respect to
a $K$-invariant inner product of $V$, then we have
the direct sum decomposition
\begin{equation}\label{eq:PGVdec}
P_G(V)
=\bigl(\Lien_\CC U(\Lieg_\CC)\otimes V
\,\oplus\, S(\Liea_\CC)\otimes(V^M)^\perp\bigr)
\,\oplus\, S(\Liea_\CC)\otimes V^M.
\end{equation}
Hence $\Gamma(P_G(V))\simeq S(\Liea_\CC)\otimes V^M$.
On the other hand, for any $W$-module $U$
we put $P_{\mathbf H}(U):=\mathbf H\otimes_{\CC W}U$.
This is an $\mathbf H$-module and naturally
$P_{\mathbf H}(U)\simeq S(\Liea_\CC)\otimes U$
as an $\Liea_\CC$-module.
Now let us identify $\Gamma(P_G(V))$ with $P_{\mathbf H}(V^M)$ by
\begin{equation}\label{eq:PHVid}
\Gamma(P_G(V))\simeq S(\Liea_\CC)\otimes V^M \ni
\varphi(\lambda)\otimes v\longmapsto
\varphi(\lambda+\rho)\otimes v\in
S(\Liea_\CC)\otimes V^M\simeq P_{\mathbf H}(V^M).
\end{equation}
Note this is an isomorphism of $\Liea_\CC$-modules.

\begin{exmp}\label{exmp:triv}
If $V=\CC_\triv$
then for any fixed $v_\triv\in \CC_\triv\setminus \{0\}$ we have
the surjection $U(\Lieg_\CC)\ni D\mapsto D\otimes v_\triv \in P_G(\CC_\triv)$
and the bijection $S(\Liea_\CC)\ni \varphi\mapsto \varphi\otimes v_\triv\in P_{\mathbf H}(\CC_\triv)$, for which the diagram
\[\xymatrix{
U(\Lieg_\CC)
\ar[d]_{\text{$\gamma$ in \eqref{eq:HC-homo}}}
\ar@{->>}[r]
&
P_G(\CC_\triv)
\ar[d]^{\gamma=\gamma^{P_G(\CC_\triv)}}
\\
S(\Liea_\CC)
\ar[r]^-{\sim}
&
P_{\mathbf H}(\CC_\triv)
}
\]
commutes.
\end{exmp}

Now suppose $\mathscr Y_1$ and $\mathscr Y_1$ are two $(\Lieg_\CC,K)$-modules
and $\Psi\in\Hom_{\Lieg_\CC,K}(\mathscr Y_1,\mathscr Y_2)$.
Then there exists a unique $\Gamma(\Psi)\in\Hom_{\Liea_\CC}(\Gamma(\mathscr Y_1),\Gamma(\mathscr Y_2))$ such that the diagram
\begin{equation}\label{cd:HCfunct}
\xymatrix{
\mathscr Y_1 \ar[r]^{\Psi}
\ar@{->>}[d]_{\gamma}
& \mathscr Y_2 \ar@{->>}[d]^{\gamma}\\
\Gamma(\mathscr Y_1) \ar[r]_{\Gamma(\Psi)} & \Gamma(\mathscr Y_2)
}
\end{equation}
commutes.
Thus $\Gamma$ defines a (right exact) functor from the category of $(\Lieg_\CC,K)$-modules
to the category of $\Liea_\CC$-modules.
\begin{defn}
Suppose $E, V \in \Km$.
Then naturally
\[
P_{\mathbf H}(E^M)=P_{\mathbf H}(E^M_\single)\oplus P_{\mathbf H}(E^M_\double),\quad
P_{\mathbf H}(V^M)=P_{\mathbf H}(V^M_\single)\oplus P_{\mathbf H}(V^M_\double).
\]
For $\Psi\in\Hom_{\Lieg_\CC,K}(P_G(E),P_G(V))$
we define
$\tilde\Gamma(\Psi)\in\Hom_{\Liea_\CC}(P_{\mathbf H}(E^M_\single),P_{\mathbf H}(V^M_\single))$ by
\[
P_{\mathbf H}(E^M_\single)\hookrightarrow
P_{\mathbf H}(E^M)\xrightarrow{\Gamma(\Psi)}
P_{\mathbf H}(V^M)\twoheadrightarrow
P_{\mathbf H}(V^M_\single).
\]
In addition, we put
\begin{align*}
\Hom_{\Lieg_\CC,K}^\oto(P_G(E)&,P_G(V))\\
=&\bigl\{
\Psi\in \Hom_{\Lieg_\CC,K}(P_G(E),P_G(V));\,
\Gamma(\Psi)\bigl[ P_{\mathbf H}(E^M_\single)\bigr]\subset P_{\mathbf H}(V^M_\single)
\bigr\},\\
\Hom_{\Lieg_\CC,K}^\ttt(P_G(E)&,P_G(V))\\
=&\bigl\{
\Psi\in \Hom_{\Lieg_\CC,K}(P_G(E),P_G(V));\,
\Gamma(\Psi)\bigl[ P_{\mathbf H}(E^M_\double)\bigr]\subset P_{\mathbf H}(V^M_\double)
\bigr\}.
\end{align*}
\end{defn}
In general,
the correspondence $\tilde\Gamma$ does not commute with composition of morphisms.
But it does in the following cases:
\begin{prop}\label{prop:chain}
Suppose $E,F, V\in\Km$, $\Psi\in\Hom_{\Lieg_\CC,K}(P_G(E),P_G(F))$
and $\Phi\in\Hom_{\Lieg_\CC,K}(P_G(F),P_G(V))$.
If $\Psi\in\Hom_{\Lieg_\CC,K}^\oto$ or $\Phi\in\Hom_{\Lieg_\CC,K}^\ttt$ then
$\tilde\Gamma(\Phi\circ\Psi)=\tilde\Gamma(\Phi)\circ\tilde\Gamma(\Psi)$.
\end{prop}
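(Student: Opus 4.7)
The plan is to reduce everything to the functoriality of $\Gamma$ (which is noted just before the proposition, via diagram~\eqref{cd:HCfunct}) combined with the block-matrix structure induced by the single/double decomposition of Theorem~\ref{thm:Ch}(iv). Functoriality immediately gives $\Gamma(\Phi\circ\Psi)=\Gamma(\Phi)\circ\Gamma(\Psi)$, so the content of the proposition is entirely about how this equality interacts with the splittings
\[
P_{\mathbf H}(E^M)=P_{\mathbf H}(E^M_\single)\oplus P_{\mathbf H}(E^M_\double),\quad
P_{\mathbf H}(F^M)=P_{\mathbf H}(F^M_\single)\oplus P_{\mathbf H}(F^M_\double),\quad
P_{\mathbf H}(V^M)=P_{\mathbf H}(V^M_\single)\oplus P_{\mathbf H}(V^M_\double).
\]

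For $X\in\{E,F,V\}$ let $\iota^X_\single,\iota^X_\double$ denote the natural inclusions and $\pi^X_\single,\pi^X_\double$ the projections for the above decompositions. By the definition of $\tilde\Gamma$,
\[
\tilde\Gamma(\Psi)=\pi^F_\single\circ\Gamma(\Psi)\circ\iota^E_\single,\quad
\tilde\Gamma(\Phi)=\pi^V_\single\circ\Gamma(\Phi)\circ\iota^F_\single,\quad
\tilde\Gamma(\Phi\circ\Psi)=\pi^V_\single\circ\Gamma(\Phi\circ\Psi)\circ\iota^E_\single.
\]
Inserting $\id_{P_{\mathbf H}(F^M)}=\iota^F_\single\pi^F_\single+\iota^F_\double\pi^F_\double$ between $\Gamma(\Phi)$ and $\Gamma(\Psi)$ and using functoriality, I would write
\[
\tilde\Gamma(\Phi\circ\Psi)
=\tilde\Gamma(\Phi)\circ\tilde\Gamma(\Psi)
+\bigl(\pi^V_\single\circ\Gamma(\Phi)\circ\iota^F_\double\bigr)\circ
\bigl(\pi^F_\double\circ\Gamma(\Psi)\circ\iota^E_\single\bigr).
\]

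The remaining step is to observe that the hypotheses kill the cross term. By definition, $\Psi\in\Hom^\oto_{\Lieg_\CC,K}(P_G(E),P_G(F))$ means $\Gamma(\Psi)[P_{\mathbf H}(E^M_\single)]\subset P_{\mathbf H}(F^M_\single)$, i.e.\ $\pi^F_\double\circ\Gamma(\Psi)\circ\iota^E_\single=0$; and $\Phi\in\Hom^\ttt_{\Lieg_\CC,K}(P_G(F),P_G(V))$ means $\Gamma(\Phi)[P_{\mathbf H}(F^M_\double)]\subset P_{\mathbf H}(V^M_\double)$, i.e.\ $\pi^V_\single\circ\Gamma(\Phi)\circ\iota^F_\double=0$. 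In either case the cross term vanishes and the desired identity follows.

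No step presents a genuine obstacle: the whole argument is a bookkeeping calculation once the block-matrix picture is in place. The only thing to be a bit careful about is that the splittings are taken on the target side $P_{\mathbf H}(F^M)$ of $\Gamma(\Psi)$ and on the source side of $\Gamma(\Phi)$, so one must insert the resolution of the identity in the correct place; this is why the two hypotheses are dual to each other (one kills the ``output'' of $\Gamma(\Psi)$ into $P_{\mathbf H}(F^M_\double)$, the other kills the ``input'' of $\Gamma(\Phi)$ from $P_{\mathbf H}(F^M_\double)$), and either is individually sufficient.
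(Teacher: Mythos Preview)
Your argument is correct and is precisely the natural one: functoriality gives $\Gamma(\Phi\circ\Psi)=\Gamma(\Phi)\circ\Gamma(\Psi)$, and inserting the identity $\iota^F_\single\pi^F_\single+\iota^F_\double\pi^F_\double$ isolates a single cross term that either hypothesis kills. The paper states this proposition without proof, so your write-up is exactly the routine verification the author leaves to the reader.
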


\begin{rem}
Suppose $E, V \in \Km$.

\noindent{\normalfont (i)}
If $V\in\Ksp$ or $E\in\Km\setminus\Kqsp$, then
\[
\Hom_{\Lieg_\CC,K}^\oto(P_G(E),P_G(V))
=\Hom_{\Lieg_\CC,K}(P_G(E),P_G(V)).
\]
If $E\in\Ksp$ or $V\in\Km\setminus\Kqsp$, then
\[
\Hom_{\Lieg_\CC,K}^\ttt(P_G(E),P_G(V))
=\Hom_{\Lieg_\CC,K}(P_G(E),P_G(V)).
\]

\noindent{\normalfont (ii)}
If $V\in\Ksp$, then
\[
\Hom_{\Lieg_\CC,K}^\ttt(P_G(E),P_G(V))
=\bigl\{
\Psi\in \Hom_{\Lieg_\CC,K}(P_G(E),P_G(V));\,
\Gamma(\Psi)\bigl[ P_{\mathbf H}(E^M_\double)\bigr]=\{0\}
\bigr\}.
\]
\end{rem}

We are now in the position to state the main result of this section.
\begin{thm}[the generalized Harish-Chandra homomorphism]\label{thm:HC}
Suppose $E, V \in \Km$.

\noindent
{\normalfont (i)}
$\tilde\Gamma\bigl(
\Hom_{\Lieg_\CC,K}(P_G(E),P_G(V))
\bigr)\subset
\Hom_{\mathbf H}(P_{\mathbf H}(E^M_\single),P_{\mathbf H}(V^M_\single)).
$

\noindent
{\normalfont (ii)}
If $V=\CC_\triv$ then $\tilde\Gamma$ induces a bijection
\[
\tilde\Gamma:\,
\Hom_{\Lieg_\CC,K}^\ttt(P_G(E),P_G(\CC_\triv))
\simarrow
\Hom_{\mathbf H}(P_{\mathbf H}(E^M_\single),P_{\mathbf H}(\CC_\triv)).
\]

\noindent
{\normalfont (iii)}
If $E=\CC_\triv$ then $\tilde\Gamma$ induces a bijection
\[
\tilde\Gamma:\,
\Hom_{\Lieg_\CC,K}^\oto(P_G(\CC_\triv),P_G(V))
\simarrow
\Hom_{\mathbf H}(P_{\mathbf H}(\CC_\triv),P_{\mathbf H}(V^M_\single)).
\]

\noindent
{\normalfont (iv)}
If $E=V=\CC_\triv$ then there exist natural identifications
$\End_{\Lieg_\CC,K}(P_G(\CC_\triv))\simeq U(\Lieg_\CC)^K/(U(\Lieg_\CC)\Liek_\CC)^K$ and
$\End_{\mathbf H}(P_{\mathbf H}(\CC_\triv))\simeq S(\Liea_\CC)^W$,
under which the algebra isomorphism
\[
\tilde\Gamma:\,
\End_{\Lieg_\CC,K}(P_G(\CC_\triv))
\simarrow
\End_{\mathbf H}(P_{\mathbf H}(\CC_\triv))
\]
coincides with \eqref{eq:HC-isom}.
\end{thm}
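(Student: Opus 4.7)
I would prove the four parts in the order (iv), (i), (ii), (iii), with part (i) carrying the main technical content.

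\textbf{Part (iv).} Frobenius reciprocity gives $\End_{\Lieg_\CC,K}(P_G(\CC_\triv))\simeq P_G(\CC_\triv)^K \simeq U(\Lieg_\CC)^K/(U(\Lieg_\CC)\Liek_\CC)^K$ as algebras, and similarly $\End_{\mathbf H}(P_{\mathbf H}(\CC_\triv))\simeq S(\Liea_\CC)^W$ using that the center of $\mathbf H$ is $S(\Liea_\CC)^W$. Example \ref{exmp:triv} identifies $\tilde\Gamma$ under these identifications with the classical $\gamma$ of \eqref{eq:HC-homo}, so the required algebra isomorphism is exactly \eqref{eq:HC-isom}.

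\textbf{Part (i).} The map $\tilde\Gamma(\Psi)$ is $S(\Liea_\CC)$-linear by the functoriality of $\Gamma$ with the $\rho$-shifted $\Liea_\CC$-action. Since $\mathbf H=S(\Liea_\CC)\cdot\CC W$, to establish $\mathbf H$-linearity it suffices to prove $W$-equivariance, and a short computation using \eqref{eq:Hrel} together with the already-established $S(\Liea_\CC)$-linearity reduces this further to showing
\[
\tilde\Gamma(\Psi)(1\otimes\dot s_\alpha e) = s_\alpha\cdot\tilde\Gamma(\Psi)(1\otimes e)\quad\text{in }P_{\mathbf H}(V^M_\single)
\]
for every $e\in E^M_\single$, every simple $\alpha\in\Pi$, and a fixed lift $\dot s_\alpha\in N_K(\Liea)$. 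Since $\Psi$ is $K$-equivariant, $\Psi(1\otimes\dot s_\alpha e)=\dot s_\alpha\,\Psi(1\otimes e)$, so the task becomes comparing $\gamma^{P_G(V)}\circ\dot s_\alpha$ with the $\mathbf H$-action of $s_\alpha$ on $P_{\mathbf H}(V^M_\single)$. The subtle point is that $\Ad(\dot s_\alpha)$ does not preserve $\Lien_\CC$ (it interchanges $\Lieg_\alpha$ and $\Lieg_{-\alpha}$): expanding $\Psi(1\otimes e)$ through an Iwasawa-PBW basis $U(\Lieg_\CC)=U(\Lien_\CC)U(\Liea_\CC)U(\Liek_\CC)$ and moving $\dot s_\alpha$ past each factor produces correction terms involving $Z_\alpha=X_\alpha+\theta X_\alpha$. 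By Lemma \ref{lem:qsp}\,(i), once one further projects to $V^M_\single$ these $Z_\alpha$-corrections collapse into scalar multiples of $(1-s_\alpha)$, matching exactly the $-\mathbf k(\alpha)\alpha(\xi)$ term of \eqref{eq:Hrel} after accounting for the $\rho$-shift in \eqref{eq:PHVid}. This combinatorial matching is the main obstacle of the proof.

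\textbf{Parts (ii) and (iii).} Both are algebraic translations of Theorem \ref{thm:Ch}. For (ii), Frobenius reciprocity gives
\begin{align*}
\Hom^\ttt_{\Lieg_\CC,K}(P_G(E),P_G(\CC_\triv)) &\simeq \Hom_K^\ttt(E,P_G(\CC_\triv)),\\
\Hom_{\mathbf H}(P_{\mathbf H}(E^M_\single),P_{\mathbf H}(\CC_\triv)) &\simeq \Hom_W(E^M_\single,S(\Liea_\CC)),
\end{align*}
and $\tilde\Gamma$ is induced by post-composition with the algebraic $\gamma:P_G(\CC_\triv)\to S(\Liea_\CC)$ of Example \ref{exmp:triv}. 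Under the symmetrization isomorphism $P_G(\CC_\triv)\simeq S(\Lies_\CC)\simeq\mathscr P(\Lies)$ of $K$-modules, the bijectivity of $\tilde\Gamma$ reduces to Theorem \ref{thm:Ch}\,(iv) with $\mathscr F=\mathscr P$. Part (iii) is the dual statement: $\Hom^\oto_{\Lieg_\CC,K}(P_G(\CC_\triv),P_G(V))\simeq P_G(V)^{K,\oto}$ and $\Hom_{\mathbf H}(P_{\mathbf H}(\CC_\triv),P_{\mathbf H}(V^M_\single))\simeq P_{\mathbf H}(V^M_\single)^W$, and the bijection is the algebraic incarnation of Theorem \ref{thm:Ch}\,(v) applied to the $V$-isotypic part.
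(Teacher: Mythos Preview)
Your treatment of (iv) is fine and matches the paper. The other three parts, however, diverge from the paper's approach in ways that leave genuine gaps.

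\textbf{Part (i).} Your direct computation is a different route from the paper's. The paper does \emph{not} attempt to verify the $s_\alpha$-equivariance by hand for general $V$; instead it bootstraps from the special case $V=\CC_\triv$ (already known from \cite{Oda:HC}). Concretely, it chooses a basis $\hat\varphi_1,\ldots,\hat\varphi_{m'}$ of $\Hom_W(V^M_\single,P_{\mathbf H}(\CC_\triv))$ so that $\prod_j\hat\varphi_j:P_{\mathbf H}(V^M_\single)\hookrightarrow P_{\mathbf H}(\CC_\triv)^{m'}$ is injective, lifts each $\hat\varphi_j$ to $\Phi_j\in\Hom^\ttt_{\Lieg_\CC,K}(P_G(V),P_G(\CC_\triv))$ via (ii), and then uses Proposition~\ref{prop:chain} to write $(\prod_j\hat\varphi_j)\circ\tilde\Gamma(\Psi)=\prod_j\tilde\Gamma(\Phi_j\circ\Psi)$, which is an $\mathbf H$-map by the case $V=\CC_\triv$. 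Your sketch, by contrast, relies on Lemma~\ref{lem:qsp}\,(i), which controls only $Z_\alpha^2 v$ for $v\in V^M_\single$. When you re-expand $\dot s_\alpha\cdot\Psi(1\otimes e)$ in the Iwasawa basis, the correction terms involve arbitrary words in $U(\Liek_\CC)$ acting on the $v_i$'s, which need not lie in $V^M_\single$; Lemma~\ref{lem:qsp} alone does not collapse these. Making your approach work would essentially amount to reproducing the full argument of \cite[\S4]{Oda:HC} for general $V$, which is what the paper's bootstrap avoids.

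\textbf{Part (ii).} Your reduction to Theorem~\ref{thm:Ch} via symmetrization conflates two different maps. Under $P_G(\CC_\triv)\simeq S(\Lies_\CC)$, the Harish-Chandra projection $\gamma$ of \eqref{eq:HC-homo} and the geometric restriction $\gamma_0:S(\Lies_\CC)\to S(\Liea_\CC)$ agree only on the associated graded, not on the nose (already visible in degree~$2$, where $\gamma$ picks up lower-order terms from commutators $[X_\alpha,\theta X_\alpha]\in\Liea$). So bijectivity of $\tilde\Gamma$ does not reduce \emph{directly} to Theorem~\ref{thm:Ch}\,(iv). A filtration argument could close this, but the paper simply cites \cite[Theorem~4.11]{Oda:HC}, where the algebraic statement is proved.

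\textbf{Part (iii).} Here your appeal to Theorem~\ref{thm:Ch}\,(v) does not make sense: that statement characterizes surjectivity of $\Gamma_0^V$ by $V\in\Ksp$, whereas (iii) concerns arbitrary $V\in\Km$ and the map $P_G(V)^{K,\oto}\to P_{\mathbf H}(V^M_\single)^W$. The paper handles (iii) by a completely different mechanism: it develops star operations $\Psi\mapsto\Psi^\star$ in \S\ref{sec:star}, proves $\Gamma^{V^\star}_{E^\star}(\Psi^\star)=\Gamma^E_V(\Psi)^\star$ and $\Hom^\oto(P_G(E),P_G(V))^\star=\Hom^\ttt(P_G(V^\star),P_G(E^\star))$ (Theorem~\ref{thm:starGam}), and concludes that (iii) is equivalent to (ii) (Corollary~\ref{cor:2-3}). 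Your sketch does not supply any substitute for this duality.
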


   Before proving the theorem
we introduce some notation, which will also be used
to formulate the radial part formula in the next section.

\begin{defn}\label{defn:homhom}
Suppose $E, V \in \Km$.
By the Frobenius reciprocity,
we can identify
\begin{align*}
\Hom_{\Lieg_\CC,K}(P_G(E),P_G(V))&\simeq\Hom_{K}(E,P_G(V)),\\
\Hom_{\Liea_\CC}(P_{\mathbf H}(E^M),P_{\mathbf H}(V^M))
&\simeq\Hom_{\CC}(E^M,P_{\mathbf H}(V^M)),\\
\Hom_{\mathbf H}(P_{\mathbf H}(E^M_\single),P_{\mathbf H}(V^M_\single))
&\simeq\Hom_{W}(E^M_\single,P_{\mathbf H}(V^M_\single)).
\end{align*}
Under these identifications, the map
\[
\Gamma :\, \Hom_{\Lieg_\CC,K}(P_G(E),P_G(V))\to\Hom_{\Liea_\CC}(P_{\mathbf H}(E^M),P_{\mathbf H}(V^M))
\]
reduces to
\begin{align*}
\Gamma^E_V :\,&
 \Hom_K(E,P_G(V))
\longrightarrow
\Hom_\CC(E^M,P_{\mathbf H}(V^M))\,;\\
&\Psi\longmapsto
\bigl(
\psi:
E^M\hookrightarrow
E
\xrightarrow{\Psi} P_G(V)
\xrightarrow{\gamma} P_{\mathbf H}(V^M)
\bigr).
\end{align*}
This interpretation
is distinguished by attaching super- and sub-scripts to $\Gamma$.
We define
\[
\tilde\Gamma^E_V :\,
 \Hom_K(E,P_G(V))
\longrightarrow
\Hom_\CC(E^M_\single,P_{\mathbf H}(V^M_\single))
\]
similarly (Theorem \ref{thm:HC} (i) asserts
that the target space of this map can be replaced with $\Hom_W$).
Finally we put
\begin{align*}
\Hom_{K}^\oto(E,P_G(V))
&=\bigl\{
\Psi\in \Hom_{K}(E,P_G(V));\,
\Gamma^E_V(\Psi)\bigl[ E^M_\single ]\subset P_{\mathbf H}(V^M_\single)
\bigr\},\\
\Hom_{K}^\ttt(E,P_G(V))
&=\bigl\{
\Psi\in \Hom_{K}(E,P_G(V));\,
\Gamma^E_V(\Psi)\bigl[ E^M_\double \bigr]\subset P_{\mathbf H}(V^M_\double)
\bigr\}.
\end{align*}
\end{defn}
\begin{proof}[Proof of {\normalfont Theorem \ref{thm:HC}}]
From
\cite[Thorem 4.7]{Oda:HC}
it holds that
\[
\tilde\Gamma^E_{\CC_\triv}\bigl(
\Hom_{K}(E,P_G(\CC_\triv))\bigr)
\subset
\Hom_{W}(E^M_\single,P_{\mathbf H}(\CC_\triv)),
\]
which is equivalent to (i) for $V=\CC_\triv$.
Also, by \cite[Thorem 4.11]{Oda:HC}
we have an isomorphism
\[
\tilde\Gamma^E_{\CC_\triv}:\,
\Hom_{K}^\ttt(E,P_G(\CC_\triv))
\simarrow
\Hom_{W}(E^M_\single,P_{\mathbf H}(\CC_\triv)),
\]
which is equivalent to (ii).

Next, fix a non-zero $v_\triv\in\CC_\triv$.
Then (iv) is clear from
\begin{align*}
\End_{\Lieg_\CC,K}(P_G(\CC_\triv))
&=\Hom_{K}(\CC_\triv,P_G(\CC_\triv))\\
&=\Hom_{\CC}\bigl(\CC\,v_\triv, U(\Lieg_\CC)^K/(U(\Lieg_\CC)\Liek_\CC)^K \otimes v_\triv\bigr),\\
\End_{\mathbf H}(P_{\mathbf H}(\CC_\triv))
&=\Hom_W(\CC_\triv,P_{\mathbf H}(\CC_\triv))\\
&=\Hom_\CC(\CC\,v_\triv, S(\Liea_\CC)^W\otimes v_\triv)
\end{align*}
and Example \ref{exmp:triv}.

To prove (i) suppose $E,V\in\Km$ are arbitrary.
Choose a basis $\{v_1,\ldots,v_{m'}\}$ of $V^M_\single$.
Let $H_W(\Liea_\CC)\subset S(\Liea_\CC)$
be the space of $W$-harmonic polynomials on $\Liea^*$.
Then there exist $m'\, (=\dim V^M_\single$) linearly independent $W$-homomorphisms
$\varphi_j:\,V^M_\single\to H_W(\Liea_\CC)$ ($j=1,\ldots,m'$)
such that $\varphi_j[v_1],\ldots,\varphi_j[v_{m'}]$ are all homogeneous with the same degree
for each fixed $j$.
For $j=1,\ldots,m'$ choose
$\hat\varphi_j\in \Hom_W(V^M_\single,P_{\mathbf H}(\CC_\triv))$
so that 
the top degree part of $\hat\varphi_j$ coincides with $\varphi_j$.
Here we are identifying
$P_{\mathbf H}(\CC_\triv)=S(\Liea_\CC)\otimes v_\triv$ with $S(\Liea_\CC)$ naturally.
Note that this identification respects $\Liea_\CC$-module structures
and that $\det(\hat\varphi_j[v_i])_{1\le i,j\le m'}\ne 0$
since $\det(\varphi_j[v_i])_{1\le i,j\le m'}\ne 0$ (cf.~\cite[\S2]{HC}).
Thus the $\mathbf H$-homomorphism
\begin{equation*}
\begin{aligned}
P_{\mathbf H}(V^M_\single)&
\xrightarrow{
\prod_{j}\hat\varphi_j
}
P_{\mathbf H}(\CC_\triv)^{m'};\\
\sum_i f_i \otimes v_i
&\longmapsto
\Bigl(
\sum_i f_i\hat\varphi_1[v_i]\otimes v_\triv,
\ldots,
\sum_i f_i\hat\varphi_{m'}[v_i]\otimes v_\triv,
\Bigr)
\end{aligned}
\end{equation*}
is injective.
Now using (ii) we can lift $\hat\varphi_j$ to
$\Phi_j\in \Hom_{\Lieg_\CC,K}^\ttt(P_G(V),P_G(\CC_\triv))$.
By virtue of Proposition \ref{prop:chain},
for any $\Psi\in\Hom_{\Lieg_\CC,K}(P_G(E),P_G(V))$ we have
\[
\prod_j\tilde\Gamma(\Phi_j \circ\Psi)
=
\prod_j(\tilde\Gamma(\Phi_j)\circ\tilde\Gamma(\Psi))
=
\prod_j(\hat\varphi_j\circ\tilde\Gamma(\Psi))
=
\Bigl(\prod_j\hat\varphi_j\Bigl)\circ\tilde\Gamma(\Psi).
\]
The leftmost side shows this is an $\mathbf H$-homomorphism of
$P_{\mathbf H}(E^M_\single)$ into $P_{\mathbf H}(\CC_\triv)^{m'}$
since we already know (i) is valid for
$\Hom_{\Lieg_\CC,K}(P_G(E),P_G(\CC_\triv))$.
But since $\prod_j\hat\varphi_j$ is an injective $\mathbf H$-homomorphism,
the rightmost side shows $\tilde\Gamma(\Psi)$ is also an $\mathbf H$-homomorphism.

We postpone the proof of (iii) until we introduce the notion of
\emph{star operation}s for morphisms in \S\ref{sec:star}.
With that notion, (iii) is equivalent to (ii) (Corollary \ref{cor:2-3}).
\end{proof}

\section{Radial part formula, II}\label{sec:radII}

In this section we try to generalize \eqref{eq:rad2}
to some cases where $\Delta$ and $f$ are not necessarily $K$-invariant.
In view of \eqref{eq:rad2} the radial part of a left Lie algebra action
on the $K$-invariant functions
is twisted by the Cartan involution $\theta$.
Related to this, we introduce the ``Cartan involution" for $\mathbf H$.
\begin{defn}\label{defn:HCartan}
Let $w_0$ be the longest element of $W$.
We define the algebra automorphism $\theta_{\mathbf H}$ of $\mathbf H$
so that it satisfies the following relations:
\[\left\{
\begin{aligned}
&\theta_{\mathbf H}\,w=w&&\text{for }w\in W,\\
&\theta_{\mathbf H}\,\xi=-w_0\,w_0(\xi)\,w_0
&&\text{for }\xi\in \Liea_\CC.
\end{aligned}\right.
\]
The automorphism is well defined by \eqref{eq:Hrel}.
\end{defn}
For $V\in\Km$, $\theta$ naturally induces a $K$-linear automorphism of
$P_G(V)=U(\Lieg_\CC)\otimes_{\Liek_\CC} V $:
\[
P_G(V)\ni D\otimes v\longmapsto (\theta D)\otimes v
\in P_G(V).
\]
For a $W$-module $U$,
$\theta_{\mathbf H}$ naturally induces a $W$-linear automorphism of
$P_{\mathbf H}(U)$ likewise.

\begin{prop}\label{prop:2Cartans}
Let $\bar w_0\in N_K(\Liea)=\{k\in K;\,\Ad(k)(\Liea)\subset\Liea\}$ be an element 
normalizing $\Liea$ with the same action as $w_0$.
Suppose $V\in \Km$. Then it holds that
\begin{equation}\label{eq:Cartan0}
\gamma(\theta (\bar w_0D))=\theta_{\mathbf H}(w_0\gamma(D))
\quad\text{for any }D\in P_G(V).
\end{equation}
Moreover, if $E,V\in\Km$ then we have
\begin{align}
&\tilde\Gamma^E_V(\theta\circ\Psi)=\theta_{\mathbf H}\circ \tilde\Gamma^E_V(\Psi)
&&\text{for any }\Psi\in \Hom_K(E,P_G(V)),\label{eq:Cartan1}\\
&\theta\circ\Psi\in \Hom_K^\oto(E,P_G(V))
&&\text{for any }\Psi\in \Hom_K^\oto(E,P_G(V)),\label{eq:Cartan2}\\
&\theta\circ\Psi\in \Hom_K^\ttt(E,P_G(V))
&&\text{for any }\Psi\in \Hom_K^\ttt(E,P_G(V)).\label{eq:Cartan3}
\end{align}
\end{prop}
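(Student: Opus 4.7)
The plan is to prove \eqref{eq:Cartan0} directly, and then to extract the remaining three statements \eqref{eq:Cartan1}, \eqref{eq:Cartan2}, \eqref{eq:Cartan3} as formal corollaries.

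First I reduce \eqref{eq:Cartan0} to the case $D = p\otimes v$ with $p\in S(\Liea_\CC)$ and $v\in V^M$ by using the decomposition \eqref{eq:PGVdec}. On $\Lien_\CC U(\Lieg_\CC)\otimes V$ both sides vanish: the right side vanishes by definition of $\gamma$, and on the left side the composition $\theta\circ\Ad(\bar w_0)$ sends $\Lien_\CC$ into itself (because $\theta\Lien_\CC=\Lien_\CC^-$ and $\Ad(\bar w_0)\Lien_\CC^-=\Lien_\CC$, as $w_0\Sigma^+=-\Sigma^+$), so $\theta(\bar w_0 D)\in\Lien_\CC P_G(V)$ and $\gamma$ kills it. On $S(\Liea_\CC)\otimes (V^M)^\perp$ both sides again vanish: $\gamma$ kills this summand on the right, and $\bar w_0$ preserves $(V^M)^\perp$ since $\bar w_0$ normalises $M$, so $\theta(\bar w_0 D)$ stays in this summand on the left.

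On the remaining summand $S(\Liea_\CC)\otimes V^M$, for $D=p\otimes v$ I compute both sides of \eqref{eq:Cartan0} explicitly. Using $\theta|_{\Liea_\CC}=-\id$, $\Ad(\bar w_0)|_{\Liea_\CC}=w_0$, the shift-by-$\rho$ isomorphism \eqref{eq:PHVid}, and the identity $w_0\rho=-\rho$, the left side corresponds to the polynomial function $\mu\mapsto p(-w_0\mu+\rho)$ on $\Liea_\CC^*$ tensored with $\bar w_0 v$ in $P_{\mathbf H}(V^M)$. For the right side, the defining relation $\theta_{\mathbf H}(\xi)=-w_0\,w_0(\xi)\,w_0$ for $\xi\in\Liea_\CC$, combined with $\theta_{\mathbf H}|_W=\id$ and the commutation relation \eqref{eq:Hrel} iterated through a reduced expression of $w_0$, yields after the Weyl-group correction terms cancel under the outer conjugation by $w_0$ the same polynomial $\mu\mapsto p(-w_0\mu+\rho)$, now tensored with $w_0 v$. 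Since $\bar w_0 v=w_0 v$ in $V^M$ (the $W$-action on $V^M$ is induced from the $N_K(\Liea)$-action), the two sides match. Reduction from general $p$ to linear $p=\xi\in\Liea_\CC$ uses the algebra-homomorphism property of $\theta_{\mathbf H}$ together with the multiplicativity of the shift-by-$\rho$ map.

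For \eqref{eq:Cartan1}, I apply \eqref{eq:Cartan0} with $D=\Psi[e]$ and $e\in E^M_\single$; by $K$-equivariance of $\Psi$ we have $\bar w_0\Psi[e]=\Psi[w_0 e]$ (the identification between the $\bar w_0$-action on $E^M$ and the $W$-action is through $N_K(\Liea)/M=W$). This yields
\[
\tilde\Gamma^E_V(\theta\circ\Psi)[w_0 e]=\theta_{\mathbf H}\bigl(w_0\cdot\tilde\Gamma^E_V(\Psi)[e]\bigr)
\]
after projecting the $V^M$-component to its single part, which commutes with both $\theta_{\mathbf H}$ and left multiplication by $w_0$ since the single/double decomposition of $V^M$ is $W$-stable. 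By Theorem \ref{thm:HC}(i), $\tilde\Gamma^E_V(\Psi)$ is $W$-equivariant, so $w_0\cdot\tilde\Gamma^E_V(\Psi)[e]=\tilde\Gamma^E_V(\Psi)[w_0 e]$; replacing $e$ by $w_0 e\in E^M_\single$ and using $w_0^2=1$ gives \eqref{eq:Cartan1}. Finally, \eqref{eq:Cartan2} and \eqref{eq:Cartan3} follow immediately by observing that $\theta_{\mathbf H}|_{\CC W}=\id$ makes $\theta_{\mathbf H}$ into a $\CC W$-module automorphism of $\mathbf H$, hence it preserves $P_{\mathbf H}(U)=\mathbf H\otimes_{\CC W}U$ for every $W$-submodule $U\subset V^M$, in particular for $U=V^M_\single$ and $U=V^M_\double$. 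The main obstacle is the polynomial calculation on $S(\Liea_\CC)\otimes V^M$: the intermediate expression $\theta_{\mathbf H}(w_0\cdot(\text{shifted }p))$ picks up Weyl-group correction terms from \eqref{eq:Hrel}, and verifying that these corrections cancel after the outer $w_0$-conjugation built into $\theta_{\mathbf H}$ --- transparent in degree one but requiring careful bookkeeping in general --- is the technical core.
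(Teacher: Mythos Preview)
Your approach is correct and essentially matches the paper's. However, the ``main obstacle'' you identify is illusory: the polynomial calculation on $S(\Liea_\CC)\otimes V^M$ requires no bookkeeping of correction terms. Since $\theta_{\mathbf H}$ is an algebra automorphism with $\theta_{\mathbf H}(\xi)=w_0\cdot(-w_0(\xi))\cdot w_0$ for $\xi\in\Liea_\CC$ and $w_0^2=1$, multiplicativity gives immediately $\theta_{\mathbf H}(\varphi(\lambda))=w_0\cdot\varphi(-w_0\lambda)\cdot w_0$ in $\mathbf H$ for \emph{every} $\varphi\in S(\Liea_\CC)$ --- both sides are algebra homomorphisms $S(\Liea_\CC)\to\mathbf H$ agreeing on degree one. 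Then in $P_{\mathbf H}(V^M)=\mathbf H\otimes_{\CC W}V^M$ one has in a single line
\[
\theta_{\mathbf H}\bigl(w_0\cdot\varphi(\lambda+\rho)\otimes v\bigr)
=w_0\cdot w_0\,\varphi(-w_0\lambda+\rho)\,w_0\otimes v
=\varphi(-w_0\lambda+\rho)\otimes w_0 v,
\]
matching your left side. The point is that one never needs to commute $w_0$ past the polynomial via \eqref{eq:Hrel}: working in $\mathbf H\otimes_{\CC W}V^M$ rather than forcing every intermediate expression into the $S(\Liea_\CC)\otimes V^M$ basis, the inner and outer $w_0$'s simply collapse.

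One small addendum: for \eqref{eq:Cartan2} and \eqref{eq:Cartan3} you should first record the identity $\Gamma^E_V(\theta\circ\Psi)[e]=\theta_{\mathbf H}\bigl(w_0\,\Gamma^E_V(\Psi)[w_0 e]\bigr)$ for \emph{all} $e\in E^M$ (not just $e\in E^M_\single$), which follows from \eqref{eq:Cartan0} exactly as in your argument for \eqref{eq:Cartan1} before projecting. Then the conclusion needs not only that $\theta_{\mathbf H}$ preserves $P_{\mathbf H}(V^M_\single)$ and $P_{\mathbf H}(V^M_\double)$, but also that $w_0$ preserves $E^M_\single$, $E^M_\double$, and (by left multiplication) each summand $P_{\mathbf H}(V^M_\bullet)$ --- all immediate from $W$-stability, but your formula involves all three ingredients.
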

\begin{proof}
Suppose $V\in\Km$ and $D\in P_G(V)$.
Let $D=D_1+D_2$ be the decomposition corresponding to the direct sum decomposition \eqref{eq:PGVdec}.
Applying $\theta\circ\bar w_0$ to this, we get
$\theta(\bar w_0D)=\theta(\bar w_0D_1)+\theta(\bar w_0D_2)$,
which is nothing but the decomposition of $\theta(\bar w_0D)$
corresponding to \eqref{eq:PGVdec}.
Thus if we write the identification \eqref{eq:PHVid} in the form
\begin{equation*}
\iota:\,S(\Liea_\CC)\otimes V^M \ni
\varphi(\lambda)\otimes v\longmapsto
\varphi(\lambda+\rho)\otimes v\in
S(\Liea_\CC)\otimes V^M\simeq P_{\mathbf H}(V^M)
\end{equation*}
and prove the equality $\iota(\theta(\bar w_0D_2))=\theta_{\mathbf H}(w_0\,\iota(D_2))$, then
\eqref{eq:Cartan0} follows.
But the equality holds since
\begin{align*}
\iota\bigl(\theta(\bar w_0\,\varphi(\lambda)\otimes v)\bigr)
&=\iota\bigl(\varphi(-w_0^{-1}\lambda)\otimes w_0v\bigr)
=\iota\bigl(\varphi(-w_0\lambda)\otimes w_0v\bigr)\\
&=\varphi(-w_0(\lambda+\rho))\otimes w_0v
=\varphi(-w_0\lambda+\rho)\otimes w_0v\\
&=w_0\,w_0 \varphi(-w_0\lambda+\rho) w_0\otimes v
=w_0\,\theta_{\mathbf H}\bigl(\varphi(\lambda+\rho)\otimes v\bigr)\\
&=w_0\,\theta_{\mathbf H}\bigl(\iota\bigl(\varphi(\lambda)\otimes v\bigr)\bigr)
=
\theta_{\mathbf H}\bigl(w_0\,\iota\bigl(\varphi(\lambda)\otimes v\bigr)\bigr).
\end{align*}

Now, suppose $E,V\in\Km$ and $\Psi\in \Hom_K(E,P_G(V))$.
For any $e\in E^M$
\begin{align*}
\Gamma^E_V(\theta\circ\Psi)[e]&=\gamma(\theta(\Psi[e]))=\gamma(\theta(\Psi[w_0\,w_0e]))\\
&=\gamma(\theta(\bar w_0\Psi[w_0e])) &&(\because \Psi\in\Hom_K)\\
&=\theta_{\mathbf H}(w_0\,\gamma(\Psi[w_0e])) &&(\because \eqref{eq:Cartan0})\\
&=\theta_{\mathbf H}(w_0\,\Gamma^E_V(\Psi)[w_0e]).
\end{align*}
This expression proves \eqref{eq:Cartan2} since
$w_0 E^M_\single=E^M_\single$ and both $\theta_{\mathbf H}$ and the left multiplication by $w_0$
leave $P_{\mathbf H}(V^M_\single)$ stable.
Similar is \eqref{eq:Cartan3}.
Lastly, since the projection $P_{\mathbf H}(V^M)\to P_{\mathbf H}(V^M_\single)$
commutes with $\theta_{\mathbf H}$ and the left multiplication by $w_0$,
for $e\in E^M_\single$ we have
\begin{align*}
\tilde\Gamma^E_V(\theta\circ\Psi)[e]
&=\theta_{\mathbf H}(w_0\,\tilde\Gamma^E_V(\Psi)[w_0e])\\
&=\theta_{\mathbf H}(w_0\,w_0\,\tilde\Gamma^E_V(\Psi)[e])
=\theta_{\mathbf H}(\tilde\Gamma^E_V(\Psi)[e]) &&(\because\text{Theorem \ref{thm:HC} (i)})\\
&=(\theta_{\mathbf H}\circ \tilde\Gamma^E_V(\Psi))[e].
\end{align*}
This shows \eqref{eq:Cartan1}. 
\end{proof}
Suppose $V\in\Km$.
As in Definition \ref{defn:homhom} we identify
\begin{equation*}
\Hom_K(V,C^\infty(G/K))\simeq \Hom_{\Lieg_\CC,K}(P_G(V), C^\infty(G/K)_\Kf)
\end{equation*}
using the action of $\ell(U(\Lieg_\CC))$ on $C^\infty(G/K)_\Kf$.
In view of \eqref{eq:rad2} and Proposition \ref{prop:2Cartans}
we let the analogous identification
\begin{equation*}
\Hom_W(U,C^\infty(A))\simeq \Hom_{\mathbf H}(P_{\mathbf H}(U), C^\infty(A))
\end{equation*}
for any $W$-module $U$
be based on the $\mathbf H$-module structure of $C^\infty(A)$
defined by $\mathscr T(\theta_{\mathbf H}\cdot)$.
Under these identifications the map $\Gamma^V_0$ defined by \eqref{eq:GammaV0}
can be rewritten as
\[
\begin{aligned}
\Gamma_0:\,&
\Hom_{\Lieg_\CC,K}(P_G(V), C^\infty(G/K)_\Kf)
\to
\Hom_{\mathbf H}(P_{\mathbf H}(V^M), C^\infty(A));\\
&\Phi\longmapsto
\Bigl(
\varphi:
P_{\mathbf H}(V^M)\ni \sum_{i=1}^mh_i\otimes v_i
\longmapsto
\sum_{i=1}^m\mathscr T(\theta_{\mathbf H}h_i)\gamma_0(\Phi[v_i])
\Bigr).
\end{aligned}
\]
We distinguish
this interpretation by the symbol $\Gamma_0$
with no superscript.
We remark in contrast to \eqref{cd:HCfunct}
the diagram
\begin{equation*}
\xymatrix{
P_G(V) \ar[r]^-{\Phi}
\ar@{->>}[d]_{\gamma}
& {C^\infty(G/K)_\Kf \ar[d]^{\gamma_0}\hspace{-5ex}}\\
P_{\mathbf H}(V^M) \ar[r]_{\Gamma_0(\Phi)} & C^\infty(A)
}
\end{equation*}
cannot be assumed commutative at all.
Similarly,
for $\Phi\in\Hom_K(V,C^\infty(G/K))=\Hom_{\Lieg_\CC,K}(P_G(V), C^\infty(G/K)_\Kf)$
we let
$\tilde\Gamma_0(\Phi)\in\Hom_{\mathbf H}(P_{\mathbf H}(V^M_\single), C^\infty(A))$
be a map identified with $\tilde\Gamma^V_0(\Phi)\in\Hom_W(V^M_\single, C^\infty(A))$
(cf.~Definition \ref{defn:22} (ii)).
We also use the following identification:
\begin{align*}
\Hom_{K}^\ttt&(V,C^\infty(G/K))\\
\simeq&\Hom_{\Lieg_\CC,K}^\ttt(P_G(V),C^\infty(G/K)_\Kf)\\
&:=\bigl\{
\Phi\in \Hom_{\Lieg_\CC,K}(P_G(V),C^\infty(G/K)_\Kf);\,
\Gamma_0(\Phi)\bigl[ P_{\mathbf H}(V^M_\double)\bigr]=\{0\}
\bigr\}.
\end{align*}

\begin{thm}[the radial part formula]\label{thm:radD}
Suppose $E,V\in \Km$,
$\Psi\in\Hom_K(E,P_G(V))$
and $\Phi \in \Hom_K(V,C^\infty(G/K))=\Hom_{\Lieg_\CC,K}(P_G(V), C^\infty(G/K)_\Kf)$.

\noindent{\normalfont (i)}
Suppose $\Phi \in \Hom_K^\ttt(V,C^\infty(G/K))=\Hom_{\Lieg_\CC,K}^\ttt(P_G(V), C^\infty(G/K)_\Kf)$.
Then it holds that
\begin{equation*}
\tilde \Gamma^E_0(\Phi\circ\Psi)=\tilde\Gamma_0(\Phi)\circ\tilde\Gamma^E_V(\Psi).
\end{equation*}
This means the diagram
\[
\xymatrix{
E
\ar[r]^-{\Psi}
&
P_G(V)
\ar[r]^-{\Phi}
& {C^\infty(G/K)_\Kf
\ar[d]^{\gamma_0}\hspace{-5ex}}\\
E^M_\single
\ar@{_{(}->}[u]
\ar[r]_-{\tilde\Gamma^E_V(\Psi)}
&
P_{\mathbf H}(V^M_\single)
\ar[r]_{\tilde\Gamma_0(\Phi)}
& C^\infty(A)
}
\]
is commutative.
In other words, if we take a basis $\{v_1,\ldots,v_n\}$ of $V$
so that $\{v_1,\ldots,v_{m'}\}$, $\{v_{m'+1},\ldots,v_{m}\}$ and $\{v_{m+1},\ldots,v_{n}\}$
are bases of $V^M_\single$, $V^M_\double$ and $(V^M)^\perp$ respectively $(m'\le m\le n)$,
and if for any $e\in E^M_\single$ we write
\[
\Psi[e]=\sum_{i=1}^n D_i\otimes v_i \text{ with }D_i\in U(\Lieg_\CC),\qquad
\Gamma^E_V(\Psi)[e]=\sum_{i=1}^{m} h_i\otimes v_i \text{ with }h_i\in \mathbf H,
\]
then
\[
\gamma_0\biggl(\sum_{i=1}^n \ell(D_i)\Phi[v_i]\biggr)
=
\sum_{i=1}^{m'}\mathscr T(\theta_{\mathbf H}h_i)\gamma_0(\Phi[v_i]).
\]

\noindent
{\normalfont (ii)}
If $\Phi \in \Hom_{\Lieg_\CC,K}(P_G(V), C^\infty(G/K)_\Kf)$
and
$\Psi\in\Hom_K^\oto(E,P_G(V))$ (see {\normalfont Definition \ref{defn:homhom}}),
then the same assertion as {\normalfont (i)} holds.

\noindent
{\normalfont (iii)}
If $\Phi \in \Hom_{\Lieg_\CC,K}^\ttt(P_G(V), C^\infty(G/K)_\Kf)$
and
$\Psi\in\Hom_K^\ttt(E,P_G(V))$ (see {\normalfont Definition \ref{defn:homhom}}),
then $\Phi\circ\Psi \in\Hom_K^\ttt(E,C^\infty(G/K))$ and hence it holds that
\begin{equation*}
\Gamma^E_0(\Phi\circ\Psi)=\Gamma_0(\Phi)\circ\Gamma^E_V(\Psi).
\end{equation*}
\end{thm}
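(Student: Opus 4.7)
The strategy is to adapt the proof of Theorem \ref{thm:radC}, now allowing the ``radial'' operator to be an arbitrary element $D\in P_G(V)$ (ultimately $D=\Psi[e]$) rather than an element of $U(\Lieg_\CC)^K$. Proposition \ref{prop:2Cartans}, in particular the compatibility \eqref{eq:Cartan0}, serves as the dictionary between the left $\Lieg_\CC$-action and the $\theta_{\mathbf H}$-twisted Cherednik action $\mathscr T(\theta_{\mathbf H}\cdot)$.

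I would first extend Lemma \ref{lem:abstRad} from $U(\Lieg_\CC)^K$ to arbitrary $D\in P_G(V)$. Fix a regular point $x\in A_-$, and identify $\Phi\in\Hom_K(V,C^\infty({K\bar x}))$ with its $(\Lieg_\CC,K)$-extension $P_G(V)\to C^\infty({K\bar x})_\Kf$. Expanding $D$ by PBW, pushing the $\Lien_\CC$-terms into $\mathscr M$ via the congruence \eqref{eq:radialmod}, and collapsing the residual $U(\Liek_\CC)$-factor via $p^V\circ\pi_V$ exactly as in the proof of Lemma \ref{lem:abstRad}, one obtains a unique $E_D\in\mathscr M\otimes\partial(S(\Liea_\CC))\otimes(V^M)^*$ such that
\[
\gamma_{0,x}(\Phi[D])=\bigl(\partial(\gamma(D)(\cdot-\rho))+E_D\bigr)\bigl(\Gamma^V_{0,x}(\Phi)\bigr),
\]
where $\gamma(D)(\cdot-\rho)\in S(\Liea_\CC)\otimes V^M$ is the $V^M$-projection of $D$ in \eqref{eq:PGVdec}, and both operators act on $\Gamma^V_{0,x}(\Phi)\in\Hom_\CC(V^M,C^\infty_x)$ by pairing the $V^M$-factor. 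An analogous local formulation of Lemma \ref{lem:abstChered} for the $W$-module $V^M$ with $\mathbf H$ acting via $\mathscr T\circ\theta_{\mathbf H}$ produces, for each $h\in P_{\mathbf H}(V^M)$, a matching identity whose principal part agrees with the one above thanks to \eqref{eq:Cartan0}.

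The final step is to identify the blocks of $E_D$ with those of the Cherednik correction, block by block with respect to $V^M=V^M_\single\oplus V^M_\double$. I would verify this first for $D$ built from the Casimir $L_\Lieg$ via the explicit computation leading to \eqref{eq:Lg} together with Lemma \ref{lem:qsp}; for arbitrary $D$, the matching is transferred using the commutativity of $r(L_\Lieg)$ with the $\ell$-action combined with Lemma \ref{lem:Ltrick}, precisely in the spirit of the block analysis at the end of Section \ref{sec:radI}. Specializing $D=\Psi[e]$ then yields each part of the theorem: (i) uses the $\ttt$-hypothesis on $\Phi$ to annihilate the $V^M_\double$-block of $\Gamma_0(\Phi)$; (ii) uses the $\oto$-hypothesis on $\Psi$ to guarantee $\gamma(\Psi[E^M_\single])\subset P_{\mathbf H}(V^M_\single)$, ruling out cross-leakage into the double block; (iii) combines both hypotheses, and the block analysis additionally gives $\Phi\circ\Psi\in\Hom_K^\ttt(E,C^\infty(G/K))$, so that the formula of (i) extends from $E^M_\single$ to all of $E^M$. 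The main obstacle is controlling the off-diagonal couplings in $E_D$: in general these couplings need not respect the single/double decomposition of $V^M$, and it is precisely Lemma \ref{lem:qsp}~(ii), reinforced by the asymmetric $\oto$/$\ttt$ hypotheses, that forces the block-preserving structure required to match $E_D$ with the Cherednik correction.
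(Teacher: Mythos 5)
Your strategy — localize at a regular point $x\in A_-$, expand the left action in $\mathscr R\otimes\partial(S(\Liea_\CC))\otimes\End_\CC(V^M)^*$, set up the parallel Cherednik expansion, and identify the $\mathscr M$-parts block by block via commutation with the Casimir and Lemma \ref{lem:Ltrick} — is indeed the paper's strategy. But the local expansion you state on the $G$-side has a genuine error. You claim
\[
\gamma_{0,x}(\Phi[D])=\bigl(\partial(\gamma(D)(\cdot-\rho))+E_D\bigr)\bigl(\Gamma^V_{0,x}(\Phi)\bigr),
\qquad E_D\in\mathscr M\otimes\partial(S(\Liea_\CC))\otimes\cdots,
\]
with $\gamma(D)(\cdot-\rho)$ the $V^M$-projection of $D$ itself via \eqref{eq:PGVdec}. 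That is not the correct principal part for the \emph{left} action. Lemma \ref{lem:abstRad2} starts from the $U(\Lien_\CC+\Liea_\CC)$-components $D_{ij}$ of $(\theta\circ\Psi)[e_i]$, applies the congruence \eqref{eq:radialmod} to \emph{those}, and only afterwards applies $\theta$ back; the resulting principal part is $\partial(\gamma(D_{ij})(\cdot-\rho))$, i.e.\ the $\Liea$-part of the $\theta$-twisted element, not of $D=\Psi[e]$. These differ in general — for instance $\gamma(X_\alpha\theta X_\alpha)=0$ while $\gamma(\theta(X_\alpha\theta X_\alpha))=\gamma([\theta X_\alpha,X_\alpha])\ne0$ — and the discrepancy is a nonzero constant-coefficient operator, which cannot be absorbed into $E_D$ if $E_D$ is to lie in $\mathscr M\otimes\cdots$. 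Without the twist, the $\spec$-argument of Lemma \ref{lem:Ltrick} does not apply and the principal parts on the two sides do not match. You correctly name \eqref{eq:Cartan0} as the dictionary, but it has to be applied \emph{inside} the local expansion — as in \eqref{eq:partilapart}, which invokes \eqref{eq:Cartan1} to identify $f_{ij}=\gamma(D_{ij})$ on the single--single block — not only invoked at the end.

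A secondary, related slip: ``collapsing the residual $U(\Liek_\CC)$-factor via $p^V\circ\pi_V$ exactly as in Lemma \ref{lem:abstRad}'' conflates the $r$- and $\ell$-mechanisms. For $r(\Delta)$ the $\Liek$-factor ends up acting on $\Phi[v]$ through $\ell(\trans D')$, and $p^V\circ\pi_V$ collapses it. For $\ell(D)$, after applying $\theta$ as above, the residual $\Liek$-factor appears as $\Ad(y)(D'_{ijk})$ and evaluates at $y\in A$ through the counit $\epsilon:U(\Liek_\CC)\to\CC$; the orthogonal projection $p^V$ only enters via $\gamma_{0,x}(\Phi[v_j])=0$ for $v_j\in(V^M)^\perp$. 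This difference is precisely why the $\ell$-side requires its own Lemma \ref{lem:abstRad2} rather than being a direct reuse of Lemma \ref{lem:abstRad}.
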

The proof of the theorem is similar to that of Theorem~\ref{thm:radC}.
Suppose $x\in A_-$ and 
let $\gamma_{0,Wx}: C^\infty(K\bar x)\to C^\infty(Wx)$,
$\gamma_{0,x}: C^\infty(K\bar x)\to C^\infty_x$, $\mathscr R$, $\mathscr M$
and $\mathscr R\otimes \partial(S(\Liea_\CC))$
be as in \S\ref{sec:radI}.
In addition to the basis $\{v_1,\ldots,v_n\}$ of $V$ in (i), take
a basis $\{e_1,\ldots,e_\mu\}$ of $E^M$
so that $\{e_1,\ldots,e_{\mu'}\}$, $\{e_{\mu'+1},\ldots,e_{\mu}\}$
are bases of $E^M_\single$, $E^M_\double$ respectively $(\mu'\le \mu)$.
We use the same identifications with \eqref{eq:restId}:
\[
\Hom_W(V^M,C^\infty(Wx))\simeq \Hom_\CC(V^M,C^\infty_x),\
\Hom_W(E^M,C^\infty(Wx))\simeq\Hom_\CC(E^M,C^\infty_x).
\]
They are still identified with
the spaces of column vectors with entries in $C^\infty_x$
by using $\{v_1,\ldots,v_m\}$ and $\{e_1,\ldots,e_\mu\}$.
Thus each element of $\Mat(\mu,m;\mathscr R\otimes \partial(S(\Liea_\CC)))$ naturally
defines a map $\Hom_W(V^M,C^\infty(Wx)) \to \Hom_W(E^M,C^\infty(Wx))$.
\begin{lem}\label{lem:abstRad2}
Suppose $\Psi\in\Hom_K(E,P_G(V))$ and take $D_{ij}\in U(\Lien_\CC+\Liea_\CC)$
so that
\begin{equation}\label{eq:thetaPsi}
(\theta\circ\Psi)[e_i]=\sum_{j=1}^n D_{ij}\otimes v_j
\qquad\text{for }i=1,\ldots,\mu.
\end{equation}
Then there exists a unique
$S=(S_{ij}) \in \Mat(\mu,m; \mathscr M\otimes \partial(S(\Liea_\CC)))$
such that
\begin{equation}\label{eq:dMdecomp}
\Gamma^E_{0,x}(\Phi\circ\Psi)[e_i]
=
\sum_{j=1}^{m}
\Bigl(
\partial\bigl(\gamma(D_{ij})(\cdot-\rho)\bigr)+S_{ij}
\Bigr)\,\Gamma^V_{0,x}(\Phi)[v_j]
\end{equation}
for any $i=1,\ldots,\mu$ and
$\Phi\in\Hom_K(V,C^\infty(K\bar x))\simeq\Hom_{\Lieg_\CC,K}(P_G(V),C^\infty(K\bar x)_\Kf)$.
 \end{lem}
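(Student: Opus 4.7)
My plan is to adapt the proof of Lemma \ref{lem:abstRad} to the setting of the left action. The key new ingredient, playing the role of the congruence \eqref{eq:radialmod}, will be the following single computational identity: writing $\bar X_\alpha := \theta X_\alpha \in \Lieg_{-\alpha}$ and $Z_\alpha := X_\alpha + \theta X_\alpha \in \Liek$ for $X_\alpha \in \Lieg_\alpha$, I first show that for any right-$K$-invariant $g \in C^\infty(K\bar x)$,
\[
\ell(\bar X_\alpha)\,g|_A \;=\; \frac{1}{1-e^{-2\alpha}}\,\ell(Z_\alpha)\,g|_A,
\]
where $\frac{1}{1-e^{-2\alpha}}$ is understood through its absolutely convergent expansion $-\sum_{k\ge 1}e^{2k\alpha}$ on $A_-$, and hence lies in $\mathscr M$. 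This follows by combining $\ell(X)f(y) = -r(\Ad(y^{-1})X)f(y)$ for $y \in A$, the eigenvalue relation $\Ad(y^{-1})\bar X_\alpha = e^{\alpha(\log y)}\bar X_\alpha$, right-$K$-invariance $r(Z_\alpha)g = 0$, and the identity $\bar X_\alpha = Z_\alpha - X_\alpha$; together these yield $(1-e^{-2\alpha})\ell(\bar X_\alpha)g = \ell(Z_\alpha)g$ on $A$.

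For existence of the decomposition, I split each $D_{ij} = (D_{ij})_\Liea + (D_{ij})_\Lien$ under $U(\Lien_\CC+\Liea_\CC) = U(\Liea_\CC) \oplus \Lien_\CC U(\Lien_\CC+\Liea_\CC)$, so that $(D_{ij})_\Liea = \gamma(D_{ij})(\cdot-\rho)$. Because $\theta$ acts as the parity involution on $U(\Liea_\CC)$ and $\ell(H)|_A = -\partial(H)$ for $H \in \Liea$, a direct check gives $\ell(\theta(D_{ij})_\Liea)\Phi[v_j]|_A = \partial(\gamma(D_{ij})(\cdot-\rho))\gamma_{0,x}(\Phi[v_j])$, which equals $\partial(\gamma(D_{ij})(\cdot-\rho))\Gamma^V_{0,x}(\Phi)[v_j]$ for $j \le m$ and vanishes for $j > m$ (since $p^V(v_j) = 0$); this produces the leading term of \eqref{eq:dMdecomp}. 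For the remainder $\ell(\theta(D_{ij})_\Lien)\Phi[v_j]|_A$ with $\theta(D_{ij})_\Lien \in \theta\Lien_\CC\cdot U(\theta\Lien_\CC+\Liea_\CC)$, I induct on the total degree of $(D_{ij})_\Lien$: extracting an outermost $\bar X_\alpha$ via PBW and applying the boxed identity produces an $\mathscr M$-valued prefactor times $\ell(Z_\alpha)$; commuting $\ell(Z_\alpha)$ through the rest yields $\ell(\theta D')\ell(Z_\alpha) + \ell([Z_\alpha,\theta D'])$, where $K$-equivariance of $\Phi$ gives $\ell(Z_\alpha)\Phi[v] = \Phi[\pi_V(Z_\alpha)v]$ and both $\theta D'$ and $[Z_\alpha,\theta D']$ have strictly smaller total degree, so the induction hypothesis applies. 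Since $\mathscr M$ is an ideal of $\mathscr R$, the $\mathscr M$-valued prefactor persists through all iterations; after passing $\gamma_{0,x}\circ\Phi$ through $p^V$ to obtain $\gamma_{0,x}(\Phi[w]) = \Gamma^V_{0,x}(\Phi)[p^V(w)]$, the accumulated terms consolidate into $\sum_{j'=1}^{m}S_{ij'}\,\Gamma^V_{0,x}(\Phi)[v_{j'}]$ with $S_{ij'} \in \mathscr M\otimes\partial(S(\Liea_\CC))$; contributions from indices $j > m$, where $\gamma_{0,x}(\Phi[v_j]) = 0$ but $\ell(\theta(D_{ij})_\Lien)\Phi[v_j]|_A$ need not vanish, are handled identically and absorbed into the same $S_{ij'}$'s.

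Uniqueness of the matrix $(S_{ij})$ will follow from Lemma \ref{lem:localCh}: since $\Gamma^V_{0,x}$ is a bijection, the $m$-tuple $(\Gamma^V_{0,x}(\Phi)[v_j])_{j=1}^m$ can be freely prescribed, so any two decompositions of the stated form must coincide. The chief technical obstacle will be in controlling the induction in paragraph two: the commutator $[Z_\alpha,\theta D']$ need not lie in $U(\theta\Lien_\CC+\Liea_\CC)$ and can reintroduce $X_\alpha$-factors through $[Z_\alpha,H] = -\alpha(H)(X_\alpha-\bar X_\alpha)$, and the subsequent identity $\ell(X_\alpha)g|_A = \frac{1}{1-e^{2\alpha}}\ell(Z_\alpha)g|_A$ introduces the factor $\frac{1}{1-e^{2\alpha}} = 1 + e^{2\alpha} + e^{4\alpha} + \cdots$ whose constant term does not lie in $\mathscr M$. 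This danger is neutralized by the structural observation that every inductive branch is entered through an outermost $\bar X_\alpha$-extraction, and therefore always carries the $\mathscr M$-valued prefactor $\frac{1}{1-e^{-2\alpha}}$ inherited from that first step; since $\mathscr M\cdot(\mathscr R\otimes\partial(S(\Liea_\CC)))\subset \mathscr M\otimes\partial(S(\Liea_\CC))$, the overall coefficient remains in $\mathscr M\otimes\partial(S(\Liea_\CC))$ throughout the induction.
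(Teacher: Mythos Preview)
Your argument is correct, but it takes a longer and more hands-on route than the paper's. The paper does not set up a fresh induction at all: it simply recycles the congruence \eqref{eq:radialmod} already obtained in the proof of Lemma~\ref{lem:abstRad}. Applying that congruence to each $D_{ij}$ gives
\[
(\theta\circ\Psi)[e_i]=\sum_j\gamma(D_{ij})(\cdot-\rho)\otimes v_j+\sum_{j,k}c_{ijk}(y)\,\Ad(y^{-1})(D'_{ijk})\,D''_{ijk}\otimes v_j
\]
in $P_G(V)$, with $c_{ijk}\in\mathscr M$, $D'_{ijk}\in U(\Liek_\CC)$, $D''_{ijk}\in S(\Liea_\CC)$. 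Applying $\theta$ turns $\Ad(y^{-1})(D'_{ijk})$ into $\Ad(y)(D'_{ijk})$ and $D''_{ijk}$ into $\trans D''_{ijk}$, and then one uses the single clean identity $\ell(\Ad(y)D')f(y)=r(\trans D')f(y)$ together with right-$K$-invariance to get $\ell(\Ad(y)D'_{ijk}\,\trans D''_{ijk})\Phi[v_j](y)=\epsilon(D'_{ijk})\,\partial(D''_{ijk})\,\gamma_{0,x}(\Phi[v_j])(y)$. This yields $S_{ij}=\sum_k c_{ijk}\,\epsilon(D'_{ijk})\,\partial(D''_{ijk})$ in one stroke, with no commutators to chase.

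Your identity $\ell(\bar X_\alpha)g|_A=\frac{1}{1-e^{-2\alpha}}\ell(Z_\alpha)g|_A$ is exactly the degree-one instance of what \eqref{eq:radialmod} packages uniformly in all degrees; your induction is in effect re-deriving \eqref{eq:radialmod} from scratch. The advantage of the paper's route is economy and the absence of the awkward commutator $[Z_\alpha,\theta D']$, which as you note escapes $U(\theta\Lien_\CC+\Liea_\CC)$ and forces a PBW rewrite at each inductive step. The advantage of your route is that it is self-contained and makes the mechanism (convert each $\theta\Lien$-factor to a $\Liek$-factor at the cost of an $\mathscr M$-coefficient) completely transparent; it also foreshadows the style of the explicit Casimir computation later in \S\ref{sec:radI}. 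Either way the uniqueness argument via Lemma~\ref{lem:localCh} is the same.
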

\begin{proof}
By the same method used in obtaining \eqref{eq:radialmod}
we can take $c_{ijk}\in\mathscr M$, $D'_{ijk}\in U(\Liek_\CC)$,
and $D''_{ijk}\in S(\Liea_\CC)$ ($1\le i\le \mu, 1\le j\le n, 1\le k\le q_{ij}$) so that it holds that
\begin{equation*}
(\theta\circ\Psi)[e_i]
=\sum_{j=1}^n\gamma(D_{ij})(\cdot-\rho)\otimes v_j
+\sum_{j=1}^{n}\sum_{k=1}^{q_{ij}}c_{ijk}(y)\Ad(y^{-1})(D'_{ijk})D''_{ijk}\otimes v_j
\end{equation*}
for any $i=1,\ldots,\mu$ and $y\in A_-$.
Applying $\theta$ to both the side
we get
\begin{equation*}
\Psi[e_i]
=\sum_{j=1}^n{\mathstrut}^{\mathrm t}\!\bigl(\gamma(D_{ij})(\cdot-\rho)\bigr)\otimes v_j
+\sum_{j=1}^{n}\sum_{k=1}^{q_{ij}}c_{ijk}(y)
\Ad(y)(D'_{ijk})\,\trans\!D''_{ijk}\otimes v_j.\end{equation*}
Hence letting $\epsilon:U(\Liek_\CC)\to\CC$ be the projection 
of the decomposition $U(\Liek_\CC)=\CC\oplus U(\Liek_\CC)\Liek_\CC$
to the first summand,
we calculate for $i=1,\ldots,\mu$ and $y$ in a neighborhood of $x$
\begin{align*}
(\Phi&\circ\Psi)[e_i](y)\\
&=\sum_{j=1}^n\ell\Bigl({\mathstrut}^{\mathrm t}\!\bigl(\gamma(D_{ij})(\cdot-\rho)\bigr)\Bigr)\Phi[v_j] (y)+\sum_{j=1}^{n}\sum_{k=1}^{q_{ij}}c_{ijk}(y)
\ell\bigl(
\Ad(y)(D'_{ijk})\,\trans\!D''_{ijk}
\bigr)\Phi[v_j](y)\\
&=
\sum_{j=1}^n\partial\bigl(\gamma(D_{ij})(\cdot-\rho)\bigr)\gamma_{0,x}\bigl(\Phi[v_j]\bigr) (y)
+\sum_{j=1}^{n}\sum_{k=1}^{q_{ij}}c_{ijk}(y)\,
\epsilon(D'_{ijk})\,\partial(D''_{ijk})
\gamma_{0,x}\bigl(\Phi[v_j]\bigr)(y)\\
&=
\sum_{j=1}^m\partial\bigl(\gamma(D_{ij})(\cdot-\rho)\bigr)\gamma_{0,x}\bigl(\Phi[v_j]\bigr) (y)
+\sum_{j=1}^{m}\sum_{k=1}^{q_{ij}}c_{ijk}(y)\,
\epsilon(D'_{ijk})\,\partial(D''_{ijk})
\gamma_{0,x}\bigl(\Phi[v_j]\bigr)(y).
\end{align*}
Here the last equality follows from the fact that
$\gamma_{0,x}\bigl(\Phi[v_j]\bigr)=0$ for $j=m+1,\ldots,n$.
Now putting $S_{ij}=\sum_{k=1}^{q_{ij}}c_{ijk}(y)\,
\epsilon(D'_{ijk})\,\partial(D''_{ijk})$ we get \eqref{eq:dMdecomp}.
Its uniqueness is due to the surjectivity of $\Gamma^V_{0,x}$
(Lemma \ref{lem:localCh}).
\end{proof}
\begin{lem}\label{lem:abstChered2}
Suppose $\psi\in\Hom_W(E^M_\single,P_{\mathbf H}(V^M_\single))$
and take $f_{ij}\in S(\Liea_\CC)$ so that
\[
(\theta_{\mathbf H}\circ\psi)[e_i]=\sum_{j=1}^{m'}f_{ij}\otimes v_j\qquad\text{for }i=1,\ldots,\mu'.
\]
Then there exists a unique $T=(T_{ij})\in\Mat(\mu',m';\mathscr M\otimes\partial(S(\Liea_\CC)))$
such that
\begin{equation}\label{eq:Chred2}
(\varphi\circ\psi)[e_i]
=\sum_{j=1}^{m'}
\mathscr T(f_{ij})\varphi[v_j]
=
\sum_{j=1}^{m'}\Bigl(\partial\bigl(f_{ij}(\cdot-\rho)\bigr)+T_{ij}\Bigr)\varphi[v_j]
\end{equation}
for any $i=1,\ldots,\mu'$ and
\[
\varphi\in
\Hom_{\mathbf H}(P_{\mathbf H}(V^M_\single),C^\infty(Wx))\simeq
\Hom_W(V^M_\single,C^\infty(Wx))\simeq
\Hom_\CC(V^M_\single,C^\infty_x).
\]
Note $h\in\mathbf H$ acts on $C^\infty(Wx)$
by $\mathscr T(\theta_{\mathbf H}h)$.
We consider
$\varphi[v_j]\in C^\infty(Wx)$ in the second expression of \eqref{eq:Chred2}
while $\varphi[v_j]\in C^\infty_x$ in the third expression .
\end{lem}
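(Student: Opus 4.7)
The plan is to follow the two-step template of Lemma \ref{lem:abstChered}: first rewrite $(\varphi\circ\psi)[e_i]$ as a sum of Cherednik operators applied to the values $\varphi[v_j]$, and then apply the structural decomposition
\[
\mathscr T(D)-\partial(D(\cdot-\rho))\in(\tilde{\mathscr R}\cap\mathscr M)\,\partial(S(\Liea_\CC))\,W\qquad(D\in S(\Liea_\CC))
\]
already established in the paragraph preceding Lemma \ref{lem:abstChered}, absorbing the $W$-tail into a $\CC$-coefficient matrix.

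For the first step I would exploit the fact that $\theta_{\mathbf H}$ is an involution---a one-line check against the generators in Definition \ref{defn:HCartan}. Applying $\theta_{\mathbf H}$ to the hypothesis $(\theta_{\mathbf H}\circ\psi)[e_i]=\sum_j f_{ij}\otimes v_j$ yields $\psi[e_i]=\sum_j\theta_{\mathbf H}(f_{ij})\otimes v_j$. Because $\varphi$ is an $\mathbf H$-homomorphism for the action $\mathscr T(\theta_{\mathbf H}\cdot)$ on $C^\infty(Wx)$, one has $\varphi[h\otimes v]=\mathscr T(\theta_{\mathbf H}h)\,\varphi[v]$, and so
\[
(\varphi\circ\psi)[e_i]=\sum_j \mathscr T(\theta_{\mathbf H}^2 f_{ij})\,\varphi[v_j]=\sum_j\mathscr T(f_{ij})\,\varphi[v_j],
\]
which is the first equality of \eqref{eq:Chred2}.

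For the second step, apply the structural decomposition above to each $\mathscr T(f_{ij})$ and write the $W$-tail as $\sum_k r_{ij,k}\,w_{ij,k}$ with $r_{ij,k}\in(\tilde{\mathscr R}\cap\mathscr M)\,\partial(S(\Liea_\CC))$ and $w_{ij,k}\in W$. Acting on $\varphi[v_j]\in C^\infty(Wx)$, and using $W$-equivariance of $\varphi$ to rewrite $w_{ij,k}\cdot\varphi[v_j]=\varphi[w_{ij,k}v_j]$, then re-expanding $w_{ij,k}v_j$ in the basis $\{v_1,\dots,v_{m'}\}$ of the $W$-stable subspace $V^M_\single$, I would collect the coefficients of each $\varphi[v_j]$ into a purely differential operator matrix $T=(T_{ij})\in\Mat(\mu',m';\mathscr M\otimes\partial(S(\Liea_\CC)))$. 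Restricting the resulting identity to the germ at $x$ produces the third expression in \eqref{eq:Chred2}.

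Uniqueness follows from the composite bijection
\[
\Hom_{\mathbf H}(P_{\mathbf H}(V^M_\single),\,C^\infty(Wx))\simarrow\Hom_W(V^M_\single,\,C^\infty(Wx))\simarrow\Hom_\CC(V^M_\single,\,C^\infty_x)
\]
given by Frobenius reciprocity together with \eqref{eq:restId}: the germs $\varphi[v_j]$ at $x$ may be prescribed independently, so testing \eqref{eq:Chred2} against a $\varphi$ supported on a single basis vector pins down each $T_{ij_0}$ as an operator on $C^\infty_x$. The only subtle point is the $W$-absorption in step two, but it works cleanly because $V^M_\single$ is $W$-stable, so every $w_{ij,k}v_j$ re-expands with scalar coefficients that commute with the operators $r_{ij,k}$; beyond that, the proof is an almost verbatim transcription of the argument for Lemma \ref{lem:abstChered}, with an extra basis index $j$ carried throughout.
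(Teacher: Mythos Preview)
Your proof is correct and follows essentially the same route as the paper, which dispatches the lemma in one line by pointing back to the structural decomposition $\mathscr T(D)-\partial(D(\cdot-\rho))\in(\tilde{\mathscr R}\cap\mathscr M)\,\partial(S(\Liea_\CC))\,W$ established just before Lemma~\ref{lem:abstChered}. You have simply made explicit the two ingredients the paper leaves implicit: the involutivity of $\theta_{\mathbf H}$ to obtain the first equality, and the absorption of the $W$-tail into scalar coefficients via the $W$-stability of $V^M_\single$ to obtain the matrix $T$.
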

\begin{proof}
Immediate from
the argument just before Lemma \ref{lem:abstChered}.
\end{proof}
Now suppose $\Psi$, $D_{ij}$ and $S$ are as in Lemma \ref{lem:abstRad2}.
According to the devisions of bases
\begin{align*}
\{e_1,\ldots,e_{\mu}\}&=\{e_1,\ldots,e_{\mu'}\} \sqcup \{e_{\mu'+1},\ldots,e_\mu\},\\
\{v_1,\ldots,v_{m}\}&=\{v_1,\ldots,v_{m'}\} \sqcup \{v_{m'+1},\ldots,v_m\},
\end{align*}
we divide $S$ into four blocks:
\[
S=\begin{pmatrix}
S_\single & P \\
Q & S_\double
\end{pmatrix}.
\]
Similarly we divide the matrix $\bigl(
\partial\bigl(\gamma(D_{ij})(\cdot-\rho)\bigr)
\bigr)_{\substack{1\le i\le \mu\\1\le j\le m}}
\in \Mat(\mu,m;\partial(S(\Liea_\CC)))$ into four blocks:
\[
\bigl(
\partial\bigl(\gamma(D_{ij})(\cdot-\rho)\bigr)
\bigr)_{\substack{1\le i\le \mu\\1\le j\le m}}
=\begin{pmatrix}
\partial_\single & \partial_P \\
\partial_Q & \partial_\double
\end{pmatrix}.
\]
For $\psi:=\tilde\Gamma^E_V(\Psi)$
let $f_{ij}$ and $T$ be as in Lemma \ref{lem:abstChered2}.
Because of \eqref{eq:Cartan1} we clearly have
\[
f_{ij}=\gamma(D_{ij})\qquad\text{for }i=1,\ldots,\mu'\text{ and }
j=1,\ldots,m',
\]
and hence
\begin{equation}\label{eq:partilapart}
\bigl(
\partial\bigl(f_{ij}(\cdot-\rho)\bigr)
\bigr)_{\substack{1\le i\le \mu'\\1\le j\le m'}}
=\partial_\single.
\end{equation}
Let $F_{\gamma(L_\Lieg)}$ (resp., $F'_{\gamma(L_\Lieg)}$) be the $F$ of Lemma \ref{lem:abstChered}
for $U=V^M_\single$ (resp., $U=E^M_\single$) and $\Delta=\gamma(L_\Lieg)\in S(\Liea_\CC)^W$.
By a result of \S\ref{sec:radI},
there exist a matrix $L_\double\in \Mat(m-m',m-m';\mathscr M\otimes\partial(S(\Liea_\CC)))$
such that for any $\Phi\in\Hom_K(V^M,C^\infty(K\bar x))$
\[
\Gamma^V_{0,x}\bigl(r(L_\Lieg)\circ\Phi\bigr)=
\biggl(
\partial(\gamma(L_\Lieg)(\cdot-\rho))+\begin{pmatrix}
F_{\gamma(L_\Lieg)} & 0\\
0 & L_\double
\end{pmatrix}
\biggr)
{\mathstrut}^{\mathrm t}\!\bigl(\Gamma^V_{0,x}(\Phi)[v_1],\ldots,\Gamma^V_{0,x}(\Phi)[v_m]\bigr),
\]
and a matrix $L'_\double\in \Mat(\mu-\mu',\mu-\mu';\mathscr M\otimes\partial(S(\Liea_\CC)))$
such that for any $\Phi'\in\Hom_K(E^M,C^\infty(K\bar x))$
\[
\Gamma^E_{0,x}\bigl(r(L_\Lieg)\circ\Phi'\bigr)=
\biggl(
\partial(\gamma(L_\Lieg)(\cdot-\rho))+\begin{pmatrix}
F'_{\gamma(L_\Lieg)} & 0\\
0 & L'_\double
\end{pmatrix}
\biggr)
{\mathstrut}^{\mathrm t}\!\bigl(\Gamma^E_{0,x}(\Phi')[e_1],\ldots,\Gamma^E_{0,x}(\Phi')[e_\mu]\bigr).
\]
Since $r(\cdot)$ and $\ell(\cdot)$ commute, we have $r(L_\Lieg)\circ (\Phi\circ\Psi)=(r(L_\Lieg)\circ \Phi)\circ\Psi$
for any $\Phi\in\Hom_K(V^M,C^\infty(K\bar x))$.
Therefore Lemma \ref{lem:abstRad2} and the surjectivity of $\Gamma^V_{0,x}$ imply
the matrix identity:
\begin{multline}\label{eq:matId1}
\biggl(
\partial(\gamma(L_\Lieg)(\cdot-\rho))+\begin{pmatrix}
F'_{\gamma(L_\Lieg)} & 0\\
0 & L'_\double
\end{pmatrix}
\biggr)
\begin{pmatrix}
\partial_\single+S_\single & \partial_P+P \\
\partial_Q+Q & \partial_\double+S_\double
\end{pmatrix}
\\
=
\begin{pmatrix}
\partial_\single+S_\single & \partial_P+P \\
\partial_Q+Q & \partial_\double+S_\double
\end{pmatrix}
\biggl(
\partial(\gamma(L_\Lieg)(\cdot-\rho))+\begin{pmatrix}
F_{\gamma(L_\Lieg)} & 0\\
0 & L_\double
\end{pmatrix}
\biggr).
\end{multline}
On the other hand, it follows from Proposition \ref{prop:Cherednik} (i)
that $\mathscr T(\gamma(L_\Lieg))\circ(\varphi \circ \psi)
=(\mathscr T(\gamma(L_\Lieg))\circ\varphi) \circ \psi$
for any $\varphi\in\Hom_{\mathbf H}(P_{\mathbf H}(V^M_\single),C^\infty(Wx))
\simeq\Hom_\CC(V^M_\single,C^\infty_x)$.
Hence Lemma \ref{lem:abstChered},
Lemma \ref{lem:abstChered2} and \eqref{eq:partilapart}
imply the matrix identity:
\begin{equation}\label{eq:matId2}
\bigl(\partial(\gamma(L_\Lieg)(\cdot-\rho))+F'_{\gamma(L_\Lieg)}\bigr)
\bigl(\partial_\single+T\bigr)
=
\bigl(\partial_\single+T\bigr)
\bigl(\partial(\gamma(L_\Lieg)(\cdot-\rho))+F_{\gamma(L_\Lieg)}\bigr).
\end{equation}
Now by comparing the upper-left blocks in \eqref{eq:matId1}
we get
\begin{equation}\label{eq:matId3}
\bigl(\partial(\gamma(L_\Lieg)(\cdot-\rho))+F'_{\gamma(L_\Lieg)}\bigr)
\bigl(\partial_\single+S_\single\bigr)
=
\bigl(\partial_\single+S_\single\bigr)
\bigl(\partial(\gamma(L_\Lieg)(\cdot-\rho))+F_{\gamma(L_\Lieg)}\bigr).
\end{equation}
Subtracting \eqref{eq:matId2} from \eqref{eq:matId3},
\begin{equation}\label{eq:matId4}
\bigl[
\partial(\gamma(L_\Lieg)(\cdot-\rho)),\,
S_\single-T
\bigr]=
\bigl(S_\single-T\bigr)F_{\gamma(L_\Lieg)}-F'_{\gamma(L_\Lieg)}\bigl(S_\single-T\bigr).
\end{equation}
Applying Lemma \ref{lem:Ltrick} to \eqref{eq:matId4} we conclude 
\begin{equation}\label{eq:ST}
S_\single=T.
\end{equation}

If $\Phi \in \Hom_K^\ttt(V,C^\infty(K\bar x))$,
that is, if $\Gamma^V_{0,x}(\Phi)[v_j]=0$ for $j=m'+1,\ldots,m$, then
for $i=1,\ldots,\mu'$
\begin{align*}
\tilde\Gamma^E_{0,x}(\Phi\circ\Psi)[e_i]
&=\Gamma^E_{0,x}(\Phi\circ\Psi)[e_i]
&&(\text{by definition})\\
&=\sum_{j=1}^{m}
\Bigl(
\partial\bigl(\gamma(D_{ij})(\cdot-\rho)\bigr)+S_{ij}
\Bigr)\,\Gamma^V_{0,x}(\Phi)[v_j]
&&(\because\text{Lemma \ref{lem:abstRad2}})\\
&=\sum_{j=1}^{m'}
\Bigl(
\partial\bigl(\gamma(D_{ij})(\cdot-\rho)\bigr)+S_{ij}
\Bigr)\,\tilde\Gamma^V_{0,x}(\Phi)[v_j]\\
&=\sum_{j=1}^{m'}
\Bigl(
\partial\bigl(f_{ij}(\cdot-\rho)\bigr)+T_{ij}\Bigr)\,\tilde\Gamma^V_{0,x}(\Phi)[v_j]
&&(\because\text{\eqref{eq:partilapart} and \eqref{eq:ST}})\\
&=\bigl(\tilde\Gamma_{0,x}(\Phi)\circ \tilde\Gamma^E_V(\Psi)\bigr)[e_i].
&&(\because\text{Lemma \ref{lem:abstChered2}})
\end{align*}
This proves Theorem \ref{thm:radD} (i).

Next, suppose $\Phi \in \Hom_K(V,C^\infty(K\bar x))$ is arbitrary
and $\Psi\in\Hom_K^\oto(E,P_G(V))$.
Then $\theta\circ\Psi\in\Hom_K^\oto(E,P_G(V))$ by \eqref{eq:Cartan2}.
Thus $\gamma(D_{ij})=0$ for $i=1,\ldots,\mu'$ and $j=m'+1,\ldots,m$
in view of \eqref{eq:thetaPsi},
which means $\partial_P=0$.
Hence by comparing the upper-right blocks in \eqref{eq:matId1} we get
\[
\bigl[
\partial(\gamma(L_\Lieg)(\cdot-\rho)),\,
P
\bigr]=
P L_\double - F'_{\gamma(L_\Lieg)} P.
\]
Applying Lemma \ref{lem:Ltrick} to this, we obtain $P=0$.
Thus for $i=1,\ldots,\mu'$
\begin{align*}
\tilde\Gamma^E_{0,x}(\Phi\circ\Psi)[e_i]
&=\sum_{j=1}^{m}
\Bigl(
\partial\bigl(\gamma(D_{ij})(\cdot-\rho)\bigr)+S_{ij}
\Bigr)\,\Gamma^V_{0,x}(\Phi)[v_j]\\
&=\sum_{j=1}^{m'}
\Bigl(
\partial\bigl(\gamma(D_{ij})(\cdot-\rho)\bigr)+S_{ij}
\Bigr)\,\Gamma^V_{0,x}(\Phi)[v_j]\\
&\qquad\qquad
+\sum_{j=1}^{m-m'}
\Bigl(
(\partial_P)_{ij}+P_{ij}
\Bigr)\,\Gamma^V_{0,x}(\Phi)[v_{m'+j}]\\
&=\sum_{j=1}^{m'}
\Bigl(
\partial\bigl(\gamma(D_{ij})(\cdot-\rho)\bigr)+S_{ij}
\Bigr)\,\tilde\Gamma^V_{0,x}(\Phi)[v_j].
\end{align*}
Hence the same calculation as before proves Theorem \ref{thm:radD} (ii).

Finally, suppose $\Phi \in \Hom_K^\ttt(V,C^\infty(K\bar x))$
and $\Psi\in\Hom_K^\ttt(E,P_G(V))$.
Since $\theta\circ\Psi\in\Hom_K^\ttt(E,P_G(V))$ by \eqref{eq:Cartan3},
$\gamma(D_{ij})=0$ for $i=\mu'+1,\ldots,\mu$ and $j=1,\ldots,m'$.
This means $\partial_Q=0$.
By comparing the lower-left blocks in \eqref{eq:matId1} we get
\[
\bigl[
\partial(\gamma(L_\Lieg)(\cdot-\rho)),\,
Q
\bigr]=
Q F_{\gamma(L_\Lieg)} - L'_\double Q
\]
and hence $Q=0$.
Since $\Gamma^V_{0,x}(\Phi)[v_j]=0$ for $j=m'+1,\ldots,m$,
we calculate for $i=\mu'+1,\ldots,\mu$
\begin{align*}
\Gamma^E_{0,x}(\Phi\circ\Psi)[e_i]
&=\sum_{j=1}^{m}
\Bigl(
\partial\bigl(\gamma(D_{ij})(\cdot-\rho)\bigr)+S_{ij}
\Bigr)\,\Gamma^V_{0,x}(\Phi)[v_j]\\
&=\sum_{j=1}^{m'}
\Bigl(
(\partial_Q)_{i-\mu'\,j}+Q_{i-\mu'\,j}
\Bigr)\,\Gamma^V_{0,x}(\Phi)[v_{j}]\\
&=0.
\end{align*}
This shows $\Phi\circ\Psi\in \Hom_K^\ttt(E,C^\infty(K\bar x))$,
proving Theorem \ref{thm:radD} (iii).

\section{Correspondences of submodules}\label{sec:cor}
In this section we study
various correspondences
which send $\mathbf H$-modules to $(\Lieg_\CC,K)$-modules.
One example is $\Ximin_0$ which maps
an $\mathbf H$-submodule of $C^\infty(A)$ to a $(\Lieg_\CC,K)$-submodule
of $C^\infty(G/K)_\Kf$.
Another example
is $\Ximin$ which maps
an $\mathbf H$-submodule of $P_{\mathbf H}(\CC_\triv)$ to
a $(\Lieg_\CC,K)$-submodule of $P_{G}(\CC_\triv)$.
Both correspondences have a nice property on the multiplicities of
(quasi-) \!\!single-petaled $K$-types.
This property for $\Ximin_0$ comes from
the generalized Chevalley restriction theorem (Theorem \ref{thm:Ch})
and the radial part formula in the last section (Theorem \ref{thm:radD}),
and for $\Ximin$ from the generalized Harish-Chandra isomorphism (Theorem \ref{thm:HC} (ii))
and the functoriality of
the generalized Harish-Chandra homomorphism (Proposition \ref{prop:chain}).
Motivated by this obvious formal parallelism
we shall develop a unified argument by
introducing the following three categories $\CCh$,
$\Cwrad$ and $\Crad$:
\begin{defn}[the category $\CCh$]\label{defn:CCh}
An object of $\CCh$ is
a pair $\mathcal M=(\mathcal M_G, \mathcal M_{\mathbf H})$
of a $(\Lieg_\CC,K)$-module $\mathcal M_G$
and an $\mathbf H$-module $\mathcal M_{\mathbf H}$
satisfying:
all the $K$-types of $\mathcal M_G$ belong to $\Km$;
to each $V\in\Km$ there attach a linear map
$\tilde\Gamma^V_{\mathcal M}: \Hom_K(V,\mathcal M_G)\to \Hom_W(V^M_\single,\mathcal M_{\mathbf H})$
and a linear subspace $\Hom_K^\ttt(V,\mathcal M_G)$ of $\Hom_K(V,\mathcal M_G)$ such that
the restriction of $\tilde\Gamma^V_{\mathcal M}$ to $\Hom_K^\ttt(V,\mathcal M_G)$
gives a bijection
\begin{gather}\label{cond:Ch}
\tilde\Gamma_{\mathcal M}^V :\,
 \Hom_K^\ttt(V,\mathcal M_{G})
\simarrow
\Hom_W(V^M_\single,\mathcal M_{\mathbf H}).\tag{Ch-0}
\end{gather}
Suppose $\mathcal M=(\mathcal M_G, \mathcal M_{\mathbf H})$,
$\mathcal N=(\mathcal N_G, \mathcal N_{\mathbf H})\in\CCh$.
Then a morphism of $\CCh$ between them
is a pair $\mathcal I=(\mathcal I_G, \mathcal I_{\mathbf H})$
of a $(\Lieg_\CC,K)$-homomorphism $\mathcal I_G : \mathcal M_G\to \mathcal N_G$
and an $\mathbf H$-homomorphism $\mathcal I_{\mathbf H} : \mathcal M_{\mathbf H}\to \mathcal N_{\mathbf H}$
which satisfies the following two conditions:
\begin{enumerate}[label=(Ch-\arabic{*}), ref=Ch-\arabic{*}]
\item\label{cond:Ch1}
For any $V\in\Km$ the diagram
\[
\xymatrix{
\Hom_K(V,\mathcal M_G) \ar[r]^{\mathcal I_G\circ\cdot}
\ar[d]_{\tilde\Gamma^V_{\mathcal M}}
& \Hom_K(V,\mathcal N_G) \ar[d]^{\tilde\Gamma^V_{\mathcal N}}\\
\Hom_W(V^M_\single,\mathcal M_{\mathbf H}) \ar[r]_{\mathcal I_{\mathbf H}\circ\cdot} & \Hom_W(V^M_\single,\mathcal N_{\mathbf H})
}
\]
commutes.
\item\label{cond:Ch2}
For any $V\in\Km$ and $\Phi\in \Hom_K^\ttt(V,\mathcal M_G)$,
$\mathcal I_G\circ\Phi\in \Hom_K^\ttt(V,\mathcal N_G)$,
namely, the following map is well defined:
\[
\xymatrix{
\Hom_K^\ttt(V,\mathcal M_G) \ar[r]^{\mathcal I_G\circ\cdot}
& \Hom_K^\ttt(V,\mathcal N_G).
}
\]
\end{enumerate}
\end{defn}
\begin{rem}
Suppose $\mathcal M=(\mathcal M_G, \mathcal M_{\mathbf H})\in\CCh$.
For each $V\in\Km$ we have the direct sum decomposition
\[
\Hom_K(V,\mathcal M_{G})=\Ker \tilde\Gamma^V_{\mathcal M} \,\oplus\, \Hom_K^\ttt(V,\mathcal M_{G}).
\]
For $V\in\Km\setminus\Kqsp$ it necessarily holds that
 $\tilde\Gamma^V_{\mathcal M}=0$ and $\Hom_K^\ttt(V,\mathcal M_G)=\{0\}$.
Hence any pair of 
a $(\Lieg_\CC,K)$-homomorphism $\mathcal I_G : \mathcal M_G\to \mathcal N_G$
and an $\mathbf H$-homomorphism
$\mathcal I_{\mathbf H} : \mathcal M_{\mathbf H}\to \mathcal N_{\mathbf H}$
automatically satisfies 
Conditions \eqref{cond:Ch1} and \eqref{cond:Ch2}
for $V\in\Km\setminus\Kqsp$. 
\end{rem}
\begin{defn}[the category $\Cwrad$]\label{defn:Cwrad}
We call an object of $\mathcal M=(\mathcal M_G, \mathcal M_{\mathbf H})\in\CCh$ a \emph{weak radial pair} if it satisfies
\begin{enumerate}[label=(w-rad),ref=w-rad]
\item\label{cond:w-rad}
for any $E,V\in\Km$, $\Phi \in \Hom_{\Lieg_\CC,K}^{\ttt}(P_G(V), \mathcal M_{G})$,
and $\Psi\in \Hom_W(E,P_G(V))$ it holds that
\[
\tilde\Gamma^E_{\mathcal M}(\Phi\circ\Psi)
=\tilde\Gamma_{\mathcal M}(\Phi)\circ\tilde\Gamma^E_V(\Psi).
\]
\end{enumerate}
Here $\Hom_{\Lieg_\CC,K}^{\ttt}(P_G(V), \mathcal M_{G})$ denotes
the subspace of $\Hom_{\Lieg_\CC,K}(P_G(V), \mathcal M_{G})$
corresponding to $\Hom_K^\ttt(V,\mathcal M_G)$
under the identification $\Hom_{\Lieg_\CC,K}(P_G(V), \mathcal M_{G})
\simeq \Hom_K(V,\mathcal M_G)$ and
\[
\tilde \Gamma_{\mathcal M}:\,
\Hom_{\Lieg_\CC,K}(P_G(V), \mathcal M_{G})
\to
\Hom_{\mathbf H}(P_{\mathbf H}(V^M_\single), \mathcal M_{\mathbf H})
\]
is the map identified with
\[
\tilde\Gamma_{\mathcal M}^V :\,
 \Hom_K(V,\mathcal M_G)
\longrightarrow
\Hom_W(V^M_\single,\mathcal M_{\mathbf H}).
\]
The category $\Cwrad$ is the full subcategory of $\CCh$
consisting of the weak radial pairs.
\end{defn}
\begin{rem}
If $E\in \Km\setminus \Kqsp$ or $V\in \Km\setminus \Kqsp$
then the formula in \eqref{cond:w-rad} is automatic.
\end{rem}
\begin{defn}[the category $\Crad$]\label{defn:Crad}
We call an object of $\mathcal M=(\mathcal M_G, \mathcal M_{\mathbf H})\in\Cwrad$
a \emph{radial pair} if it satisfies the following two additional conditions:
\begin{enumerate}[label=(rad-\arabic{*}), ref=rad-\arabic{*}]
\item\label{cond:rad1}
For any $E,V\in\Km$, $\Phi \in \Hom_{\Lieg_\CC,K}(P_G(V), \mathcal M_{G})$,
and $\Psi\in \Hom_W^{\oto}(E,P_G(V))$ it holds that
\[
\tilde\Gamma^E_{\mathcal M}(\Phi\circ\Psi)
=\tilde\Gamma_{\mathcal M}(\Phi)\circ\tilde\Gamma^E_V(\Psi).
\]
\item\label{cond:rad2}
For any $E,V\in\Km$, $\Phi \in \Hom_{\Lieg_\CC,K}^{\ttt}(P_G(V), \mathcal M_{G})$,
and $\Psi\in \Hom_W^{\ttt}(E,P_G(V))$ it holds that
\[
\Phi\circ\Psi \in \Hom_{K}^{\ttt}(E, \mathcal M_{G}).
\]
\end{enumerate}
The category $\Crad$ is the full subcategory of $\Cwrad$
consisting of the radial pairs.
\end{defn}
\begin{rem}
If $E\in \Km\setminus \Kqsp$ then 
the formula in \eqref{cond:rad1} is automatic.
If $V\in \Km\setminus \Kqsp$ then 
the condition in \eqref{cond:rad2} is automatic.
\end{rem}
\begin{defn}[radial restrictions]\label{defn:RadRest}
Suppose $\mathcal M=(\mathcal M_G, \mathcal M_{\mathbf H})\in\CCh$.
We say a linear map $\gamma_{\mathcal M}: \mathcal M_G\to \mathcal M_{\mathbf H}$
is a \emph{radial restriction} of $\mathcal M$
if it satisfies the following two conditions:
\begin{enumerate}[label=(rest-\arabic{*}), ref=rest-\arabic{*}]
\item\label{cond:rest1}
For any $V\in\Km$
\begin{equation*}
\Hom_K^{\ttt}(V, \mathcal M_{G})
=\bigl\{
\Phi\in \Hom_K(V, \mathcal M_{G});\,
(\gamma_{\mathcal M}\circ\Phi)\bigl[ V^M_\double\bigr]=\{0\}
\bigr\}.
\end{equation*}
\item\label{cond:rest2}
For any $V\in\Km$,
$\tilde\Gamma_{\mathcal M}^V$ coincides with the linear map
\begin{equation*}
\quad\qquad\Hom_K(V,\mathcal M_G)\ni 
\Phi\longmapsto
\bigl(
V^M_\single\hookrightarrow
V
\xrightarrow{\Phi} \mathcal M_{G}
\xrightarrow{\gamma_{\mathcal M}} \mathcal M_{\mathbf H}
\bigr)
\in
\Hom_\CC(V^M_\single,\mathcal M_{\mathbf H}).
\end{equation*}
(Hence actually the rightmost part can be replaced by $\Hom_W$.)
\end{enumerate}
Note $\gamma_{\mathcal M}$ completely determines the structure of $\mathcal M$.
\end{defn}
\begin{rem}\label{rem:RadR}
Suppose $\mathcal M=(\mathcal M_G, \mathcal M_{\mathbf H})\in\CCh$
has a radial restriction $\gamma_{\mathcal M}$.
Then $\Hom_K^\ttt(V,\mathcal M_{G})=\Hom_K(V,\mathcal M_{G})$
for $V\in\Ksp$.
For each $V\in\Km$
we can define in addition to $\tilde\Gamma_{\mathcal M}^V$
a linear map
\begin{equation}\label{eq:genGammaV}
\begin{aligned}
\Gamma_{\mathcal M}^V :\,&
 \Hom_K(V,\mathcal M_G)
\longrightarrow
\Hom_\CC(V^M,\mathcal M_{\mathbf H})\,;\\
&\Phi\longmapsto
\bigl(
\varphi:
V^M\hookrightarrow
V
\xrightarrow{\Phi} \mathcal M_{G}
\xrightarrow{\gamma_{\mathcal M}} \mathcal M_{\mathbf H}
\bigr).
\end{aligned}
\end{equation}
This is injective since for $\Phi\in \Hom_K(V,\mathcal M_{G})$ we have
\[
\Gamma_{\mathcal M}^V(\Phi)=0
\Longleftrightarrow
\tilde\Gamma_{\mathcal M}^V(\Phi)=0 \text{ and }
\Phi\in\Hom_K^\ttt(V,\mathcal M_{G})
\Longleftrightarrow
\Phi=0.
\]
\end{rem}
Let us look at some examples:
\begin{prop}\label{prop:R3GK}
$\bigl((\ell(\cdot),C^\infty(G/K)_\Kf),\,
(\mathscr T(\theta_{\mathbf H}\cdot),C^\infty(A))\bigr)$ is a radial pair with
radial restriction $\gamma_0$.
\end{prop}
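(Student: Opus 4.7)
The plan is straightforward: the proposition is essentially a corollary that repackages results from Sections \ref{sec:Ch} and \ref{sec:radII} into the axiomatic language of Definitions \ref{defn:CCh}, \ref{defn:Cwrad}, \ref{defn:Crad} and \ref{defn:RadRest}. All the genuine analysis — the generalized Chevalley restriction theorem and the second radial part formula — is already in place; what remains is a verification.

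First I will fix the identifications. For each $V\in\Km$, take $\tilde\Gamma^V_{\mathcal M}:=\tilde\Gamma_0^V$ and $\Hom_K^\ttt(V,\mathcal M_G):=\Hom_K^\ttt(V,C^\infty(G/K))$ as defined in Definition \ref{defn:22}. Condition \eqref{cond:Ch} is then exactly the bijection \eqref{eq:TGammaV0} in Remark \ref{rem:22} (ii), which is itself a consequence of Theorem \ref{thm:Ch}; this puts $\mathcal M$ in $\CCh$. The claim that $\gamma_0$ is a radial restriction of $\mathcal M$ needs nothing beyond matching definitions: Condition \eqref{cond:rest1} is literally the definition of $\Hom_K^\ttt(V,C^\infty(G/K))$ in Definition \ref{defn:22} (i), and \eqref{cond:rest2} is the definition of $\tilde\Gamma_0^V$ in Definition \ref{defn:22} (ii).

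Finally, to see $\mathcal M\in\Crad$, recall that the Frobenius-reciprocity identifications introduced just before Theorem \ref{thm:radD} are deliberately based on the $\theta_{\mathbf H}$-twisted $\mathbf H$-action $\mathscr T(\theta_{\mathbf H}\cdot)$ on $C^\infty(A)$ — exactly the action named in the statement of this proposition. Under those identifications the map $\tilde\Gamma_{\mathcal M}$ of Definition \ref{defn:Cwrad} is the map $\tilde\Gamma_0$ of Theorem \ref{thm:radD}, and Conditions \eqref{cond:w-rad}, \eqref{cond:rad1} and \eqref{cond:rad2} reduce respectively to parts (i), (ii) and (iii) of Theorem \ref{thm:radD}. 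The only substantive point to keep track of is that the twist by $\theta_{\mathbf H}$ in the $\mathbf H$-factor is precisely what converts the commuting right action $r(\cdot)$ of $U(\Lieg_\CC)^K$ into the noncommuting left action $\ell(\cdot)$ of $U(\Lieg_\CC)$ on the radial side (cf.~\eqref{eq:rad2}); Proposition \ref{prop:2Cartans} is what makes this compatibility work on the level of morphisms. Once this compatibility has been fixed, no further calculation is required and the three radial-pair axioms fall out simultaneously.
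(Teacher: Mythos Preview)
Your proposal is correct and follows essentially the same route as the paper's own proof, which simply cites Theorem~\ref{thm:Ch}, the bijection \eqref{eq:TGammaV0}, and Theorem~\ref{thm:radD}. The one point you omit is the first clause in Definition~\ref{defn:CCh}: you must also note (as the paper does) that every $K$-type of $C^\infty(G/K)_\Kf$ belongs to $\Km$, which is well known and is required before \eqref{cond:Ch} even makes sense.
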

\begin{proof}
It is well known (or easy to show) that all the $K$-types of $C^\infty(G/K)_\Kf$
belong to $\Km$.
The other conditions are satisfied
by Theorem \ref{thm:Ch}, \eqref{eq:TGammaV0} and Theorem \ref{thm:radD}.
\end{proof}

Recall the Harish-Chandra homomorphism $\gamma : P_G(\CC_\triv)\to P_{\mathbf H}(\CC_\triv)$ in {\normalfont Example \ref{exmp:triv}}.
\begin{prop}\label{prop:R3HC}
$\bigl(P_G(\CC_\triv), P_{\mathbf H}(\CC_\triv) \bigr)$ is a radial pair with
radial restriction $\gamma$.
\end{prop}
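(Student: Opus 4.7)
The plan is to unwind the axioms defining $\CCh$ and $\Crad$ and recognise them as direct consequences of Theorem~\ref{thm:HC} and Proposition~\ref{prop:chain}, both of which concern exactly the pair $\bigl(P_G(E),P_G(\CC_\triv)\bigr)$ under discussion here. Since Example~\ref{exmp:triv} already identifies the Harish-Chandra map~\eqref{eq:nhomogamma} in this situation with the classical $\gamma$, essentially no new calculation is required; the work is purely to confirm that the abstract packaging of Section~\ref{sec:cor} matches the concrete content of Section~\ref{sec:HC}.

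First I will verify that $\mathcal M:=(P_G(\CC_\triv),P_{\mathbf H}(\CC_\triv))$ is an object of $\CCh$ and that $\gamma$ is its radial restriction. That every $K$-type of $P_G(\CC_\triv)$ lies in $\Km$ is actually a bonus corollary of Theorem~\ref{thm:HC}(ii): for $V\in\widehat K\setminus\Km$ one has $V^M_\single=V^M_\double=\{0\}$, so the $\ttt$-condition of Definition~\ref{defn:homhom} is vacuous, and the bijection in Theorem~\ref{thm:HC}(ii) (applied with its $E$ taken equal to $V$) forces $\Hom_K(V,P_G(\CC_\triv))=\Hom_W(\{0\},P_{\mathbf H}(\CC_\triv))=\{0\}$. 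Specialising Definition~\ref{defn:homhom} to the target $P_G(\CC_\triv)$, where $(\CC_\triv)^M_\double=\{0\}$, the condition defining $\Hom_K^\ttt(V,P_G(\CC_\triv))$ reads $(\gamma\circ\Phi)[V^M_\double]=\{0\}$, which is \eqref{cond:rest1}; and $\tilde\Gamma^V_{\CC_\triv}$ is by its definition the restriction of $\gamma\circ\Phi$ to $V^M_\single$, which is \eqref{cond:rest2}. Condition (Ch-0) is then a word-for-word translation of Theorem~\ref{thm:HC}(ii) through Frobenius reciprocity.

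The remaining three compatibility conditions follow from Proposition~\ref{prop:chain} and the functoriality of $\Gamma$ (diagram~\eqref{cd:HCfunct}). Given $E,V\in\Km$, $\Phi\in\Hom_{\Lieg_\CC,K}(P_G(V),P_G(\CC_\triv))$ and $\Psi\in\Hom_K(E,P_G(V))$ (Frobenius-reciprocally identified with $\hat\Psi\in\Hom_{\Lieg_\CC,K}(P_G(E),P_G(V))$), Proposition~\ref{prop:chain} yields $\tilde\Gamma(\Phi\circ\hat\Psi)=\tilde\Gamma(\Phi)\circ\tilde\Gamma(\hat\Psi)$ whenever either $\Phi\in\Hom^\ttt$ or $\hat\Psi\in\Hom^\oto$; these are precisely the assertions of \eqref{cond:w-rad} and \eqref{cond:rad1}. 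For \eqref{cond:rad2}, functoriality of $\Gamma$ gives $\Gamma(\Phi\circ\hat\Psi)=\Gamma(\Phi)\circ\Gamma(\hat\Psi)$, whence the $\ttt$-properties of $\hat\Psi$ and $\Phi$ cascade to
\[
\Gamma(\Phi\circ\hat\Psi)\bigl[P_{\mathbf H}(E^M_\double)\bigr]\subset\Gamma(\Phi)\bigl[P_{\mathbf H}(V^M_\double)\bigr]\subset P_{\mathbf H}\bigl((\CC_\triv)^M_\double\bigr)=\{0\},
\]
so that $\Phi\circ\hat\Psi\in\Hom^\ttt_{\Lieg_\CC,K}(P_G(E),P_G(\CC_\triv))$. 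No step is genuinely delicate; the only real care needed is in aligning the three slightly different incarnations of the symbol $\Hom^\ttt$ appearing in Definitions~\ref{defn:22}, \ref{defn:homhom} and \ref{defn:CCh}.
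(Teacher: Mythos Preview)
Your verification of the axioms \eqref{cond:Ch}, \eqref{cond:w-rad}, \eqref{cond:rad1}, \eqref{cond:rad2} and of the radial-restriction conditions \eqref{cond:rest1}, \eqref{cond:rest2} is correct and matches the paper's route: the paper simply cites Theorem~\ref{thm:HC}(i),(ii) and Proposition~\ref{prop:chain}, and your unpacking of how these statements translate into the required axioms is accurate.

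There is, however, a gap in your argument that every $K$-type of $P_G(\CC_\triv)$ lies in $\Km$. You try to extract this from Theorem~\ref{thm:HC}(ii) by taking its $E$ to be a $K$-type $V\notin\Km$, so that $E^M_\single=\{0\}$ forces the right-hand side of the bijection to vanish. But Theorem~\ref{thm:HC} is stated (and proved, via \cite[Theorem~4.11]{Oda:HC}) only under the hypothesis $E\in\Km$; you are invoking it precisely where that hypothesis fails. The conclusion $\Hom_K(V,P_G(\CC_\triv))=\{0\}$ therefore does not follow from the cited theorem.

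The paper handles this point differently: it observes that $P_G(\CC_\triv)\simeq\mathbf S\otimes v_\triv$ with $\mathbf S$ the image of the symmetrization map for $S(\Lies_\CC)$, so that the $K$-types of $P_G(\CC_\triv)$ are those of $S(\Lies_\CC)$, which are all $M$-spherical by the Kostant--Rallis theorem \cite{KR}. Once you supply this (or any other independent) argument for the $K$-type condition, the rest of your proof goes through exactly as written.
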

\begin{proof}
Recall $\Lies$ is the vector part of the Cartan decomposition of $\Lieg$.
If $\mathbf S$ denotes the image of the symmetrization map of $S(\Lies_\CC)$
then one has $P_G(\CC_\triv)\simeq \mathbf S\otimes v_\triv$.
Thus a $K$-type of $P_G(\CC_\triv)$ is
a $K$-type of $S(\Lies_\CC)$ and hence is belonging to $\Km$
\cite{KR}.
The other conditions follow from Theorem \ref{thm:HC} (i), (ii) and Proposition \ref{prop:chain}.
\end{proof}
\begin{prop}\label{prop:class1}
Suppose $\mathcal M=(\mathcal M_G, \mathcal M_{\mathbf H})\in \Cwrad$
satisfies \eqref{cond:rad2}
and $\mathcal M_{\mathbf H}$ has a $W$-invariant element $\phi_W$. 
Fix a non-zero $v_\triv\in \CC_\triv$.
Then there exists a unique $K$-invariant element
$\phi_K$ in $\mathcal M_G$ such that
under the bijection
\[
\tilde\Gamma_{\mathcal M}^{\CC_\triv}:
\Hom_K^\ttt(\CC_\triv, \mathcal M_G)
\simarrow
\Hom_W(\CC_\triv, \mathcal M_{\mathbf H})
\]
$(\CC_\triv\ni c\,v_\triv\mapsto c\,\phi_K \in \mathcal M_G)$
corresponds to $(\CC_\triv\ni c\,v_\triv\mapsto c\,\phi_W \in \mathcal M_{\mathbf H})$.
(If $\mathcal M$ has a radial restriction $\gamma_{\mathcal M}$
then $\phi_K$ is a unique $K$-invariant element such that $\gamma_{\mathcal M}(\phi_K)=\phi_W$.)
If we define
\begin{align*}
\mathcal I_G &: P_G(\CC_\triv)=U(\Lieg_\CC)\otimes_{U(\Liek_\CC)}\CC_\triv \ni
D\otimes v_\triv \longmapsto D\,\phi_K \in \mathcal M_G,\\
\mathcal I_{\mathbf H} &: P_{\mathbf H}(\CC_\triv)={\mathbf H}\otimes_{\CC W}\CC_\triv \ni
h\otimes v_\triv \longmapsto h\,\phi_W \in \mathcal M_{\mathbf H}
\end{align*}
then $\mathcal I=(\mathcal I_G,\mathcal I_{\mathbf H}) :
\bigl(P_G(\CC_\triv), P_{\mathbf H}(\CC_\triv)\bigr) \to \mathcal M$ is a morphism of $\Cwrad$.
\end{prop}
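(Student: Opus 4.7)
The plan is to construct $\phi_K$ via the bijection \eqref{cond:Ch} applied to the trivial $K$-type, and then to reduce the two morphism axioms \eqref{cond:Ch1} and \eqref{cond:Ch2} for $\mathcal I=(\mathcal I_G,\mathcal I_{\mathbf H})$ directly to the hypotheses \eqref{cond:w-rad} and \eqref{cond:rad2} on $\mathcal M$. For the trivial $K$-type one has $(\CC_\triv)^M=(\CC_\triv)^M_\single=\CC_\triv$ automatically, so \eqref{cond:Ch} delivers the bijection
$$\tilde\Gamma_{\mathcal M}^{\CC_\triv}:\Hom_K^\ttt(\CC_\triv,\mathcal M_G)\simarrow\Hom_W(\CC_\triv,\mathcal M_{\mathbf H}).$$
The element $\phi_W\in\mathcal M_{\mathbf H}^W$ defines the $W$-map $\psi_0: c\,v_\triv\mapsto c\,\phi_W$, whose unique preimage $\Phi_0\in\Hom_K^\ttt(\CC_\triv,\mathcal M_G)$ determines $\phi_K:=\Phi_0[v_\triv]$; this is the unique $K$-invariant element with the stated property, and in the presence of a radial restriction the characterization $\gamma_{\mathcal M}(\phi_K)=\phi_W$ follows from \eqref{cond:rest2}.

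Because $\Liek_\CC\phi_K=\{0\}$ and $\phi_W$ is $W$-invariant, the maps $\mathcal I_G$ and $\mathcal I_{\mathbf H}$ descend to well-defined $(\Lieg_\CC,K)$- and $\mathbf H$-homomorphisms respectively. Under the Frobenius reciprocities they correspond to $\Phi_0$ and $\psi_0$; consequently $\mathcal I_G\in\Hom_{\Lieg_\CC,K}^\ttt(P_G(\CC_\triv),\mathcal M_G)$, and $\tilde\Gamma_{\mathcal M}(\mathcal I_G)=\mathcal I_{\mathbf H}$ as $\mathbf H$-maps $P_{\mathbf H}(\CC_\triv)\to\mathcal M_{\mathbf H}$, since both send $h\otimes v_\triv$ to $h\,\phi_W$.

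For axiom \eqref{cond:Ch1}, I would invoke \eqref{cond:w-rad} for $\mathcal M$ with $V=\CC_\triv$ and $\Phi=\mathcal I_G$: for any $E\in\Km$ and $\Psi\in\Hom_K(E,P_G(\CC_\triv))$,
$$\tilde\Gamma_{\mathcal M}^E(\mathcal I_G\circ\Psi)=\tilde\Gamma_{\mathcal M}(\mathcal I_G)\circ\tilde\Gamma^E_{\CC_\triv}(\Psi)=\mathcal I_{\mathbf H}\circ\tilde\Gamma^E_{\CC_\triv}(\Psi),$$
and by Proposition \ref{prop:R3HC} the map $\tilde\Gamma^E_{\CC_\triv}$ of Definition \ref{defn:homhom} is exactly the component $\tilde\Gamma^E_{(P_G(\CC_\triv),P_{\mathbf H}(\CC_\triv))}$ of the radial-pair structure on $\bigl(P_G(\CC_\triv),P_{\mathbf H}(\CC_\triv)\bigr)$, which is what \eqref{cond:Ch1} asks for. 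For axiom \eqref{cond:Ch2}, I would invoke \eqref{cond:rad2} (which holds on $\mathcal M$ by assumption) with $V=\CC_\triv$ and $\Phi=\mathcal I_G$: for any $V'\in\Km$ and $\Psi\in\Hom_K^\ttt(V',P_G(\CC_\triv))$ it yields $\mathcal I_G\circ\Psi\in\Hom_K^\ttt(V',\mathcal M_G)$. The argument is essentially formal; the only subtlety is the identification $\tilde\Gamma_{\mathcal M}(\mathcal I_G)=\mathcal I_{\mathbf H}$, which is the glue that lets \eqref{cond:w-rad} deliver \eqref{cond:Ch1}.
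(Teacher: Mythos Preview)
Your argument is correct and essentially identical to the paper's: you construct $\phi_K$ via \eqref{cond:Ch} for the trivial $K$-type, then reduce \eqref{cond:Ch1} to \eqref{cond:w-rad} and \eqref{cond:Ch2} to \eqref{cond:rad2}, both applied with $V=\CC_\triv$ and $\Phi=\mathcal I_G\in\Hom_{\Lieg_\CC,K}^\ttt$, via the identification $\tilde\Gamma_{\mathcal M}(\mathcal I_G)=\mathcal I_{\mathbf H}$. The paper phrases the same computation by writing $\mathcal I_G\circ\Psi=\Phi\circ\Psi$ with $\Phi\in\Hom_K^\ttt(\CC_\triv,\mathcal M_G)$ the Frobenius counterpart of $\mathcal I_G$, but the content is the same.
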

\begin{proof}
Let $\Phi:=(\CC_\triv\ni c\,v_\triv\mapsto c\,\phi_K \in \mathcal M_G)\in \Hom_K^\ttt(\CC_\triv, \mathcal M_G)$.
Thus 
$\tilde\Gamma^{\CC_\triv}_{\mathcal M}(\Phi)=
(\CC_\triv\ni c\,v_\triv\mapsto c\,\phi_W \in \mathcal M_{\mathbf H})\in
\Hom_W(\CC_\triv, \mathcal M_{\mathbf H})$.
Now for $V\in\Km$ and $\Psi\in\Hom_K(V, P_G(\CC_\triv))$ we have
\begin{align*}
&\mathcal I_G\circ\Psi = \Phi\circ\Psi \in \Hom_K^\ttt(V, \mathcal M_G)
\quad\text{if }\Psi\in\Hom_K^\ttt,
&&(\because \eqref{cond:rad2})\\
&\tilde\Gamma_{\mathcal M}^{V}(\mathcal I_G\circ\Psi)
=\tilde\Gamma_{\mathcal M}^{V}(\Phi\circ\Psi)
=\tilde\Gamma_{\mathcal M}(\Phi)\circ \tilde\Gamma_{\CC_\triv}^V(\Psi),
&&(\because \eqref{cond:w-rad})\\
&\mathcal I_{\mathbf H}\circ\tilde\Gamma_{\CC_\triv}^{V}(\Psi)
=\tilde\Gamma_{\mathcal M}(\Phi)\circ \tilde\Gamma_{\CC_\triv}^V(\Psi).
\end{align*}
The first property shows $\mathcal I$ satisfies \eqref{cond:Ch2}.
The second and third ones show $\mathcal I$ satisfies \eqref{cond:Ch1}.
\end{proof}

One can check the following basic properties of our categories
in a straightforward way.
\begin{prop}\label{prop:catpro}
The categories $\CCh$, $\Cwrad$ and $\Crad$ are Abelian categories.
Suppose 
$\mathcal I=(\mathcal I_G, \mathcal I_{\mathbf H}):
\mathcal M=(\mathcal M_G, \mathcal M_{\mathbf H})\to
\mathcal N=(\mathcal N_G, \mathcal N_{\mathbf H})$
is a morphism in $\CCh$.

\noindent
{\normalfont (i)}
$\mathcal I$ is mono if and only if both $\mathcal I_G$ and $\mathcal I_{\mathbf H}$
are injective.
$\mathcal I$ is epi if and only if both $\mathcal I_G$ and $\mathcal I_{\mathbf H}$
are surjective.

\noindent
{\normalfont (ii)}
Put $\mathcal K_G=\Ker \mathcal I_G$ and $\mathcal K_{\mathbf H}=\Ker \mathcal I_{\mathbf H}$.
Then we can identify\/
$\Ker \mathcal I\simeq\mathcal K:=(\mathcal K_G, \mathcal K_{\mathbf H})$
where for each $V\in\Km$ 
\[\begin{aligned}
\Hom_K^\ttt(V,\mathcal K_G)
&= \Hom_K(V,\mathcal K_G) \cap \Hom_K^\ttt(V,\mathcal M_G)\\
&=\bigl\{
\Phi\in \Hom_K^\ttt(V, \mathcal M_{G});\,
\tilde \Gamma^V_{\mathcal M}(\Phi) \in \Hom_W(V^M_\single, \mathcal K_{\mathbf H})
\bigr\}.
\end{aligned}\]
If $\mathcal M$ satisfies \eqref{cond:w-rad} (that is $\mathcal M\in\Cwrad$)
then so does $\mathcal K$.
The same thing holds for \eqref{cond:rad1} or \eqref{cond:rad2}.

\noindent
{\normalfont (iii)}
Put $\mathcal Q_G=\Coker \mathcal I_G$ and $\mathcal Q_{\mathbf H}=\Coker \mathcal I_{\mathbf H}$.
Then we can identify
$\Coker \mathcal I\simeq \mathcal Q:=(\mathcal Q_G, \mathcal Q_{\mathbf H})$
where for each $V\in\Km$ 
\[\begin{aligned}
\Hom_K^\ttt(V,\mathcal Q_G)
=\, \text{the image of }&
\Hom_K^\ttt(V, \mathcal N_{G})\\
\text{ under }&
\Hom_K(V,\mathcal N_G)\twoheadrightarrow\Hom_K(V,\mathcal Q_G).
\end{aligned}\]
If $\mathcal N$ satisfies \eqref{cond:w-rad} (that is $\mathcal N\in\Cwrad$)
then so does $\mathcal Q$.
The same thing holds for \eqref{cond:rad1} or \eqref{cond:rad2}.
\end{prop}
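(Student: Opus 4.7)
The approach is to (a) verify that $\CCh$ is additive with componentwise biproducts, (b) construct kernels and cokernels as claimed in (ii) and (iii), (c) deduce (i) from their explicit form, (d) check that $\Coim\mathcal I \to \Image\mathcal I$ is an isomorphism in $\CCh$ to conclude abelianness, and (e) verify that $\Cwrad$ and $\Crad$ inherit all of this as full subcategories. Throughout, projectivity of $V \in \widehat K$ as a $K$-module (which holds because $K$ is compact and $V$ is irreducible) will be used repeatedly to lift morphisms through epimorphisms of $(\Lieg_\CC,K)$-modules.

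For the kernel in (ii), set $\mathcal K_G = \Ker\mathcal I_G$ and $\mathcal K_{\mathbf H} = \Ker\mathcal I_{\mathbf H}$. The equivalence of the two displayed descriptions of $\Hom_K^\ttt(V,\mathcal K_G)$ is a short diagram chase: if $\Phi \in \Hom_K^\ttt(V,\mathcal M_G)$ has $\tilde\Gamma^V_{\mathcal M}(\Phi)$ valued in $\mathcal K_{\mathbf H}$, then \eqref{cond:Ch1} gives $\tilde\Gamma^V_{\mathcal N}(\mathcal I_G\circ\Phi) = 0$ while \eqref{cond:Ch2} puts $\mathcal I_G\circ\Phi$ in $\Hom_K^\ttt(V,\mathcal N_G)$, and \eqref{cond:Ch} for $\mathcal N$ then forces $\mathcal I_G\circ\Phi = 0$, so $\Phi$ factors through $\mathcal K_G$. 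The map $\tilde\Gamma^V_{\mathcal K}$ is the restriction of $\tilde\Gamma^V_{\mathcal M}$ (which automatically takes values in $\mathcal K_{\mathbf H}$ by the same naturality), and its bijectivity on the $\ttt$-subspace transfers from that for $\mathcal M$. For the cokernel in (iii), set $\mathcal Q_G = \Coker\mathcal I_G$ and $\mathcal Q_{\mathbf H} = \Coker\mathcal I_{\mathbf H}$. Projectivity lifts any $\bar\Phi \in \Hom_K(V,\mathcal Q_G)$ to a $\Phi \in \Hom_K(V,\mathcal N_G)$; define $\tilde\Gamma^V_{\mathcal Q}(\bar\Phi)$ as the image of $\tilde\Gamma^V_{\mathcal N}(\Phi)$ in $\Hom_W(V^M_\single,\mathcal Q_{\mathbf H})$, with independence of the lift following from \eqref{cond:Ch1} applied to $\mathcal I$; the bijection \eqref{cond:Ch} for $\mathcal Q$ then results from a chase through the decomposition $\Hom_K(V,\mathcal N_G) = \Ker\tilde\Gamma^V_{\mathcal N} \oplus \Hom_K^\ttt(V,\mathcal N_G)$.

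Part (i) follows at once: componentwise injectivity (resp.\ surjectivity) yields mono (resp.\ epi) in $\CCh$, and conversely $\Ker\mathcal I = 0$ (resp.\ $\Coker\mathcal I = 0$) forces $\Ker\mathcal I_G = 0 = \Ker\mathcal I_{\mathbf H}$ (resp.\ both cokernels to vanish) by (ii) (resp.\ (iii)). For abelianness the remaining content is that $\Coim\mathcal I \to \Image\mathcal I$ is an isomorphism in $\CCh$; on the underlying module pairs this is the first isomorphism theorem, so what must be shown is that the two induced $\Hom_K^\ttt$-structures on $(\Image\mathcal I_G, \Image\mathcal I_{\mathbf H})$ agree. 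This is the main technical step. Given $\Phi \in \Hom_K(V,\Image\mathcal I_G) \cap \Hom_K^\ttt(V,\mathcal N_G)$ (the sub-of-$\mathcal N$ description), lift by projectivity to $\tilde\Phi \in \Hom_K(V,\mathcal M_G)$ and decompose $\tilde\Phi = \tilde\Phi_1 + \tilde\Phi_2$ with $\tilde\Phi_1 \in \Ker\tilde\Gamma^V_{\mathcal M}$ and $\tilde\Phi_2 \in \Hom_K^\ttt(V,\mathcal M_G)$; then $\mathcal I_G\circ\tilde\Phi_1$ lies in $\Hom_K^\ttt(V,\mathcal N_G) \cap \Ker\tilde\Gamma^V_{\mathcal N} = \{0\}$ by \eqref{cond:Ch}, so $\Phi = \mathcal I_G\circ\tilde\Phi_2$, placing $\Phi$ in the quotient-of-$\mathcal M$ description as required.

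The closure statements for $\Cwrad$ and $\Crad$ at the end of (ii) and (iii) are then routine given the explicit constructions. For the kernel, check \eqref{cond:w-rad} (resp.\ \eqref{cond:rad1}, \eqref{cond:rad2}) by composing with the inclusion $\iota:\mathcal K\hookrightarrow \mathcal M$, applying the corresponding axiom for $\mathcal M$, and then cancelling the injective $\iota_{\mathbf H}$ on the target side. For the cokernel, lift a hypothesized $\bar\Phi\in\Hom^\ttt_{\Lieg_\CC,K}(P_G(V),\mathcal Q_G)$ to some $\Phi\in\Hom^\ttt_{\Lieg_\CC,K}(P_G(V),\mathcal N_G)$ (possible by the matching argument of the previous paragraph applied to the epi $\mathcal N\twoheadrightarrow\mathcal Q$), apply the axiom for $\mathcal N$, and project to $\mathcal Q_{\mathbf H}$. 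The overall obstacle lies solely in the identification of $\Hom_K^\ttt$-structures on image and coimage; once this is settled, all remaining checks are mechanical bookkeeping with \eqref{cond:Ch1} and \eqref{cond:Ch2}.
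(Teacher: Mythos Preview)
Your proposal is correct and is precisely the kind of straightforward verification the paper intends: the paper gives no proof beyond the sentence ``One can check the following basic properties of our categories in a straightforward way,'' and your sketch fleshes out exactly those checks using \eqref{cond:Ch}, \eqref{cond:Ch1}, \eqref{cond:Ch2} together with semisimplicity of $K$- and $W$-modules.
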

The next lemma will be repeatedly used:
\begin{lem}\label{lem:rest3}
Suppose
$\mathcal M=(\mathcal M_G,\mathcal M_{\mathbf H})\in\CCh$
and a linear map $\gamma_{\mathcal M}: \mathcal M_G\to \mathcal M_{\mathbf H}$
satisfies {\normalfont Condition \eqref{cond:rest2}} 
and the following two conditions:
\begin{enumerate}[label={\normalfont (rest-1')}, ref=rest-1']
\item\label{cond:rest1'}
For any $V\in\Km$
\begin{equation*}
\Hom_K^{\ttt}(V, \mathcal M_{G})
\subset\bigl\{
\Phi\in \Hom_K(V, \mathcal M_{G});\,
(\gamma_{\mathcal M}\circ\Phi)\bigl[ V^M_\double\bigr]=\{0\}
\bigr\}.
\end{equation*}
\end{enumerate}
\begin{enumerate}[label={\normalfont (rest-3)}, ref=rest-3]
\item\label{cond:rest3}
If $y\in\mathcal M_G$ then
\begin{align}
\gamma_{\mathcal M}(Hy)&=(H+\rho(H))\gamma_{\mathcal M}(y)
&\text{for }H\in\Liea_\CC,\label{eq:gammaH}\\
\gamma_{\mathcal M}(Xy)&=0
&\text{for }X\in\Lien_\CC,\label{eq:gammaX}\\
\gamma_{\mathcal M}(my)&=0
&\text{for }m\in M.\label{eq:gammam}
\end{align}
\end{enumerate}
Then $\mathcal M$ is a weak radial pair satisfying \eqref{cond:rad1}.
Moreover, if $\gamma_{\mathcal M}$ satisfies \eqref{cond:rest1}
then $\mathcal M$ is a radial pair.
\end{lem}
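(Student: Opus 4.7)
My plan is to verify \eqref{cond:w-rad}, \eqref{cond:rad1}, and \eqref{cond:rad2} by a single master calculation that uses only the axioms collected in \eqref{cond:rest3} together with the direct sum decomposition \eqref{eq:PGVdec} of $P_G(V)$.

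The preliminary step I would establish is that, for any $\Phi\in\Hom_K(V,\mathcal M_G)$ and any $v\in V$,
\[
\gamma_{\mathcal M}(\Phi[v])=\gamma_{\mathcal M}(\Phi[p^Vv]),
\]
so in particular $\gamma_{\mathcal M}(\Phi[v])=0$ whenever $v\in(V^M)^\perp$. This follows from the $M$-invariance of $\gamma_{\mathcal M}$ (the interpretation of \eqref{eq:gammam}) by $M$-averaging, using that $\Phi$ is $K$-equivariant and $p^V$ is the orthogonal projection onto $V^M$.

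Next, for fixed $E,V\in\Km$, $\Phi\in\Hom_{\Lieg_\CC,K}(P_G(V),\mathcal M_G)$, $\Psi\in\Hom_K(E,P_G(V))$ and $e\in E^M$, choose a basis $\{v_1,\dots,v_n\}$ of $V$ in which $\{v_1,\dots,v_{m'}\}$, $\{v_{m'+1},\dots,v_m\}$, $\{v_{m+1},\dots,v_n\}$ are bases of $V^M_\single$, $V^M_\double$, $(V^M)^\perp$ respectively, and decompose via \eqref{eq:PGVdec}:
\[
\Psi[e]=D_N+\sum_{j=m+1}^{n}q_j\otimes v_j+\sum_{j=1}^{m}p_j\otimes v_j,
\]
with $D_N\in\Lien_\CC U(\Lieg_\CC)\otimes_{U(\Liek_\CC)}V$ and $q_j,p_j\in S(\Liea_\CC)$. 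Applying $\gamma_{\mathcal M}\circ\Phi$: the first summand vanishes by iterated use of \eqref{eq:gammaX}; the middle summands vanish by iterated \eqref{eq:gammaH} followed by the preliminary step (since $v_j\in(V^M)^\perp$); and the last summand yields
\[
\gamma_{\mathcal M}(\Phi(\Psi[e]))=\sum_{j=1}^{m}\tilde p_j\cdot\gamma_{\mathcal M}(\Phi[v_j]),
\]
where $\tilde p_j(\lambda):=p_j(\lambda+\rho)$ acts on $\mathcal M_{\mathbf H}$ through $S(\Liea_\CC)\subset\mathbf H$. The $\rho$-shift here is precisely the one built into the identification \eqref{eq:PHVid}, so $\Gamma^E_V(\Psi)[e]=\sum_{j=1}^m\tilde p_j\otimes v_j\in P_{\mathbf H}(V^M)$.

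The three assertions now follow by inspecting which terms on the right survive. For \eqref{cond:w-rad}: the hypothesis $\Phi\in\Hom^\ttt_{\Lieg_\CC,K}(P_G(V),\mathcal M_G)$ combined with \eqref{cond:rest1'} kills the $j>m'$ summands, and \eqref{cond:rest2} identifies the remainder with $(\tilde\Gamma_{\mathcal M}(\Phi)\circ\tilde\Gamma^E_V(\Psi))[e]$ for $e\in E^M_\single$. For \eqref{cond:rad1}: the hypothesis $\Psi\in\Hom^\oto_K(E,P_G(V))$ and $e\in E^M_\single$ force $\tilde p_j=0$ for $j>m'$ directly, so no $\ttt$-hypothesis on $\Phi$ is needed. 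For \eqref{cond:rad2}: taking $e\in E^M_\double$ with $\Psi\in\Hom^\ttt_K$ kills the $j\le m'$ summands, while $\Phi\in\Hom^\ttt_K$ kills the $j>m'$ summands; hence $(\gamma_{\mathcal M}\circ\Phi\circ\Psi)[E^M_\double]=\{0\}$, which under the stronger \eqref{cond:rest1} is exactly $\Phi\circ\Psi\in\Hom^\ttt_K(E,\mathcal M_G)$. The only real bookkeeping hurdle is tracking the $\rho$-shift consistently between \eqref{eq:gammaH} and \eqref{eq:PHVid}; modulo that, all three conclusions are parallel consequences of the same master formula.
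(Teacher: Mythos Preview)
Your proposal is correct and follows essentially the same strategy as the paper: a single master computation of $\gamma_{\mathcal M}\bigl(\Phi(\Psi[e])\bigr)$ for $e\in E^M$, followed by a case analysis of which terms survive under the hypotheses $\Phi\in\Hom^\ttt$, $\Psi\in\Hom^\oto$, or $\Psi\in\Hom^\ttt$. The only cosmetic difference is that the paper writes $\Psi[e]=\sum_{i=1}^n D_i\otimes v_i$ with $D_i\in U(\Lien_\CC+\Liea_\CC)$ (via the Iwasawa decomposition) rather than your three-summand split coming from \eqref{eq:PGVdec}; the resulting formula and the subsequent case-by-case reasoning are identical.
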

\begin{rem}\label{eq:goodRR}
In the setting of the lemma
let $\gamma^{\mathcal M_G}: \mathcal M_G\to\Gamma(\mathcal M_G)$ be the linear map defined by \eqref{eq:nhomogamma}.
Then Condition \eqref{cond:rest3} holds if and only if there exists an $\Liea_\CC$-homomorphism
$\delta^{\mathcal M_G}: \Gamma(\mathcal M_G)\to \mathcal M_{\mathbf H}$
such that $\gamma_{\mathcal M}=\delta^{\mathcal M_G}\circ\gamma^{\mathcal M_G}$.
\end{rem}
\begin{exmp}\label{exmp:rest3}
The radial restriction $\gamma$ for the radial pair
$\bigl(P_G(\CC_\triv), P_{\mathbf H}(\CC_\triv) \bigr)$
satisfies \eqref{cond:rest3} since $\gamma^{P_G(\CC_\triv)}=\gamma$.
\end{exmp}
\begin{proof}[Proof of {\normalfont Lemma \ref{lem:rest3}}]
Suppose $E,V\in\Km$
and let $\{v_1,\ldots,v_{m'}\}$, $\{v_{m'+1},\ldots,v_m\}$ and $\{v_{m+1},\ldots,v_n\}$
be bases of\/ $V^M_\single$, $V^M_\double$ and $(V^M)^\perp$.
Let $\Phi\in\Hom_{\Lieg_\CC,K}(P_G(V),\mathcal M_G)$,
$\Psi\in\Hom_K(E,P_G(V))$.
For any $e\in E^M$ let
\[
\Psi[e]=\sum_{i=1}^n D_i\otimes v_i\quad
\text{with }D\in U(\Lien_\CC+\Liea_\CC).
\]
Then $\Gamma^E_V(\Psi)[e]=\sum_{i=1}^{m}\gamma(D_i)\otimes v_i$ and
\begin{equation}\label{eq:lemrest}
\begin{aligned}
(\gamma_{\mathcal M}\circ\Phi\circ\Psi)[e]
&=\sum_{i=1}^n \gamma_{\mathcal M}(D_i\Phi[v_i])\\
&=\sum_{i=1}^n \gamma(D_i) \gamma^{\mathcal M}(\Phi[v_i])
&&(\because \eqref{eq:gammaH}, \eqref{eq:gammaX})\\
&=\sum_{i=1}^m \gamma(D_i) \gamma^{\mathcal M}(\Phi[v_i])
&&(\because \eqref{eq:gammam})\\
&=\sum_{i=1}^{m'} \gamma(D_i) \tilde\Gamma_{\mathcal M}(\Phi)[v_i]
+\sum_{i=m'+1}^m \gamma(D_i) (\gamma^{\mathcal M}\circ \Phi)[v_i].
&&(\because \eqref{cond:rest2})
\end{aligned}
\end{equation}
Here the first summand is zero when $\Psi\in\Hom_{K}^\ttt$ and $e\in E^M_\double$
since in this case $\gamma(D_i)=0$ for $i=1,\ldots,m'$.
Also, the second summand is zero when $\Phi\in\Hom_{\Lieg_\CC,K}^\ttt$
since in this case $(\gamma^{\mathcal M}\circ \Phi)[v_i]=0$
for $i=m'+1,\ldots,m$ by \eqref{cond:rest1'}.

Hence if $\Phi\in\Hom_{\Lieg_\CC,K}^\ttt$ and
$\Psi\in\Hom_{K}^\ttt$ then
\[
\gamma_{\mathcal M}\circ(\Phi\circ\Psi)\bigl[ V^M_\double \bigr] =\{0\}.
\]
This means
\eqref{cond:rest1} implies \eqref{cond:rad2}
under the assumption of the lemma.

Now suppose $e\in E^M_\single$.
Then \eqref{eq:lemrest} reduces to
\begin{equation*}
\tilde\Gamma^E_{\mathcal M}(\Phi\circ\Psi)[e]
= 
(\tilde\Gamma_{\mathcal M}(\Phi)\circ \tilde\Gamma^E_V(\Psi))[e]
+\sum_{i=m'+1}^m \gamma(D_i) (\gamma^{\mathcal M}\circ \Phi)[v_i].
\end{equation*}
Since the second summand is zero when $\Phi\in\Hom_{\Lieg_\CC,K}^\ttt$,
Condition \eqref{cond:w-rad} is satisfied.
The second summand also vanishes when $\Psi\in\Hom_{K}^\oto$
since in this case $\gamma(D_i)=0$ for $i=m'+1,\ldots,m$.
Thus Condition \eqref{cond:rad1} is satisfied.
\end{proof}
In this paper we shall introduce various functors
connecting the category $\mathbf H\Mod$ of $\mathbf H$-modules
to the category $(\Lieg_\CC,K)\Mod$ of $(\Lieg_\CC,K)$-modules.
Each one is a descendant of the following:
\begin{defn}[the functor $\Xiwrad$]\label{defn:Xiwrad}
For each $V\in\Km$ 
we define a $(\Lieg_\CC,K)$-subspace $Q_G(V)$
of $P_G(V)$ by
\[
Q_G(V)=
\sum_{E\in\widehat K\setminus\Km}
U(\Lieg_\CC)\,\bigl(\text{the $E$-isotypic component of $P_G(V)$}\bigr)
\]
and put $\bar P_G(V):=P_G(V)/Q_G(V)$.
For $\mathscr X\in \mathbf H\Mod$ we define a $(\Lieg_\CC,K)$-module
\begin{align*}
\Xiwrad(\mathscr X)
&=\bigoplus_{V\in\Km} \bar P_G(V)\otimes \Hom_W(V^M_\single,\mathscr X)\\
&\simeq \bigoplus_{V\in\Km} \bar P_G(V)\otimes \Hom_{\mathbf H}(P_{\mathbf H}(V^M_\single),\mathscr X)\end{align*}
where $\Lieg_\CC$ and $K$ act only on the $\bar P_G(V)$-parts.
The correspondence
$\Xiwrad: \mathbf H\Mod \to (\Lieg_\CC,K)\Mod$
clearly defines an exact functor .
\end{defn}
The next lemma accumulates easy properties of $\bar P_G(V)$. 
\begin{lem}\label{lem:PGbar}
Suppose $F,V\in\Km$.

\noindent
{\normalfont (i)}
All the $K$-types of $\bar P_G(V)$ belong to $\Km$.
The right exact functor $\Gamma$ defined in {\normalfont \S\ref{sec:HC}}
maps $Q_G(V)$ to $0$.
Hence $\Gamma(\bar P_G(V))=\Gamma(P_G(V))=P_{\mathbf H}(V^M)$
and the map $\gamma^{P_G(V)} : P_G(V)\to P_{\mathbf H}(V^M)$ defined by \eqref{eq:nhomogamma} factors through $\gamma^{\bar P_G(V)} : \bar P_G(V)\to P_{\mathbf H}(V^M)$.

\noindent
{\normalfont (ii)}
If all the $K$-types of a $(\Lieg_\CC,K)$-module $\mathscr Y$ belong to $\Km$
then
\[
\Hom_K(V, \mathscr Y)\simeq
\Hom_{\Lieg_\CC,K}(P_G(V), \mathscr Y)\simeq
\Hom_{\Lieg_\CC,K}(\bar P_G(V), \mathscr Y).
\]

\noindent
{\normalfont (iii)}
The surjective map
\begin{equation}\label{eq:gkPbar}
\Hom_{\Lieg_\CC,K}(P_G(V),P_G(F))\twoheadrightarrow
\Hom_{\Lieg_\CC,K}(\bar P_G(V),\bar P_G(F))
\end{equation}
is naturally induced from (and is identified with) the surjective map
\begin{equation}\label{eq:KPbar}
\Hom_{K}(V,P_G(F))\twoheadrightarrow
\Hom_{K}(V,\bar P_G(F)).
\end{equation}

\noindent
{\normalfont (iv)}
Let $\tilde\gamma^{\bar P_G(F)} : \bar P_G(F)\to P_{\mathbf H}(F^M_\single)$
be the composition of 
$\gamma^{\bar P_G(F)}:\bar P_G(F)\to P_{\mathbf H}(F^M)$
and
the projection $P_{\mathbf H}(F^M) = P_{\mathbf H}(F^M_\single)\oplus P_{\mathbf H}(F^M_\double) \to P_{\mathbf H}(F^M_\single)$.
Then the linear map $\tilde\Gamma^V_F : \Hom_K(V, P_G(F)) \to
\Hom_{W}(V^M_\single,P_{\mathbf H}(F^M_\single))$
equals the composition of \eqref{eq:KPbar}
and the linear map $\tilde{\bar\Gamma}^V_F
: \Hom_{K}(V,\bar P_G(F)) \to \Hom_{\CC}(V^M_\single,P_{\mathbf H}(F^M_\single))$ defined by
\[
\Psi\longmapsto
\bigl(
V^M_\single\hookrightarrow
V
\xrightarrow{\Psi} \bar P_G(F)
\xrightarrow{\tilde\gamma^{\bar P_G(F)}} P_{\mathbf H}(F^M_\single)
\bigr).
\]
Hence $\tilde{\bar\Gamma}^V_F(\Psi)\in\Hom_W$ for any $\Psi$.
\end{lem}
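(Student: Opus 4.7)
For (i), I would first argue that the claim about $K$-types is immediate from the definition: if $E\in\widehat K\setminus\Km$ and $E'\subset P_G(V)$ is its isotypic component, then $E'\subset U(\Lieg_\CC)E'\subset Q_G(V)$, so the $E$-isotypic part of $\bar P_G(V)$ vanishes. The substantive step is the vanishing $\Gamma(Q_G(V))=0$. For each such $E$, the natural $(\Lieg_\CC,K)$-surjection $P_G(E)\twoheadrightarrow U(\Lieg_\CC)E'$ combined with the right-exactness of $\Gamma$ gives a surjection $\Gamma(P_G(E))=P_{\mathbf H}(E^M)\twoheadrightarrow\Gamma(U(\Lieg_\CC)E')$; the source is zero since $E^M=\{0\}$, so each $\Gamma(U(\Lieg_\CC)E')=0$. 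Writing $Q_G(V)$ as the image of the direct sum of these submodules under the sum map and applying right-exactness again yields $\Gamma(Q_G(V))=0$. Applying $\Gamma$ to the short exact sequence $Q_G(V)\hookrightarrow P_G(V)\twoheadrightarrow\bar P_G(V)$ then gives $\Gamma(\bar P_G(V))=\Gamma(P_G(V))=P_{\mathbf H}(V^M)$ and, in particular, the factorization of $\gamma^{P_G(V)}$ through a map $\gamma^{\bar P_G(V)}$.

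For (ii), the first isomorphism is Frobenius reciprocity. For the second, let $\Psi\in\Hom_{\Lieg_\CC,K}(P_G(V),\mathscr Y)$. By $K$-equivariance $\Psi$ carries the $E$-isotypic component of $P_G(V)$ into the $E$-isotypic component of $\mathscr Y$, which vanishes for $E\notin\Km$ by hypothesis. Hence $\Psi$ kills the generators of $Q_G(V)$, and by $U(\Lieg_\CC)$-linearity $\Psi(Q_G(V))=\{0\}$; therefore $\Psi$ descends uniquely through $\bar P_G(V)$. The converse direction (pulling back) is immediate from the universal property of the quotient.

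For (iii), I would apply (ii) with $\mathscr Y=\bar P_G(F)$ (legitimate by (i)) to obtain $\Hom_{\Lieg_\CC,K}(\bar P_G(V),\bar P_G(F))\simeq\Hom_K(V,\bar P_G(F))$; combined with Frobenius reciprocity for $P_G(V)$, this intertwines the maps \eqref{eq:gkPbar} and \eqref{eq:KPbar}. Surjectivity of \eqref{eq:KPbar} follows from the semisimplicity of the category of unitary $K$-representations, which lets the $K$-surjection $P_G(F)\twoheadrightarrow\bar P_G(F)$ split as a $K$-map. For (iv), the factorization from (i) combined with the projection $P_{\mathbf H}(F^M)\twoheadrightarrow P_{\mathbf H}(F^M_\single)$ lets me rewrite the defining composition of $\tilde\Gamma^V_F(\Psi)$ (from Definition \ref{defn:homhom}) as $\tilde{\bar\Gamma}^V_F$ applied to the image of $\Psi$ in $\Hom_K(V,\bar P_G(F))$. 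That $\tilde{\bar\Gamma}^V_F(\Psi)\in\Hom_W$ is then inherited from Theorem \ref{thm:HC}(i) via the surjectivity of \eqref{eq:KPbar}.

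No serious obstacle arises; the proof is essentially a sequence of formal consequences of the right-exactness of $\Gamma$, Frobenius reciprocity, and the semisimplicity of $K$-representations. The subtlest point is the vanishing $\Gamma(Q_G(V))=0$ in (i), which requires invoking right-exactness twice (once to handle each generator $U(\Lieg_\CC)E'$ and once to pass from their sum to $Q_G(V)$), and everything else is then a straightforward unwinding of definitions.
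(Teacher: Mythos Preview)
Your proposal is correct. The paper does not supply a proof of this lemma at all; it is introduced with the sentence ``The next lemma accumulates easy properties of $\bar P_G(V)$'' and then used freely. Your write-up fills in exactly the routine verifications one would expect: the $K$-type claim in (i) is definitional; the vanishing $\Gamma(Q_G(V))=0$ follows because each generator $U(\Lieg_\CC)E'$ (for $E\notin\Km$) is a quotient of copies of $P_G(E)$ with $\Gamma(P_G(E))\simeq S(\Liea_\CC)\otimes E^M=0$, and right-exactness of $\Gamma$ pushes this to the sum; (ii) is Frobenius reciprocity plus the observation that any $(\Lieg_\CC,K)$-map to $\mathscr Y$ must kill the non-$\Km$ isotypic components and hence $Q_G(V)$; (iii) and (iv) then follow formally. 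One small imprecision: in (i) you write ``the natural $(\Lieg_\CC,K)$-surjection $P_G(E)\twoheadrightarrow U(\Lieg_\CC)E'$'', but $E'$ may have multiplicity greater than one in $P_G(V)$, so the source should be a direct sum of copies of $P_G(E)$ indexed by a basis of $\Hom_K(E,P_G(V))$; this does not affect the conclusion since $\Gamma$ commutes with direct sums.
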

We give the pair
$(\Xiwrad(\mathscr X),\mathscr X)$
a structure as an object of $\Cwrad$.
For each $V\in\Km$  we identify
\[
\Hom_K(V,\Xiwrad(\mathscr X))
\simeq\bigoplus_{F\in\Km} \Hom_K(V,\bar P_G(F))\otimes
\Hom_{\mathbf H}(P_{\mathbf H}(F^M_\single),\mathscr X)
\]
and define the linear map
$\tilde\Gamma^V_\wrad: \Hom_K(V, \Xiwrad(\mathscr X))
\to \Hom_W(V^M_\single,\mathscr X)$ by
\begin{equation}\label{eq:Gwraddef}
\tilde\Gamma^V_\wrad:\,
\sum_{F\in\Km} \sum_{i=1}^{q_F}\Psi_F^i\otimes \varphi_F^i
\longmapsto
\sum_{F\in\Km} \sum_{i=1}^{q_F}\varphi_F^i\circ \tilde{\bar\Gamma}^V_F(\Psi_F^i).
\end{equation}
Moreover for each $V\in\Km$ we put
\[
\Hom_K^\ttt(V,\Xiwrad(\mathscr X))=
I_V\otimes \Hom_{\mathbf H}(P_{\mathbf H}(V^M_\single),\mathscr X)
\]
where $I_V\in\Hom_K(V,\bar P_G(V))$ denotes the map $V\ni v\mapsto 1\otimes v \bmod Q_G(V)\in \bar P_G(V)$.
Then clearly $(\Xiwrad(\mathscr X),\mathscr X)\in\CCh$ by these data. 

We define a linear map
\[
\gamma_\wrad:\Xiwrad(\mathscr X)=\bigoplus_{F\in\Km} \bar P_G(F)\otimes \Hom_{\mathbf H}(P_{\mathbf H}(F^M_\single),\mathscr X)
\longrightarrow\mathscr X
\]
by
\[
\bar P_G(F)\otimes \Hom_{\mathbf H}(P_{\mathbf H}(F^M_\single),\mathscr X) \ni 
D\otimes\varphi \longmapsto \varphi(\tilde\gamma^{\bar P_G(F)}(D)) \in \mathscr X.
\]
One can easily observe
\begin{lem}\label{lem:gammawrad}
For $(\Xiwrad(\mathscr X),\mathscr X)\in\CCh$
the linear map $\gamma_\wrad$
satisfies \eqref{cond:rest1'}, \eqref{cond:rest2} and \eqref{cond:rest3}.
\end{lem}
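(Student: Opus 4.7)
The plan is to verify each of the three axioms directly from the explicit formula
\[
\gamma_\wrad(D\otimes\varphi)=\varphi\bigl(\tilde\gamma^{\bar P_G(F)}(D)\bigr),\qquad D\in\bar P_G(F),\ \varphi\in\Hom_{\mathbf H}(P_{\mathbf H}(F^M_\single),\mathscr X),
\]
which defines $\gamma_\wrad$ on each summand of $\Xiwrad(\mathscr X)$. The main tool is Lemma \ref{lem:PGbar}, and the only delicate point is tracking the $\rho$-shift built into \eqref{eq:PHVid}; none of the three axioms presents a real obstacle.

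I would first verify \eqref{cond:rest3}. Since the $(\Lieg_\CC,K)$-action on $\Xiwrad(\mathscr X)$ acts only on the first tensor factor, and $\varphi$ is $\mathbf H$-linear (hence in particular $\Liea_\CC$-linear), it suffices to check the three identities for $\tilde\gamma^{\bar P_G(F)}:\bar P_G(F)\to P_{\mathbf H}(F^M_\single)$. By Lemma \ref{lem:PGbar} (i) this map factors through $\gamma^{\bar P_G(F)}:\bar P_G(F)\to\Gamma(\bar P_G(F))=P_{\mathbf H}(F^M)$, which in turn is induced from $\gamma^{P_G(F)}$ via \eqref{eq:nhomogamma}. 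The annihilation of $\Lien_\CC$ and the $M$-equivariance are built into the definition $\Gamma(\mathscr Y)=(\mathscr Y/\Lien_\CC\mathscr Y)^M$, yielding \eqref{eq:gammaX} and \eqref{eq:gammam}. For \eqref{eq:gammaH}, an unwinding of \eqref{eq:PHVid} shows that under the shift $\varphi(\lambda)\otimes v\mapsto\varphi(\lambda+\rho)\otimes v$, the natural $\Liea_\CC$-action on $\Gamma(P_G(F))$ (induced from left multiplication in $P_G(F)$) corresponds to the action $H\mapsto H+\rho(H)$ on $P_{\mathbf H}(F^M)$, so $\gamma^{P_G(F)}(HD)=(H+\rho(H))\gamma^{P_G(F)}(D)$; applying $\varphi$ gives \eqref{eq:gammaH}.

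Next, I would verify \eqref{cond:rest2}. Writing a general $\Phi=\sum_{F,i}\Psi_F^i\otimes\varphi_F^i\in\Hom_K(V,\Xiwrad(\mathscr X))$, for $v\in V^M_\single$ one computes
\[
\gamma_\wrad(\Phi[v])=\sum_{F,i}\varphi_F^i\bigl(\tilde\gamma^{\bar P_G(F)}(\Psi_F^i(v))\bigr)=\sum_{F,i}\varphi_F^i\bigl(\tilde{\bar\Gamma}^V_F(\Psi_F^i)[v]\bigr),
\]
where the second equality is precisely Lemma \ref{lem:PGbar} (iv). Comparing with the defining formula \eqref{eq:Gwraddef} for $\tilde\Gamma^V_\wrad$ gives $\gamma_\wrad(\Phi[v])=\tilde\Gamma^V_\wrad(\Phi)[v]$, which is exactly \eqref{cond:rest2}.

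Finally, for \eqref{cond:rest1'}, recall by definition $\Hom_K^\ttt(V,\Xiwrad(\mathscr X))=I_V\otimes\Hom_{\mathbf H}(P_{\mathbf H}(V^M_\single),\mathscr X)$. Taking an element $\Phi=I_V\otimes\varphi$ and $v\in V^M_\double$, one has $\Phi[v]=(1\otimes v+Q_G(V))\otimes\varphi$, so $\gamma_\wrad(\Phi[v])=\varphi(\tilde\gamma^{\bar P_G(V)}(1\otimes v))$. Tracking $1\otimes v$ through the direct sum decomposition \eqref{eq:PGVdec} and then the shift \eqref{eq:PHVid}, its image $\gamma^{\bar P_G(V)}(1\otimes v)$ in $P_{\mathbf H}(V^M)=P_{\mathbf H}(V^M_\single)\oplus P_{\mathbf H}(V^M_\double)$ lies in the second summand (because $v\in V^M_\double$), so the projection defining $\tilde\gamma^{\bar P_G(V)}$ kills it. By linearity, $\gamma_\wrad\circ\Phi$ vanishes on $V^M_\double$ for every $\Phi\in\Hom_K^\ttt$, establishing \eqref{cond:rest1'}.
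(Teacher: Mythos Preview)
Your proof is correct and takes exactly the approach the paper has in mind: the paper itself offers no argument beyond ``One can easily observe'', and your direct verification via the explicit formula for $\gamma_\wrad$ together with Lemma~\ref{lem:PGbar} is precisely how that observation unpacks. Your tracking of the $\rho$-shift in \eqref{eq:PHVid} for \eqref{eq:gammaH}, the use of Lemma~\ref{lem:PGbar}~(iv) for \eqref{cond:rest2}, and the observation that $1\otimes v$ lands in $P_{\mathbf H}(V^M_\double)$ for \eqref{cond:rest1'} are all on target.
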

Hence from Lemma \ref{lem:rest3} we have
\begin{prop}\label{prop:Xiwrad0}
For $\mathscr X\in \mathbf H\Mod$,
$(\Xiwrad(\mathscr X),\mathscr X)$
is a weak radial pair satisfying \eqref{cond:rad1}.
\end{prop}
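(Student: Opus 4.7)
\medskip

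The plan is essentially a one-line deduction from the preceding two lemmas, so I would first make sure Lemma \ref{lem:gammawrad} is in hand and then invoke Lemma \ref{lem:rest3}. Concretely: by Lemma \ref{lem:gammawrad} the map $\gamma_\wrad:\Xiwrad(\mathscr X)\to\mathscr X$ satisfies \eqref{cond:rest1'}, \eqref{cond:rest2} and \eqref{cond:rest3}, and the pair $(\Xiwrad(\mathscr X),\mathscr X)$ already belongs to $\CCh$ by the data attached after Definition \ref{defn:Xiwrad}. Then Lemma \ref{lem:rest3} applied to $\mathcal M=(\Xiwrad(\mathscr X),\mathscr X)$ and $\gamma_{\mathcal M}=\gamma_\wrad$ yields immediately that $\mathcal M\in\Cwrad$ and that $\mathcal M$ satisfies \eqref{cond:rad1}.

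Thus the only thing to verify at the level of content is Lemma \ref{lem:gammawrad}, and since this is stated as an easy observation I would just outline why each condition holds. Condition \eqref{cond:rest2} is exactly the definition of $\tilde\Gamma^V_\wrad$ in \eqref{eq:Gwraddef}: for $\Psi_F^i\otimes\varphi_F^i$ the composition $\gamma_\wrad\circ(\Psi_F^i\otimes\varphi_F^i)$ restricted to $V^M_\single$ equals $\varphi_F^i\circ\tilde{\bar\Gamma}^V_F(\Psi_F^i)$, summed term by term. Condition \eqref{cond:rest1'} is then automatic from the decomposition $P_{\mathbf H}(F^M)=P_{\mathbf H}(F^M_\single)\oplus P_{\mathbf H}(F^M_\double)$ together with the definition $\Hom_K^\ttt(V,\Xiwrad(\mathscr X))=I_V\otimes\Hom_{\mathbf H}(P_{\mathbf H}(V^M_\single),\mathscr X)$: for a summand supported on $F=V$ via $I_V$, the image $\tilde\gamma^{\bar P_G(V)}\circ I_V$ of $V^M_\double$ already lies in $P_{\mathbf H}(V^M_\double)$ and hence is killed after projecting to $P_{\mathbf H}(V^M_\single)$.

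Condition \eqref{cond:rest3} is the cleanest one: by Lemma \ref{lem:PGbar}(i), $\gamma^{\bar P_G(F)}:\bar P_G(F)\to P_{\mathbf H}(F^M)$ factors through the $0$th $\Lien_\CC$-homology and the $M$-fixed part with a $\rho$-shifted $\Liea_\CC$-action (cf.\ Remark \ref{eq:goodRR}); since $\gamma_\wrad$ is built from $\gamma^{\bar P_G(F)}$ by applying $\varphi_F^i$ to the target, the same factorization persists, giving \eqref{eq:gammaH}, \eqref{eq:gammaX}, \eqref{eq:gammam}.

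I do not expect a real obstacle here: the substantive work has already been encapsulated in Lemmas \ref{lem:rest3} and \ref{lem:gammawrad}, and the proposition is the formal consequence. The only mildly delicate point is to keep track of the distinction between $\Gamma^V_{\mathcal M}$ and $\tilde\Gamma^V_{\mathcal M}$ and of the subspaces $\Hom_K^\ttt$ when unpacking \eqref{cond:rad1}; but since $\gamma_\wrad$ is not claimed to satisfy the full condition \eqref{cond:rest1}, we only obtain \eqref{cond:w-rad} and \eqref{cond:rad1} (not \eqref{cond:rad2}), which is exactly what the proposition asserts.
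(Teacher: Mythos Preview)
Your proposal is correct and matches the paper's approach exactly: the paper's proof is literally the one-line deduction ``Hence from Lemma \ref{lem:rest3} we have'' following Lemma \ref{lem:gammawrad}. Your additional outline of why Lemma \ref{lem:gammawrad} holds is accurate and more detailed than the paper, which leaves it as an easy observation; one small imprecision is that $\tilde\gamma^{\bar P_G(V)}$ already includes the projection to $P_{\mathbf H}(V^M_\single)$, so the correct phrasing for \eqref{cond:rest1'} is that $\gamma^{\bar P_G(V)}\circ I_V$ sends $V^M_\double$ into $P_{\mathbf H}(V^M_\double)$, which is then killed by the projection defining $\tilde\gamma^{\bar P_G(V)}$.
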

We thus get the exact functor
$\mathbf H\Mod\ni\mathscr X\mapsto (\Xiwrad(\mathscr X),\mathscr X)\in\Cwrad$,
which has the following universal property:
\begin{prop}\label{prop:Xiwrad}
The functor $\mathscr X\mapsto (\Xiwrad(\mathscr X),\mathscr X)$ 
is left adjoint to the functor $\Cwrad\ni (\mathcal M_G, \mathcal M_{\mathbf H})
\mapsto \mathcal M_{\mathbf H}\in \mathbf H\Mod$.
More precisely, if $\mathcal M=(\mathcal M_G, \mathcal M_{\mathbf H})\in\Cwrad$
and an $\mathbf H$-homomorphism $\mathcal I_{\mathbf H}:\mathscr X\to \mathcal M_{\mathbf H}$ are given,
then
there exists a unique $(\Lieg_\CC,K)$-homomorphism
$\mathcal I_G:\Xiwrad(\mathscr X)\to\mathcal M_G$ such that
$(\mathcal I_G,\mathcal I_{\mathbf H}): (\Xiwrad(\mathscr X),\mathscr X)
\to (\mathcal M_G, \mathcal M_{\mathbf H})$ is a morphism of $\Cwrad$.
\end{prop}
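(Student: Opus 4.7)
The plan is to build $\mathcal I_G$ summand by summand, using the bijection \eqref{cond:Ch} for $\mathcal M$ to define it on each $\bar P_G(V)\otimes\Hom_{\mathbf H}(P_{\mathbf H}(V^M_\single),\mathscr X)$, and then to verify the axioms \eqref{cond:Ch1} and \eqref{cond:Ch2} using the weak-radial property of $\mathcal M$ together with Lemma \ref{lem:PGbar}. Uniqueness will fall out of the bijectivity in \eqref{cond:Ch} once one observes that the elements $(1\otimes v\bmod Q_G(V))\otimes\varphi$ generate $\Xiwrad(\mathscr X)$ as a $(\Lieg_\CC,K)$-module.

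For the construction, fix $V\in\Km$ and $\varphi\in\Hom_{\mathbf H}(P_{\mathbf H}(V^M_\single),\mathscr X)$. The composition $\mathcal I_{\mathbf H}\circ\varphi$ lies in $\Hom_{\mathbf H}(P_{\mathbf H}(V^M_\single),\mathcal M_{\mathbf H})\simeq\Hom_W(V^M_\single,\mathcal M_{\mathbf H})$. Applying the inverse of the bijection \eqref{cond:Ch}, I obtain a unique $\Phi_{V,\varphi}\in\Hom_K^\ttt(V,\mathcal M_G)$. Since every $K$-type of $\mathcal M_G$ belongs to $\Km$, Frobenius reciprocity together with Lemma \ref{lem:PGbar} (ii) lifts $\Phi_{V,\varphi}$ to a $(\Lieg_\CC,K)$-homomorphism $\tilde\Phi_{V,\varphi}:\bar P_G(V)\to\mathcal M_G$ depending linearly on $\varphi$. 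I then put $\mathcal I_G(D\otimes\varphi):=\tilde\Phi_{V,\varphi}(D)$ on each summand, and extend linearly over all $V\in\Km$.

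To check \eqref{cond:Ch1}, write a general $\Phi\in\Hom_K(V,\Xiwrad(\mathscr X))$ as $\sum_{F,i}\Psi_F^i\otimes\varphi_F^i$ with $\Psi_F^i\in\Hom_K(V,\bar P_G(F))$, and lift each $\Psi_F^i$ via Lemma \ref{lem:PGbar} (iii) to an element of $\Hom_K(V,P_G(F))\simeq\Hom_{\Lieg_\CC,K}(P_G(V),P_G(F))$. Each $\tilde\Phi_{F,\varphi_F^i}$ corresponds by construction to $\Phi_{F,\varphi_F^i}\in\Hom_K^\ttt(F,\mathcal M_G)$, so it represents an element of $\Hom_{\Lieg_\CC,K}^\ttt(P_G(F),\mathcal M_G)$. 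Axiom \eqref{cond:w-rad} then yields
\[
\tilde\Gamma^V_{\mathcal M}\bigl(\tilde\Phi_{F,\varphi_F^i}\circ\Psi_F^i\bigr)
=\tilde\Gamma_{\mathcal M}\bigl(\tilde\Phi_{F,\varphi_F^i}\bigr)\circ\tilde\Gamma^V_F(\Psi_F^i)
=\bigl(\mathcal I_{\mathbf H}\circ\varphi_F^i\bigr)\circ\tilde\Gamma^V_F(\Psi_F^i),
\]
and summing while using Lemma \ref{lem:PGbar} (iv) to identify $\tilde\Gamma^V_F(\Psi_F^i)$ with $\tilde{\bar\Gamma}^V_F(\Psi_F^i)$ matches the definition \eqref{eq:Gwraddef} of $\tilde\Gamma^V_\wrad$, proving \eqref{cond:Ch1}. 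Condition \eqref{cond:Ch2} is built into the construction, since $\Hom_K^\ttt(V,\Xiwrad(\mathscr X))=I_V\otimes\Hom_{\mathbf H}(P_{\mathbf H}(V^M_\single),\mathscr X)$ and $\mathcal I_G\circ(I_V\otimes\varphi)=\Phi_{V,\varphi}$ lies in $\Hom_K^\ttt(V,\mathcal M_G)$ by construction. For uniqueness, the difference $\mathcal J_G$ of any two candidates is a $(\Lieg_\CC,K)$-morphism satisfying \eqref{cond:Ch2} with $\tilde\Gamma^V_{\mathcal M}$-image zero on every $I_V\otimes\varphi$; by \eqref{cond:Ch} these compositions vanish, and since the corresponding elements generate $\Xiwrad(\mathscr X)$ we get $\mathcal J_G=0$. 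The only delicate step is the bookkeeping of successive Frobenius identifications and the passage between $P_G(F)$ and $\bar P_G(F)$; the axiom \eqref{cond:w-rad} is framed precisely so that the critical identity in \eqref{cond:Ch1} reduces to its one-generator instance.
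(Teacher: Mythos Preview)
Your proof is correct and follows essentially the same approach as the paper's: construct $\mathcal I_G$ summand by summand via the bijection \eqref{cond:Ch} and Lemma \ref{lem:PGbar} (ii), verify \eqref{cond:Ch1} by expanding an arbitrary $\Phi$ as $\sum \bar\Psi_F^i\otimes\varphi_F^i$ and applying \eqref{cond:w-rad} together with Lemma \ref{lem:PGbar} (iv) and \eqref{eq:Gwraddef}, verify \eqref{cond:Ch2} directly from the construction on $I_V\otimes\varphi$, and deduce uniqueness from the fact that the elements $I_V\otimes\varphi$ generate $\Xiwrad(\mathscr X)$. The bookkeeping you flag as delicate is exactly the bookkeeping the paper carries out, and there is no substantive difference between the two arguments.
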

\begin{proof}
Using \eqref{cond:Ch} for $\mathcal M$
and Lemma \ref{lem:PGbar} (ii), we define $\mathcal I_G : \Xiwrad(\mathscr X)\to\mathcal M_G$ by
\begin{gather*}
\bigoplus_{F\in\Km} \bar P_G(F)\otimes \Hom_W(F^M_\single,\mathscr X)
\to
\bigoplus_{F\in\Km} \bar P_G(F)\otimes \Hom_{\Lieg_\CC,K}^\ttt(\bar P_G(F),\mathcal M_G)
\to \mathcal M_G;\\
 D_F\otimes \varphi_F\,\longmapsto\,
 D_F\otimes \Phi_F\text{ with }
\tilde\Gamma_{\mathcal M}(\Phi_F)=\mathcal I_{\mathbf H}\circ\varphi_F\,
\longmapsto\,
 \Phi_F(D_F).
\end{gather*}
This is clearly a $(\Lieg_\CC, K)$-homomorphism.
Now suppose $V\in\Km$ and express
any $\Phi\in\Hom_K(V,\Xiwrad(\mathscr X))$ as
\begin{equation}\label{eq:PhiPshiphi}
\Phi=\sum_{F\in\Km}\sum_{i=1}^{q_F} \bar \Psi_F^i\otimes \varphi_F^i
\quad
\text{with }
\left\{\begin{aligned}
&\Psi_F^i\in
\Hom_K(V, P_G(F)),\\
&\varphi_F^i\in\Hom_W(F^M_\single,\mathscr X),
\end{aligned}\right.
\end{equation}
where $\bar \Psi_F^i$ is the image of $\Psi_F^i$ under \eqref{eq:KPbar}.
For each $\varphi_F^i$ take $\Phi_F^i\in \Hom_K^\ttt(F,\mathcal M_G)$
so that 
$\tilde\Gamma^F_{\mathcal M}(\Phi_F^i)=\mathcal I_{\mathbf H}\circ\varphi_F^i$.
Then
\begin{align*}
\tilde\Gamma^V_{\mathcal M}(\mathcal I_G\circ\Phi)
&=\tilde\Gamma^V_{\mathcal M}
\Bigl(\sum_{F\in\Km}\sum_{i=1}^{q_F}
\Phi_F^i\circ\Psi_F^i\Bigr)\\
&=\sum_{F\in\Km}\sum_{i=1}^{q_F}
\tilde\Gamma_{\mathcal M}(\Phi_F^i)\circ\tilde\Gamma^V_F(\Psi_F^i)
&&(\because \eqref{cond:w-rad}\text{ for }\mathcal M)\\
&=\mathcal I_{\mathbf H}\circ\Bigl(
\sum_{F\in\Km}\sum_{i=1}^{q_F}
\varphi_F^i\circ\tilde\Gamma^V_F(\Psi_F^i)\Bigr)\\
&=\mathcal I_{\mathbf H}\circ\Bigl(
\sum_{F\in\Km}\sum_{i=1}^{q_F}
\varphi_F^i\circ\tilde{\bar\Gamma}^V_F(\bar\Psi_F^i)\Bigr)
&&(\because \text{Lemma \ref{lem:PGbar} (iv)})\\
&=\mathcal I_{\mathbf H}\circ \tilde\Gamma^V_\wrad(\Phi),
&&(\because \eqref{eq:Gwraddef})
\end{align*}
proving \eqref{cond:Ch1} for $(\mathcal I_G,\mathcal I_{\mathbf H})$.
If $\Phi\in\Hom_K^\ttt(V,\Xiwrad(\mathscr X))$
then $\Phi=I_V\otimes \varphi_V$
with some $\varphi_V\in \Hom_W(V^M_\single,\mathscr X)$.
In this case, by taking
a unique element $\Phi_V \in \Hom_K^\ttt(V,\mathcal M_G)$
such that
$\tilde\Gamma^V_{\mathcal M}(\Phi_V)=\mathcal I_{\mathbf H}\circ\varphi_V$,
we have $\mathcal I_G\circ\Phi=\Phi_V$.
This shows $(\mathcal I_G,\mathcal I_{\mathbf H})$ satisfies \eqref{cond:Ch2}.
Finally, to prove the uniqueness of $\mathcal I_G$,
assume $(\mathcal I'_G, \mathcal I_{\mathbf H}): (\Xiwrad(\mathscr X),\mathscr X)\to(\mathcal M_G,\mathcal M_{\mathbf H})$
is a morphism of $\Cwrad$.
For each $V\in\Km$,
$V\otimes \Hom_W(V^M_\single,\mathscr X)$ is contained in the $V$-isotypic component of 
$\Xiwrad(\mathscr X)$ and
\[
\Hom_K\bigl(V, V\otimes \Hom_W(V^M_\single,\mathscr X)\bigr)
=\Hom_K^\ttt(V,\, \Xiwrad(\mathscr X))
\]
by definition.
It follows that $\mathcal I'_G$ on $V\otimes \Hom_W(V^M_\single,\mathscr X)$
is determined by $\mathcal I_{\mathbf H}$,
and hence is equal to $\mathcal I_G$.
But since $\Xiwrad(\mathscr X)$ is spanned by
$\bigcup_{V\in\Km} V\otimes \Hom_W(V^M_\single,\mathscr X)$,
we conclude $\mathcal I'_G=\mathcal I_G$.
\end{proof}
Now suppose $\mathcal M=(\mathcal M_G,\mathcal M_{\mathbf H})\in\Cwrad$
and let us define two correspondences
$\Ximin_{\mathcal M}$ and $\Ximax_{\mathcal M}$ sending $\mathbf H$-submodules of $\mathcal M_{\mathbf H}$
to $(\Lieg_\CC,K)$-submodules of $\mathcal M_G$.
\begin{defn}[the correspondence $\Ximin_{\mathcal M}$]\label{defn:Ximin}
Suppose $\mathscr X$ is an $\mathbf H$-submodule of $\mathcal M_{\mathbf H}$.
For any $V\in\Km$ the $V$-isotypic component of $\mathcal M_G$
is naturally identified with $V\otimes\Hom_K(V,\mathcal M_G)$.
Using \eqref{cond:Ch}
we can think of
$\Hom_W(V^M_\single, \mathscr X)\subset \Hom_W(V^M_\single, \mathcal M_{\mathbf H})$ as a subspace of $\Hom_K^\ttt(V,\mathcal M_G)$.
Thus
\[
V\otimes \Hom_W(V^M_\single, \mathscr X)
\subset V\otimes \Hom_K^\ttt(V,\mathcal M_G)
\subset V\otimes \Hom_K(V,\mathcal M_G) \subset \mathcal M_G.
\]
Now we set
\begin{align*}
\Ximin_{\mathcal M}(\mathscr X)
&=\text{ the }U(\Lieg_\CC)\text{-span of }\sum_{V\in\Km}
V\otimes \Hom_W(V^M_\single, \mathscr X)\\
&=\sum_{V\in\Km}
U(\Lieg_\CC)\bigl(V\otimes \Hom_W(V^M_\single, \mathscr X)\bigr).
\end{align*}
This is a $(\Lieg_\CC,K)$-submodule
of $\mathcal M_G$.

If $\mathcal M=\bigl(C^\infty(G/K),C^\infty(A)\bigr)$,
we write $\Ximin_0$ for $\Ximin_{\mathcal M}$.
If $\mathcal M=\bigl(P_G(\CC_\triv), P_{\mathbf H}(\CC_\triv) \bigr)$,
we write $\Ximin$ for $\Ximin_{\mathcal M}$.
\end{defn}
\begin{defn}[the correspondence $\Ximax_{\mathcal M}$]\label{defn:Ximax}
Suppose $\mathscr X$ is an $\mathbf H$-submodule of $\mathcal M_{\mathbf H}$.
We define a $(\Lieg_\CC,K)$-submoudle $\Ximax_{\mathcal M}(\mathscr X) \subset \mathcal M_G$ by
\[
\Ximax_{\mathcal M}(\mathscr X)
=\sum \left\{
\mathscr Y\subset \mathcal M_G\,;\,
\begin{aligned}
\text{a }(\Lieg_\CC,K&)\text{-submodule such that}&\\
\tilde\Gamma^V_{\mathcal M}&\bigl(\Hom_K(V,\mathscr Y)\bigr)\subset
\Hom_W(V^M_\single,\mathscr X)\\
&\text{for any $V\in\Km$}
\end{aligned}
\right\}.
\]

If $\mathcal M=\bigl(C^\infty(G/K),C^\infty(A)\bigr)$,
we write $\Ximax_0$ for $\Ximax_{\mathcal M}$.
If $\mathcal M=\bigl(P_G(\CC_\triv), P_{\mathbf H}(\CC_\triv) \bigr)$,
we write $\Ximax$ for $\Ximax_{\mathcal M}$.
\end{defn}
\begin{thm}\label{thm:multcor}
Retain the setting of the above definitions.

\noindent{\normalfont (i)}
$\Ximin_{\mathcal M}(\mathscr X)\subset\Ximax_{\mathcal M}(\mathscr X)$.

\noindent{\normalfont (ii)}
Suppose a $(\Lieg_\CC,K)$-submodule
$\mathscr Y\subset \mathcal M_G$ is
such that\/ $\Ximin_{\mathcal M}(\mathscr X)\subset 
\mathscr Y\subset\Ximax_{\mathcal M}(\mathscr X)$.
Then $(\mathscr Y,\mathscr X)$ is a weak radial pair such that
the pair of inclusion maps $\mathscr Y\hookrightarrow \mathcal M_G$
and $\mathscr X\hookrightarrow \mathcal M_{\mathbf H}$
is a morphism.
In particular, for any $V\in\Km$ we have
\[
\Hom_K^\ttt(V,\mathscr Y)=
\Hom_K(V,\mathscr Y)\cap \Hom_K^\ttt(V,\mathcal M_G),
\]
and the bijection
\begin{equation}\label{eq:mod-cor}
\tilde\Gamma^V_{\mathcal M}:
\Hom_K^\ttt(V,\, \mathscr Y)\simarrow
\Hom_W(V^M_\single, \mathscr X).
\end{equation}
If $\mathcal M$ is a radial pair then
so is $(\mathscr Y,\mathscr X)$ by {\normalfont Proposition \ref{prop:catpro} (ii)}.

\noindent{\normalfont (iii)}
The weak radial pair $(\Ximin_{\mathcal M}(\mathscr X), \mathscr X)$ always
satisfies \eqref{cond:rad1}.
If $\mathcal M$ satisfies \eqref{cond:rad2}
then $(\Ximin_{\mathcal M}(\mathscr X), \mathscr X)$ is a radial pair.
In this case, for any $V\in\Ksp$ we have
\begin{equation}\label{eq:genspcor}
\Hom_K(V,\, \Ximin_{\mathcal M}(\mathscr X))=\Hom_K^\ttt(V,\, \Ximin_{\mathcal M}(\mathscr X))
\end{equation}
and hence the bijection
\[
\tilde\Gamma^V_{\mathcal M}:
\Hom_K(V,\, \Ximin_{\mathcal M}(\mathscr X))\simarrow
\Hom_W(V^M, \mathscr X).
\]
\end{thm}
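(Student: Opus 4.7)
The plan rests on a concrete description of $\Ximin_{\mathcal M}(\mathscr X)$. For each $F\in\Km$ and $\varphi\in\Hom_W(F^M_\single,\mathscr X)\hookrightarrow\Hom_W(F^M_\single,\mathcal M_{\mathbf H})$, axiom \eqref{cond:Ch} for $\mathcal M$ produces a unique lift $\Phi_\varphi\in\Hom_K^\ttt(F,\mathcal M_G)\simeq\Hom_{\Lieg_\CC,K}^\ttt(P_G(F),\mathcal M_G)$, and by construction $\tilde\Gamma_{\mathcal M}(\Phi_\varphi)$ takes values in $\mathscr X$. A direct check from Definition \ref{defn:Ximin} yields
\[
\Ximin_{\mathcal M}(\mathscr X)\;=\;\sum_{F\in\Km}\sum_{\varphi}\Phi_\varphi\bigl(P_G(F)\bigr),
\]
so that for any $V\in\Km$, every $\Phi\in\Hom_K(V,\Ximin_{\mathcal M}(\mathscr X))$ admits a finite decomposition $\Phi=\sum_k\Phi_{\varphi_k}\circ\Psi'_k$ with $\Psi'_k\in\Hom_K(V,P_G(F_k))$ for $F_k\in\Km$. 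This decomposition drives the whole proof.

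For (i), given such $\Phi$, I apply \eqref{cond:w-rad} for $\mathcal M$ to each summand---legitimate since $\Phi_{\varphi_k}\in\Hom_{\Lieg_\CC,K}^\ttt$---to obtain $\tilde\Gamma^V_{\mathcal M}(\Phi_{\varphi_k}\circ\Psi'_k)=\tilde\Gamma_{\mathcal M}(\Phi_{\varphi_k})\circ\tilde\Gamma^V_{F_k}(\Psi'_k)$. Theorem \ref{thm:HC} (i) gives $\tilde\Gamma^V_{F_k}(\Psi'_k)\in\Hom_W$, and by construction $\tilde\Gamma_{\mathcal M}(\Phi_{\varphi_k})$ takes values in $\mathscr X$; summing yields $\tilde\Gamma^V_{\mathcal M}(\Phi)\in\Hom_W(V^M_\single,\mathscr X)$, which is exactly the defining condition for $\Ximax_{\mathcal M}(\mathscr X)$.

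For (ii), I endow $(\mathscr Y,\mathscr X)$ with its $\CCh$-structure by restricting $\tilde\Gamma^V_{\mathcal M}$ to $\Hom_K(V,\mathscr Y)$ (which lands in $\Hom_W(V^M_\single,\mathscr X)$ because $\mathscr Y\subset\Ximax_{\mathcal M}(\mathscr X)$) and setting $\Hom_K^\ttt(V,\mathscr Y):=\Hom_K(V,\mathscr Y)\cap\Hom_K^\ttt(V,\mathcal M_G)$. Injectivity in \eqref{cond:Ch} is inherited; for surjectivity, the lift $\Phi_\varphi$ of any $\varphi\in\Hom_W(V^M_\single,\mathscr X)$ already has image in $\Ximin_{\mathcal M}(\mathscr X)\subset\mathscr Y$. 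Conditions \eqref{cond:Ch1} and \eqref{cond:Ch2} for the pair of inclusions are then tautological, and \eqref{cond:w-rad}, together with \eqref{cond:rad1} and \eqref{cond:rad2} when assumed for $\mathcal M$, transfer verbatim because they are identities among morphisms that already hold in $\mathcal M$ and whose underlying maps happen to take values in $\mathscr Y$.

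The only non-formal point is \eqref{cond:rad1} for $(\Ximin_{\mathcal M}(\mathscr X),\mathscr X)$ in (iii), since $\mathcal M$ is merely assumed weakly radial; this is the main obstacle, and it is overcome by combining \eqref{cond:w-rad} for $\mathcal M$ with Proposition \ref{prop:chain}. Given $\Phi=\sum_k\Phi_{\varphi_k}\circ\Psi'_k$ as above and $\Psi\in\Hom_{\Lieg_\CC,K}^\oto(P_G(E),P_G(V))$, Proposition \ref{prop:chain} applied to the pair $(\Psi'_k,\Psi)$ gives $\tilde\Gamma^E_{F_k}(\Psi'_k\circ\Psi)=\tilde\Gamma^V_{F_k}(\Psi'_k)\circ\tilde\Gamma^E_V(\Psi)$; applying \eqref{cond:w-rad} for $\mathcal M$ to $\Phi_{\varphi_k}\circ(\Psi'_k\circ\Psi)$, summing, and using \eqref{cond:w-rad} once more to recognize $\sum_k\tilde\Gamma_{\mathcal M}(\Phi_{\varphi_k})\circ\tilde\Gamma^V_{F_k}(\Psi'_k)=\tilde\Gamma^V_{\mathcal M}(\Phi)$ yields the desired identity. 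Axiom \eqref{cond:rad2} when $\mathcal M$ satisfies it follows automatically from the characterization of $\Hom_K^\ttt(V,\mathscr Y)$ given in (ii). Finally, for $V\in\Ksp$ one has $V^M_\double=0$, so $\Psi'_k\in\Hom_K^\ttt(V,P_G(F_k))$, and \eqref{cond:rad2} for $\mathcal M$ then forces each $\Phi_{\varphi_k}\circ\Psi'_k\in\Hom_K^\ttt(V,\mathcal M_G)$, whence $\Hom_K(V,\Ximin_{\mathcal M}(\mathscr X))=\Hom_K^\ttt(V,\Ximin_{\mathcal M}(\mathscr X))$ and the final bijection of \eqref{eq:genspcor} follows.
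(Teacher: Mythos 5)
Your proof is correct, and it takes a genuinely different route from the paper's. The paper derives Theorem \ref{thm:multcor} from the universal object $\Xiwrad(\mathscr X)$: it invokes Proposition \ref{prop:Xiwrad} to get a morphism $(\mathcal I_G,\mathcal I_{\mathbf H}):(\Xiwrad(\mathscr X),\mathscr X)\to\mathcal M$ with $\Image\mathcal I_G=\Ximin_{\mathcal M}(\mathscr X)$, then appeals to Proposition \ref{prop:Xiwrad0} (which in turn rests on Lemma \ref{lem:rest3} and the auxiliary map $\gamma_\wrad$) and the abelian-category machinery of Proposition \ref{prop:catpro} to transfer \eqref{cond:rad1} to the image, and obtains the two key equalities \eqref{eq:gammamin}--\eqref{eq:gammamax} from \eqref{cond:Ch1} and \eqref{cond:Ch2} for the morphism. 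You instead bypass $\Xiwrad$ entirely, working directly from the observation that $\Ximin_{\mathcal M}(\mathscr X)=\sum_{F,\varphi}\Phi_\varphi(P_G(F))$, so that every $\Phi\in\Hom_K(V,\Ximin_{\mathcal M}(\mathscr X))$ decomposes as a finite sum $\sum_k\Phi_{\varphi_k}\circ\Psi'_k$ with $\Phi_{\varphi_k}\in\Hom^\ttt_{\Lieg_\CC,K}(P_G(F_k),\mathcal M_G)$ lying over $\mathscr X$. Parts (i), (ii), and \eqref{eq:genspcor} then reduce to applying \eqref{cond:w-rad} to each summand; and the crucial \eqref{cond:rad1} for $(\Ximin_{\mathcal M}(\mathscr X),\mathscr X)$ comes from combining \eqref{cond:w-rad} with Proposition \ref{prop:chain} (to pull $\Psi\in\Hom^\oto$ past $\Psi'_k$) rather than from Lemma \ref{lem:rest3}. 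Your approach is more elementary and self-contained for this particular theorem: it avoids constructing $\Xiwrad$, checking that $\gamma_\wrad$ satisfies \eqref{cond:rest1'}--\eqref{cond:rest3}, and verifying the adjointness. The paper's route costs more up front but gives a reusable universal object, which the author exploits heavily in later sections (e.g.\ in Theorem \ref{thm:liftincl} and in constructing $\Xirad$); your direct computation essentially unpacks, in this specific case, what the functoriality of $\Xiwrad$ packages once and for all. Both establish the same decomposition $\tilde\Gamma^V_{\mathcal M}(\Phi)=\sum_k\tilde\Gamma_{\mathcal M}(\Phi_{\varphi_k})\circ\tilde\Gamma^V_{F_k}(\Psi'_k)$ as the core identity.
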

\begin{exmp}\label{exmp:Wmax}
For any $\mathcal M=(\mathcal M_G,\mathcal M_{\mathbf H})\in\Cwrad$,
$\Ximin_{\mathcal M}(\{0\})=\{0\}$ and
$\Ximax_{\mathcal M}(\mathcal M_{\mathbf H})=\mathcal M_G$.
\end{exmp}
\begin{exmp}\label{exmp:AA}
Suppose $\lambda\in\Liea_\CC^*$.
Then \eqref{eq:rad8} and \eqref{eq:spec-cor2} imply
\[
\Ximin_0(\mathscr A(A,\lambda))
\subset \mathscr A(G/K,\lambda)_\Kf \subset \Ximax_0(\mathscr A(A,\lambda)).
\]
In particular $\bigl(\mathscr A(G/K,\lambda)_\Kf, \mathscr A(A,\lambda)\bigr)$
is a radial pair.
\end{exmp}
We regard that \eqref{eq:mod-cor} describes the correspondence of
the multiplicity of the $K$-representation $V$ in $\mathscr Y$
with the multiplicity of the $W$-representation $V^M_\single$ in $\mathscr X$
(cf.~Remark \ref{rem:RadR}).
\begin{proof}[Proof of {\normalfont Theorem \ref{thm:multcor}}]
Let $\mathcal I_{\mathbf H}: \mathscr X \hookrightarrow \mathcal M_{\mathbf H}$
is the inclusion map. It follows from Proposition \ref{prop:Xiwrad} that
there exists a unique $\mathcal I_G: \Xiwrad(\mathscr X)\to \mathcal M_G$
such that $(\mathcal I_G,\mathcal I_{\mathbf H})$ is a morphism of $\Cwrad$.
From the proof of Proposition \ref{prop:Xiwrad}
we can see
$\mathcal I_G$ maps
$v\otimes \varphi_V \in V\otimes \Hom_W(V^M_\single,\mathscr X)$
($V\in\Km$) to $\Phi_V[v]$
where $\Phi_V\in \Hom_K^\ttt(V,\mathcal M_G)$ is
a unique element
such that $\tilde\Gamma^V_{\mathcal M}(\Phi_V)=\varphi_V$.
Note $\Phi_V[v]$ is written as $v\otimes \Phi_V$
if we identify the $V$-isotypic component of $\mathcal M_G$
with $V\otimes\Hom_K(V,\mathcal M_G)$.
This shows $\Image \mathcal I_G=\Ximin_{\mathcal M}(\mathscr X)$.
Hence 
by Proposition \ref{prop:Xiwrad0} and Proposition \ref{prop:catpro} (iii),
$(\Ximin_{\mathcal M}(\mathscr X), \mathscr X)$
is a weak radial pair satisfying \eqref{cond:rad1}.
Hence in particular for any $V\in\Km$ we have
\begin{gather}
\tilde\Gamma^V_{\mathcal M}\bigl(\Hom_K(V,\,\Ximin_{\mathcal M}(\mathscr X))\bigr)
=
\Hom_W(V^M_\single,\mathscr X),\label{eq:gammamin}\\
\tilde\Gamma^V_{\mathcal M}\bigl(
\Hom_K(V,\,\Ximin_{\mathcal M}(\mathscr X))
\cap
\Hom_K^\ttt(V,\mathcal M_G)
\bigr)
=
\Hom_W(V^M_\single,\mathscr X).
\end{gather}
From \eqref{eq:gammamin} we get $\Ximin_{\mathcal M}(\mathscr X)\subset \Ximax_{\mathcal M}(\mathscr X)$.

In general, if $\{\mathscr Y_\nu\}$ is a family of $(\Lieg_\CC,K)$-submodules of $\mathcal M_G$ then
\[
\Hom_K\Bigl(V,\, \sum_\nu\mathscr Y_\nu\Bigr)=\sum_\nu\Hom_K(V, \mathscr Y_\nu)
\subset \Hom_K(V, \mathcal M_G).
\]
Hence for any $V\in\Km$
\begin{equation}\label{eq:gammamax}
\tilde\Gamma^V_{\mathcal M}\bigl(\Hom_K(V,\Ximax_{\mathcal M}(\mathscr X))\bigr)\subset
\Hom_W(V^M_\single,\mathscr X).
\end{equation}
Now suppose a $(\Lieg_\CC,K)$-submodule $\mathscr Y\subset \mathcal M_G$
is such that $\Ximin_{\mathcal M}(\mathscr X) \subset
\mathscr Y \subset \Ximax_{\mathcal M}(\mathscr X)$.
Then by virtue of \eqref{eq:gammamin}--\eqref{eq:gammamax} we have
\begin{gather*}
\tilde\Gamma^V_{\mathcal M}\bigl(\Hom_K(V,\mathscr Y)\bigr)
=
\Hom_W(V^M_\single,\mathscr X),\\
\tilde\Gamma^V_{\mathcal M}\bigl(
\Hom_K(V,\mathscr Y)
\cap
\Hom_K^\ttt(V,\mathcal M_G)
\bigr)
=
\Hom_W(V^M_\single,\mathscr X).
\end{gather*}
Hence $(\mathscr Y,\mathscr X)$ is a subobject of $(\mathcal M_G,\mathcal M_{\mathbf H})\in\CCh$.
Accordingly 
$(\mathscr Y,\mathscr X)\in\Cwrad$
by Proposition \ref{prop:catpro} (ii).
Similarly, if $(\mathcal M_G,\mathcal M_{\mathbf H})$ additionally
satisfies \eqref{cond:rad1} or \eqref{cond:rad2} then
so does $(\mathscr Y,\mathscr X)$.

What remains to be shown is \eqref{eq:genspcor}
when $\mathcal M$ satisfies \eqref{cond:rad2} and $V\in\Ksp$.
In this case it holds that
\[
\Hom_K(V,P_G(F))=\Hom_K^\ttt(V,P_G(F))
\quad\text{for any }F\in\Km
\]
by Proposition \ref{prop:chain} (i).
Let us consider the surjective map
\[
\Hom_K(V, \Xiwrad(\mathscr X))\xrightarrow{\mathcal I_G\circ\cdot}
\Hom_K(V,\Ximin_{\mathcal M}(\mathscr X)).
\]
Express any element $\Phi$ in
the left-hand side as in \eqref{eq:PhiPshiphi}
and take $\Phi_F^i\in \Hom_K^\ttt(F,\mathcal M_G)$
so that 
$\tilde\Gamma^F_{\mathcal M}(\Phi_F^i)=\mathcal I_{\mathbf H}\circ\varphi_F^i$.
Then
\[
\mathcal I_G\circ\Phi=\sum_{F\in\Km}\sum_{i=1}^{q_F}
\Phi_F^i\circ\Psi_F^i
\]
where $\Psi_F^i\in \Hom_K(V, P_G(F))=\Hom_K^\ttt(V,P_G(F))$.
Thus from \eqref{cond:rad2} for $\mathcal M$
we have
\[
\mathcal I_G\circ\Phi\in 
\Hom_K(V,\Ximin_{\mathcal M}(\mathscr X)) \cap \Hom_K^\ttt(V,\mathcal M_G)
=\Hom_K^\ttt(V,\Ximin_{\mathcal M}(\mathscr X)).
\]
This proves \eqref{eq:genspcor}.
\end{proof}
In general, when $\mathcal M=(\mathcal M_G,\mathcal M_{\mathbf H})\in\Cwrad$
has a radial restriction $\gamma_{\mathcal M}$
and $\mathscr X$ is an $\mathbf H$-submodule of $\mathcal M_{\mathbf H}$,
$\gamma_{\mathcal M}$ is not necessarily a radial restriction of
$(\Ximin_{\mathcal M}(\mathscr X), \mathscr X)$
because it may happen that $\gamma_{\mathcal M}\bigl(\Ximin_{\mathcal M}(\mathscr X)\bigr)\not\subset \mathscr X$.
But such a thing never happens
if $\mathcal M=\bigl(P_G(\CC_\triv), P_{\mathbf H}(\CC_\triv)\bigr)$
and $\gamma_{\mathcal M}=\gamma$.
\begin{prop}[the correspondence $\Xi^\natural_{\mathcal M}$]\label{prop:XinaturalGen}
Suppose
$\mathcal M=(\mathcal M_G,\mathcal M_{\mathbf H})\in\Cwrad$
and a linear map $\gamma_{\mathcal M}$
satisfies the assumption of {\normalfont Lemma \ref{lem:rest3}},
that is, {\normalfont Conditions} \eqref{cond:rest1'}, \eqref{cond:rest2}
and \eqref{cond:rest3}.
For an $\mathbf H$-submodule $\mathscr X$ of $\mathcal M_{\mathbf H}$
set
\[
\Xi^\natural_{\mathcal M}(\mathscr X)=\sum\bigl\{
\mathscr V\subset \mathcal M_G;\, \text{a $K$-stable $\CC$-subspace with }\gamma_{\mathcal M}(\mathscr V)\subset \mathscr X
\bigr\}.
\]
Then we have the following:

\noindent{\normalfont (i)}
$\Xi^\natural_{\mathcal M}(\mathscr X)$
is a $(\Lieg_\CC,K)$-submodule of ${\mathcal M}_G$
such that\/ $\Ximin_{\mathcal M}(\mathscr X)\subset 
\Xi^\natural_{\mathcal M}(\mathscr X)\subset\Ximax_{\mathcal M}(\mathscr X)$.

\noindent{\normalfont (ii)}
Suppose $\mathscr Y\subset {\mathcal M}_G$ is a $(\Lieg_\CC,K)$-submodule
such that\/ $\Ximin_{\mathcal M}(\mathscr X)\subset 
\mathscr Y\subset\Xi^\natural_{\mathcal M}(\mathscr X)$.
Then
the linear map $\gamma_{\mathcal M}|_{\mathscr Y}:\mathscr Y\to\mathscr X$
satisfies \eqref{cond:rest1'}, \eqref{cond:rest2} and \eqref{cond:rest3}
for the sub weak radial pair $(\mathscr Y, \mathscr X)$.
Also, a linear map 
$
\gamma_{\mathcal Q}: \mathcal M_G /\mathscr Y\to
\mathcal M_{\mathbf H}/\mathscr X
$
is naturally induced from $\gamma_{\mathcal M}$,
which satisfies \eqref{cond:rest1'}, \eqref{cond:rest2} and \eqref{cond:rest3}
for the quotient weak radial pair
$\mathcal Q:=(\mathcal M_G /\mathscr Y,\mathcal M_{\mathbf H}/\mathscr X)$.
If $\gamma_{\mathcal M}$ satisfies \eqref{cond:rest1}
then so does $\gamma_{\mathcal M}|_{\mathscr Y}$.

\noindent{\normalfont (iii)}
If $\mathscr Y=\Xi^\natural_{\mathcal M}(\mathscr X)$ in {\normalfont (ii)},
then 
$\mathcal Q=\bigl(\mathcal M_G /\, \Xi^\natural_{\mathcal M}(\mathscr X),\,
 \mathcal M_{\mathbf H}/\mathscr X\bigr)$
is a radial pair 
and 
$\gamma_{\mathcal Q}: \mathcal M_G /\, \Xi^\natural_{\mathcal M}(\mathscr X)\to
\mathcal M_{\mathbf H}/\mathscr X$
is a radial restriction of $\mathcal Q$
satisfying \eqref{cond:rest3}.

\noindent{\normalfont (iv)}
The radial restriction $\gamma$ for 
$\bigl(P_G(\CC_\triv), P_{\mathbf H}(\CC_\triv)\bigr)$
satisfies the assumption of the proposition (cf.~{\normalfont Example \ref{exmp:rest3}}).
In this case we use the symbol $\Xi^\natural$ instead of
$\Xi^\natural_\mathcal M$.
\end{prop}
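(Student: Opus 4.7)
The plan is to handle (i), (ii), (iii) in sequence, with Lemma~\ref{lem:rest3} as the bridge from the hypotheses on $\gamma_{\mathcal M}$ to the axioms of radial pairs. For (i), $\Xi^\natural_{\mathcal M}(\mathscr X)$ is $K$-stable by construction. To obtain $\Lieg_\CC$-stability I will take a $K$-stable $\mathscr V$ with $\gamma_{\mathcal M}(\mathscr V)\subset\mathscr X$, an element $y\in\mathscr V$, and $X\in\Lieg_\CC$; for each $k\in K$ I write $k(Xy)=(\Ad(k)X)(ky)$ and decompose $\Ad(k)X=Z+H+N$ via $\Lieg_\CC=\Liek_\CC+\Liea_\CC+\Lien_\CC$. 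Because any $(\Lieg_\CC,K)$-module is locally $K$-finite, $\mathscr V$ is automatically $\Liek_\CC$-stable, so $\gamma_{\mathcal M}(Z\cdot ky)\in\mathscr X$; the $H$-term is handled by \eqref{eq:gammaH} together with $\mathbf H$-stability of $\mathscr X$, and the $N$-term by \eqref{eq:gammaX}. The inclusion $\Ximin_{\mathcal M}(\mathscr X)\subset\Xi^\natural_{\mathcal M}(\mathscr X)$ will then be verified by taking $\Phi\in\Hom_K^\ttt(V,\mathcal M_G)$ corresponding to an element of $\Hom_W(V^M_\single,\mathscr X)$ and checking that $\gamma_{\mathcal M}(\Phi(v))\in\mathscr X$ on each of $V^M_\single$, $V^M_\double$, $(V^M)^\perp$ by \eqref{cond:rest2}, \eqref{cond:rest1'}, and the $(V^M)^\perp$-vanishing already extracted from \eqref{cond:rest3} in the proof of Lemma~\ref{lem:rest3}. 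Finally $\Xi^\natural_{\mathcal M}(\mathscr X)\subset\Ximax_{\mathcal M}(\mathscr X)$ follows because for $\Phi\in\Hom_K(V,\Xi^\natural_{\mathcal M}(\mathscr X))$, condition \eqref{cond:rest2} identifies $\tilde\Gamma^V_{\mathcal M}(\Phi)[v]$ with $\gamma_{\mathcal M}(\Phi(v))\in\mathscr X$ for $v\in V^M_\single$.

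For (ii), $(\mathscr Y,\mathscr X)$ lies in $\Cwrad$ by Theorem~\ref{thm:multcor}~(ii), and $\gamma_{\mathcal M}|_{\mathscr Y}$ lands in $\mathscr X$ by the very definition of $\Xi^\natural_{\mathcal M}(\mathscr X)\supset\mathscr Y$. Conditions \eqref{cond:rest1'}, \eqref{cond:rest2}, \eqref{cond:rest3} are inherited verbatim, with the identification $\Hom_K^\ttt(V,\mathscr Y)=\Hom_K(V,\mathscr Y)\cap\Hom_K^\ttt(V,\mathcal M_G)$ of Theorem~\ref{thm:multcor}~(ii) translating \eqref{cond:rest1'} and, when it holds for $\gamma_{\mathcal M}$, also \eqref{cond:rest1} to $\gamma_{\mathcal M}|_{\mathscr Y}$. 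For the quotient $\mathcal Q$, Proposition~\ref{prop:catpro}~(iii) supplies the $\Cwrad$ structure, and $\gamma_{\mathcal Q}$ is well defined because $\gamma_{\mathcal M}(\mathscr Y)\subset\mathscr X$; each of \eqref{cond:rest1'}, \eqref{cond:rest2}, \eqref{cond:rest3} descends by lifting, applying the property for $\gamma_{\mathcal M}$, and reducing modulo $\mathscr X$, using that $\Hom_K^\ttt(V,\mathcal Q_G)$ is by definition the image of $\Hom_K^\ttt(V,\mathcal M_G)$.

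For (iii), Lemma~\ref{lem:rest3} reduces the task to verifying \eqref{cond:rest1} for $\gamma_{\mathcal Q}$, i.e.\ the reverse inclusion to the one given by \eqref{cond:rest1'}. My plan for this---the essential point of the proposition---is a lifting-and-splitting maneuver: given $\bar\Phi\in\Hom_K(V,\mathcal Q_G)$ with $(\gamma_{\mathcal Q}\circ\bar\Phi)[V^M_\double]=\{0\}$, pick any $K$-equivariant lift $\Phi\in\Hom_K(V,\mathcal M_G)$, so that $\gamma_{\mathcal M}(\Phi(v))\in\mathscr X$ for $v\in V^M_\double$; using the bijection \eqref{cond:Ch} for $\mathcal M$, let $\Phi^\ttt\in\Hom_K^\ttt(V,\mathcal M_G)$ be the unique preimage of $\tilde\Gamma^V_{\mathcal M}(\Phi)$. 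The key claim is then that $(\Phi-\Phi^\ttt)(V)\subset\Xi^\natural_{\mathcal M}(\mathscr X)=\mathscr Y$, which I will check by verifying $\gamma_{\mathcal M}((\Phi-\Phi^\ttt)(v))\in\mathscr X$ componentwise: on $V^M_\single$ it is zero by construction of $\Phi^\ttt$ and \eqref{cond:rest2}; on $V^M_\double$ the $\Phi^\ttt$ part vanishes by \eqref{cond:rest1'} and the $\Phi$ part lies in $\mathscr X$ by hypothesis; on $(V^M)^\perp$ both parts vanish by the \eqref{cond:rest3}-based argument of Lemma~\ref{lem:rest3}. Since $(\Phi-\Phi^\ttt)(V)$ is $K$-stable with $\gamma_{\mathcal M}$-image in $\mathscr X$, it lies in $\Xi^\natural_{\mathcal M}(\mathscr X)$, so $\bar\Phi=\overline{\Phi^\ttt}$ belongs to $\Hom_K^\ttt(V,\mathcal Q_G)$. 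Claim~(iv) is then immediate from Proposition~\ref{prop:R3HC} and Example~\ref{exmp:rest3}. I expect this lifting step to be the main hurdle, as it is the only place where the definition of $\Xi^\natural_{\mathcal M}(\mathscr X)$ as the maximal $K$-stable subspace with $\gamma_{\mathcal M}$-image in $\mathscr X$ is used in an essential way.
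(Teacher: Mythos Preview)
Your proposal is correct and follows essentially the same route as the paper's proof; in particular, your lifting-and-splitting argument for (iii) is exactly the paper's argument (the paper writes $\Phi',\Phi''$ for your $\Phi,\Phi^\ttt$ and checks $\gamma_{\mathcal M}\bigl((\Phi'-\Phi'')[V]\bigr)\subset\mathscr X$ on $V^M_\single$, $V^M_\double$, and then passes to all of $V$ via \eqref{eq:gammam}). The only cosmetic difference is in (i): rather than conjugating by $k$ and decomposing $\Ad(k)X$, the paper simply notes that $\Lieg_\CC\mathscr V$ is $K$-stable and computes $\gamma_{\mathcal M}(\Lieg_\CC\mathscr V)=\gamma_{\mathcal M}\bigl((\Lien_\CC+\Liea_\CC+\Liek_\CC)\mathscr V\bigr)\subset\mathscr X$ directly from \eqref{cond:rest3} and $\Liek_\CC\mathscr V\subset\mathscr V$.
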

\begin{proof}
Suppose $\mathscr V\subset \mathcal M_G$ is $K$-stable and
$\gamma_{\mathcal M}(\mathscr V)\subset \mathscr X$.
Then $\Lieg_\CC \mathscr V$ is also $K$-stable
and $\gamma_{\mathcal M}(\Lieg_\CC \mathscr V)=\gamma_{\mathcal M}((\Lien_\CC+\Liea_\CC+\Liek_\CC) \mathscr V)=\Liea_\CC\gamma_{\mathcal M}(\mathscr V)+\gamma_{\mathcal M}(\mathscr V)\subset \mathscr X$  by \eqref{eq:gammaH} and \eqref{eq:gammaX}.
Thus $\Xi^\natural_{\mathcal M}(\mathscr X)$ is stable under the action of $(\Lieg_\CC, K)$.
It is clear that $\Xi^\natural_{\mathcal M}(\mathscr X) \subset \Ximax_{\mathcal M}(\mathscr X)$.
Recall $\Ximin_{\mathcal M}(\mathscr X)$ is generated by
\[
v \otimes \Phi \in V\otimes \Hom^\ttt_K(V,\mathcal M_G)
\quad
\text{with }V\in\Km\text{ and }\tilde\Gamma^V_{\mathcal M}(\Phi)\in\Hom_W(V^M_\single,\mathscr X).
\]
Since $v\otimes\Phi=\Phi[v]$ we have
\begin{align*}
\gamma_{\mathcal M}(V\otimes\Phi) &= \gamma_{\mathcal M}(\Phi[V])\\
&=\gamma_{\mathcal M}(\Phi[V^M]) &&(\because \eqref{eq:gammam})\\
&=\gamma_{\mathcal M}(\Phi[V^M_\single]) &&(\because \eqref{cond:rest1'})\\
&=\tilde\Gamma^V_{\mathcal M}(\Phi)[V^M_\single] &&(\because \eqref{cond:rest2})\\
&\subset\mathscr X.
\end{align*}
This proves $\Ximin_{\mathcal M}(\mathscr X) \subset \Xi^\natural_{\mathcal M}(\mathscr X)$ and hence (i).

Suppose $\mathscr Y$ is as in (ii).
It is immediate from Proposition \ref{prop:catpro} (ii)
that $\gamma_{\mathcal M}|_{\mathscr Y}$ satisfies \eqref{cond:rest1'}, \eqref{cond:rest2} and \eqref{cond:rest3} for $(\mathscr Y,\mathscr X)$
(and \eqref{cond:rest1} if $\gamma_{\mathcal M}$ satisfies \eqref{cond:rest1}).
The pair of inclusion maps
$(\mathscr Y,\mathscr X)\to (\mathcal M_G,\mathcal M_{\mathbf H})$
induces a morphism $(\mathcal J_G,\mathcal J_{\mathbf H}): \mathcal M\to (\mathcal M_G/\mathscr Y,\mathcal M_{\mathbf H}/\mathscr X)$ of $\Cwrad$.
Since 
$\gamma_{\mathcal Q}\circ\mathcal J_G=
\mathcal J_{\mathbf H}\circ\gamma_{\mathcal M}$
we can easily observe from Proposition \ref{prop:catpro} (iii)
that $\gamma_{\mathcal Q}$ satisfies \eqref{cond:rest1'}, \eqref{cond:rest2} and \eqref{cond:rest3}.
Thus (ii) is proved.

Finally suppose $\mathscr Y=\Xi^\natural_{\mathcal M}(\mathscr X)$
and let us prove $\gamma_{\mathcal Q}$ satisfies \eqref{cond:rest1}.
For this purpose let $V\in\Km$ and take any $\Phi\in \Hom_K(V,\mathcal M_G/\mathscr Y)$
such that $(\gamma_{\mathcal Q}\circ\Phi)\bigl[V^M_\double\bigr]=\{0\}$.
Then there exists $\Phi'\in \Hom_K(V,\mathcal M_G)$ such that $\Phi=\mathcal J_G\circ\Phi'$.
Since $(\mathcal J_{\mathbf H}\circ\gamma_{\mathcal M}\circ\Phi')\bigl[V^M_\double\bigr]
=(\gamma_{\mathcal Q}\circ \mathcal J_G\circ\Phi')\bigl[V^M_\double\bigr]
=(\gamma_{\mathcal Q}\circ \Phi)\bigl[V^M_\double\bigr]=\{0\}$,
we have $(\gamma_{\mathcal M}\circ\Phi')\bigl[V^M_\double\bigr]\subset \mathscr X$.
Now it follows from \eqref{cond:Ch} that
there exists $\Phi''\in \Hom_K^\ttt(V,\mathcal M_G)$ such that $\tilde\Gamma^V_{\mathcal M}(\Phi'')=\tilde\Gamma^V_{\mathcal M}(\Phi')$.
But then we have
\begin{align*}
&(\gamma_{\mathcal M}\circ(\Phi'-\Phi''))\bigr|_{V^M_\single}
=\tilde\Gamma^V_{\mathcal M}(\Phi'-\Phi'')=0,
&&(\because \eqref{cond:rest2})\\
&(\gamma_{\mathcal M}\circ(\Phi'-\Phi''))\bigr|_{V^M_\double}
=(\gamma_{\mathcal M}\circ\Phi')\bigr|_{V^M_\double}\subset\mathscr X,
&&(\because \eqref{cond:rest1'})\\
\therefore\quad
&\gamma_{\mathcal M}\bigl((\Phi'-\Phi'')[V^M]\bigr)\subset \mathscr X,\\
\therefore\quad
&\gamma_{\mathcal M}\bigl((\Phi'-\Phi'')[V]\bigr)\subset \mathscr X,
&&(\because \eqref{eq:gammam})\\
\therefore\quad
&(\Phi'-\Phi'')[V]\subset \Xi^\natural_{\mathcal M}(\mathscr X)=\mathscr Y.
\end{align*}
Hence $\Phi=\mathcal J_G\circ\Phi'=\mathcal J_G\circ\Phi''\in \Hom_K^\ttt(V,\mathcal M_G/\mathscr Y)$.
This shows $\gamma_{\mathcal Q}$ satisfies \eqref{cond:rest1}, proving (iii).
\end{proof}
In \S\ref{sec:Ximin} the correspondence
\[
\Ximin : \{\mathbf H\text{-submodules of }P_{\mathbf H}(\CC_\triv)\}\to\{(\Lieg_\CC,K)\text{-submodules of }P_G(\CC_\triv)\}
\]
will be extended to a functor
sending an $\mathbf H$-module $\mathscr X$ to a $(\Lieg_\CC,K)$-module $\Ximin(\mathscr X)$ such that $(\Ximin(\mathscr X), \mathscr X)$ is a radial pair
with some canonical radial restriction satisfying \eqref{cond:rest3}.
Proposition \ref{prop:XinaturalGen} will play a key role in that argument.
We conclude this section with the following:

\begin{thm}\label{thm:liftincl}
Suppose $\mathcal M=(\mathcal M_G, \mathcal M_{\mathbf H})$,
$\mathcal N=(\mathcal N_G, \mathcal N_{\mathbf H})\in\Cwrad$ and\/
$\mathcal I=(\mathcal I_G, \mathcal I_{\mathbf H}) : \mathcal M\to \mathcal N$
is a morphism.
Then for any $\mathbf H$-submodule $\mathscr X\subset \mathcal M_{\mathbf H}$
\begin{equation}
\Ximin_{\mathcal N}(\mathcal I_{\mathbf H}(\mathscr X))
=\mathcal I_G(\Ximin_{\mathcal M}(\mathscr X))
\subset \mathcal I_G(\Ximax_{\mathcal M}(\mathscr X)) \subset \Ximax_{\mathcal N}(\mathcal I_{\mathbf H}(\mathscr X)).
\label{eq:R3Mimage}
\end{equation}
For any $\mathbf H$-submodule $\mathscr X'\subset \mathcal N_{\mathbf H}$
\begin{equation}
\Ximin_{\mathcal M}(\mathcal I_{\mathbf H}^{-1}(\mathscr X'))\subset
\mathcal I_G^{-1}(\Ximin_{\mathcal N}(\mathscr X'))
\subset \mathcal I_G^{-1}(\Ximax_{\mathcal N}(\mathscr X'))
=\Ximax_{\mathcal M}(\mathcal I_{\mathbf H}^{-1}(\mathscr X')).
\label{eq:R3Mpreimage}
\end{equation}
In particular
\begin{gather}
\Ximin_{\mathcal N}(\Image \mathcal I_{\mathbf H})
\subset \Image \mathcal I_G \subset \Ximax_{\mathcal N}(\Image \mathcal I_{\mathbf H}),
\label{eq:R3MWimage}\\
\Ximin_{\mathcal M}(\Ker \mathcal I_{\mathbf H})
\subset \Ker \mathcal I_G \subset \Ximax_{\mathcal M}(\Ker \mathcal I_{\mathbf H}).
\label{eq:R3MWpreimage}
\end{gather}
\end{thm}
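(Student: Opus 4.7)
The plan is to establish the four inclusions/equalities in \eqref{eq:R3Mimage} and \eqref{eq:R3Mpreimage} by unwinding the definitions of $\Ximin_{\mathcal M}$ and $\Ximax_{\mathcal M}$ in \S\ref{sec:cor} and using the two defining conditions \eqref{cond:Ch1}, \eqref{cond:Ch2} of a morphism in $\CCh$; the special cases \eqref{eq:R3MWimage} and \eqref{eq:R3MWpreimage} then follow by taking $\mathscr X=\mathcal M_{\mathbf H}$, respectively $\mathscr X'=\{0\}$ (using Example \ref{exmp:Wmax}).

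I would begin with the central equality $\Ximin_{\mathcal N}(\mathcal I_{\mathbf H}(\mathscr X))=\mathcal I_G(\Ximin_{\mathcal M}(\mathscr X))$. Fix $V\in\Km$. For the inclusion $\mathcal I_G(\Ximin_{\mathcal M}(\mathscr X))\subset\Ximin_{\mathcal N}(\mathcal I_{\mathbf H}(\mathscr X))$, if $\Phi\in\Hom_K^\ttt(V,\mathcal M_G)$ has $\tilde\Gamma^V_{\mathcal M}(\Phi)\in\Hom_W(V^M_\single,\mathscr X)$, then \eqref{cond:Ch2} gives $\mathcal I_G\circ\Phi\in\Hom_K^\ttt(V,\mathcal N_G)$ and \eqref{cond:Ch1} gives $\tilde\Gamma^V_{\mathcal N}(\mathcal I_G\circ\Phi)=\mathcal I_{\mathbf H}\circ\tilde\Gamma^V_{\mathcal M}(\Phi)\in\Hom_W(V^M_\single,\mathcal I_{\mathbf H}(\mathscr X))$; identifying the $V$-isotypic components of $\mathcal M_G$ and $\mathcal N_G$ with $V\otimes\Hom_K^\ttt$ via \eqref{cond:Ch}, this says $\mathcal I_G$ sends the generating subspaces of $\Ximin_{\mathcal M}(\mathscr X)$ into those of $\Ximin_{\mathcal N}(\mathcal I_{\mathbf H}(\mathscr X))$, so the inclusion follows as both sides are $(\Lieg_\CC,K)$-modules. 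For the reverse, take a generator $\Phi'\in\Hom_K^\ttt(V,\mathcal N_G)$ with $\tilde\Gamma^V_{\mathcal N}(\Phi')\in\Hom_W(V^M_\single,\mathcal I_{\mathbf H}(\mathscr X))$. Lift $\tilde\Gamma^V_{\mathcal N}(\Phi')$ componentwise to $\psi\in\Hom_W(V^M_\single,\mathscr X)$ with $\mathcal I_{\mathbf H}\circ\psi=\tilde\Gamma^V_{\mathcal N}(\Phi')$, then use the bijection \eqref{cond:Ch} for $\mathcal M$ to obtain $\Phi\in\Hom_K^\ttt(V,\mathcal M_G)$ with $\tilde\Gamma^V_{\mathcal M}(\Phi)=\psi$. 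By \eqref{cond:Ch1}, $\tilde\Gamma^V_{\mathcal N}(\mathcal I_G\circ\Phi)=\mathcal I_{\mathbf H}\circ\psi=\tilde\Gamma^V_{\mathcal N}(\Phi')$; since $\mathcal I_G\circ\Phi\in\Hom_K^\ttt(V,\mathcal N_G)$ by \eqref{cond:Ch2} and $\tilde\Gamma^V_{\mathcal N}$ is bijective on this subspace, $\mathcal I_G\circ\Phi=\Phi'$, so $\Phi'$ lies in $\mathcal I_G(\Ximin_{\mathcal M}(\mathscr X))$.

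Next I would handle $\Ximax$. For $\mathcal I_G(\Ximax_{\mathcal M}(\mathscr X))\subset\Ximax_{\mathcal N}(\mathcal I_{\mathbf H}(\mathscr X))$, let $\mathscr Y'=\mathcal I_G(\Ximax_{\mathcal M}(\mathscr X))$ and check that $\mathscr Y'$ belongs to the family defining $\Ximax_{\mathcal N}$. Given $\Phi'\in\Hom_K(V,\mathscr Y')$, use that $K$ is compact — hence every $(\Lieg_\CC,K)$-module is completely reducible as a $K$-module — to lift $\Phi'$ to some $\Phi\in\Hom_K(V,\Ximax_{\mathcal M}(\mathscr X))$ with $\mathcal I_G\circ\Phi=\Phi'$ (choose a $K$-stable complement of $\Ker\mathcal I_G\cap\Ximax_{\mathcal M}(\mathscr X)$). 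Then \eqref{cond:Ch1} yields $\tilde\Gamma^V_{\mathcal N}(\Phi')=\mathcal I_{\mathbf H}\circ\tilde\Gamma^V_{\mathcal M}(\Phi)\in\Hom_W(V^M_\single,\mathcal I_{\mathbf H}(\mathscr X))$, as required. For the equality $\mathcal I_G^{-1}(\Ximax_{\mathcal N}(\mathscr X'))=\Ximax_{\mathcal M}(\mathcal I_{\mathbf H}^{-1}(\mathscr X'))$ in \eqref{eq:R3Mpreimage}, both inclusions are straightforward diagram chases: $\Phi\in\Hom_K(V,\mathcal M_G)$ has values in $\mathcal I_G^{-1}(\Ximax_{\mathcal N}(\mathscr X'))$ iff $\mathcal I_G\circ\Phi$ has values in $\Ximax_{\mathcal N}(\mathscr X')$, and \eqref{cond:Ch1} translates this into $\mathcal I_{\mathbf H}\circ\tilde\Gamma^V_{\mathcal M}(\Phi)\in\Hom_W(V^M_\single,\mathscr X')$, i.e.\ $\tilde\Gamma^V_{\mathcal M}(\Phi)\in\Hom_W(V^M_\single,\mathcal I_{\mathbf H}^{-1}(\mathscr X'))$; the maximality characterization of each side does the rest.

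Finally, the remaining inclusion $\Ximin_{\mathcal M}(\mathcal I_{\mathbf H}^{-1}(\mathscr X'))\subset\mathcal I_G^{-1}(\Ximin_{\mathcal N}(\mathscr X'))$ follows by applying the already proved \eqref{eq:R3Mimage} to $\mathscr X:=\mathcal I_{\mathbf H}^{-1}(\mathscr X')$, together with the obvious monotonicity of $\Ximin_{\mathcal N}$ (immediate from its definition) applied to $\mathcal I_{\mathbf H}(\mathcal I_{\mathbf H}^{-1}(\mathscr X'))\subset\mathscr X'$. The two special cases \eqref{eq:R3MWimage} and \eqref{eq:R3MWpreimage} drop out by plugging $\mathscr X=\mathcal M_{\mathbf H}$ (so $\Ximax_{\mathcal M}(\mathcal M_{\mathbf H})=\mathcal M_G$) respectively $\mathscr X'=\{0\}$ (so $\Ximin_{\mathcal N}(\{0\})=\{0\}$) into the already proved statements. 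The main technical point, and the only step that is not a pure diagram chase, is the $K$-equivariant lifting in the $\Ximax$ image inclusion, which genuinely requires the compactness of $K$ and complete reducibility of $\mathcal M_G$ as a $K$-module; everything else is a careful but routine application of the axioms \eqref{cond:Ch}, \eqref{cond:Ch1}, \eqref{cond:Ch2}.
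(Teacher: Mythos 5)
Your proof is correct, and it takes a genuinely different route from the paper's for the central equality $\Ximin_{\mathcal N}(\mathcal I_{\mathbf H}(\mathscr X))=\mathcal I_G(\Ximin_{\mathcal M}(\mathscr X))$. The paper appeals to the universal property of the auxiliary functor $\Xiwrad$ (Proposition \ref{prop:Xiwrad}): it builds a commutative square of $\Cwrad$-morphisms with $\Xiwrad(\mathscr X)$ and $\Xiwrad(\mathcal I_{\mathbf H}(\mathscr X))$ on top, notes that $\Xiwrad$ is exact so the induced map $\overline{\mathcal I_G}$ is surjective, and reads the equality off the commutative square. You instead unpack what that universal property is encoding: you lift generators of $\Ximin_{\mathcal N}$ through $\mathcal I_G$ by hand, splitting the surjection $\mathscr X\twoheadrightarrow\mathcal I_{\mathbf H}(\mathscr X)$ on the finite-dimensional $W$-module $V^M_\single$ and then invoking \eqref{cond:Ch} and \eqref{cond:Ch1}--\eqref{cond:Ch2}. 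Likewise, for $\mathcal I_G(\Ximax_{\mathcal M}(\mathscr X))\subset\Ximax_{\mathcal N}(\mathcal I_{\mathbf H}(\mathscr X))$ the paper derives it as a corollary of the preimage equality (by applying that equality to $\mathscr X'=\mathcal I_{\mathbf H}(\mathscr X)$), whereas you prove it directly by a $K$-equivariant lifting of $\Phi'\in\Hom_K(V,\mathscr Y')$ to $\Hom_K(V,\Ximax_{\mathcal M}(\mathscr X))$. Worth noting: the paper's preimage-equality calculation also tacitly uses $K$-complete reducibility in the identity $\Hom_K(V,\mathcal I_G(\mathscr Y))=(\mathcal I_G\circ\cdot)(\Hom_K(V,\mathscr Y))$, so the ingredient you flag as the ``only step that is not a pure diagram chase'' is present in both proofs, just located differently. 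Your approach is more concrete and avoids the extra $\Xiwrad$ machinery at the cost of slightly longer generator-level bookkeeping; the paper's approach is more modular since $\Xiwrad$ and its universal property are established once and reused.
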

\begin{proof}
Consider the commutative diagram
\[
\xymatrix @M=0.4pc{
\mathscr X \ar @{->>} ^-{\mathcal I_{\mathbf H}} [r]
\ar @{^{(}->} _-{\mathcal J_{\mathbf H}} [d]
&
\mathcal I_{\mathbf H}(\mathscr X)
\ar @{^{(}->} ^-{\mathcal J'_{\mathbf H}} [d]\\
\mathcal M_{\mathbf H}
\ar _-{\mathcal I_{\mathbf H}} [r]
&
\mathcal N_{\mathbf H}
}
\]
where $\mathcal J_{\mathbf H}$ and $\mathcal J'_{\mathbf H}$
are inclusion maps.
Then by Proposition \ref{prop:Xiwrad}
there uniquely exist a set of $(\Lieg_\CC,K)$-homomorphisms
$\mathcal J_G$, $\mathcal J'_G$ and $\overline{\mathcal I_G}$, 
such that the diagram
\[
\xymatrix @M=0.4pc{
(\Xiwrad(\mathscr X),\mathscr X) \ar @{->>} ^-{(\overline{\mathcal I_G},\mathcal I_{\mathbf H})} [rr]
\ar _-{(\mathcal J_{G},\mathcal J_{\mathbf H})} [d]
&&
(\Xiwrad(\mathcal I_{\mathbf H}(\mathscr X)),\mathcal I_{\mathbf H}(\mathscr X))
\ar ^-{(\mathcal J'_{G},\mathcal J'_{\mathbf H})} [d]\\
(\mathcal M_{G},\mathcal M_{\mathbf H})
\ar _-{(\mathcal I_{G},\mathcal I_{\mathbf H})} [rr]
&&
(\mathcal N_{G},\mathcal N_{\mathbf H})
}
\]
commutes in $\Cwrad$.
Here 
as is shown in the proof of Theorem \ref{thm:multcor},
$\Ximin_{\mathcal M}(\mathscr X)=\Image \mathcal J_G$ and
$\Ximin_{\mathcal N}(\mathcal I_{\mathbf H}(\mathscr X))=\Image \mathcal J'_G$.
We also note $\overline{\mathcal I_G}$ is surjective since $\Xiwrad$ is exact.
Hence we have
\[
\Ximin_{\mathcal N}(\mathcal I_{\mathbf H}(\mathscr X))=\Image \mathcal J'_G
=\Image \mathcal J'_G\circ \overline{\mathcal I_G}
=\Image \mathcal I_G\circ \mathcal J_G
=\mathcal I_G(\Image \mathcal J_G)
=\mathcal I_G(\Ximin_{\mathcal M}(\mathscr X)),
\]
namely, the first equality in \eqref{eq:R3Mimage}.
Applying this to the case where $\mathscr X=\mathcal I^{-1}_{\mathbf H}(\mathscr X')$ we have
\[
\mathcal I_G(\Ximin_{\mathcal M}(\mathcal I^{-1}_{\mathbf H}(\mathscr X')))
=
\Ximin_{\mathcal N}(\mathcal I_{\mathbf H}(\mathcal I^{-1}_{\mathbf H}(\mathscr X')))\subset
\Ximin_{\mathcal N}(\mathscr X').
\]
This implies the first inclusion relation of \eqref{eq:R3Mpreimage}.
The first inclusion in \eqref{eq:R3Mimage}
and the second inclusion in \eqref{eq:R3Mpreimage}
are obvious from Theorem \ref{thm:multcor} (i).
Now, for a $(\Lieg_\CC,K)$-submodule $\mathscr Y$ of $\mathcal M_G$
\begin{align*}
\mathscr Y\subset \mathcal I_G^{-1}(\Ximax_{\mathcal N}(\mathscr X'))
&\Longleftrightarrow
\mathcal I_G(\mathscr Y)\subset \Ximax_{\mathcal N}(\mathscr X')\\
&\Longleftrightarrow
\forall V\in\Km\ 
\tilde\Gamma_{\mathcal N}^V\bigl(
\Hom_K(V, \mathcal I_G(\mathscr Y))
\bigr) \subset \Hom_W(V^M_\single, \mathscr X' ).
\end{align*}
But since
\begin{align*}
\tilde\Gamma_{\mathcal N}^V\bigl(
\Hom_K(V, \mathcal I_G(\mathscr Y))
\bigr)
&=(\tilde\Gamma_{\mathcal N}^V \circ (\mathcal I_G\circ\cdot))
\bigl(
\Hom_K(V, \mathscr Y )
\bigr)\\
&=((\mathcal I_{\mathbf H}\circ\cdot)\circ \tilde\Gamma_{\mathcal M}^V )
\bigl(
\Hom_K(V, \mathscr Y )
\bigr),
\end{align*}
the above condition is still equivalent to
\begin{align*}
&\phantom{\Leftrightarrow}
\forall V\in\Km\ 
\tilde\Gamma_{\mathcal M}^V\bigl(
\Hom_K(V, \mathscr Y)
\bigr) \subset \Hom_W(V^M_\single, \mathcal I_{\mathbf H}^{-1}(\mathscr X') )
\Longleftrightarrow
\mathscr Y\subset \Ximax_{\mathcal M}(\mathcal I^{-1}_{\mathbf H}(\mathscr X')).
\end{align*}
Thus we get the last equality in \eqref{eq:R3Mpreimage}.
Applying this to the case where $\mathscr X'=\mathcal I_{\mathbf H}(\mathscr X)$ we get
\[
\mathcal I_G(\Ximax_{\mathcal M}(\mathscr X))
\subset
\mathcal I_G(\Ximax_{\mathcal M}(\mathcal I^{-1}_{\mathbf H}(\mathcal I_{\mathbf H}(\mathscr X))))
=
\mathcal I_G(\mathcal I_G^{-1}(\Ximax_{\mathcal N}(\mathcal I_{\mathbf H}(\mathscr X)))
)
\subset
\Ximax_{\mathcal N}(\mathcal I_{\mathbf H}(\mathscr X)),
\]
namely the last inclusion relation in \eqref{eq:R3Mimage}.

Finally, \eqref{eq:R3MWimage} and \eqref{eq:R3MWpreimage}
follow from \eqref{eq:R3Mimage}, \eqref{eq:R3Mpreimage} and Example \ref{exmp:Wmax}.
\end{proof}

\section{Spherical principal series}\label{sec:sps}
In this section we review the spherical principal series representation
$B_{G}(\lambda)$ for $G$ and the corresponding
standard representation $B_{\mathbf H}(\lambda)$ for $\mathbf H$.
For the latter we employ an unusual realization
so that a certain ``restriction map'' $\gamma_{B(\lambda)}:
B_{G}(\lambda) \to B_{\mathbf H}(\lambda)$ can easily be defined.
It will turn out that $B(\lambda):=\bigl(B_{G}(\lambda)_\Kf, B_{\mathbf H}(\lambda)\bigr)$ is a radial pair with radial restriction $\gamma_{B(\lambda)}$.
We also review standard invariant sesquilinear forms for principal series representations, which play important roles in later sections.

We discuss these things in a slightly more general setting
for the sake of application in \S\ref{sec:Ximin}.
Suppose a finite-dimensional $\Liea_\CC$-module $(\sigma,\mathscr U)$ is given.
The action $\sigma$ of $\Liea$ on $\mathscr U$ can be integrated to
the action of the simply connected Lie group $A$:
\[
A \ni a\longmapsto a^\sigma:=\exp \sigma(\log a)\in\End_\CC\mathscr U.
\]
We denote by $\mathscr U^\star$ the linear space of
antilinear functionals on $\mathscr U$.
Let $(\cdot,\cdot)_{\mathscr U}$ denote the canonical sesquilinear form on $\mathscr U^\star\times\mathscr U$.
Then $\mathscr U^\star$ is naturally an $\Liea_\CC$-module by
\[
(\sigma^\star(\xi) u^\star,u)_{\mathscr U}=
-(u^\star,\sigma(\xi) u)_{\mathscr U}
\quad\text{for }\xi\in\Liea,
u^\star\in\mathscr U^\star\text{ and }u\in\mathscr U.
\]
Suppose $\lambda\in\Liea_\CC^*$
and let $\CC_\lambda$ be the $\CC$ endowed with the $\Liea_\CC$-module structure
by $\Liea_\CC\ni\xi\mapsto\lambda(\xi)\in\End_\CC \CC_\lambda$.
We naturally identify $\CC_{-\lambda}^\star$ with $\CC_{\bar\lambda}$.

\begin{defn}\label{defn:BG}
We put
\begin{align*}
\Ind_{MAN}^G(\mathscr U)&=\left\{F:G\xrightarrow{C^\infty} \mathscr U;\,
\begin{aligned}
F(gman)
&=a^{-\sigma-\rho}F(g)\\
\text{for }&
(g,m,a,n)\in G\times M\times A\times N
\end{aligned}\right\},\\
B_G(\lambda)&=\Ind_{MAN}^G(\CC_{-\lambda})\\
&=\left\{
F \in C^\infty(G);\, 
\begin{aligned}F(gman)
&=a^{\lambda-\rho} F(g)\\
\text{for }&
(g,m,a,n)\in G\times M\times A\times N
\end{aligned}\right\}.
\end{align*}
We consider $\Ind_{MAN}^G(\mathscr U)$ as a $G$-module by the similar action to $\ell(\cdot)$.
Furthermore we define a sesquilinear form
$\threeset{\mathscr U}{\cdot,\cdot}{G}{\mathscr U^\star}$
on $\Ind_{MAN}^G(\mathscr U)\times \Ind_{MAN}^G(\mathscr U^\star)$ by
\[
\threeset{\mathscr U}{F_1,F_2}{G}{\mathscr U^\star}
=\int_K
(F_1(k),F_2(k))_{\mathscr U^\star}\,dk.
\]
Here $dk$ is the Haar measure on $K$ with $\int dk=1$.
In particular the sesquilinear form
$\threeset{\lambda}{\cdot,\cdot}{G}{-\bar\lambda}$
on $B_{G}(\lambda)\times B_{G}(-\bar\lambda)$
is defined by
\[
\threeset{\lambda}{F_1,F_2}{G}{-\bar\lambda}
=\int_K
F_1(k)\overline{F_2(k)}\,dk.
\]
\end{defn}
\begin{prop}\label{prop:sesquiGinv}
The sesquiliner form 
$\threeset{\mathscr U}{\cdot,\cdot}{G}{\mathscr U^\star}$
is invariant and non-degenerate.
\end{prop}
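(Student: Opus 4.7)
The plan is to verify invariance by a direct computation using the Iwasawa decomposition together with a standard measure-theoretic identity on $K$, and then to derive non-degeneracy by a bump-function argument in the compact picture.

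For invariance, fix $g\in G$ and $F_1\in\Ind_{MAN}^G(\mathscr U)$, $F_2\in\Ind_{MAN}^G(\mathscr U^\star)$. Write the Iwasawa decomposition of $g^{-1}k$ as $g^{-1}k=\kappa(g^{-1}k)\cdot\mathbf a(g^{-1}k)\cdot\mathbf n(g^{-1}k)$, so that
\[
F_1(g^{-1}k)=\mathbf a(g^{-1}k)^{-\sigma-\rho}F_1(\kappa(g^{-1}k)),
\qquad
F_2(g^{-1}k)=\mathbf a(g^{-1}k)^{-\sigma^\star-\rho}F_2(\kappa(g^{-1}k)).
\]
The key infinitesimal identity is
\[
(\sigma(\xi)u,u^\star)_{\mathscr U^\star}+(u,\sigma^\star(\xi)u^\star)_{\mathscr U^\star}=0
\qquad(\xi\in\Liea),
\]
which is immediate from the defining relation of $\sigma^\star$ once one relates the form $(\cdot,\cdot)_{\mathscr U^\star}$ on $\mathscr U\times\mathscr U^\star$ to the form $(\cdot,\cdot)_{\mathscr U}$ on $\mathscr U^\star\times\mathscr U$ (both are sesquilinear and differ only by complex conjugation/swap of arguments). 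Exponentiating and combining with the reality of $\rho$ yields
\[
(a^{-\sigma-\rho}u,a^{-\sigma^\star-\rho}u^\star)_{\mathscr U^\star}
=a^{-2\rho}(u,u^\star)_{\mathscr U^\star}
\qquad(a\in A).
\]

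Applying this pointwise to the integrand gives
\[
\int_K\bigl(F_1(g^{-1}k),F_2(g^{-1}k)\bigr)_{\mathscr U^\star}dk
=\int_K\mathbf a(g^{-1}k)^{-2\rho}\bigl(F_1(\kappa(g^{-1}k)),F_2(\kappa(g^{-1}k))\bigr)_{\mathscr U^\star}dk.
\]
Since $F_1$ and $F_2$ are right-$M$-invariant, the function $\varphi(k):=(F_1(k),F_2(k))_{\mathscr U^\star}$ descends to $K/M$, and by the classical Iwasawa integral formula (cf.~Helgason, \emph{Groups and Geometric Analysis}, Ch.~I)
\[
\int_K\varphi(\kappa(g^{-1}k))\,\mathbf a(g^{-1}k)^{-2\rho}\,dk=\int_K\varphi(k)\,dk,
\]
the right-hand side equals $\threeset{\mathscr U}{F_1,F_2}{G}{\mathscr U^\star}$, proving invariance.

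For non-degeneracy, restriction to $K$ identifies $\Ind_{MAN}^G(\mathscr U)$ and $\Ind_{MAN}^G(\mathscr U^\star)$ with $C^\infty(K/M,\mathscr U)$ and $C^\infty(K/M,\mathscr U^\star)$ respectively, under which the form becomes the fiberwise $L^2$-pairing induced by the non-degenerate sesquilinear form on $\mathscr U\times\mathscr U^\star$. Given a non-zero $F_2$, choose $k_0\in K$ with $F_2(k_0)\ne 0$, pick $u\in\mathscr U$ with $(u,F_2(k_0))_{\mathscr U^\star}\ne 0$, and take $F_1$ to be (the $MAN$-equivariant extension of) a small $\mathscr U$-valued bump function on $K/M$ concentrated near $k_0 M$ with value $u$ at $k_0$; then $\threeset{\mathscr U}{F_1,F_2}{G}{\mathscr U^\star}\ne 0$, and symmetrically for non-zero $F_1$.

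The only real subtlety is bookkeeping with the sesquilinear versus linear conventions; once the cancellation $(\sigma(\xi)u,u^\star)_{\mathscr U^\star}+(u,\sigma^\star(\xi)u^\star)_{\mathscr U^\star}=0$ is in hand, both invariance and non-degeneracy reduce to standard facts in the compact picture.
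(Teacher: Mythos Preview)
Your proof is correct and follows essentially the same route as the paper. The paper's one-line argument---observing that $g\mapsto (F_1(g),F_2(g))_{\mathscr U^\star}$ lies in $B_G(-\rho)$ and then invoking the $G$-invariance of $F\mapsto\int_K F(k)\,dk$ on $B_G(-\rho)$ (Helgason, Ch.~I, Lemma 5.19)---is exactly what your Iwasawa change-of-variables computation unpacks, and your compact-picture bump-function argument for non-degeneracy fills in what the paper leaves implicit.
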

\begin{proof}
Use the $G$-invariance of the linear functional
\[
B_{G}(-\rho)\ni F(g)\longmapsto \int_K F(k)\,dk\in\CC
\]
(cf.~\cite[Ch.\,I, Lemma 5.19]{Hel4}).
\end{proof}
\begin{defn}\label{defn:BH}
Let $w_0$ be as in {\normalfont Definition \ref{defn:HCartan}}
and let $\trans\cdot : \mathbf H\to\mathbf H$ be a unique algebra anti-automorphism such that
\[
\left\{\begin{aligned}
&\trans w=w^{-1}&&\text{for }w\in W,\\
&\trans \xi=-w_0\,w_0(\xi)\,w_0&&\text{for }\xi\in\Liea_\CC.
\end{aligned}\right.
\]
We put
\begin{align*}
&\Ind_{S(\Liea_\CC)}^{\mathbf H}(\mathscr U)
=\{
F \in \Hom_\CC(\mathbf H,\mathscr U);\, F(h\,\trans\xi)=\sigma(\xi) F(h)
\quad
\text{for }h\in\mathbf H\text{ and }\xi\in\Liea_\CC
\},\\
&B_{\mathbf H}(\lambda)=\Ind_{S(\Liea_\CC)}^{\mathbf H}(\CC_{-\lambda})\\
&\qquad\quad=\{
F \in \mathbf H^*;\, F(h\,\trans\xi)=-\lambda(\xi) F(h)
\quad
\text{for }h\in\mathbf H\text{ and }\xi\in\Liea_\CC
\}.
\end{align*}
We consider $\Ind_{S(\Liea_\CC)}^{\mathbf H}(\mathscr U)$ as a left $\mathbf H$-module
by $hF(\cdot)=F(\trans h\,\cdot)$.
Furthermore we define a sesquilinear form
$\threeset{\mathscr U}{\cdot,\cdot}{\mathbf H}{\mathscr U^\star}$
on $\Ind_{S(\Liea_\CC)}^{\mathbf H}(\mathscr U)\times
\Ind_{S(\Liea_\CC)}^{\mathbf H}(\mathscr U^\star)$ by
\[
\threeset{\mathscr U}{F_1,F_2}{\mathbf H}{\mathscr U^\star}
=\frac1{|W|}\sum_{w\in W} (F_1(w), {F_2(w)})_{\mathscr U^\star}.
\]
In particular the sesquilinear form 
$\threeset{\lambda}{\cdot,\cdot}{\mathbf H}{-\bar\lambda}$
on $B_{\mathbf H}(\lambda)\times B_{\mathbf H}(-\bar\lambda)$
is defined by
\[
\threeset{\lambda}{F_1,F_2}{\mathbf H}{-\bar\lambda}
=\frac1{|W|}\sum_{w\in W} F_1(w)\overline{F_2(w)}.
\]
\end{defn}
\begin{defn}\label{defn:sesquiHinv}
let $\cdot^\star : \mathbf H\to\mathbf H$ be a unique antilinear anti-automorphism such that
\[
\left\{\begin{aligned}
&w^\star=w^{-1}&&\text{for }w\in W,\\
&\xi^\star=-w_0\,w_0(\xi)\,w_0&&\text{for }\xi\in\Liea.
\end{aligned}\right.
\]
Suppose $\mathscr X_1$ and $\mathscr X_2$ are $\mathbf H$-modules.
Then a sesquilinear form
$(\cdot,\cdot):\,\mathscr X_1\times \mathscr X_2\to\CC$ is
called invariant if it satisfies
\[
(hx_1,x_2)=(x_1,h^\star x_2)
\qquad\text{for any }
x_1\in\mathscr X_1,
x_2\in\mathscr X_2\text{ and }
h\in\mathbf H.
\]
\end{defn}
\begin{prop}\label{prop:sesquiHinv}
The sesquiliner form 
$\threeset{\mathscr U}{\cdot,\cdot}{\mathbf H}{\mathscr U^\star}$
is invariant and non-degenerate.
\end{prop}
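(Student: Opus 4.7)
The plan is to address non-degeneracy first via a PBW-type identification, and then verify invariance by direct computation on a generating set of $\mathbf H$.

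For non-degeneracy, I would first establish the PBW-style decomposition $\mathbf H = \bigoplus_{w \in W} w \cdot \trans(S(\Liea_\CC))$. This can be verified by a filtration argument: $\trans$ preserves the natural filtration of $\mathbf H$ and reduces to $-\id$ on $\Liea_\CC$ at the associated graded level, so $\{w \trans p : w \in W,\ p \text{ a monomial in } S(\Liea_\CC)\}$ is a $\CC$-basis by Nakayama's lemma. Combined with the right-equivariance $F(h \trans\xi) = \sigma(\xi) F(h)$, this makes the evaluation map
\[
\Ind_{S(\Liea_\CC)}^{\mathbf H}(\mathscr U) \ni F \longmapsto (F(w))_{w \in W} \in \mathscr U^W
\]
a linear isomorphism. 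Consequently the form equals $|W|^{-1}$ times the sum over $W$ of the canonical pairings $(F_1(w), F_2(w))_{\mathscr U^\star}$; since the latter is non-degenerate and the values $F_i(w)$ can be freely prescribed, non-degeneracy follows.

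For invariance, by (anti)linearity it suffices to check $\threeset{\mathscr U}{hF_1, F_2}{\mathbf H}{\mathscr U^\star} = \threeset{\mathscr U}{F_1, h^\star F_2}{\mathbf H}{\mathscr U^\star}$ when $h$ ranges over the generators $W$ and $\Liea$ of $\mathbf H$. The case $h = w_1 \in W$ is immediate: $\trans w_1 = w_1^{-1}$ and $\trans(w_1^\star) = w_1$, so the substitution $w \mapsto w_1 w$ in the sum matches both sides. For $h = \xi \in \Liea$ we have $\trans\xi = \xi^\star = -w_0 w_0(\xi) w_0$ and $\trans(\xi^\star) = \xi$. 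Iterating the commutation relation \eqref{eq:Hrel} yields $w_0 \cdot w_0(\xi) = \xi\, w_0 + c_\xi$ with $c_\xi \in \CC W$, and a parallel manipulation lets us write each of $\trans\xi \cdot w$ and $\xi \cdot w$ uniquely in the PBW form $\sum_v v \cdot \trans p_v$ with $p_v \in S(\Liea_\CC)$. The right-equivariance of $F_1$, $F_2$ then converts each side into a finite sum of terms $(\sigma(p_v) F_1(v), F_2(v'))_{\mathscr U^\star}$ or $(F_1(v), \sigma^\star(p'_v) F_2(v'))_{\mathscr U^\star}$, and the identity $(\sigma(\eta) u, u^\star)_{\mathscr U^\star} = -(u, \sigma^\star(\eta) u^\star)_{\mathscr U^\star}$ (immediate from the definition of $\sigma^\star$) matches the two sides term by term.

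The main obstacle is the bookkeeping for the $\CC W$-valued correction terms $c_\xi$ arising from the Hecke relations; these produce auxiliary contributions on each side whose cancellation is not visible until the PBW expansions are aligned and the duality identity for $\sigma^\star$ is applied. A cleaner structural route, which I would attempt in parallel, is to identify $\Ind_{S(\Liea_\CC)}^{\mathbf H}(\mathscr U^\star)$ with the $\star$-contragredient of $\Ind_{S(\Liea_\CC)}^{\mathbf H}(\mathscr U)$ (possibly after a sign twist reflecting the defining minus sign in $\sigma^\star$), under which $\threeset{\mathscr U}{\cdot,\cdot}{\mathbf H}{\mathscr U^\star}$ becomes the canonical duality pairing; both invariance and non-degeneracy would then follow formally and bypass the explicit Hecke-algebra calculation.
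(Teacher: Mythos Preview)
Your treatment of non-degeneracy and of $W$-invariance is essentially the paper's: the PBW decomposition $\mathbf H=\CC W\cdot\trans S(\Liea_\CC)$ is exactly what underlies the isomorphism $\Ind_{S(\Liea_\CC)}^{\mathbf H}(\mathscr U)\simeq\Hom_\CC(\CC W,\mathscr U)$ stated there, and the substitution $w\mapsto w_1w$ is the same.

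The gap is in the $\xi$-invariance step. Writing $\trans\xi\cdot w=\sum_v v\,\trans p_v$ and $\xi\cdot w=\sum_v v\,\trans q_v$ is fine, but your claim that the duality identity for $\sigma^\star$ then ``matches the two sides term by term'' is exactly the point that is \emph{not} automatic. After expansion, the left side is a double sum $\sum_{w,v}(\sigma(p_v(w))F_1(v),F_2(w))_{\mathscr U^\star}$ and the right side is $\sum_{w,v}(F_1(w),\sigma^\star(q_v(w))F_2(v))_{\mathscr U^\star}$; to compare them you need a relation between the coefficients $p_v(w)$ and $q_w(v)$, and this relation is not visible from the bare Hecke relation \eqref{eq:Hrel}. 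The paper supplies precisely this missing ingredient: it uses the closed formula
\[
w\,\xi\,w^{-1}=w(\xi)+\sum_{\alpha\in R_1^+\cap ww_0R_1^+}\mathbf m_1(\alpha)\,(w^{-1}\alpha)(\xi)\,s_\alpha
\]
(equation \eqref{eq:wxiw}), performs the substitution $w\mapsto w^{-1}w_0$ in the $W$-sum, splits off the $\sigma^\star$-term, and then reindexes the remaining reflection terms by $w\mapsto s_\alpha w$ (which flips the sign of $(w^{-1}\alpha)(\xi)$ and swaps the condition $w^{-1}\alpha\in R^+$ with $w^{-1}\alpha\in w_0R^+$). Only after this reindexing do the two sides recombine via \eqref{eq:wxiw} into $\threeset{\mathscr U}{F_1,\xi^\star F_2}{\mathbf H}{\mathscr U^\star}$. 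You correctly flag the bookkeeping as the obstacle, but you do not supply the reindexing that resolves it.

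Your alternative structural route (identifying $\Ind_{S(\Liea_\CC)}^{\mathbf H}(\mathscr U^\star)$ with the $\star$-contragredient) would, if carried out, give a cleaner argument; but note that the identification itself amounts to proving invariance, so it does not bypass the computation so much as repackage it.
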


\begin{proof}
Since $\mathbf H=\trans \mathbf H=\trans(S(\Liea_\CC)\otimes \CC W)= \CC W\otimes \trans S(\Liea_\CC)$,
restriction to $\CC W$ gives the $W$-isomorphism
$\Ind_{S(\Liea_\CC)}^{\mathbf H}(\mathscr U)\simarrow \Hom_\CC(\CC W, \mathscr U)$.
Hence the non-degeneracy and
the $W$-invariance are clear.
In general, for $\xi\in\Liea_\CC$ and $w\in W$ it holds that
\begin{equation}\label{eq:wxiw}
w\,\xi\,w^{-1}=w(\xi)+\!\!\sum_{\alpha\in R_1^+\cap ww_0R_1^+} \!\!\mathbf m_1(\alpha)
(w^{-1}\alpha)(\xi)s_\alpha
\end{equation}
in $\mathbf H$ (cf.~\cite[Proposition 1.1 (1)]{Op:Cherednik}).
We also note 
$\xi\,w_0=-w_0\trans(w_0(\xi))$ for $\xi\in\Liea_\CC$.
Hence for any $\xi\in\Liea_\CC$
\begin{align*}
\threeset{\mathscr U}{\xi F_1,F_2}{\mathbf H}{\mathscr U^\star}\hspace{-3.5em}&\\
&=\frac1{|W|}\sum_{w\in W} 
\bigl( F_1(-w_0\,w_0(\xi)\,w_0w) ,\, F_2(w) \bigr)_{\mathscr U^\star}\\
&=\frac1{|W|}\sum_{w\in W}
\bigl( F_1(-w^{-1}ww_0\,w_0(\xi)\,w_0w^{-1}w_0) ,\, F_2(w^{-1}w_0) \bigr)_{\mathscr U^\star}\\
&=\frac1{|W|}\sum_{w\in W}
\bigl( F_1\bigl(
w^{-1}
\bigl(-w(\xi)-\!\!\sum_{\alpha\in R_1^+\cap wR_1^+} \!\!\mathbf m_1(\alpha)(w^{-1}\alpha)(\xi)s_\alpha\bigr)w_0
\bigr) ,\, F_2(w^{-1}w_0) \bigr)_{\mathscr U^\star}\\
&=\frac1{|W|}\sum_{w\in W}
\bigl( \sigma(w_0w(\xi))F_1(w^{-1}w_0) ,\, F_2(w^{-1}w_0) \bigr)_{\mathscr U^\star}\\
&
\quad-\frac1{|W|}\sum_{\alpha\in R_1^+} \!\!\sum_{\substack{w\in W;\\\,w^{-1}\!\!\alpha \in R^+}}
\mathbf m_1(\alpha)(w^{-1}\alpha)(\xi)
\bigl( F_1(w^{-1} s_\alpha w_0) ,\, F_2(w^{-1}w_0) \bigr)_{\mathscr U^\star}\\
&=\frac1{|W|}\sum_{w\in W} 
\bigl( F_1(w^{-1}w_0) ,\, \sigma^\star(-w_0w(\bar\xi)) F_2(w^{-1}w_0) \bigr)_{\mathscr U^\star}\\
&
\quad-\frac1{|W|}\sum_{\alpha\in R_1^+} \!\!\sum_{\substack{w\in W;\\\,w^{-1}\alpha \in w_0R^+}}\!\!
\mathbf m_1(\alpha)({(s_\alpha w)}^{-1}\alpha)(\xi)
\bigl( F_1({(s_\alpha w)}^{-1} s_\alpha w_0) ,\, F_2({(s_\alpha w)}^{-1}w_0) \bigr)_{\mathscr U^\star}\\
&=\frac1{|W|}\sum_{w\in W} 
\bigl( F_1(w^{-1}w_0) ,\, F_2(w^{-1}\,w(\overline \xi)\,w_0) \bigr)_{\mathscr U^\star}\\
&
\quad+\frac1{|W|}\sum_{\alpha\in R_1^+} \!\!\sum_{\substack{w\in W;\\\,w^{-1}\alpha \in w_0R^+}}\!\!
\mathbf m_1(\alpha)({w}^{-1}\alpha)(\xi)
\bigl( F_1(w^{-1} w_0) ,\, F_2(w^{-1}s_\alpha w_0) \bigr)_{\mathscr U^\star}\\
&=\frac1{|W|}\sum_{w\in W}
\bigl( F_1(
w^{-1}w_0) ,\, F_2\bigl(w^{-1}
\bigl(w(\overline \xi)+\!\!\sum_{\alpha\in R_1^+\cap ww_0R_1^+} \!\!\mathbf m_1(\alpha)(w^{-1}\alpha)(\overline \xi)s_\alpha\bigr)w_0
\bigr) \bigr)_{\mathscr U^\star}\\
&=\frac1{|W|}\sum_{w\in W}
\bigl( F_1(w^{-1}w_0) ,\, F_2(\overline\xi\,w^{-1}w_0) \bigr)_{\mathscr U^\star}
\\
&=\threeset{\mathscr U}{F_1,\xi^\star F_2}{\mathbf H}{\mathscr U^\star}.\qedhere
\end{align*}
\end{proof}
\begin{rem}
This proof is essentially the same as Opdam's proof of \cite[Theorem 4.2 (1)]{Op:Cherednik}.
Indeed $B_{\mathbf H}(-w_0\lambda)$ is isomorphic to $I_{\lambda}$ in \cite{Op:Cherednik}
and $\threeset{-w_0\lambda}{\cdot,\cdot}{\mathbf H}{w_0\bar\lambda}$
equals the sesquilinear form $(\cdot,\cdot)$ of \cite[Definition 7.2]{Op:Cherednik} up to a scalar multiple.
\end{rem}
\begin{defn}\label{defn:restInd}
Define the linear map $\gamma_{\Ind(\mathscr U)} : \Ind_{MAN}^G(\mathscr U)\to \Ind_{S(\Liea_\CC)}^{\mathbf H}(\mathscr U)$
by
\[
\Ind_{MAN}^G(\mathscr U)
\ni F(g)
\longmapsto
\bigl(
W\ni w\longmapsto F(\bar w)\in\mathscr U
\bigr)\in\Hom_\CC(\CC W,\mathscr U)
\simeq \Ind_{S(\Liea_\CC)}^{\mathbf H}(\mathscr U).
\]
Here for $w\in W$,
$\bar w$ is any lift of $w$ in $N_K(\Liea)$.
If $\mathscr U=\CC_{-\lambda}$ then
the corresponding map $\gamma_{B(\lambda)}:
B_{G}(\lambda) \to B_{\mathbf H}(\lambda)$ is defined by
\[
B_G(\lambda)\ni F(g)\longmapsto \bigl(\,W\ni w\longmapsto F(\bar w)\in\CC \,\bigr)
\in (\CC W)^*\simeq B_{\mathbf H}(\lambda).
\]
\end{defn}
\begin{thm}\label{thm:R3B}
$\Ind(\mathscr U):=\bigl(\Ind_{MAN}^G(\mathscr U)_\Kf,\, \Ind_{S(\Liea_\CC)}^{\mathbf H}(\mathscr U) \bigr)$
is a radial pair with radial restriction $\gamma_{\Ind(\mathscr U)}$.
Moreover, for any $V\in \Km$,
$\Gamma^V_{\Ind(\mathscr U)}$ defined by \eqref{eq:genGammaV}
gives a linear bijection
\begin{equation}\label{eq:R3B}
\Gamma^V_{\Ind(\mathscr U)} :\,
 \Hom_K(V,\Ind_{MAN}^G(\mathscr U))
\simarrow
\Hom_W(V^M,\Ind_{S(\Liea_\CC)}^{\mathbf H}(\mathscr U)).
\end{equation}
In particular $B(\lambda):=\bigl(B_{G}(\lambda)_\Kf, B_{\mathbf H}(\lambda)\bigr)$ is a radial pair with radial restriction $\gamma_{B(\lambda)}$.
\end{thm}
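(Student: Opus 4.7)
The plan is to prove the theorem in three stages: establishing the bijection $\Gamma^V_{\Ind(\mathscr U)}$, verifying $\Ind(\mathscr U) \in \CCh$ with $\gamma_{\Ind(\mathscr U)}$ as a radial restriction, and checking the radial pair axioms.

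\textbf{Bijection.} The plan is to apply Frobenius reciprocity on both sides. Since $M$ and $N$ act trivially on $\mathscr U$, Frobenius for parabolic induction gives $\Hom_K(V, \Ind_{MAN}^G(\mathscr U)) \simeq \Hom_\CC(V^M, \mathscr U)$ via $\Phi \mapsto \phi := \Phi[\cdot](1_G)|_{V^M}$. On the $\mathbf H$-side, the canonical $W$-equivariant isomorphism $\Ind_{S(\Liea_\CC)}^{\mathbf H}(\mathscr U) \simeq \Hom_\CC(\CC W, \mathscr U)$ (with left-regular $W$-action on $\CC W$) combined with classical Frobenius for finite groups gives $\Hom_W(V^M, \Ind_{S(\Liea_\CC)}^{\mathbf H}(\mathscr U)) \simeq \Hom_\CC(V^M, \mathscr U)$. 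The explicit identity $\Gamma^V_{\Ind(\mathscr U)}(\Phi)[v](w) = \Phi[v](\bar w) = \Phi[\bar w^{-1}v](1_G) = \phi(w^{-1}v)$ (for $v \in V^M$, using $N_K(\Liea)$-stability of $V^M$) shows that $\Gamma^V_{\Ind(\mathscr U)}$ corresponds to the identity on $\Hom_\CC(V^M, \mathscr U)$, hence is bijective.

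\textbf{Category structure.} Setting $\Hom_K^\ttt(V, \Ind_{MAN}^G(\mathscr U)) := \{\Phi : (\gamma_{\Ind(\mathscr U)} \circ \Phi)[V^M_\double] = 0\}$ makes Conditions (rest-1) and (rest-2) of Definition \ref{defn:RadRest} hold tautologically, and restricting the bijection above to $V^M_\single$ produces the bijection $\tilde\Gamma^V$ of (Ch-0). This places $\Ind(\mathscr U)$ in $\CCh$ with $\gamma_{\Ind(\mathscr U)}$ as a radial restriction.

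\textbf{Radial pair axioms.} The cornerstone is that $(\ell(X)F)(1_G) = 0$ for every $X \in \Lien_\CC$ and $F \in \Ind_{MAN}^G(\mathscr U)$, because $\exp(-tX) \in N$ forces $F(\exp(-tX)) = F(1_G)$. Combined with the $A$-transformation rule $F(\exp(-tH)) = e^{t(\sigma+\rho)(H)} F(1_G)$, the Iwasawa PBW decomposition $U(\Lieg_\CC) = U(\Lien_\CC) \otimes U(\Liea_\CC) \otimes U(\Liek_\CC)$ yields
\[
(\ell(D) \Phi[v])(1_G) = \sum_{J,K} c_{0JK}\,(\sigma + \rho)(H_J)\, \phi(p^V(k_K v))
\]
when $D = \sum_{I,J,K} c_{IJK} X_I H_J k_K$. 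Under the identification \eqref{eq:PHVid} relating $\Gamma(P_G(V))$ and $P_{\mathbf H}(V^M)$ via a $\rho$-shift, this matches the $\sigma$-evaluation arising from the induction rule $G(h\,\trans\xi) = \sigma(\xi) G(h)$ for the $\mathbf H$-action on $\Ind_{S(\Liea_\CC)}^{\mathbf H}(\mathscr U)$ evaluated at $w = 1$. Condition (rad-2) is immediate: for $\Psi \in \Hom_K^\ttt$ and $e' \in E^M_\double$, $\gamma^{P_G(V)}(\Psi[e']) \in S(\Liea_\CC) \otimes V^M_\double$, on which $\phi$ vanishes for $\Phi \in \Hom_{\Lieg_\CC,K}^\ttt$. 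For (w-rad) and (rad-1), both sides of the required identity are $W$-equivariant maps $E^M_\single \to \Ind_{S(\Liea_\CC)}^{\mathbf H}(\mathscr U)$: the LHS by direct computation using $K$-equivariance of $\Phi \circ \Psi$, and the RHS by $\mathbf H$-equivariance (hence $W$-equivariance) of $\tilde\Gamma_{\Ind(\mathscr U)}(\Phi)$ composed with $W$-equivariance of $\tilde\Gamma^E_V(\Psi)$. By Frobenius applied to $E^M_\single$, both maps are determined by their values at $w = 1$, where they agree by the displayed formula.

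\textbf{Main obstacle.} The principal technical step is tracking the $\rho$-shift carefully so that the $(\sigma+\rho)$-evaluation arising from the $G$-side PBW computation is precisely identified with the $\sigma$-evaluation on the $\mathbf H$-side via \eqref{eq:PHVid}; note that (rest-3) of Lemma \ref{lem:rest3} fails for $\gamma_{\Ind(\mathscr U)}$ in general, so the radial-pair axioms must be verified directly rather than invoked through that lemma.
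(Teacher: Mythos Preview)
Your proposal is correct and follows essentially the same approach as the paper's proof. Both arguments establish the bijection \eqref{eq:R3B} via Frobenius reciprocity (the paper writes out the explicit inverse, you phrase it more abstractly), define $\Hom_K^\ttt$ tautologically so that (rest-1), (rest-2) hold, and then verify (w-rad), (rad-1), (rad-2) by reducing to the evaluation at $w=1$ where the $\Lien_\CC$-part of the PBW expansion dies and the $\Liea_\CC$-part produces the $(\sigma+\rho)$-action, matching $\sigma(\gamma(D_i))$ on the $\mathbf H$-side; $W$-equivariance then propagates the identity to all $w$. Your remark that (rest-3) fails for $\gamma_{\Ind(\mathscr U)}$ (because $\Ad(\bar w^{-1})\Lien_\CC \not\subset \Lien_\CC$ for general $w$), so that Lemma~\ref{lem:rest3} is unavailable, is a correct observation that the paper leaves implicit. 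The only cosmetic difference is that the paper absorbs the $U(\Liek_\CC)$-factor into $V$ at the outset by writing $\Psi[e]=\sum D_i\otimes v_i$ with $D_i\in U(\Lien_\CC+\Liea_\CC)$, whereas you carry the $k_K$-term through and then apply $p^V$.
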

\begin{proof}
Suppose $V\in\widehat K$.
Let $(V^M)^\perp$ be
the orthogonal complement of $V^M$ in $V$ with respect to a $K$-invariant inner product and $p^V:V\to V^M$ the orthogonal projection.
If $\Phi\in\Hom_K(V, \Ind_{MAN}^G(\mathscr U))$ then
$\Phi[v](1)=\Phi[p^V(v)](1)$ for $v\in V$ since the linear functional
$V\ni v\mapsto \Phi[v](1)\in \CC$ is $M$-invariant.
Hence
\begin{equation}\label{eq:Phirecover}
\Phi[v](kan)=a^{-\sigma-\rho}\Phi[p^V(k^{-1}v)](1)
\quad\text{for }v\in V\text{ and }(k,a,n)\in K\times A\times N.
\end{equation}
From this one sees all the $K$-types of $\Gamma^V_{\Ind(\mathscr U)}$
belong to $\Km$.
So we assume $V\in\Km$.
Now clearly $\Gamma^V_{\Ind(\mathscr U)}$ maps $\Hom_K$
into $\Hom_W$:
\[
\Gamma^V_{\Ind(\mathscr U)}:\,
\Hom_K(V, \Ind_{MAN}^G(\mathscr U))\longrightarrow
\Hom_W(V^M, \Ind_{S(\Liea_\CC)}^{\mathbf H}(\mathscr U)).
\]
We assert this is a bijection.
Indeed,
the injectivity follows from \eqref{eq:Phirecover}.
In addition, if $\varphi\in \Hom_W(V^M, \Ind_{S(\Liea_\CC)}^{\mathbf H}(\mathscr U))$
then
\[
\Phi:\,V\ni v\longmapsto \bigl(\,G\ni g=kan \longmapsto 
a^{-\sigma-\rho}\varphi[p^V(k^{-1}v)](1) \in\mathscr X\,
\bigr) \in \Ind_{MAN}^G(\mathscr U)
\] 
defines an element of $\Hom_K(V, \Ind_{MAN}^G(\mathscr U))$ such that $\varphi=\Gamma^V_{\Ind(\mathscr U)}(\Phi)$.
We thus get \eqref{eq:R3B}.
Hence if we define $\Hom^\ttt_K(V,\Ind_{MAN}^G(\mathscr U))$ and $\tilde\Gamma_{\Ind(\mathscr U)}^V$
so that \eqref{cond:rest1} and \eqref{cond:rest2} are valid,
then \eqref{cond:Ch} holds and $\Ind(\mathscr U) \in \CCh$.
Note that \eqref{eq:R3B} induces a bijection
\[
\Gamma_{\Ind(\mathscr U)} :\,
 \Hom_{\Lieg_\CC,K}(P_G(V),\Ind_{MAN}^G(\mathscr U)_\Kf)
\simarrow
\Hom_{\mathbf H}(P_{\mathbf H}(V^M),\Ind_{S(\Liea_\CC)}^{\mathbf H}(\mathscr U)).
\]

In order to prove \eqref{cond:w-rad}, \eqref{cond:rad1} and \eqref{cond:rad2},
suppose $\Phi\in\Hom_K(V, \Ind_{MAN}^G(\mathscr U))
\simeq \Hom_{\Lieg_\CC,K}(P_G(V),\Ind_{MAN}^G(\mathscr U)_\Kf)$, $E\in\Km$,
and $\Psi\in\Hom_K(E, P_{G}(V))$.
Take a basis $\{v_1,\ldots,v_m,\ldots,v_n\}$ of $V$
so that $\{v_1,\ldots,v_m\}$ and $\{v_{m+1},\ldots,v_n\}$
are respectively bases of $V^M$ and $(V^M)^\perp$.
If for any $e\in E^M$ we write
\[
\Psi[e]=\sum_{i=1}^n D_i\otimes v_i \text{ with }D_i\in U(\Lien_\CC+\Liea_\CC),
\]
then we have
\[
\Gamma^E_V(\Psi)[e]=\sum_{i=1}^{m} \gamma(D_i)\otimes v_i \text{ with }\gamma(D_i) \in S(\Liea_\CC), 
\]
and
\begin{align*}
\bigl(\Phi\circ\Psi\bigr)[e](1)&=\sum_{i=1}^n\bigl(D_i\Phi[v_i]\bigr)(1)
=\sum_{i=1}^n\sigma(\gamma(D_i))\bigl(\Phi[v_i](1)\bigr)\\
&=\sum_{i=1}^m\sigma(\gamma(D_i))\bigl(\Phi[v_i](1)\bigr)
=\sum_{i=1}^m\sigma(\gamma(D_i))\bigl(\Gamma^V_{\Ind(\mathscr U)}(\Phi)[v_i](1)\bigr)\\
&=\sum_{i=1}^m\bigl(\gamma(D_i)\Gamma^V_{\Ind(\mathscr U)}(\Phi)[v_i]\bigr)(1)
=\bigl(\Gamma_{\Ind(\mathscr U)}(\Phi)\circ\Gamma^E_V(\Psi)\bigr)[e](1).
\end{align*}
Now since $\Gamma^E_{\Ind(\mathscr U)}(\Phi\circ\Psi)\in \Hom_W(E^M,\Ind_{S(\Liea_\CC)}^{\mathbf H}(\mathscr U))$,
for any $w\in W$
\begin{equation}\label{eq:R3PS}
\begin{aligned}
\Gamma^E_{\Ind(\mathscr U)}(\Phi\circ\Psi)[e](w)
&=\Gamma^E_{\Ind(\mathscr U)}(\Phi\circ\Psi)[w^{-1}e](1)
=\bigl(\Phi\circ\Psi\bigr)[\bar w^{-1}e](1)\\
&=\bigl(\Gamma_{\Ind(\mathscr U)}(\Phi)\circ\Gamma^E_V(\Psi)\bigr)[w^{-1}e](1).
\end{aligned}\end{equation}
If $\Phi\in\Hom^\ttt$, $\Psi\in\Hom^\ttt$, $e\in E^M_\double$ and $w\in W$ then
\begin{align*}
\bigl(\Gamma_{\Ind(\mathscr U)}(\Phi)\circ\Gamma^E_V(\Psi)\bigr)[w^{-1}e]
&\subset
\Gamma_{\Ind(\mathscr U)}(\Phi)\bigl[\Gamma^E_V(\Psi)\bigl[ E^M_\double \bigr]\bigr]\\
&\subset
\Gamma_{\Ind(\mathscr U)}(\Phi)\bigl[ P_{\mathbf H}(V^M_\double)\bigr]
=\{0\}
\end{align*}
and hence
$\Gamma^E_{\Ind(\mathscr U)}(\Phi\circ\Psi)[e](w)=0$.
This shows \eqref{cond:rad2}.
Finally, if $\Phi\in\Hom^\ttt$ or $\Psi\in\Hom^\oto$, then
$\bigl(\Gamma_{\Ind(\mathscr U)}(\Phi)\circ\Gamma^E_V(\Psi)\bigr)\bigr|_{E^M_\single}
=\tilde \Gamma_{\Ind(\mathscr U)}(\Phi)\circ\tilde \Gamma^E_V(\Psi)$ is a $W$-homomorphism.
Hence in this case for each $e\in E^M_\single$, \eqref{eq:R3PS} reduces to
\[
\tilde\Gamma^E_{\Ind(\mathscr U)}(\Phi\circ\Psi)[e](w)
=\bigl(\tilde \Gamma_{\Ind(\mathscr U)}(\Phi)\circ\tilde \Gamma^E_V(\Psi)\bigr)[w^{-1}e](1)
=\bigl(\tilde \Gamma_{\Ind(\mathscr U)}(\Phi)\circ\tilde \Gamma^E_V(\Psi)\bigr)[e](w),
\]
proving \eqref{cond:w-rad} and \eqref{cond:rad1}.
\end{proof}
In general, for $V\in \Km$ 
we denote by $V^\star$ the linear space of antilinear functionals on $V$.
This has a natural $K$-module structure isomorphic to $V$ 
and the map $\,\bar\cdot: V^*\ni v^*\mapsto
\overline{v^*}=\overline{\ang{\cdot}{v^*}}\in V^\star$
is a $K$-antilinear isomorphism.
The inverse of $\,\bar\cdot\,$ is also denoted by $\,\bar\cdot$.

\begin{defn}\label{defn:compRM}
Suppose $\mathcal M^1=(\mathcal M^1_G, \mathcal M^1_{\mathbf H})$,
$\mathcal M^2=(\mathcal M^2_G, \mathcal M^2_{\mathbf H}) \in \CCh$.
A pair of
two invariant sesquilinear froms,
$(\cdot,\cdot)^{G}$ on $\mathcal M^1_G\times\mathcal M^2_G$ and
$(\cdot,\cdot)^{\mathbf H}$
on $\mathcal M^1_{\mathbf H}\times\mathcal M^2_{\mathbf H}$,
is said to be \emph{compatible with restriction}
if it satisfies the following condition:
\smallskip

\noindent
For any $V\in\Kqsp$ take a basis $\{v_1,\ldots,v_{m'},\ldots,v_n\}$ of $V$
so that $\{v_1,\ldots,v_{m'}\}$ is a basis of $V^M_\single$ and
$v_{m'+1},\ldots,v_{n}$ are orthogonal to $V^M_\single$ with respect to
a $K$-invariant inner product of $V$;
Let $\{v^*_1,\ldots,v^*_n\}\subset V^*$ be the dual basis of
$\{v_1,\ldots,v_n\}$ and put $v_i^\star=\overline{v_i^*}$ for $i=1,\ldots, n$;
Then for any $(\Phi_1, \Phi_2)\in
\Hom_K^\ttt(V,\mathcal M^1_G) \times \Hom_K(V^\star,\mathcal M^2_G)$
$\cup$
$\Hom_K(V,\mathcal M^1_G) \times \Hom_K^\ttt(V^\star,\mathcal M^2_G)$
\[
\sum_{i=1}^n
\bigl(\Phi_1[v_i],\Phi_2[v_i^\star]\bigr)^G
=\sum_{i=1}^{m'}\bigl(\tilde\Gamma^{V}_{\mathcal M^1}(\Phi_1)[v_i],
\tilde\Gamma^{V^\star}_{\mathcal M^2}(\Phi_2)[v_i^\star]\bigr)^{\mathbf H}.
\]
\end{defn}
\begin{prop}\label{prop:Blcomp}
Suppose $V\in\Km$ and
take a basis $\{v_1,\ldots,v_{m},\ldots,v_n\}$\/ of $V$
so that $\{v_1,\ldots,v_{m}\}$ is a basis of\/ $V^M$ and
$v_{m+1},\ldots,v_{n}$ are orthogonal to $V^M$.
Define $\{v_i^\star\}\subset V^\star$
as in {\normalfont Definition \ref{defn:compRM}}.
Then for any $\Phi_1\in\Hom_K(V,\Ind_{MAN}^G(\mathscr U))$
and $\Phi_2\in\Hom_K(V^\star,\Ind_{MAN}^G(\mathscr U^\star))$
\[
\sum_{i=1}^n
\bthreeset{\mathscr U}{\Phi_1[v_i],\Phi_2[v_i^\star]}{G}{\mathscr U^\star}
=\sum_{i=1}^{m}
\bthreeset{\mathscr U}{
\Gamma^{V}_{\Ind(\mathscr U)}(\Phi_1)[v_i],
\Gamma^{V^\star}_{\Ind(\mathscr U^\star)}(\Phi_2)[v_i^\star]
}{\mathbf H}{\mathscr U^\star}.
\]
In particular, the pair of $\threeset{\mathscr U}{\cdot,\cdot}{G}{\mathscr U^\star}$
and $\threeset{\mathscr U}{\cdot,\cdot}{\mathbf H}{\mathscr U^\star}$
is compatible with restriction.
\end{prop}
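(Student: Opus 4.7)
The plan is to show both sides equal the common quantity $\sum_{i=1}^{m}(\Phi_1[v_i](1),\Phi_2[v_i^\star](1))_{\mathscr U^\star}$. The central principle is trace-invariance: for any bilinear form $B$ on $X\times X^*$, the sum $\sum_j B(x_j,x_j^*)$ over a basis of $X$ and its dual basis of $X^*$ equals the trace of the induced endomorphism of $X$, hence is independent of the basis. I apply this to the bilinear form $\tilde\beta:V\times V^*\to\CC$ defined by $\tilde\beta(v,v^*):=(\Phi_1[v](1),\Phi_2[\overline{v^*}](1))_{\mathscr U^\star}$. This is genuinely bilinear because the antilinearity of $v^*\mapsto\overline{v^*}$ combines with the antilinearity of the second slot of $(\cdot,\cdot)_{\mathscr U^\star}$ to cancel.

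For the $G$-side, the $K$-equivariance $\Phi_j[v](k)=\Phi_j[k^{-1}v](1)$ together with $\overline{k^{-1}v_i^\star}=k^{-1}v_i^*$ (since $\overline{\cdot}$ is $K$-equivariant and involutive) rewrites the integrand as $\sum_{i=1}^n\tilde\beta(k^{-1}v_i,k^{-1}v_i^*)$. The $K$-action preserves the natural pairing between $V$ and $V^*$, so $\{k^{-1}v_i\}$ and $\{k^{-1}v_i^*\}$ remain dual bases; by trace-invariance the integrand is constant in $k$ and equals $\sum_{i=1}^n\tilde\beta(v_i,v_i^*)$. The terms with $i>m$ vanish because $p^Vv_i=0$ forces $\Phi_1[v_i](1)=\Phi_1[p^Vv_i](1)=0$ by \eqref{eq:Phirecover}, so after integration over $K$ (with $\int_K dk=1$) one obtains the target expression. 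For the $\mathbf H$-side I argue analogously, now on $V^M$: the restrictions $\{v_i\}_{i\le m}$ and $\{v_i^*\}_{i\le m}$ are dual bases of $V^M$ and $(V^M)^*\simeq(V^*)^M$ (the latter identified with functionals vanishing on $(V^M)^\perp$), and the $W$-action preserves this duality; combined with $\Gamma^V_{\Ind(\mathscr U)}(\Phi_1)[v_i](w)=\Phi_1[\bar w^{-1}v_i](1)$, the same trace-invariance argument makes the $w$th summand constantly equal to $\sum_{i=1}^m\tilde\beta(v_i,v_i^*)$, and averaging over $W$ yields the same value as the $G$-side.

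The ``in particular'' clause is then immediate: for $\Phi_1\in\Hom_K^\ttt$ the contributions from $v_i\in V^M_\double$ drop out of the $\mathbf H$-side sum, collapsing it to $i\le m'$. The only delicate point---the main obstacle worth spelling out---is verifying that $v_i^\star\in(V^\star)^M_\single$ for $i\le m'$, so that $\tilde\Gamma^{V^\star}_{\Ind(\mathscr U^\star)}$ is applicable at these vectors. This duality compatibility holds because $v_i^*\in(V^*)^M$ vanishes on $V^M_\double$ by the choice of basis, and for any $v\in V$ the $M$-invariance of $v_i^*$ gives $v_i^*(Z_\alpha(Z_\alpha^2+4)v)=v_i^*(p^V(Z_\alpha(Z_\alpha^2+4)v))$; the latter argument lies in $V^M_\double$ by the very definition of that subspace, whence $v_i^*$ annihilates $Z_\alpha(Z_\alpha^2+4)V$ for every $\alpha$. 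Therefore $v_i^*\in(V^*)^M_\single$ and $v_i^\star=\overline{v_i^*}\in(V^\star)^M_\single$, as required.
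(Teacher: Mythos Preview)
Your proof is correct and follows essentially the same approach as the paper: both reduce each side to $\sum_{i=1}^m(\Phi_1[v_i](1),\Phi_2[v_i^\star](1))_{\mathscr U^\star}$ via the trace-invariance identity $\sum_i v_i\otimes v_i^*=\sum_i gv_i\otimes gv_i^*$ applied with $g\in K$ on the $G$-side and $g\in W$ on the $\mathbf H$-side, together with $\Phi_1[v_i](1)=0$ for $i>m$. The paper dispatches the ``in particular'' clause in one line by citing $V^M_\single\perp V^M_\double$, whereas you spell out explicitly why $v_i^\star\in(V^\star)^M_\single$ for $i\le m'$; your argument there is sound (the key point being that $p^V\bigl(Z_\alpha(Z_\alpha^2+4)v\bigr)$ lands in $V^M_\double$ since the $M$-span of $Z_\alpha(Z_\alpha^2+4)V$ is $M$-stable and $p^V$ is the $M$-average).
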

\begin{proof}
Since $\sum_{i=1}^n v_i\otimes v_i^*=
\sum_{i=1}^n k^{-1}v_i\otimes k^{-1}v_i^*$
for any $k\in K$, we calculate
\begin{align*}
\sum_{i=1}^n
\bthreeset{\mathscr U}{\Phi_1[v_i],\Phi_2[v_i^\star]}{G}{\mathscr U^\star}
&=
\sum_{i=1}^n
\int_K 
\bigl( \Phi_1[v_i](k) ,\, \Phi_2[v_i^\star](k) \bigr)_{\mathscr U^\star}\,dk\\
&=
\sum_{i=1}^n
\int_K 
\bigl( \Phi_1[k^{-1}v_i](1) ,\, \Phi_2\bigl[\,\overline{k^{-1}v_i^*}\,\bigr](1) \bigr)_{\mathscr U^\star}\,dk\\
&=
\sum_{i=1}^n
\int_K 
\bigl( \Phi_1[v_i](1) ,\, \Phi_2\bigl[\,\overline{v_i^*}\,\bigr](1) \bigr)_{\mathscr U^\star}\,dk\\
&=
\sum_{i=1}^n
\bigl( \Phi_1[v_i](1) ,\, \Phi_2\bigl[\,\overline{v_i^*}\,\bigr](1) \bigr)_{\mathscr U^\star}.
\end{align*}
But since $\Phi_1[v_i](1)=0$ for $i>m$ (cf.~the proof of Theorem \ref{thm:R3B})
and since $\sum_{i=1}^m v_i\otimes v_i^*=
\sum_{i=1}^m w^{-1}v_i\otimes w^{-1}v_i^*$ for any $w\in W$, 
the last expression equals
\begin{align*}
\sum_{i=1}^m
\bigl( \Phi_1[v_i](1) ,\, &\Phi_2\bigl[\,\overline{v_i^*}\,\bigr](1) \bigr)_{\mathscr U^\star}\\
&=
\sum_{i=1}^m
\bigl( \Gamma^{V}_{\Ind(\mathscr U)}(\Phi_1)[v_i](1) ,\, 
\Gamma^{V^\star}_{\Ind(\mathscr U^\star)}(\Phi_2)\bigl[\,\overline{v_i^*}\,\bigr](1)
 \bigr)_{\mathscr U^\star}\\
&=
\frac1{|W|}\sum_{w\in W}\sum_{i=1}^m
\bigl( \Gamma^{V}_{\Ind(\mathscr U)}(\Phi_1)[w^{-1}v_i](1) ,\, 
\Gamma^{V^\star}_{\Ind(\mathscr U^\star)}(\Phi_2)\bigl[\,\overline{w^{-1}v_i^*}\,\bigr](1)
 \bigr)_{\mathscr U^\star}\\
&=
\frac1{|W|}\sum_{w\in W}\sum_{i=1}^m
\bigl( \Gamma^{V}_{\Ind(\mathscr U)}(\Phi_1)[v_i](w) ,\, 
\Gamma^{V^\star}_{\Ind(\mathscr U^\star)}(\Phi_2)[v_i^\star](w)
 \bigr)_{\mathscr U^\star}\\
&=\sum_{i=1}^{m}
\bthreeset{\mathscr U}{
\Gamma^{V}_{\Ind(\mathscr U)}(\Phi_1)[v_i],
\Gamma^{V^\star}_{\Ind(\mathscr U^\star)}(\Phi_2)[v_i^\star]
}{\mathbf H}{\mathscr U^\star}.
\end{align*}
The last assertion of the proposition is easy from this.
(Note $V^M_\single\perp V^M_\double$ \cite[Lemma~3.3]{Oda:HC}.)
\end{proof}

\section{Star operations on morphisms}\label{sec:star}
In this section
we introduce two antilinear operations
\begin{align*}
\cdot^\star: \Hom_K(E,P_G(V))&\longrightarrow\Hom_K(V^\star,P_G(E^\star)),\\
\cdot^\star: \Hom_{\CC}(E^M,P_{\mathbf H}(V^M))&\longrightarrow\Hom_\CC((V^\star)^M,P_{\mathbf H}((E^\star)^M))
\end{align*}
for $E,V\in\Km$ and study their properties.
These tools will be used to prove Theorem \ref{thm:HC} (iii)
and other results in later sections.
\begin{lem}\label{lem:hanten1}
Suppose $E,V\in\Km$. 
Then naturally
\begin{equation*}
\begin{aligned}
\Hom_K(E,P_G(V))
&\simeq(E^*\otimes U(\Lieg_\CC)\otimes_{U(\Liek_\CC)} V)^K\\
&\simeq(E^*\otimes_{U(\Liek_\CC)} U(\Lieg_\CC)\otimes_{U(\Liek_\CC)} V)^K\\
&\simeq(E^*\otimes_{U(\Liek_\CC)} U(\Lieg_\CC)\otimes V)^K\\
&\simeq \Hom_K(V^*,E^*\otimes_{U(\Liek_\CC)}U(\Lieg_\CC)).
\end{aligned}
\end{equation*}
Here we consider $D\in U(\Liek_\CC)$ acts on $e^*\in E^*$ from the right by
$e^*D=\trans\! D\,e^*$.
(Recall $\trans\cdot:U(\Lieg_\CC)\to U(\Lieg_\CC)$ is a unique anti-automorphism such that $\trans X=-X$ for $X\in\Lieg_\CC$.)
\end{lem}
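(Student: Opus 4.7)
The strategy is to establish each of the four isomorphisms separately, using standard Frobenius-type identities for the outer two and a coinvariants argument (made available by the compactness of $K$) for the inner two.

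First, the isomorphism $\Hom_K(E,P_G(V))\simeq(E^*\otimes P_G(V))^K$ is the standard adjunction $\Hom_K(E,M)\simeq (E^*\otimes M)^K$ valid for any finite-dimensional $K$-module $E$ and any $K$-module $M$, realised by sending $\Phi\mapsto\sum_i e_i^*\otimes \Phi(e_i)$ for a basis $\{e_i\}$ of $E$ with dual basis $\{e_i^*\}$. Symmetrically, the fourth isomorphism
\[
(E^*\otimes_{U(\Liek_\CC)}U(\Lieg_\CC)\otimes V)^K
\simeq
\Hom_K(V^*,E^*\otimes_{U(\Liek_\CC)}U(\Lieg_\CC))
\]
is the same adjunction applied to the finite-dimensional $K$-module $V$ (using $V^{**}\simeq V$), treating $E^*\otimes_{U(\Liek_\CC)}U(\Lieg_\CC)$ as a $K$-module via $\Ad$ on $U(\Lieg_\CC)$ and the natural $K$-action on $E^*$.

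The heart of the lemma is the pair of middle isomorphisms. For the second, consider the natural surjective $K$-equivariant map
\[
p:\,E^*\otimes U(\Lieg_\CC)\otimes_{U(\Liek_\CC)} V
\twoheadrightarrow
E^*\otimes_{U(\Liek_\CC)}U(\Lieg_\CC)\otimes_{U(\Liek_\CC)} V.
\]
Using the convention $e^*X=\trans X\,e^*=-X e^*$ for $X\in\Liek_\CC$, a direct computation shows that $\Ker p$ is spanned by
\[
Xe^*\otimes D\otimes v+e^*\otimes XD\otimes v\qquad(X\in\Liek_\CC),
\]
which is precisely the image of the infinitesimal $\Liek$-action on $E^*\otimes P_G(V)$ (recall that $X\in\Liek_\CC$ acts on $P_G(V)$ by left multiplication on $U(\Lieg_\CC)$, since $[X,D]\otimes v+D\otimes Xv=XD\otimes v$). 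Because $K$ is compact, the module $E^*\otimes P_G(V)$ decomposes $K$-equivariantly as $(E^*\otimes P_G(V))^K\oplus M_0$, where $M_0$ contains $\Liek\cdot(E^*\otimes P_G(V))$; averaging over $K$ then shows that the restriction $p|_{(E^*\otimes P_G(V))^K}$ is an isomorphism onto $(E^*\otimes_{U(\Liek_\CC)}U(\Lieg_\CC)\otimes_{U(\Liek_\CC)} V)^K$. The third isomorphism is obtained by exactly the same argument, now transferring the $\Liek_\CC$-action across the right-hand tensor (i.e.\ undoing the $U(\Liek_\CC)$-balancing between $U(\Lieg_\CC)$ and $V$ while re-imposing it as the plain tensor $\otimes V$).

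The only subtle point is making sure the coinvariants-versus-invariants matching really holds for disconnected compact $K$. The hypothesis that $K=N_G(\Liek)$ is compact ensures that $K$-modules are completely reducible and that the Haar-averaging projector $P_K(m)=\int_K k\cdot m\,dk$ splits every short exact sequence of $K$-modules; this suffices for the averaged map to realise the isomorphism on $K$-invariants, regardless of whether $K$ is connected. Once all four identifications are in place, chasing the element $\sum_i e_i^*\otimes \Phi(e_i)$ through the chain produces the explicit correspondence between $\Phi\in\Hom_K(E,P_G(V))$ and its ``transpose'' in $\Hom_K(V^*,E^*\otimes_{U(\Liek_\CC)}U(\Lieg_\CC))$, which is the form that will be needed in the subsequent definition of the star operation.
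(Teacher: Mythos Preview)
Your proof is correct and follows essentially the same route as the paper: both arguments hinge on the complete reducibility of locally finite $K$-modules to show that the $K$-invariants functor commutes with the canonical surjection $\pi$ (your $p$) imposing the extra $U(\Liek_\CC)$-balancing. You make this explicit by identifying $\Ker p$ with $\Liek_\CC\cdot M$ and invoking Haar averaging, whereas the paper works with the isotypic projections $p_1,p_2$ and constructs an explicit one-sided inverse $t$ to the composite $p_2\circ\pi\circ\iota_1$; the content is the same.
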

\begin{proof}
Note that $K$ acts on $E^*\otimes U(\Lieg_\CC)\otimes_{U(\Liek_\CC)} V$
and $E^*\otimes_{U(\Liek_\CC)} U(\Lieg_\CC)\otimes_{U(\Liek_\CC)} V$
diagonally.
Since these actions are locally finite,
we have the following projections to the trivial isotypic components:
\begin{align*}
&p_1:\,E^*\otimes U(\Lieg_\CC)\otimes_{U(\Liek_\CC)} V
\longrightarrow
(E^*\otimes U(\Lieg_\CC)\otimes_{U(\Liek_\CC)} V)^K,\\
&p_2:\,E^*\otimes_{U(\Liek_\CC)} U(\Lieg_\CC)\otimes_{U(\Liek_\CC)} V
\longrightarrow
(E^*\otimes_{U(\Liek_\CC)} U(\Lieg_\CC)\otimes_{U(\Liek_\CC)} V)^K.
\end{align*}
Thus if
\[
\iota_1: \,
(E^*\otimes U(\Lieg_\CC)\otimes_{U(\Liek_\CC)} V)^K
\longrightarrow
E^*\otimes U(\Lieg_\CC)\otimes_{U(\Liek_\CC)} V
\]
is the inclusion map and if
\[
\pi:\, E^*\otimes U(\Lieg_\CC)\otimes_{U(\Liek_\CC)} V
\longrightarrow
E^*\otimes_{U(\Liek_\CC)} U(\Lieg_\CC)\otimes_{U(\Liek_\CC)} V
\]
is the canonical surjection, then $p_2\circ\pi\circ\iota_1$ is a surjection
and $p_1\circ\iota_1=\id$.
One easily checks $p_1$ induces a $K$-homomorphism
\[
\tilde p_1:\,E^*\otimes_{U(\Liek_\CC)} U(\Lieg_\CC)\otimes_{U(\Liek_\CC)} V
\longrightarrow
(E^*\otimes U(\Lieg_\CC)\otimes_{U(\Liek_\CC)} V)^K
\]
such that $\tilde p_1\circ\pi=p_1$.
Since $\tilde p_1$ factors through $p_2$ there exists
\[
t:\,(E^*\otimes_{U(\Liek_\CC)} U(\Lieg_\CC)\otimes_{U(\Liek_\CC)} V)^K
\longrightarrow
(E^*\otimes U(\Lieg_\CC)\otimes_{U(\Liek_\CC)} V)^K
\]
such that $t\circ p_2=\tilde p_1$.
Now since $t\circ(p_2\circ\pi\circ\iota_1)=(t\circ p_2)\circ\pi\circ\iota_1
=(\tilde p_1\circ \pi)\circ\iota_1=p_1\circ\iota_1=\id$,
\[
p_2\circ\pi\circ\iota_1:\,
(E^*\otimes U(\Lieg_\CC)\otimes_{U(\Liek_\CC)} V)^K
\longrightarrow
(E^*\otimes_{U(\Liek_\CC)} U(\Lieg_\CC)\otimes_{U(\Liek_\CC)} V)^K
\]
is a bijection.
Likewise $(E^*\otimes_{U(\Liek_\CC)} U(\Lieg_\CC)\otimes_{U(\Liek_\CC)} V)^K
\simeq (E^*\otimes_{U(\Liek_\CC)} U(\Lieg_\CC)\otimes V)^K$
and the other isomorphisms are obvious.
\end{proof}
\begin{defn}[the star operation]\label{defn:starOpG}
Let $\cdot^\star: U(\Lieg_\CC)\to U(\Lieg_\CC)$ be a unique antilinear anti-automorphism
such that $X^\star=-X$ for $X\in\Lieg$ and
let $\sigma:E^*\otimes_{U(\Liek_\CC)}U(\Lieg_\CC)\to U(\Lieg_\CC) \otimes_{U(\Liek_\CC)} E^\star$ be the $K$-antilinear isomorphism defined by
$e^*\otimes D\mapsto D^\star\otimes\overline{e^*}$.
For $\Psi\in \Hom_K(E,P_G(V))$
let
$\trans\Psi \in \Hom_K(V^*,E^*\otimes_{U(\Liek_\CC)}U(\Lieg_\CC))$
be the corresponding element by Lemma \ref{lem:hanten1}.
Then we define $\Psi^\star \in \Hom_K(V^\star,P_G(E^\star))$
by the following the composition:
\[
\Psi^\star:\,V^\star\xrightarrow{\bar\cdot} V^*
\xrightarrow{\trans\Psi} 
E^*\otimes_{U(\Liek_\CC)}U(\Lieg_\CC)
\xrightarrow{\sigma}
U(\Lieg_\CC) \otimes_{U(\Liek_\CC)} E^\star=P_G(E^\star).
\]
\end{defn}
The next proposition is easy from the definition:
\begin{prop}\label{prop:starMor}
Suppose an $\Ad(K)$-stable subspace $\mathbf S$ of $U(\Lieg_\CC)$ satisfies
$\mathbf S \otimes U(\Liek_\CC)\simarrow U(\Lieg_\CC)$
by multiplication (or equivalently $U(\Liek_\CC) \otimes \mathbf S\simarrow U(\Lieg_\CC)$).
Let $\{e_1,\ldots,e_\nu\}$ and $\{v_1,\ldots,v_n\}$
be bases of $E$ and $V$.
Let $\{e_1^*,\ldots,e_\nu^*\}\subset E^*$
be the dual basis of $\{e_1,\ldots,e_\nu\}$
and put $e_i^\star=\overline{e_i^*}$ $(i=1,\ldots,\nu)$.
Define $\{v_1^\star,\ldots,v_n^\star\}\subset V^\star$ similarly.
If for a given $\Psi\in\Hom_K(E,P_G(V))$ we take
$S_{ij}\in\mathbf S$ $(1\le i\le\nu, 1\le j\le n)$ so that
\[
\Psi[e_i]=\sum_{j=1}^n S_{ij}\otimes v_j
\qquad\text{for }i=1,\ldots,\nu,
\]
then
\[
\Psi^\star[v_j^\star]=\sum_{i=1}^\nu S_{ij}^\star\otimes  e_i^\star
\qquad\text{for }j=1,\ldots,n.
\]
Hence $\Psi^{\star\star}=\Psi$.%, $(\theta\circ\Psi)^\star=\theta\circ\Psi^\star$.
\end{prop}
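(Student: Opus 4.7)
\textbf{Proof plan for Proposition \ref{prop:starMor}.}

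The plan is to trace the given expansion $\Psi[e_i]=\sum_j S_{ij}\otimes v_j$ through each of the isomorphisms appearing in Lemma~\ref{lem:hanten1}, then through the maps $\bar\cdot$ and $\sigma$ that define $\Psi^\star$. First I would view $\Psi$ as the element
\[
\sum_{i,j} e_i^*\otimes S_{ij}\otimes v_j \in (E^*\otimes U(\Lieg_\CC)\otimes_{U(\Liek_\CC)}V)^K,
\]
which is unambiguous because $\{e_i^*\}$ is the dual basis to $\{e_i\}$. The hypothesis $\mathbf S\otimes U(\Liek_\CC)\simarrow U(\Lieg_\CC)$ is exactly what makes the $S_{ij}$ canonical representatives modulo the various $U(\Liek_\CC)$-tensor relations.

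Second, I would use the PBW-type identification together with the $\Ad(K)$-stability of $\mathbf S$ to observe that the successive isomorphisms
\[
(E^*\otimes U(\Lieg_\CC)\otimes_{U(\Liek_\CC)}V)^K
\simeq
(E^*\otimes_{U(\Liek_\CC)} U(\Lieg_\CC)\otimes_{U(\Liek_\CC)}V)^K
\simeq
(E^*\otimes_{U(\Liek_\CC)} U(\Lieg_\CC)\otimes V)^K
\]
preserve the expression $\sum_{i,j}e_i^*\otimes S_{ij}\otimes v_j$: indeed one checks that the projection to the trivial $K$-isotypic component, when the $U(\Lieg_\CC)$-slot is represented by elements of $\mathbf S$, is computed simply by the obvious averaging, and the $K$-invariance of $\sum_{i,j}e_i^*\otimes S_{ij}\otimes v_j$ is automatic from $\Psi\in\Hom_K$. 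Consequently the image of $\Psi$ in $\Hom_K(V^*,E^*\otimes_{U(\Liek_\CC)}U(\Lieg_\CC))$, i.e.\ $\trans\Psi$, is given by the matrix transpose
\[
\trans\Psi[v_j^*]=\sum_i e_i^*\otimes S_{ij}.
\]

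Third, I would substitute into the definition of $\Psi^\star$. Since $\overline{v_j^\star}=v_j^*$ (as $v_j^\star=\overline{v_j^*}$ and $\bar\cdot$ is involutive between $V^*$ and $V^\star$), one obtains
\[
\Psi^\star[v_j^\star]=\sigma(\trans\Psi[v_j^*])
=\sigma\Bigl(\sum_i e_i^*\otimes S_{ij}\Bigr)
=\sum_i S_{ij}^\star\otimes e_i^\star,
\]
which is the asserted formula. For the final claim $\Psi^{\star\star}=\Psi$, I would apply the formula once more: the set $\mathbf S^\star:=\{S^\star;\,S\in\mathbf S\}$ is again an $\Ad(K)$-stable PBW complement of $U(\Liek_\CC)$ in $U(\Lieg_\CC)$ (using that the antilinear anti-automorphism $\cdot^\star$ intertwines the adjoint action because $(\Ad(k)X)^\star=\Ad(k)(X^\star)$ for $X\in\Lieg$, $k\in K$). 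Applying the just-established formula to $\Psi^\star$ with respect to the bases $\{v_j^\star\}$, $\{e_i^\star\}$ and the complement $\mathbf S^\star$ gives
\[
\Psi^{\star\star}[(e_i^\star)^\star]=\sum_j (S_{ij}^\star)^\star\otimes (v_j^\star)^\star
=\sum_j S_{ij}\otimes v_j=\Psi[e_i],
\]
because $\cdot^\star$ is involutive on $U(\Lieg_\CC)$ and $\bar{\,\bar\cdot\,}=\id$ on dual spaces.

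The only delicate step is the second one: one has to be careful that the chain of isomorphisms in Lemma~\ref{lem:hanten1} really does act as the identity on symbols of the form $\sum_{i,j}e_i^*\otimes S_{ij}\otimes v_j$ with $S_{ij}\in\mathbf S$. This is where the PBW-complement hypothesis on $\mathbf S$ is essential, and it is the one point that requires writing out the projections $p_1,p_2$ from the proof of Lemma~\ref{lem:hanten1} and checking their compatibility with the decomposition $U(\Lieg_\CC)=\mathbf S\otimes U(\Liek_\CC)$. Everything else is routine bookkeeping.
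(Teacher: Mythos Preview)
Your proposal is correct and is exactly the unpacking of Definition~\ref{defn:starOpG} and Lemma~\ref{lem:hanten1} that the paper has in mind; the paper itself gives no argument beyond declaring the proposition ``easy from the definition.'' Your handling of the only nontrivial point---that the chain of isomorphisms in Lemma~\ref{lem:hanten1} is the identity on symbols $\sum_{i,j}e_i^*\otimes S_{ij}\otimes v_j$ with $S_{ij}\in\mathbf S$---is right: the map $p_2\circ\pi\circ\iota_1$ reduces to $\pi$ on $K$-invariants, and the $\mathbf S$-representatives pass unchanged through $\pi$ and its sibling isomorphism because $\mathbf S$ is simultaneously a right and left PBW complement.
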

\begin{rem}{\normalfont (i)}
As an $\mathbf S$ we can take the image of the symmetrization map for $S(\Lies_\CC)$ (Recall $\Lies$ is the vector part of the Cartan decomposition of $\Lieg$.)

\noindent{\normalfont (ii)}
We cannot use $U(\Lien_\CC+\Liea_\CC)$
as an $\mathbf S$ since it is not $\Ad(K)$-stable.
\end{rem}
\begin{cor}\label{cor:sesqui}
Suppose $\mathscr Y_1$ and $\mathscr Y_2$ are $(\Lieg_\CC,K)$-modules
and $(\cdot,\cdot)$ is
an invariant sesquilinear form on $\mathscr Y_1\times \mathscr Y_2$.
Suppose bases $\{e_i\}, \{e_i^\star\},
\{v_j\}$ and $\{v_j^\star\}$ are as in {\normalfont Proposition \ref{prop:starMor}}.
For $\Psi\in \Hom_K(E,P_G(V))$,
$\Phi_1\in\Hom_K(V,\mathscr Y_1)\simeq \Hom_{\Lieg_\CC,K}(P_G(V),\mathscr Y_1)$
and $\Phi_2\in \Hom_K(E^\star,\mathscr Y_2)\simeq \Hom_{\Lieg_\CC,K}(P_G(E^\star),\mathscr Y_2)$ it holds that
\[
\sum_{i=1}^\nu\bigl(\Phi_1
\circ\Psi[e_i],\, \Phi_2[e_i^\star]
\bigr)
=
\sum_{j=1}^n\bigl(
\Phi_1[v_j],\, \Phi_2\circ\Psi^\star[v_j^\star]
\bigr).
\]
\end{cor}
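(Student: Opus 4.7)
The plan is to reduce the identity to a direct symbol-pushing computation by combining the explicit matrix form of $\Psi^\star$ supplied by Proposition~\ref{prop:starMor} with the invariance of the sesquilinear form. The key observation is that the antilinear anti-automorphism $\cdot^\star$ of $U(\Lieg_\CC)$ defined in Definition~\ref{defn:starOpG} is precisely the one adapted to the invariance relation of $(\cdot,\cdot)$.

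First I would fix an $\Ad(K)$-stable complement $\mathbf S \subset U(\Lieg_\CC)$ as in Proposition~\ref{prop:starMor} (for instance the image of the symmetrization map for $S(\Lies_\CC)$), and write
\[
\Psi[e_i] = \sum_{j=1}^n S_{ij}\otimes v_j, \qquad \Psi^\star[v_j^\star] = \sum_{i=1}^\nu S_{ij}^\star \otimes e_i^\star,
\]
where the second expression is exactly the content of Proposition~\ref{prop:starMor}. Using the $(\Lieg_\CC,K)$-homomorphism interpretations of $\Phi_1$ and $\Phi_2$ coming from Frobenius reciprocity, one then expands
\[
\sum_{i=1}^\nu \bigl(\Phi_1\circ\Psi[e_i],\Phi_2[e_i^\star]\bigr)
=\sum_{i,j}\bigl(S_{ij}\Phi_1[v_j],\,\Phi_2[e_i^\star]\bigr).
\]

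Next, I would invoke the invariance of $(\cdot,\cdot)$ to move each $S_{ij}$ to the right entry. The invariance condition on a $(\Lieg_\CC,K)$-module sesquilinear form is $(Xy_1,y_2)=-(y_1,Xy_2)$ for $X\in\Lieg$ (together with $K$-invariance); since $\cdot^\star$ is the antilinear anti-automorphism of $U(\Lieg_\CC)$ with $X^\star=-X$ for $X\in\Lieg$, a short induction on order (using $(D_1D_2)^\star=D_2^\star D_1^\star$) gives
\[
(D y_1,y_2)=(y_1,D^\star y_2)\qquad\text{for all }D\in U(\Lieg_\CC).
\]
Applying this with $D=S_{ij}$ transforms the sum into $\sum_{i,j}\bigl(\Phi_1[v_j],S_{ij}^\star\Phi_2[e_i^\star]\bigr)$.

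Finally I would swap the order of summation and re-assemble, using that $\Phi_2$ is a $(\Lieg_\CC,K)$-homomorphism on $P_G(E^\star)$:
\[
\sum_{i=1}^\nu S_{ij}^\star\Phi_2[e_i^\star]
=\Phi_2\Bigl(\sum_{i=1}^\nu S_{ij}^\star\otimes e_i^\star\Bigr)
=\Phi_2\bigl(\Psi^\star[v_j^\star]\bigr),
\]
which yields the desired right-hand side. No step is really an obstacle; the only point requiring a moment of care is the verification that the Lie-algebra invariance of $(\cdot,\cdot)$ genuinely extends to the specific antilinear anti-automorphism $\cdot^\star$ of $U(\Lieg_\CC)$, rather than to its transpose $\trans\cdot$. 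That matching is why $\cdot^\star$ was defined by $X^\star=-X$ on $\Lieg$ in the first place, and it is what makes the final identification $\sum_i S_{ij}^\star\otimes e_i^\star=\Psi^\star[v_j^\star]$ via Proposition~\ref{prop:starMor} drop out cleanly.
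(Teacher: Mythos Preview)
Your proof is correct and follows essentially the same route as the paper: expand via the matrix expressions of $\Psi$ and $\Psi^\star$ from Proposition~\ref{prop:starMor}, use invariance $(Dy_1,y_2)=(y_1,D^\star y_2)$ to shift each $S_{ij}$, and reassemble. The paper's proof is just a one-line compression of exactly this computation.
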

\begin{proof}
Using the expressions of $\Psi$ and $\Psi^\star$ in Proposition \ref{prop:starMor}.
\[
\text{L.H.S.}=\sum_{i=1}^\nu\Bigl(\sum_{j=1}^n S_{ij}\Phi_1[v_j]
,\, \Phi_2[e_i^\star]
\Bigr)=\sum_{j=1}^n\Bigl( \Phi_1[v_j]
,\,
\sum_{i=1}^\nu S_{ij}^\star\Phi_2[e_i^\star]
\Bigr)
=\text{R.H.S.}\qedhere
\]
\end{proof}

\begin{cor}\label{cor:compRM}
Let $\mathcal M^1=(\mathcal M^1_G, \mathcal M^1_{\mathbf H})$,
$\mathcal M^2=(\mathcal M^2_G, \mathcal M^2_{\mathbf H}) \in \Cwrad$
and suppose $\Ximin_{\mathcal M^1}(\mathcal M^1_{\mathbf H})=\mathcal M^1_G$.
Let $(\cdot,\cdot)^{G}$ and ${\vphantom|}^\backprime(\cdot,\cdot)^{G}$ be
invariant sesquilinear forms on $\mathcal M^1_G\times\mathcal M^2_G$
and
$(\cdot,\cdot)^{\mathbf H}$ an invariant sesquilinear form
on $\mathcal M^1_{\mathbf H}\times\mathcal M^2_{\mathbf H}$.
Suppose both pairs $\bigl((\cdot,\cdot)^{G},\,(\cdot,\cdot)^{\mathbf H})$
and $\bigl({\vphantom|}^\backprime(\cdot,\cdot)^{G},\,(\cdot,\cdot)^{\mathbf H})$
are compatible with restriction
in the sense of\/ {\normalfont Definition \ref{defn:compRM}}.
Then $(\cdot,\cdot)^{G}={\vphantom|}^\backprime(\cdot,\cdot)^{G}$.
\end{cor}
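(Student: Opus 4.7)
The plan is to study the difference $B:=(\cdot,\cdot)^{G}-{\vphantom|}^\backprime(\cdot,\cdot)^{G}$, itself an invariant sesquilinear form on $\mathcal M^1_G\times\mathcal M^2_G$, and to show that $B\equiv 0$. Subtracting the two compatibility relations of Definition \ref{defn:compRM} makes the right-hand sides cancel, since those only involve $\tilde\Gamma$ and the common form $(\cdot,\cdot)^{\mathbf H}$; so for every $V\in\Km$, $\Phi_1\in\Hom_K^\ttt(V,\mathcal M^1_G)$, and $\Phi_2\in\Hom_K(V^\star,\mathcal M^2_G)$, one obtains the single scalar identity
\[
\sum_{i=1}^n B(\Phi_1[v_i],\Phi_2[v_i^\star])=0
\]
with the bases as in Definition \ref{defn:compRM}.

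Next I would promote this one relation to the vanishing of the full sesquilinear form $\beta:V\times V^\star\to\CC$, $(v,v^\star)\mapsto B(\Phi_1[v],\Phi_2[v^\star])$. $K$-invariance of $B$ makes $\beta$ a $K$-invariant sesquilinear form; composing its second slot with the antilinear $K$-isomorphism $\bar\cdot:V^*\simarrow V^\star$ converts $\beta$ into a $K$-invariant bilinear form on the irreducible pair $V\times V^*$, and Schur's lemma forces this to be a scalar multiple $c\cdot\mathrm{ev}$ of the canonical evaluation pairing. Evaluating on dual bases gives $\sum_i\beta(v_i,v_i^\star)=c\dim V$, so the displayed identity forces $c=0$, whence $B(\Phi_1[v],\Phi_2[v^\star])=0$ for every $v\in V$ and $v^\star\in V^\star$. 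Since $K$-invariance of $B$ already forces the $V$-isotypic component of $\mathcal M^1_G$ to pair trivially with every $K$-isotypic component of $\mathcal M^2_G$ other than the $V^\star$-isotypic one, this upgrades immediately to $B(\Phi_1[v],y_2)=0$ for arbitrary $y_2\in\mathcal M^2_G$.

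Finally, I would bootstrap by $\Lieg_\CC$-invariance: for any $D\in U(\Lieg_\CC)$, $B(D\Phi_1[v],y_2)=B(\Phi_1[v],D^\star y_2)=0$. By Definition \ref{defn:Ximin}, as $V\in\Km$, $\Phi_1\in\Hom_K^\ttt(V,\mathcal M^1_G)$, $v\in V$, and $D\in U(\Lieg_\CC)$ all vary, the vectors $D\Phi_1[v]$ span $\Ximin_{\mathcal M^1}(\mathcal M^1_{\mathbf H})$, which equals $\mathcal M^1_G$ by hypothesis; hence $B\equiv 0$ and the two $G$-forms coincide. The most delicate point is the Schur step: one must track the antilinear identification $\bar\cdot:V^*\simarrow V^\star$ carefully so that $K$-invariance of $\beta$ really descends to a $K$-invariant bilinear form on $V\times V^*$, and one has to observe that the canonical tensor $\sum_i v_i\otimes v_i^\star$ is basis-independent so that the specific basis chosen in Definition \ref{defn:compRM} poses no obstruction to the Schur conclusion. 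The remainder is a routine consequence of invariance and of Definition \ref{defn:Ximin}.
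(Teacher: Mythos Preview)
Your argument is correct and takes a genuinely different route from the paper's. The paper also reduces (via the same Schur reasoning you spell out) to controlling the traces $\sum_i(\Phi'[e_i],\Phi''[e_i^\star])$, but then handles an \emph{arbitrary} $\Phi'\in\Hom_K(E,\mathcal M^1_G)$: it uses the hypothesis $\Ximin_{\mathcal M^1}(\mathcal M^1_{\mathbf H})=\mathcal M^1_G$ to write $\Phi'$ as a sum of compositions $\Phi_1\circ\Psi_1$ with $\Phi_1\in\Hom_{\Lieg_\CC,K}^\ttt(P_G(V),\mathcal M^1_G)$ and $\Psi_1\in\Hom_K(E,P_G(V))$, invokes Corollary~\ref{cor:sesqui} (the star-operation transfer) to shift $\Psi_1$ to the second slot, and then applies compatibility. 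You instead specialize from the outset to $\Phi_1\in\Hom_K^\ttt(V,\mathcal M^1_G)$, deduce pointwise vanishing of $B$ on $\Phi_1[V]\times\mathcal M^2_G$ via Schur and the isotypic orthogonality, and then propagate by $U(\Lieg_\CC)$-invariance to all of $\mathcal M^1_G$ using Definition~\ref{defn:Ximin}. Your route is more elementary in that it bypasses the star-operation machinery of \S\ref{sec:star}; the paper's route fits the ambient framework in which Corollary~\ref{cor:sesqui} and Theorem~\ref{thm:starGam} are already in hand and are the natural tools for moving factors across invariant pairings.
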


\begin{proof}
It suffices to show for $E\in\Km$, $\Phi'\in\Hom_K(E,\mathcal M^1_G)$
and $\Phi''\in\Hom_K(E^\star,\mathcal M^2_G)$
\[
\sum_{i=1}^\nu
\bigl(\Phi'[e_i],\Phi''[e_i^\star]\bigr)^G
=\sum_{i=1}^\nu
{\vphantom{\bigm|}}^{\backprime}\bigl(\Phi'[e_i],\Phi''[e_i^\star]\bigr)^G.
\]
Here $\{e_1,\ldots,e_\nu\}$ is a basis of $E$
and $e_i^\star=\overline{e_i^*}$ for $i=1,\ldots, \nu$.
Since $\Ximin_{\mathcal M^1}(\mathcal M^1_{\mathbf H})=\mathcal M^1_G$,
$\Hom_K(E,\mathcal M^1_G)$ is spanned 
over $\CC$ by elements of the form
\[
\Phi_1\circ\Psi_1
\quad
\text{with }V\in\Kqsp, \Phi_1\in\Hom^\ttt_{\Lieg_\CC,K}(P_G(V),\mathcal M^1_G)
\text{ and }\Psi_1\in\Hom_K(E,P_G(V)).
\]
Hence we have only to show
\begin{equation}\label{eq:GsesGses}
\sum_{i=1}^\nu
\bigl(\Phi_1\circ\Psi_1[e_i],\,\Phi''[e_i^\star]\bigr)^G
=\sum_{i=1}^\nu
{\vphantom{\bigm|}}^{\backprime}\bigl(\Phi_1\circ\Psi_1[e_i],\,\Phi''[e_i^\star]\bigr)^G.
\end{equation}
Take a basis $\{v_1,\ldots,v_{m'},\ldots,v_n\}$ of $V$
so that $\{v_1,\ldots,v_{m'}\}$ is a basis of $V^M_\single$ and
$v_{m'+1},\ldots,v_n$ are orthogonal to $V^M_\single$.
Put $v_j^\star=\overline{v_j^*}$ for $j=1,\ldots, n$.
Then we have
\begin{align*}
\text{L.H.S.}&=
\sum_{j=1}^n
\bigl(\Phi_1[v_j],\,\Phi''\circ\Psi_1^\star[v_j^\star]\bigr)^G
&(\because \text{Corollary \ref{cor:sesqui}})\\
&=\sum_{j=1}^{m'}
\bigl(\tilde\Gamma^V_{\mathcal M_1}(\Phi_1)[v_j],\,
\tilde\Gamma^{V^\star}_{\mathcal M_2}(\Phi''\circ\Psi_1^\star)[v_j^\star]\bigr)^{\mathbf H}.
&(\because \Phi_1\in\Hom^\ttt)
\end{align*}
But since the right-hand side of \eqref{eq:GsesGses} can be calculated in the same way, \eqref{eq:GsesGses} holds.
\end{proof}
Identifying
$\Hom_{K}(E,P_G(V))$ with $\Hom_{\Lieg_\CC,K}(P_G(E),P_G(V))$
as in Definition \ref{defn:homhom},
we can apply the star operation to the latter.
\begin{prop}\label{prop:chainStar}
In addition to $E, V$ suppose $F\in\Km$.
Then it holds that $(\Psi_2\circ\Psi_1)^\star=\Psi_1^\star \circ \Psi_2^\star$
for any $\Psi_1\in\Hom_{\Lieg_\CC,K}(P_G(E),P_G(F))$
and $\Psi_2\in\Hom_{\Lieg_\CC,K}(P_G(F),P_G(V))$.
\end{prop}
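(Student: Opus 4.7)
The plan is to unwind Definition \ref{defn:starOpG}, which writes $\Psi^\star$ as the three-step composite
\[
V^\star \xrightarrow{\bar{\cdot}} V^* \xrightarrow{\trans\Psi} E^*\otimes_{U(\Liek_\CC)}U(\Lieg_\CC) \xrightarrow{\sigma} U(\Lieg_\CC)\otimes_{U(\Liek_\CC)}E^\star = P_G(E^\star),
\]
and to verify that each of the last two ingredients is ``contravariantly compatible'' with composition. The antilinear isomorphism $\bar\cdot$ is independent of $\Psi$, so it plays no role in the argument, and the proof reduces to two algebraic identities.

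First I would prove that the transposition of Lemma \ref{lem:hanten1} satisfies
\[
\trans(\Psi_2\circ\Psi_1)=\widetilde{\trans\Psi_1}\circ \trans\Psi_2,
\]
where $\widetilde{\trans\Psi_1}\colon F^*\otimes_{U(\Liek_\CC)}U(\Lieg_\CC)\to E^*\otimes_{U(\Liek_\CC)}U(\Lieg_\CC)$ denotes the natural right $U(\Lieg_\CC)$-linear extension $f^*\otimes D\mapsto\trans\Psi_1(f^*)\cdot D$. Writing $\Psi_1[e]=\sum_\alpha e^*_\alpha(e)D^1_\alpha\otimes f_\alpha$ and $\Psi_2[f]=\sum_\beta f^*_\beta(f)D^2_\beta\otimes v_\beta$, both sides of the identity evaluate on $v^*\in V^*$ to $\sum_{\alpha,\beta}v^*(v_\beta)f^*_\beta(f_\alpha)\,e^*_\alpha\otimes D^1_\alpha D^2_\beta$, as one checks by direct substitution.

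Next I would prove the intertwining relation
\[
\widetilde{\Psi_1^\star}\circ\sigma=\sigma\circ\widetilde{\trans\Psi_1},
\]
where $\widetilde{\Psi_1^\star}\colon P_G(F^\star)\to P_G(E^\star)$ is the $(\Lieg_\CC,K)$-linear extension of $\Psi_1^\star$. A short calculation using the $K$-antilinearity of $\sigma$ together with the key algebraic identity $(AB)^\star=B^\star A^\star$ for $A,B\in U(\Lieg_\CC)$ shows that both sides send $f^*\otimes D$ to the element $\sum_\alpha \overline{f^*(f_\alpha)}\,D^\star(D^1_\alpha)^\star\otimes\overline{e^*_\alpha}$ of $P_G(E^\star)$.

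Combining the two identities yields
\begin{align*}
(\Psi_2\circ\Psi_1)^\star
 &= \sigma\circ\trans(\Psi_2\circ\Psi_1)\circ\bar\cdot \\
 &= \sigma\circ\widetilde{\trans\Psi_1}\circ\trans\Psi_2\circ\bar\cdot \\
 &= \widetilde{\Psi_1^\star}\circ\sigma\circ\trans\Psi_2\circ\bar\cdot \\
 &= \widetilde{\Psi_1^\star}\circ\Psi_2^\star
  = \Psi_1^\star\circ\Psi_2^\star,
\end{align*}
which is the asserted identity. The main obstacle is the bookkeeping in the two verifications above, in particular tracking how the $K$-antilinearity of $\sigma$ interacts with the scalar coefficients $f^*(f_\alpha)$ and $v^*(v_\beta)$ and ensuring $\widetilde{\trans\Psi_1}$ descends to the balanced tensor product over $U(\Liek_\CC)$; conceptually, however, the whole argument is driven only by the anti-automorphism property of $\cdot^\star$ on $U(\Lieg_\CC)$ together with the right $U(\Lieg_\CC)$-linearity of the transposition picture.
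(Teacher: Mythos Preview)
Your argument is correct but takes a different route from the paper.  You unwind Definition~\ref{defn:starOpG} and verify directly that transposition and $\sigma$ are contravariantly compatible with composition; the identities you isolate do hold (the well-definedness of $\widetilde{\trans\Psi_1}$ on the balanced tensor follows from the $K$-equivariance of $\trans\Psi_1$, as you note).

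The paper instead argues by duality.  It introduces the invariant non-degenerate sesquilinear form
\[
(D\otimes e^\star, f)=r(D)(e^\star,f(g))\bigr|_{g=1}
\]
on $P_G(E^\star)\times\mathscr A(G\times_K E)_\Kf$, where $\mathscr A(G\times_K E)$ is the space of analytic sections of the bundle $G\times_K E$.  Then for any $\Phi\in\Hom_K(V,\mathscr A(G\times_K E))$, two applications of Corollary~\ref{cor:sesqui} give
\[
\sum_{j}\bigl(((\Psi_2\circ\Psi_1)^\star-\Psi_1^\star\circ\Psi_2^\star)[v_j^\star],\,\Phi[v_j]\bigr)
=\sum_i\bigl(I[e_i^\star],\,(\Phi\circ\Psi_2\circ\Psi_1-\Phi\circ\Psi_2\circ\Psi_1)[e_i]\bigr)=0,
\]
and non-degeneracy finishes.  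This avoids all bookkeeping with the chain of isomorphisms in Lemma~\ref{lem:hanten1} and uses only the matrix description of Proposition~\ref{prop:starMor}.  Your approach is more self-contained and shows exactly why the identity is forced by the anti-automorphism property of $\cdot^\star$ on $U(\Lieg_\CC)$; the paper's approach is shorter once Corollary~\ref{cor:sesqui} is in hand and sidesteps the question of whether $\trans\Psi[v_j^*]=\sum_i e_i^*\otimes D_{ij}$ holds for arbitrary (non-$\mathbf S$) representatives $D_{ij}$.
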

\begin{proof}
Put
\[
\mathscr A(G\times_K E):=\bigl\{
f:G\xrightarrow{\text{analytic}} E;\,
f(gk)=k^{-1}f(g)\quad\text{for }k\in K
\bigr\}.
\]
Then there is a sesquilinear form $(\cdot,\cdot)$
on $P_G(E^\star) \times \mathscr A(G\times_K E)_\Kf$ defined by
\[
\bigl(
D\otimes e^\star, f
\bigr)
=
r(D)(e^\star,f(g))\bigr|_{g=1}.
\]
One easily checks this is invariant and non-degenerate.
Let $I\in\End_{\Lieg_\CC,K}(P_G(E^\star))$ be the identity map.
Then it follows from Corollary \ref{cor:sesqui} that
for any $\Phi\in\Hom_K(V,\mathscr A(G\times_K E))
\simeq \Hom_{\Lieg_\CC,K}(V,\mathscr A(G\times_K E)_\Kf)$,
\begin{align*}
\sum_{j=1}^n
\Bigl(
\bigl((\Psi_2\circ\Psi_1)^\star - \Psi_1^\star \circ \Psi_2^\star\bigr)[v^\star_j],\,
\Phi[v_j]
\Bigr)&=
\sum_{i=1}^\nu
\Bigl(
I[e_i^\star],\,
\bigl(\Phi\circ(\Psi_2\circ\Psi_1) - \Phi\circ\Psi_2 \circ \Psi_1\bigr)[e_i]
\Bigr)\\
&=0.
\end{align*}
The non-degeneracy of the form implies $(\Psi_2\circ\Psi_1)^\star = \Psi_1^\star \circ \Psi_2^\star$.
\end{proof}
Suppose $Y$ and $U$ are finite-dimensional $W$-modules.
Let us define the star operation for $\Hom_\CC(Y,P_{\mathbf H}(U))$.

\begin{defn}[the star operation]\label{defn:starOpH}
Let $\{y_1,\ldots,y_\mu\}$ and $\{u_1,\ldots,u_m\}$
be bases of $Y$ and $U$.
Let $\{y_1^*,\ldots,y_\mu^*\}\subset Y^*$
be the dual basis of $\{y_1,\ldots,y_\mu\}$
and put $y_i^\star=\overline{y_i^*}$ $(i=1,\ldots,\mu)$.
Define $\{u_1^\star,\ldots,u_m^\star\}\subset U^\star$ similarly.
For a given $\psi\in\Hom_\CC(Y,P_{\mathbf H}(U))$ we take
$f_{ij}(\lambda)\in S(\Liea_\CC)$ $(1\le i\le\mu, 1\le j\le m)$ so that
\begin{equation}\label{eq:stardef}
\psi[y_i]=\sum_{j=1}^m f_{ij}(\lambda)\otimes u_j
\qquad\text{for }i=1,\ldots,\mu.
\end{equation}
We define $\psi^\star\in \Hom_\CC(U^\star,P_{\mathbf H}(Y^\star))$ by 
\begin{equation*}
\psi^\star[u_j^\star]=\sum_{i=1}^\mu
\overline{f_{ij}(-\bar\lambda)}\otimes  y_i^\star
\qquad\text{for }j=1,\ldots,m.
\end{equation*}
\end{defn}
\begin{rem}
{\normalfont (i)}
It is easy to see the definition is independent of the choice of bases.

\noindent
{\normalfont (ii)}
The star operation $\psi\mapsto\psi^\star$ is involutive.

\noindent
{\normalfont (iii)}
If we consider $f_{ij}$ in the definition as elements of $U(\Lieg_\CC)$,
then $f_{ij}^\star(\lambda)=\overline{f_{ij}(-\bar\lambda)}$.
\end{rem}
Identifying
$\Hom_{\CC}(Y,P_{\mathbf H}(U))$ with
$\Hom_{\Liea_\CC}(P_{\mathbf H}(Y),P_{\mathbf H}(U))$
as in Definition \ref{defn:homhom},
we can apply the star operation to the latter.
The next is immediate from Definition \ref{defn:starOpH}.
\begin{prop}
In addition to $Y, U$ suppose $Z$ is a finite-dimensional $W$-module.
Then it holds that $(\psi_2\circ\psi_1)^\star=\psi_1^\star \circ \psi_2^\star$
for any $\psi_1\in\Hom_{\Liea_\CC}(P_{\mathbf H}(Y),P_{\mathbf H}(Z))$
and $\psi_2\in\Hom_{\Liea_\CC}(P_{\mathbf H}(Z),P_{\mathbf H}(U))$.
\end{prop}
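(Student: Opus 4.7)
The plan is to prove the identity by a direct matrix computation, since the star operation in Definition~\ref{defn:starOpH} is already formulated entry-wise with respect to a chosen basis. The commutativity of $S(\Liea_\CC)$ makes the verification essentially formal; the only conceptual point is that composition on the $\mathbf H$ side corresponds to the usual matrix product of the coefficient matrices, which follows because the maps in question are $\Liea_\CC$-linear (equivalently, $S(\Liea_\CC)$-linear) under the identification $P_{\mathbf H}(U)\simeq S(\Liea_\CC)\otimes U$.

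First I would fix bases $\{y_i\}$ of $Y$, $\{z_k\}$ of $Z$, $\{u_j\}$ of $U$, together with the corresponding dual-conjugate bases $\{y_i^\star\}\subset Y^\star$, $\{z_k^\star\}\subset Z^\star$, $\{u_j^\star\}\subset U^\star$. Write
\[
\psi_1[y_i]=\sum_{k}g_{ik}(\lambda)\otimes z_k,\qquad
\psi_2[z_k]=\sum_{j}h_{kj}(\lambda)\otimes u_j,
\]
with $g_{ik},h_{kj}\in S(\Liea_\CC)$.

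Next I would compute $\psi_2\circ\psi_1$. Since $\psi_2$ is $\Liea_\CC$-linear and $P_{\mathbf H}(Z)$ is generated as an $\Liea_\CC$-module by $1\otimes z_k$, one has $\psi_2\bigl(g_{ik}(\lambda)\otimes z_k\bigr)=g_{ik}(\lambda)\,\psi_2[z_k]$; hence
\[
(\psi_2\circ\psi_1)[y_i]=\sum_{j}\Bigl(\sum_{k}g_{ik}(\lambda)\,h_{kj}(\lambda)\Bigr)\otimes u_j.
\]
Applying the definition of the star operation gives
\[
(\psi_2\circ\psi_1)^\star[u_j^\star]=\sum_{i}\overline{\Bigl(\sum_{k}g_{ik}(-\bar\lambda)\,h_{kj}(-\bar\lambda)\Bigr)}\otimes y_i^\star.
\]

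Finally I would compute $\psi_1^\star\circ\psi_2^\star$ in the same way. By definition,
\[
\psi_2^\star[u_j^\star]=\sum_{k}\overline{h_{kj}(-\bar\lambda)}\otimes z_k^\star,\qquad
\psi_1^\star[z_k^\star]=\sum_{i}\overline{g_{ik}(-\bar\lambda)}\otimes y_i^\star,
\]
and the $\Liea_\CC$-linearity of $\psi_1^\star$ yields
\[
(\psi_1^\star\circ\psi_2^\star)[u_j^\star]
=\sum_{i}\sum_{k}\overline{h_{kj}(-\bar\lambda)}\cdot\overline{g_{ik}(-\bar\lambda)}\otimes y_i^\star.
\]
Because $S(\Liea_\CC)$ is commutative, this equals the expression obtained for $(\psi_2\circ\psi_1)^\star[u_j^\star]$, completing the proof. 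There is no real obstacle here: the only thing to watch is to not confuse the order of multiplication (which is harmless thanks to commutativity of $S(\Liea_\CC)$) and to remember that composition is interpreted via the $\Liea_\CC$-linear extensions to $P_{\mathbf H}(\cdot)$, not via any $W$-linearity that $\psi_1,\psi_2$ need not possess.
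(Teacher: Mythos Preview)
Your proof is correct and is exactly what the paper intends: the paper simply states that the proposition is ``immediate from Definition~\ref{defn:starOpH}'' and gives no further argument, while you have carefully written out this immediate matrix computation. The one nontrivial point you correctly identified---that composition corresponds to the product of coefficient matrices because the maps are $\Liea_\CC$-linear under $P_{\mathbf H}(U)\simeq S(\Liea_\CC)\otimes U$---is precisely what makes the verification routine.
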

The operation $\psi\mapsto\psi^\star$
has a similar property to Corollary \ref{cor:sesqui}.
\begin{prop}\label{prop:sesquiH}
Suppose $\psi\in\Hom_W(Y,P_{\mathbf H}(U))$. Then
$\psi^\star\in \Hom_W(U^\star,P_{\mathbf H}(Y^\star))$.
Furthermore,
suppose $\mathscr X_1$ and $\mathscr X_2$ are $\mathbf H$-modules
and $(\cdot,\cdot)$ is
an invariant sesquilinear form on $\mathscr X_1\times \mathscr X_2$.
Suppose bases $\{y_i\}, \{y_i^\star\},
\{u_j\}$ and $\{u_j^\star\}$ are as in {\normalfont Definition \ref{defn:starOpH}}.
Then for
$\varphi_1\in\Hom_W(U,\mathscr X_1)
\simeq \Hom_{\mathbf H}(P_{\mathbf H}(U),\mathscr X_1)$
and $\varphi_2\in \Hom_W(Y^\star,\mathscr X_2)
\simeq \Hom_{\mathbf H}(P_{\mathbf H}(Y^\star),\mathscr X_2)$
it holds that
\[
\sum_{i=1}^\mu\bigl(\varphi_1
\circ\psi[y_i],\, \varphi_2[y_i^\star]
\bigr)
=
\sum_{j=1}^m\bigl(
\varphi_1[u_j],\, \varphi_2\circ\psi^\star[u_j^\star]
\bigr).
\]
\end{prop}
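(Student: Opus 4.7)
The plan is to prove the two assertions together, with the sesquilinear identity as the main structural input and the $W$-equivariance of $\psi^\star$ extracted from it via a non-degenerate invariant pairing. The argument closely mirrors Proposition~\ref{prop:starMor} and Corollary~\ref{cor:sesqui} on the $G$-side, but the subtlety is that the ``star'' used in Definition~\ref{defn:starOpH} is the polynomial-level operation $f(\lambda)\mapsto \overline{f(-\bar\lambda)}$ rather than the full $\mathbf H$-star $f\mapsto f^\star=w_0\cdot\overline{f(-w_0\lambda)}\cdot w_0^{-1}$ (as an element of $\mathbf H$, with Hecke corrections produced by \eqref{eq:Hrel}). The two agree only modulo $W$-twists, so the $W$-equivariance of $\psi$ and $\varphi_2$ must be used essentially to close the computation.

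For the sesquilinear identity, I would first upgrade $\varphi_1,\varphi_2$ to $\mathbf H$-homomorphisms $\hat\varphi_1:P_{\mathbf H}(U)\to\mathscr X_1$ and $\hat\varphi_2:P_{\mathbf H}(Y^\star)\to\mathscr X_2$ by Frobenius reciprocity, so that $\varphi_1\circ\psi[y_i]=\sum_j f_{ij}(\lambda)\,\varphi_1[u_j]$ with $f_{ij}(\lambda)$ acting as an element of $S(\Liea_\CC)\subset\mathbf H$. Invariance of $(\cdot,\cdot)$ then gives
\[
\sum_{i=1}^\mu\bigl(\varphi_1\circ\psi[y_i],\varphi_2[y_i^\star]\bigr)
=\sum_{i,j}\bigl(\varphi_1[u_j],\,f_{ij}(\lambda)^\star\varphi_2[y_i^\star]\bigr),
\]
and the task reduces to establishing, for each $j$, the key identity
\[
\sum_{i=1}^\mu f_{ij}(\lambda)^\star\varphi_2[y_i^\star]
=\sum_{i=1}^\mu \overline{f_{ij}(-\bar\lambda)}\,\varphi_2[y_i^\star]\qquad\text{in }\mathscr X_2.
\]
The plan here is to translate the $W$-equivariance $\psi[wy_i]=w\psi[y_i]$ into a matrix identity in $\mathbf H$ of the shape $\sum_{i'}a_{i'i}(w)f_{i'j'}(\lambda)=\sum_{j}b_{j'j}(w)\cdot wf_{ij}(\lambda)w^{-1}$ (with $a(w),b(w)$ the matrices of the $W$-actions on $Y,U$). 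Taking $w=w_0$ and decomposing both sides into their $S(\Liea_\CC)$-parts plus correction terms supported in $\mathbf H\cdot\CC[W\setminus\{1\}]$ via \eqref{eq:Hrel} and \eqref{eq:wxiw}, one reads off precisely the compensations needed: the $W$-correction portion of $f_{ij}^\star$, after being pushed through $\varphi_2$ (using $\varphi_2\in\Hom_W$, so $w\varphi_2[y_i^\star]=\varphi_2[wy_i^\star]$), reshuffles the sum over $i$ by $W$, and the matrix identity coming from $\psi\in\Hom_W$ shows this reshuffled sum coincides with $\sum_i\overline{f_{ij}(-\bar\lambda)}\varphi_2[y_i^\star]$.

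For the first assertion that $\psi^\star\in\Hom_W(U^\star,P_{\mathbf H}(Y^\star))$, my plan is to apply the just-established sesquilinear identity to the universal test datum: take $\mathscr X_2=P_{\mathbf H}(Y^\star)$ with $\varphi_2=\widehat{\operatorname{id}}$ the canonical $\mathbf H$-homomorphism, and $\mathscr X_1=P_{\mathbf H}(U)$ with a suitable $\varphi_1$ so that $(\cdot,\cdot)$ comes from a non-degenerate invariant pairing supplied by Proposition~\ref{prop:sesquiHinv} (after specializing to $B_{\mathbf H}(\lambda)$ at generic $\lambda$ and passing to the limit, or equivalently by working in the generic $S(\Liea_\CC)^W$-algebraic quotient). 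The identity then forces $w\cdot\psi^\star[u^\star]=\psi^\star[wu^\star]$ for every $w\in W$ and every $u^\star\in U^\star$, because the left- and right-hand sides pair identically with all test vectors.

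The hard part is the bookkeeping in the central identity $\sum_i f_{ij}^\star\varphi_2[y_i^\star]=\sum_i\overline{f_{ij}(-\bar\lambda)}\varphi_2[y_i^\star]$: the Hecke corrections in $f_{ij}^\star$ are non-commutative and produce a $W$-weighted rearrangement of the index $i$, and matching it to the pure-polynomial right-hand side requires invoking the $W$-equivariance of $\psi$ at each degree. The induction on the degree of $f_{ij}$ (reducing via \eqref{eq:Hrel} from degree $d$ to degree $<d$ plus reflection-weighted lower-degree terms) and a careful use of $w_0^2=1$ and \eqref{eq:wxiw} for $w=w_0$ should make the cancellation algorithmic, though notationally heavy. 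Once this is done, both conclusions follow in a unified sweep.
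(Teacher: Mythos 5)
Your proposal takes a genuinely different route from the paper, and the paper's route is the right one here; as sketched, your central reduction is incorrect. The paper does not try to compare $\sum_i f_{ij}(\lambda)^\star\varphi_2[y_i^\star]$ with $\sum_i\overline{f_{ij}(-\bar\lambda)}\,\varphi_2[y_i^\star]$ term by term in $j$. Instead it writes down an explicit auxiliary map
\[
\psi^\vee[u^\star]=\frac1{|W|}\sum_{i,k}\sum_{w\in W}(wu^\star,u_k)\,w^{-1}w_0\,\overline{f_{ik}(-w_0\bar\lambda)}\,w_0\otimes y_i^\star,
\]
shows by a direct invariance-plus-averaging computation (using $\varphi_1\in\Hom_W$ to replace $\varphi_1[u_k]$ by $\frac1{|W|}\sum_w w\varphi_1[w^{-1}u_k]$) that the full sesquilinear identity holds with $\psi^\vee$ in place of $\psi^\star$, observes $\psi^\vee\in\Hom_W$ by inspection, and finally identifies $\psi^\vee=\psi^\star$ by testing against the forms on $B_{\mathbf H}(\lambda_0)\times B_{\mathbf H}(-\bar\lambda_0)$ with the delta-function homomorphisms $\varphi_1^{(k)},\varphi_2^{(\ell)}$. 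The averaging step is essential and cannot be discarded: it is exactly what mixes the index $j$ and allows the full sum over $j$ to close without any term-by-term assertion.

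Your ``key identity''
\[
\sum_{i=1}^\mu f_{ij}(\lambda)^\star\varphi_2[y_i^\star]
=\sum_{i=1}^\mu \overline{f_{ij}(-\bar\lambda)}\,\varphi_2[y_i^\star]
\]
is in general false, and no amount of bookkeeping with \eqref{eq:Hrel} will make it hold; the correct statement is only true after pairing against $\varphi_1[u_j]$ and using that $\varphi_1\in\Hom_W$ constrains $\varphi_1[u_j]$ to lie in a specific $W$-isotypic component. Concretely, take $W=\{1,s_\alpha\}$ of rank one with $k:=\mathbf m_1(\alpha)\neq 0$, $Y=\CC W$ with basis $y_1,y_2$ where $s_\alpha y_1=y_1$, $s_\alpha y_2=-y_2$, and $U=\CC_\triv=\CC u$. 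Then $\psi[y_1]=(\alpha^\vee)^2\otimes u$ and $\psi[y_2]=(\alpha^\vee+k)\otimes u$ define a $W$-homomorphism $Y\to P_{\mathbf H}(\CC_\triv)$. A short computation with \eqref{eq:Hrel} gives $(\alpha^\vee)^\star=s_\alpha\alpha^\vee s_\alpha=-\alpha^\vee-2ks_\alpha$, hence $(\alpha^\vee+k)^\star=-\alpha^\vee-2ks_\alpha+k$, and since $s_\alpha$ acts by $-1$ on $y_2^\star$ one finds
\[
(\alpha^\vee+k)^\star\,\varphi_2[y_2^\star]=(-\alpha^\vee+3k)\,\varphi_2[y_2^\star],
\qquad
\overline{(\alpha^\vee+k)(-\bar\lambda)}\,\varphi_2[y_2^\star]=(-\alpha^\vee+k)\,\varphi_2[y_2^\star],
\]
while the $i=1$ contributions agree. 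So the two sides of your key identity differ by $2k\,\varphi_2[y_2^\star]$, which is nonzero for generic $\varphi_2$. The full sesquilinear identity is nevertheless true: the discrepancy $2k\,\varphi_2[y_2^\star]$ is in the sign-isotypic component, $\varphi_1[u]$ is in the trivial one, and invariance of $(\cdot,\cdot)$ forces $(\varphi_1[u],\varphi_2[y_2^\star])=0$. This is precisely the constraint your plan throws away by reducing to the per-$j$ identity, and it is precisely what the paper's averaging over $W$ keeps. If you want to repair your argument along its current lines, you must prove only the pairing identity against $\varphi_1[u_j]$, and at that point you will essentially have reconstructed the paper's averaging computation; the cleaner move is the paper's, namely writing the candidate $\psi^\vee$ directly and identifying it with $\psi^\star$ via the principal-series test.
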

\begin{proof}
Express $\psi\in\Hom_W(Y,P_{\mathbf H}(U))$ as in \eqref{eq:stardef}
and define $\psi^\vee\in \Hom_\CC(U^\star,P_{\mathbf H}(Y^\star))$ by
\[
\psi^\vee[u^\star]
=
\frac1{|W|}
\sum_{i=1}^\mu
\sum_{k=1}^m \sum_{w\in W} (w u^\star, u_k)
\,w^{-1}w_0\,\overline{f_{ik}(-w_0 \bar \lambda)}\,w_0\otimes y_i^\star
\qquad\text{for }u^\star\in U.
\]
Since $w^{-1}u_k=\sum_{j=1}^m\overline{(wu_j^\star,u_k)}u_j$ for $w\in W$,
we have
\begin{align*}
\sum_{i=1}^\mu\bigl(\varphi_1
\circ&\psi[y_i],\, \varphi_2[y_i^\star]
\bigr)\\
&=
\sum_{i=1}^\mu\Bigl(\sum_{j=1}^m f_{ij}(\lambda)\varphi_1[u_j],\, \varphi_2[y_i^\star]
\Bigr)\\
&=
\sum_{i=1}^\mu\sum_{j=1}^m
\bigl(
\varphi_1[u_j],\, w_0\,\overline{f_{ij}(-w_0\bar\lambda)}\,w_0 \varphi_2[y_i^\star]
\bigr)\\
&=
\frac1{|W|}
\sum_{i=1}^\mu\sum_{k=1}^m
\sum_{w\in W}
\bigl(
w\varphi_1[w^{-1}u_k],\, w_0\,\overline{f_{ik}(-w_0\bar\lambda)}\,w_0 \varphi_2[y_i^\star]
\bigr)\\
&=\sum_{j=1}^m
\frac1{|W|}
\sum_{i=1}^\mu
\sum_{k=1}^m \sum_{w\in W}
\bigl(
\varphi_1[u_j],\,
 (w u_j^\star, u_k)
\,w^{-1}w_0\,\overline{f_{ik}(-w_0 \bar \lambda)}\,w_0\varphi_2[y_i^\star]
\bigr)\\
&=\sum_{j=1}^m \bigl(\varphi_1
[u_j],\, \varphi_2\circ\psi^\vee[u_j^\star]
\bigr).
\end{align*}
Also, one easily checks $\psi^\vee\in \Hom_W$.
Hence the proposition follows if we can show $\psi^\vee=\psi^\star$.
To do so,
for any fixed $\lambda_0\in\Liea_\CC^*$
suppose $\varphi_1\in\Hom_W(U,B_{\mathbf H}(\lambda_0))$
and $\varphi_2\in \Hom_W(Y^\star,B_{\mathbf H}(-\bar\lambda_0))$.

On the one hand,
\begin{align*}
\sum_{j=1}^m 
\threeset{\lambda_0}{\varphi_1
[u_j],\, \varphi_2\circ\psi^\vee[u_j^\star]
}{\mathbf H}{-\bar\lambda_0}
&=
\sum_{i=1}^\mu
\threeset{\lambda_0}{\varphi_1
\circ\psi[y_i],\, \varphi_2[y_i^\star]
}{\mathbf H}{-\bar\lambda_0}\\
&=
\frac1{|W|}
\sum_{w\in W}
\sum_{i=1}^\mu
\varphi_1\circ\psi[y_i](w)
\overline{\varphi_2[y_i^\star](w)}\\
&=
\frac1{|W|}
\sum_{w\in W}
\sum_{i=1}^\mu
\varphi_1\circ\psi[w^{-1}y_i](1)
\overline{\varphi_2[w^{-1}y_i^\star](1)}\\
&=
\frac1{|W|}
\sum_{w\in W}
\sum_{i=1}^\mu
\varphi_1\circ\psi[w^{-1}y_i](1)
\overline{\varphi_2\bigl[\,\overline{w^{-1} y_i^*}\,\bigr](1)}.
\end{align*}
But since $\sum_{i=1}^\mu w^{-1}y_i\otimes w^{-1}y_i^*=\sum_{i=1}^\mu y_i\otimes y_i^*$,
the last expression equals
\begin{align*}
\sum_{i=1}^\mu
\varphi_1\circ\psi[y_i](1)
\overline{\varphi_2[y_i^\star](1)}
&=
\sum_{i=1}^\mu\sum_{j=1}^m
\bigl(f_{ij}(\lambda)\varphi_1[u_j]\bigr)(1)
\overline{\varphi_2[y_i^\star](1)}\\
&=
\sum_{i=1}^\mu\sum_{j=1}^m
\varphi_1[u_j](w_0\,f_{ij}(-w_0\lambda)\,w_0)
\overline{\varphi_2[y_i^\star](1)}\\
&=
\sum_{i=1}^\mu\sum_{j=1}^m
f_{ij}(-\lambda_0)
\varphi_1[u_j](1)
\overline{\varphi_2[y_i^\star](1)}.
\end{align*}
One the other hand,
take $g_{ij}(\lambda)\in S(\Liea_\CC)$ $(1\le i\le\mu, 1\le j\le m)$ so that
\begin{equation*}
\psi^\vee[u_j^\star]=\sum_{i=1}^\mu g_{ij}(\lambda)\otimes y_i^\star
\qquad\text{for }j=1,\ldots,m,
\end{equation*}
and apply 
the same calculation as above to the right entry.
Then we have
\begin{align*}
\sum_{j=1}^m 
\threeset{\lambda_0}{\varphi_1
[u_j],\, \varphi_2\circ\psi^\vee[u_j^\star]
}{\mathbf H}{-\bar\lambda_0}
&=
\frac1{|W|}\sum_{w\in W}\sum_{j=1}^m
\varphi_1[u_j](w)
\overline{\varphi_2\circ\psi^\vee[u_j^\star](w)}\\
&=
\sum_{j=1}^m
\varphi_1[u_j](1)
\overline{\varphi_2\circ\psi^\vee[u_j^\star](1)}\\
&=
\sum_{j=1}^m\sum_{i=1}^\nu
\varphi_1[u_j](1)
\overline{\bigl(g_{ij}(\lambda)\varphi_2[y_i^\star]\bigr)(1)}\\
&=
\sum_{j=1}^m\sum_{i=1}^\nu
\varphi_1[u_j](1)
\overline{\varphi_2[y_i^\star](w_0\,g_{ij}(-w_0\lambda)\,w_0)}\\
&=
\sum_{j=1}^m\sum_{i=1}^\nu
\overline{g_{ij}(\bar\lambda_0)}
\varphi_1[u_j](1)
\overline{\varphi_2[y_i^\star](1)}.
\end{align*}
Therefore
\begin{equation}\label{eq:f-g}
\sum_{i=1}^\mu\sum_{j=1}^m
\Bigl(f_{ij}(-\lambda_0)
-
\overline{g_{ij}(\bar\lambda_0)}\Bigr)
\varphi_1[u_j](1)
\overline{\varphi_2[y_i^\star](1)}
=0.
\end{equation}

Now for $k=1,\ldots,m$ and $\ell=1,\ldots,\mu$
let us take
\[
\varphi_1^{(k)} : U\ni u\longmapsto
\Bigl( w \longmapsto \overline{(u_k^\star, w^{-1}u)} \Bigr)
\in (\CC W)^*\simeq B_{\mathbf H}(\lambda_0)
\]
as $\varphi_1\in\Hom_W(U,B_{\mathbf H}(\lambda_0))$ 
and
\[
\varphi_2^{(\ell)} : Y^\star\ni y^\star\longmapsto
\Bigl( w \longmapsto (w^{-1} y^\star, y_\ell) \Bigr)
\in (\CC W)^*\simeq B_{\mathbf H}(-\bar \lambda_0)
\]
as $\varphi_2\in\Hom_W(Y^\star,B_{\mathbf H}(-\bar \lambda_0))$.
Then \eqref{eq:f-g} reduces to
\[
f_{\ell k}(-\lambda_0)-\overline{g_{\ell k}(\bar\lambda_0)}=0
\]
since
$\varphi_1^{(k)}[u_j](1)=\delta_{kj}$ and $\varphi_2^{(\ell)}[y_i^\star](1)=\delta_{\ell i}$.
Since $\lambda_0$ is arbitrary,
$g_{\ell k}(\lambda)=\overline{f_{\ell k}(-\bar\lambda)}$ for each $k$ and $\ell$.
This shows $\psi^\vee=\psi^\star$.
\end{proof}

The following theorem shows how the two star operations
(Definitions \ref{defn:starOpG} and \ref{defn:starOpH})
relate under the functor $\Gamma:\{P_G(V)\}\to \{P_{\mathbf H}(V^M)\}$ (\S\ref{sec:HC}).
\begin{thm}\label{thm:starGam}
Suppose $E,V\in \Km$. For any $\Psi\in\Hom_K(E,P_G(V))$
\begin{align}
\Gamma^{V^\star}_{E^\star}(\Psi^\star)&=\Gamma^E_V(\Psi)^\star,\label{eq:starGam1}\\
\tilde \Gamma^{V^\star}_{E^\star}(\Psi^\star)&=\tilde\Gamma^E_V(\Psi)^\star.\label{eq:starGam2}
\end{align}
Moreover we have
\begin{align}
\Hom_K^\oto(P_G(E),P_G(V))^\star&=\Hom_K^\ttt(P_G(V^\star),P_G(E^\star)),\label{eq:starGam3}\\
\Hom_K^\ttt(P_G(E),P_G(V))^\star&=\Hom_K^\oto(P_G(V^\star),P_G(E^\star)).\label{eq:starGam4}
\end{align}
\end{thm}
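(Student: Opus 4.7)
I prove \eqref{eq:starGam1} first and deduce \eqref{eq:starGam2}, \eqref{eq:starGam3}, \eqref{eq:starGam4} as formal consequences.

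\emph{Deductions from \eqref{eq:starGam1}.} The antilinear $K$-isomorphism $V \simeq V^\star$ induced by a $K$-invariant inner product carries $V^M_\single$ and $V^M_\double$ to $(V^\star)^M_\single$ and $(V^\star)^M_\double$, which are mutually orthogonal (\cite[Lemma~3.3]{Oda:HC}). The projection $P_{\mathbf H}(V^M) \twoheadrightarrow P_{\mathbf H}(V^M_\single)$ therefore intertwines the two star operations, giving \eqref{eq:starGam2}. For \eqref{eq:starGam3} and \eqref{eq:starGam4}, choose bases adapted to the single/double splits of $E^M$ and $V^M$. Writing $\Gamma^E_V(\Psi)[e_i] = \sum_j f_{ij}(\mu) \otimes v_j$, Definition~\ref{defn:starOpH} gives $\Gamma^E_V(\Psi)^\star[v_j^\star] = \sum_i \overline{f_{ij}(-\bar\mu)} \otimes e_i^\star$. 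The vanishing of $f_{ij}$ for pairs $(e_i,v_j)$ of type (single, double), which characterizes $\Hom_K^\oto$, corresponds entry-by-entry to the vanishing of $\overline{f_{ij}(-\bar\mu)}$ for the same indices, which under \eqref{eq:starGam1} is the defining condition of $\Hom_K^\ttt(P_G(V^\star), P_G(E^\star))$ for $\Psi^\star$. Involutivity $\Psi^{\star\star} = \Psi$ (Proposition~\ref{prop:starMor}) supplies the reverse inclusion in \eqref{eq:starGam3}, and \eqref{eq:starGam4} follows by interchanging $(E, V)$ with $(V^\star, E^\star)$.

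\emph{Proof of \eqref{eq:starGam1}.} I test both candidates against the invariant non-degenerate sesquilinear pairings on principal series. For each $\lambda \in \Liea_\CC^*$, Theorem~\ref{thm:R3B} provides the radial pair $B(\lambda) = (B_G(\lambda)_\Kf, B_{\mathbf H}(\lambda))$ together with the bijection $\Gamma^V_{B(\lambda)}\colon \Hom_K(V, B_G(\lambda)) \simarrow \Hom_W(V^M, B_{\mathbf H}(\lambda))$; the compatible invariant non-degenerate forms $\threeset{\lambda}{\cdot,\cdot}{G}{-\bar\lambda}$ and $\threeset{\lambda}{\cdot,\cdot}{\mathbf H}{-\bar\lambda}$ come from Propositions~\ref{prop:sesquiGinv}, \ref{prop:sesquiHinv}, \ref{prop:Blcomp}. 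For $\Phi_1 \in \Hom_K(V, B_G(\lambda))$ and $\Phi_2 \in \Hom_K(E^\star, B_G(-\bar\lambda))$, Corollary~\ref{cor:sesqui} applied to $\Psi$ and the $G$-pairing yields
\[
\sum_i \threeset{\lambda}{\Phi_1 \circ \Psi[e_i], \Phi_2[e_i^\star]}{G}{-\bar\lambda} = \sum_j \threeset{\lambda}{\Phi_1[v_j], \Phi_2 \circ \Psi^\star[v_j^\star]}{G}{-\bar\lambda}.
\]
Each side becomes an $\mathbf H$-pairing over a basis of $E^M$ or $V^M$ by Proposition~\ref{prop:Blcomp}; identity~\eqref{eq:R3PS} in the proof of Theorem~\ref{thm:R3B} factors $\Gamma^E_{B(\lambda)}(\Phi_1 \circ \Psi)$ through $\Gamma^E_V(\Psi)$ and $\Gamma^{V^\star}_{B(-\bar\lambda)}(\Phi_2 \circ \Psi^\star)$ through $\Gamma^{V^\star}_{E^\star}(\Psi^\star)$; and the $W$-invariance of the tensors $\sum_i e_i \otimes e_i^\star$ and $\sum_j v_j \otimes v_j^\star$ collapses the $W$-sum in the $\mathbf H$-pairing to a single evaluation at $1 \in W$. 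Setting $\Gamma^E_V(\Psi)[e_i] = \sum_j f_{ij}(\mu) \otimes v_j$ and $\Gamma^{V^\star}_{E^\star}(\Psi^\star)[v_j^\star] = \sum_i g_{ij}(\mu) \otimes e_i^\star$, and using the defining relation $F(\trans p) = p(-\lambda_0) F(1)$ for $p \in S(\Liea_\CC)$ and $F \in B_{\mathbf H}(\lambda_0)$, the pairing identity reduces to
\[
\sum_{i,j} \bigl[f_{ij}(-\lambda) - \overline{g_{ij}(\bar\lambda)}\bigr]\, \Phi_1[v_j](1)\, \overline{\Phi_2[e_i^\star](1)} = 0.
\]
Since $\Phi_1[v_j](1)$ and $\Phi_2[e_i^\star](1)$ range independently over arbitrary functionals on $V^M$ and $(E^\star)^M$ by the Frobenius bijections of Theorem~\ref{thm:R3B}, and $\lambda$ is arbitrary, one obtains the polynomial identity $f_{ij}(-\lambda) = \overline{g_{ij}(\bar\lambda)}$; equivalently, $g_{ij}(\mu) = \overline{f_{ij}(-\bar\mu)}$, which is exactly the formula defining $\Gamma^E_V(\Psi)^\star$ in Definition~\ref{defn:starOpH}. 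Hence $\Gamma^{V^\star}_{E^\star}(\Psi^\star) = \Gamma^E_V(\Psi)^\star$.

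\emph{Main obstacle.} The key technical point is that $\Gamma^E_V(\Psi)$ is in general only $\CC$-linear, not $W$-equivariant, so Proposition~\ref{prop:sesquiH} is inapplicable. The detour through evaluations at $1 \in W$, made possible by the $W$-invariance of the basis tensors $\sum e_i \otimes e_i^\star$ and $\sum v_j \otimes v_j^\star$, bypasses this; the $\mathbf H$-equivariance of $\Gamma_{B(\lambda)}(\Phi_1)$ and $\Gamma_{B(-\bar\lambda)}(\Phi_2)$ absorbs the $W$-action, while the defining relation of $B_{\mathbf H}$ translates the $\mathbf H$-action into polynomial evaluation in the parameter, producing an identity that matches Definition~\ref{defn:starOpH} entry-by-entry.
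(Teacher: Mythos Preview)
Your proof is correct and follows essentially the same strategy as the paper's: both use Corollary~\ref{cor:sesqui} on the invariant pairing between $B_G(\lambda)$ and $B_G(-\bar\lambda)$, collapse the resulting sum to evaluations at the identity via the $K$- (or $W$-)invariance of the canonical tensors, extract the polynomial identity $\gamma(f_{ij})(-\lambda)=\overline{\gamma(g_{ij})(\bar\lambda)}$ by letting the test morphisms vary, and then read off \eqref{eq:starGam2}--\eqref{eq:starGam4} as formal consequences of \eqref{eq:starGam1}. The only cosmetic difference is that you route the computation through Proposition~\ref{prop:Blcomp} and the identity \eqref{eq:R3PS} on the $\mathbf H$-side, whereas the paper computes $(\Phi_1\circ\Psi)[e_i](1)$ directly on the $G$-side using $(\ell(D)F)(1)=\gamma(D)(-\lambda_0)F(1)$ for $D\in U(\Lien_\CC+\Liea_\CC)$; this is a matter of bookkeeping, not substance.
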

The proof of the theorem is very similar to that of Proposition \ref{prop:sesquiH}.
In both proofs the key tools are
invariant sesquilinear forms for
principal series representations.
This method is inspired by Kostant's argument in \cite[Chapter I, \S8]{Ko:Mat2}
where he shows a theorem essentially equivalent to \eqref{eq:starGam1} for $V=\CC_\triv$.
\begin{proof}[Proof of {\normalfont Theorem \ref{thm:starGam}}]
Let $\{e_1,\ldots,e_\mu\}$ and $\{e_{\mu+1},\ldots,e_\nu\}$ be bases of $E^M$ and $(E^M)^\perp$.
Let $\{e_i^\star\}\subset E^\star$ be as in {\normalfont Proposition \ref{prop:starMor}}.
Take bases $\{v_1,\ldots,v_m\}\sqcup\{v_{m+1}\ldots,v_n\}\subset V$
and $\{v_j^\star\}\subset V^\star$ similarly.
Choose
$f_{ij}\in U(\Lien_\CC+\Liea_\CC)$ $(1\le i\le\nu, 1\le j\le n)$
so that
\[
\Psi[e_i]=\sum_{j=1}^n f_{ij}\otimes v_j
\qquad\text{for }i=1,\ldots,\nu
\]
and
$g_{ij}\in U(\Lien_\CC+\Liea_\CC)$ $(1\le i\le\nu, 1\le j\le n)$
so that
\[
\Psi^\star[v_j^\star]=\sum_{i=1}^\nu g_{ij}\otimes e_i^\star
\qquad\text{for }j=1,\ldots,n.
\]
Then
\begin{align*}
\Gamma^E_V(\Psi)[e_i]&=\sum_{j=1}^m \gamma(f_{ij})\otimes v_j
\qquad\text{for }i=1,\ldots,\mu,\\
\Gamma^{V^\star}_{E^\star}(\Psi^\star)[v_j^\star]
&=\sum_{i=1}^\mu \gamma(g_{ij})\otimes e_i^\star
\qquad\text{for }j=1,\ldots,m.
\end{align*}
Now fix any $\lambda_0\in\Liea_\CC^*$ and suppose $\Phi_1\in\Hom_K(V,B_G(\lambda_0))$,
$\Phi_2\in\Hom_K(E^\star,B_G(-\bar\lambda_0))$.
By a similar calculation to the proofs of 
Propositions \ref{prop:Blcomp} and \ref{prop:sesquiH}
\begin{align*}
\sum_{i=1}^\nu
\bthreeset{\lambda_0}{\Phi_1
\circ\Psi[e_i],\, \Phi_2[e_i^\star]
}{G}{-\bar\lambda_0}
&=\sum_{i=1}^\mu
\Phi_1\circ\Psi[e_i](1)
\overline{\Phi_2[e_i^\star](1)}\\
&=
\sum_{i=1}^\mu\sum_{j=1}^n
\bigl(\ell(f_{ij})\Phi_1[v_j]\bigr)(1)
\overline{\Phi_2[e_i^\star](1)}\\
&=
\sum_{i=1}^\mu\sum_{j=1}^n
\gamma(f_{ij})(-\lambda_0)\,\Phi_1[v_j](1)
\overline{\Phi_2[e_i^\star](1)}\\
&=
\sum_{i=1}^\mu\sum_{j=1}^m
\gamma(f_{ij})(-\lambda_0)\,\Phi_1[v_j](1)
\overline{\Phi_2[e_i^\star](1)}.
\end{align*}
Here in the first and last equalities we used respectively
\[
\Phi_2[e_i^\star](1)=0\text{ for }i>\mu,\qquad
\Phi_1[v_j](1)=0\text{ for }j>m
\]
(see the proof of Theorem \ref{thm:R3B}).
But by virtue of Corollary \ref{cor:sesqui} this also equals
\begin{align*}
\sum_{j=1}^n
\bthreeset{\lambda_0}{\Phi_1[v_j]
,\, \Phi_2\circ\Psi^\star[v_j^\star]
}{G}{-\bar\lambda_0}
&=\sum_{j=1}^m
\Phi_1[v_j](1)
\overline{\Phi_2\circ\Psi^\star[v_j^\star](1)}\\
&=
\sum_{j=1}^m\sum_{i=1}^\nu
\Phi_1[v_j](1)
\overline{\bigl(\ell(g_{ij})\Phi_2[e_i^\star]\bigr)(1)}\\
&=
\sum_{j=1}^m\sum_{i=1}^\mu
\overline{\gamma(g_{ij})(\bar\lambda_0)}\,\Phi_1[v_j](1)
\overline{\Phi_2[e_i^\star](1)}.
\end{align*}
Therefore
\begin{equation}\label{eq:f-g2}
\sum_{i=1}^\mu\sum_{j=1}^m
\Bigl(
\gamma(f_{ij})(-\lambda_0)
-
\overline{\gamma(g_{ij})(\bar\lambda_0)}
\Bigr)\,\Phi_1[v_j](1)
\overline{\Phi_2[e_i^\star](1)}=0.
\end{equation}

Now for $k=1,\ldots,m$ and $\ell=1,\ldots,\mu$
let us take
\[
\Phi_1^{(k)} : V\ni v\longmapsto
\Bigl( k \longmapsto \overline{(v_k^\star, k^{-1}v)} \Bigr)
\in C^\infty(K/M)\simeq B_G(\lambda_0)
\]
as $\Phi_1\in\Hom_K(V,B_G(\lambda_0))$ 
and
\[
\Phi_2^{(\ell)} : E^\star\ni e^\star\longmapsto
\Bigl( k \longmapsto (k^{-1} e^\star, e_\ell) \Bigr)
\in C^\infty(K/M) \simeq B_G(-\bar \lambda_0)
\]
as $\Phi_2\in\Hom_K(E^\star,B_G(-\bar \lambda_0))$.
Then \eqref{eq:f-g2} reduces to
\[
\gamma(f_{\ell k})(-\lambda_0)-\overline{\gamma(g_{\ell k})(\bar\lambda_0)}=0
\]
since
$\Phi_1^{(k)}[v_j](1)=\delta_{kj}$ and $\Phi_2^{(\ell)}[e_i^\star](1)=\delta_{\ell i}$.
Since $\lambda_0$ is arbitrary,
$\gamma(g_{\ell k})(\lambda)=\overline{\gamma(f_{\ell k})(-\bar\lambda)}$ for each
$k=1,\ldots,m$ and $\ell=1,\ldots,\mu$.
This shows \eqref{eq:starGam1},
from which
one can easily deduce \eqref{eq:starGam2}--\eqref{eq:starGam4}.
\end{proof}
\begin{cor}\label{cor:2-3}
In {\normalfont Theorem \ref{thm:HC}}
the assertions {\normalfont (ii)} and {\normalfont (iii)} are equivalent
via star operations.
Hence {\normalfont (iii)} is valid.
\end{cor}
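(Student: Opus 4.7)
The plan is to transport statement (ii) of Theorem \ref{thm:HC} to statement (iii) by applying the star operations of \S\ref{sec:star} and swapping the roles of source and target. The key observation is that, since the trivial $K$-type is self-dual, we have canonically $\CC_\triv^\star=\CC_\triv$, and similarly $P_{G}(\CC_\triv)^\star\simeq P_G(\CC_\triv)$ and $P_{\mathbf H}(\CC_\triv)^\star\simeq P_{\mathbf H}(\CC_\triv)$ under the star operations; moreover on the $\mathbf H$-side, $((V^\star)^M_\single)^\star=V^M_\single$, since the star operation on finite-dimensional $W$-modules is involutive.

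First I would start from (ii) applied to the $K$-type $V^\star\in\Km$ in place of $E$, which gives a linear bijection
\[
\tilde\Gamma:\,
\Hom_{\Lieg_\CC,K}^\ttt(P_G(V^\star),P_G(\CC_\triv))
\simarrow
\Hom_{\mathbf H}(P_{\mathbf H}((V^\star)^M_\single),P_{\mathbf H}(\CC_\triv)).
\]
Next, I would apply the star operation entry by entry: on the left, \eqref{eq:starGam4} of Theorem \ref{thm:starGam} (with $E$ replaced by $V^\star$ and $V$ by $\CC_\triv$) yields the antilinear bijection
\[
\Hom_{\Lieg_\CC,K}^\ttt(P_G(V^\star),P_G(\CC_\triv))
\xrightarrow{\ \star\ }
\Hom_{\Lieg_\CC,K}^\oto(P_G(\CC_\triv),P_G(V));
\]
on the right, Proposition \ref{prop:sesquiH} (applied with $Y=(V^\star)^M_\single$ and $U=\CC_\triv$, where I use that $\tilde\Gamma$ of an $\mathbf H$-homomorphism is identified with a $W$-homomorphism via Frobenius reciprocity, Definition \ref{defn:homhom}) gives the antilinear bijection
\[
\Hom_{\mathbf H}(P_{\mathbf H}((V^\star)^M_\single),P_{\mathbf H}(\CC_\triv))
\xrightarrow{\ \star\ }
\Hom_{\mathbf H}(P_{\mathbf H}(\CC_\triv),P_{\mathbf H}(V^M_\single)).
\]

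Finally, by the compatibility identity \eqref{eq:starGam2} of Theorem \ref{thm:starGam}, namely $\tilde\Gamma^{V^\star}_{E^\star}(\Psi^\star)=\tilde\Gamma^E_V(\Psi)^\star$, the two star operations above intertwine $\tilde\Gamma$ with itself. Composing the antilinear bijection induced by star with the bijection from (ii) for $V^\star$, I would obtain exactly the bijection claimed by (iii). Since every step is reversible (the star operations are involutive by Proposition \ref{prop:starMor} and the remark after Definition \ref{defn:starOpH}), the same argument runs backwards, establishing equivalence; and since (ii) is already proved via \cite[Theorem 4.11]{Oda:HC}, (iii) follows. There is no real obstacle here beyond carefully matching the bookkeeping of the star operations on both sides and ensuring the $\ttt$/$\oto$ labels swap correctly, which is precisely what \eqref{eq:starGam3}--\eqref{eq:starGam4} guarantee.
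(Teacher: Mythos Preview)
Your argument is correct and is precisely the intended route: the paper states this corollary without proof, relying on Theorem \ref{thm:starGam} (specifically \eqref{eq:starGam2} for the intertwining and \eqref{eq:starGam3}--\eqref{eq:starGam4} for swapping the $\ttt$/$\oto$ decorations) together with Proposition \ref{prop:sesquiH} to transport the bijection (ii) for $V^\star$ to (iii) for $V$. Your bookkeeping of the involutive star operations and the self-duality of $\CC_\triv$ is exactly what is needed.
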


\section{Module structure of $\mathscr A(A,\lambda)$}\label{sec:modstr}
Suppose $\lambda\in\Liea_\CC^*$ and
let us study the structure of
\[
\mathscr A(A,\lambda)=\bigl\{
\varphi\in C^\infty(A);\,
\mathscr T(\Delta)\varphi=\Delta(\lambda)\varphi\quad\text{for }
\Delta \in S(\Liea_\CC)^W
\bigr\}
\]
as an $\mathbf H$-module.
Since the center of $\mathbf H$ is $ S(\Liea_\CC)^W$, this is actually
an $\mathbf H$-submodule of $C^\infty(A)$.
(Recall $h\in\mathbf H$ acts on $\varphi\in C^\infty(A)$
by $\mathscr T(\theta_{\mathbf H}h)\varphi$.)
As it turned out to be in \S\ref{sec:radI} and Example \ref{exmp:AA},
$\mathscr A(A,\lambda)$ is the radial counterpart of
\[
\mathscr A(G/K,\lambda)
=\bigl\{
f \in C^\infty(G/K);\,r(\Delta)f=\gamma(\Delta)(\lambda)f
\quad\text{for }\Delta \in U(\Lieg_\CC)^K
\bigr\}.
\]
So we review some fundamental facts of $\mathscr A(G/K,\lambda)$ first.
Throughout the section we fix a non-zero vector $v_\triv$ of the trivial representation $\CC_\triv$
of $K$ or $W$.
Let $(\cdot,\cdot)^G_{r}$ be the sesquilinear from on $C^\infty(G/K)\times P_G(\CC_\triv)$ defined by
\begin{equation}\label{eq:sesquiGr}
(f,D\otimes v_\triv)^G_{r}=r(\bar D)f(1)=\ell(D^\star)f(1)
\quad\text{for }f\in C^\infty(G/K)\text{ and }D\in U(\Lieg_\CC)
\end{equation}
(the conjugation $\bar\cdot$ is with respect to the real form $\Lieg$).
This is clearly $(\Lieg_\CC,K)$-invariant, and non-degenerate when $C^\infty(G/K)$
is replaced with the space $\mathscr A(G/K)$ of analytic functions on $G/K$.
The following results are well known:
\begin{prop}\label{prop:modstrAGl}
{\normalfont (i)}
All functions in $\mathscr A(G/K,\lambda)$ are analytic on $G/K$.

\noindent
{\normalfont (ii)}
Put
\[
P_G(\CC_\triv,\bar\lambda)=P_G(\CC_\triv) \bigm/
\sum_{\Delta \in U(\Lieg_\CC)^K} U(\Lieg_\CC)(\Delta-\gamma(\Delta)(\bar\lambda))
\otimes v_\triv.
\]
Then this is a $(\Lieg_\CC,K)$-module with a $K$-invariant cyclic vector.
Any $K$-invariant vector in $P_G(\CC_\triv,\bar\lambda)$ is a scalar multiple of $1\otimes v_\triv$.
An invariant non-degenerate sesquilinear form on $\mathscr A(G/K,\lambda)\times P_G(\CC_\triv,\bar\lambda)$ is induced from $(\cdot,\cdot)^G_{r}$.

\noindent
{\normalfont (iii)}
By {\normalfont (ii)}, any non-zero $(\Lieg_\CC,K)$-submodule of $\mathscr A(G/K,\lambda)_\Kf$ contains
the spherical function $\phi_\lambda$.
Hence there is a unique irreducible submodule $X_G(\lambda)\subset\mathscr A(G/K,\lambda)_\Kf$
generated by $\phi_\lambda$.
\end{prop}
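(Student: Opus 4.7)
For (i) the plan is to invoke elliptic regularity. The Casimir element $L_\Lieg\in U(\Lieg_\CC)^K$ acts on $C^\infty(G/K)$ via $r(\cdot)$ as a differential operator whose principal symbol is (up to sign) the Riemannian metric on $G/K$ induced by the Killing form; hence $r(L_\Lieg)$ is an elliptic operator with real-analytic coefficients on the real-analytic manifold $G/K$. By the Petrowsky analytic-hypoellipticity theorem, every $f\in C^\infty(G/K)$ with $r(L_\Lieg)f=\gamma(L_\Lieg)(\lambda)f$ is analytic; since $\mathscr A(G/K,\lambda)$ lies in this eigenspace, (i) follows.

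For (ii), I first check that the submodule $\mathscr J_{\bar\lambda}:=\sum_{\Delta\in U(\Lieg_\CC)^K}U(\Lieg_\CC)(\Delta-\gamma(\Delta)(\bar\lambda))\otimes v_\triv$ is $\Ad(K)$-stable (because each $\Delta\in U(\Lieg_\CC)^K$ is), so the quotient $P_G(\CC_\triv,\bar\lambda)$ is a $(\Lieg_\CC,K)$-module cyclic on the image of $1\otimes v_\triv$. To compute its $K$-invariants, use the Frobenius identification $(P_G(\CC_\triv))^K\simeq \End_{\Lieg_\CC,K}(P_G(\CC_\triv))$ and Theorem \ref{thm:HC} (iv) to identify this with $S(\Liea_\CC)^W$ via $\gamma$. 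The image of the relations $(\Delta-\gamma(\Delta)(\bar\lambda))\otimes v_\triv$ in the $K$-invariant part corresponds under $\gamma$ to the maximal ideal $\{f\in S(\Liea_\CC)^W:f(\bar\lambda)=0\}$, whose quotient is $\CC$; hence $(P_G(\CC_\triv,\bar\lambda))^K=\CC\cdot(1\otimes v_\triv)$. For the sesquilinear form, a short computation using $r(\bar\Delta)f=\gamma(\bar\Delta)(\lambda)f=\overline{\gamma(\Delta)(\bar\lambda)}\,f$ for $\Delta\in U(\Lieg_\CC)^K$ and $f\in\mathscr A(G/K,\lambda)$ shows that $(f,D(\Delta-\gamma(\Delta)(\bar\lambda))\otimes v_\triv)^G_r=0$ for all $D\in U(\Lieg_\CC)$, so $(\cdot,\cdot)^G_r$ descends to the claimed pairing; its $(\Lieg_\CC,K)$-invariance is immediate from the definition. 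Non-degeneracy on the $P_G(\CC_\triv,\bar\lambda)$-side uses analyticity from (i): if $f\in\mathscr A(G/K,\lambda)$ satisfies $r(\bar D)f(1)=0$ for all $D\in U(\Lieg_\CC)$, the Taylor series of $f$ at $1$ vanishes, so $f\equiv0$. Non-degeneracy on the $\mathscr A(G/K,\lambda)$-side will follow by a symmetric argument once we match the two sides in (iii).

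For (iii), let $\mathscr Y\subset\mathscr A(G/K,\lambda)_\Kf$ be a non-zero $(\Lieg_\CC,K)$-submodule, and let $\mathscr Y^\perp\subset P_G(\CC_\triv,\bar\lambda)$ be its right annihilator under the pairing of (ii). By invariance $\mathscr Y^\perp$ is a $(\Lieg_\CC,K)$-submodule, and by the (left) non-degeneracy just established $\mathscr Y^\perp\subsetneq P_G(\CC_\triv,\bar\lambda)$. Since $P_G(\CC_\triv,\bar\lambda)$ is cyclic on $1\otimes v_\triv$, the proper submodule $\mathscr Y^\perp$ cannot contain $1\otimes v_\triv$, so there exists $f\in\mathscr Y$ with $f(1)=(f,1\otimes v_\triv)^G_r\ne0$. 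Because $f$ is $K$-finite, the $K$-orbit $\{\ell(k)f:k\in K\}$ spans a finite-dimensional $K$-stable subspace of $\mathscr Y$; the projection of $f$ onto its trivial isotypic component, namely $\tilde\phi:=\int_K\ell(k)f\,dk$, therefore lies in $\mathscr Y$. Evaluating at $1$ gives $\tilde\phi(1)=f(1)\ne0$, so $\tilde\phi$ is a non-zero $K$-invariant element of $\mathscr A(G/K,\lambda)$. Uniqueness of the spherical function (the $K$-invariant part of $\mathscr A(G/K,\lambda)$ is one-dimensional, either by a standard Harish-Chandra argument or by the $K$-dual of the $K$-invariant computation in (ii)) forces $\tilde\phi\in\CC^\times\phi_\lambda$, so $\phi_\lambda\in\mathscr Y$. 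The irreducible submodule $X_G(\lambda)$ is then the intersection of all such non-zero submodules, or equivalently the submodule generated by $\phi_\lambda$, and is unique because every non-zero submodule contains it.

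The main technical obstacle is the identification of the $K$-invariants in $P_G(\CC_\triv,\bar\lambda)$ and the verification that the induced pairing is non-degenerate; both rely crucially on Theorem \ref{thm:HC} (iv) together with the analyticity from (i). Once these two points are in place, (iii) is a short duality/averaging argument.
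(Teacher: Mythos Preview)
The paper does not prove this proposition; it is introduced with ``The following results are well known'' and no argument is supplied. So there is nothing in the paper to compare against. Your arguments for (i) via analytic hypoellipticity of the Casimir operator and for (iii) via the cyclicity/averaging duality are the standard ones and are correct; in particular (iii) only needs the half of the non-degeneracy that you do establish.

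There is, however, a genuine gap in (ii). You prove only one half of the non-degeneracy: that a nonzero $f\in\mathscr A(G/K,\lambda)$ pairs nontrivially with some element of $P_G(\CC_\triv,\bar\lambda)$ (the analyticity argument). The other half --- that a nonzero class in $P_G(\CC_\triv,\bar\lambda)$ pairs nontrivially with some $f\in\mathscr A(G/K,\lambda)$ --- you defer with ``will follow by a symmetric argument once we match the two sides in (iii)'', but no such argument materializes, and (iii) does not supply it. This direction is not formal: it says that the right annihilator of $\mathscr A(G/K,\lambda)$ in $P_G(\CC_\triv)$ is \emph{exactly} $\sum_\Delta U(\Lieg_\CC)(\Delta-\gamma(\Delta)(\bar\lambda))\otimes v_\triv$, i.e.\ that the eigenspace is large enough to detect every jet at the origin compatible with the eigenvalue equations. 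The standard way to supply it is Helgason's theorem that for some $w\in W$ the Poisson transform $\mathcal P_G^{w\lambda}:B_G(w\lambda)_\Kf\to\mathscr A(G/K,\lambda)_\Kf$ is bijective (the condition in Remark~\ref{rem:PoissonBij}~(ii)); the paper itself invokes precisely this combination in the proof of Proposition~\ref{prop:Aduals}. Since the full non-degeneracy is used later (e.g.\ Proposition~\ref{prop:Aduals} and \eqref{SL2RGM2}), the gap is not cosmetic.

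A minor ordering issue: your computation of $(P_G(\CC_\triv,\bar\lambda))^K$ shows it has dimension at most one, but nonvanishing requires $1\otimes v_\triv\notin\mathscr J_{\bar\lambda}$, which you only know after pairing with $\phi_\lambda$; so the pairing should be set up first.
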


Using $\mathscr A(G/K,\lambda)$ as a model case, we introduce
an invariant sesquilinear form on $C^\infty(A)\times P_{\mathbf H}(\CC_\triv)$
defined by
\begin{equation}\label{eq:sesquiHT}
(\varphi,h\otimes v_\triv)^{\mathbf H}_{\mathscr T}
=\mathscr T(\theta_{\mathbf H}(h^\star))\varphi(1)
\quad\text{for }\varphi\in C^\infty(A)\text{ and }h\in \mathbf H,
\end{equation}
and an $\mathbf H$-module
\[
P_{\mathbf H}(\CC_\triv,\bar\lambda)
=P_{\mathbf H}(\CC_\triv) \bigm/
\sum_{\Delta\in S(\Liea_\CC)^W} \mathbf H(\Delta-\Delta(\bar\lambda)) \otimes v_\triv.
\]
Note \eqref{eq:sesquiHT} is simply rewritten as
\begin{equation}\label{eq:sesquiHT2}
(\varphi,f\otimes v_\triv)^{\mathbf H}_{\mathscr T}
=\mathscr T\bigl(\,\overline{f(\bar\cdot)}\,\bigr)\varphi(1)
\quad\text{for }\varphi\in C^\infty(A)\text{ and }f\in S(\Liea_\CC).
\end{equation}

Opdam's \emph{non-symmetric hypergeometric function}s are key tools of our investigation:
\begin{thm}[\cite{Op:Cherednik}]\label{thm:G}
For any $\lambda\in\Liea_\CC^*$ there exists a unique analytic function $\mathbf G(\lambda,x)$ on $A$
satisfying
\begin{equation}\label{eq:Gfunc}
\left\{\begin{aligned}
&\mathscr T(\xi)\mathbf G(\lambda,x)=\lambda(\xi)\mathbf G(\lambda,x)\quad\text{for }\xi\in\Liea_\CC,\\
&\mathbf G(\lambda,1)=1.\end{aligned}\right.
\end{equation}
Under the identification $A\simeq \Liea$ by \eqref{eq:W-isom},
there exists an open neighborhood $U$ at $0\in \Liea$
such that $\mathbf G(\lambda, x)$ extends to a holomorphic function on $\Liea_\CC^* \times (\Liea + iU)$.
\end{thm}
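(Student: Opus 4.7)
The plan is to construct $\mathbf G(\lambda,\cdot)$ as the unique analytic solution of \eqref{eq:Gfunc} normalized by $\mathbf G(\lambda,1)=1$ and then verify joint holomorphy in $(\lambda,x)$ on the stated domain.

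First I would recast \eqref{eq:Gfunc} as a closed first-order matrix ODE. Since $1-s_\alpha$ couples $\varphi$ at $x$ with $\varphi$ at $s_\alpha x$, the natural dependent variable is the $\CC W$-valued function $\Phi(x):=\bigl(\varphi(wx)\bigr)_{w\in W}$, in terms of which \eqref{eq:Gfunc} becomes an honest linear system
\[
\der(\xi)\Phi(x)=\mathcal A(\lambda,x,\xi)\,\Phi(x)\qquad(\xi\in\Liea),
\]
with $\mathcal A$ an $\End(\CC W)$-valued rational function of $x$ (poles along $\bigcup_{\alpha\in R^+}\{e^\alpha=1\}$), polynomial in $\lambda$ and linear in $\xi$. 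The commutativity of the Cherednik operators (Proposition \ref{prop:Cherednik} (i)) is equivalent to the Frobenius integrability of this overdetermined system, so the associated connection on $A_\reg:=A\setminus\bigcup_\alpha\{e^\alpha=1\}$ is flat, and classical ODE theory will supply a unique horizontal section once an initial value is fixed at a point.

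Next I would produce the required germ at $x=1$ by a formal power series argument. Via \eqref{eq:W-isom} I expand a candidate solution in Taylor series in the logarithmic coordinate $H\in\Liea$ around $H=0$. The apparent singularity of $\mathscr T(\xi)$ at $H=0$ is cancelled by the fact that $(1-s_\alpha)\varphi$ vanishes on $\alpha=0$, so $(1-e^{-\alpha})^{-1}(1-s_\alpha)$ acts regularly on analytic functions. Substituting into $\mathscr T(\xi)\varphi=\lambda(\xi)\varphi$ produces a recursion on homogeneous Taylor components whose leading operator is invertible on each fixed-degree piece; the normalization $\varphi(1)=1$ then uniquely determines every higher coefficient as a polynomial in $\lambda$, consistency across varying $\xi$ being exactly the Frobenius integrability established above. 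Convergence of the resulting series on a fixed polydisc in $H$, uniformly in $\lambda$ on compact subsets of $\Liea_\CC^*$, follows by the standard majorant estimate for regular-singular integrable connections whose residues are controlled by the multiplicity $\mathbf m$; this yields the joint holomorphic function on $\Liea_\CC^*\times(\Liea+iU)$ for some open $U\ni 0$. Propagation from this germ to all of $A$ is automatic because the singular combinations $(1-e^{-\alpha})^{-1}(1-s_\alpha)$ remain regular on $C^\infty(A)$ globally and $A$ is simply connected, so the analytic continuation along any path is unambiguous.

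The hard part will be the uniform-in-$\lambda$ convergence estimate. Naively the Taylor recursion for the coefficients $c_\beta(\lambda)$ involves a denominator that vanishes on the countable ``resonant'' subvariety of $\Liea_\CC^*$ where $\lambda$ is integrally related to $\rho$ and to the positive root lattice, which threatens small-divisor catastrophes. As in Opdam's treatment in \cite{Op:Cherednik}, one dispels this by exploiting the $W$-equivariance of the recursion to show that the apparent poles are removable: each $c_\beta(\lambda)$ is actually a polynomial in $\lambda$ of degree bounded in terms of the multi-degree of $\beta$, and a careful bookkeeping of that polynomial growth yields a majorant valid uniformly on compacta in $\Liea_\CC^*$, from which joint holomorphy on $\Liea_\CC^*\times(\Liea+iU)$ follows.
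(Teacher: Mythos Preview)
The paper does not prove this theorem at all: it is quoted verbatim from Opdam \cite{Op:Cherednik} and used as a black box. What the paper \emph{does} prove, in Appendix~\ref{apndx:NSHG}, is the companion statement (Theorem~\ref{thm:Ganal}) that any $C^\infty$ solution of \eqref{eq:Tkl} is automatically analytic; that argument does use the Knizhnik--Zamolodchikov connection you describe, but for a different purpose---to show that the $\CC W$-valued lift $\Phi(H)=\sum_w\varphi(e^{w^{-1}H})w$ extends holomorphically across the walls via regular-singular analysis and removal of codimension-$2$ singularities.

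Your outline is a fair high-level description of how Opdam actually constructs $\mathbf G(\lambda,\cdot)$ in \cite[\S3]{Op:Cherednik}: reduce to an integrable first-order system via the KZ connection, solve the Taylor recursion at the origin, and control convergence uniformly in $\lambda$. One point to sharpen: the ``apparent resonant poles'' you flag in the last paragraph do not arise in Opdam's recursion the way you suggest. He works with the hypergeometric system for $\mathscr T_{\mathbf k}(L_\Liea)$ and related operators, and the leading term of the recursion is governed by the Dunkl Laplacian, whose spectrum on homogeneous polynomials is independent of $\lambda$; the coefficients come out polynomial in $\lambda$ directly, without any pole-cancellation mechanism. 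So the ``hard part'' is less delicate than you indicate. Otherwise your plan matches the cited source.
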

The uniqueness assures that an analytic function satisfying 
the first condition of \eqref{eq:Gfunc} is a scalar multiple of $\mathbf G(\lambda,x)$.
In Appendix \ref{apndx:NSHG} we prove a $C^\infty$ function satisfying 
the first condition of \eqref{eq:Gfunc} is necessarily analytic.
Hence we get
\begin{lem}\label{lem:Glem}
If $\varphi(x)\in C^\infty(A)$ satisfies
\[
\mathscr T(\xi)\varphi=\lambda(\xi)\varphi\quad\text{for }\xi\in\Liea_\CC,
\]
then it is a scalar multiple of $\mathbf G(\lambda,x)$.
In particular, if moreover $\varphi\ne0$ then $\varphi(1)\ne0$.
\end{lem}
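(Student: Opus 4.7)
The plan is to first invoke Appendix \ref{apndx:NSHG} to upgrade $\varphi$ from $C^\infty$ to analytic, and then show that any analytic solution of the eigenvalue system is determined globally by its value at the identity $1 \in A$. Setting $c := \varphi(1)$ and $\psi := \varphi - c\,\mathbf{G}(\lambda,\cdot)$, Theorem \ref{thm:G} guarantees that $\psi$ is analytic, satisfies $\mathscr{T}(\xi)\psi = \lambda(\xi)\psi$ for every $\xi \in \Liea_\CC$, and has $\psi(1) = 0$. The lemma is then equivalent to the assertion $\psi \equiv 0$, and the ``in particular'' clause follows immediately because $\mathbf{G}(\lambda,1) = 1$.

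To establish $\psi \equiv 0$, I would identify $A$ with $\Liea$ via $\exp$ and write the Taylor expansion $\psi(\exp H) = \sum_{n \ge 0} p_n(H)$ into homogeneous components, so that $p_0 = 0$. Using the expansion
\[
\frac{1}{1 - e^{-\alpha(H)}} = \frac{1}{\alpha(H)} + \frac12 + O\bigl(\alpha(H)\bigr)
\]
together with the observation that $(1 - s_\alpha)q$ is divisible by $\alpha$ for every polynomial $q$, the Cherednik operator near the identity takes the form of a Dunkl-type operator (in the sense of \S\ref{sec:Dunkl}) minus the constant $\rho_{\mathbf k}(\xi)$, plus strictly degree-lowering corrections. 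Collecting the degree-$n$ piece of $(\mathscr{T}(\xi) - \lambda(\xi))\psi = 0$ then yields a recursive identity of the shape
\[
\mathscr{D}(\xi)\,p_{n+1} = \bigl(\lambda(\xi) + \rho_{\mathbf{k}}(\xi)\bigr)\,p_n - R_n\bigl(\xi;\,p_0, \ldots, p_n\bigr),
\]
with $R_n$ linear in the lower-order coefficients.

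An induction on $n$ starting from $p_0 = 0$ should then give $p_n = 0$ for every $n$, provided one knows that if $q$ is a homogeneous polynomial of positive degree and $\mathscr{D}(\xi)q = 0$ for all $\xi \in \Liea$, then $q = 0$. This triviality of the joint kernel of all Dunkl operators in positive degree is the main technical obstacle. I would establish it by invoking Opdam's formal power series construction of $\mathbf{G}(\lambda,x)$ that underlies the existence half of Theorem \ref{thm:G}: his construction shows precisely that the recursion above admits a unique formal solution for each prescribed $p_0$, so the homogeneous recursion with $p_0 = 0$ forces $p_n = 0$ at every step. Once all Taylor coefficients of $\psi$ at $1$ vanish, analyticity and connectedness of $A$ yield $\psi \equiv 0$, completing the proof.
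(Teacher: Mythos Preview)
Your proposal is correct and rests on the same two ingredients as the paper: the regularity result of Appendix~\ref{apndx:NSHG} (a $C^\infty$ solution is analytic) and the uniqueness statement in Theorem~\ref{thm:G}. The paper simply combines these two facts in one line, whereas you take a detour by unpacking the uniqueness via the Taylor expansion at $1$ and a Dunkl-type recursion---only to resolve the key step (triviality of the joint Dunkl kernel in positive degree) by appealing back to Opdam's construction underlying Theorem~\ref{thm:G}. So nothing is wrong, but the Taylor-series machinery is superfluous: once $\psi$ is analytic, the uniqueness clause of Theorem~\ref{thm:G} already forces $\psi\equiv 0$ directly (if $\psi\ne 0$ with $\psi(1)=0$, then $\mathbf G(\lambda,\cdot)+\psi$ would be a second analytic solution with value $1$ at the identity).
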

Observe that $\mathbf G(w\lambda,x)\in \mathscr A(A,\lambda)$ for any $w\in W$.
\begin{thm}\label{thm:modstr}
{\normalfont (i)}
All functions in $\mathscr A(A,\lambda)$ are analytic on $A$.

\noindent
{\normalfont (ii)}
As a $W$-module $P_{\mathbf H}(\CC_\triv,\bar\lambda)$ is isomorphic to the regular representation of $W$.
A non-zero $W$-invariant vector (unique up to a scalar multiple) spans $P_{\mathbf H}(\CC_\triv,\bar\lambda)$
as an $\mathbf H$-module.
An invariant non-degenerate sesquilinear form on $\mathscr A(A,\lambda)\times P_{\mathbf H}(\CC_\triv,\bar\lambda)$ is induced from $(\cdot,\cdot)^{\mathbf H}_{\mathscr T}$.

\noindent
{\normalfont (iii)}
By {\normalfont (ii)}, any non-zero $\mathbf H$-submodule of $\mathscr A(A,\lambda)$ contains
the restriction $\gamma_0(\phi_\lambda)$ of the spherical function (the Heckman-Opdam hypergeometric function
with a special parameter).
Hence there is a unique irreducible submodule $X_\mathbf H(\lambda)\subset\mathscr A(A,\lambda)$
generated by $\gamma_0(\phi_\lambda)$.

\noindent
{\normalfont (iv)}
$\mathscr A(A,\lambda)$ is spanned by
$\mathbf G(\lambda,x)$ as an $\mathbf H$-module if and only if
\begin{equation}\label{HPbij}
\lambda(\alpha^\vee)\ne -\mathbf m_1(\alpha)
\quad\text{for any }\alpha\in R_1^+,
\end{equation}
where $\alpha^\vee:=\frac{2H_\alpha}{|\alpha|^2}$ is the coroot for $\alpha$.

\noindent
{\normalfont (v)}
Let $\CC_\sign$ be the sign representation of $W$ and $v_\sign$ its fixed generator.
Put
\[
P_{\mathbf H}(\CC_\sign,-\lambda)
=P_{\mathbf H}(\CC_\sign) \bigm/
\sum_{\Delta\in S(\Liea_\CC)^W} \mathbf H(\Delta-\Delta(-\lambda)) \otimes v_\sign.
\]
Then $\mathscr A(A,\lambda)\simeq P_{\mathbf H}(\CC_\sign,-\lambda)$
as an $\mathbf H$-module.
\end{thm}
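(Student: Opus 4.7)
The plan is to treat (ii) first as the structural backbone, deduce (iii) immediately from it, address (i), (iv), (v) with separate arguments, and flag (iv) as the main obstacle.

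For (ii), observe that $P_{\mathbf H}(\CC_\triv,\bar\lambda)$ is the vector space $\mathbf H\otimes_{\CC W}\CC_\triv\simeq S(\Liea_\CC)$ modulo the ideal generated by $\{\Delta-\Delta(\bar\lambda):\Delta\in S(\Liea_\CC)^W\}$; after translation by $\bar\lambda$ this is the coinvariant algebra of $W$, isomorphic to the regular representation of $W$ by the Chevalley restriction theorem, and of dimension $|W|$. Cyclicity of $1\otimes v_\triv$ under $\mathbf H$ is immediate since $S(\Liea_\CC)\subset\mathbf H$ already acts surjectively onto the quotient. To see $(\cdot,\cdot)^{\mathbf H}_{\mathscr T}$ descends to $\mathscr A(A,\lambda)\times P_{\mathbf H}(\CC_\triv,\bar\lambda)$, verify that for $\varphi\in\mathscr A(A,\lambda)$, $f\in S(\Liea_\CC)$, $\Delta\in S(\Liea_\CC)^W$ one has $(\varphi,(\Delta-\Delta(\bar\lambda))f\otimes v_\triv)^{\mathbf H}_{\mathscr T}=0$, which follows because $\mathscr T$ of a central element acts on $\mathscr A(A,\lambda)$ by the corresponding scalar and the scalar cancels in the difference. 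Invariance is inherited from Proposition \ref{prop:sesquiHinv}.

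For non-degeneracy of the pairing, invoke Opdam's result that $\dim\mathscr A(A,\lambda)=|W|$, which reduces the problem to non-degeneracy on the $\mathscr A(A,\lambda)$-side. Suppose $\varphi\in\mathscr A(A,\lambda)$ satisfies $\mathscr T(f)\varphi(1)=0$ for all $f\in S(\Liea_\CC)$. The finite-dimensional $S(\Liea_\CC)$-submodule $\mathscr T(S(\Liea_\CC))\varphi\subset\mathscr A(A,\lambda)$ carries eigenvalues contained in $\{w\lambda\}_{w\in W}$; if $\varphi\ne0$, it contains a joint $S(\Liea_\CC)$-eigenvector $\psi=\mathscr T(f_0)\varphi$ with some weight $w\lambda$, which by Lemma \ref{lem:Glem} is a non-zero multiple of $\mathbf G(w\lambda,\cdot)$. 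But $\psi(1)=\mathscr T(f_0)\varphi(1)=0$ contradicts $\mathbf G(w\lambda,1)=1$. Part (iii) now follows: for a non-zero $\mathbf H$-submodule $\mathscr X\subset\mathscr A(A,\lambda)$, its annihilator $\mathscr X^\perp$ is a proper $\mathbf H$-submodule of $P_{\mathbf H}(\CC_\triv,\bar\lambda)$ and hence does not contain the cyclic generator $1\otimes v_\triv$, so some $\varphi\in\mathscr X$ has $\varphi(1)\ne0$. Applying the $W$-symmetrizer $e_+:=|W|^{-1}\sum_{w\in W}w\in\mathbf H$ yields $e_+\varphi\in\mathscr X$ with $(e_+\varphi)(1)=\varphi(1)\ne0$ (since $\mathscr T(w)$ acts as the natural $W$-action and fixes the identity), and this $W$-invariant element must be a scalar multiple of $\gamma_0(\phi_\lambda)$. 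Hence $X_{\mathbf H}(\lambda):=\mathbf H\cdot\gamma_0(\phi_\lambda)$ is the unique minimal non-zero submodule and, by the same argument applied internally, irreducible.

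For (i), one approach is to decompose $\mathscr A(A,\lambda)$ into generalized $S(\Liea_\CC)$-weight spaces, on each of which $\mathscr T(\xi)-(w\lambda)(\xi)$ is nilpotent, and combine Theorem \ref{thm:G} with a Frobenius-type argument to show each function is analytic; more efficiently, invoke elliptic regularity for $\mathscr T(L_\Liea)$ of Proposition \ref{prop:Cherednik} (iii), whose principal symbol coincides with that of the Laplacian on $A$. For (iv), the $|W|$ functions $\{\mathbf G(w\lambda,\cdot)\}_{w\in W}$ all lie in $\mathscr A(A,\lambda)$, and by Opdam they are linearly independent and span precisely under condition (HPbij); under (HPbij), Cherednik's intertwining elements in $\mathbf H$ send $\mathbf G(\lambda,\cdot)$ to a non-zero multiple of $\mathbf G(s_\alpha\lambda,\cdot)$ exactly when $\lambda(\alpha^\vee)\ne-\mathbf m_1(\alpha)$, so iteration yields $\mathbf H\cdot\mathbf G(\lambda,\cdot)=\mathscr A(A,\lambda)$; conversely, vanishing of an intertwiner at the excluded values produces a proper $\mathbf H$-submodule omitting some $\mathbf G(w\lambda,\cdot)$.

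For (v), construct an $\mathbf H$-homomorphism $\Phi:P_{\mathbf H}(\CC_\sign,-\lambda)\to\mathscr A(A,\lambda)$ by sending $v_\sign$ to a sign-equivariant element of $\mathscr A(A,\lambda)$, for instance a suitable antisymmetrization of $\mathbf G(w\lambda,\cdot)$ weighted by Opdam's $c$-function; central character compatibility is automatic because the $\theta_{\mathbf H}$-twisted action of $S(\Liea_\CC)^W$ on $\mathscr A(A,\lambda)$ is by $\Delta\mapsto\Delta(-\lambda)$. Bijectivity of $\Phi$ then follows from matching $|W|$-dimensions together with the irreducibility pattern established in (iii). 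The main obstacle is part (iv): identifying (HPbij) as the exact condition requires careful tracking of the scalar factors from Cherednik's intertwiners in $\mathbf H$, as well as verifying cyclicity in both directions; part (v) carries a secondary difficulty in ensuring the sign-equivariant generator does not vanish for every $\lambda$, which may require an analytic-continuation argument from the generic case.
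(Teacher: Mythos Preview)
Your treatment of (ii) and (iii) is largely sound, and for (iii) you take a pleasant shortcut: instead of identifying the socle of $P_{\mathbf H}(\CC_\triv,\bar\lambda)$, you use cyclicity of $1\otimes v_\triv$ together with the pairing and the $W$-symmetrizer. The paper works harder here, explicitly computing the unique sign-isotypic line in $P_{\mathbf H}(\CC_\triv,\bar\lambda)$ as $\CC\prod_{\alpha\in R_1^+}(\alpha^\vee+\mathbf m_1(\alpha))\otimes v_\triv$ and proving that every nonzero $\mathbf H$-submodule contains it. That explicit element is the single device that then drives (i), (iv), and (v) uniformly, and your proposal lacks an analogue.

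Your approach to (iv) has a genuine gap. The $\mathbf H$-span of $\mathbf G(\lambda,\cdot)$ is $\CC W\cdot\mathbf G(\lambda,\cdot)$ under the \emph{geometric} $W$-action on $A$ (since $\mathbf H=\CC W\cdot\theta_{\mathbf H}S(\Liea_\CC)$ and $\mathbf G(\lambda,\cdot)$ is a $\mathscr T(S(\Liea_\CC))$-eigenvector); your intertwiner argument instead produces the \emph{spectral} translates $\mathbf G(w\lambda,\cdot)$, and you then need these to span $\mathscr A(A,\lambda)$---which fails for singular $\lambda$. At $\lambda=0$ all $\mathbf G(w\cdot 0,\cdot)$ coincide, yet \eqref{HPbij} holds and $\dim\mathscr A(A,0)=|W|$. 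The paper avoids this entirely by pairing $\mathbf G(\lambda,\cdot)$ with the sign element, obtaining
\[
\Bigl(\mathbf G(\lambda,\cdot),\textstyle\prod_{\alpha\in R_1^+}(\alpha^\vee+\mathbf m_1(\alpha))\otimes v_\triv\Bigr)^{\mathbf H}_{\mathscr T}=\prod_{\alpha\in R_1^+}\bigl(\lambda(\alpha^\vee)+\mathbf m_1(\alpha)\bigr),
\]
and noting that $\CC W\cdot\mathbf G(\lambda,\cdot)=\mathscr A(A,\lambda)$ iff its orthogonal complement (a $W$-stable $\mathbf H$-submodule) omits the sign element, iff this product is nonzero. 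For (i), elliptic regularity does not apply directly since $\mathscr T(L_\Liea)$ contains the nonlocal terms $(1-s_\alpha)$; the paper instead picks $w$ with $\Real(w\lambda)$ dominant (so \eqref{HPbij} holds for $w\lambda$), whence $\mathbf G(w\lambda,\cdot)$ is cyclic and analyticity propagates under the $\mathbf H$-action. For (v), your antisymmetrized candidate may vanish; the paper extracts a nonzero $\phi_\sign\in\mathscr A(A,\lambda)$ from the $W$-module structure (dual to the regular representation) and its nonzero pairing with the explicit sign element, then checks surjectivity via that same pairing.
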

\begin{rem}
Essentially the same result as (iv)  
and the duality between $P_{\mathbf H}(\CC_\triv,\bar\lambda)$ and $P_{\mathbf H}(\CC_\sign,-\lambda)$
are stated in \cite {Ch2}.
\end{rem}
\begin{proof}[Proof of {\normalfont Theorem \ref{thm:modstr}}]
First, the identification $S(\Liea_\CC)\simarrow P_{\mathbf H}(\CC_\triv);\,
f\mapsto f\otimes v_\triv$ induces the identification
\[
S(\Liea_\CC) / \sum_{\Delta\in S(\Liea_\CC)^W}
S(\Liea_\CC)(\Delta-\Delta(\bar\lambda))
\simarrow
P_{\mathbf H}(\CC_\triv,\bar\lambda)
\]
as $S(\Liea_\CC)$-modules.
The left-hand side is isomorphic to 
the space $H_W(\Liea_\CC)$ of $W$-harmonic polynomials on $\Liea^*$
as a $\CC$-linear space
since
\[
S(\Liea_\CC)
=H_W(\Liea_\CC) \otimes S(\Liea_\CC)^W
=H_W(\Liea_\CC)\oplus \sum_{\Delta\in S(\Liea_\CC)^W}
S(\Liea_\CC)(\Delta-\Delta(\bar\lambda)).
\]
But since we also have the decomposition
\[
S(\Liea_\CC)
=H_W(\Liea_\CC) \oplus S(\Liea_\CC)\bigl(S(\Liea_\CC)\Liea_\CC\bigr)^W,
\]
which is compatible with the decomposition to the homogeneous parts,
it holds that for $d=0,1,2,\ldots$
\[
\begin{aligned}
\mathcal F^d P_{\mathbf H}(\CC_\triv,\bar\lambda)&:=\bigl\{f\otimes v_\triv\in P_{\mathbf H}(\CC_\triv,\bar\lambda) ;\,f\in S(\Liea_\CC)\text{ with }\deg f\le d\bigr \}\\
&=
\bigl\{f\otimes v_\triv\in P_{\mathbf H}(\CC_\triv,\bar\lambda) ;\,f\in H_W(\Liea_\CC)\text{ with }\deg f\le d\bigr \}.
\end{aligned}
\]
Using these subspaces as a filtered $W$-module structure of $P_{\mathbf H}(\CC_\triv,\bar\lambda)$,
we get the $W$-module isomorphisms
\[
P_{\mathbf H}(\CC_\triv,\bar\lambda)\simeq
\gr_{\mathcal F}P_{\mathbf H}(\CC_\triv,\bar\lambda)
\simeq H_W(\Liea_\CC) \simeq \CC W.
\]
Likewise $P_{\mathbf H}(\CC_\sign,-\lambda)\simeq
\CC W\otimes \CC_\sign\simeq \CC W$ as $W$-modules.

Secondly, we assert that $\CC\prod_{\alpha\in R_1^+}(\alpha^\vee+\mathbf m_1(\alpha))\otimes v_\triv\subset P_{\mathbf H}(\CC_\triv,\bar\lambda)$ is a unique $W$-submodule
isomorphic to $\CC_\sign$ and that 
\[
P_{\mathbf H}(\CC_\triv,\bar\lambda)
=
\mathcal F^{\,|R_1^+|-1} P_{\mathbf H}(\CC_\triv,\bar\lambda)
\ \oplus\
\CC\prod_{\alpha\in R_1^+}(\alpha^\vee+\mathbf m_1(\alpha))\otimes v_\triv
\]
is the direct sum decomposition as a $W$-module.
Indeed, since $\prod_{\alpha\in R_1^+} \alpha^\vee \in H_W(\Liea_\CC)$,
\[
\prod_{\alpha\in R_1^+}(\alpha^\vee+\mathbf m_1(\alpha))\otimes v_\triv
\equiv
\prod_{\alpha\in R_1^+} \alpha^\vee \otimes v_\triv
\not\equiv0
\pmod{\mathcal F^{\,|R_1^+|-1} P_{\mathbf H}(\CC_\triv,\bar\lambda)}.
\]
Moreover for any $\beta\in\Pi$
we have $\prod_{\alpha\in R_1^+\setminus\{\beta\}}(\alpha^\vee+\mathbf m_1(\alpha))
\in S(\Liea_\CC)^{s_\beta}=S(\Liea_\CC^{s_\beta})\cdot\CC[(\beta^\vee)^2]$
where $\Liea_\CC^{s_\beta}=\{\xi\in\Liea_\CC;\,\beta(\xi)=0\}$.
By using \eqref{eq:Hrel}
one can check
$S(\Liea_\CC)^{s_\beta}$ commutes with
$s_\beta$ also in $\mathbf H$.
Hence using \eqref{eq:Hrel} again
we calculate
\begin{align*}
s_\beta \prod_{\alpha\in R_1^+}(\alpha^\vee+\mathbf m_1(\alpha)) \otimes v_\triv
&=
\Bigl(\prod_{\alpha\in R_1^+\setminus\{\beta\}}(\alpha^\vee+\mathbf m_1(\alpha))\Bigr)
s_\beta\, (\beta^\vee+\mathbf m_1(\beta))
\otimes v_\triv\\
&=
\Bigl(\prod_{\alpha\in R_1^+\setminus\{\beta\}}(\alpha^\vee+\mathbf m_1(\alpha))\Bigr)
(-\beta^\vee s_\beta-\mathbf m_1(\beta)(2-s_\beta) )
\otimes v_\triv\\
&=-\prod_{\alpha\in R_1^+}(\alpha^\vee+\mathbf m_1(\alpha)) \otimes v_\triv.
\end{align*}
This shows $\CC\prod_{\alpha\in R_1^+}(\alpha^\vee+\mathbf m_1(\alpha))\otimes v_\triv \simeq \CC_\sign$.
The other assertions are obvious.

Thirdly, let us prove any non-zero
$\mathbf H$-submodule of $P_{\mathbf H}(\CC_\triv,\bar\lambda)$
contains $\prod_{\alpha\in R_1^+}(\alpha^\vee+\mathbf m_1(\alpha))\otimes v_\triv$.
For this purpose take any non-zero $f\in H_W(\Liea_\CC)$.
Let $f'$ denote the highest homogeneous part of $f$.
By the theory of $W$-harmonic polynomials there exists a homogeneous $g \in S(\Liea_\CC)$ such that
$\der(g)\der(f')\prod_{\alpha\in R^+_1}\alpha=1$.
For such $g$ we easily see
\[
\sum_{w\in W}(\sgn w)g(w\,\cdot)f(w\,\cdot)=
\sum_{w\in W}(\sgn w)g(w\,\cdot)f'(w\,\cdot)=
c \prod_{\alpha_\in R^+_1}\alpha^\vee
\quad\text{with }c\ne0.
\]
Hence in $P_{\mathbf H}(\CC_\triv,\bar\lambda)$ it holds that
\[
\sum_{w\in W}(\sgn w)g(w\,\cdot)\,w^{-1}(f\otimes v_\triv)
\equiv
c\prod_{\alpha_\in R^+_1}\alpha^\vee\otimes v_\triv
\not\equiv0
\pmod{\mathcal F^{\,|R_1^+|-1} P_{\mathbf H}(\CC_\triv,\bar\lambda)}.
\]
This shows $\mathbf H (f\otimes v_\triv) \not\subset \mathcal F^{\,|R_1^+|-1} P_{\mathbf H}(\CC_\triv,\bar\lambda)$,
which, combined with the previous argument, implies
$\prod_{\alpha\in R_1^+}(\alpha^\vee+\mathbf m_1(\alpha))\otimes v_\triv
\in
\mathbf H (f\otimes v_\triv)$.

Fourthly, it is clear from \eqref{eq:sesquiHT2} that
$(\cdot,\cdot)^{\mathbf H}_{\mathscr T}$ induces
an invariant sesquilinear form on
$\mathscr A(A,\lambda)\times P_{\mathbf H}(\CC_\triv,\bar\lambda)$
(we use the same symbol $(\cdot,\cdot)^{\mathbf H}_{\mathscr T}$ for this form).
We assert this is non-degenerate.
Indeed, if $\varphi\in \mathscr A(A,\lambda)$ is non-zero
then $\mathscr T(S(\Liea_\CC))\varphi=\mathscr T(H_W(\Liea_\CC))\varphi$
has finite dimension
and is annihilated by $\mathscr T(\Delta)-\Delta(\lambda)$
for any $\Delta\in S(\Liea_\CC)^W$.
Hence there exist some $f\in S(\Liea_\CC)$ and $w\in W$ such that
$\mathscr T(f)\varphi\ne 0$ and
\[
\mathscr T(\xi)\mathscr T(f)\varphi=(w\lambda)(\xi)\mathscr T(f)\varphi
\quad\text{for any }\xi\in\Liea_\CC.
\]
By Lemma \ref{lem:Glem}
$\mathscr T(f)\varphi$ is 
a non-zero multiple of $\mathbf G(w\lambda,x)$.
Thus it holds that
\[
\bigl(\varphi,\overline{f(\bar\cdot)} \otimes v_\triv\bigr)^{\mathbf H}_{\mathscr T}
=
\mathscr T(f)\varphi(1)\ne 0.
\]
Conversely, if $D \in P_{\mathbf H}(\CC_\triv,\bar\lambda)$ is non-zero
then there exists some $h\in\mathbf H$ such that
\[
h D=\prod_{\alpha\in R_1^+}(\alpha^\vee+\mathbf m_1(\alpha))\otimes v_\triv.
\]
Since
for any $w\in W$
\begin{equation}\label{eq:Gsgn}
\biggl(
\mathbf G(w\lambda,x),
\prod_{\alpha\in R_1^+}(\alpha^\vee+\mathbf m_1(\alpha))\otimes v_\triv
\biggr)^{\!\mathbf H}_{\!\mathscr T}
=\prod_{\alpha\in R_1^+}((w\lambda)(\alpha^\vee)+\mathbf m_1(\alpha))
\end{equation}
and this is non-zero for a suitable choice of $w\in W$,
we get for such $w$
\[
\bigl(
h^\star\,\mathbf G(w\lambda,x), D
\bigr)^{\!\mathbf H}_{\!\mathscr T}
=
\biggl(
\mathbf G(w\lambda,x),
\prod_{\alpha\in R_1^+}(\alpha^\vee+\mathbf m_1(\alpha))\otimes v_\triv
\biggr)^{\!\mathbf H}_{\!\mathscr T}
\ne 0.
\]
This completes the proof of (ii) and hence (iii).

Fifthly, since $\mathbf H=\theta_{\mathbf H}(\CC W S(\Liea_\CC))=\CC W\theta_{\mathbf H}(S(\Liea_\CC))$,
we have
\[
\mathscr T(\theta_{\mathbf H}\mathbf H)\,\mathbf G(\lambda,x)
=\CC W\mathscr T(S(\Liea_\CC))\,\mathbf G(\lambda,x)
=\CC W\, \mathbf G(\lambda,x).
\]
Hence, $\mathbf G(\lambda,x)$ spans $\mathscr A(A,\lambda)$
if and only if 
the orthogonal complement of $\CC W\, \mathbf G(\lambda,x)$ with respect to the sesquilinear form
does not contain $\prod_{\alpha\in R_1^+}(\alpha^\vee+\mathbf m_1(\alpha))\otimes v_\triv$,
if and only if
$t\,\mathbf G(\lambda,x)$ and $\prod_{\alpha\in R_1^+}(\alpha^\vee+\mathbf m_1(\alpha))\otimes v_\triv$
are not orthogonal for some $t\in W$,
if and only if 
$\mathbf G(\lambda,x)$ and $\prod_{\alpha\in R_1^+}(\alpha^\vee+\mathbf m_1(\alpha))\otimes v_\triv$
are not orthogonal,
if and only if \eqref{eq:Gsgn} with $w=1$ is non-zero,
if and only if \eqref{HPbij} is satisfied.
Thus (iv) is proved.
If we choose $w\in W$ so that \eqref{eq:Gsgn} is non-zero then
$\mathscr A(A,\lambda)=\mathscr A(A,w\lambda)$ is spanned by the analytic function $\mathbf G(w\lambda,x)$.
This implies (i).

Finally it follows from the non-degeneracy of the sesquilinear form that
there exists a non-zero $\phi_\sign \in \mathscr A(A,\lambda)$
such that $\CC \phi_\sign\simeq \CC_\sign$ as a $W$-module.
Such $\phi_\sign$ is unique up to a non-zero scalar and
is not orthogonal to $\prod_{\alpha\in R_1^+}(\alpha^\vee+\mathbf m_1(\alpha))\otimes v_\triv$.
Now any $\Delta\in S(\Liea_\CC)^W$ acts on $\mathscr A(A,\lambda)$
by
\[
\mathscr T(\theta_{\mathbf H}\Delta)=\mathscr T(w_0\, \Delta(-w_0\cdot)\,w_0)
=\mathscr T(w_0\, \Delta(-\cdot)\,w_0)
=\mathscr T(\Delta(-\cdot))=\Delta(-\lambda).
\]
Hence by the obvious universal property of $P_{\mathbf H}(\CC_\sign,-\lambda)$
there exists a unique $\mathbf H$-homomorphism $P_{\mathbf H}(\CC_\sign,-\lambda)\to\mathscr A(A,\lambda)$
such that $1\otimes v_\sign\mapsto\phi_\sign$.
This is surjective since the orthogonal compliment of $\mathscr T(\theta_{\mathbf H}\mathbf H)\,\phi_\sign$
does not contain $\prod_{\alpha\in R_1^+}(\alpha^\vee+\mathbf m_1(\alpha))\otimes v_\triv$.
The injectivity is clear from the dimension argument.
Thus we get (v).
\end{proof}
As we saw in Example \ref{exmp:AA},
$\bigl(\mathscr A(G/K,\lambda)_\Kf, \mathscr A(A,\lambda)\bigr)$
is a radial pair.
So each $\mathbf H$-submodule of $\mathscr A(A,\lambda)$ is lifted to
a $(\Lieg_\CC,K)$-submodule of $\mathscr A(G/K,\lambda)_\Kf$ by
the correspondence $\Ximin_0$ introduced in Definition \ref{defn:Ximin}.
\begin{thm}\label{thm:Xpair}
$\Ximin_0(X_{\mathbf H}(\lambda))=X_G(\lambda)$.
In particular $\bigl(X_G(\lambda), X_{\mathbf H}(\lambda)\bigr)$ is a radial pair.
\end{thm}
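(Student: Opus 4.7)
The second assertion---that $\bigl(X_G(\lambda),X_{\mathbf H}(\lambda)\bigr)$ is a radial pair---will follow immediately from Theorem \ref{thm:multcor} (iii) applied to the radial pair $\mathcal N:=\bigl(\mathscr A(G/K,\lambda)_\Kf,\mathscr A(A,\lambda)\bigr)$ (cf.\ Example \ref{exmp:AA}), once the equality $\Ximin_0(X_{\mathbf H}(\lambda))=X_G(\lambda)$ is established. My plan is to derive the two opposite inclusions from two applications of Proposition \ref{prop:class1}: one to $\mathcal N$ itself, and one to the sub-radial-pair produced by Theorem \ref{thm:multcor} (iii). Throughout, I shall use that for an $\mathbf H$-submodule $\mathscr X\subset\mathscr A(A,\lambda)$ the constructions $\Ximin_0(\mathscr X)$ and $\Ximin_{\mathcal N}(\mathscr X)$ coincide, since the $K$-typic lifts used to generate them agree by Corollary \ref{cor:spec-cor}.

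For $\Ximin_0(X_{\mathbf H}(\lambda))\subset X_G(\lambda)$: The element $\phi_W:=\gamma_0(\phi_\lambda)$ is $W$-invariant, so I apply Proposition \ref{prop:class1} to $\mathcal N$ to produce a $K$-invariant lift $\phi_K\in\mathscr A(G/K,\lambda)_\Kf$ characterized by $\gamma_0(\phi_K)=\gamma_0(\phi_\lambda)$. The uniqueness built into \eqref{eq:spherical}---that is, the one-dimensionality of $\mathscr A(G/K,\lambda)^{\ell(K)}$---will force $\phi_K=\phi_\lambda$, and the proposition will further furnish a morphism
\[
\mathcal I=(\mathcal I_G,\mathcal I_{\mathbf H}):\bigl(P_G(\CC_\triv),P_{\mathbf H}(\CC_\triv)\bigr)\longrightarrow\mathcal N
\]
in $\Cwrad$ with $\mathcal I_G(D\otimes v_\triv)=\ell(D)\phi_\lambda$ and $\mathcal I_{\mathbf H}(h\otimes v_\triv)=\mathscr T(\theta_{\mathbf H}h)\gamma_0(\phi_\lambda)$. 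The irreducibility of $X_G(\lambda)$ and $X_{\mathbf H}(\lambda)$ (Proposition \ref{prop:modstrAGl} (iii) and Theorem \ref{thm:modstr} (iii)) then gives $\Image\mathcal I_G=U(\Lieg_\CC)\phi_\lambda=X_G(\lambda)$ and $\Image\mathcal I_{\mathbf H}=\mathbf H\gamma_0(\phi_\lambda)=X_{\mathbf H}(\lambda)$. Feeding $\mathcal I$ into \eqref{eq:R3MWimage} of Theorem \ref{thm:liftincl} will then yield $\Ximin_0(X_{\mathbf H}(\lambda))\subset\Image\mathcal I_G=X_G(\lambda)$.

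For $X_G(\lambda)\subset\Ximin_0(X_{\mathbf H}(\lambda))$: By Theorem \ref{thm:multcor} (iii) the pair $\mathcal M':=\bigl(\Ximin_0(X_{\mathbf H}(\lambda)),X_{\mathbf H}(\lambda)\bigr)$ is itself a radial pair and a subobject of $\mathcal N$ in $\Crad$, so $\tilde\Gamma^{\CC_\triv}_{\mathcal M'}$ is just the restriction of $\tilde\Gamma^{\CC_\triv}_{\mathcal N}$. A second application of Proposition \ref{prop:class1}---now to $\mathcal M'$ with the same $\phi_W=\gamma_0(\phi_\lambda)$---will produce a $K$-invariant $\phi_K'\in\Ximin_0(X_{\mathbf H}(\lambda))$ with $\gamma_0(\phi_K')=\gamma_0(\phi_\lambda)$, which the same one-dimensionality argument forces to equal $\phi_\lambda$. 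Hence $\phi_\lambda\in\Ximin_0(X_{\mathbf H}(\lambda))$, and $(\Lieg_\CC,K)$-stability of the latter will yield $X_G(\lambda)=U(\Lieg_\CC)\phi_\lambda\subset\Ximin_0(X_{\mathbf H}(\lambda))$.

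The only non-categorical input anywhere in the argument is the identification $\phi_K=\phi_\lambda=\phi_K'$, which rests on the classical one-dimensionality of $\mathscr A(G/K,\lambda)^{\ell(K)}$ already implicit in \eqref{eq:spherical}; everything else is a formal assembly of results from \S\ref{sec:cor}. I therefore expect no serious obstacle beyond carefully matching the two applications of Proposition \ref{prop:class1} so that the bijections $\tilde\Gamma^{\CC_\triv}$ used in $\mathcal N$ and in $\mathcal M'$ really do single out the same $K$-invariant element.
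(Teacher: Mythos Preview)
Your argument is correct and begins exactly as the paper does: apply Proposition~\ref{prop:class1} to $\mathcal N=\bigl(\mathscr A(G/K,\lambda)_\Kf,\mathscr A(A,\lambda)\bigr)$ with $\phi_W=\gamma_0(\phi_\lambda)$, identify $\phi_K=\phi_\lambda$, and obtain the morphism $\mathcal I:\bigl(P_G(\CC_\triv),P_{\mathbf H}(\CC_\triv)\bigr)\to\mathcal N$ with $\Image\mathcal I=(X_G(\lambda),X_{\mathbf H}(\lambda))$.

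Where you diverge from the paper is in the final step. You invoke only the inclusion \eqref{eq:R3MWimage} to get $\Ximin_0(X_{\mathbf H}(\lambda))\subset X_G(\lambda)$, and then mount a second application of Proposition~\ref{prop:class1} to the subpair $\mathcal M'$ for the reverse inclusion. This works, but it is more than needed. The paper instead uses the full \emph{equality} in \eqref{eq:R3Mimage},
\[
\Ximin_{\mathcal N}(\mathcal I_{\mathbf H}(\mathscr X))=\mathcal I_G(\Ximin_{\mathcal M}(\mathscr X)),
\]
with $\mathscr X=P_{\mathbf H}(\CC_\triv)$, together with the observation that $\Ximin(P_{\mathbf H}(\CC_\triv))=P_G(\CC_\triv)$ (the trivial lift $\CC_\triv\hookrightarrow P_G(\CC_\triv)$ already generates everything). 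This gives
\[
\Ximin_0(X_{\mathbf H}(\lambda))=\mathcal I_G\bigl(\Ximin(P_{\mathbf H}(\CC_\triv))\bigr)=\mathcal I_G(P_G(\CC_\triv))=X_G(\lambda)
\]
in one line, and your second half of the argument becomes unnecessary. The upshot: same starting point, but you are discarding information (the equality) that is already in hand.
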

\begin{proof}
We apply Proposition \ref{prop:class1} to the case where
$\mathcal M=\bigl(\mathscr A(G/K,\lambda)_\Kf, \mathscr A(A,\lambda)\bigr)$
and $\phi_W=\gamma_0(\phi_\lambda)$.
Thus $\phi_K=\phi_\lambda$ and there exists a morphism
\[\mathcal I=(\mathcal I_G,\mathcal I_{\mathbf H}) :
\bigl(P_G(\CC_\triv), P_{\mathbf H}(\CC_\triv)\bigr) \to \bigl(\mathscr A(G/K,\lambda)_\Kf, \mathscr A(A,\lambda)\bigr)\]
in $\Crad$
such that $\mathcal I_G(1\otimes v_\triv)=\phi_\lambda$
and $\mathcal I_{\mathbf H}(1\otimes v_\triv)=\gamma_0(\phi_\lambda)$.
Here clearly $\Image\mathcal I=\bigl(X_G(\lambda), X_{\mathbf H}(\lambda)\bigr)$ and $\Ximin(P_{\mathbf H}(\CC_\triv))=P_G(\CC_\triv)$.
Hence by \eqref{eq:R3Mimage} we have 
\[
\Ximin_0(X_{\mathbf H}(\lambda))
=\Ximin_0(\mathcal I_{\mathbf H}(P_{\mathbf H}(\CC_\triv)))
=\mathcal I_G(\Ximin(P_{\mathbf H}(\CC_\triv)))
=\mathcal I_G(P_G(\CC_\triv))=X_G(\lambda).\qedhere
\]
\end{proof}

\section{Poisson transforms}\label{sec:Poisson}
Suppose $\lambda\in\Liea_\CC^*$.
The Poisson transform $\mathcal P_G^\lambda$ is
the $G$-homomorphism of $B_G(\lambda)$ into $\mathscr A(G/K,\lambda)$ defined by
\begin{equation*}
\mathcal P_G^\lambda:\,
B_G(\lambda) \ni F(g)\longmapsto \int_KF(xk)\,dk \in \mathscr A(G/K,\lambda).
\end{equation*}
To construct an analogous $\mathbf H$-homomorphism of
$B_{\mathbf H}(\lambda)$ into $\mathscr A(A,\lambda)$
we recall
another description of $\mathcal P_G^\lambda$ according to \cite[Ch.\,II, \S3, No.\,4]{Hel5}.
For $g\in G$ let $A(g)$ be the unique element of $\Liea$ such that
$g=n\,\exp{A(g)}\,k$
for some $n\in N$ and $k\in K$.
Then the function
\[
G\times G \ni (g,x) \longmapsto e^{(\bar\lambda+\rho)(A(g^{-1}x))}\in\CC
\]
belongs 
to $B_G(-\bar\lambda)$ as a function in $g$
and
to $\mathscr A(G/K,\bar\lambda)$ as a function in $x$.
If $\threeset{\lambda}{\cdot,\cdot}{G}{-\bar\lambda}$
is the invariant sesquilinear form of Definition \ref{defn:BG}
then for $F(g)\in B_G(\lambda)$
\begin{align*}
\mathcal P_G^\lambda F(x)
&=\int_K F(xk)\,dk=\int_K F(xk)\,\overline{e^{(\bar\lambda+\rho)(A(k^{-1}))}}\,dk\\
&=\bthreeset{\lambda}{F(xg), e^{(\bar\lambda+\rho)(A(g^{-1}))} }{G}{-\bar\lambda}
=\bthreeset{\lambda}{F(g), e^{(\bar\lambda+\rho)(A((x^{-1}g)^{-1}))} }{G}{-\bar\lambda}\\
&=\bthreeset{\lambda}{F(g), e^{(\bar\lambda+\rho)(A(g^{-1}x))} }{G}{-\bar\lambda}
=\int_K F(k)\,\overline{e^{(\bar\lambda+\rho)(A(k^{-1}x))}}\,dk\\
&=\int_K F(k)\,e^{(\lambda+\rho)(A(k^{-1}x))}\,dk.
\end{align*}
Now the function
\[
\mathbf H \times A \ni (h,x)\longmapsto
\mathscr T(\theta_{\mathbf H}h)\,\mathbf G(\bar\lambda,x) \in \CC
\]
belongs to $B_{\mathbf H}(-\bar\lambda)$ as a function in $h$
and to $\mathscr A(A, \bar\lambda)$ as a function in $x$.
If $\threeset{\lambda}{\cdot,\cdot}{\mathbf H}{-\bar\lambda}$
is the invariant sesquilinear form of Definition \ref{defn:BH}
then for $F(h) \in B_{\mathbf H}(\lambda)$
\begin{equation}\label{eq:HPoisson}
\begin{aligned}
\bthreeset{\lambda}{F(h), \mathscr T(\theta_{\mathbf H}h)\,\mathbf G(\bar\lambda,x) }{{\mathbf H}}{-\bar\lambda}
&=\frac1{|W|}\sum_{w\in W} F(w)\,\overline{\mathscr T(\theta_{\mathbf H}w)\,\mathbf G(\bar\lambda,x) }\\
&=\frac1{|W|}\sum_{w\in W} F(w)\,\mathbf G(\lambda,w^{-1}x).
\end{aligned}\end{equation}
Here we used the relation
$\overline{\mathbf G(\bar\lambda,x)}=\mathbf G(\lambda,x)$,
which is obvious from \eqref{eq:Gfunc}.
Since the final expression of \eqref{eq:HPoisson}
belongs to $\mathscr A(A, \lambda)$,
we define the Poisson transform for $B_{\mathbf H}(\lambda)$ by
\begin{equation}\label{eq:HPoisson2}
\mathcal P_{\mathbf H}^\lambda:\,
B_{\mathbf H}(\lambda)\ni F(h)
\longmapsto \frac1{|W|}\sum_{w\in W} F(w)\,\mathbf G(\lambda,w^{-1}x)
\in \mathscr A(A, \lambda).
\end{equation}
This is an $\mathbf H$-homomorphism since for any $a\in\mathbf H$
\begin{align*}
\bthreeset{\lambda}{F(\trans a\, h), \mathscr T(\theta_{\mathbf H}h)\,\mathbf G(\bar\lambda,x) }{{\mathbf H}}{-\bar\lambda}
&=
\bthreeset{\lambda}{F(h), \mathscr T(\theta_{\mathbf H}(\trans(a^\star) h))\,\mathbf G(\bar\lambda,x) }{{\mathbf H}}{-\bar\lambda}\\
&=\bthreeset{\lambda}{F(h), \mathscr T\bigl(\overline{\theta_{\mathbf H} a}\bigr)\mathscr T(\theta_{\mathbf H}h)\,\mathbf G(\bar\lambda,x) }{{\mathbf H}}{-\bar\lambda}\\
&=\mathscr T(\theta_{\mathbf H} a)\,
\bthreeset{\lambda}{F(h), \mathscr T(\theta_{\mathbf H}h)\,\mathbf G(\bar\lambda,x) }{{\mathbf H}}{-\bar\lambda}.
\end{align*}
\begin{prop}\label{prop:PoiBij}
The Poisson transform $\mathcal P_{\mathbf H}^\lambda$ is bijective
if and only if \eqref{HPbij} is satisfied.
This condition is rewritten as
\[
\begin{cases}
\lambda(\alpha^\vee) + \dim \Lieg_{\alpha} \ne 0& \text{for }\alpha\in \Sigma^+\setminus (2\Sigma^+\cup\frac12\Sigma^+) ,\\
\lambda(\alpha^\vee)  +\dim \Lieg_{\alpha}+ 2\dim \Lieg_{2\alpha}\ne0 & \text{for }\alpha\in \Sigma^+\cap \frac12\Sigma^+.
\end{cases}
\]
\end{prop}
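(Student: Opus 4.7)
The plan is a dimension count plus an identification of $\Image\mathcal P_{\mathbf H}^\lambda$ with the cyclic $\mathbf H$-submodule generated by $\mathbf G(\lambda,x)$. Both $B_{\mathbf H}(\lambda)$ and $\mathscr A(A,\lambda)$ have dimension $|W|$: the former via the restriction $B_{\mathbf H}(\lambda)\simeq(\CC W)^*$, as noted at the start of the proof of Proposition~\ref{prop:sesquiHinv}, and the latter by the non-degenerate invariant pairing with $P_{\mathbf H}(\CC_\triv,\bar\lambda)\simeq\CC W$ supplied by Theorem~\ref{thm:modstr}~(ii). Hence bijectivity of $\mathcal P_{\mathbf H}^\lambda$ is equivalent to surjectivity.

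Next I would read off from \eqref{eq:HPoisson2} that $\Image\mathcal P_{\mathbf H}^\lambda$ is the $\CC$-span of $\{\mathbf G(\lambda,w^{-1}x):w\in W\}$. Because $\theta_{\mathbf H}$ fixes $W$ pointwise, a group element $w\in W\subset\mathbf H$ acts on $\varphi\in C^\infty(A)$ by $\mathscr T(\theta_{\mathbf H}w)\varphi=\mathscr T(w)\varphi=\varphi(w^{-1}\cdot)$, so $\mathbf G(\lambda,w^{-1}x)=w\cdot\mathbf G(\lambda,x)$ inside the $\mathbf H$-module $C^\infty(A)$. Since $\mathcal P_{\mathbf H}^\lambda$ is $\mathbf H$-linear, $\Image\mathcal P_{\mathbf H}^\lambda$ is an $\mathbf H$-submodule of $\mathscr A(A,\lambda)$ containing $\mathbf G(\lambda,x)$ (take $F_0\in(\CC W)^*$ with $F_0(w)=|W|\,\delta_{w,1}$); combining both inclusions gives $\Image\mathcal P_{\mathbf H}^\lambda=\mathbf H\cdot\mathbf G(\lambda,x)$. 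Thus $\mathcal P_{\mathbf H}^\lambda$ is surjective exactly when $\mathbf G(\lambda,x)$ generates $\mathscr A(A,\lambda)$ as an $\mathbf H$-module, which by Theorem~\ref{thm:modstr}~(iv) is equivalent to \eqref{HPbij}.

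To rewrite \eqref{HPbij} in the claimed form, parametrize $R_1^+=\{2\beta:\beta\in\Sigma^+\setminus 2\Sigma^+\}$. For $\alpha=2\beta$ one has $(2\beta)^\vee=\beta^\vee/2$, hence $\lambda(\alpha^\vee)=\lambda(\beta^\vee)/2$, and the condition $2\alpha\in R$ reduces to $2\beta\in\Sigma$, i.e.\ $\beta\in\tfrac12\Sigma^+$. Substituting $\Lieg_{\alpha/2}=\Lieg_\beta$ and $\Lieg_\alpha=\Lieg_{2\beta}$ into the definition of $\mathbf m_1(\alpha)$, multiplying the inequality $\lambda(\alpha^\vee)\ne-\mathbf m_1(\alpha)$ by $2$, and renaming $\beta$ back to $\alpha$, one obtains the first displayed inequality for $\alpha\in\Sigma^+\setminus(2\Sigma^+\cup\tfrac12\Sigma^+)$ and the second for $\alpha\in\Sigma^+\cap\tfrac12\Sigma^+$ (these two cases partition $\Sigma^+\setminus 2\Sigma^+$, since no chain $\alpha/2,\alpha,2\alpha$ lies in $\Sigma$). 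The one genuinely delicate point is the identification $\Image\mathcal P_{\mathbf H}^\lambda=\mathbf H\cdot\mathbf G(\lambda,x)$, which hinges on $\theta_{\mathbf H}|_W=\id$ to reconcile the $\mathbf H$-action on $C^\infty(A)$ with the usual $W$-action; the dimension count and the root-data bookkeeping are routine.
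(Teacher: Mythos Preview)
Your proof is correct and follows essentially the same route as the paper: reduce bijectivity to surjectivity by the dimension count $\dim B_{\mathbf H}(\lambda)=\dim\mathscr A(A,\lambda)=|W|$, identify $\Image\mathcal P_{\mathbf H}^\lambda$ with the $\mathbf H$-submodule generated by $\mathbf G(\lambda,x)$, and invoke Theorem~\ref{thm:modstr}~(iv). The paper obtains the image identification via the chain $\sum_w\CC\,\mathbf G(\lambda,w^{-1}x)=\mathscr T(WS(\Liea_\CC))\mathbf G(\lambda,x)=\mathscr T(\mathbf H)\mathbf G(\lambda,x)=\mathscr T(\theta_{\mathbf H}\mathbf H)\mathbf G(\lambda,x)$ rather than your pair of inclusions, and it leaves the root-data rewriting of \eqref{HPbij} to the reader, but these are cosmetic differences.
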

\begin{rem}\label{rem:PoissonBij}
(i)
The surjectivity of $\mathcal P_{\mathbf H}^\lambda$
is equivalent to its injectivity
since
$\dim B_{\mathbf H}(\lambda)=\dim \mathscr A(A, \lambda) =|W|$.

\noindent{\normalfont(ii)}
This condition is much weaker than the following well-known condition for
the bijectivity of $\mathcal P_G^\lambda : B_{G}(\lambda)_\Kf\to \mathscr A(G/K, \lambda)_\Kf$ (cf.~\cite{Hel2}):
\[
\begin{cases}
\lambda(\alpha^\vee) + \dim \Lieg_{\alpha} \notin \{0,-2,-4,\ldots\}& \text{for }\alpha\in \Sigma^+\setminus (2\Sigma^+\cup\frac12\Sigma^+) ,\\
\lambda(\alpha^\vee)  +\dim \Lieg_{\alpha}
\notin \{-2,-6,-10,\ldots\} & \text{for }\alpha\in \Sigma^+\cap \frac12\Sigma^+,\\
\lambda(\alpha^\vee)  +\dim \Lieg_{\alpha}+ 2\dim \Lieg_{2\alpha}
\notin \{0,-4,-8,\ldots\} & \text{for }\alpha\in \Sigma^+\cap \frac12\Sigma^+.
\end{cases}
\]
\end{rem}
\begin{proof}[Proof of {\normalfont Proposition \ref{prop:PoiBij}}]
By \eqref{eq:HPoisson2} we have
\begin{align*}
\Image \mathcal P_{\mathbf H}^\lambda
&=
\sum_{w\in W} \CC\, \mathbf G(\lambda,w^{-1}x)
= \sum_{w\in W} \bigl(\mathscr T(S(\Liea_\CC))\, \mathbf G\bigr)(\lambda,w^{-1}x)\\
&= \mathscr T(W S(\Liea_\CC) )\, \mathbf G(\lambda,x)
= \mathscr T(\mathbf H)\, \mathbf G(\lambda,x)= \mathscr T(\theta_\mathbf H \mathbf H )\, \mathbf G(\lambda,x).
\end{align*}
Hence the proposition follows from Theorem \ref{thm:modstr} (iv).
\end{proof}

The following is the main result of this section:
\begin{thm}\label{thm:Poisson}
The pair of\/ $\mathcal P_G^{\lambda}: B_G(\lambda)_\Kf\to \mathscr A(G/K,\lambda)_\Kf$
and\/ $\mathcal P_{\mathbf H}^\lambda: B_{\mathbf H}(\lambda)\to \mathscr A(A,\lambda)$
is a morphism of radial pairs:
\[
\mathcal P^{\lambda}=
(\mathcal P_G^{\lambda},\mathcal P_{\mathbf H}^\lambda):\,\bigl(B_G(\lambda)_\Kf, B_{\mathbf H}(\lambda)\bigr)
\longrightarrow\bigl(\mathscr A(G/K,\lambda)_\Kf,\mathscr A(A,\lambda)\bigr).
\]
That is, if\/ $V\in\Km$ then

\noindent{\normalfont (i)}
for any $\Phi\in\Hom_K(V,B_G(\lambda))$
the two maps
\begin{gather*}
V^M_\single \hookrightarrow V
\xrightarrow{\Phi} B_G(\lambda)
\xrightarrow{\mathcal P_G^\lambda}
\mathscr A(G/K,\lambda)
\xrightarrow{\gamma_0} C^\infty(A),\\
V^M_\single \hookrightarrow V
\xrightarrow{\Phi} B_G(\lambda)
\xrightarrow{\gamma_{B(\lambda)}}
B_{\mathbf H}(\lambda)
\xrightarrow{\mathcal P_{\mathbf H}^\lambda}
\mathscr A(A,\lambda)
\hookrightarrow C^\infty(A)
\end{gather*}
coincide and

\noindent{\normalfont (ii)}
for any $\Phi\in\Hom_K(V,B_G(\lambda))$ such that
\[
\Phi[v](1)=0\quad\forall v\in V^M_\double
\]
it holds that
\[
\mathcal P_G^\lambda(\Phi[v])(x)=0
\quad\forall v\in V^M_\double\text{ and\/ }\forall x\in A.
\]
These properties can be more explicitly stated as follows:

\noindent{\normalfont (i')}
For any $\Phi\in\Hom_K(V,C^\infty(K/M))$ it holds that
\[
\int_K\Phi[v](k)\,e^{(\lambda+\rho)(A(k^{-1}x))}\,dk
=\frac1{|W|}
\sum_{w\in W} \Phi[v](\bar w)\,\mathbf G(\lambda,w^{-1}x)
\quad
\forall v\in V^M_\single\text{ and\/ }\forall x\in A
\]
($\bar w\in N_K(\Liea)$ is a lift of $w$) and

\noindent{\normalfont (ii')}
for any $\Phi\in\Hom_K(V,C^\infty(K/M))$ such that
\[
\Phi[v](1)=0\quad\forall v\in V^M_\double
\]
it holds that
\[
\int_K\Phi[v](k)\,e^{(\lambda+\rho)(A(k^{-1}x))}\,dk
=0
\quad
\forall v\in V^M_\double\text{ and\/ }\forall x\in A.
\]
\end{thm}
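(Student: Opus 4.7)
Since both $B(\lambda) = (B_G(\lambda)_\Kf, B_{\mathbf H}(\lambda))$ (Theorem~\ref{thm:R3B}) and $(\mathscr A(G/K,\lambda)_\Kf, \mathscr A(A,\lambda))$ (Example~\ref{exmp:AA}) are radial pairs equipped with explicit radial restrictions $\gamma_{B(\lambda)}$ and $\gamma_0$, the morphism conditions \eqref{cond:Ch1} and \eqref{cond:Ch2} of Definition~\ref{defn:CCh} applied to the pair $(\mathcal P_G^\lambda, \mathcal P_{\mathbf H}^\lambda)$ unwind, via Remark~\ref{rem:RadR} and the characterization of $\Hom^\ttt_K$ by the radial restrictions, to precisely the explicit identities (i') and (ii') stated in the theorem. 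I will therefore prove these two identities directly.

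For (i'), I plan to recast both Poisson transforms as sesquilinear pairings, as already done in the derivation of $\mathcal P_{\mathbf H}^\lambda$ in \eqref{eq:HPoisson}--\eqref{eq:HPoisson2}. Setting
\[
F_x(g) := e^{(\bar\lambda+\rho)(A(g^{-1}x))} \in B_G(-\bar\lambda),\qquad F_x'(h) := \mathscr T(\theta_{\mathbf H}h)\,\mathbf G(\bar\lambda,x) \in B_{\mathbf H}(-\bar\lambda),
\]
the two sides of (i') become $\bthreeset{\lambda}{\Phi[v], F_x}{G}{-\bar\lambda}$ and $\bthreeset{\lambda}{\gamma_{B(\lambda)}(\Phi[v]), F_x'}{{\mathbf H}}{-\bar\lambda}$. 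The key tool is Proposition~\ref{prop:Blcomp}, the compatibility of the $G$-form with the $\mathbf H$-form under the restriction map $\Gamma^V_{\Ind}$. I would apply it to basis-sums $\sum_i \bthreeset{\lambda}{\Phi[v_i], \Phi_2[v_i^\star]}{G}{-\bar\lambda}$ where $\Phi_2 \in \Hom_K(V^\star, B_G(-\bar\lambda))$ realizes the $V$-isotypic projection of the test function $F_x$; the resulting restricted identity, evaluated on $V^M_\single$, would match the $F_x'$-pairing on the $\mathbf H$-side after invoking the defining property of $\mathbf G(\lambda,\cdot)$ (Theorem~\ref{thm:G}, Lemma~\ref{lem:Glem}) and the single-petaled structure (Lemma~\ref{lem:qsp}).

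The main obstacle is that globally $\gamma_{B(-\bar\lambda)}(F_x)(w) = e^{(\bar\lambda+\rho)(w^{-1}H)}$ (a plane wave in $H$, with $x = \exp H$) differs from $F_x'(w) = \mathbf G(\bar\lambda, \exp(w^{-1}H))$ (the Opdam non-symmetric hypergeometric function); the two pairings can only agree after averaging against $\Phi[v]$ restricted to $V^M_\single$. To handle this cancellation, I would view both the LHS and RHS of (i') as $W$-homomorphisms $V^M_\single \to \mathscr A(A,\lambda)$, with the LHS landing in $\mathscr A(A,\lambda)$ thanks to Theorem~\ref{thm:radC} applied to $\mathcal P_G^\lambda \Phi[v] \in \mathscr A(G/K,\lambda)$ together with the Chevalley restriction theorem (Theorem~\ref{thm:Ch}), and then match them using the finite-dimensional structure of $\mathscr A(A,\lambda)$ from Theorem~\ref{thm:modstr}(ii) combined with the pairing framework. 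Condition (ii') follows by the symmetric mechanism: under the $\Hom^\ttt_K$-hypothesis on $\Phi$, the $W$-stability of $V^M_\double$ forces $\Phi[v](\bar w) = 0$ for $v \in V^M_\double$, whence the RHS of (i') extended to $V^M_\double$ vanishes identically; the same pairing argument applied with the roles of $V^M_\single$ and $V^M_\double$ interchanged then forces the vanishing of the LHS, which is exactly (ii').
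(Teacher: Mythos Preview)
Your instinct to use sesquilinear pairings is correct and matches the paper's strategy, but the proposal has a real gap precisely at the step you flag as the ``main obstacle.'' The restriction $\gamma_{B(-\bar\lambda)}(F_x)$ is indeed not $F'_x$, and the phrase ``match them using the finite-dimensional structure \ldots\ combined with the pairing framework'' does not name a mechanism that forces the two $W$-homomorphisms $V^M_\single \to \mathscr A(A,\lambda)$ to coincide. Proposition~\ref{prop:Blcomp} relates the $B_G \times B_G$ form to the $B_{\mathbf H} \times B_{\mathbf H}$ form, but what you actually need is a bridge from the $\mathscr A(G/K,\lambda)$-side to the $\mathscr A(A,\lambda)$-side, and that requires a \emph{different} pair of compatible forms.

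The paper's resolution is indirect. First one shows (Lemma~\ref{lem:PoissonAdj}) that $\mathcal P_G^\lambda$ and $\mathcal P_{\mathbf H}^\lambda$ are the adjoints, with respect to the forms $(\cdot,\cdot)^G_r$, $(\cdot,\cdot)^{\mathbf H}_{\mathscr T}$ of \eqref{eq:sesquiGr}, \eqref{eq:sesquiHT} on one side and $\threeset{\lambda}{\cdot,\cdot}{G}{-\bar\lambda}$, $\threeset{\lambda}{\cdot,\cdot}{\mathbf H}{-\bar\lambda}$ on the other, of the maps $\mathcal I_G^{-\bar\lambda}: P_G(\CC_\triv) \to B_G(-\bar\lambda)$ and $\mathcal I_{\mathbf H}^{-\bar\lambda}: P_{\mathbf H}(\CC_\triv) \to B_{\mathbf H}(-\bar\lambda)$. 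The pair $\mathcal I^{-\bar\lambda}$ is already a morphism of radial pairs, since it arises from Proposition~\ref{prop:class1} applied with $\phi_W = \mathbf 1^{-\bar\lambda}_{\mathbf H}$. A separate proposition establishes that $(\cdot,\cdot)^G_r$ and $(\cdot,\cdot)^{\mathbf H}_{\mathscr T}$ are compatible with restriction; this is where Theorem~\ref{thm:radD} and the star operations of \S\ref{sec:star} enter. Finally, a general categorical lemma (Lemma~\ref{lem:adjMor}) says that whenever two pairs of compatible forms satisfy the non-degeneracy conditions \eqref{eq:NGperp0}, \eqref{eq:NHperp0} and $\mathcal I^2$ is a morphism in $\CCh$, its adjoint $\mathcal I^1$ is automatically a morphism. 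Applying this with $\mathcal I^2 = \mathcal I^{-\bar\lambda}$ and $\mathcal I^1 = \mathcal P^\lambda$ gives the theorem at once. The point is that one never compares $F_x$ with $F'_x$ directly; the adjointness absorbs the mismatch.

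Your argument for (ii') has a parallel gap: observing that the $\mathbf H$-side formula vanishes on $V^M_\double$ under the $\Hom^\ttt$ hypothesis does not by itself force the $G$-side integral to vanish, because (i') is only asserted on $V^M_\single$, so there is no ``same pairing argument'' to invoke. In the paper, condition \eqref{cond:Ch2} for $\mathcal P^\lambda$ is obtained inside Lemma~\ref{lem:adjMor} from \eqref{cond:Ch2} for $\mathcal I^{-\bar\lambda}$ via the characterization \eqref{eq:22chara} of $\Hom_K^\ttt$ in terms of the pairing.
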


The proof requires some preparation.
\begin{prop}
For $\bigl(C^\infty(G/K)_\Kf,C^\infty(A)\bigr)$ and
$\bigl(P_G(\CC_\triv), P_{\mathbf H}(\CC_\triv)\bigr)$,
the pair of invariant sesquilinear forms
$(\cdot,\cdot)^G_{r}$ and $(\cdot,\cdot)^{\mathbf H}_{\mathscr T}$
is compatible with restriction in the sense of {\normalfont Definition \ref{defn:compRM}}.
\end{prop}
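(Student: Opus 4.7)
The plan is to encode the pair $(\Phi_1,\Phi_2)$ into a single composite morphism and reduce to the radial part formula of \S\ref{sec:radII}. Write $\Phi_2[v_i^\star]=D_i\otimes v_\triv$ with $D_i\in U(\Lieg_\CC)$. By Proposition~\ref{prop:starMor}, $\Phi_2^\star\in\Hom_K(\CC_\triv,P_G(V))$ sends $v_\triv$ to $\sum_{i=1}^n D_i^\star\otimes v_i\in P_G(V)$, so the composite $\Phi_1\circ\Phi_2^\star\in\Hom_{\Lieg_\CC,K}(P_G(\CC_\triv),C^\infty(G/K)_\Kf)$ sends $1\otimes v_\triv$ to the $K$-invariant function $f:=\sum_{i=1}^n\ell(D_i^\star)\Phi_1[v_i]$. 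Evaluating $f$ at the identity, the definition~\eqref{eq:sesquiGr} of $(\cdot,\cdot)^G_{r}$ yields
\[
f(1)=\sum_{i=1}^n\bigl(\Phi_1[v_i],D_i\otimes v_\triv\bigr)^{G}_{r}=\sum_{i=1}^n\bigl(\Phi_1[v_i],\Phi_2[v_i^\star]\bigr)^{G}_{r},
\]
which is the left-hand side of the claimed identity.

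Next I apply Theorem~\ref{thm:radD} to the pair $(\Phi_1,\Phi_2^\star)$ with $E=\CC_\triv$. By Theorem~\ref{thm:starGam}, $\Phi_2\in\Hom_K^\ttt(V^\star,P_G(\CC_\triv))$ if and only if $\Phi_2^\star\in\Hom_K^\oto(\CC_\triv,P_G(V))$. Hence under either of the two hypotheses of Definition~\ref{defn:compRM} on $(\Phi_1,\Phi_2)$, either case~(i) or case~(ii) of Theorem~\ref{thm:radD} applies, yielding
\[
\tilde\Gamma_0^{\CC_\triv}(\Phi_1\circ\Phi_2^\star)=\tilde\Gamma_0(\Phi_1)\circ\tilde\Gamma^{\CC_\triv}_V(\Phi_2^\star).
\]
Evaluating on $v_\triv$ and then at $1\in A$, the left-hand side becomes $\gamma_0(f)(1)=f(1)$, recovering the LHS.

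For the right-hand side I use~\eqref{eq:starGam2}, which gives $\tilde\Gamma^{\CC_\triv}_V(\Phi_2^\star)=\tilde\Gamma^{V^\star}_{\CC_\triv}(\Phi_2)^\star$. Since $\tilde\Gamma^{V^\star}_{\CC_\triv}(\Phi_2)[v_i^\star]=\gamma(D_i)\otimes v_\triv$ for $i=1,\ldots,m'$, Definition~\ref{defn:starOpH} gives
\[
\tilde\Gamma^{\CC_\triv}_V(\Phi_2^\star)[v_\triv]=\sum_{i=1}^{m'}\gamma(D_i)^\star\otimes v_i\in P_{\mathbf H}(V^M_\single).
\]
Substituting, and recalling that $h\in\mathbf H$ acts on $C^\infty(A)$ by $\mathscr T(\theta_{\mathbf H}h)$ together with $\tilde\Gamma_0^V(\Phi_1)[v_i]=\gamma_0(\Phi_1[v_i])$, the right-hand side of the radial part formula evaluated at $v_\triv$ and $1\in A$ becomes
\[
\sum_{i=1}^{m'}\bigl(\mathscr T(\theta_{\mathbf H}\gamma(D_i)^\star)\gamma_0(\Phi_1[v_i])\bigr)(1)
=\sum_{i=1}^{m'}\bigl(\tilde\Gamma_0^V(\Phi_1)[v_i],\tilde\Gamma^{V^\star}_{\CC_\triv}(\Phi_2)[v_i^\star]\bigr)^{\mathbf H}_{\mathscr T}
\]
by the definition~\eqref{eq:sesquiHT} of $(\cdot,\cdot)^{\mathbf H}_{\mathscr T}$, which is the RHS of the compatibility identity.

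The main technical point is the triple interplay between the two star operations (Definitions~\ref{defn:starOpG} and~\ref{defn:starOpH}) and the Harish-Chandra homomorphism~$\gamma$; that this is compatible is exactly the content of Theorem~\ref{thm:starGam}, so beyond careful bookkeeping no additional calculation will be required.
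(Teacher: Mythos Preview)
Your proof is correct and follows essentially the same approach as the paper: pass from $\Phi_2$ to $\Phi_2^\star$ via the star operation, compose with $\Phi_1$, apply the radial part formula (Theorem~\ref{thm:radD}), and then use Theorem~\ref{thm:starGam} to undo the star on the $\mathbf H$ side. The paper packages the first swap step (turning $\sum_i(\Phi_1[v_i],\Phi_2[v_i^\star])^G_r$ into a single evaluation) as an application of Corollary~\ref{cor:sesqui}, and the final swap on the $\mathbf H$ side as an application of Proposition~\ref{prop:sesquiH}, whereas you unwrap both of these directly from the definitions; but the underlying argument is the same. One small point of bookkeeping: to invoke Proposition~\ref{prop:starMor} for the formula $\Phi_2^\star[v_\triv]=\sum_i D_i^\star\otimes v_i$, you should choose the representatives $D_i$ in an $\Ad(K)$-stable complement $\mathbf S$ of $U(\Lieg_\CC)\Liek_\CC$ (e.g.\ the symmetrized image of $S(\Lies_\CC)$), not arbitrary elements of $U(\Lieg_\CC)$.
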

\begin{proof}
Suppose $V\in\Kqsp$ and take bases $\{v_1,\ldots,v_{m'},\ldots,v_n\}\subset V$
and $\{v_1^\star,\ldots,v_n^\star\}\subset V^\star$
as in Definition \ref{defn:compRM}.
Let $I_G\in\End_{\Lieg_\CC,K}(P_G(\CC_\triv))$ be the identity
and take $v_\triv^\star\in\CC_\triv^\star$ so that $(v^\star_\triv,v_\triv)=1$.
For $(\Phi,\Psi)\in\Hom_K(V, C^\infty(G/K))\times \Hom_K(V^\star, P_G(\CC_\triv))$
\begin{align*}
\sum_{i=1}^n
\bigl(\Phi[v_i],\Psi[v_i^\star]\bigr)^G_r
&=\sum_{i=1}^n
\bigl(\Phi[v_i],\,I_G\circ\Psi[v_i^\star]\bigr)^G_r\\
&=\bigl(\Phi\circ\Psi^\star[v_\triv^\star],\,I_G[v_\triv]\bigr)^G_r
&&(\because \text{Corollary \ref{cor:sesqui}})\\
&=\bigl(\Phi\circ\Psi^\star[v_\triv^\star],\,1\otimes v_\triv\bigr)^G_r\\
&=\Phi\circ\Psi^\star[v_\triv^\star](1)
&&(\because \eqref{eq:sesquiGr})\\
&=\tilde\Gamma_0^{\CC^\star_\triv}(\Phi\circ\Psi^\star)[v_\triv^\star](1).
\end{align*}
Here if $(\Phi,\Psi)\in\Hom_K^\ttt\times\Hom_K$ then
\begin{equation}\label{eq:GPhiPsistar}
\tilde\Gamma_0^{\CC^\star_\triv}(\Phi\circ\Psi^\star)
=\tilde\Gamma_0(\Phi)\circ\tilde\Gamma^{\CC^\star_\triv}_V(\Psi^\star)
\end{equation}
by Theorem \ref{thm:radD} (i).
Formula \eqref{eq:GPhiPsistar} also holds
for $(\Phi,\Psi)\in\Hom_K\times\Hom_K^\ttt$ since in this case
$\Psi^\star\in\Hom_K^\oto$ by \eqref{eq:starGam4} and
Theorem \ref{thm:radD} (ii) can be applied.
Hence in either case, letting
$I_{\mathbf H}\in\End_{\Lieg_\CC,K}(P_{\mathbf H}(\CC_\triv))$ be the identity
we have
\begin{align*}
\sum_{i=1}^n
\bigl(\Phi[v_i],\Psi[v_i^\star]\bigr)^G_r
&=\tilde\Gamma_0(\Phi)\circ\tilde\Gamma^{\CC^\star_\triv}_V(\Psi^\star)
[v_\triv^\star](1)\\
&=\bigl(
\tilde\Gamma_0(\Phi)\circ\tilde\Gamma^{\CC^\star_\triv}_V(\Psi^\star)
[v_\triv^\star],\,
1\otimes v_\triv
\bigr)^{\mathbf H}_{\mathscr T}
&&(\because \eqref{eq:sesquiHT})\\
&=\bigl(
\tilde\Gamma_0(\Phi)\circ\tilde\Gamma^{\CC^\star_\triv}_V(\Psi^\star)
[v_\triv^\star],\,
I_{\mathbf H}[v_\triv]
\bigr)^{\mathbf H}_{\mathscr T}\\
&=\sum_{i=1}^{m'}
\bigl(
\tilde\Gamma_0^V(\Phi)
[v_i],\,
I_{\mathbf H}\circ\bigl(\tilde\Gamma^{\CC^\star_\triv}_V(\Psi^\star)\bigr)^\star\,
[v_i^\star]
\bigr)^{\mathbf H}_{\mathscr T}
&&(\because \text{\normalfont Proposition \ref{prop:sesquiH}})\\
&=\sum_{i=1}^{m'}
\bigl(
\tilde\Gamma_0^V(\Phi)
[v_i],\,
\tilde\Gamma^{V^\star}_{\CC_\triv}(\Psi)
[v_i^\star]
\bigr)^{\mathbf H}_{\mathscr T}
&&(\because \eqref{eq:starGam2})
\end{align*}
and the proposition.
\end{proof}
Let $\mathbf1^{-\bar\lambda}_G\in B_G(-\bar\lambda)$ and $\mathbf1^{-\bar\lambda}_{\mathbf H}\in B_{\mathbf H}(-\bar\lambda)$ denote functions
which constantly take $1$ on $K$ and $W$ respectively.
Then $\mathbf1^{-\bar\lambda}_G$ and $\mathbf1^{-\bar\lambda}_{\mathbf H}$
are  $K$- and $W$-invariant respectively.
Since $\gamma_{B(-\bar\lambda)}\bigl(\mathbf1^{-\bar\lambda}_G\bigr)=\mathbf1^{-\bar\lambda}_{\mathbf H}$ it follows from Proposition \ref{prop:class1} that
there exists a morphism $\mathcal I^{-\bar\lambda}=\bigl(\mathcal I^{-\bar\lambda}_G,\mathcal I^{-\bar\lambda}_{\mathbf H}\bigr):
\bigl(P_G(\CC_\triv), P_{\mathbf H}(\CC_\triv)\bigr) \to \bigl(B_{G}(-\bar\lambda),B_{\mathbf H}(-\bar\lambda)\bigr)$ in $\Crad$ such that
$\mathcal I^{-\bar\lambda}_G(1\otimes v_\triv)=\mathbf1^{-\bar\lambda}_G$
and $\mathcal I^{-\bar\lambda}_{\mathbf H}(1\otimes v_\triv)=\mathbf1^{-\bar\lambda}_{\mathbf H}$.

\begin{lem}\label{lem:PoissonAdj}
\begin{align*}
\bigl(\mathcal P_G^\lambda F,\, D\otimes v_\triv\bigr)^G_r
&=\bthreeset{\lambda}{F,\, \mathcal I^{-\bar\lambda}_G(D\otimes v_\triv) }{G}{-\bar\lambda}
&&\text{for }F\in B_G(\lambda)\text{ and }D\in U(\Lieg_\CC),\\
\bigl(\mathcal P_{\mathbf H}^\lambda F,\, h\otimes v_\triv\bigr)^{\mathbf H}_{\mathscr T}
&=\bthreeset{\lambda}{F,\, \mathcal I^{-\bar\lambda}_{\mathbf H}(h\otimes v_\triv) }{{\mathbf H}}{-\bar\lambda}
&&\text{for }F\in B_{\mathbf H}(\lambda)\text{ and }h\in \mathbf H.
\end{align*}
\end{lem}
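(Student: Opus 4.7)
Both identities assert that $\mathcal P^\lambda_G$ and $\mathcal P^\lambda_{\mathbf H}$ are formal adjoints of the inclusions $\mathcal I^{-\bar\lambda}_G$ and $\mathcal I^{-\bar\lambda}_{\mathbf H}$ with respect to the sesquilinear pairings $(\cdot,\cdot)^G_r$ and $(\cdot,\cdot)^{\mathbf H}_{\mathscr T}$. My plan is to prove the $G$-identity by a four-step manipulation (equivariance, evaluation at $1$, rewriting the integral as a sesquilinear pairing against $\mathbf 1^{-\bar\lambda}_G$, then moving the operator back via invariance), and to observe that exactly the same sequence works verbatim on the $\mathbf H$-side.

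For the $G$-identity, I start from the definition $(f,D\otimes v_\triv)^G_r = \ell(D^\star)f(1)$. Substituting $f=\mathcal P_G^\lambda F$ and using that $\mathcal P_G^\lambda$ is a $G$-homomorphism (clear from the formula $\mathcal P_G^\lambda F(x)=\int_K F(xk)\,dk$),
\[
\bigl(\mathcal P_G^\lambda F,D\otimes v_\triv\bigr)^G_r
=\ell(D^\star)\bigl(\mathcal P_G^\lambda F\bigr)(1)
=\mathcal P_G^\lambda(\ell(D^\star)F)(1)
=\int_K \ell(D^\star)F(k)\,dk.
\]
Because $\mathbf 1^{-\bar\lambda}_G\equiv 1$ on $K$, the last integral is exactly $\threeset{\lambda}{\ell(D^\star)F,\mathbf 1^{-\bar\lambda}_G}{G}{-\bar\lambda}$. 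Now the invariance statement in Proposition \ref{prop:sesquiGinv}, differentiated to Lie-algebra level and extended antilinearly to $U(\Lieg_\CC)$, reads $\threeset{\lambda}{\ell(D)F_1,F_2}{G}{-\bar\lambda}=\threeset{\lambda}{F_1,\ell(D^\star)F_2}{G}{-\bar\lambda}$. Applying this with $D$ replaced by $D^\star$ (and using $D^{\star\star}=D$) transfers the operator to the right slot, yielding $\threeset{\lambda}{F,\ell(D)\mathbf 1^{-\bar\lambda}_G}{G}{-\bar\lambda}$. By construction of $\mathcal I^{-\bar\lambda}$ via Proposition \ref{prop:class1}, $\ell(D)\mathbf 1^{-\bar\lambda}_G=\mathcal I^{-\bar\lambda}_G(D\otimes v_\triv)$, closing the identity.

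The $\mathbf H$-side is completely parallel. Starting from $(\varphi,h\otimes v_\triv)^{\mathbf H}_{\mathscr T}=\mathscr T(\theta_{\mathbf H}h^\star)\varphi(1)$ and using the $\mathbf H$-equivariance of $\mathcal P_{\mathbf H}^\lambda$ already verified in the discussion preceding \eqref{eq:HPoisson2},
\[
\bigl(\mathcal P_{\mathbf H}^\lambda F,h\otimes v_\triv\bigr)^{\mathbf H}_{\mathscr T}
=\mathscr T(\theta_{\mathbf H}h^\star)\mathcal P_{\mathbf H}^\lambda F(1)
=\mathcal P_{\mathbf H}^\lambda(h^\star F)(1)
=\frac{1}{|W|}\sum_{w\in W}(h^\star F)(w),
\]
where I used $\mathbf G(\lambda,1)=1$ from \eqref{eq:Gfunc}. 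Since $\mathbf 1^{-\bar\lambda}_{\mathbf H}\equiv 1$ on $W$, this equals $\threeset{\lambda}{h^\star F,\mathbf 1^{-\bar\lambda}_{\mathbf H}}{\mathbf H}{-\bar\lambda}$; invariance of the form (Proposition \ref{prop:sesquiHinv}) together with $h^{\star\star}=h$ then gives $\threeset{\lambda}{F,h\,\mathbf 1^{-\bar\lambda}_{\mathbf H}}{\mathbf H}{-\bar\lambda}=\threeset{\lambda}{F,\mathcal I^{-\bar\lambda}_{\mathbf H}(h\otimes v_\triv)}{\mathbf H}{-\bar\lambda}$.

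There is no real obstacle here: every ingredient is either a definition or a result established earlier in the paper. The only point demanding a little care is checking that the infinitesimal form of the invariance in Propositions \ref{prop:sesquiGinv} and \ref{prop:sesquiHinv} extends antilinearly to $U(\Lieg_\CC)$ and $\mathbf H$ via the star operation; this amounts to verifying the identity on generators ($X\in\Lieg$ with $X^\star=-X$ and $iX$ with $(iX)^\star=iX$, respectively $w\in W$ and $\xi\in\Liea$) and then extending multiplicatively, a routine bookkeeping that I would carry out at the outset.
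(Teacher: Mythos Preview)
Your proof is correct and follows essentially the same approach as the paper's own proof: both arguments move the operator $D^\star$ (resp.\ $h^\star$) through the Poisson transform by equivariance, evaluate at the identity to obtain the pairing against $\mathbf 1^{-\bar\lambda}_G$ (resp.\ $\mathbf 1^{-\bar\lambda}_{\mathbf H}$), and then shift the operator back to the right slot via the invariance of the sesquilinear form. The paper's proof is slightly more terse (it invokes the invariance of $(\cdot,\cdot)^G_r$ as the first step rather than the raw definition, and simply says the $\mathbf H$-case is analogous), but the underlying chain of equalities is identical to yours.
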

\begin{proof}
The first formula holds since
\begin{align*}
\bigl(\mathcal P_G^\lambda F,\, D\otimes v_\triv\bigr)^G_r
&=\bigl(D^\star\,\mathcal P_G^\lambda F,\, 1\otimes v_\triv\bigr)^G_r
=\bigl(\mathcal P_G^\lambda (D^\star F),\, 1\otimes v_\triv\bigr)^G_r\\
&=\mathcal P_G^\lambda (D^\star F)(1)
=\bthreeset{\lambda}{(D^\star F)(g),\, e^{(\bar\lambda+\rho)(A(g^{-1}))} }{G}{-\bar\lambda}\\
&=\bthreeset{\lambda}{D^\star F,\, \mathbf1^{-\bar\lambda}_G }{G}{-\bar\lambda}
=\bthreeset{\lambda}{D^\star F,\, \mathcal I^{-\bar\lambda}_G(1\otimes v_\triv) }{G}{-\bar\lambda}\\
&=\bthreeset{\lambda}{F,\, D\,\mathcal I^{-\bar\lambda}_G(1\otimes v_\triv) }{G}{-\bar\lambda}
=\bthreeset{\lambda}{F,\, \mathcal I^{-\bar\lambda}_G(D\otimes v_\triv) }{G}{-\bar\lambda}.
\end{align*}
One can deduce the second formula by an analogous calculation.
\end{proof}
\begin{lem}\label{lem:adjMor}
Suppose $\mathcal M^1=(\mathcal M^1_G, \mathcal M^1_{\mathbf H})$,
$\mathcal M^2=(\mathcal M^2_G, \mathcal M^2_{\mathbf H})\in \CCh$
and a pair of
invariant sesquilinear froms
$(\cdot,\cdot)^{G}_{\mathcal M}$ on $\mathcal M^1_G\times\mathcal M^2_G$ and
$(\cdot,\cdot)^{\mathbf H}_{\mathcal M}$
on $\mathcal M^1_{\mathbf H}\times\mathcal M^2_{\mathbf H}$
is compatible with restriction.
Suppose $\mathcal N^1=(\mathcal N^1_G, \mathcal N^1_{\mathbf H})$,
$\mathcal N^2=(\mathcal N^2_G, \mathcal N^2_{\mathbf H})\in \CCh$
and a pair of
invariant sesquilinear froms
$(\cdot,\cdot)^{G}_{\mathcal N}$ on $\mathcal N^1_G\times\mathcal N^2_G$ and
$(\cdot,\cdot)^{\mathbf H}_{\mathcal N}$
on $\mathcal N^1_{\mathbf H}\times\mathcal N^2_{\mathbf H}$
is compatible with restriction.
Suppose $\mathcal I^2=(\mathcal I^2_G,\mathcal I^2_{\mathbf H}):\mathcal N^2\to\mathcal M^2$ is a morphism in $\CCh$.
Suppose $\mathcal I^2_G$ and $\mathcal I^2_{\mathbf H}$ have thier adjoint
$\mathcal I^1_G$ and $\mathcal I^1_{\mathbf H}$ satisfying
\begin{align*}
\bigl(\mathcal I^1_G (y_1),\, y_2\bigr)^G_{\mathcal N}
&=\bigl(y_1,\, \mathcal I^2_G (y_2)\bigr)^G_{\mathcal M}
&&\text{for }y_1\in \mathcal M^1_G\text{ and }y_2\in \mathcal N^2_G,\\
\bigl(\mathcal I^1_{\mathbf H} (x_1),\, x_2\bigr)^{\mathbf H}_{\mathcal N}
&=\bigl(x_1,\, \mathcal I^2_{\mathbf H} (x_2)\bigr)^{\mathbf H}_{\mathcal M}
&&\text{for }x_1\in \mathcal M^1_{\mathbf H}\text{ and }x_2\in \mathcal N^2_{\mathbf H}.
\end{align*}
Finally suppose $(\mathcal N^2_G)^\perp=\{0\}$ and $(\mathcal N^2_{\mathbf H})^\perp=\{0\}$, that is
\begin{align}
\bigl\{
y_1\in\mathcal N^1_G;\,
(y_1,y_2)^{G}_{\mathcal N}=0\text{ for any } y_2\in\mathcal N^2_G
\bigr\}&=\{0\},\label{eq:NGperp0}\\
\bigl\{
x_1\in\mathcal N^1_{\mathbf H};\,
(x_1,x_2)^{{\mathbf H}}_{\mathcal N}=0\text{ for any } x_2\in\mathcal N^2_{\mathbf H}
\bigr\}&=\{0\}.\label{eq:NHperp0}
\end{align}
Then $\mathcal I^1_G$ and $\mathcal I^1_{\mathbf H}$ constitute a morphism
$\mathcal I^1:=(\mathcal I^1_G,\mathcal I^1_{\mathbf H}):\mathcal M^1\to\mathcal N^1$ in $\CCh$.
\end{lem}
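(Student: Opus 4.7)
The plan is to verify Conditions \eqref{cond:Ch1} and \eqref{cond:Ch2} for $\mathcal I^1=(\mathcal I^1_G,\mathcal I^1_{\mathbf H})$ by a dualisation argument: I will push each desired identity through the adjoint relation to the opposite side, use compatibility of the sesquilinear forms to translate $(\cdot,\cdot)^G$-pairings into $(\cdot,\cdot)^{\mathbf H}$-pairings of restriction maps, and conclude from the non-degeneracy hypotheses. For \eqref{cond:Ch1}, fix $V\in\Kqsp$ and $\Phi_1\in\Hom_K(V,\mathcal M^1_G)$, and set
\[
\tilde d := \tilde\Gamma^V_{\mathcal N^1}(\mathcal I^1_G\circ\Phi_1)-\mathcal I^1_{\mathbf H}\circ\tilde\Gamma^V_{\mathcal M^1}(\Phi_1)\in\Hom_W(V^M_\single,\mathcal N^1_{\mathbf H}).
\]
Choose bases $\{v_i\}\subset V$ and $\{v_i^\star\}\subset V^\star$ as in Definition \ref{defn:compRM}; for every $\Phi_2\in\Hom_K^\ttt(V^\star,\mathcal N^2_G)$ I chase $\sum_i(\mathcal I^1_G\circ\Phi_1[v_i],\Phi_2[v_i^\star])^G_{\mathcal N}$ through a six-step transformation: (i) compatibility on the $\mathcal N$-side (legitimate because $\Phi_2\in\Hom_K^\ttt$); (ii) the adjoint relation between $\mathcal I^1_G$ and $\mathcal I^2_G$; (iii) \eqref{cond:Ch2} for $\mathcal I^2$, placing $\mathcal I^2_G\circ\Phi_2$ inside $\Hom_K^\ttt(V^\star,\mathcal M^2_G)$; (iv) compatibility on the $\mathcal M$-side; (v) \eqref{cond:Ch1} for $\mathcal I^2$, substituting $\tilde\Gamma^{V^\star}_{\mathcal M^2}(\mathcal I^2_G\circ\Phi_2)=\mathcal I^2_{\mathbf H}\circ\tilde\Gamma^{V^\star}_{\mathcal N^2}(\Phi_2)$; (vi) the adjoint relation between $\mathcal I^1_{\mathbf H}$ and $\mathcal I^2_{\mathbf H}$. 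The chain collapses to
\[
\sum_{i=1}^{m'}\bigl(\tilde d[v_i],\,\tilde\Gamma^{V^\star}_{\mathcal N^2}(\Phi_2)[v_i^\star]\bigr)^{\mathbf H}_{\mathcal N}=0,
\]
and \eqref{cond:Ch} for $\mathcal N^2$ lets $\tilde\Gamma^{V^\star}_{\mathcal N^2}(\Phi_2)$ exhaust $\Hom_W((V^\star)^M_\single,\mathcal N^2_{\mathbf H})$.

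To finish \eqref{cond:Ch1}, I will test this vanishing against the $W$-averaged family $\varphi_{\ell,x_0}(u^\star):=\tfrac1{|W|}\sum_{w\in W}\ell(w^{-1}u^\star)\,wx_0$ parametrised by $(\ell,x_0)\in((V^\star)^M_\single)^*\times\mathcal N^2_{\mathbf H}$; the $W$-invariance of $\sum_i v_i\otimes v_i^\star$ and of $(\cdot,\cdot)^{\mathbf H}_{\mathcal N}$ reduce the tested sum to $\sum_{i=1}^{m'}\ell(v_i^\star)\,(\tilde d[v_i],x_0)^{\mathbf H}_{\mathcal N}$, and specialising $\ell$ to dual basis functionals yields $(\tilde d[v_{i_0}],x_0)^{\mathbf H}_{\mathcal N}=0$ for every $i_0$ and every $x_0$, so $\tilde d=0$ by \eqref{eq:NHperp0}. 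For \eqref{cond:Ch2}, take $\Phi_1\in\Hom_K^\ttt(V,\mathcal M^1_G)$ and split $\mathcal I^1_G\circ\Phi_1=\Phi_1'+\Phi_1''$ via \eqref{cond:Ch} for $\mathcal N^1$, where $\Phi_1'\in\Hom_K^\ttt(V,\mathcal N^1_G)$ is the unique lift of $\mathcal I^1_{\mathbf H}\circ\tilde\Gamma^V_{\mathcal M^1}(\Phi_1)$ (which equals $\tilde\Gamma^V_{\mathcal N^1}(\mathcal I^1_G\circ\Phi_1)$ by the already-established \eqref{cond:Ch1}) and $\Phi_1''\in\Ker\tilde\Gamma^V_{\mathcal N^1}$. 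Running the same six-step chain—now legal for any $\Phi_2\in\Hom_K(V^\star,\mathcal N^2_G)$ since compatibility on the $\mathcal M$-side is supplied by $\Phi_1\in\Hom_K^\ttt$ and compatibility on the $\mathcal N$-side by $\Phi_1'\in\Hom_K^\ttt$—gives $\sum_i(\Phi_1''[v_i],\Phi_2[v_i^\star])^G_{\mathcal N}=0$ for every such $\Phi_2$, and the parallel $K$-averaging $\Phi_2(u^\star):=\int_K\ell(k^{-1}u^\star)\,ky_0\,dk$ combined with \eqref{eq:NGperp0} forces $\Phi_1''=0$, whence $\mathcal I^1_G\circ\Phi_1=\Phi_1'\in\Hom_K^\ttt(V,\mathcal N^1_G)$.

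The main obstacle I expect is the careful bookkeeping of the six-step chain; in particular, step (iii) absolutely requires that $\mathcal I^2$ already satisfy \eqref{cond:Ch2}, without which $\mathcal I^2_G\circ\Phi_2$ would escape $\Hom_K^\ttt$ and compatibility on the $\mathcal M$-side could not be applied. Each invocation of Definition \ref{defn:compRM} must be matched precisely to the ``$\Hom_K^\ttt$-on-one-side'' hypothesis, and the success of the two averaging arguments hinges on the $W$-invariance (resp.\ $K$-invariance) of $\sum_i v_i\otimes v_i^\star$ in $V^M_\single\otimes(V^\star)^M_\single$ (resp.\ $V\otimes V^\star$), together with the invariance properties of the respective sesquilinear forms.
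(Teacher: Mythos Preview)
Your proof is correct and follows essentially the same route as the paper's: both verify \eqref{cond:Ch1} and \eqref{cond:Ch2} by moving through the adjoint relations, invoking compatibility with restriction on each side, and using \eqref{cond:Ch1}--\eqref{cond:Ch2} for $\mathcal I^2$. The only differences are presentational. For \eqref{cond:Ch2} the paper first isolates an auxiliary characterization of $\Hom_K^\ttt(V,\mathcal N^1_G)$ as those $\Phi_1$ orthogonal to $\Ker\tilde\Gamma^{V^\star}_{\mathcal N^2}$, and then checks that $\mathcal I^1_G\circ\Phi_1$ satisfies it; your splitting $\mathcal I^1_G\circ\Phi_1=\Phi_1'+\Phi_1''$ is exactly the step used inside the paper's proof of that characterization, so you have simply merged two steps into one. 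For the final non-degeneracy conclusions the paper writes only ``by \eqref{eq:NHperp0}'' and ``by \eqref{eq:NGperp0}'', leaving implicit the passage from a vanishing $\Hom_W$-paired (resp.\ $\Hom_K$-paired) sum to the vanishing of each individual entry; your $W$- and $K$-averaging constructions make that passage explicit and are a perfectly legitimate way to fill it in.
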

\begin{proof}
We have to check 
$\mathcal I^1$ satisfies
Conditions \eqref{cond:Ch1} and \eqref{cond:Ch2}
in Definition \ref{defn:CCh}.
To do so, let $V\in\Km$ be arbitrary and 
take bases $\{v_1,\ldots,v_{m'},\ldots,v_n\}\subset V$
and $\{v_i^\star\}\subset V^\star$
as in Definition \ref{defn:compRM}.
We first confirm \eqref{cond:Ch1},
namely 
$\tilde\Gamma^V_{\mathcal N^1}(\mathcal I_G^1\circ\Phi_1)=
\mathcal I_{\mathbf H}^1\circ\tilde\Gamma^V_{\mathcal M^1}(\Phi_1)$
for any $\Phi_1\in\Hom_K(V,\mathcal M^1_G)$.
Because of \eqref{eq:NHperp0} it is enough to show
\[
\sum_{i=1}^{m'} \bigl(
\tilde\Gamma^V_{\mathcal N^1}(\mathcal I_G^1\circ\Phi_1)[v_i],\,\varphi_2[v_i^\star]
\bigr)_{\mathcal N}^{\mathbf H}
=
\sum_{i=1}^{m'} \bigl(
\mathcal I_{\mathbf H}^1\circ\tilde\Gamma^V_{\mathcal M^1}(\Phi_1)[v_i],\,\varphi_2[v_i^\star]
\bigr)_{\mathcal N}^{\mathbf H}
\]
for any $\varphi_2\in\Hom_W((V^\star)^M_\single,\mathcal N_{\mathbf H}^2)$.
But if we take $\Phi_2\in\Hom_K^\ttt(V^\star,\mathcal N_{G}^2)$
so that 
$\tilde\Gamma^{V^\star}_{\mathcal N^2}(\Phi_2)=\varphi_2$ then
\begin{align*}
\sum_{i=1}^{m'} \bigl(
\tilde\Gamma^V_{\mathcal N^1}(\mathcal I_G^1&\circ\Phi_1)[v_i],\,\varphi_2[v_i^\star]
\bigr)_{\mathcal N}^{\mathbf H}\\
&=\sum_{i=1}^{n} \bigl(
\mathcal I_G^1\circ\Phi_1[v_i],\,\Phi_2[v_i^\star]
\bigr)_{\mathcal N}^G\\
&=\sum_{i=1}^{n} \bigl(
\Phi_1[v_i],\,\mathcal I_G^2\circ\Phi_2[v_i^\star]
\bigr)_{\mathcal M}^G\\
&=\sum_{i=1}^{m'} \bigl(
\tilde\Gamma^V_{\mathcal M^1}(\Phi_1)[v_i],\,
\tilde\Gamma^{V^\star}_{\mathcal M^2}(\mathcal I_G^2\circ\Phi_2)[v_i]
\bigr)_{\mathcal M}^{\mathbf H}
&&(\because \mathcal I_G^2\circ\Phi_2\in\Hom_K^\ttt)\\
&=\sum_{i=1}^{m'} \bigl(
\tilde\Gamma^V_{\mathcal M^1}(\Phi_1)[v_i],\,
\mathcal I_{\mathbf H}^2\circ\tilde\Gamma^{V^\star}_{\mathcal N^2}(\Phi_2)[v_i]
\bigr)_{\mathcal M}^{\mathbf H}
&&(\because \eqref{cond:Ch1}\text{ for }\mathcal I^2)\\
&=\sum_{i=1}^{m'} \bigl(
\mathcal I_{\mathbf H}^1\circ\tilde\Gamma^V_{\mathcal M^1}(\Phi_1)[v_i],\,\varphi_2[v_i^\star]
\bigr)_{\mathcal N}^{\mathbf H}.
\end{align*}
Thus $\mathcal I^1$ satisfies Condition \eqref{cond:Ch1}.

Secondly we assert
\begin{equation}\label{eq:22chara}
\Hom_K^\ttt(V,\mathcal N^1_G)
=\left\{
\Phi_1\in \Hom_K(V,\mathcal N^1_G);\,
\begin{aligned}
&\sum_{i=1}^n
\bigl(
\Phi_1[v_i],\,\Phi_2[v_i^\star]
\bigr)_{\mathcal N}^G=0\\
&\text{ for all }\Phi_2\in\Hom_K(V^\star,\mathcal N^2_G)\\
&
\text{ such that }\tilde\Gamma_{\mathcal N^2}^{V^\star}(\Phi_2)=0
\end{aligned}
\right\}.
\end{equation}
Indeed `$\subset$' is immediate from the compatibility of the sesquilinear forms with restriction.
In order to show the inverse inclusion,
take any $\Phi_1$ in the right-hand side of \eqref{eq:22chara}.
Then there exists a unique $\Phi_1'\in\Hom_K^\ttt(V,\mathcal N^1_G)$ such that
$\tilde\Gamma^V_{\mathcal N^1}(\Phi_1)=\tilde\Gamma^V_{\mathcal N^1}(\Phi_1')$.
Now suppose $\Phi_2\in\Hom_K(V^\star,\mathcal N^2_G)$ is arbitrary
and take $\Phi_2'\in\Hom_K^\ttt(V^\star,\mathcal N^2_G)$
so that
$\tilde\Gamma^{V^\star}_{\mathcal N^2}(\Phi_2)=\tilde\Gamma^{V^\star}_{\mathcal N^2}(\Phi_2')$.
Since $\tilde\Gamma_{\mathcal N^2}^{V^\star}(\Phi_2-\Phi_2')=0$
and $\Phi_1-\Phi_1'$ belongs to the right-hand side of \eqref{eq:22chara},
we have
\begin{align*}
\sum_{i=1}^n
\bigl(
(\Phi_1-\Phi_1')&[v_i],\,\Phi_2[v_i^\star]
\bigr)_{\mathcal N}^G\\
&=\sum_{i=1}^n
\bigl(
(\Phi_1-\Phi_1')[v_i],\,(\Phi_2-\Phi_2')[v_i^\star]
\bigr)_{\mathcal N}^G
+\sum_{i=1}^n
\bigl(
(\Phi_1-\Phi_1')[v_i],\,\Phi_2'[v_i^\star]
\bigr)_{\mathcal N}^G\\
&=0+\sum_{i=1}^{m'}
\bigl(
\tilde\Gamma^V_{\mathcal N^1}(\Phi_1-\Phi_1')[v_i],\,
\tilde\Gamma^{V^\star}_{\mathcal N^2}(\Phi_2')[v_i^\star]
\bigr)_{\mathcal N}^{\mathbf H}\\
&=0.
\end{align*}
Hence $\Phi_1-\Phi_1'=0$ by \eqref{eq:NGperp0}, proving \eqref{eq:22chara}.

Let us prove \eqref{cond:Ch2} for $\mathcal I^1$.
Suppose $\Phi_1\in\Hom_K^\ttt(V,\mathcal M_G^1)$.
Then
for any $\Phi_2\in\Hom_K(V^\star,\mathcal N^2_G)$ with 
$\tilde\Gamma_{\mathcal N^2}^{V^\star}(\Phi_2)=0$  it holds that
\begin{align*}
\sum_{i=1}^n
\bigl(
\mathcal I^1_G\circ\Phi_1[v_i],\,\Phi_2[v_i^\star]
\bigr)_{\mathcal N}^G
&=\sum_{i=1}^n
\bigl(
\Phi_1[v_i],\,\mathcal I^2_G\circ\Phi_2[v_i^\star]
\bigr)_{\mathcal M}^G\\
&=\sum_{i=1}^{m'}
\bigl(
\tilde\Gamma^V_{\mathcal M^1}(\Phi_1)[v_i],\,
\tilde\Gamma^{V^\star}_{\mathcal M^2}(\mathcal I^2_G\circ\Phi_2)[v_i^\star]
\bigr)_{\mathcal M}^{\mathbf H}\\
&=\sum_{i=1}^{m'}
\bigl(
\tilde\Gamma^V_{\mathcal M^1}(\Phi_1)[v_i],\,
\mathcal I^2_{\mathbf H}\circ\tilde\Gamma^{V^\star}_{\mathcal N^2}(\Phi_2)[v_i^\star]
\bigr)_{\mathcal M}^{\mathbf H}
&&(\because \eqref{cond:Ch1}\text{ for }\mathcal I^2)\\
&=0.
\end{align*}
Hence $\mathcal I^1_G\circ\Phi_1\in\Hom_K^\ttt(V,\mathcal N^1_G)$ by \eqref{eq:22chara}.
\end{proof}
\begin{proof}[Proof of {\normalfont Theorem \ref{thm:Poisson}}]
We have only to apply Lemma \ref{lem:adjMor}
to the case where
\begin{align*}
\mathcal M^1&=\bigl(B_G(\lambda)_\Kf, B_{\mathbf H}(\lambda)\bigr),
&\mathcal M^2&=\bigl(B_G(-\bar\lambda)_\Kf, B_{\mathbf H}(-\bar\lambda)\bigr),\\
\mathcal N^1&=\bigl(\mathscr A(G/K,\lambda)_\Kf,\mathscr A(A,\lambda)\bigr),
&\mathcal N^2&=\bigl(P_G(\CC_\triv), P_{\mathbf H}(\CC_\triv)\bigr),\\
\mathcal I^1&=(\mathcal P_G^{\lambda},\mathcal P_{\mathbf H}^\lambda),
&\mathcal I^2&=\mathcal I^{-\bar\lambda}.\qedhere
\end{align*}
\end{proof}
The following property of the Poisson transforms will be used in the next section:
\begin{prop}\label{prop:PoissonUniq}
\begin{gather*}
\Hom_{\Lieg_\CC,K}\bigl(B_G(\lambda)_\Kf,\,\mathscr A(G/K,\lambda)_\Kf\bigr)=\CC\,\mathcal P_G^\lambda,\\
\Hom_{\mathbf H}\bigl(B_{\mathbf H}(\lambda),\,\mathscr A(A,\lambda)\bigr)=\CC\,\mathcal P_{\mathbf H}^\lambda.
\end{gather*}
\end{prop}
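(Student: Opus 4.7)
The plan is to prove both assertions by the same two-step argument: first show that $\mathcal P_G^\lambda$ (resp.~$\mathcal P_{\mathbf H}^\lambda$) is nonzero and that any morphism is determined up to scalar by its value on the canonical spherical vector $\mathbf 1_G^\lambda$ (resp.~$\mathbf 1_{\mathbf H}^\lambda$), then rule out nonzero morphisms that annihilate this vector. The $G$-case and the $\mathbf H$-case are entirely parallel, so I sketch the $G$-case and indicate the analogues.

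First I would check that $\mathcal P_G^\lambda\mathbf 1_G^\lambda=\phi_\lambda$ from the integral formula $\mathcal P_G^\lambda F(x)=\int_K F(k)\,e^{(\lambda+\rho)(A(k^{-1}x))}\,dk$ recalled at the start of the section; in particular $\mathcal P_G^\lambda\ne0$. For the $\mathbf H$-analogue, $\mathcal P_{\mathbf H}^\lambda\mathbf 1_{\mathbf H}^\lambda(x)=\frac1{|W|}\sum_{w\in W}\mathbf G(\lambda,w^{-1}x)$ is a nonzero $W$-invariant element of $\mathscr A(A,\lambda)$ (its value at $x=1$ is $1$), hence a nonzero multiple of $\gamma_0(\phi_\lambda)$ by Theorem \ref{thm:modstr}(iii). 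Next, given any $T\in\Hom_{\Lieg_\CC,K}(B_G(\lambda)_\Kf,\mathscr A(G/K,\lambda)_\Kf)$, the element $T\mathbf 1_G^\lambda$ lies in $\mathscr A(G/K,\lambda)^K$, which is one-dimensional and spanned by $\phi_\lambda$ by Proposition \ref{prop:modstrAGl}(ii). Hence $T\mathbf 1_G^\lambda=c\,\mathcal P_G^\lambda\mathbf 1_G^\lambda$ for some $c\in\CC$, and replacing $T$ by $T-c\mathcal P_G^\lambda$ reduces the problem to showing that any $T$ annihilating $\mathbf 1_G^\lambda$ is zero.

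The main step, and the one I expect to be the crux, is this vanishing. The functor of $K$-invariants is exact on $(\Lieg_\CC,K)$-modules, since such modules are locally $K$-finite and decompose into $K$-isotypic components. Applied to
\[
0\longrightarrow U(\Lieg_\CC)\mathbf 1_G^\lambda\longrightarrow B_G(\lambda)_\Kf\longrightarrow Q\longrightarrow 0,
\]
using the well-known fact $B_G(\lambda)_\Kf^K=\CC\mathbf 1_G^\lambda$ (the spherical $K$-type appears with multiplicity one in the minimal principal series), which coincides with $(U(\Lieg_\CC)\mathbf 1_G^\lambda)^K$, this yields $Q^K=\{0\}$. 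If $T$ kills $\mathbf 1_G^\lambda$ it factors through some $\bar T:Q\to\mathscr A(G/K,\lambda)_\Kf$, and if $\bar T\ne0$ its image is a nonzero $(\Lieg_\CC,K)$-submodule of $\mathscr A(G/K,\lambda)_\Kf$, which by Proposition \ref{prop:modstrAGl}(iii) contains $\phi_\lambda$ and therefore has nonzero $K$-invariants. By exactness of $(-)^K$ the $K$-invariants of $\bar T(Q)$ coincide with $\bar T(Q^K)=\bar T(0)=0$, a contradiction. Hence $\bar T=0$, so $T=c\,\mathcal P_G^\lambda$.

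For the $\mathbf H$-statement the same scheme applies: $B_{\mathbf H}(\lambda)^W=\CC\mathbf 1_{\mathbf H}^\lambda$ follows from the $W$-isomorphism $B_{\mathbf H}(\lambda)\simeq(\CC W)^*$ obtained by restricting functionals to $\CC W\subset\mathbf H$; the $W$-invariant functor is exact since $W$ is finite; and the uniqueness of the spherical submodule of $\mathscr A(A,\lambda)$ containing $\gamma_0(\phi_\lambda)$ is Theorem \ref{thm:modstr}(iii). The only point requiring care is that the $\mathbf H$-action on $\mathscr A(A,\lambda)\subset C^\infty(A)$ is the twisted one $\mathscr T(\theta_{\mathbf H}\cdot)$; this does not affect the $W$-invariants computation since $\theta_{\mathbf H}$ acts as the identity on $W\subset\mathbf H$.
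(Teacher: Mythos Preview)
Your proof is correct and follows essentially the same approach as the paper's: reduce to a morphism killing $\mathbf 1_G^\lambda$ (resp.\ $\mathbf 1_{\mathbf H}^\lambda$), observe that its image then has no $K$-invariants (resp.\ $W$-invariants), and invoke Proposition~\ref{prop:modstrAGl}(iii) (resp.\ Theorem~\ref{thm:modstr}(iii)) to conclude the image is zero. The paper's version is slightly terser---it argues directly that $\Image(T-c\,\mathcal P_G^\lambda)$ has no trivial $K$-type rather than factoring through $Q$---but the logic is the same.
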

\begin{proof}
We only prove the first equality since the second one can be proved in the same way.
Suppose $\mathcal I_G\in \Hom_{\Lieg_\CC,K}\bigl(B_G(\lambda)_\Kf,\,\mathscr A(G/K,\lambda)_\Kf\bigr)$ is given.
Since a $K$-invariant function in $\mathscr A(G/K,\lambda)_\Kf$ is a scalar multiple of
$\phi_\lambda=\mathcal P_G^\lambda\mathbf 1_G^\lambda$,
there exists a constant $c$ such that
$\mathcal I_G\mathbf 1_G^\lambda=c\,\mathcal P_G^\lambda\mathbf 1_G^\lambda$.
Hence $\Ker(\mathcal I_G-c\,\mathcal P_G^\lambda)\supset \CC\mathbf 1_G^\lambda$.
Since the multiplicity of the trivial $K$-type in $B_G(\lambda)_\Kf$ is $1$,
$\Image(\mathcal I_G-c\,\mathcal P_G^\lambda)$ does not contain $\phi_\lambda$.
It then follows from Propostion \ref{prop:modstrAGl} (iii) that 
$\Image(\mathcal I_G-c\,\mathcal P_G^\lambda)=\{0\}$.
This proves $\mathcal I_G=c\,\mathcal P_G^\lambda$.
\end{proof}

We conclude this section by presenting a new series of radial pairs.
Put
\begin{align*}
D_G(\CC_\triv,\bar\lambda)&:=
\sum_{\Delta \in U(\Lieg_\CC)^K} U(\Lieg_\CC)(\Delta-\gamma(\Delta)(\bar\lambda))
\otimes v_\triv \subset P_G(\CC_\triv),\\
D_{\mathbf H}(\CC_\triv,\bar\lambda)
&:= 
\sum_{\Delta\in S(\Liea_\CC)^W} \mathbf H(\Delta-\Delta(\bar\lambda)) \otimes v_\triv
\subset P_{\mathbf H}(\CC_\triv)
\end{align*}
and recall
\[
P_G(\CC_\triv,\bar\lambda)=P_G(\CC_\triv) / D_G(\CC_\triv,\bar\lambda),\quad
P_{\mathbf H}(\CC_\triv,\bar\lambda)
=P_{\mathbf H}(\CC_\triv) / D_{\mathbf H}(\CC_\triv,\bar\lambda).
\]
\begin{prop}\label{prop:Aduals}
Let
\[
\Xi^\natural 
: \{\mathbf H\text{-submodules of }P_{\mathbf H}(\CC_\triv)\}\to\{(\Lieg_\CC,K)\text{-submodules of }P_G(\CC_\triv)\}
\]
be the correspondence defined in {\normalfont Proposition \ref{prop:XinaturalGen} (iv)}.
Then
\[
\Xi^\natural\bigl(D_{\mathbf H}(\CC_\triv,\bar\lambda)\bigr)
=\Ximin\bigl(D_{\mathbf H}(\CC_\triv,\bar\lambda)\bigr)
=D_G(\CC_\triv,\bar\lambda).\]
Hence by {\normalfont Proposition \ref{prop:XinaturalGen} (iii)},
$\bigl(P_G(\CC_\triv,\bar\lambda),\,P_{\mathbf H}(\CC_\triv,\bar\lambda)\bigr)$
is a radial pair with a radial restriction satisfying \eqref{cond:rest3}.
\end{prop}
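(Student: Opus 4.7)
The plan is to sandwich $\Ximin(D_{\mathbf H}(\CC_\triv,\bar\lambda))$ between $D_G(\CC_\triv,\bar\lambda)$ and $\Xi^\natural(D_{\mathbf H}(\CC_\triv,\bar\lambda))$, the middle inclusion $\Ximin \subset \Xi^\natural$ being immediate from Proposition \ref{prop:XinaturalGen} (i). For the easy end, fix $\Delta \in U(\Lieg_\CC)^K$ and consider the $K$-invariant element $y_\Delta := (\Delta - \gamma(\Delta)(\bar\lambda))\otimes v_\triv \in P_G(\CC_\triv)$; the associated $\Phi_\Delta \in \Hom_K(\CC_\triv, P_G(\CC_\triv))$ automatically lies in $\Hom_K^\ttt$ because $\CC_\triv \in \Ksp$, and its $\tilde\Gamma$-image sends $v_\triv$ to $(\gamma(\Delta) - \gamma(\Delta)(\bar\lambda))\otimes v_\triv$, which belongs to $D_{\mathbf H}(\CC_\triv,\bar\lambda)$. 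Hence $y_\Delta \in \Ximin(D_{\mathbf H}(\CC_\triv,\bar\lambda))$ by the very definition of $\Ximin$, and because $\Ximin$ is a $(\Lieg_\CC,K)$-submodule, the entire $(\Lieg_\CC,K)$-submodule generated by the $y_\Delta$'s, namely $D_G(\CC_\triv,\bar\lambda)$, lies inside it.

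For the reverse inclusion $\Xi^\natural \subset D_G$, I would introduce the quotient $\mathcal Q := (P_G(\CC_\triv)/\Xi^\natural(D_{\mathbf H}(\CC_\triv,\bar\lambda)),\, P_{\mathbf H}(\CC_\triv,\bar\lambda))$. Since $\gamma$ on the ambient radial pair satisfies \eqref{cond:rest3} (Example \ref{exmp:rest3}), Proposition \ref{prop:XinaturalGen} (iii) promotes $\mathcal Q$ to a radial pair with induced radial restriction $\gamma_{\mathcal Q}$ still satisfying \eqref{cond:rest3}. The class $\phi_K := \overline{1\otimes v_\triv} \in \mathcal Q_G$ is $K$-invariant with $\gamma_{\mathcal Q}(\phi_K) = \overline{1\otimes v_\triv} =: \phi_W \in P_{\mathbf H}(\CC_\triv,\bar\lambda)$ being $W$-invariant, so Proposition \ref{prop:class1} supplies the canonical quotient morphism $\mathcal J = (\mathcal J_G, \mathcal J_{\mathbf H}) : (P_G(\CC_\triv), P_{\mathbf H}(\CC_\triv)) \to \mathcal Q$ in $\Cwrad$. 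Feeding $\Phi_\Delta$ into conditions \eqref{cond:Ch1} and \eqref{cond:Ch2} for $\mathcal J$ gives $\mathcal J_G\circ\Phi_\Delta \in \Hom_K^\ttt(\CC_\triv, \mathcal Q_G)$ with $\tilde\Gamma^{\CC_\triv}_{\mathcal Q}(\mathcal J_G\circ\Phi_\Delta) = \mathcal J_{\mathbf H}\circ\tilde\Gamma(\Phi_\Delta) = 0$, and bijectivity of $\tilde\Gamma^{\CC_\triv}_{\mathcal Q}$ on $\Hom_K^\ttt$ forces $(\Delta - \gamma(\Delta)(\bar\lambda))\phi_K = 0$ in $\mathcal Q_G$ for every $\Delta \in U(\Lieg_\CC)^K$. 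Since $\phi_K$ generates $\mathcal Q_G$, the universal property of $P_G(\CC_\triv,\bar\lambda)$ produces a surjective $(\Lieg_\CC,K)$-homomorphism $\pi : P_G(\CC_\triv,\bar\lambda) \twoheadrightarrow \mathcal Q_G$.

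The main obstacle is establishing that this surjection $\pi$ is injective, which is the content of $\Xi^\natural \subset D_G$. My plan is to match $K$-isotypic multiplicities. Kostant's theorem gives $\dim\Hom_K(V,P_G(\CC_\triv,\bar\lambda)) = \dim V^M$ for every $V \in \Km$, so it suffices to produce the same lower bound for $\mathcal Q_G$. For $V \in \Ksp$ the radial-pair bijection \eqref{cond:Ch} for $\mathcal Q$, combined with $P_{\mathbf H}(\CC_\triv,\bar\lambda) \simeq \CC W$ from Theorem \ref{thm:modstr} (ii), already yields $\dim\Hom_K(V,\mathcal Q_G) = \dim V^M$. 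The delicate case $V\in\Kqsp\setminus\Ksp$, where $V^M_\single \subsetneq V^M$, is what I expect to be hardest; the strategy there is to exploit the non-degenerate invariant sesquilinear forms on $\mathscr A(G/K,\lambda)\times P_G(\CC_\triv,\bar\lambda)$ and $\mathscr A(A,\lambda)\times P_{\mathbf H}(\CC_\triv,\bar\lambda)$ from Proposition \ref{prop:modstrAGl} (ii) and Theorem \ref{thm:modstr} (ii), together with the compatibility-with-restriction of $(\cdot,\cdot)^G_r$ and $(\cdot,\cdot)^{\mathbf H}_{\mathscr T}$ established in the proof of Theorem \ref{thm:Poisson} and Lemma \ref{lem:adjMor}, to transport the radial-pair structure of $(\mathscr A(G/K,\lambda)_\Kf, \mathscr A(A,\lambda))$ (Example \ref{exmp:AA}) dually onto $(P_G(\CC_\triv,\bar\lambda), P_{\mathbf H}(\CC_\triv,\bar\lambda))$, thereby pinning down the remaining multiplicities. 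Once $\pi$ is an isomorphism, all three sets $D_G, \Ximin, \Xi^\natural$ coincide, and the concluding radial-pair assertion with a radial restriction satisfying \eqref{cond:rest3} is just Proposition \ref{prop:XinaturalGen} (iii) applied with $\mathscr Y = D_G(\CC_\triv,\bar\lambda)$.
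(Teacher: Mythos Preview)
Your sandwich for the easy direction --- $D_G(\CC_\triv,\bar\lambda)\subset\Ximin(D_{\mathbf H}(\CC_\triv,\bar\lambda))\subset\Xi^\natural(D_{\mathbf H}(\CC_\triv,\bar\lambda))$ --- is exactly what the paper does. The divergence is entirely in the reverse inclusion $\Xi^\natural\subset D_G$.

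Your multiplicity-matching plan has a genuine gap. You propose to show $\dim\Hom_K(V,\mathcal Q_G)\ge\dim V^M$ for every $V\in\Km$, but your argument via \eqref{cond:Ch} only yields $\dim\Hom_K^\ttt(V,\mathcal Q_G)=\dim V^M_\single$, which is the desired bound precisely when $V\in\Ksp$. You flag $V\in\Kqsp\setminus\Ksp$ as the delicate case, but you omit $V\in\Km\setminus\Kqsp$ entirely, where $V^M_\single=\{0\}$ and the radial-pair axioms give you literally nothing about the $K$-multiplicity in $\mathcal Q_G$. Your proposed remedy (transport the radial-pair structure from $(\mathscr A(G/K,\lambda)_\Kf,\mathscr A(A,\lambda))$ via compatible sesquilinear forms and Lemma~\ref{lem:adjMor}) cannot supply these missing multiplicities either, because that machinery again only controls $\Hom_K^\ttt$ and single-petaled data.

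The paper sidesteps the whole $K$-type case analysis. It introduces, for each $w\in W$,
\[
U(w\lambda)=\bigl\{D\in P_G(\CC_\triv):\ (\mathcal P_G^{w\lambda}F,\,D)^G_r=0\text{ for all }F\in B_G(w\lambda)\bigr\},
\]
and shows by a direct computation (using $\mathcal I_G^{-w\bar\lambda}$ and Lemma~\ref{lem:PoissonAdj}) that $U(w\lambda)$ equals the sum of all $K$-stable subspaces $\mathscr V\subset P_G(\CC_\triv)$ with $\gamma(D)(w\bar\lambda)=0$ for every $D\in\mathscr V$. This immediately gives $\Xi^\natural(D_{\mathbf H}(\CC_\triv,\bar\lambda))\subset U(w\lambda)$ for every $w$, since $\gamma(D)\in D_{\mathbf H}(\CC_\triv,\bar\lambda)$ forces $\gamma(D)(w\bar\lambda)=0$. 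Then one simply picks $w$ so that $\mathcal P_G^{w\lambda}:B_G(w\lambda)_\Kf\to\mathscr A(G/K,\lambda)_\Kf$ is bijective (Remark~\ref{rem:PoissonBij}~(ii)), whereupon the non-degeneracy in Proposition~\ref{prop:modstrAGl}~(ii) forces $U(w\lambda)=D_G(\CC_\triv,\bar\lambda)$. No per-$K$-type bookkeeping is needed. Your duality intuition was pointing toward this, but the paper's execution via the orthogonal complement $U(w\lambda)$ is both shorter and complete.
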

\begin{proof}
For any $\Delta\in U(\Lieg_\CC)^K$,
$(\CC_\triv\ni cv_\triv\mapsto
c(\gamma(\Delta)-\gamma(\Delta)(\bar\lambda))\otimes v_\triv
\in P_{\mathbf H}(\CC_\triv))
\in\Hom_W(\CC_\triv, P_{\mathbf H}(\CC_\triv))$
is lifted to
$(\CC_\triv\ni cv_\triv\mapsto
c(\Delta-\gamma(\Delta)(\bar\lambda))\otimes v_\triv
\in P_{G}(\CC_\triv))
\in\Hom_K^\ttt(\CC_\triv, P_{G}(\CC_\triv))$.
Hence
\[
D_G(\CC_\triv,\bar\lambda)\subset
\Ximin\bigl(D_{\mathbf H}(\CC_\triv,\bar\lambda)\bigr)\subset
\Xi^\natural\bigl(D_{\mathbf H}(\CC_\triv,\bar\lambda)\bigr).
\]
Suppose $w\in W$ and put
\[
U(w\lambda):=
\bigl\{
D\in P_G(\CC_\triv);\, 
\bigl(\mathcal P_G^{w\lambda} F,\, D\bigr)^G_r=0
\text{ for any }F\in B_G(w\lambda)
\bigr\}.
\]
Let us prove
\begin{equation}\label{eq:Uwlambda}
U(w\lambda)
=
\sum\bigl\{
\mathscr V\subset P_G(\CC_\triv);\, \text{a $K$-stable subspace with }
\gamma(D)(w\bar\lambda)=0\text{ for }\forall D\in \mathscr V
\bigr\}
\end{equation}
where we identify $P_{\mathbf H}(\CC_\triv)$ with $S(\Liea_\CC)$
as in Example \ref{exmp:triv}.
Since both sides of \eqref{eq:Uwlambda}
are $(\Lieg_\CC,K)$-submodules of $P_G(\CC_\triv)$,
it suffices to show for any $V\in\Km$
\[
\Hom_K(V,U(w\lambda))=\bigl\{\Phi\in
\Hom_K(V,P_G(\CC_\triv));\,\gamma(\Phi[v])(w\bar\lambda)=0\text{ for }v\in V
\bigr\}.
\]
Let $\Phi\in
\Hom_K(V,P_G(\CC_\triv))$ be arbitrary.
If we choose $D\in U(\Lien_\CC+\Liea_\CC)$ for $v\in V$
so that $\Phi[v]=D\otimes v_\triv$
then
\[
(\mathcal I^{-w\bar\lambda}_{G}
\circ \Phi[v])(1)
=\mathcal I^{-w\bar\lambda}_{G}
[D\otimes v_\triv](1)
=(D\, \mathbf 1_G^{-w\bar\lambda})(1)
=\gamma(D)(w\bar\lambda)
=\gamma(\Phi[v])(w\bar\lambda).
\]
Hence we have
\begin{align*}
\gamma(\Phi&[v])(w\bar\lambda)=0\quad\text{for any } v\in V\\
&\Longleftrightarrow
(\mathcal I^{-w\bar\lambda}_{G}
\circ \Phi[k^{-1}v])(1)=0
\quad\text{for any } v\in V\text{ and } k\in K\\
&\Longleftrightarrow
(\mathcal I^{-w\bar\lambda}_{G}
\circ \Phi[v])(k)=0
\quad\text{for any } v\in V\text{ and } k\in K\\
&\Longleftrightarrow
\bthreeset{w\lambda}{F,\, \mathcal I^{-w\bar\lambda}_{G}
\circ \Phi[v]
}{G}{-w\bar\lambda}=0\quad\text{for any } v\in V\text{ and }F\in B_G(w\lambda)\\
&\Longleftrightarrow
\bigl(\mathcal P_G^{w\lambda}F,\, \Phi[v]\bigr)^G_r=0\quad\text{for any }
v\in V\text{ and }F\in B_G(w\lambda)
\qquad(\because \text{Lemma \ref{lem:PoissonAdj}})\\
&\Longleftrightarrow
\Phi\in \Hom_K(V,U(w\lambda)).
\end{align*}
Thus we get \eqref{eq:Uwlambda}.
Hence in particular $\Xi^\natural(D_{\mathbf H}(\CC_\triv,\bar\lambda))\subset U(w\lambda)$.

Now choose $w\in W$ so that
\[
\mathcal P_G^{w\lambda}\bigl(B_G(w\lambda)_\Kf\bigr)
=\mathscr A(G/K, w\lambda)_\Kf=\mathscr A(G/K, \lambda)_\Kf
\]
(cf.~Remark \ref{rem:PoissonBij} (ii)).
Then $D_G(\CC_\triv,\bar\lambda)=U(w\lambda)$ by Proposition \ref{prop:modstrAGl} (ii).
Hence
\[
\Xi^\natural(D_{\mathbf H}(\CC_\triv,\bar\lambda))\subset U(w\lambda)=D_G(\CC_\triv,\bar\lambda).
\qedhere
\]
\end{proof}

\section{Intertwining operators}\label{sec:intertwining}
For an arbitrary $w\in W$ let $\bar w\in N_K(\Liea)$ be its lift.
Let $d\bar n$ be a Haar measure of $\bar w^{-1}\!N\bar w\cap \theta N$.
For $\lambda\in\Liea^*_\CC$, \cite{KuS} shows
the intertwining operator
$\mathcal A_G(w,\lambda):
B_G(\lambda)\to B_G(w \lambda)$
formally given by
\[
\mathcal A_G(w,\lambda)F\,(g)
=\int_{\bar w^{-1}\!N\bar w\,\cap\, \theta N} F(g\bar w\bar n)\,d\bar n
\]
converges and makes sense when $-\Real\lambda$ is sufficiently dominant.
(By \cite{Sch} the integral is convergent
when $\lambda$ satisfies $\Real \lambda(\alpha^\vee)<0$
for all $\alpha\in \Sigma^+\cap -w^{-1}\Sigma^+$.)
This operator clearly does not depend on the choice of $\bar w$.
In \cite{KnS1} Knapp and Stein prove that
$\mathcal A_G(w,\lambda)$,
as an operator acting on $C^\infty(K/M)\simeq B_G(\lambda)$
with the holomorphic parameter $\lambda$,
extends meromorphically in $\lambda$ to the whole $\Liea_\CC^*$.
Now let us assume for each $\alpha\in \Pi$
the Haar measure $d\bar n$ of $\bar s_\alpha^{-1} N \bar s_\alpha\cap\theta N$
is normalized so that
\[
\int_{\bar s_\alpha^{-1} N \bar s_\alpha\,\cap\, \theta N}e^{2\rho(A(\bar n))}\,d\bar n=1.
\]
It then follows from a result of \cite{Sch} and its correction by \cite[\S2]{KnS2}
that we can normalize other Haar measures so that
\begin{equation}\label{eq:AchainG}
\mathcal A_G(w,\lambda)=\mathcal A_G(w_1,w_2\lambda)\,\mathcal A_G(w_2,\lambda)
\end{equation}
whenever
$w=w_1w_2$ ($w_1,w_2\in W$) is a minimal decomposition
(namely, the length of $w$ is the sum of those of $w_1$ and $w_2$)
and all intertwining operators in the formula make sense.

\begin{defn}
For $\alpha\in R_1^+$ put
\begin{align*}
\mathbf e_\alpha(\lambda)&=
\Bigl\{
{\Gamma\bigl(\tfrac12\bigl(\tfrac12\dim \Lieg_{\alpha/2}+1+\lambda(\alpha^\vee)\bigr)\bigr)\,%
\Gamma\bigl(\tfrac12\bigl(\mathbf m_1(\alpha)+\lambda(\alpha^\vee)\bigr)\bigr)}
\Bigr\}^{-1},\\
\mathbf c_\alpha(\lambda)&=
2^{-\lambda(\alpha^\vee)}\,\Gamma(\lambda(\alpha^\vee))\,\mathbf e_\alpha(\lambda).
\end{align*}
For $\alpha\in \Pi$ define
\[
\Tilde{\mathcal A}_G(s_\alpha,\lambda)
=
\frac{\mathbf c_\alpha(\rho)}{\mathbf c_\alpha(-\lambda)}\,
\mathcal A_G(s_\alpha,\lambda).
\]
\end{defn}
\begin{prop}\label{prop:AG}
{\normalfont (i)}
For any $\alpha\in\Pi$
the intertwining operator $\Tilde{\mathcal A}_G(s_\alpha,\lambda):
B_G(\lambda)\to B_G(s_\alpha\lambda)$ makes sense
if and only if $\mathbf e_\alpha(-\lambda)\ne0$.
For such $\lambda$ it holds that
\begin{equation}\label{eq:image1G}
\Tilde{\mathcal A}_G(s_\alpha,\lambda)\mathbf 1_G^{\lambda}
=\mathbf 1_G^{s_\alpha\lambda}
\end{equation}
where $\mathbf 1_G^{\lambda}\in B_G(\lambda)$ and
$\mathbf 1_G^{s_\alpha\lambda}\in B_G(s_\alpha\lambda)$ are
functions taking the constant value $1$ on $K$ as in the last section.

\noindent
{\normalfont (ii)}
Suppose $w\in W$
and
let $w=s_{\alpha_1}\cdots s_{\alpha_k}$ be 
a reduced expression of $w$.
Then the intertwining operator
\[
\Tilde{\mathcal A}_G(w,\lambda)
=
\Tilde{\mathcal A}_G(s_{\alpha_1},s_{\alpha_2}\cdots s_{\alpha_k}\lambda)
\Tilde{\mathcal A}_G(s_{\alpha_2},s_{\alpha_3}\cdots s_{\alpha_k}\lambda)
\cdots
\Tilde{\mathcal A}_G(s_{\alpha_k},\lambda)
\]
is defined independently of the expression.

\noindent
{\normalfont (iii)}
For any $w_1,w_2\in W$ it holds that
\[
\Tilde{\mathcal A}_G(w_1w_2,\lambda)
=
\Tilde{\mathcal A}_G(w_1,w_2\lambda)
\Tilde{\mathcal A}_G(w_2,\lambda).
\]
\end{prop}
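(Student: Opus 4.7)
The plan is to prove the three parts in sequence, with the key work concentrated in the rank-one Gindikin-Karpelevich computation of part (i); parts (ii) and (iii) then follow by uniqueness arguments combined with meromorphic continuation.

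For (i), the key observation is that $\mathcal A_G(s_\alpha,\lambda)\mathbf 1_G^\lambda$ is again $K$-invariant, so it must be a scalar multiple $c'_\alpha(\lambda)\,\mathbf 1_G^{s_\alpha\lambda}$. Evaluation at $g=1$ reduces $c'_\alpha(\lambda)$ to an integral that depends only on the rank-one subgroup attached to $\alpha$, because the relevant unipotent radical is generated by $\Lieg_\alpha\oplus\Lieg_{2\alpha}$. A standard $\mathrm{SL}(2,\RR)$ or $\mathrm{SU}(n,1)$ reduction converts this into a Beta integral whose evaluation gives $\mathbf c_\alpha(-\lambda)/\mathbf c_\alpha(\rho)$, yielding \eqref{eq:image1G}. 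The domain of definition of $\Tilde{\mathcal A}_G(s_\alpha,\lambda)$ is determined by the meromorphic structure of $\frac{\mathbf c_\alpha(\rho)}{\mathbf c_\alpha(-\lambda)}\mathcal A_G(s_\alpha,\lambda)$: the poles of $\mathcal A_G(s_\alpha,\lambda)$ from the Knapp-Stein meromorphic extension are cancelled by the factor $\Gamma(\lambda(\alpha^\vee))$ in $1/\mathbf c_\alpha(-\lambda)$, leaving only the Gamma factors in $\mathbf e_\alpha(-\lambda)$ in the denominator, so the operator is well-defined precisely when $\mathbf e_\alpha(-\lambda)\ne 0$.

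For (ii), I would use the fact that \eqref{eq:AchainG} already gives $\mathcal A_G(w,\lambda) = \mathcal A_G(s_{\alpha_1},s_{\alpha_2}\cdots s_{\alpha_k}\lambda)\cdots\mathcal A_G(s_{\alpha_k},\lambda)$ independently of the reduced expression. The normalized product in the statement therefore equals $\mathcal A_G(w,\lambda)$ times a scalar $K(w,\lambda;\,\text{expression})$, and by iterating (i) this product sends $\mathbf 1_G^\lambda$ to $\mathbf 1_G^{w\lambda}$. On the other hand, iterating the rank-one Gindikin-Karpelevich formula gives $\mathcal A_G(w,\lambda)\mathbf 1_G^\lambda = c'(w,\lambda)\mathbf 1_G^{w\lambda}$ with $c'(w,\lambda)$ determined by $(w,\lambda)$ alone, whence $K(w,\lambda;\,\text{expression}) = 1/c'(w,\lambda)$ is independent of the reduced expression, proving (ii).

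For (iii), the minimal case where the length of $w_1w_2$ equals the sum of the lengths of $w_1$ and $w_2$ is immediate from (ii): the concatenation of reduced expressions for $w_1$ and $w_2$ is reduced for $w_1w_2$. For the general case I would invoke Matsumoto's theorem: any two (possibly non-reduced) expressions of the same Weyl group element are connected by braid moves and $s_\alpha s_\alpha$ cancellations, and the operator attached to a word is invariant under braid moves by (ii) applied to rank-two subsystems and under cancellations by the rank-one involution $\Tilde{\mathcal A}_G(s_\alpha,s_\alpha\lambda)\Tilde{\mathcal A}_G(s_\alpha,\lambda) = \id$. This involution itself follows from (i) since both sides fix $\mathbf 1_G^\lambda$, and for generic $\lambda$ the principal series $B_G(\lambda)_\Kf$ is irreducible so $\End_{\Lieg_\CC,K}(B_G(\lambda)_\Kf)=\CC$ forces the two $(\Lieg_\CC,K)$-endomorphisms to coincide; the general case follows by meromorphic continuation. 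With word-independence established, (iii) is the tautological decomposition of the concatenated expression. The main obstacle I anticipate is the precise bookkeeping in (i) — verifying that the Gindikin-Karpelevich integral evaluates to exactly $\mathbf c_\alpha(-\lambda)/\mathbf c_\alpha(\rho)$, with the correct contribution from the doubled root $2\alpha$ when present, so that the specific form of $\mathbf e_\alpha$ (and hence the exact domain of definition in (i)) is matched.
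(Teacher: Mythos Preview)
Your outline for (ii) is essentially the paper's argument, and your route for (iii) via Matsumoto's theorem and the rank-one involution $\Tilde{\mathcal A}_G(s_\alpha,s_\alpha\lambda)\Tilde{\mathcal A}_G(s_\alpha,\lambda)=\id$ is valid, though the paper takes a shorter path: for $\lambda\in i\Liea^*$ the principal series is irreducible, so the endomorphism $\Tilde{\mathcal A}_G(w_2,\lambda)^{-1}\Tilde{\mathcal A}_G(w_1,w_2\lambda)^{-1}\Tilde{\mathcal A}_G(w_1w_2,\lambda)$ is a scalar, and since it fixes $\mathbf 1_G^\lambda$ it equals the identity; then continue analytically. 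Your approach buys a statement about arbitrary words, but the paper's needs nothing beyond (i), (ii), and unitary irreducibility.

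The genuine gap is in the ``only if'' direction of (i). Your argument that the normalizing factor leaves ``only the Gamma factors in $\mathbf e_\alpha(-\lambda)$ in the denominator'' does not by itself show that $\Tilde{\mathcal A}_G(s_\alpha,\lambda_0)$ \emph{fails} to exist when $\mathbf e_\alpha(-\lambda_0)=0$: you would need to rule out a compensating zero of the unnormalized operator $\mathcal A_G(s_\alpha,\lambda)$ at $\lambda_0$, and nothing in your sketch excludes this. (Also, a small slip: the factor in $1/\mathbf c_\alpha(-\lambda)$ is $1/\Gamma(-\lambda(\alpha^\vee))$, not $\Gamma(\lambda(\alpha^\vee))$.) The paper handles this direction by a different mechanism entirely: when $\mathbf e_\alpha(-\lambda_0)=0$ one has $\mathbf e_\alpha(\lambda_0)\ne 0$ and $\mathbf e_\alpha(s_\alpha\lambda_0)=0$, so $\mathcal P_G^{\lambda_0}$ is bijective on $K$-finite vectors while $\mathcal P_G^{s_\alpha\lambda_0}$ is not. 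If $\Tilde{\mathcal A}_G(s_\alpha,\lambda_0)$ were defined, then by \eqref{eq:image1G} and Proposition~\ref{prop:PoissonUniq} one would have $\mathcal P_G^{\lambda_0}=\mathcal P_G^{s_\alpha\lambda_0}\circ\Tilde{\mathcal A}_G(s_\alpha,\lambda_0)$ on $K$-finite vectors, forcing the bijective $\mathcal P_G^{\lambda_0}$ to factor through a non-surjective map --- a contradiction. For the ``if'' direction the paper is also more explicit than your cancellation sketch: it invokes the Knapp--Stein result that at any singular $\lambda_0$ the regularized operator $(\lambda(\alpha^\vee)-\lambda_0(\alpha^\vee))\mathcal A_G(s_\alpha,\lambda)$ is defined and \emph{nonzero}, then uses that $B_G(\lambda_0)_\Kf$ is generated by $\mathbf 1_G^{\lambda_0}$ (again via Poisson bijectivity, since $\mathbf e_\alpha(-\lambda_0)\ne 0$) together with the Gindikin--Karpelevich value to conclude that $\mathbf c_\alpha(-\lambda)$ has a pole at $\lambda_0$, which exactly absorbs the singularity.
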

\begin{proof}
In the proof of (i) we may assume $G$ has real rank $1$
without loss of generality (cf.~\cite{KnS1}).
If $\Real\lambda(\alpha^\vee)<0$ then we have for $k\in K$
\begin{equation}\label{eq:calc1G}
\begin{aligned}
\mathcal A_G(s_\alpha,\lambda)\mathbf 1_G^\lambda\,(k)
&=\int_{\bar s_\alpha^{-1}N\bar s_\alpha\,\cap\, \theta N} \mathbf 1_G^\lambda(k\bar s_\alpha\bar n)\,d\bar n
=\int_{\bar s_\alpha^{-1}N\bar s_\alpha\,\cap\, \theta N} \mathbf 1_G^\lambda(\bar n)\,d\bar n\\
&=\int_{\bar s_\alpha^{-1}N\bar s_\alpha\,\cap\, \theta N} e^{(\lambda-\rho)(-A(\bar n^{-1}))}\,d\bar n
=\frac{\mathbf c_\alpha(-\lambda)}{\mathbf c_\alpha(\rho)}.
\end{aligned}
\end{equation}
Here the last equality follows from \cite[Ch.\,IV, Theorem 6.4]{Hel4}.
By analytic continuation,
\eqref{eq:calc1G} is valid
whenever $\mathcal A_G(s_\alpha,\lambda)$ is defined and
\eqref{eq:image1G} is valid
whenever $\Tilde{\mathcal A}_G(s_\alpha,\lambda)$ is defined.
Recall in general the Poisson transform $\mathcal P_G^\lambda : B_{G}(\lambda)_\Kf\to \mathscr A(G/K, \lambda)_\Kf$ is bijective if and only if $\mathbf e_\alpha(\lambda)\ne0$
(cf.~Remark \ref{rem:PoissonBij}).

Suppose $\mathbf e_\alpha(-\lambda_0)\ne0$.
Since $\mathbf e_\alpha(-\bar\lambda_0)=\overline{\mathbf e_\alpha(-\lambda_0)}
\ne0$
we have $B_{G}(-\bar\lambda_0)_\Kf\simeq \mathscr A(G/K, -\bar\lambda_0)_\Kf$
and it follows from Proposition \ref{prop:sesquiGinv}
and Proposition \ref{prop:modstrAGl} (ii)
that the $(\Lieg_\CC,K)$-module
$B_G(\lambda_0)_\Kf$ is equivalent to $P_G(\CC_\triv,-\lambda_0)$
and is generated by $\mathbf 1_G^{\lambda_0}$.
If $\lambda_0$ is a singular point for which
$\mathcal A_G(s_\alpha,\lambda_0)$ does not make sense
then it follows from \cite[Theorem 3]{KnS1} that 
the regularized intertwining operator
$(\lambda(\alpha^\vee)-\lambda_0(\alpha^\vee))\,\mathcal A_G(s_\alpha,\lambda)$ is well defined around $\lambda=\lambda_0$ and non-zero.
In this case since
\begin{align*}
(\lambda(\alpha^\vee)-\lambda_0(\alpha^\vee))\,
&\mathcal A_G(s_\alpha,\lambda)\Bigr|_{\lambda=\lambda_0}
\bigl(B_G(\lambda_0)_\Kf
\bigr)\\
&=
(\lambda(\alpha^\vee)-\lambda_0(\alpha^\vee))\,\mathcal A_G(s_\alpha,\lambda)\Bigr|_{\lambda=\lambda_0}
\bigl(U(\Lieg_\CC)\,\mathbf 1_G^{\lambda_0}
\bigr)\\
&=U(\Lieg_\CC)\biggl(
(\lambda(\alpha^\vee)-\lambda_0(\alpha^\vee))\,
\frac{\mathbf c_\alpha(-\lambda)}{\mathbf c_\alpha(\rho)}\biggr|_{\lambda=\lambda_0}
\biggr)\,\mathbf 1_G^{s_\alpha\lambda_0}
\ne\{0\}
\end{align*}
by \eqref{eq:calc1G}, we can conclude $\lambda=\lambda_0$
is a pole of $\mathbf c_\alpha(-\lambda)$
and $\Tilde{\mathcal A}_G(s_\alpha,\lambda_0)$ is well defined.
On the other hand, if $\mathcal A_G(s_\alpha,\lambda_0)$ makes sense
then $\Tilde{\mathcal A}_G(s_\alpha,\lambda_0)$
also makes sense since $\mathbf c_\alpha(-\lambda)^{-1}$ is regular at $\lambda=\lambda_0$.

Suppose now $\mathbf e_\alpha(-\lambda_0)=0$.
Then $\mathcal P_G^{\lambda_0} : B_{G}(\lambda_0)_\Kf\to \mathscr A(G/K, \lambda_0)_\Kf$ is bijective
since $\mathbf e_\alpha(\lambda_0)\ne0$
while $\mathcal P_G^{s_\alpha\lambda_0} : B_{G}(s_\alpha\lambda_0)_\Kf\to \mathscr A(G/K, s_\alpha\lambda_0)_\Kf$ is not bijective
since
$\mathbf e_\alpha(s_\alpha\lambda_0)=\mathbf e_\alpha(-\lambda_0)=0$.
The latter means $\mathcal P_G^{s_\alpha\lambda_0}\bigr|_{B_G(s_\alpha\lambda_0)_\Kf}$ is not surjective.
Now assume $\Tilde{\mathcal A}_G(s_\alpha,\lambda_0)$ makes sense.
Then $\mathcal P_G^{s_\alpha\lambda_0}\circ\Tilde{\mathcal A}_G(s_\alpha,\lambda_0)\,\mathbf 1^{\lambda_0}_G=\phi_{\lambda_0}$ by \eqref{eq:image1G}
and it follows from Proposition \ref{prop:PoissonUniq} that
\[
\mathcal P_G^{\lambda_0}\bigr|_{B_G(\lambda_0)_\Kf}
=\mathcal P_G^{s_\alpha\lambda_0}\circ\Tilde{\mathcal A}_G(s_\alpha,\lambda_0)\bigr|_{B_G(\lambda_0)_\Kf}.
\]
This is a contradiction since
the left-hand side is bijective while the right-hand side is not.
Hence $\Tilde{\mathcal A}_G(s_\alpha,\lambda_0)$ cannot make sense.
Thus we get (i).

Let us return to the general case where $G$ may have higher rank.
For a generic $\lambda$ the right-hand side of the formula of (ii)
equals a scalar multiple of $\mathcal A_G(w,\lambda)$  by \eqref{eq:AchainG}.
This scalar does not depend on the decomposition
since $\Tilde{\mathcal A}_G(w,\lambda)\mathbf 1_G^{\lambda}
=\mathbf 1_G^{w\lambda}$ by (i).

To prove (iii) suppose $w_1,w_2\in W$.
Since it is well known that $B_G(\lambda)$ is irreducible when $\lambda\in i\Liea^*$,
\[
\Tilde{\mathcal A}_G(w_2,\lambda)^{-1}\Tilde{\mathcal A}_G(w_1,w_2\lambda)^{-1}\Tilde{\mathcal A}_G(w_1w_2,\lambda)
\in\End_G(B_G(\lambda))
\]
is a well-defined scalar operator.
This sends $\mathbf 1_G^{\lambda}$ to itself
and hence must be the identity.
Thus (iii) is valid for $\lambda\in i\Liea^*$,
from which the general case follows.
\end{proof}

Let us develop an analogous story for $\mathbf H$.
For $\alpha\in\Pi$ put
\[
\tau_\alpha= \alpha^\vee s_\alpha+\mathbf m_1(\alpha) \in \mathbf H.
\]
Then by \eqref{eq:Hrel} we have
\[
\tau_\alpha\,\xi=s_\alpha(\xi)\,\tau_\alpha\quad\text{for any }\xi\in\Liea_\CC^*.
\]
Suppose
$w\in W$ and let $w=s_{\alpha_1}\cdots s_{\alpha_k}$ be a reduced expression.
Then one can prove that
\[
\tau_w=\tau_{\alpha_1}\cdots\tau_{\alpha_k}\in \mathbf H
\]
is defined independently of the expression (see \cite[Theorem 4.2]{Op:book}).
Hence we can define the intertwining operator
$\mathcal A_{\mathbf H}(w,\lambda):
B_{\mathbf H}(\lambda)\to B_{\mathbf H}(w\lambda)$
by
\[
\mathcal A_{\mathbf H}(w,\lambda)F\,(h)=F(h\,\trans{\tau_{w^{-1}}}).
\]
If $w=w_1w_2$ is a minimal decomposition then it clearly holds that
\begin{equation*}
\mathcal A_{\mathbf H}(w,\lambda)
=\mathcal A_{\mathbf H}(w_1,w_2\lambda)\,\mathcal A_{\mathbf H}(w_2,\lambda).
\end{equation*}
\begin{defn}
For $\alpha\in\Pi$ define
\[
\Tilde{\mathcal A}_{\mathbf H}(s_\alpha,\lambda)
=\frac1{\mathbf m_1(\alpha)-\lambda(\alpha^\vee)}\,
\mathcal A_{\mathbf H}(s_\alpha,\lambda).
\]
\end{defn}
\begin{prop}\label{prop:AH}
{\normalfont (i)}
For any $\alpha\in\Pi$
the intertwining operator $\Tilde{\mathcal A}_{\mathbf H}(s_\alpha,\lambda):
B_{\mathbf H}(\lambda)\to B_{\mathbf H}(s_\alpha\lambda)$ makes sense
if and only if $\mathbf m_1(\alpha)\ne\lambda(\alpha^\vee)$.
For such $\lambda$ it holds that
\begin{equation}\label{eq:image1H}
\Tilde{\mathcal A}_{\mathbf H}(s_\alpha,\lambda)\mathbf 1_{\mathbf H}^{\lambda}
=\mathbf 1_{\mathbf H}^{s_\alpha\lambda}
\end{equation}
where $\mathbf 1_{\mathbf H}^{\lambda}\in B_{\mathbf H}(\lambda)$ and
$\mathbf 1_{\mathbf H}^{s_\alpha\lambda}\in B_{\mathbf H}(s_\alpha\lambda)$ are
functions taking the constant value $1$ on $W$.

\noindent
{\normalfont (ii)}
Suppose $w\in W$
and
let $w=s_{\alpha_1}\cdots s_{\alpha_k}$ be 
a reduced expression.
Then the intertwining operator
\[
\Tilde{\mathcal A}_{\mathbf H}(w,\lambda)
=
\Tilde{\mathcal A}_{\mathbf H}(s_{\alpha_1},s_{\alpha_2}\cdots s_{\alpha_k}\lambda)
\Tilde{\mathcal A}_{\mathbf H}(s_{\alpha_2},s_{\alpha_3}\cdots s_{\alpha_k}\lambda)
\cdots
\Tilde{\mathcal A}_{\mathbf H}(s_{\alpha_k},\lambda)
\]
is defined independently of the expression. 

\noindent
{\normalfont (iii)}
For any $w_1,w_2\in W$ it holds that
\[
\Tilde{\mathcal A}_{\mathbf H}(w_1w_2,\lambda)
=
\Tilde{\mathcal A}_{\mathbf H}(w_1,w_2\lambda)
\Tilde{\mathcal A}_{\mathbf H}(w_2,\lambda).
\]
\end{prop}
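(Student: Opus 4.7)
The plan is to treat the three parts by explicit algebraic manipulation inside $\mathbf H$, in place of the analytic arguments (Kunze--Stein convergence, Knapp--Stein meromorphic continuation, irreducibility of $B_G(\lambda)$ at imaginary parameter) used on the $G$-side. My first step for (i) is to derive the concrete formula
\[
(\mathcal A_{\mathbf H}(s_\alpha,\lambda)F)(h)=-\lambda(\alpha^\vee)\,F(hs_\alpha)+\mathbf m_1(\alpha)\,F(h)\qquad(F\in B_{\mathbf H}(\lambda),\ h\in\mathbf H)
\]
by expanding $\trans\tau_\alpha = s_\alpha\,\trans\alpha^\vee+\mathbf m_1(\alpha)$ and applying the defining relation of $B_{\mathbf H}(\lambda)$ to the argument $h s_\alpha$. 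Plugging $F=\mathbf 1_{\mathbf H}^\lambda$ into this gives the constant value $\mathbf m_1(\alpha)-\lambda(\alpha^\vee)$ on $W$, which is $(\mathbf m_1(\alpha)-\lambda(\alpha^\vee))\mathbf 1_{\mathbf H}^{s_\alpha\lambda}$, yielding \eqref{eq:image1H} after normalization. For the converse in (i), the same formula, viewed as a polynomial family in $\lambda$ of endomorphisms of the $\lambda$-independent space of functions on $W$ (the underlying $W$-module of $B_{\mathbf H}(\lambda)$), specializes at $\lambda(\alpha^\vee)=\mathbf m_1(\alpha)$ to the \emph{non-zero} operator $f(w)\mapsto\mathbf m_1(\alpha)(f(w)-f(ws_\alpha))$, so the rational family $\Tilde{\mathcal A}_{\mathbf H}(s_\alpha,\cdot)$ has a genuine pole there and cannot be defined.

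For (ii), I would decompose the product into its unnormalized part and its scalar part. The unnormalized $\mathcal A_{\mathbf H}(w,\lambda)$ is right multiplication by $\trans\tau_{w^{-1}}$, which is independent of the reduced expression by the braid relations for the $\tau_\alpha$'s established in \cite[Theorem 4.2]{Op:book}. The scalar part unwinds to $\prod_{i=1}^k(\mathbf m_1(\alpha_i)-\lambda(w_i^{-1}\alpha_i^\vee))^{-1}$ with $w_i:=s_{\alpha_{i+1}}\cdots s_{\alpha_k}$; using the classical bijection $i\mapsto w_i^{-1}\alpha_i$ between $\{1,\dots,k\}$ and $R_1^+\cap(-w^{-1}R_1^+)$, combined with $W$-invariance of $\mathbf m_1$, this product equals $\prod_{\beta\in R_1^+\cap(-w^{-1}R_1^+)}(\mathbf m_1(\beta)-\lambda(\beta^\vee))^{-1}$, which depends only on $w$ and $\lambda$.

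For (iii), my first sub-step is to establish the two-step cancellation
\[
\Tilde{\mathcal A}_{\mathbf H}(s_\alpha,s_\alpha\lambda)\,\Tilde{\mathcal A}_{\mathbf H}(s_\alpha,\lambda)=\id.
\]
This follows from the central identity $\tau_\alpha^2=\mathbf m_1(\alpha)^2-(\alpha^\vee)^2$ in $\mathbf H$ (a short computation from \eqref{eq:Hrel}): applying $\trans$ and the defining relation of $B_{\mathbf H}(\lambda)$ gives $\mathcal A_{\mathbf H}(s_\alpha,s_\alpha\lambda)\mathcal A_{\mathbf H}(s_\alpha,\lambda)=(\mathbf m_1(\alpha)^2-\lambda(\alpha^\vee)^2)\,\id$, while the two normalization denominators multiply to the same scalar by $(s_\alpha\lambda)(\alpha^\vee)=-\lambda(\alpha^\vee)$. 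Next I would prove (iii) by induction on $d(w_1,w_2):=\ell(w_1)+\ell(w_2)-\ell(w_1 w_2)$. The base case $d=0$ is immediate from (ii) because the concatenation of reduced expressions of $w_1$ and $w_2$ is then a reduced expression of $w_1 w_2$. For $d>0$ the exchange condition supplies $\alpha\in\Pi$ with $\ell(w_1 s_\alpha)=\ell(w_1)-1$ and $\ell(s_\alpha w_2)=\ell(w_2)-1$; setting $w_1':=w_1 s_\alpha$ and $w_2':=s_\alpha w_2$, I factor each of $\Tilde{\mathcal A}_{\mathbf H}(w_1,w_2\lambda)$ and $\Tilde{\mathcal A}_{\mathbf H}(w_2,\lambda)$ by (ii), cancel the two adjacent $s_\alpha$-factors via the identity above (noting $s_\alpha w_2\lambda=w_2'\lambda$), and invoke the inductive hypothesis on $(w_1',w_2')$, for which $d(w_1',w_2')=d(w_1,w_2)-2$.

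The main technical point I anticipate is the bookkeeping of the loci in $\lambda$ on which the individual operators are defined. My intended remedy is to carry out every computation first as an identity of rational functions in $\lambda$ valued in the endomorphism algebra of the fixed $|W|$-dimensional space of functions on $W$; any such identity then specializes wherever both sides are regular, which covers precisely the hypotheses of (i)--(iii). This sidesteps entirely the meromorphic-continuation and generic-irreducibility steps used on the $G$-side in Proposition \ref{prop:AG} (iii).
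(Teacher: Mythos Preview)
Your treatments of (i) and (ii) are correct. For (ii) you compute the normalizing scalar directly via the standard bijection $i\mapsto w_i^{-1}\alpha_i$ onto the inversion set $R_1^+\cap(-w^{-1}R_1^+)$, whereas the paper (following its proof of Proposition~\ref{prop:AG}~(ii)) pins the scalar down by the normalization $\mathbf 1_{\mathbf H}^\lambda\mapsto\mathbf 1_{\mathbf H}^{w\lambda}$; both work, and yours is more explicit.

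Your proof of (iii) has a genuine gap. The claim that whenever $d(w_1,w_2)>0$ there exists $\alpha\in\Pi$ with both $\ell(w_1 s_\alpha)=\ell(w_1)-1$ and $\ell(s_\alpha w_2)=\ell(w_2)-1$ is \emph{false}. Take $W$ of type $A_2$ and $w_1=w_2=s_1 s_2$: then $w_1 w_2=(s_1 s_2)^2=s_2 s_1$, so $d=2>0$, yet the only right descent of $w_1$ is $\alpha_2$ while the only left descent of $w_2$ is $\alpha_1$. (The exchange condition in Coxeter theory is a statement about deleting a letter from a reduced word; it does not furnish a common descent of this kind.) Your inductive step therefore breaks down at such pairs.

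The repair stays entirely within your algebraic framework: induct on $\ell(w_1)$ rather than on $d$. For $\ell(w_1)>0$ choose any right descent $\alpha$ of $w_1$ and write $w_1=w_1's_\alpha$, so that (ii) gives $\Tilde{\mathcal A}_{\mathbf H}(w_1,w_2\lambda)=\Tilde{\mathcal A}_{\mathbf H}(w_1',s_\alpha w_2\lambda)\,\Tilde{\mathcal A}_{\mathbf H}(s_\alpha,w_2\lambda)$. If $\alpha$ is also a left descent of $w_2$, your cancellation $\Tilde{\mathcal A}_{\mathbf H}(s_\alpha,s_\alpha\mu)\,\Tilde{\mathcal A}_{\mathbf H}(s_\alpha,\mu)=\id$ applies and one is reduced to the pair $(w_1',s_\alpha w_2)$. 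If not, then $\ell(s_\alpha w_2)=\ell(w_2)+1$ and (ii) gives $\Tilde{\mathcal A}_{\mathbf H}(s_\alpha,w_2\lambda)\,\Tilde{\mathcal A}_{\mathbf H}(w_2,\lambda)=\Tilde{\mathcal A}_{\mathbf H}(s_\alpha w_2,\lambda)$, again reducing to $(w_1',s_\alpha w_2)$. In both cases $\ell(w_1')<\ell(w_1)$ and $w_1'(s_\alpha w_2)=w_1 w_2$, so the induction goes through. Note that in the second case $d$ is unchanged, which is precisely why your original induction parameter fails.

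For comparison, the paper proves (iii) by a different route, parallel to Proposition~\ref{prop:AG}~(iii): it first establishes that $B_{\mathbf H}(\lambda)$ is irreducible for $\lambda\in i\Liea^*$, so by Schur's lemma the endomorphism $\Tilde{\mathcal A}_{\mathbf H}(w_2,\lambda)^{-1}\Tilde{\mathcal A}_{\mathbf H}(w_1,w_2\lambda)^{-1}\Tilde{\mathcal A}_{\mathbf H}(w_1w_2,\lambda)$ is a scalar, seen to equal $1$ by tracking $\mathbf 1_{\mathbf H}^\lambda$; the general case then follows since both sides are rational in $\lambda$. Your (corrected) combinatorial argument avoids the irreducibility input altogether.
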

\begin{proof}
Suppose $\alpha\in\Pi$ and $\lambda\in\Liea_\CC^*$.
For $F\in B_{\mathbf H}(\lambda)$ one easily calculates
\begin{equation}\label{eq:explicitAH}
\mathcal A_{\mathbf H}(s_\alpha,\lambda)F\,(w)
=\mathbf m_1(\alpha)F(w)-\lambda(\alpha^\vee)F(ws_\alpha)
\quad\text{for }w\in W.
\end{equation}
Hence we readily have $\mathcal A_{\mathbf H}(s_\alpha,\lambda)\ne 0$
and
$\Tilde{\mathcal A}_{\mathbf H}(s_\alpha,\lambda)
\mathbf 1_{\mathbf H}^{\lambda}
=(\mathbf m_1(\alpha)-\lambda(\alpha^\vee))\mathbf 1_{\mathbf H}^{s_\alpha\lambda}$.
These facts prove (i).

We can prove (ii) in the same way as in the proof of Proposition \ref{prop:AG} (ii).

It is well known that $B_{\mathbf H}(\lambda)$ is irreducible if and only if
\begin{equation}\label{eq:BHirr}
\lambda(\alpha^\vee)\ne\mathbf m_1(\alpha)
\quad\text{for any }\alpha\in R_1.
\end{equation}
Indeed \eqref{eq:BHirr} is equivalent to the condition
that both $\mathcal P_{\mathbf H}^\lambda$ and
$\mathcal P_{\mathbf H}^{-\bar\lambda}$
are bijective by Proposition \ref{prop:PoiBij}.
The former bijectivity implies
any non-zero submodule of $B_{\mathbf H}(\lambda)$
contains $\mathbf 1_{\mathbf H}^\lambda$ by Theorem \ref{thm:modstr} (iii)
and the latter implies
$B_{\mathbf H}(\lambda)=\mathbf H\mathbf 1_{\mathbf H}^\lambda$
by Proposition \ref{prop:sesquiHinv} and Theorem \ref{thm:modstr} (ii).
Thus \eqref{eq:BHirr} implies the irreducibility of $B_{\mathbf H}(\lambda)$.
Conversely, if $B_{\mathbf H}(\lambda)$ is irreducible
then both $\mathcal P_{\mathbf H}^\lambda$ and
$\mathcal P_{\mathbf H}^{-\bar\lambda}$
are clearly bijective and \eqref{eq:BHirr} holds.
In particular $B_{\mathbf H}(\lambda)$ is irreducible when $\lambda\in i\Liea_\CC^*$
and we can prove (iii) in the same way
as in the proof of Proposition \ref{prop:AG} (iii).
\end{proof}
The following is the main result of this section:
\begin{thm}\label{thm:it}
Suppose $\alpha\in\Pi$ and $\lambda\in\Liea_\CC^*$ satisfy
$\mathbf e_\alpha(-\lambda)\ne0$.
(Hence both $\Tilde{\mathcal A}_{G}(s_\alpha,\lambda)$ and
$\Tilde{\mathcal A}_{\mathbf H}(s_\alpha,\lambda)$ are well defined.)
Then
\[
(\Tilde{\mathcal A}_G(s_\alpha,\lambda),\Tilde{\mathcal A}_{\mathbf H}(s_\alpha,\lambda)):\,
\bigl(B_G(\lambda)_\Kf, B_{\mathbf H}(\lambda)\bigr)
\longrightarrow
\bigl(B_G(s_\alpha\lambda)_\Kf, B_{\mathbf H}(s_\alpha\lambda)\bigr)
\]
is a morphism of $\Crad$.
That is, if\/ $V\in\Km$ then

\noindent{\normalfont (i)}
for any $\Phi\in\Hom_K(V,C^\infty(K/M))$ it holds that
\begin{equation}\label{eq:it1}
\Tilde{\mathcal A}_G(s_\alpha,\lambda)(\Phi[v])(1)
=\frac{\mathbf m_1(\alpha)\,\Phi[v](1)
-\lambda(\alpha^\vee)\,\Phi[s_\alpha v](1)}%
{\mathbf m_1(\alpha)-\lambda(\alpha^\vee)}\,
\quad\forall v\in V^M_\single
\end{equation}
(cf.~\eqref{eq:explicitAH}) and

\noindent{\normalfont (ii)}
for any $\Phi\in\Hom_K(V,C^\infty(K/M))$ such that
\[
\Phi[v](1)=0\quad\forall v\in V^M_\double
\]
it holds that
\begin{equation}\label{eq:it2}
\Tilde{\mathcal A}_G(s_\alpha,\lambda)(\Phi[v])(1)=0
\quad\forall v\in V^M_\double.
\end{equation}
If\/ $\Real\lambda(\alpha^\vee)<0$ then \eqref{eq:it1} is written more explicitly as
\[
\int_{\bar s_\alpha^{-1}N\bar s_\alpha\,\cap\, \theta N}
\Phi[s_\alpha v](\bar n)\,d\bar n
=
\frac{\mathbf c_\alpha(-\lambda)}{\mathbf c_\alpha(\rho)}\cdot
\frac{\mathbf m_1(\alpha)\,\Phi[v](1)
-\lambda(\alpha^\vee)\,\Phi[s_\alpha v](1)}%
{\mathbf m_1(\alpha)-\lambda(\alpha^\vee)}.
\]
\end{thm}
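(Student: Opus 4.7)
The plan is first to translate the abstract morphism property in $\Crad$ into the concrete identities (i)--(ii) using the explicit radial restrictions of Theorem \ref{thm:R3B}, then to reduce (i) and (ii) to a rank-one integral. Because $\gamma_{B(\mu)}(F)(w) = F(\bar w)$, for any $\Phi \in \Hom_K(V, B_G(\mu))$ and $v \in V^M$ one has $\tilde\Gamma^V_{B(\mu)}(\Phi)[v](w) = \Phi[\bar w^{-1}v](1) = \Phi[w^{-1}v](1)$. Combined with the $K$-equivariance of $\Tilde{\mathcal A}_G(s_\alpha,\lambda) \circ \Phi$ and the $W$-equivariance of $\Tilde{\mathcal A}_{\mathbf H}(s_\alpha,\lambda) \circ \tilde\Gamma^V_{B(\lambda)}(\Phi)$, condition \eqref{cond:Ch1} at every $w \in W$ reduces to the single evaluation at $w = 1$; inserting the explicit formula \eqref{eq:explicitAH} yields exactly (i). Condition \eqref{cond:Ch2} becomes (ii) in the same way.

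Since both sides of (i) are meromorphic in $\lambda$ with polar locus contained in the zero set of $\mathbf e_\alpha(-\lambda)$, it suffices to establish (i) on the open dense half-space $\Real \lambda(\alpha^\vee) < 0$, where the integral defining $\mathcal A_G(s_\alpha,\lambda)$ converges absolutely and (i) becomes the explicit integral identity displayed at the end of the theorem. The integration domain $\bar s_\alpha^{-1} N \bar s_\alpha \cap \theta N$ lies in the rank-one analytic subgroup $G_\alpha \subset G$ associated with $\alpha$, so using the Iwasawa decomposition $\bar n = n'\, a(\bar n)\, k(\bar n)$ inside $G_\alpha$, with $k(\bar n) \in K_\alpha := K \cap G_\alpha$, the integrand rewrites as $a(\bar n)^{\lambda - \rho}\, \Phi\bigl[p^V\bigl(k(\bar n)^{-1} s_\alpha v\bigr)\bigr](1)$. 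This turns the problem into a rank-one integral that only involves the $K_\alpha$-action on $V^M$.

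For $v \in V^M_\single$, Lemma \ref{lem:qsp} (i) implies that $Z_\alpha$ acts on $V^M_\single$ via the relation $Z_\alpha(Z_\alpha^2 + 4) = 0$, so the $K_\alpha$-orbit of $s_\alpha v$ stays inside $\CC v + \CC s_\alpha v \subset V^M_\single$ and $p^V(k(\bar n)^{-1} s_\alpha v)$ has coefficients in $v,\,s_\alpha v$ expressible in terms of $\cosh,\sinh$ of the rank-one Iwasawa coordinate. Integration against $a(\bar n)^{\lambda - \rho}$ then produces standard beta-type integrals which, combined with the normalizing factor $\mathbf c_\alpha(\rho)/\mathbf c_\alpha(-\lambda)$, yield exactly the affine combination $(\mathbf m_1(\alpha)\Phi[v](1) - \lambda(\alpha^\vee)\Phi[s_\alpha v](1))/(\mathbf m_1(\alpha) - \lambda(\alpha^\vee))$, proving (i). For $v \in V^M_\double$, Lemma \ref{lem:qsp} (ii) guarantees that $p^V(k(\bar n)^{-1} s_\alpha v) \in V^M_\double$ for every $\bar n$, so the vanishing hypothesis of (ii) propagates through the integral. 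The main obstacle is this rank-one beta computation: one must carefully match the $G$-side normalization (encoded in $\mathbf c_\alpha$) with Opdam's $\mathbf H$-side normalization from the definition of $\Tilde{\mathcal A}_{\mathbf H}(s_\alpha,\lambda)$, handling separately the cases $2\alpha \in R$ and $2\alpha \notin R$. Once this matching is done, meromorphic continuation extends (i) and (ii) to all $\lambda$ with $\mathbf e_\alpha(-\lambda) \neq 0$.
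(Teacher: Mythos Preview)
Your reduction of the abstract morphism conditions \eqref{cond:Ch1}, \eqref{cond:Ch2} to the concrete identities (i), (ii) is correct, and the idea of meromorphic continuation from a convenient region is sound. But the route you then take---a direct rank-one integral computation---is both different from the paper's and incomplete.

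The paper bypasses all computation. It reduces by holomorphy to $\lambda\in i\Liea^*$, where both $\mathcal P_G^{s_\alpha\lambda}$ and $\mathcal P_{\mathbf H}^{s_\alpha\lambda}$ are bijective. By Proposition~\ref{prop:PoissonUniq} and \eqref{eq:image1G}, \eqref{eq:image1H} one has $\mathcal P_G^{\lambda}=\mathcal P_G^{s_\alpha\lambda}\circ\Tilde{\mathcal A}_G(s_\alpha,\lambda)$ and $\mathcal P_{\mathbf H}^{\lambda}=\mathcal P_{\mathbf H}^{s_\alpha\lambda}\circ\Tilde{\mathcal A}_{\mathbf H}(s_\alpha,\lambda)$, so $(\Tilde{\mathcal A}_G,\Tilde{\mathcal A}_{\mathbf H})=(\mathcal P^{s_\alpha\lambda})^{-1}\circ\mathcal P^{\lambda}$. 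Since $\mathcal P^{\lambda}$ is already a morphism in $\Crad$ by Theorem~\ref{thm:Poisson}, the composition of a morphism with an isomorphism is again a morphism. That is the entire proof.

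Your approach, by contrast, leaves the essential step undone: you assert that the rank-one integral evaluates to a beta-type expression matching the $\mathbf H$-side normalization, but you do not carry out this computation, and you yourself call it ``the main obstacle.'' This is not a minor bookkeeping step---it is the whole content of the identity (i), and the two cases $2\alpha\in R$ and $2\alpha\notin R$ behave quite differently. Moreover, your argument for (ii) overstates what Lemma~\ref{lem:qsp}~(ii) gives: that lemma controls $p^V\bigl((X_\alpha+\theta X_\alpha)^2 v\bigr)$ for $v\in V^M_\double$, but it does not immediately yield $p^V(k^{-1}v)\in V^M_\double$ for a general $k\in K_\alpha$, since the exponentiated action involves all powers $Z_\alpha^j v$ and the lemma does not iterate directly (the intermediate vectors $Z_\alpha^2 v$ need not lie in $V^M$). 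One would need a finer analysis of the $K_\alpha$-module structure of $V$ to close this gap. The paper's approach sidesteps all of this by reusing Theorem~\ref{thm:Poisson}, which already encodes the delicate compatibility between the $G$- and $\mathbf H$-side normalizations.
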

\begin{proof}
Since each side of \eqref{eq:it1} and \eqref{eq:it2}
is holomorphic in $\lambda$ for any fixed $\Phi$ and $v$,
it suffices to prove the theorem when $\lambda\in i\Liea_\CC^*$.
In this case both $\mathcal P_G^{s_\alpha\lambda}\bigr|_{B_G(s_\alpha\lambda)_\Kf}$
and $\mathcal P_{\mathbf H}^{s_\alpha\lambda}$ are bijective.
Therefore
\[
\mathcal P^{s_\alpha\lambda}=
(\mathcal P_G^{s_\alpha\lambda},\mathcal P_{\mathbf H}^{s_\alpha\lambda}):\,\bigl(B_G(s_\alpha\lambda)_\Kf, B_{\mathbf H}(s_\alpha\lambda)\bigr)
\longrightarrow\bigl(\mathscr A(G/K,s_\alpha\lambda)_\Kf,\mathscr A(A,s_\alpha\lambda)\bigr)
\]
is an isomorphism in $\Crad$ and its inverse is given by
\[
(\mathcal P^{s_\alpha\lambda})^{-1}=
\biggl(
\bigl(\mathcal P_G^{s_\alpha\lambda}\bigr|_{B_G(s_\alpha\lambda)_\Kf}
\bigr)^{-1},\,
\bigl(
\mathcal P_{\mathbf H}^{s_\alpha\lambda}\bigr)^{-1}
\biggr).
\]
Since it follows from Proposition \ref{prop:PoissonUniq} that
\begin{align*}
\mathcal P_G^{\lambda}\bigr|_{B_G(\lambda)_\Kf}
&=\mathcal P_G^{s_\alpha\lambda}\bigr|_{B_G(s_\alpha\lambda)_\Kf}
\circ\Tilde{\mathcal A}_G(s_\alpha,\lambda)\bigr|_{B_G(\lambda)_\Kf},\\
\mathcal P_{\mathbf H}^{\lambda}
&=\mathcal P_{\mathbf H}^{s_\alpha\lambda}
\circ\Tilde{\mathcal A}_{\mathbf H}(s_\alpha,\lambda),
\end{align*}
we conclude
\[
(\Tilde{\mathcal A}_G(s_\alpha,\lambda),\Tilde{\mathcal A}_{\mathbf H}(s_\alpha,\lambda))
=(\mathcal P^{s_\alpha\lambda})^{-1}\circ \mathcal P^{\lambda}
\]
is a morphism in $\Crad$.
\end{proof}
 
\section{The Helgason-Fourier transform and the Opdam-Cherednik transform}\label{sec:F}
Helgason introduces the Fourier transform on $G/K$ in \cite{Hel1}
as a non-invariant version of the spherical transform
while Opdam defines an analogous transform on $A$ in \cite{Op:Cherednik}.
We shall show they constitute a morphism of
radial pairs and study some related topics from this view point.

Recall the symbol $C_\cpt^\infty$ stands for the class of compactly supported $C^\infty$-functions.
By Theorem \ref{thm:Ch} for $\mathscr F=C^\infty_\cpt$ and Proposition \ref{prop:R3GK},
$\bigl(C^\infty_\cpt(G/K)_\Kf,C^\infty_\cpt(A)\bigr)$ is a radial pair with
radial restriction $\gamma_0$.
Put $\ell=\dim\Liea$ and let
$dH$ be $|W|^{-1}(2\pi)^{-\ell/2}$ times of the Euclidean measure on $\Liea$ relative to the metric given by the Killing form
$B(\cdot,\cdot)$.
Let $da$ be the corresponding Haar measure on $A$.
We normalize a $G$-invariant non-zero volume element $dx$ on $G/K$
so that
\begin{equation}\label{eq:HermiteRes}
\int_{G/K} f(x)\,dx
=\int_A f(a)
\prod_{\alpha\in\Sigma^+}\bigl|2\sinh \alpha(\log a)\bigr|^{\dim\Lieg_\alpha}\,da\quad
\text{for }f\in C_\cpt^\infty(G/K)^{\ell(K)}
\end{equation}
(see \cite[Ch.\,I,~Theorem~5.8]{Hel4}).
The sesquilinear form
\[
\sang{f_1}{f_2}_{G/K}=\int_{G/K}f_1(x)\overline{f_2(x)}\,dx
\]
on $C_\cpt^\infty(G/K)\times C^\infty(G/K)$ is invariant and non-degenerate
and of course the restriction of this form to $C_\cpt^\infty(G/K)\times C_\cpt^\infty(G/K)$
is a Hermitian inner product.
Define a sesquilinear form
\[
\sang{f_1}{f_2}_A
=\int_A f_1(a)\overline{f_2(a)}\prod_{\alpha\in\Sigma^+}\bigl|2\sinh \alpha(\log a)\bigr|^{\dim\Lieg_\alpha}\,da
\]
on $C_\cpt^\infty(A)\times C^\infty(A)$.
It has the following invariance property:
\begin{prop}[{\cite[Lemma~7.8]{Op:Cherednik}}]\label{prop:Hstar}
For any $f_1\in C_\cpt^\infty(A)$, $f_2\in C^\infty(A)$ and $h \in\mathbf H$ it holds that
\[
\sang{\mathscr T(h)f_1}{f_2}_A
=\sang{f_1}{\mathscr T(h^\star)f_2}_A.
\]
\end{prop}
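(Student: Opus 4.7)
The plan is to reduce the verification to generators of $\mathbf H$ and then perform a direct computation using integration by parts together with the $W$-invariance of the measure. Since $\mathbf H$ is generated as an algebra by $\CC W$ and $\Liea_\CC \subset S(\Liea_\CC)$, and since $\star$ is an antilinear anti-automorphism (so the relation $\langle \mathscr T(h_1 h_2) f_1, f_2\rangle_A = \langle f_1, \mathscr T((h_1 h_2)^\star) f_2\rangle_A$ follows from the corresponding relations for $h_1$ and $h_2$), it is enough to treat $h = w \in W$ and $h = \xi \in \Liea$.

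For $h = w \in W$, note that $\mathscr T(w)$ acts on $C^\infty(A)$ simply by the natural $W$-action and $w^\star = w^{-1}$. The identity $\sang{w f_1}{f_2}_A = \sang{f_1}{w^{-1} f_2}_A$ is then immediate from the $W$-invariance of $\delta(a) := \prod_{\alpha\in\Sigma^+}|2\sinh\alpha(\log a)|^{\dim\Lieg_\alpha}$ and of the Haar measure $da$.

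For $h = \xi \in \Liea$, I would expand $\mathscr T(\xi)$ according to Definition \ref{defn:Chered} and compute the formal adjoint of each summand against $\delta(a) da$. Integration by parts (with boundary terms killed by the compact support of $f_1$) transfers $\partial(\xi)$ to $f_2$ and produces a multiplicative correction equal to the logarithmic derivative
\[
\partial(\xi) \log \delta(a)
= \sum_{\alpha\in R^+} \mathbf{m}(\alpha)\,\alpha(\xi)\coth\tfrac{\alpha(\log a)}{2}.
\]
For the difference part $\sum_{\alpha\in R^+}\mathbf m(\alpha)\tfrac{\alpha(\xi)}{1-e^{-\alpha}}(1-s_\alpha)$, the change of variables $a \mapsto s_\alpha a$ combined with the invariance $\delta(s_\alpha a) = \delta(a)$ and the identity $\tfrac{1}{1-e^{-\alpha}} + \tfrac{1}{1-e^\alpha} = 1$ converts the adjoint into a similar difference operator on $f_2$ plus an explicit multiplicative term. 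The scalar $-\rho(\xi)$ contributes $-\rho(\xi)$ to the adjoint. Summing these contributions and organizing by root (and by whether $2\alpha \in R$, to pass from $\mathbf m$ to $\mathbf m_1$) one arrives at the identity
\[
\sang{\mathscr T(\xi) f_1}{f_2}_A + \sang{f_1}{\mathscr T(\xi) f_2}_A
= -\sum_{\alpha\in R_1^+} \mathbf m_1(\alpha)\,\alpha(\xi)\,\sang{f_1}{s_\alpha f_2}_A.
\]

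To finish, I would translate the desired formula $\sang{\mathscr T(\xi) f_1}{f_2}_A = \sang{f_1}{\mathscr T(\xi^\star) f_2}_A$ into the same shape: using Definition \ref{defn:BH}, $\xi^\star = -w_0 w_0(\xi) w_0$, and the $w_0$-invariance of $\delta(a)da$ together with relation \eqref{eq:wxiw} applied to $w_0 \xi w_0$, the right-hand side $\sang{f_1}{\mathscr T(\xi^\star) f_2}_A$ unfolds to exactly $-\sang{f_1}{\mathscr T(\xi) f_2}_A - \sum_{\alpha\in R_1^+}\mathbf m_1(\alpha)\alpha(\xi)\sang{f_1}{s_\alpha f_2}_A$, matching the identity derived above. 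The main technical obstacle is the careful bookkeeping in the difference-operator calculation: one must correctly combine the cross terms coming from the adjoint of $\partial(\xi)$ (the $\coth\tfrac{\alpha}{2}$ correction) with those arising from symmetrizing $\tfrac{1}{1-e^{-\alpha}}(1-s_\alpha)$, and in the $2\alpha \in R$ case absorb the doubled root contribution so that the total coefficient becomes $\mathbf m_1(\alpha)$ rather than $\mathbf m(\alpha)$. Once this calculation is cleanly done, the reduction to generators closes the argument.
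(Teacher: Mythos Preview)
The paper does not supply its own proof of this proposition; it is quoted directly from \cite[Lemma~7.8]{Op:Cherednik}. Your outline is the standard argument and matches Opdam's original proof: reduce to generators, handle $W$ by invariance of the weight $\delta(a)\,da$, and for $\xi\in\Liea$ combine integration by parts with the reflection symmetry of the difference terms. Your reduction of the target identity via \eqref{eq:wxiw} is correct: since $w_0^2=1$ one gets $\xi^\star=-\xi-\sum_{\alpha\in R_1^+}\mathbf m_1(\alpha)\,\alpha(\xi)\,s_\alpha$ in $\mathbf H$, so the claim is exactly equivalent to the symmetric identity you display. The bookkeeping you flag as the obstacle is genuine but routine; once the $\coth\tfrac{\alpha}{2}$ correction from $\partial(\xi)\log\delta$ is matched against the cross terms from symmetrizing $\tfrac{1}{1-e^{-\alpha}}(1-s_\alpha)$, the passage from $\mathbf m$ to $\mathbf m_1$ on the non-reduced roots goes through.
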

As in the previous sections, we consider $C^\infty(A)$ (or $C^\infty_\cpt(A)$)
as an $\mathbf H$-module by
$hf=\mathscr T(\theta_{\mathbf H}h)f$.
Since $(\theta_{\mathbf H}h)^\star=\theta_{\mathbf H}(h^\star)$ for $h\in\mathbf H$, $(\cdot,\cdot)_A$ is a non-degenerate invariant sesquilinear form
on $C_\cpt^\infty(A)\times C^\infty(A)$ which restricts to
a Hermitian inner product on $C_\cpt^\infty(A)\times C^\infty_\cpt(A)$.
The next proposition is an easy corollary of \eqref{eq:HermiteRes}:
\begin{prop}
Suppose $V\in\Km$ and take bases $\{v_1,\ldots,v_{m},\ldots,v_n\}\subset V$
and $\{v_i^\star\}\subset V^\star$ as in
{\normalfont Proposition \ref{prop:Blcomp}}.
Then for any $\Phi_1\in\Hom_K(V,C^\infty_\cpt(G/K)_\Kf)$
and $\Phi_2\in\Hom_K(V^\star,C^\infty(G/K)_\Kf)$
\[
\sum_{i=1}^n
\bigl(\Phi_1[v_i],\Phi_2[v_i^\star]\bigr)_{G/K}
=\sum_{i=1}^{m}
\bigl(
\Gamma^{V}_0(\Phi_1)[v_i],
\Gamma^{V^\star}_0(\Phi_2)[v_i^\star]
\bigr)_A.
\]
In particular,
for $\bigl(C^\infty_\cpt(G/K)_\Kf,C^\infty_\cpt(A)\bigr)$
and $\bigl(C^\infty(G/K)_\Kf,C^\infty(A)\bigr)$,
the pair of $(\cdot,\cdot)_{G/K}$ and $(\cdot,\cdot)_{A}$
is compatible with restriction in the sense of\/ {\normalfont Definition \ref{defn:compRM}}.
\end{prop}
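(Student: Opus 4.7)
The plan is to mirror the strategy of Proposition \ref{prop:Blcomp}, using the Weyl integration formula \eqref{eq:HermiteRes} in place of the definition of $\threeset{\mathscr U}{\cdot,\cdot}{G}{\mathscr U^\star}$. First I would form the function
\[
F(x):=\sum_{i=1}^n \Phi_1[v_i](x)\,\overline{\Phi_2[v_i^\star](x)} \in C^\infty_\cpt(G/K)
\]
and check that it is $\ell(K)$-invariant. The crucial input is that the tensor $\sum_i v_i \otimes v_i^\star \in V\otimes V^\star$ is $K$-invariant: it is the image of the canonical (identity) element $\sum_i v_i \otimes v_i^* \in V\otimes V^*$ under the antilinear isomorphism $V^* \to V^\star,\ v^*\mapsto\overline{v^*}$, which is $K$-equivariant. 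Consequently, $\sum_i kv_i \otimes kv_i^\star = \sum_i v_i\otimes v_i^\star$ for every $k \in K$, and combined with the $K$-equivariance of $\Phi_1$ and $\Phi_2$ this gives $F(kx)=F(x)$. Applying \eqref{eq:HermiteRes} to $F$ then reduces the proposition to computing $F|_A$.

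Next I would analyse $F$ on $A$. For any $a \in A$ and any $\Phi \in \Hom_K(V,C^\infty(G/K))$, the linear functional $v\mapsto \Phi[v](a)$ is $M$-invariant: since $M\subset K$ centralises $A$, for $m\in M$ one has $\Phi[v](ma)=\Phi[v](am)=\Phi[v](a)$ (the last equality because $amK=aK$ in $G/K$), while $K$-equivariance gives $\Phi[v](ma)=\Phi[m^{-1}v](a)$. Hence $\Phi_1[v_i](a)=0$ whenever $v_i\in (V^M)^\perp$, in particular for $i>m$. Moreover, $\mathrm{span}\{v_1^*,\ldots,v_m^*\}\subset V^*$ is the annihilator of the $M$-stable subspace $(V^M)^\perp$; it is therefore the dual of the $M$-trivial space $V^M$, so each $v_i^*$ (and hence each $v_i^\star=\overline{v_i^*}$) is $M$-invariant for $i\le m$. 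For such $i$, $\Phi_1[v_i](a)=\Gamma_0^V(\Phi_1)[v_i](a)$ and $\Phi_2[v_i^\star](a)=\Gamma_0^{V^\star}(\Phi_2)[v_i^\star](a)$, and substituting into the $A$-integral yields exactly the right-hand side of the asserted identity.

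For the ``in particular'' clause, I would refine the basis so that $\{v_1,\ldots,v_{m'}\}$ is a basis of $V^M_\single$ and $\{v_{m'+1},\ldots,v_m\}$ of $V^M_\double$, which is permitted because $V^M=V^M_\single\oplus V^M_\double$ is an orthogonal decomposition of $W$-modules (cf.\ the proof of \cite[Lemma~3.3]{Oda:HC}); the corresponding $\{v_i^\star\}$ behave symmetrically. When $\Phi_1\in\Hom_K^\ttt$, by definition $\Gamma_0^V(\Phi_1)[v_i]=0$ for $m'<i\le m$, so the sum from $1$ to $m$ collapses to the sum from $1$ to $m'$; the symmetric case for $\Phi_2$ is identical. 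This is exactly the compatibility condition of Definition \ref{defn:compRM}. I do not anticipate any serious obstacle, as the argument is a direct translation of the one in Proposition \ref{prop:Blcomp}: the only delicate point is the bookkeeping of the $K$-action on $V^\star$ needed to justify $K$-invariance of $\sum_i v_i\otimes v_i^\star$, and the verification that point evaluations on $A$ are $M$-invariant, both of which are elementary once correctly set up.
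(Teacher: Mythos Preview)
Your proof is correct and follows exactly the approach the paper has in mind: the paper states only that the proposition is ``an easy corollary of \eqref{eq:HermiteRes}'', and your argument---forming the $K$-invariant function $F(x)=\sum_i\Phi_1[v_i](x)\overline{\Phi_2[v_i^\star](x)}$, applying the Weyl integration formula, and reducing $F|_A$ using $M$-invariance of point evaluations---is precisely the intended unpacking. Your handling of the ``in particular'' clause is also correct; the one point you leave implicit (that $v_i^\star\in(V^\star)^M_\double$ for $m'<i\le m$) follows because, writing $v_i^\star=(u_i,\cdot)$ for the $K$-invariant inner product, the condition $(u_i,v_j)=\delta_{ij}$ forces $u_i\perp V^M_\single\oplus(V^M)^\perp$, hence $u_i\in V^M_\double$.
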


The transforms we study in this section are the following:
\begin{defn}[the Helgason-Fourier transform {\cite[Ch.\,III, \S1, No.\,1]{Hel5}}]\label{defn:HelF}
For $g\in G$
let $A(g)$ be as in \S\ref{sec:Poisson}.
Suppose $f(x)\in C^\infty_\cpt(G/K)$.
For $\lambda\in\Liea^*$ and $k\in K$ we put
\[
\mathcal F_G f\,(\lambda,k)=\int_{G/K} f(x)\, e^{(-i\lambda+\rho)(A(k^{-1}x))}\,dx.
\]
Since
$\mathcal F_G f\,(\lambda,km)=\mathcal F_G f\,(\lambda,k)$ for any $m\in M$,
$\mathcal F_G f\,(\lambda, k)$ is a function on $\Liea^*\times K/M$.
Since the integral converges for any $\lambda\in\Liea^*_\CC$,
$\mathcal F_G f\,(\lambda, k)$ extends to an analytic function on $\Liea^*_\CC\times K/M$
which is holomorphic in $\lambda\in\Liea^*_\CC$.
\end{defn}

\begin{defn}[the Opdam-Cherednik transform {\cite[Definition 7.9]{Op:Cherednik}}]
Suppose $f(a)\in C^\infty_\cpt(A)$.
For $\lambda\in\Liea^*$ and $w\in W$ we put
\[
\mathcal F_{\mathbf H} f\,(\lambda,w)=\int_{A} f(a)\, 
\mathbf G(-i\lambda,w^{-1}a)\prod_{\alpha\in\Sigma^+}\bigl|2\sinh \alpha(\log a)\bigr|^{\dim\Lieg_\alpha}\,da.
\]
One can see from Theorem \ref{thm:G} that for each fixed $w\in W$, $\mathcal F_{\mathbf H} f\,(\lambda, w)$ extends to an entire holomorphic function in $\lambda\in\Liea^*_\CC$.
\end{defn}
Let us pack the target spaces of these transforms into a radial pair.
For any $\lambda\in\Liea_\CC^*$ naturally
$B_G(i\lambda)\simeq C^\infty(K/M)$
and $B_{\mathbf H}(i\lambda)\simeq (\CC W)^*$ by restriction.
Using injections
\begin{align*}
C^\infty(\Liea^*\times K/M) \ni F(\lambda, k) &\longmapsto \bigl(k\mapsto F(\lambda, k)\bigr)_{\lambda\in\Liea^*}\in \prod_{\lambda\in\Liea^*} B_G(i\lambda),\\
C^\infty(\Liea^*\times W) \ni F(\lambda, w) &\longmapsto \bigl(w\mapsto F(\lambda, w)\bigr)_{\lambda\in\Liea^*}\in \prod_{\lambda\in\Liea^*} B_{\mathbf H}(i\lambda),
\end{align*}
we can consider $C^\infty(\Liea^*\times K/M)$ and $C^\infty(\Liea^*\times W)$
as $G$- and $\mathbf H$-modules respectively.
In fact one easily checks that these two spaces are
closed under the action of $G$ or $\mathbf H$
and that $G$ acts continuously on $C^\infty(\Liea^*\times K/M)$
equipped with the topology of compact convergence in all derivatives.
By a similar argument to \S\ref{sec:Poisson},
for each fixed $\lambda\in\Liea^*$ the map
\begin{align*}
C^\infty_\cpt(G/K)\ni f(x)\longmapsto
\biggl(G\ni g\longmapsto&
\int_{G/K} f(x)\, e^{(-i\lambda+\rho)(A(g^{-1}x))}\,dx\\
&\qquad =\bigl(f(x),\, e^{(i\lambda+\rho)(A(g^{-1}x))}\bigr)_{G/K}
\biggr)
\in B_G(i\lambda)
\end{align*}
and the map
\begin{align*}
C^\infty_\cpt(A)\ni f(a)\mapsto\biggl(
\mathbf H\ni h\mapsto&
\int_Af(a)\bigl(
\mathscr T(\theta_{\mathbf H}h)\,\mathbf G(-i\lambda,a)\bigr)
\!\!\prod_{\alpha\in\Sigma^+}\bigl|2\sinh \alpha(\log a)\bigr|^{\dim\Lieg_\alpha}da\\
&\qquad\qquad=\bigl(f(a),\, \mathscr T\bigl(\overline{\theta_{\mathbf H}h}\bigr)\,\mathbf G(i\lambda,a)\bigr)_{A}\biggr)\in B_{\mathbf H}(i\lambda)
\end{align*}
are respectively $G$- and $\mathbf H$-homomorphisms.
This shows
$\mathcal F_G$ and $\mathcal F_{\mathbf H}$ are homomorphisms
(the continuity of $\mathcal F_G$ is clear from the definition).
Define the restriction map
$\gamma_B: C^\infty(\Liea^*\times K/M) \to C^\infty(\Liea^*\times W)$ by
\[F(\lambda,k)\longmapsto \bigl(\Liea^*\times W\ni (\lambda,w)\longmapsto F(\lambda,\bar w)\in\CC \,\bigr).
\]
Note that $\gamma_B(F)(\lambda,\cdot)=\gamma_{B(i\lambda)}(F(\lambda,\cdot))$
for any $\lambda\in\Liea^*$.
Suppose $V\in\Km$.
If $\Phi\in\Hom_K(V,C^\infty(\Liea^*\times K/M))$ then
\[
\Gamma^V_B(\Phi):=\gamma_B\circ\Phi\bigr|_{V^M}
\]
belongs to $\Hom_W(V^M, C^\infty(\Liea^*\times W))$.
Conversely if $\varphi\in \Hom_W(V^M, C^\infty(\Liea^*\times W))$
then $\Phi\in \Hom_K(V, C^\infty(\Liea^*\times K/M))$ defined by
\begin{equation}\label{eq:Blift}
\Phi[v](\lambda,k)=\varphi\bigl[p^V(k^{-1}v)\bigr](\lambda,1),
\end{equation}
with $p^V$ as in Theorem \ref{thm:Ch} (ii),
is a unique lift satisfying $\Gamma^V_B(\Phi)=\varphi$.
Hence it readily follows from Theorem \ref{thm:R3B}
that $\bigl(C^\infty(\Liea^*\times K/M)_\Kf, C^\infty(\Liea^*\times W)\bigr)$
is a radial pair with radial restriction $\gamma_B$.
\begin{rem}
For $w\in W$ let $\delta_w\ (w\in W)$
be the element in $B_{\mathbf H}(\lambda)\simeq(\CC W)^*$ such that
\[\delta_w(t)=\begin{cases}
1 &(t=w),\\
0 &(t\ne w).
\end{cases}
\]
Then $B_{\mathbf H}(\lambda)=\mathbf H\,\delta_{w_0}$
and by \eqref{eq:wxiw} one easily sees
\[
\xi\, \delta_{w_0} = -(w_0\lambda)(\xi)\,\delta_{w_0}
\quad\text{for }\xi\in\Liea_\CC.
\]
Hence $B_{\mathbf H}(\lambda)\simeq I_{-w_0\lambda}$ by $\delta_{w_0}\leftrightarrow1\otimes1$
where we put $I_\lambda=\mathbf H\otimes_{S(\Liea_\CC)}\CC_\lambda$
for any $\lambda\in\Liea_\CC^*$ according to \cite{Op:Cherednik}.
On the other hand, if $I_\lambda^{\theta_{\mathbf H}}$ denotes
$I_\lambda$ endowed with the twisted $\mathbf H$-module structure by $\theta_{\mathbf H}$,
then $I_\lambda^{\theta_{\mathbf H}}\simeq I_{-w_0\lambda}$
by $w_0\otimes1 \leftrightarrow 1\otimes1$.
Hence $I_\lambda^{\theta_{\mathbf H}}\simeq B_{\mathbf H}(\lambda)$
by $w\otimes 1\leftrightarrow \delta_w$ ($\forall w\in W$).

Now let $\mathcal F'_{\mathbf H}$ be exactly the same as
Opdam's Cherednik transform  `$\mathcal F$'
defined in \cite[Definition 7.9]{Op:Cherednik}
and let $(\cdot,\cdot)$ be the inner product on $I_{i\lambda}$ $(\lambda\in\Liea^*)$
used in his definition.
Then for $\lambda\in\Liea^*$,
$\mathcal F'_{\mathbf H}(i\lambda)\in I_{i\lambda}$
is defined by
\begin{align*}
(\mathcal F'_{\mathbf H}f\,(i\lambda),w\otimes1)
&=(2\pi)^{\ell/2}|W|\int_{A}f(a)\, \mathbf G(-i\lambda,w^{-1}a)\,da\\
&=(2\pi)^{\ell/2}|W|\,\mathcal F_{\mathbf H}f\,(\lambda,w)
\qquad\text{for }\forall w\in W
\end{align*}
(note Opdam employs $(2\pi)^{\ell/2}|W|da$ as a Haar measure on $A$)
and under the identification $I_{i\lambda}
=I_{i\lambda}^{\theta_{\mathbf H}}\simeq B_{\mathbf H}(i\lambda)\simeq (\CC W)^*$
it holds that
\[
\mathcal F'_{\mathbf H}(i\lambda)
=
(2\pi)^{\ell/2}|W|\,\mathcal F_{\mathbf H}f\,(\lambda,\cdot).
\]
\end{rem}
We prepare some function spaces.
\begin{defn}
Suppose $\eta>0$.
Let $\PW_\eta(\Liea^*)$ be the space of holomorphic functions $\psi(\lambda)$
on $\Liea_\CC^*$ such that
\begin{equation}\label{eq:PWcond}
\sup_{\lambda\in\Liea_\CC^*}
e^{-\eta|\Imaginary \lambda|}(1+|\lambda|)^N|\psi(\lambda)|<\infty
\quad\text{for each }N\in\ZZ_{\ge0}.
\end{equation}
Let $\PW_\eta(\Liea^*\times W)$ be the space of 
holomorphic functions $F(\lambda,w)$ on $\Liea^*_\CC\times W$
such that $F(\cdot, w)\in \PW_\eta(\Liea^*)$ for each $w\in W$.
We naturally consider $\PW_\eta(\Liea^*\times W)\subset C^\infty(\Liea^*\times W)$.
One easily observes  by use of \eqref{eq:wxiw}
that this is an $\mathbf H$-submodule.
Let $\PW_\eta(\Liea^*\times K/M)$ be the space of those continuous functions on
$\Liea_\CC^*\times K/M$ which are holomorphic in $\lambda$ and satisfying
\begin{equation}\label{eq:PWcondG}
\sup_{(\lambda,k)\in\Liea_\CC^*\times K/M}
e^{-\eta|\Imaginary \lambda|}(1+|\lambda|)^N|F(\lambda,k)|<\infty
\quad\text{for each }N\in\ZZ_{\ge0}.
\end{equation}
This is a  Fr\'echet space
by the system of seminorms
\begin{equation*}
||F||_N
=\sup_{(\lambda,k)\in\Liea_\CC^*\times K/M}
e^{-\eta|\Imaginary \lambda|}(1+|\lambda|)^N|F(\lambda,k)|
\qquad(N\in\ZZ_{\ge0}).
\end{equation*}
Moreover we put
\[
\PW(\Liea^*\times K/M)=\bigcup_{\eta>0}\PW_\eta(\Liea^*\times K/M),\quad
\PW(\Liea^*\times W)=\bigcup_{\eta>0}\PW_\eta(\Liea^*\times W).\]
\end{defn}
\begin{defn}\label{defn:WinvGH}
Let $\tPW_\eta(\Liea^*\times K/M)$ be the closed subspace of 
$\PW_\eta(\Liea^*\times K/M)$ consisting of those functions $F$ satisfying
\begin{equation*}%\label{eq:WinvG}
\begin{aligned}
\int_K F(t\lambda,k)\,e^{(it\lambda+\rho)(A(k^{-1}x))}\,dk
&=\int_K F(\lambda,k)\,e^{(i\lambda+\rho)(A(k^{-1}x))}\,dk\\
&\qquad\qquad\text{for any }t\in W,\lambda\in\Liea^*\text{ and }x\in G.
\end{aligned}
\end{equation*}
Let $\tPW_\eta(\Liea^*\times W)$ be the subspace of 
$\PW_\eta(\Liea^*\times W)$ consisting of those functions $F$ satisfying
\begin{equation*}\label{eq:WinvH}
\begin{aligned}
\sum_{w\in W} F(t\lambda,w)\,\mathbf G(it\lambda,w^{-1}a)
&=\sum_{w\in W} F(\lambda,w)\,\mathbf G(i\lambda,w^{-1}a)\\
&\qquad\qquad\text{for any }t\in W,\lambda\in\Liea^*\text{ and }a\in A.
\end{aligned}
\end{equation*}
Moreover we put
\[
\tPW(\Liea^*\times K/M)=\bigcup_{\eta>0}\tPW_\eta(\Liea^*\times K/M),\quad
\tPW(\Liea^*\times W)=\bigcup_{\eta>0}\tPW_\eta(\Liea^*\times W).\]
We equip $\tPW(\Liea^*\times K/M)$
with the topology of the inductive limit.
\end{defn}
\begin{defn}
Let $d$ denote the distance function on $G/K$ or $A$
given by the Riemannian metric corresponding to the Killing form. 
Put for each $\eta>0$
\begin{align*}
C^\infty_\eta(G/K)&=\{
f(x)\in C^\infty(G/K);\,f(x)=0\text{ whenever }d(x,1K)\ge \eta
\},\\
C^\infty_\eta(A)&=\{
f(a)\in C^\infty(A);\,f(a)=0\text{ whenever }d(a,1)\ge \eta
\}.
\end{align*}
They are Fr\'echet spaces with the topology of uniform convergence
in all derivatives.
(The topology of $C^\infty_\eta(A)$ in not used in the paper.)
\end{defn}
Now we describe Helgason's results on $\mathcal F_G$
with some subsidiary information.
Let $d\lambda$ is $|W|^{-1}(2\pi)^{-\ell/2}$ times of
the Euclidean measure on $\Liea^*$ induced by the Killing form.
\begin{prop}\label{prop:FourierG}
{\normalfont (i)}
If $F\in\PW_\eta(\Liea^*\times K/M)$ then
\begin{equation}\label{eq:adjG}
\mathcal J_GF\,(x)=\int_{\Liea^*}\left(\int_K
e^{(i\lambda+\rho)(A(k^{-1}x))}
F(\lambda, k)\,dk\right)\,|\mathbf c(\lambda)|^{-2}\,d\lambda
\end{equation}
converges for all $x\in G$ and defines a $C^\infty$ function on $G/K$. 
Here $\mathbf c(\lambda)$ is Harish-Chandra's $\mathbf c$-function
defined by
\[
\mathbf c(\lambda)=\prod_{\alpha\in R_1^+}
\frac{\mathbf c_\alpha(i\lambda)}{\mathbf c_\alpha(\rho)}.\]
The linear map $\mathcal J_G:\PW_\eta(\Liea^*\times K/M)\to C^\infty(G/K)$
is continuous (\/$C^\infty(G/K)$ has the topology of compact convergence
in all derivatives). 

\noindent {\normalfont (ii)}
The transform $\mathcal F_G$ is a bijection of $C^\infty_\cpt(G/K)$
onto $\tPW(\Liea^*\times K/M)$.
More precisely, for each $\eta>0$, $C^\infty_\eta(G/K)$ is isomorphic to $\tPW_\eta(\Liea^*\times K/M)$
by $\mathcal F_G$ as a topological vector space.
The inverse map is given by $\mathcal J_G$.

\noindent {\normalfont (iii)}
For any $f_1, f_2\in C^\infty_\cpt(G/K)$
\[
(f_1,f_2)_{G/K}=\int_{\Liea^*}\int_K \mathcal F_Gf_1\,(\lambda,k)\,\overline{\mathcal F_Gf_2\,(\lambda,k)}\,
dk\,|\mathbf c(\lambda)|^{-2}\,d\lambda.
\]
\end{prop}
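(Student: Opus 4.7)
The plan is to derive this proposition from Helgason's classical work on the Fourier transform on Riemannian symmetric spaces (as developed across \cite{Hel1, Hel2, Hel4, Hel5}), with only bookkeeping adjustments for our normalization conventions. I would not attempt to reprove the deep analytic results from scratch; instead, my task is to identify where each assertion sits in Helgason's theory and to verify that the continuity statements, which are not always foregrounded in the standard references, follow from the usual estimates.

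For part (i), I would first show that the inner integral $\int_K e^{(i\lambda+\rho)(A(k^{-1}x))} F(\lambda,k)\,dk$ is dominated in absolute value by a constant multiple of $e^{(|\Real i\lambda|+\|\rho\|)d(x,1K)}\sup_k|F(\lambda,k)|$, where the exponential growth in $\lambda$ is absorbed by the Paley--Wiener decay \eqref{eq:PWcondG}: in combination with the standard estimate $|\mathbf c(\lambda)|^{-2}=O(|\lambda|^{2|R^+_1|})$ on $\Liea^*$, choosing $N$ large enough in \eqref{eq:PWcondG} makes the outer integral \eqref{eq:adjG} absolutely convergent on any compact set in $G/K$. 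Differentiation under the integral sign (in $x$) produces factors that are polynomial in $\lambda$ times the same integrand, so the same choice of $N$ gives $\mathcal J_GF\in C^\infty(G/K)$ and continuous dependence on $F$ in the seminorms $||\cdot||_N$.

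For part (ii), I would cite Helgason's Paley--Wiener theorem \cite[Ch.\,III, Theorem 5.1]{Hel5} (or \cite{Hel2}) which asserts exactly that $\mathcal F_G$ is a bijection of $C^\infty_\cpt(G/K)$ onto $\tPW(\Liea^*\times K/M)$, together with Helgason's inversion formula identifying $\mathcal J_G\circ\mathcal F_G=\id$ on $C^\infty_\cpt(G/K)$. Surjectivity gives $\mathcal F_G\circ \mathcal J_G=\id$ on $\tPW$ as well. The $W$-invariance condition in Definition \ref{defn:WinvGH} corresponds precisely to Helgason's condition that the Poisson integrals match; this is built into the characterization of the image. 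For the sharpened statement that $C^\infty_\eta(G/K)\simeq\tPW_\eta$ as topological vector spaces, one direction is the classical estimate $\mathcal F_Gf\in\PW_\eta$ for $\supp f\subset \{d(x,1K)\le\eta\}$ (which is continuous in the Fr\'echet topology by direct inspection of $|\partial_\lambda^\alpha \mathcal F_Gf(\lambda,k)|$). The other direction is the continuity of $\mathcal J_G$ from part (i), combined with the support control in Helgason's Paley--Wiener theorem, then an application of the open mapping theorem for Fr\'echet spaces.

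For part (iii), the Plancherel formula is the classical result of Helgason--Harish-Chandra \cite[Ch.\,III, Theorem 1.5]{Hel5}, up to our normalization of $dx$ via \eqref{eq:HermiteRes} and our $|W|^{-1}(2\pi)^{-\ell/2}$ normalization of $d\lambda$. The main bookkeeping obstacle will be reconciling these normalization conventions with those in Helgason's references so that the constant in front of the integral is exactly $1$; I would check this by evaluating both sides on a single $K$-invariant test function $f$ and reducing to the well-known Plancherel formula for the spherical transform on $G/K$. The only genuinely non-trivial input beyond Helgason's theorems is the continuity assertion in (i), for which the elementary estimates above suffice.
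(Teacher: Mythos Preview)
Your proposal is correct and follows essentially the same route as the paper: establish (i) via the standard estimates (the paper carries out the derivative computation more explicitly using the Iwasawa decomposition, writing $r_x(D)e^{(i\lambda+\rho)A(k^{-1}x)}$ as $e^{(i\lambda+\rho)A(k^{-1}x)}$ times an explicit polynomial in $\lambda$ with coefficients bounded on compacta in $(k,x)$), then derive (ii) and (iii) from Helgason's Paley--Wiener and Plancherel theorems together with the open mapping theorem. The one subtlety the paper flags that you gloss over is that $\PW_\eta(\Liea^*\times K/M)$ as defined here requires only continuity in $k$, whereas Helgason's argument in \cite{Hel5} is written for smooth $F$; the paper notes that once (i) is available, Helgason's proof goes through for merely continuous $F$.
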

\begin{cor}\label{cor:PWG}
Any function in $\tPW(\Liea^*\times K/M)=\mathcal F_G\bigl(C^\infty_\cpt(G/K)\bigr)$
is necessarily analytic in all variables.
Hence an embedding\/ $\tPW(\Liea^*\times K/M)\hookrightarrow C^\infty(\Liea^*\times  K/M)$ 
is naturally defined and this is continuos
since $\mathcal F_G:C^\infty_\cpt(G/K)\to C^\infty(\Liea^*\times  K/M)$ is continuous.
Consider\/ $\tPW(\Liea^*\times K/M)$
as a $G$-submodule of\/ $C^\infty(\Liea^*\times  K/M)$.
Since any function in $C^\infty_\cpt(G/K)$ is a \/ $C^\infty$ vector,
the same thing holds for\/ $\tPW(\Liea^*\times K/M)$.
Thus $U(\Lieg_\CC)$ acts
on $\tPW(\Liea^*\times K/M)$ in a compatible way with the embedding.
For each $\eta>0$,
$C^\infty_\eta(\Liea^*\times  K/M)$ and\/ $\tPW_\eta(\Liea^*\times K/M)$
are stable under the actions
of $K$ and $U(\Lieg_\CC)$.
Hence we can consider $(\Lieg_\CC,K)$-modules
$C^\infty_\eta(\Liea^*\times  K/M)_\Kf$ and\/ $\tPW_\eta(\Liea^*\times K/M)_\Kf$.
They are isomorphic via $\mathcal F_G$ or $\mathcal J_G$.
\end{cor}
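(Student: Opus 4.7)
The plan is to establish the corollary by reducing all assertions to Proposition \ref{prop:FourierG} together with structural properties of the integrand in the definition of $\mathcal F_G$. The only genuinely new content is the analyticity claim; once that is in hand, the remaining items are formal consequences of things already proved in the excerpt. So I expect Step 1 below to be the main (and essentially only) obstacle.

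\textbf{Step 1 (analyticity, the key step).} Pick any $\Phi\in\tPW(\Liea^*\times K/M)$. By Proposition \ref{prop:FourierG}(ii) one has $\Phi=\mathcal F_G f$ for some $f\in C^\infty_\cpt(G/K)$, so
\[
\Phi(\lambda,k)=\int_{G/K} f(x)\,e^{(-i\lambda+\rho)(A(k^{-1}x))}\,dx.
\]
The Iwasawa projection $g\mapsto A(g)$ is a real-analytic map $G\to\Liea$; consequently $(k,x)\mapsto A(k^{-1}x)$ is real-analytic in $k$ uniformly over $x$ in compacta, while the exponential is entire in $\lambda$. Because $f$ is compactly supported, we may differentiate under the integral to all orders and expand the integrand as a power series in $(\lambda,k-k_0)$ around any basepoint $(\lambda_0,k_0)$ with a uniform radius of convergence determined by the (compact) support of $f$. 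This produces a convergent power series expansion of $\Phi$, giving joint real-analyticity on $\Liea^*_\CC\times K/M$ with holomorphic dependence on $\lambda$.

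\textbf{Step 2 (the continuous embedding).} Analyticity from Step 1 gives a tautological set-theoretic inclusion $\tPW(\Liea^*\times K/M)\hookrightarrow C^\infty(\Liea^*\times K/M)$. For continuity, I would use that $\mathcal F_G\colon C^\infty_\cpt(G/K)\to C^\infty(\Liea^*\times K/M)$ is continuous (clear from the integral formula and dominated convergence) and that by Proposition \ref{prop:FourierG}(ii) the restriction $\mathcal F_G\colon C^\infty_\eta(G/K)\simarrow\tPW_\eta$ is an isomorphism of Fréchet spaces. Composing the inverse of this isomorphism with $\mathcal F_G$ identifies the inclusion $\tPW_\eta\hookrightarrow C^\infty(\Liea^*\times K/M)$ with a continuous map, and the inductive-limit topology then yields continuity of $\tPW\hookrightarrow C^\infty$.

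\textbf{Step 3 ($G$-action, $U(\Lieg_\CC)$-action, and the $(\Lieg_\CC,K)$-module isomorphism).} A one-line change-of-variables calculation in the defining integral shows that $\mathcal F_G$ intertwines $\ell(G)$ on $C^\infty_\cpt(G/K)$ with the natural $G$-action on $C^\infty(\Liea^*\times K/M)$ inherited from the principal series structures $B_G(i\lambda)\simeq C^\infty(K/M)$; hence $\tPW$ is a $G$-submodule. Every $f\in C^\infty_\cpt(G/K)$ is a $C^\infty$-vector for $\ell(G)$, so by equivariance so is $\mathcal F_G f$; this makes the $U(\Lieg_\CC)$-action well defined on $\tPW$ and compatible with the embedding. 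Stability of $C^\infty_\eta(G/K)$ under $K$ (which fixes the basepoint $1K$) and under $U(\Lieg_\CC)$ (elements of which act as differential operators and preserve support) transports via $\mathcal F_G$ to stability of $\tPW_\eta$ under the same actions, so the $K$-finite subspaces $C^\infty_\eta(G/K)_\Kf$ and $\tPW_\eta(\Liea^*\times K/M)_\Kf$ are $(\Lieg_\CC,K)$-modules, and $\mathcal F_G$ restricts to the claimed isomorphism between them (with inverse $\mathcal J_G$).
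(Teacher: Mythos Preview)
Your proposal is correct and matches the paper's intended reasoning. The paper gives no explicit proof of this corollary: the analyticity of $\mathcal F_G f$ in all variables was already asserted in Definition~\ref{defn:HelF} (using exactly the integral representation you invoke in Step~1), and the remaining assertions are, as you say, formal consequences of the Fr\'echet isomorphism in Proposition~\ref{prop:FourierG}(ii) together with the obvious $G$-equivariance of $\mathcal F_G$. Your Step~1 supplies the standard compactness argument that the paper leaves implicit when it asserts analyticity.
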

\begin{proof}[Proof of {\normalfont Proposition \ref{prop:FourierG}}]
In the proof of (i) we regard $C^\infty(G/K)$
as a closed subspace of $C^\infty(G)$.
Suppose $D\in U(\Lieg_\CC)$.
Let $E$ be a finite-dimensional $\Ad(K)$-stable subspace of $U(\Lieg_\CC)$ containing $D$
and take a basis $\{D_j\}$ of $E$.
Then there exist analytic functions $\pi_{j}(k)$ on $K$ such that
\[
\Ad(k)D=\sum_j\pi_{j}(k)D_j
\quad\text{for any }k\in K.
\]
For $g\in G$ let $\kappa(g)$ denote the unique element in $K$ such that
$g=n\, \exp A(g)\, \kappa(g)$ for some $n\in N$.
For $k\in K$ and $x\in G$ it holds that
\begin{align*}
r_x(D)\,e^{(i\lambda+\rho)(A(k^{-1}x))}
&=e^{(i\lambda+\rho)(A(k^{-1}x))}
\bigl(r_y(\Ad(\kappa(k^{-1}x))D)\,e^{(i\lambda+\rho)(A(y))}\bigr)\bigr|_{y=1}\\
&=e^{(i\lambda+\rho)(A(k^{-1}x))}
\sum_j\pi_{j}(\kappa(k^{-1}x))
\bigl(r_y(D_j)e^{(i\lambda+\rho)(A(y))}\bigr)\bigr|_{y=1}\\
&=e^{(i\lambda+\rho)(A(k^{-1}x))}
\sum_j\pi_{j}(\kappa(k^{-1}x))
\gamma(D_j)(i\lambda).
\end{align*}
Hence by the inequality $|A(k^{-1}x)|\le d(xK,1K)$
(cf.~\cite[Ch.\,IV, \S10, (14)]{Hel4})
we get for any $F\in\PW_\eta(\Liea^*\times K/M)$ 
and any $N\in\ZZ_{\ge0}$
\begin{multline*}
\sup_{\lambda\in\Liea_\CC}
(1+|\lambda|)^N
\left|
r_x(D) \int_K \,e^{(i\lambda+\rho)(A(k^{-1}x))} 
F(\lambda, k)\,dk
\right|\\
\le
\biggl(\sup_{\substack{H\in \Liea;\\ |H|\le d(xK,1K)}}\!\!\!e^{\rho(H)}\biggr)
\sum_j ||\pi_{j}(\cdot)||_{L^\infty(K)}
\sup_{\substack{\lambda\in\Liea_\CC \\ k\in K}}
\Bigl((1+|\lambda|)^N |\gamma(D_j)(i\lambda)\,
F(\lambda,k)|\Bigr).
\end{multline*}
Since $|\mathbf c(\lambda)|^{-2}$ has at most polynomial growth
(cf.~\cite[Ch.\,IV, Proposition 7.2]{Hel4}),
we can differentiate \eqref{eq:adjG} as a function in $x\in G$
repeatedly under the outer integral.
The first assertion is thus proved.
The continuity of $\mathcal J_G$ also easily
follows from the above estimate.

To prove (ii) let $F\in\tPW_\eta(\Liea^*\times K/M)$ be given.
We assert there exists $f\in C^\infty_\eta(G/K)$ such that $F=\mathcal F_Gf$.
In fact, if $F$ has $C^\infty$ regularity then the proof for this assertion
is given in \cite[pp.268--271]{Hel5}. But the proof there works for
any continuous $F$ with the help of (i).
Thus the bijectivity of
\begin{equation}\label{eq:JGRbij}
\mathcal J_G:\tPW_\eta(\Liea^*\times K/M)\to C^\infty_\eta(G/K)
\end{equation}
follows from \cite[Ch.\,III, Theorem 5.1]{Hel5}.
Since the subspace $C^\infty_\eta(G/K)\subset C^\infty(G/K)$
is a Fr\'echet space,
from (i) and the open mapping theorem one sees \eqref{eq:JGRbij} is an isomorphism of topological vector spaces.

For (iii) we refer the reader to \cite[Ch.\,III, \S1, No.\,2]{Hel5}.
\end{proof}

Let us return to the argument on the target spaces.
\begin{lem}\label{lem:targetSp}
For $F\in \PW_\eta(\Liea^*\times K/M)$
\begin{align*}
F\in \tPW_\eta(\Liea^*\times K/M)
&\Leftrightarrow
\left\{
\begin{aligned}
&F\in C^\infty(\Liea^*\times K/M),\\
&\mathcal P_G^{it\lambda} \bigl(F(t\lambda,\cdot)\bigr)
=\mathcal P_G^{i\lambda} \bigl(F(\lambda,\cdot)\bigr)\text{ for any }t\in W
\text{ and }\lambda\in\Liea^*
\end{aligned}
\right.\\
&\Leftrightarrow
\left\{
\begin{aligned}
&F\in C^\infty(\Liea^*\times K/M),\\
&\Tilde{\mathcal A}_G(t,i\lambda) \bigl(F(\lambda,\cdot)\bigr)
=F(t\lambda,\cdot)\text{ for any }t\in W
\text{ and }\lambda\in\Liea^*.\end{aligned}
\right.
\end{align*}
For $F\in \PW_\eta(\Liea^*\times W)$
\begin{align*}
F\in \tPW_\eta(\Liea^*\times W)
&\Leftrightarrow
\mathcal P_{\mathbf H}^{it\lambda} \bigl(F(t\lambda,\cdot)\bigr)
=\mathcal P_{\mathbf H}^{i\lambda} \bigl(F(\lambda,\cdot)\bigr)\text{ for any }t\in W
\text{ and }\lambda\in\Liea^*\\
&\Leftrightarrow
\Tilde{\mathcal A}_{\mathbf H}(t,i\lambda) \bigl(F(\lambda,\cdot)\bigr)
=F(t\lambda,\cdot)\text{ for any }t\in W
\text{ and }\lambda\in\Liea^*.
\end{align*}
In particular\/ $\tPW_\eta(\Liea^*\times W)$
is an $\mathbf H$-submodule of\/ $\PW_\eta(\Liea^*\times W)$.
\end{lem}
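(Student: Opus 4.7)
The plan is to unpack the conditions in Definition \ref{defn:WinvGH} as Poisson identities and then translate Poisson identities into intertwining identities via a projection formula.

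First I would observe that by Definition \ref{defn:HelF} the integrand
\[
\int_K F(\lambda,k)\,e^{(i\lambda+\rho)(A(k^{-1}x))}\,dk
\]
is precisely $\mathcal{P}_G^{i\lambda}(F(\lambda,\cdot))(x)$, while by \eqref{eq:HPoisson2} the sum $\sum_{w\in W}F(\lambda,w)\,\mathbf{G}(i\lambda,w^{-1}a)$ equals $|W|\cdot \mathcal{P}_{\mathbf{H}}^{i\lambda}(F(\lambda,\cdot))(a)$. Thus the first equivalence in each half is essentially a rewriting of the definition. For the $G$-side the ancillary condition $F\in C^\infty(\Liea^*\times K/M)$ comes for free: if $F\in \tPW_\eta(\Liea^*\times K/M)$ then $F=\mathcal{F}_G f$ for some $f\in C_\cpt^\infty(G/K)$ by Proposition \ref{prop:FourierG} (ii), hence $F$ is analytic in all variables by Corollary \ref{cor:PWG}; conversely only $F\in\PW_\eta$ plus the Poisson identity is needed to conclude $F\in\tPW_\eta$.

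The crucial ingredient for each second equivalence is the projection formula
\[
\mathcal{P}_G^{t\lambda}\circ\tilde{\mathcal{A}}_G(t,\lambda)=\mathcal{P}_G^{\lambda},\qquad
\mathcal{P}_{\mathbf{H}}^{t\lambda}\circ\tilde{\mathcal{A}}_{\mathbf{H}}(t,\lambda)=\mathcal{P}_{\mathbf{H}}^{\lambda},
\]
valid wherever $\tilde{\mathcal{A}}$ is regular. To establish it I would reduce to simple reflections $t=s_\alpha$ by the cocycle identities in Propositions \ref{prop:AG} (iii) and \ref{prop:AH} (iii). On $K$-finite vectors both sides are morphisms into $\mathscr{A}(G/K,\lambda)$ or $\mathscr{A}(A,\lambda)$, so Proposition \ref{prop:PoissonUniq} forces them to be proportional for generic $\lambda$; evaluating on $\mathbf{1}_G^\lambda$ (resp.\ $\mathbf{1}_{\mathbf{H}}^\lambda$) via \eqref{eq:image1G} and the $W$-invariance of the spherical function $\phi_\lambda$ (resp.\ \eqref{eq:image1H} together with the identity $\mathcal{P}_{\mathbf{H}}^\lambda(\mathbf{1}_{\mathbf{H}}^\lambda)=\gamma_0(\phi_\lambda)$ that is a consequence of Theorem \ref{thm:Poisson}) pins down the scalar to $1$. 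Density of $K$-finite vectors and continuity of $\mathcal{P}_G^\lambda$ extends the $G$-case to the full smooth $B_G(\lambda)$, after which holomorphy in $\lambda$ disposes of the genericity assumption.

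Given the projection formula, each second equivalence is one manipulation. The direction $\tilde{\mathcal{A}}\Rightarrow\mathcal{P}$ is direct substitution. For $\mathcal{P}\Rightarrow\tilde{\mathcal{A}}$, rewrite the hypothesis as
\[
\mathcal{P}_G^{it\lambda}\bigl(F(t\lambda,\cdot)-\tilde{\mathcal{A}}_G(t,i\lambda)F(\lambda,\cdot)\bigr)=0
\]
and use injectivity of $\mathcal{P}_G^{it\lambda}$ on $C^\infty(K/M)=B_G(it\lambda)$ for generic $\lambda\in\Liea^*$ (topological irreducibility of $B_G(it\lambda)$ forces any kernel to be trivial once non-vanishing on $\mathbf{1}_G^{it\lambda}$ is observed); both intertwiners $\tilde{\mathcal{A}}_G(s_\alpha,i\lambda)$ and $\tilde{\mathcal{A}}_{\mathbf{H}}(s_\alpha,i\lambda)$ are regular throughout $\Liea^*$ (the Gamma-factor arguments in $\mathbf{e}_\alpha(-i\lambda)$ have positive real part, and $\mathbf{m}_1(\alpha)-i\lambda(\alpha^\vee)$ never vanishes for real $\lambda$ since $\mathbf{m}_1(\alpha)>0$), so analytic continuation extends the identity to all of $\Liea^*$. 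The $\mathbf{H}$-case follows the same outline, with Proposition \ref{prop:PoiBij} providing the finite-dimensional injectivity of $\mathcal{P}_{\mathbf{H}}^{it\lambda}$. The main obstacle will be rigorously lifting Proposition \ref{prop:PoissonUniq} from the $K$-finite setting to the full smooth principal series via density and continuity, together with securing full-space injectivity of $\mathcal{P}_G^{i\lambda}$ via topological irreducibility for generic real $\lambda$.
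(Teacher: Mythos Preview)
Your proposal is correct and follows exactly the paper's approach: the first equivalences unpack Definition \ref{defn:WinvGH} in terms of the Poisson transforms (with Corollary \ref{cor:PWG} supplying the $C^\infty$ regularity on the $G$-side), and the second equivalences come from the projection identities $\mathcal P^{i\lambda}=\mathcal P^{it\lambda}\circ\Tilde{\mathcal A}(t,i\lambda)$ together with injectivity of the Poisson transforms at purely imaginary parameter. The paper's proof simply records these identities in one line; you supply their derivation and the injectivity step, and your only inefficiency is the analytic-continuation detour---for every $\lambda\in\Liea^*$ both $\mathcal P_{\mathbf H}^{i\lambda}$ (Proposition \ref{prop:PoiBij}) and $\mathcal P_G^{i\lambda}$ (irreducibility of $B_G(i\lambda)$, invoked in the proof of Proposition \ref{prop:AG} (iii)) are already injective, so no genericity restriction is needed.
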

\begin{proof}
Suppose $F\in \tPW_\eta(\Liea^*\times W)$.
Then $F\in C^\infty(\Liea^*\times K/M)$ by Corollary \ref{cor:PWG}
and for any $\lambda\in\Liea_\CC^*$
we can consider $F(\lambda,\cdot)\in B_G(i\lambda)$
to which $\mathcal P_G^{i\lambda}$ and $\Tilde{\mathcal A}_G(t,i\lambda)$
can be applied.
Hence the lemma is immediate from
Definition \ref{defn:WinvGH},
the definition of the Poisson transforms, and the identities
\[
\mathcal P_G^{i\lambda}
=\mathcal P_G^{it\lambda}\circ
\Tilde{\mathcal A}_G(t,i\lambda),
\quad
\mathcal P_{\mathbf H}^{i\lambda}
=\mathcal P_{\mathbf H}^{it\lambda}\circ
\Tilde{\mathcal A}_{\mathbf H}(t,i\lambda).\qedhere
\]
\end{proof}
\begin{prop}\label{prop:Fpairs}
For any $\eta>0$ the pair
$\bigl(C^\infty_\eta(G/K)_\Kf,\, C^\infty_\eta(A)\bigr)$
is a radial pair with radial restriction $\gamma_0$
and is a subobject of\/
$\bigl(C^\infty_\cpt(G/K)_\Kf,\, C^\infty_\cpt(A)\bigr)$.
Likewise $\bigl(\tPW_\eta(\Liea^*\times K/M)_\Kf,\, \tPW_\eta(\Liea^*\times W)\bigr)$
is a subobject of\/ $\bigl(C^\infty(\Liea^*\times K/M)_\Kf, C^\infty(\Liea^*\times W)\bigr)\in\Crad$.
\end{prop}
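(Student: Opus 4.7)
The plan is to treat the two claimed subpairs in parallel, in each case reducing the axioms of $\Crad$ for the subobject to (i) module-stability of the underlying subspaces and (ii) the surjectivity of the restricted Chevalley map $\tilde\Gamma_{\mathcal M}^V$ of the ambient pair on the subspaces. Proposition \ref{prop:catpro} (ii) is the natural overarching tool, but since a direct kernel description is awkward I would instead verify the defining axioms directly, using Lemma \ref{lem:rest3} for part one and Theorem \ref{thm:multcor} (ii) in spirit for part two.

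For the pair $\bigl(C^\infty_\eta(G/K)_\Kf,\,C^\infty_\eta(A)\bigr)$ I would first check module-stability. $K$ fixes $1K$ isometrically, so preserves each Riemannian ball about $1K$; $U(\Lieg_\CC)$ preserves support. For $\mathbf H$ on $C^\infty_\eta(A)$, the reflection $s_\alpha$ fixes $1\in A$ isometrically, and each Cherednik operator $\mathscr T(\xi)$ splits into $\partial(\xi)$ (support-preserving) and a sum of terms $\tfrac{\alpha(\xi)}{1-e^{-\alpha}}(1-s_\alpha)$ whose output vanishes wherever both $f$ and $s_\alpha f$ do; since $s_\alpha$ preserves the $\eta$-ball, this also preserves support. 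The map $\gamma_0$ restricts since $A\hookrightarrow G/K$ is an isometric embedding. The restriction of $\gamma_0$ satisfies \eqref{cond:rest3} automatically, and taking Lemma \ref{lem:rest3} as a template I need only verify \eqref{cond:Ch} for the subpair: injectivity of $\tilde\Gamma_0^V$ is inherited from the ambient case, while for surjectivity, given $\varphi\in\Hom_W(V^M_\single,C^\infty_\eta(A))$, the lift $\Phi\in\Hom^\ttt_K(V,C^\infty(G/K))$ supplied by the ambient bijection satisfies, by $K$-equivariance combined with the Cartan decomposition $G/K=K\cdot A$ and the support identity $d(kaK,1K)=d(aK,1K)$, the formula $\Phi[v](kaK)=\varphi[p^V_\single(k^{-1}v)](a)$, which manifestly lands in $C^\infty_\eta(G/K)$.

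For the Paley-Wiener pair, module-stability of $\tPW_\eta(\Liea^*\times K/M)_\Kf$ under $(\Lieg_\CC,K)$ is already given by Corollary \ref{cor:PWG}, and that of $\tPW_\eta(\Liea^*\times W)$ under $\mathbf H$ follows from the equivalent characterization in Lemma \ref{lem:targetSp} together with the $\mathbf H$-equivariance of $\mathcal P_{\mathbf H}^\lambda$. To see $\gamma_B$ maps into $\tPW_\eta(\Liea^*\times W)$, the growth bound \eqref{eq:PWcond} is inherited from \eqref{eq:PWcondG} by restriction to $k=\bar w$; and the $W$-invariance for $\mathbf H$-Poisson transforms reduces to the $W$-invariance for $G$-Poisson transforms via Theorem \ref{thm:Poisson} (i') applied to the trivial $K$-type $\CC_\triv$, which gives $\mathcal P_G^{i\lambda}(F(\lambda,\cdot))(a)=\mathcal P_{\mathbf H}^{i\lambda}(\gamma_{B(i\lambda)}(F(\lambda,\cdot)))(a)$ for $a\in A$.

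The hard part, and where I would spend the most care, is verifying \eqref{cond:Ch} for the Paley-Wiener subpair. Given $\tilde\varphi\in\Hom_W(V^M_\single,\tPW_\eta(\Liea^*\times W))$, the ambient bijection (Theorem \ref{thm:R3B}) yields a unique lift $\Phi\in\Hom_K^\ttt(V,C^\infty(\Liea^*\times K/M))$, explicitly given by $\Phi[v](\lambda,k)=\tilde\varphi[p^V_\single(k^{-1}v)](\lambda,1)$. Holomorphy in $\lambda$ and the growth bound \eqref{eq:PWcondG} transfer at once by finite-dimensionality of $V^M_\single$; the delicate point is the $W$-invariance of $x\mapsto\mathcal P_G^{i\lambda}(\Phi[v](\lambda,\cdot))(x)$ for every $v\in V$ and every $x\in G$, not merely for $v\in V^M_\single$ and $x\in A$. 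My plan is to set $\Psi_{G,\lambda}:=\mathcal P_G^{i\lambda}\circ\Phi(\lambda,\cdot)\in\Hom_K(V,C^\infty(G/K))$; by Theorem \ref{thm:Poisson} and the morphism axiom \eqref{cond:Ch2} this lies in $\Hom_K^\ttt(V,\mathscr A(G/K,i\lambda))$, hence so does $\Psi_{G,t\lambda}-\Psi_{G,\lambda}$. Applying $\tilde\Gamma_0^V$ and using the morphism axiom \eqref{cond:Ch1} for $\mathcal P^{i\lambda}$ and $\mathcal P^{it\lambda}$, this difference is sent to $\mathcal P_{\mathbf H}^{it\lambda}\circ\tilde\varphi(t\lambda,\cdot)-\mathcal P_{\mathbf H}^{i\lambda}\circ\tilde\varphi(\lambda,\cdot)$, which vanishes by the hypothesis $\tilde\varphi\in\tPW_\eta$ and Lemma \ref{lem:targetSp}. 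Injectivity of $\tilde\Gamma_0^V$ on $\Hom_K^\ttt$ then forces $\Psi_{G,t\lambda}=\Psi_{G,\lambda}$, i.e.~$\Phi[v]\in\tPW_\eta$, completing the verification.
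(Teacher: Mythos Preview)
Your treatment of the first assertion (the $C^\infty_\eta$ pair) is essentially the paper's, with the extra care about module stability that the paper leaves implicit.

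For the Paley--Wiener pair, your ``hard part'' (the backward direction, lifting $\tilde\varphi$ to $\Phi$ and showing $\Phi[v]\in\tPW_\eta$) is correct and proceeds via the Poisson morphism $\mathcal P^\lambda$ (Theorem~\ref{thm:Poisson}) together with injectivity of $\tilde\Gamma_0^V$ on $\Hom_K^\ttt$. The paper argues this step instead through the intertwining operators (Theorem~\ref{thm:it}): Lemma~\ref{lem:targetSp} rewrites the $W$-invariance condition on both sides as an intertwining relation, and Theorem~\ref{thm:it} then transports the relation from $B_{\mathbf H}$ to $B_G$. Your Poisson route avoids that machinery; the paper's route treats both directions by the same device.

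There is, however, a genuine gap in your forward direction. You claim that Theorem~\ref{thm:Poisson}~(i') with $V=\CC_\triv$ yields
\[
\mathcal P_G^{i\lambda}\bigl(F(\lambda,\cdot)\bigr)(a)=\mathcal P_{\mathbf H}^{i\lambda}\bigl(\gamma_{B(i\lambda)}(F(\lambda,\cdot))\bigr)(a)
\]
for arbitrary $F$. This is not what that case of the theorem says: an element of $\Hom_K(\CC_\triv,C^\infty(K/M))$ is a constant function, so $V=\CC_\triv$ only recovers the classical identity $\gamma_0(\phi_{i\lambda})=$ Heckman--Opdam hypergeometric function. For general $F$ the displayed formula need not hold (it would have to account for components with $v\in V^M_\double$, where Theorem~\ref{thm:Poisson} gives nothing of the sort), so your deduction of the $W$-invariance for $\gamma_B(F)$ fails.

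The fix is to drop the function-level claim and argue at the $\Hom_K$ level, exactly as in your backward direction but in reverse: for $\Phi\in\Hom_K(V,\tPW_\eta(\Liea^*\times K/M))$ and $v\in V^M_\single$, Theorem~\ref{thm:Poisson}~(i) gives
\[
\mathcal P_{\mathbf H}^{i\lambda}\bigl(\tilde\Gamma^V_B(\Phi)[v](\lambda,\cdot)\bigr)
=\gamma_0\bigl(\mathcal P_G^{i\lambda}(\Phi_\lambda[v])\bigr),
\]
and the right side is $t$-invariant since $\Phi[v]\in\tPW_\eta$. By Lemma~\ref{lem:targetSp} this yields $\tilde\Gamma^V_B(\Phi)[v]\in\tPW_\eta(\Liea^*\times W)$, which is the inclusion~\eqref{eq:GVB} that the paper obtains via Theorem~\ref{thm:it}.
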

\begin{proof}
First, it is clear that $\gamma_0\bigl(C_\eta^\infty(G/K)\bigr)\subset C_\eta^\infty(A)$.
Suppose $V\in\Km$.
For any $\varphi\in \Hom_W(V^M_\single,C^\infty_\eta(A))$
let $\Phi$ be its lift in $\Hom_K^\ttt(V,C^\infty(G/K))$.
Extending $\varphi$ to an element of
$\Hom_W(V^M,C^\infty_\eta(A))$ by $\varphi\bigr|_{V^M_\double}=0$,
we have
\[
\Phi[v](kak^{-1})= \varphi\bigl[p^V(k^{-1}v)\bigr](a) 
\quad\text{for any }v\in V, k\in K\text{ and }a\in A.
\]
From this we easily see $\Phi\in\Hom_K(V,C^\infty_\eta(G/K))$.
These facts prove
the first assertion.

Let us prove the second assertion.
If $F\in \tPW_\eta(\Liea^*\times K/M)$ then
$\gamma_B(F)\in \PW_\eta(\Liea^*\times W)$ since
Condition \eqref{eq:PWcondG} for $F$ implies
Condition \eqref{eq:PWcond} for each $\gamma_B(F)(\cdot,w)$ ($w\in W$).
Suppose $V\in\Km$.
For any $\Phi\in \Hom_K(V,\, \tPW_\eta(\Liea^*\times K/M))$
put
\[
\tilde\Gamma^V_B(\Phi)=\gamma_B\circ \Phi\bigr|_{V^M_\single}
\in \Hom_W(V^M_\single,\, \PW_\eta(\Liea^*\times W)).
\]
Then for any $\lambda\in\Liea^*$ we have
\begin{align}
\Phi_\lambda
:=\bigl(V\ni v\longmapsto \Phi[v](\lambda,\cdot)\bigr)
&\in \Hom_K(V,B_G(i\lambda)),\label{eq:Phisplam}\\
\tilde\Gamma^V_B(\Phi)_\lambda
:=\bigl(V^M_\single\ni v\longmapsto
\tilde\Gamma^V_B(\Phi)[v](\lambda,\cdot)\bigr)
&\in \Hom_W(V^M_\single,B_{\mathbf H}(i\lambda)),\notag\\
\tilde\Gamma^V_{B(i\lambda)}(\Phi_\lambda)
:=\gamma_{B(i\lambda)}\circ \Phi_\lambda \bigr|_{V^M_\single}
&=\tilde\Gamma^V_B(\Phi)_\lambda.\label{eq:spGammaT}
\end{align}
Now for any $t\in W$
\begin{align*}
\Tilde{\mathcal A}_{\mathbf H}(t,i\lambda)\circ \tilde\Gamma^V_B(\Phi)_\lambda
&=\Tilde{\mathcal A}_{\mathbf H}(t,i\lambda)\circ \tilde\Gamma^V_{B(i\lambda)}(\Phi_\lambda)&&(\because \eqref{eq:spGammaT})\\
&=\tilde\Gamma^V_{B(it\lambda)}
\bigl(
\Tilde{\mathcal A}_{G}(t,i\lambda)\circ
\Phi_\lambda
\bigr)&&(\because \text{Theorem \ref{thm:it}})\\
&=\tilde\Gamma^V_{B(it\lambda)}
\bigl(
\Phi_{t\lambda}
\bigr)&&(\because \text{Lemma \ref{lem:targetSp}})\\
&=\tilde\Gamma^V_B(\Phi)_{t\lambda}.
&&(\because \eqref{eq:spGammaT})
\end{align*}
Hence by Lemma \ref{lem:targetSp} we conclude
$\tilde\Gamma^V_B(\Phi)
\in \Hom_W(V^M_\single,\, \tPW_\eta(\Liea^*\times W))$, namely
\begin{equation}\label{eq:GVB}
\tilde\Gamma^V_B\bigl(
\Hom_K(V,\, \tPW_\eta(\Liea^*\times K/M))
\bigr)\subset \Hom_W(V^M_\single,\, \tPW_\eta(\Liea^*\times W)).
\end{equation}
Conversely, suppose $\varphi\in \Hom_W(V^M_\single,\, \tPW_\eta(\Liea^*\times W))$
and let $\Phi$ be its unique lift in $\Hom_K^\ttt(V,\,C^\infty(\Liea^*\times K/M))$.
We extend $\varphi$ to an element of
$\Hom_W(V^M,\, \tPW_\eta(\Liea^*\times W))$ by $\varphi\bigr|_{V^M_\double}=0$.
Then \eqref{eq:Blift} holds for any $v\in V$, $k\in K$ and $\lambda\in\Liea^*$.
This means for each $v\in V$,
$\Phi[v](\lambda,k)$ extends to an analytic function
on $\Liea_\CC^*\times K/M$ which is holomorphic in $\lambda$.
Let $|\cdot|$ be the norm of $V$
induced from a $K$-invariant inner product.
Then for each $N\in \ZZ_{\ge0}$ there exists $C_N>0$ such that
\[
\sup_{\lambda\in\Liea_\CC^*}
e^{-\eta|\Imaginary \lambda|}(1+|\lambda|)^N|\varphi[v](\lambda,1)|<C_N
\quad\text{ for any }v\in V^M\text{ with }|v|\le1.
\]
Since $|p^V(k^{-1}v)|\le|v|$ for any $k\in K$ and $v\in V$,
$F:=\Phi[v]$ satisfies \eqref{eq:PWcondG}
for $v\in V$ with $|v|\le1$.
Thus $\Phi[v]\in \PW(\Liea^*\times K/M)$ for all $v\in V$.
Now for any $\lambda\in\Liea^*$ let $\Phi_\lambda$ be as in \eqref{eq:Phisplam}
and put
\[
\varphi_\lambda
:=\bigl(V^M_\single\ni v\longmapsto
\varphi[v](\lambda,\cdot)\bigr)
\in \Hom_W(V^M_\single,B_{\mathbf H}(i\lambda)).
\]
Then $\Phi_\lambda\in \Hom_K^\ttt(V,B_G(i\lambda))$
and for any $t\in W$ we have
\begin{align*}
\Phi_{t\lambda}&\in \Hom_K^\ttt(V,B_G(it\lambda)),\\
\Tilde{\mathcal A}_{G}(t,i\lambda)\circ
\Phi_\lambda&\in
\Hom_K^\ttt(V,B_G(it\lambda)),
&&(\because \text{Theorem \ref{thm:it}})\\
\tilde\Gamma^V_{B(it\lambda)}
\bigl(
\Tilde{\mathcal A}_{G}(t,i\lambda)\circ
\Phi_\lambda
\bigr)
&=\Tilde{\mathcal A}_{\mathbf H}(t,i\lambda)\circ \tilde\Gamma^V_{B(i\lambda)}(\Phi_\lambda)
&&(\because \text{Theorem \ref{thm:it}})\\
&=\Tilde{\mathcal A}_{\mathbf H}(t,i\lambda)\circ \varphi_\lambda\\
&=\varphi_{t\lambda}&&(\because \text{Lemma \ref{lem:targetSp}})\\
&=\tilde\Gamma^V_{B(it\lambda)}
\bigl(
\Phi_{t\lambda}
\bigr)
\end{align*}
and hence $\Tilde{\mathcal A}_{G}(t,i\lambda)\circ
\Phi_\lambda=\Phi_{t\lambda}$.
Hence 
$\Phi \in \Hom_K(V,\, \tPW_\eta(\Liea^*\times K/M))$
by Lemma \ref{lem:targetSp}.
Thus $\tilde\Gamma^V_B$ induces a linear bijection
\begin{multline*}
\tilde\Gamma^V_B:
\Hom_K(V,\, \tPW_\eta(\Liea^*\times K/M))
\cap \Hom_K^\ttt(V,\, C^\infty(\Liea^*\times K/M))\\
\simarrow
\Hom_W(V^M_\single,\, \tPW_\eta(\Liea^*\times W)).
\end{multline*}
This and \eqref{eq:GVB} prove that
$\bigl(\tPW_\eta(\Liea^*\times K/M)_\Kf,\, \tPW_\eta(\Liea^*\times W)\bigr)$
is a subobject of $\bigl(C^\infty(\Liea^*\times K/M)_\Kf, C^\infty(\Liea^*\times W)\bigr)$ in $\CCh$ (and hence in $\Crad$).
\end{proof}
To import Opdam's result on the characterization of the image of $\mathcal F_{\mathbf H}$ we shall prepare another description of $\tPW_\eta(\Liea^*\times W)$.
Put $\mathscr M=S(\Liea_\CC)\Liea_\CC$ and let 
$\widehat S(\Liea_\CC)$ be the $\mathscr M$-adic completion of $S(\Liea_\CC)$,
namely the algebra of formal power series at $0\in\Liea^*_\CC$.
Then there exists uniquely an algebra $\widehat{\mathbf H}$ over $\CC$
with the following properties:\smallskip

\noindent{\normalfont (i)}
$\widehat{\mathbf H}\simeq \widehat S(\Liea_\CC)\otimes\CC W$ as a $\CC$-linear space;

\noindent{\normalfont (ii)}
The maps $\widehat S(\Liea_\CC)\rightarrow \widehat{\mathbf H}, \psi\mapsto \psi\otimes1$ and
 $\CC W\rightarrow \widehat{\mathbf H}, w\mapsto1\otimes w$ are algebra homomorphisms;

\noindent{\normalfont (iii)}
$(\psi\otimes1)\cdot(1\otimes w)=\psi\otimes w$ for any $\psi\in \widehat S(\Liea_\CC)$ and $w\in W$;

\noindent{\normalfont (iv)}
$(1\otimes s_\alpha)\cdot(\psi(\lambda)\otimes1)
 =\psi(s_\alpha\lambda)\otimes s_\alpha-\mathbf k(\alpha)\,\dfrac{\psi(\lambda)-\psi(s_\alpha\lambda)}{\alpha^\vee}$
for any $\alpha\in\Pi$ and $\psi\in \widehat S(\Liea_\CC)$.
\smallskip

\noindent
We identify $\mathbf H$, $\widehat S(\Liea_\CC)$ and $\CC W$
with subalgebras of $\widehat{\mathbf H}$ in obvious ways.
By (iv) we have the following:
\begin{lem}\label{lem:psitw}
For any $\psi\in\widehat S(\Liea_\CC)$ and $w\in W$
define $\psi^w_t\in\widehat S(\Liea_\CC)$ (\/$t\in W$) by the identity
\[
w^{-1}\psi=\sum_{t\in W} \psi_t^w\otimes t^{-1}
\]
in $\widehat{\mathbf H}$.
Then for each $w,t\in W$ the correspondence $\psi\mapsto\psi_t^w$ is continuous
with respect to the $\mathscr M$-adic topology.
\end{lem}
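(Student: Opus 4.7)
The plan is to induct on the length $\ell(w)$ of $w\in W$. The base case $w=1$ is trivial since $\psi_1^1=\psi$ and $\psi_t^1=0$ for $t\neq 1$, all manifestly continuous in $\psi$. For the induction step, write $w=w's_\alpha$ with $\alpha\in\Pi$ and $\ell(w)=\ell(w')+1$, so that $w^{-1}=s_\alpha(w')^{-1}$. Expanding
\[
w^{-1}\psi=s_\alpha\sum_{t\in W}\psi_t^{w'}\otimes t^{-1}
\]
and applying the commutation relation (iv) of $\widehat{\mathbf H}$ termwise, one obtains
\[
w^{-1}\psi=\sum_{t\in W}\Bigl[s_\alpha(\psi_t^{w'})\otimes(s_\alpha t^{-1})-\mathbf m_1(\alpha)\,\partial_\alpha(\psi_t^{w'})\otimes t^{-1}\Bigr],
\]
where $\partial_\alpha(\varphi):=(\varphi-s_\alpha\varphi)/\alpha^\vee$ is the divided difference. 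Collecting by $u\in W$ therefore yields the recurrence
\[
\psi_u^{w}=s_\alpha(\psi_{s_\alpha u}^{w'})-\mathbf m_1(\alpha)\,\partial_\alpha(\psi_u^{w'}).
\]
By the inductive hypothesis each $\psi_t^{w'}$ depends continuously on $\psi$, so it suffices to verify that the two building-block operators $\psi\mapsto s_\alpha(\psi)$ and $\psi\mapsto\partial_\alpha(\psi)$ are continuous on $\widehat S(\Liea_\CC)$ with respect to the $\mathscr M$-adic topology.

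The first is obvious: $s_\alpha$ is a graded algebra automorphism of $S(\Liea_\CC)$, hence preserves $\mathscr M^n$ for every $n\ge0$ and extends to a continuous automorphism of the completion. For the divided difference, observe that for any homogeneous $\varphi\in S^n(\Liea_\CC)$ the element $\varphi-s_\alpha\varphi$ lies in $S^n(\Liea_\CC)$ and vanishes on the hyperplane $\{\lambda:\lambda(\alpha^\vee)=0\}\subset\Liea^*_\CC$, so it is divisible by $\alpha^\vee$ as an element of $S(\Liea_\CC)$ and the quotient lies in $S^{n-1}(\Liea_\CC)$. Consequently $\partial_\alpha(\mathscr M^n)\subset\mathscr M^{n-1}$ for each $n\ge1$, which extends by continuity to $\partial_\alpha(\widehat{\mathscr M}^{n})\subset\widehat{\mathscr M}^{n-1}$ on the completion. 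Hence $\partial_\alpha$ is continuous in the $\mathscr M$-adic topology.

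The only mildly delicate point is bookkeeping in the induction step—one must verify that the formal rearrangement of the (a priori infinite) power series expansion of $w^{-1}\psi$ into the PBW-like basis $\{\cdot\otimes t^{-1}\}_{t\in W}$ is legitimate. But this is automatic: for each fixed $N$, only finitely many terms of $\psi$ modulo $\widehat{\mathscr M}^N$ need to be tracked, the operations $s_\alpha$ and $\partial_\alpha$ are continuous as shown, and the sum over $t\in W$ is finite. Thus the recurrence produces $\psi_u^w$ as a finite sum of compositions of two continuous operators applied to the continuously varying quantities $\psi_t^{w'}$, closing the induction.
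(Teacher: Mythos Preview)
Your proof is correct and is precisely the argument the paper has in mind: the paper simply writes ``By (iv) we have the following'' before stating the lemma, and your induction on $\ell(w)$ together with the continuity of $s_\alpha$ and $\partial_\alpha$ is exactly how one unpacks that remark. One harmless slip: in the recurrence the index should be $u s_\alpha$ rather than $s_\alpha u$ (from $s_\alpha t^{-1}=u^{-1}$ one gets $t=u s_\alpha$), but this does not affect the argument.
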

For each $\eta>0$ we identify the function space
\[
\PW_\eta(-i\Liea^*):=\bigl\{\psi(i\lambda);\,\psi\in \PW_\eta(\Liea^*)\bigr\}
\]
with a subspace of $\widehat S(\Liea_\CC)$.
Then it holds that
\begin{equation}\label{eq:PWHc}
\PW_\eta(-i\Liea^*)\cdot W= W\cdot \PW_\eta(-i\Liea^*)=\mathbf H\cdot \PW_\eta(-i\Liea^*)
\end{equation}
in $\widehat{\mathbf H}$ (cf.~\cite[\S8]{Op:Cherednik}).
Let
\[
\mathbf 1_{\mathbf H}(\lambda,w) \in C^\infty(\Liea^*\times W) \subset \prod_{\lambda\in\Liea^*} B_{\mathbf H}(i\lambda)
\]
be the constant function with value $1$.
Let $\pi_{\mathbf H}$ denote the action of $\mathbf H$ on $\prod_{\lambda\in\Liea^*} B_{\mathbf H}(i\lambda)$.
For any $\psi\in \widehat S(\Liea_\CC)$ take a sequence $\{\psi_n\}\subset S(\Liea_\CC)$
converging to $\psi$ with respect to the $\mathscr M$-adic topology.
Then for each $w\in W$
$\{(\pi_{\mathbf H}(\psi_n)\mathbf 1_{\mathbf H})(\cdot,w)\}\subset S(\Liea_\CC)$
converges to an element of $\widehat S(\Liea_\CC)$,
the limit being independent of the choice of sequences.
More precisely, if we take $\psi_t^w \in \widehat S(\Liea_\CC)$ ($t\in W$) for $\psi$ as in Lemma \ref{lem:psitw},
then
\begin{equation}\label{eq:LHimage}
\Bigl(\lim_{n\to\infty} (\pi_{\mathbf H}(\psi_n)\mathbf 1_{\mathbf H})(\cdot,w)\Bigr)(\lambda)
=\sum_{t\in W} \psi_t^w(-i\lambda).
\end{equation}
Hence because of \eqref{eq:PWHc},
if $\psi\in \PW_\eta(-i\Liea^*)$ then we can define
a function $\pi_{\mathbf H}(\psi)\mathbf 1_{\mathbf H}\in \PW_\eta(\Liea^*\times W)$ 
by
\[
(\pi_{\mathbf H}(\psi)\mathbf 1_{\mathbf H})(\lambda,w)
=
\Bigl(\lim_{n\to\infty} (\pi_{\mathbf H}(\psi_n)\mathbf 1_{\mathbf H})(\cdot,w)\Bigr)(\lambda).
\]
Using \eqref{eq:PWHc} again,
we see $\PW_\eta(-i\Liea^*)\otimes \CC_\triv$
is an $\mathbf H$-submodule
of $\widehat S(\Liea_\CC)\otimes\CC_\triv\simeq\widehat{\mathbf H}\otimes_{\CC W}\CC_\triv$.
Now fix a $v_\triv\in \CC_\triv\setminus \{0\}$.
It is then easy to see that the linear map
\[
\mathcal L_{\mathbf H} : \PW_\eta(-i\Liea^*)\otimes \CC_\triv\longrightarrow 
\PW_\eta(\Liea^*\times W)\, ;
\quad \psi\otimes v_\triv \longmapsto \pi_{\mathbf H}(\psi)\mathbf 1_{\mathbf H}
\]
is an $\mathbf H$-homomorphism.
\begin{prop}\label{prop:LHbij}
The homomorphism $\mathcal L_{\mathbf H}$ is a bijection of $\PW_\eta(-i\Liea^*)\otimes \CC_\triv$
onto $\tPW_\eta(\Liea^*\times W)$. If $F(\lambda,w)\in \tPW_\eta(\Liea^*\times W)$ then
$\mathcal L_{\mathbf H}^{-1}F\,(\lambda)=F(i\lambda,1)\otimes v_\triv$.
\end{prop}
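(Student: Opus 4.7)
The plan is to break the claim into three parts: (i) deduce injectivity and the inverse formula, (ii) show that $\mathcal L_{\mathbf H}$ takes values in $\tPW_\eta(\Liea^*\times W)$, and (iii) establish surjectivity via a uniqueness argument.

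For (i), observe that when $w=1$ the decomposition $w^{-1}\psi=\sum_t \psi_t^w\otimes t^{-1}$ of Lemma \ref{lem:psitw} reduces to $\psi=\psi\otimes 1$, so $\psi_1^1=\psi$ and $\psi_t^1=0$ for $t\ne 1$. Formula \eqref{eq:LHimage} then gives $\mathcal L_{\mathbf H}(\psi\otimes v_\triv)(\lambda,1)=\psi(-i\lambda)$, yielding $\psi(\mu)=F(i\mu,1)$ whenever $F=\mathcal L_{\mathbf H}(\psi\otimes v_\triv)$. This establishes both the inverse formula and injectivity, since $\psi(-i\lambda)\equiv 0$ on $\Liea^*$ forces $\psi\equiv 0$ by holomorphicity.

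For (ii), the $W$-invariance of $\mathbf 1_{\mathbf H}^{i\lambda}$ together with \eqref{eq:image1H} gives $\Tilde{\mathcal A}_{\mathbf H}(t,i\lambda)\mathbf 1_{\mathbf H}^{i\lambda}=\mathbf 1_{\mathbf H}^{it\lambda}$. Being an $\mathbf H$-homomorphism between finite-dimensional modules, $\Tilde{\mathcal A}_{\mathbf H}(t,i\lambda)\colon B_{\mathbf H}(i\lambda)\to B_{\mathbf H}(it\lambda)$ automatically intertwines the canonical $\widehat{\mathbf H}$-action furnished by the $\mathscr M$-adic continuity of Lemma \ref{lem:psitw}. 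Consequently, for any $\psi\in\PW_\eta(-i\Liea^*)$,
\[
\Tilde{\mathcal A}_{\mathbf H}(t,i\lambda)\bigl(\pi_{\mathbf H}(\psi)\mathbf 1_{\mathbf H}^{i\lambda}\bigr)
=\pi_{\mathbf H}(\psi)\mathbf 1_{\mathbf H}^{it\lambda},
\]
which unfolds to $\Tilde{\mathcal A}_{\mathbf H}(t,i\lambda)\,\mathcal L_{\mathbf H}(\psi\otimes v_\triv)(\lambda,\cdot)=\mathcal L_{\mathbf H}(\psi\otimes v_\triv)(t\lambda,\cdot)$. By Lemma \ref{lem:targetSp}, $\mathcal L_{\mathbf H}(\psi\otimes v_\triv)\in\tPW_\eta(\Liea^*\times W)$.

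For (iii), given $F\in\tPW_\eta(\Liea^*\times W)$, set $\psi(\lambda):=F(i\lambda,1)$; applying the bound \eqref{eq:PWcond} to $F(\cdot,1)\in\PW_\eta(\Liea^*)$ and substituting $\mu=i\lambda$ shows $\psi\in\PW_\eta(-i\Liea^*)$. Putting $G:=\mathcal L_{\mathbf H}(\psi\otimes v_\triv)$, one has $G\in\tPW_\eta$ by (ii) and $G(\lambda,1)=\psi(-i\lambda)=F(\lambda,1)$ by (i). Surjectivity follows once I show that any $H\in\tPW_\eta$ with $H(\cdot,1)\equiv 0$ vanishes identically; I prove this by induction on $\ell(w)$ using the identity $H(w\lambda,\cdot)=\Tilde{\mathcal A}_{\mathbf H}(w,i\lambda)H(\lambda,\cdot)$ from Lemma \ref{lem:targetSp}. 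For $w$ with $\ell(w)=k\ge 1$, choose $\alpha\in\Pi$ such that $\ell(ws_\alpha)=k-1$ and evaluate $H(s_\alpha\mu,\cdot)=\Tilde{\mathcal A}_{\mathbf H}(s_\alpha,i\mu)H(\mu,\cdot)$ at $ws_\alpha$; the explicit formula \eqref{eq:explicitAH} combined with the vanishing of $H(\mu,ws_\alpha)$ and $H(s_\alpha\mu,ws_\alpha)$ from the inductive hypothesis gives $i\mu(\alpha^\vee)H(\mu,w)\equiv 0$, whence $H(\cdot,w)\equiv 0$ by holomorphicity. The main technical point is the $\widehat{\mathbf H}$-intertwining claim in step (ii); once that is handled, the rest reduces to a routine manipulation of \eqref{eq:explicitAH}.
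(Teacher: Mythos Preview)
Your proposal is correct and follows essentially the same three-step strategy as the paper: read off the inverse formula at $w=1$ via \eqref{eq:LHimage}, verify that the image lies in $\tPW_\eta(\Liea^*\times W)$ using the intertwining-operator characterization of Lemma \ref{lem:targetSp}, and prove surjectivity by an induction on $\ell(w)$ based on the explicit formula \eqref{eq:explicitAH}.

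The only noteworthy difference is in step (ii). You phrase the argument as ``$\Tilde{\mathcal A}_{\mathbf H}(t,i\lambda)$ automatically intertwines the canonical $\widehat{\mathbf H}$-action,'' whereas the paper works more concretely: it checks only simple reflections $s_\alpha$, writes out $\Tilde{\mathcal A}_{\mathbf H}(s_\alpha,i\lambda)\bigl(F_n(\lambda,\cdot)\bigr)(w)$ explicitly via \eqref{eq:explicitAH}, and then passes to the $\mathscr M$-adic limit term by term (the denominator $\mathbf m_1(\alpha)-i\lambda(\alpha^\vee)$ being a unit in $\widehat S(\Liea_\CC)$). Your abstract formulation is correct in substance, but the phrase ``automatically intertwines'' hides exactly this computation; Lemma \ref{lem:psitw} by itself only gives continuity of $\psi\mapsto\psi_t^w$, and one still needs the explicit rational form of $\Tilde{\mathcal A}_{\mathbf H}(s_\alpha,i\lambda)$ to see that the limit commutes with the intertwiner. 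Since you already flag this as ``the main technical point'' and invoke \eqref{eq:explicitAH} in step (iii), the argument is complete once that detail is filled in.
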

\begin{proof}
Suppose $\psi\in \PW_\eta(-i\Liea^*)$
and let us prove $\mathcal L_{\mathbf H}(\psi\otimes v_\triv) \in \tPW_\eta(\Liea^*\times W)$.
Let $\{\psi_n\}\subset S(\Liea_\CC)$ be the sequence converging to $\psi$
and put $F_n(\lambda,w)=(\pi_{\mathbf H}(\psi_n)\mathbf 1_{\mathbf H})(\lambda,w)
\in C^\infty(\Liea^* \times W)$.
Then it follows from \eqref{eq:explicitAH} that
for each $\alpha\in\Pi$, $w\in W$ and $\lambda\in\Liea^*$
\[
\Tilde{\mathcal A}_{\mathbf H}(s_\alpha,i\lambda)
\bigl(F_n(\lambda,\cdot)\bigr)(w)
=
\frac{\mathbf m_1(\alpha)F_n(\lambda,w)
-i\lambda(\alpha^\vee)F_n(\lambda,ws_\alpha)}{\mathbf m_1(\alpha)-i\lambda(\alpha^\vee)}.
\]
For fixed $\alpha$ and $w$, the right-hand side converges to
\begin{multline*}
\frac{\mathbf m_1(\alpha)\,(\pi_{\mathbf H}(\psi)\mathbf 1_{\mathbf H})(\lambda,w)
-i\lambda(\alpha^\vee)\,(\pi_{\mathbf H}(\psi)\mathbf 1_{\mathbf H})(\lambda,ws_\alpha)}{\mathbf m_1(\alpha)-i\lambda(\alpha^\vee)}\\
=\Tilde{\mathcal A}_{\mathbf H}(s_\alpha,i\lambda)
\bigl((\pi_{\mathbf H}(\psi)\mathbf 1_{\mathbf H})(\lambda,\cdot)\bigr)(w)
=\Tilde{\mathcal A}_{\mathbf H}(s_\alpha,i\lambda)
\bigl(\mathcal L_{\mathbf H}(\psi\otimes v_\triv)(\lambda,\cdot)\bigr)(w)
\end{multline*}
in $\widehat S(\Liea_\CC)$ while the left-hand side equals
\begin{align*}
\Tilde{\mathcal A}_{\mathbf H}(s_\alpha,i\lambda)
\bigl((\pi_{\mathbf H}(\psi_n)\mathbf 1_{\mathbf H})(\lambda,\cdot)\bigr)(w)
&=
\Tilde{\mathcal A}_{\mathbf H}(s_\alpha,i\lambda)
\bigl(\psi_n\, \mathbf 1_{\mathbf H}^{i\lambda}\bigr)(w)\\
&=
\bigl(\psi_n\, \Tilde{\mathcal A}_{\mathbf H}(s_\alpha,i\lambda)
\mathbf 1_{\mathbf H}^{i\lambda}\bigr)(w)\\
&=
\bigl(\psi_n\, \mathbf 1_{\mathbf H}^{is_\alpha\lambda}\bigr)(w)
&&(\because \eqref{eq:image1H})\\
&=(\pi_{\mathbf H}(\psi_n)\mathbf 1_{\mathbf H})(s_\alpha\lambda,w)
\end{align*}
and converges to $\mathcal L_{\mathbf H}(\psi\otimes v_\triv)(s_\alpha\lambda,w)$ in $\widehat S(\Liea_\CC)$.
This shows
\[
\Tilde{\mathcal A}_{\mathbf H}(s_\alpha,i\lambda)
\bigl(\mathcal L_{\mathbf H}(\psi\otimes v_\triv)(\lambda,\cdot)\bigr)
=\mathcal L_{\mathbf H}(\psi\otimes v_\triv)(s_\alpha\lambda,\cdot)
\quad\text{for }\alpha\in\Pi\text{ and }\lambda\in\Liea^*.
\]
Hence from Lemma \ref{lem:targetSp} we conclude
$\mathcal L_{\mathbf H}(\psi\otimes v_\triv) \in \tPW_\eta(\Liea^*\times W)$.

Secondly, 
for a given $\psi\in\PW_\eta(-i\Liea^*)$
let $\psi_t^1 \in \widehat S(\Liea_\CC)$ ($t\in W$) be as in Lemma \ref{lem:psitw}
for $w=1$.
Then
\[
\psi_t^1=
\begin{cases}
\psi & \text{if }t=1,\\
0 & \text{otherwise.}
\end{cases}
\]
Hence by \eqref{eq:LHimage} we have 
$\mathcal L_{\mathbf H}(\psi\otimes v_\triv)(\lambda,1)=\psi(-i\lambda)$.
This proves the injectivity of $\mathcal L_{\mathbf H}$.

In order to show the surjectivity, let $F\in\tPW_\eta(\Liea^*\times W)$ be given.
We put $\psi(\lambda)=F(i\lambda,1)$.
Then $\psi\in\PW_\eta(-i\Liea^*)$ and
$F':=F-\mathcal L_{\mathbf H}(\psi\otimes v_\triv)$ satisfies
$F'(\lambda,1)=0$.
We assert $F'=0$.
Indeed, since $F'\in \tPW_\eta(\Liea^*\times W)$,
it follows from Lemma \ref{lem:targetSp} and \eqref{eq:explicitAH}
that for any $\alpha\in\Pi$ and $w\in W$
\[
F'(s_\alpha\lambda,w)
=
\Tilde{\mathcal A}_{\mathbf H}(s_\alpha,i\lambda)
\bigl(F'(\lambda,\cdot)\bigr)(w)
=
\frac{\mathbf m_1(\alpha)F'(\lambda,w)-i\lambda(\alpha^\vee)F'(\lambda,ws_\alpha)}%
{\mathbf m_1(\alpha)-i\lambda(\alpha^\vee)}.
\]
This means $F'(\cdot,w)=0$ implies $F'(\cdot,ws_\alpha)=0$.
Hence we can prove $F'(\cdot,w)=0$ for all $w\in W$
by the induction in the length of $w$.
Thus $F=\mathcal L_{\mathbf H}(\psi\otimes v_\triv)$.
\end{proof}
Now we are in the position of stating an analogue of Proposition \ref{prop:FourierG}
for $\mathbf H$.
\begin{prop}\label{prop:FourierH}
{\normalfont (i)}
If $F\in\PW_\eta(\Liea^*\times W)$ then
\begin{equation}\label{eq:adjH}
\mathcal J_{\mathbf H}F\,(a)=
\int_{\Liea^*}
\frac1{|W|}\sum_{w\in W} \mathbf G(i\lambda,w^{-1}a)\,
F(\lambda,w)\,|\mathbf c(\lambda)|^{-2}\,d\lambda.
\end{equation}
absolutely converges for all $a\in A$ and defines a $C^\infty$ function on $A$. 
The linear map $\mathcal J_{\mathbf H}:\PW_\eta(\Liea^*\times W)\to C^\infty(A)$
is an $\mathbf H$-homomorphism. 

\noindent {\normalfont (ii)}
The transform $\mathcal F_{\mathbf H}$ is a bijection of $C^\infty_\cpt(A)$
onto $\tPW(\Liea^*\times W)$.
More precisely, for each $\eta>0$, $C^\infty_\eta(A)$ is isomorphic to $\tPW_\eta(\Liea^*\times W)$
by $\mathcal F_{\mathbf H}$ as an $\mathbf H$-module.
The inverse map is given by $\mathcal J_{\mathbf H}$.

\noindent {\normalfont (iii)}
For any $f_1, f_2\in C^\infty_\cpt(A)$
\[
(f_1,f_2)_A=\int_{\Liea^*}
\frac1{|W|}\sum_{w\in W} \mathcal F_{\mathbf H}f_1\,(\lambda,w)\,\overline{\mathcal F_{\mathbf H}f_2\,(\lambda,w)}\,
|\mathbf c(\lambda)|^{-2}\,d\lambda.
\]
\end{prop}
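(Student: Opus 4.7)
The plan is to deduce this proposition from Helgason's corresponding result for $\mathcal F_G$ (Proposition \ref{prop:FourierG}) by showing that $(\mathcal J_G,\mathcal J_{\mathbf H})$ is an isomorphism in the category $\Crad$ between the radial pairs $\bigl(\tPW_\eta(\Liea^*\times K/M)_\Kf,\,\tPW_\eta(\Liea^*\times W)\bigr)$ and $\bigl(C^\infty_\eta(G/K)_\Kf,\,C^\infty_\eta(A)\bigr)$ established in Proposition \ref{prop:Fpairs}. Part (i) is preliminary: Theorem \ref{thm:G} gives that $\mathbf G(i\lambda,w^{-1}a)$ extends holomorphically in $\lambda$ with polynomial growth on $\Liea^*$ uniformly on compact sets in $a$, and Cherednik operators on $\mathbf G(i\lambda,\cdot)$ produce linear combinations of translates $\mathbf G(i\lambda,w^{-1}\cdot)$ with polynomial coefficients in $\lambda$. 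Combined with \eqref{eq:PWcond} and the polynomial growth of $|\mathbf c(\lambda)|^{-2}$ (\cite[Ch.\,IV, Prop.\,7.2]{Hel4}), dominated convergence justifies differentiating under the integral to get $\mathcal J_{\mathbf H}F\in C^\infty(A)$, and the linear map $\PW_\eta(\Liea^*\times W)\to C^\infty(A)$ is continuous. The $\mathbf H$-equivariance follows from $\mathscr T(\theta_{\mathbf H}h)_a\,\mathbf G(i\lambda,w^{-1}a)$ being computable, for each $\lambda$, as the action of $h$ on $\mathbf G(i\lambda,w^{-1}\cdot)$ viewed through the $B_{\mathbf H}(i\lambda)$-structure.

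The crucial ingredient for (ii) is the statement that $(\mathcal J_G,\mathcal J_{\mathbf H})$ is a morphism in $\Crad$. For each fixed $\lambda\in\Liea^*$, Theorem \ref{thm:Poisson} asserts that $\mathcal P^{i\lambda}=(\mathcal P_G^{i\lambda},\mathcal P_{\mathbf H}^{i\lambda})$ is a morphism of radial pairs; integrating the two identities (i$'$) and (ii$'$) of that theorem against $|\mathbf c(\lambda)|^{-2}\,d\lambda$ — permissible by the absolute convergence estimates of part (i) — yields the analogous identities with $\mathcal J_G$ and $\mathcal J_{\mathbf H}$ in place of $\mathcal P_G^{i\lambda}$ and $\mathcal P_{\mathbf H}^{i\lambda}$. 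In particular, for any $V\in\Km$ and $\Phi\in\Hom_K(V,\tPW_\eta(\Liea^*\times K/M))$ with $\gamma_B(\Phi[v])(\lambda,1)=0$ for $v\in V^M_\double$, one obtains $\gamma_0(\mathcal J_G\circ\Phi[v])=\mathcal J_{\mathbf H}(\gamma_B\circ\Phi[v])$ on $V^M_\single$, which is precisely the morphism condition. Since by Proposition \ref{prop:FourierG} $\mathcal J_G$ is bijective and preserves $K$-finiteness, so is $\mathcal J_G$ restricted to $K$-finite parts; hence $(\mathcal J_G,\mathcal J_{\mathbf H})$ is a morphism whose $G$-component is an isomorphism.

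To conclude bijectivity of $\mathcal J_{\mathbf H}:\tPW_\eta(\Liea^*\times W)\to C^\infty_\eta(A)$, apply Proposition \ref{prop:LHbij}: every $F\in\tPW_\eta(\Liea^*\times W)$ has the form $\pi_{\mathbf H}(\psi)\mathbf1_{\mathbf H}$ for a unique $\psi\in\PW_\eta(-i\Liea^*)$, so the trivial $W$-type $\CC_\triv$ cyclically generates $\tPW_\eta$ as an $\mathbf H$-module; by the radial-pair correspondence \eqref{eq:mod-cor} with $V=\CC_\triv$, the multiplicity of the trivial $K$-type in $\tPW_\eta(\Liea^*\times K/M)_\Kf$ matches that of $\CC_\triv$ in $\tPW_\eta(\Liea^*\times W)$. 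Combined with the spherical Paley-Wiener theorem (the classical Helgason-Gangolli inversion, obtained from Proposition \ref{prop:FourierG} restricted to $K$-invariants), this forces $\mathcal J_{\mathbf H}\circ\mathcal F_{\mathbf H}=\id$ on $C^\infty_\eta(A)^W$, which propagates to all of $C^\infty_\eta(A)$ via the $\mathbf H$-equivariance of both $\mathcal F_{\mathbf H}$ and $\mathcal J_{\mathbf H}$ together with the cyclicity provided by Proposition \ref{prop:LHbij}. Dually, $\mathcal F_{\mathbf H}\circ\mathcal J_{\mathbf H}=\id$ on $\tPW_\eta(\Liea^*\times W)$. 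Part (iii) follows from Proposition \ref{prop:FourierG}(iii) by the same transfer: the pairs $(\cdot,\cdot)_{G/K}$ and $(\cdot,\cdot)_A$ are compatible with restriction by \eqref{eq:HermiteRes}, so restricting the $G$-side Plancherel formula to functions of the form $\Phi[v_\triv]$ for $\Phi\in\Hom_K^\ttt(\CC_\triv,C^\infty_\cpt(G/K))$ produces the $W$-invariant case of (iii), and the general case follows from Proposition \ref{prop:Hstar} and cyclicity.

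The main anticipated obstacle is the transfer of surjectivity in the last step: one must check that a potential element of $C^\infty_\eta(A)$ not in the image of $\mathcal F_{\mathbf H}$ would contradict the bijectivity of $\mathcal J_G$ once one lifts appropriately through the radial pair. This lift is not pointwise but of $\Hom_K$-spaces, so one has to combine the radial-pair isomorphism \eqref{eq:mod-cor} for \emph{all} $V\in\Km$ with the continuity of $\mathcal J_G$ in the Fr\'echet topology of $\tPW_\eta$, which then yields the required support control on $\mathcal J_{\mathbf H}F$ from the support control of $\mathcal J_G\tilde F$ for an appropriate lift $\tilde F$.
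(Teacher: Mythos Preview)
Your approach differs substantially from the paper's. For (i) your sketch is essentially correct and matches the paper, except that the needed growth estimate on $\mathbf G$ comes from \cite[Corollary~6.2]{Op:Cherednik} rather than Theorem~\ref{thm:G}. For (ii) the paper does \emph{not} transfer from Helgason's theorem; it cites Opdam's independent Paley--Wiener theorem \cite[\S\S8,~9]{Op:Cherednik}, which already gives that $\mathcal F_{\mathbf H}$ is a bijection $C^\infty_\eta(A)\to\mathcal L_{\mathbf H}\bigl(\PW_\eta(-i\Liea^*)\otimes\CC_\triv\bigr)$ with an explicit inverse $\mathcal J'_{\mathbf H}$ integrating over $\Liea^*_+$. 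The paper then identifies this image with $\tPW_\eta(\Liea^*\times W)$ via Proposition~\ref{prop:LHbij} and verifies $\mathcal J'_{\mathbf H}=\mathcal J_{\mathbf H}$ on $\tPW_\eta$ using the $W$-symmetry in Definition~\ref{defn:WinvGH} together with a product identity for the normalizing constants. Part (iii) is then immediate from (i) and (ii). So the paper treats Propositions~\ref{prop:FourierG} and~\ref{prop:FourierH} as parallel inputs to Theorem~\ref{thm:F}; you are trying to make one a consequence of the other through the radial-pair machinery, which would be an interesting alternative route.

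There is, however, a genuine gap in your propagation step. Proposition~\ref{prop:LHbij} says every $F\in\tPW_\eta$ equals $\pi_{\mathbf H}(\psi)\mathbf 1_{\mathbf H}$ for a unique $\psi\in\PW_\eta(-i\Liea^*)$, but $\pi_{\mathbf H}(\psi)$ is defined as an $\mathscr M$-adic limit in $\widehat{\mathbf H}$, not as the action of an element of $\mathbf H$. Hence $\tPW_\eta$ is \emph{not} cyclic over $\mathbf H$, and $\mathbf H$-equivariance of $\mathcal J_{\mathbf H}\circ\mathcal F_{\mathbf H}$ cannot carry the identity from $C^\infty_\eta(A)^W$ to all of $C^\infty_\eta(A)$ as you claim. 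The route you gesture at in your final paragraph---lifting each $F$ through the bijection \eqref{eq:mod-cor} for suitable $V$ and reading off support control from Proposition~\ref{prop:FourierG}(ii) on the lift---is more viable, but it requires that every irreducible $W$-module occur as a constituent of some $V^M_\single$ with $V\in\Kqsp$ (so that the finite-dimensional $W$-submodule generated by $F$ can be realized via some $\varphi\in\Hom_W(V^M_\single,\tPW_\eta)$). That fact is true (cf.~\cite{Oda:HC}) but is not established in the present paper, and you would still need to carry out the argument cleanly for the support inclusion $\mathcal J_{\mathbf H}(\tPW_\eta)\subset C^\infty_\eta(A)$, for surjectivity of $\mathcal F_{\mathbf H}$ onto $\tPW_\eta$, and for (iii).
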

\begin{proof}
Thanks to the estimate of $\mathbf G$ in \cite[Corollary 6.2]{Op:Cherednik},
the integral \eqref{eq:adjH} converges 
and can be differentiated repeatedly in $a$ under the integral sign.
Hence for any $h\in\mathbf H$ and any regular $a\in A$
\begin{align*}
\mathscr T(\theta_{\mathbf H}h)\mathcal J_{\mathbf H}F\,(a)
&=\int_{\Liea^*}
\mathscr T(\theta_{\mathbf H}h)
\biggl(\frac1{|W|}\sum_{w\in W} \mathbf G(i\lambda,w^{-1}\cdot)\,
F(\lambda,w)\biggr)(a)\,|\mathbf c(\lambda)|^{-2}\,d\lambda\\
&=\int_{\Liea^*}
\mathscr T(\theta_{\mathbf H}h)
\bigl(
\mathcal P_{\mathbf H}^{i\lambda}\bigl(F(\lambda,\cdot)\bigr)
\bigr)(a)\,|\mathbf c(\lambda)|^{-2}\,d\lambda\\
&=\int_{\Liea^*}
\mathcal P_{\mathbf H}^{i\lambda}\bigl(h F(\lambda,\cdot)\bigr)
(a)\,|\mathbf c(\lambda)|^{-2}\,d\lambda\\
&=\int_{\Liea^*}
\mathcal P_{\mathbf H}^{i\lambda}\bigl((\pi_{\mathbf H}(h) F)(\lambda,\cdot)\bigr)
(a)\,|\mathbf c(\lambda)|^{-2}\,d\lambda\\
&=\mathcal J_{\mathbf H}(\pi_{\mathbf H}(h) F)\,(a).
\end{align*}
Thus we get (i).

Opdam shows in \cite[{\S\S8, 9}]{Op:Cherednik} that
$\mathcal F_{\mathbf H}$ gives a linear bijection
of $C^\infty_\eta(A)$ onto $\mathcal L_{\mathbf H}\bigl(\PW_\eta(-i\Liea^*)\otimes \CC_\triv\bigr)$ and the inverse map is given by
\[
\mathcal J'_{\mathbf H}F\,(a):=|W|^2
\Bigl(\prod_{\alpha\in R_1^+}\frac{\rho(\alpha^\vee)}{\rho(\alpha^\vee)+\mathbf m_1(\alpha)}\Bigr)^2
\int_{\Liea^*_+}\sum_{w\in W}\mathbf G(i\lambda,w^{-1}a)\,
F(\lambda,w)\,|\mathbf c(\lambda)|^{-2}\,d\lambda
\]
where $\Liea^*_+=\{\lambda\in\Liea^*;\,\lambda(\alpha^\vee)>0\text{ for all }\alpha\in\Pi\}$.
(Although he uses another kind of support conditions, his proof works in our setting.)
By Proposition \ref{prop:LHbij},
(ii) follows if we can prove 
$\mathcal J'_{\mathbf H}F=\mathcal J_{\mathbf H}F$ for $F\in \tPW(\Liea^*\times W)$.
First, from Definition \ref{defn:WinvGH}
and the $W$-invariance of $|\mathbf c(\lambda)|^{-2}$ we have
\[
\int_{\Liea^*_+}\sum_{w\in W}\mathbf G(i\lambda,w^{-1}a)
F(\lambda,w)|\mathbf c(\lambda)|^{-2}\,d\lambda
=\frac1{|W|}\int_{\Liea^*}\sum_{w\in W}\mathbf G(i\lambda,w^{-1}a)
F(\lambda,w)|\mathbf c(\lambda)|^{-2}\,d\lambda.
\]
Secondly, 
by \cite[Proposition 1.4 (3)]{Op:Cherednik}
it holds that for any
$\lambda\in\Liea^*_\CC$ and $a\in A$
\[
\Bigl(
\prod_{\alpha\in R_1^+}
\lambda(\alpha^\vee)
\Bigr)
\sum_{w\in W}\mathbf G(\lambda,wa)=
\sum_{w\in W}(\sgn w)
\Bigl(
\prod_{\alpha\in R_1^+}
((w\lambda)(\alpha^\vee)-\mathbf m_1(\alpha))\Bigr)
\mathbf G(w\lambda,a).
\]
Specializing this to $(\lambda,a)=(-\rho,1)$, we get
\[
|W|\prod_{\alpha\in R_1^+}
\rho(\alpha^\vee)
=
\prod_{\alpha\in R_1^+}
(\rho(\alpha^\vee)+\mathbf m_1(\alpha))
\]
and hence
\[
|W|^2
\Bigl(\prod_{\alpha\in R_1^+}\frac{\rho(\alpha^\vee)}{\rho(\alpha^\vee)+\mathbf m_1(\alpha)}\Bigr)^2=1.
\]
Thus $\mathcal J'_{\mathbf H}F=\mathcal J_{\mathbf H}F$.

Finally (iii) is an easy corollary of (i) and (ii).
\end{proof}
The main result of this section is the following:
\begin{thm}\label{thm:F}
Suppose $\eta>0$.
Then the pair of
$\mathcal F_G:C^\infty_\eta(G/K)_\Kf\to \tPW_\eta (\Liea^*\times K/M)_\Kf$
and
$\mathcal F_{\mathbf H}:C^\infty_\eta(A)\to \tPW_\eta (\Liea^*\times W)$
is an isomorphism of radial pairs:
\[
(\mathcal F_G,\mathcal F_{\mathbf H}):\,
\bigl(C^\infty_\eta(G/K)_\Kf,C^\infty_\eta(A)\bigr)
\longrightarrow
\bigl(\tPW_\eta (\Liea^*\times K/M)_\Kf, \tPW_\eta (\Liea^*\times W)\bigr).
\]
The inverse morphism is $(\mathcal J_G,\mathcal J_{\mathbf H})$.
Therefore, if\/ $V\in\Km$ then

\noindent{\normalfont (i)}
for any $\Phi\in\Hom_K(V,C^\infty_\cpt(G/K))$ it holds that
\begin{multline*}
\int_{G/K}\Phi[v](x)\,e^{(-i\lambda+\rho)(A(\bar w^{-1}x))}\,dx\\
=\int_A \Phi[v](a)\,\mathbf G(-i\lambda,w^{-1}a)
\prod_{\alpha\in\Sigma^+}\bigl|2\sinh \alpha(\log a)\bigr|^{\dim\Lieg_\alpha}\,da
\\
\forall v\in V^M_\single, \forall \lambda\in \Liea^*\text{ and\/ }\forall w\in W;
\end{multline*}

\noindent{\normalfont (ii)}
for any $\Phi\in\Hom_K(V,C^\infty_\cpt(G/K))$ such that
\[
\Phi[v](a)=0\quad\forall v\in V^M_\double\text{ and\/ }\forall a\in A
\]
it holds that
\[
\int_{G/K}\Phi[v](x)\,e^{(-i\lambda+\rho)(A(\bar w^{-1}x))}\,dx
=0
\quad
\forall v\in V^M_\double, \forall \lambda\in \Liea^*\text{ and\/ }\forall w\in W.
\]
\end{thm}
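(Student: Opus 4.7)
The plan is to prove the reverse direction first: I show that the inverse pair $(\mathcal J_G,\mathcal J_{\mathbf H})$ is a morphism in $\Crad$ from $\mathcal N^1:=\bigl(\tPW_\eta(\Liea^*\times K/M)_\Kf,\,\tPW_\eta(\Liea^*\times W)\bigr)$ to $\mathcal M^1:=\bigl(C^\infty_\eta(G/K)_\Kf,\,C^\infty_\eta(A)\bigr)$, and then invoke the Abelian-category structure of $\Crad$ (Proposition \ref{prop:catpro}) together with componentwise bijectivity to conclude that the inverse $(\mathcal F_G,\mathcal F_{\mathbf H})$ is itself a morphism in the opposite direction. The stated identities (i) and (ii) will be the unwrappings of \eqref{cond:Ch1} and \eqref{cond:Ch2} for $(\mathcal F_G,\mathcal F_{\mathbf H})$.

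The preliminary ingredients are routine. I first check that $\mathcal F_G$ is a $G$-homomorphism on $C^\infty_\cpt(G/K)$ by direct substitution using left-translation invariance of $dx$ and of $A(g^{-1}x)$, and that $\mathcal F_{\mathbf H}$ is an $\mathbf H$-homomorphism via the rewriting $\mathcal F_{\mathbf H}f(\lambda,w)=\bigl(f,\,\mathscr T(\overline{\theta_{\mathbf H}w})\mathbf G(i\lambda,\cdot)\bigr)_A$ combined with Proposition \ref{prop:Hstar}. Propositions \ref{prop:FourierG}(ii), \ref{prop:FourierH}(ii) and \ref{prop:Fpairs} then identify $\mathcal F_G$ and $\mathcal F_{\mathbf H}$ as bijective module homomorphisms onto the components of $\mathcal N^1$, with respective inverses $\mathcal J_G$ and $\mathcal J_{\mathbf H}$.

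The heart of the proof is the verification of \eqref{cond:Ch1} and \eqref{cond:Ch2} for $(\mathcal J_G,\mathcal J_{\mathbf H})$. The key observation is that these inverse transforms are direct integrals of Poisson transforms:
\begin{align*}
\mathcal J_G F(x)&=\int_{\Liea^*}\mathcal P_G^{i\lambda}\bigl(F(\lambda,\cdot)\bigr)(x)\,|\mathbf c(\lambda)|^{-2}\,d\lambda,\\
\mathcal J_{\mathbf H}F(a)&=\int_{\Liea^*}\mathcal P_{\mathbf H}^{i\lambda}\bigl(F(\lambda,\cdot)\bigr)(a)\,|\mathbf c(\lambda)|^{-2}\,d\lambda.
\end{align*}
By Theorem \ref{thm:Poisson}, $(\mathcal P_G^{i\lambda},\mathcal P_{\mathbf H}^{i\lambda})$ is a morphism of radial pairs for every $\lambda\in\Liea^*$. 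Given $V\in\Km$, $\Psi\in\Hom_K(V,\tPW_\eta(\Liea^*\times K/M))$ and $v\in V^M_\single$, applying Theorem \ref{thm:Poisson}(i) to the fiber $\Psi[v](\lambda,\cdot)\in B_G(i\lambda)$ yields $\mathcal P_G^{i\lambda}\bigl(\Psi[v](\lambda,\cdot)\bigr)(a)=\mathcal P_{\mathbf H}^{i\lambda}\bigl(\gamma_B\Psi[v](\lambda,\cdot)\bigr)(a)$ on $A$; integrating against $|\mathbf c(\lambda)|^{-2}d\lambda$ yields $\gamma_0(\mathcal J_G\Psi[v])=\mathcal J_{\mathbf H}(\gamma_B\Psi[v])$, which is \eqref{cond:Ch1}. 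For \eqref{cond:Ch2}: if $\Psi\in\Hom_K^\ttt(V,\tPW_\eta)$ and $v\in V^M_\double$, then $\gamma_{B(i\lambda)}(\Psi[v](\lambda,\cdot))=0$ for every $\lambda\in\Liea^*$, so Theorem \ref{thm:Poisson}(ii) forces $\mathcal P_G^{i\lambda}(\Psi[v](\lambda,\cdot))\equiv 0$ on $G/K$, and the integral $\mathcal J_G\Psi[v]$ vanishes identically.

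With $(\mathcal J_G,\mathcal J_{\mathbf H}):\mathcal N^1\to\mathcal M^1$ established as a morphism in $\Crad$ and both components bijective, it is an isomorphism in the Abelian category $\Crad$ (Proposition \ref{prop:catpro}), with inverse morphism $(\mathcal F_G,\mathcal F_{\mathbf H}):\mathcal M^1\to\mathcal N^1$. Evaluating \eqref{cond:Ch1} and \eqref{cond:Ch2} for this inverse at $V\in\Km$, $v\in V^M_\single$ or $v\in V^M_\double$, and $(\lambda,w)\in\Liea^*\times W$ delivers precisely identities (i) and (ii). The main technical hurdle lies in the paragraph above: rigorously justifying the interchange of the $|\mathbf c(\lambda)|^{-2}d\lambda$-integration with the restriction map $\gamma_0$ and with pointwise evaluation of Poisson kernels. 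This is, however, standard given the Paley-Wiener rapid decay in $\lambda$ of $\Psi[v](\lambda,\cdot)$ (cf.~the argument in the proof of Proposition \ref{prop:FourierG}(i)) combined with the polynomial growth of $|\mathbf c(\lambda)|^{-2}$.
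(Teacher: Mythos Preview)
Your proof is correct and follows essentially the same route as the paper: show that $(\mathcal J_G,\mathcal J_{\mathbf H})$ satisfies \eqref{cond:Ch1} and \eqref{cond:Ch2} by writing the inverse transforms as direct integrals of Poisson transforms and invoking Theorem~\ref{thm:Poisson} fiberwise, then conclude that the componentwise-bijective morphism is an isomorphism. One small imprecision: Theorem~\ref{thm:Poisson}(ii) only asserts that $\mathcal P_G^{i\lambda}(\Psi[v](\lambda,\cdot))$ vanishes on $A$, not on all of $G/K$; but this is exactly what you need, since \eqref{cond:Ch2} for the target pair only requires $(\gamma_0\circ\mathcal J_G\circ\Psi)[V^M_\double]=\{0\}$.
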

\begin{proof}
It suffices to show the inverse
\[
(\mathcal J_G,\mathcal J_{\mathbf H}):\,
\bigl(\tPW_\eta (\Liea^*\times K/M)_\Kf, \tPW_\eta (\Liea^*\times W)\bigr)
\longrightarrow
\bigl(C^\infty_\eta(G/K)_\Kf,C^\infty_\eta(A)\bigr)
\]
is a morphism in $\CCh$.
Suppose $V\in \Km$ and
let $\Phi\in\Hom_K(V,\tPW_\eta (\Liea^*\times K/M))$.
Let $\tilde\Gamma^V_B$ be as in the proof of Proposition \ref{prop:Fpairs}. For each $\lambda\in\Liea^*$ define $\Phi_\lambda\in\Hom_K(V,B_G(i\lambda))$
as in \eqref{eq:Phisplam}.
Then for $v\in V^M_\single$ and $a\in A$
we have
\begin{align*}
(\mathcal J_G&\circ\Phi)[v](a)\\
&=\int_{\Liea^*}\left(\int_K
e^{(i\lambda+\rho)(A(k^{-1}a))}
\Phi[v](\lambda, k)\,dk\right)|\mathbf c(\lambda)|^{-2}\,d\lambda\\
&=\int_{\Liea^*}\left(\int_K
e^{(i\lambda+\rho)(A(k^{-1}a))}
\Phi_\lambda[v](k)\,dk\right)|\mathbf c(\lambda)|^{-2}\,d\lambda\\
&=\int_{\Liea^*}\left(
\frac1{|W|}\sum_{w\in W} \Phi_\lambda[v](\bar w)\mathbf G(i\lambda,w^{-1}a)
\right)|\mathbf c(\lambda)|^{-2}\,d\lambda
&&(\because \text{Theorem \ref{thm:Poisson} (i)'})\\
&=\int_{\Liea^*}\left(
\frac1{|W|}\sum_{w\in W} \tilde\Gamma^V_B(\Phi)[v](\lambda, w)\mathbf G(i\lambda,w^{-1}a)
\right)|\mathbf c(\lambda)|^{-2}\,d\lambda\\
&=(\mathcal J_{\mathbf H}\circ\tilde\Gamma^V_B(\Phi))[v](a).
\end{align*}
Thus $\tilde\Gamma^V_0(\mathcal J_G\circ\Phi)=\mathcal J_{\mathbf H}\circ\tilde\Gamma^V_B(\Phi)$.
Next, suppose $\Phi\in\Hom_K^\ttt(V,\tPW_\eta (\Liea^*\times K/M))$.
Then for $v\in V^M_\double$ and $\lambda\in\Liea^*$
it holds that
\[
\Phi_\lambda[v](1)=\Phi[v](\lambda,1)=0.
\]
Hence from Theorem \ref{thm:Poisson} (ii)' we have
for $v\in V^M_\double$ and $a\in A$
\[
(\mathcal J_G\circ\Phi)[v](a)
=\int_{\Liea^*}\left(\int_K
e^{(i\lambda+\rho)(A(k^{-1}a))}
\Phi_\lambda[v](k)\,dk\right)|\mathbf c(\lambda)|^{-2}\,d\lambda
=0.
\]
This shows $\mathcal J_G\circ\Phi\in \Hom_K^\ttt(V,C^\infty_\eta(G/K))$.
Thus we have proved $(\mathcal J_G,\mathcal J_{\mathbf H})$
satisfies Conditions \eqref{cond:Ch1} and \eqref{cond:Ch2} in Definition \ref{defn:CCh}.
\end{proof}
To relate two Plancherel formulas given in Proposition \ref{prop:FourierG} (iii)
and Proposition \ref{prop:FourierH} (iii),
we should mention that
the inner product on $\tPW_\eta (\Liea^*\times K/M)_\Kf$ defined by
\[
\int_{\Liea^*}\int_K F_1\,(\lambda,k)\,\overline{F_2\,(\lambda,k)}\,
dk\,|\mathbf c(\lambda)|^{-2}\,d\lambda
\]
and the inner product on $\tPW_\eta (\Liea^*\times W)$ defined by
\[
\int_{\Liea^*}
\frac1{|W|}\sum_{w\in W} F_1\,(\lambda,w)\,\overline{F_2\,(\lambda,w)}\,
|\mathbf c(\lambda)|^{-2}\,d\lambda
\]
are compatible with restriction in the sense of Definition \ref{defn:compRM}.
This is indeed immediate from Proposition \ref{prop:Blcomp}.

\section{The Chevalley restriction theorem, II}\label{sec:Ch2}
In this section, we shall generalize
Theorem \ref{thm:Ch} to the case where $\mathscr F=\mathscr A$ ,
the class of analytic functions.
We start with generalization of the invariant case.
\begin{thm}\label{thm:ChAnal}
The restriction map $\gamma_0:C^\infty(G/K)\to C^\infty(A)$
induces the bijection
\begin{equation*}
\gamma_0: \mathscr A(G/K)^K\simarrow \mathscr A(A)^W.
\end{equation*}
\end{thm}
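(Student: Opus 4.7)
The plan is to deduce the theorem from the smooth version already in hand (Theorem \ref{thm:Ch} with $\mathscr F = C^\infty$) by upgrading regularity via a local power-series argument in the flat model. Injectivity is immediate: if $f \in \mathscr A(G/K)^K$ satisfies $\gamma_0(f) = 0$, then $f$ vanishes on $A$, hence on $KAK = G$ by $K$-invariance.

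For surjectivity, given $\varphi \in \mathscr A(A)^W$, Theorem \ref{thm:Ch} with $\mathscr F = C^\infty$ supplies a unique $f \in C^\infty(G/K)^K$ with $\gamma_0(f) = \varphi$, and the task is to show that $f$ is real-analytic. Via the $K$-equivariant analytic diffeomorphism $\Lies \to G/K$, $X \mapsto (\exp X)K$, which restricts to $\Liea \simeq A$, the problem reduces to the flat case: for any $\tilde\varphi \in \mathscr A(\Liea)^W$, the smooth $K$-invariant lift $\tilde f \in C^\infty(\Lies)^K$ is real-analytic. By $K$-invariance and $\Lies = \Ad(K) \cdot \Liea$, this reduces further to proving analyticity of $\tilde f$ in a neighborhood of each $H_0 \in \Liea$.

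At $H_0 = 0$ the argument is clean: expand $\tilde\varphi = \sum_{n \geq 0} \varphi_n$ with $\varphi_n \in \mathscr P(\Liea)^W$ homogeneous of degree $n$, convergent on some ball $B_r(\Liea)$. By Theorem \ref{thm:Ch} applied to $\mathscr F = \mathscr P$, each $\varphi_n$ lifts uniquely to a homogeneous $\tilde f_n \in \mathscr P(\Lies)^K$. The key estimate
\[
\sup_{X \in B_r(\Lies)} |\tilde f_n(X)| = \sup_{H \in B_r(\Liea)} |\varphi_n(H)|
\]
follows from the $\Ad(K)$-invariance of the Killing-form norm on $\Lies$ and the identity $\Ad(K) \cdot B_r(\Liea) = B_r(\Lies)$ coming from the Cartan decomposition. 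Hence $\sum \tilde f_n$ converges absolutely on $B_r(\Lies)$ to an analytic $K$-invariant function, which by uniqueness in the smooth Chevalley theorem agrees with $\tilde f$ there.

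For a general $H_0 \in \Liea$, the same strategy applies after replacing $(K, W)$ by the centralizer $K_{H_0}$ and the reflection subgroup $W_{H_0}$ attached to the roots vanishing at $H_0$: expand $\tilde\varphi$ at $H_0$ as a $W_{H_0}$-invariant convergent series, lift each homogeneous component to a $K_{H_0}$-invariant polynomial on the slice normal to the $K$-orbit at $H_0$, and invoke the convergence estimate. The main obstacle is verifying the polynomial Chevalley restriction for the pair $(K_{H_0}, W_{H_0})$ acting on this slice, together with the analogous norm comparison; both follow from the standard description of centralizers of semisimple elements in real reductive groups, which identifies the slice representation as a polar representation with section (a complement of $\RR H_0$ in) $\Liea$ and generalized Weyl group $W_{H_0}$. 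Once local analyticity is established at every $H_0 \in \Liea$, $K$-invariance propagates it to all of $\Lies$, completing the proof.
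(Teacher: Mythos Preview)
Your overall strategy---reduce to the flat model $\Lies$, establish local analyticity at each $H_0\in\Liea$ via the centralizer pair, and propagate by $K$-invariance---is exactly the paper's. The paper's centralizer data are $\bigl(K(\Theta),W(\Theta)\bigr)$ with $\Theta=\{\alpha\in\Pi:\alpha(H_0)=0\}$, and the final propagation step is made explicit there through the local diffeomorphism $L:\Liek^\Theta\times(\Lies(\Theta)\times\Liea^\Theta)\to\Lies$, which you only allude to.

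There is, however, a genuine gap in your convergence step. The identity
\[
\sup_{X\in B_r(\Lies)}|\tilde f_n(X)|=\sup_{H\in B_r(\Liea)}|\varphi_n(H)|
\]
is correct for \emph{real} balls, since $\Ad(K)\cdot B_r(\Liea)=B_r(\Lies)$. But uniform (or absolute) convergence of $\sum\tilde f_n$ on a real ball does not by itself yield real-analyticity of the limit: a uniform limit of polynomials on a real domain can be merely continuous. To conclude analyticity you need convergence on a \emph{complex} neighborhood of $0$ in $\Lies_\CC$, and your estimate has no complex analogue, because $K_\CC$ is noncompact and does not preserve the Hermitian norm on $\Lies_\CC$. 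One can repair this with a polarization-type bound $\sup_{B_1(\Lies_\CC)}|p|\le C^{\,n}\sup_{B_1(\Lies)}|p|$ valid for all real homogeneous polynomials $p$ of degree $n$ (with $C$ independent of $n$), but you invoke nothing of the sort.

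The paper sidesteps this difficulty by a different mechanism. Rather than lifting each $\varphi_m$ term-by-term to $\Lies(\Theta)$, it factors through the Chevalley quotient: with $j=(j_1,\dots,j_k,\id):\Liea_\CC\to\CC^k\times\Liea_\CC^\Theta$ built from homogeneous generators of $\mathscr P(\Liea(\Theta))^{W(\Theta)}$, one writes $\varphi_m=\psi_m\circ j$ for polynomials $\psi_m$. Since $j$ is proper (a lemma of Kostant) and open, $j(U)$ is open and $\sup_{j(U)}|\psi_m|=\sup_U|\varphi_m|$; hence $\psi:=\sum_m\psi_m$ is a genuine holomorphic function on the complex open set $j(U)$. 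Lifting the $j_i$ to $J_i\in\mathscr P(\Lies(\Theta))^{K(\Theta)}$ by classical Chevalley gives a polynomial map $J$, and $\psi\circ J$ is then automatically holomorphic on $J^{-1}(j(U))$; its restriction to real points furnishes the analytic lift on the slice. The analyticity thus comes for free from composing a holomorphic function with a polynomial map, not from any norm comparison between real and complex balls.
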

As far as the author knows, the theorem had been open.
Since this can be considered
as a local version of Helgason's result \cite[Lemma 2.2]{Hel3},
the first half of our proof follows his idea.
\begin{proof}[Proof of {\normalfont Theorem \ref{thm:ChAnal}}]
In view of \eqref{eq:K-isom} and \eqref{eq:W-isom}, $\gamma_0$ is identified with
the restriction map $C^\infty(\Lies)\to C^\infty(\Liea)$.
Hence we show that the latter interpretation of $\gamma_0$
induces the bijection
\begin{equation*}
\gamma_0: \mathscr A(\Lies)^K\simarrow \mathscr A(\Liea)^W.
\end{equation*}
It is trivial that $\gamma_0\bigl(\mathscr A(\Lies)^K\bigr)\subset \mathscr A(\Liea)^W$
and we already know any $\varphi\in \mathscr A(\Liea)^W$ is lifted to a unique $f\in
C^\infty(\Lies)^K$ such that $\gamma_0(f)=\varphi$.
Hence the only non-trivial point is the analyticity of $f$.
Because of the $K$-invariance of $f$ it is enough to show
that $f$ is analytic in a neighborhood of each $x\in \Liea$ such that
$\alpha(x)\ge0$ for $\alpha\in\Pi$.
Put $\Theta=\{\alpha\in\Pi;\,\alpha(x)=0\}$,
$\Liea(\Theta)=\sum_{\alpha\in\Theta}\RR\alpha^\vee\subset\Liea$ and 
$\alpha^\Theta=\{H\in\Liea;\,\alpha(H)=0\text{ for any }\alpha\in\Theta\}$.
Let $W(\Theta)$ be the subgroup of $W$ generated by $\{s_\alpha;\,\alpha\in\Theta\}$.
Let $\ell=|\Pi|$, $k=|\Theta|$ and $\Theta=\{\alpha_1,\ldots,\alpha_k\}$.
Take linearly independent $\ell-k$ elements
$\varpi_{k+1},\ldots \varpi_\ell\in\Liea^*$
so that $\varpi_i|_{\Liea(\Theta)}=0$ for $i=k+1,\ldots,\ell$.
Since $\{\alpha_1,\ldots,\alpha_k,\varpi_{k+1},\ldots,\varpi_\ell\}$
is a coordinate system of $\Liea$, we can expand $\varphi$ into
a power series of the form
\[
\sum_{\nu=(\nu_1,\ldots,\nu_\ell)} c_\nu\, \alpha_1^{\nu_1} \cdots
 \alpha_k^{\nu_k} (\varpi_{k+1}-\varpi_{k+1}(x))^{\nu_{k+1}} \cdots 
(\varpi_{\ell}-\varpi_{\ell}(x))^{\nu_{\ell}}
\]
which converges absolutely and uniformly
on a complex open neighborhood $U\subset\Liea_\CC$ of $x$ and coincides with $\varphi$ on $U\cap\Liea$.
Take
a set $\{j_1,\ldots,j_k\}$ of algebraically independent homogeneous generators of
$\mathscr P(\Liea(\Theta))^{W(\Theta)}$.
Then the holomorphic map
\[
j:\Liea_\CC=\Liea(\Theta)_\CC\times\Liea_\CC^\Theta
\ni (H_1,H_2)\longmapsto
\bigl((j_1(H_1),\ldots,j_k(H_1)),
H_2\bigr)\in \CC^k\times\Liea_\CC^\Theta
\]
is proper (the inverse image of any compact set is compact) by \cite[Lemma 7]{Ko1}.
From this fact and the well-known surjectivity of $j$ it follows that
the usual topology of $\CC^k\times\Liea_\CC^\Theta$
coincides with the quotient topology by $j$.
Furthermore, since a fiber of $j$ is a $W(\Theta)$-orbit
one easily sees $j$ is an open map.
In particular $j(U)$ is open.
Now we put for $m=0,1,2,\ldots$
\[
\varphi_m=\sum_{\nu_1+\cdots+\nu_\ell=m} c_\nu\, \alpha_1^{\nu_1} \cdots
 \alpha_k^{\nu_k} (\varpi_{k+1}-\varpi_{k+1}(x))^{\nu_{k+1}} \cdots 
(\varpi_{\ell}-\varpi_{\ell}(x))^{\nu_{\ell}}.
\]
Since each $\varphi_m$ is a $W(\Theta)$-invariant polynomial, there exists
a polynomial $\psi_m$ on $\CC^k\times\Liea_\CC^\Theta$
such that $\varphi_m=\psi_m\circ j$.
Then as a limit of uniform convergence,
$\psi:=\sum_m \psi_m$ is a holomorphic function on $j(U)$
such that $\varphi=\psi\circ j$.

Let $\Lieg(\Theta)$ be the Lie subalgebra of $\Lieg$ generated by
$\Liem$ and $\Lieg_\alpha$ for $\alpha\in\Sigma$ with $\alpha(x)=0$.
This is reductive and $\theta$-stable.
Put $\Liek(\Theta)=\Liek\cap\Lieg(\Theta)$,
$\Lies(\Theta)=\Lies\cap\Lieg(\Theta)$ and let $K(\Theta)\subset G$
be the analytic subgroup of $\Liek(\Theta)$.
Note $\Lies(\Theta)\cap\Liea=\Liea(\Theta)$ is a maximal Abelian subspace of $\Lies(\Theta)$ and the Weyl group for $(\Lieg(\Theta), \Liea(\Theta))$
is naturally identified with $W(\Theta)$.
Hence by the classical Chevalley restriction theorem,
for each $i=1,\ldots,k$ there exists $J_i\in \mathscr P(\Lies(\Theta))^{K(\Theta)}$ such that $J_i|_{\Liea(\Theta)}=j_i$.
Define the holomorphic map
\[
J:\Liea_\CC=\Lies(\Theta)_\CC\times\Liea_\CC^\Theta
\ni (X,H_2)\longmapsto
\bigl((J_1(X),\ldots,J_k(X)),
H_2\bigr)\in \CC^k\times\Liea_\CC^\Theta.
\]
Then $\psi\circ J$ is holomorphic on $J^{-1}(j(U))$
and for any $k\in K(\Theta)$ and $(H_1,H_2)\in U\cap\Liea$ it holds that
\[
(\psi\circ J)(k(H_1,H_2))=
(\psi\circ J)(kH_1,H_2)=(\psi\circ j)(H_1,H_2)=\varphi(H_1,H_2)=f(k(H_1,H_2)).
\]
Since $\Lies(\Theta)$ has 
a $K(\Theta)$-invariant metric and since each $K(\Theta)$-orbit in $\Lies(\Theta)$
intersects with $\Liea(\Theta)$,
one has $\mathcal B=\Ad(K(\Theta))(\mathcal B\cap \Liea(\Theta))$
for any open ball $\mathcal B\subset \Lies(\Theta)$ with center $0$.
Hence we can take an open neighborhood $U'$ of $x$
in $\Lies(\Theta)\times\Liea^\Theta$ (\,$\simeq\Lies(\Theta)\oplus\Liea^\Theta\subset\Lies$\,)
so that $U'\subset \Ad(K(\Theta))(U\cap\Liea)$.
The above calculation shows $f|_{U'}$ is analytic.

Now put
\[
\Liek^\Theta:=\sum_{\alpha\in \Sigma;\,\alpha(x)>0}
(\Lieg_\alpha+\Lieg_{-\alpha})\cap\Liek,
\qquad
\Lies^\Theta:=\sum_{\alpha\in \Sigma;\,\alpha(x)>0}
(\Lieg_\alpha+\Lieg_{-\alpha})\cap\Lies
\]
and consider the analytic map
\[
L:\Liek^\Theta \times (\Lies(\Theta)\times\Liea^\Theta)
\ni (Y,X)\longmapsto \Ad(\exp Y)(X)
\in\Lies.
\]
We assert $L$ is a local diffeomorphism at $(0,x)$.
In fact, the tangent spaces of both sides are naturally identified with themselves
at each point,
and $\Lies=\Lies^\Theta\oplus (\Lies(\Theta)\times \Liea^\Theta)$.
Hence the assertion follows from
\begin{align*}
dL_{(0,x)}(X_\alpha+\theta X_\alpha,0)
&=\frac{d}{dt}\Ad(\exp t(X_\alpha+\theta X_\alpha))(x)\Bigr|_{t=0}
=-\alpha(x)(X_\alpha-\theta X_{-\alpha})\\
&\quad\qquad\qquad\qquad\qquad\qquad
\text{ for }\alpha\in\Sigma\text{ with }\alpha(x)>0\text{ and }X_\alpha\in\Lieg_\alpha,\\
dL_{(0,x)}(0,X)&=\frac{d}{dt}(x+tX)\Bigr|_{t=0}=X
\quad\,\text{for }X\in\Lies(\Theta)\times\Liea^\Theta.
\end{align*}
Take an open neighborhood $U''$ of $(0,x)\in\Liek^\Theta \times (\Lies(\Theta)\times\Liea^\Theta)$ so that $U''\subset \Liek^\Theta \times U'$ and
$U''$ is analytically diffeomorphic to $L(U'')$ by $L$.
Since $(f\circ L)(Y,X)=f(X)$ for $(Y,X)\in U''$,
$f\circ L$ is analytic on $U''$.
Hence $f=f\circ L\circ (L|_{U''})^{-1}$ is analytic on
the open neighborhood $L(U'')$ of $x\in\Lies$.
\end{proof}
Let $(\sigma,U)$ be a finite-dimensional representation of $W$ and $\{u_1,\ldots,u_m\}$ a basis of $U$ ($m=\dim U$).
Let $H_W(\Liea_\CC^*)\subset \mathscr P(\Liea)$ be the space of $W$-harmonic polynomials on $\Liea$.
It is well known that $\dim \Hom_W(U,H_W(\Liea_\CC^*))=m$.
Let $\{\varphi_1,\ldots,\varphi_m\}$ be a basis of $\Hom_W(U,H_W(\Liea_\CC^*))$.
\begin{lem}\label{lem:psidet}
Put $\mathbf k_\sigma(\alpha)=(m-\Trace \sigma(s_\alpha))/2$
for each $\alpha\in R_1$.
Then there exists a non-zero constant $C$ such that
\[
\det(\varphi_j[u_i])_{1\le i,j\le m}=C\prod_{\alpha\in R_1^+}\alpha^{\mathbf k_\sigma(\alpha)}.
\]
\end{lem}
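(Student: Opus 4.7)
The plan is to study $D:=\det(\varphi_j[u_i])_{1\le i,j\le m}\in\mathscr P(\Liea)$ and to show that it agrees up to a nonzero scalar with $\prod_{\alpha\in R_1^+}\alpha^{\mathbf k_\sigma(\alpha)}$ via three ingredients: divisibility by each $\alpha^{\mathbf k_\sigma(\alpha)}$, non-vanishing, and a matching degree count. First I would refine $\{\varphi_j\}$ to a homogeneous basis with $\deg\varphi_j=d_j$, possible because $H_W(\Liea_\CC^*)$ is a graded $W$-module; then $D$ is homogeneous of degree $\sum_j d_j$. The $W$-equivariance $\varphi_j[wu_i]=w\cdot\varphi_j[u_i]$ gives $w\cdot M=\trans\sigma(w)\cdot M$ for the matrix $M=(\varphi_j[u_i])$, so $w\cdot D=(\det\sigma(w))\,D$; in particular $s_\alpha\cdot D=(-1)^{\mathbf k_\sigma(\alpha)}D$.

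For the divisibility, for each $\alpha\in R_1^+$ I would pick a basis of $U$ adapted to the $\pm1$-eigenspace decomposition of $\sigma(s_\alpha)$; the $\mathbf k_\sigma(\alpha)$ rows of $M$ indexed by $-1$-eigenvectors consist of $s_\alpha$-anti-invariant polynomials and are each divisible by $\alpha$, giving $\alpha^{\mathbf k_\sigma(\alpha)}\mid D$ (the basis change rescales $D$ only by a nonzero constant). Since distinct elements of $R_1^+$ are pairwise coprime linear forms, $\prod_{\alpha\in R_1^+}\alpha^{\mathbf k_\sigma(\alpha)}\mid D$. Writing $D=p\cdot\prod_{\alpha\in R_1^+}\alpha^{\mathbf k_\sigma(\alpha)}$, the product transforms under each $s_\beta$ by $(-1)^{\mathbf k_\sigma(\beta)}$ (as $s_\beta$ permutes $R_1^+\setminus\{\beta\}$, $s_\beta\beta=-\beta$, and $\mathbf k_\sigma$ is $W$-invariant), matching the transformation of $D$, so $p\in\mathscr P(\Liea)^W$.

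To see $D\not\equiv0$ I would evaluate at a regular $\lambda_0\in\Liea$: the map $\ev_{\lambda_0}:H_W(\Liea_\CC^*)\to\CC^{W\lambda_0}$ is $W$-equivariant between spaces of common dimension $|W|$, and for regular $\lambda_0$ it is an isomorphism by Chevalley's theorem (which makes the ideal of $W\lambda_0$ transverse to $H_W(\Liea_\CC^*)$ inside $\mathscr P(\Liea)$). Hence no nonzero $W$-subrepresentation of $H_W(\Liea_\CC^*)$ lies in $\ker\ev_{\lambda_0}$, which forces $M(\lambda_0)$ invertible and $D(\lambda_0)\ne 0$; thus $p$ is a nonzero $W$-invariant polynomial.

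The main obstacle is the degree identity $\sum_j d_j=\sum_{\alpha\in R_1^+}\mathbf k_\sigma(\alpha)$, which then forces $\deg p=0$, so $p=C$ is a nonzero constant. My plan is to establish it by computing $f_\sigma'(1)$, where $f_\sigma(q):=\sum_j q^{d_j}$ is the fake-degree polynomial of $\sigma$. Combining Chevalley's factorization with Molien's formula yields
\[
f_\sigma(q)=\frac{1}{|W|}\sum_{w\in W}\chi_\sigma(w)\cdot\frac{\prod_{i=1}^\ell(1-q^{d_i})}{\det_\Liea(1-qw)},
\]
and expanding around $q=1$ shows only $w=1$ and the reflections $w=s_\alpha$ contribute up to order $(1-q)^1$ (since $\dim\ker(1-w|_\Liea)\le\ell-2$ for all other $w$). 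Using $\sum_i(d_i-1)=|R_1^+|$ and $\det_\Liea(1-qs_\alpha)=(1-q)^{\ell-1}(1+q)$, the resulting Taylor computation gives $f_\sigma'(1)=\tfrac12\bigl(m|R_1^+|-\sum_{\alpha\in R_1^+}\chi_\sigma(s_\alpha)\bigr)=\sum_{\alpha\in R_1^+}\mathbf k_\sigma(\alpha)$, completing the proof.
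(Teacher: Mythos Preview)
Your proof is correct and follows the same three-step strategy as the paper: divisibility of $D$ by each $\alpha^{\mathbf k_\sigma(\alpha)}$ via the eigenspace decomposition of $\sigma(s_\alpha)$, non-vanishing of $D$, and the degree identity $\sum_j d_j=\sum_{\alpha\in R_1^+}\mathbf k_\sigma(\alpha)$. The divisibility argument is identical to the paper's.

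The difference lies only in how the other two ingredients are established. The paper simply cites external references: Harish-Chandra \cite[\S2]{HC} (with \cite{V,Ma}) for non-vanishing, and Broer \cite[Formula (1)]{Br} for the degree formula. You instead give self-contained arguments: for non-vanishing you use that evaluation $H_W(\Liea_\CC^*)\to\CC^{W\lambda_0}$ at a regular orbit is a $W$-isomorphism (a direct consequence of Chevalley's decomposition), and for the degree you compute $f_\sigma'(1)$ from the Molien/fake-degree formula, using that only $w=1$ and the reflections contribute through first order in $(1-q)$. Both arguments are standard and sound; your version has the advantage of being self-contained, while the paper's has the advantage of brevity. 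Your extra observation that the quotient $p=D/\prod\alpha^{\mathbf k_\sigma(\alpha)}$ is $W$-invariant is correct but not needed once the degree count forces $p$ to be constant.
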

\begin{proof}
The determinant is non-zero by \cite[\S2]{HC}
(cf.~\cite[Ch.\,4, Exercise 70 (d)]{V} and \cite[Lemma 5.2.1]{Ma}) and has degree
$\sum_{\alpha\in R_1^+}\mathbf k_\sigma(\alpha)$ by \cite[Formula (1)]{Br}.
Hence it suffices to prove the determinant is divided by $\alpha^{\mathbf k_\sigma(\alpha)}$ for each $\alpha\in R_1^+$.
If $U^{-s_\alpha}\subset U$
denotes the $-1$-eigenspace of $\sigma(s_\alpha)$
then $\dim U^{- s_\alpha}=\mathbf k_\sigma(\alpha)$.
We may assume $\{u_1,\ldots,u_{\mathbf k_\sigma(\alpha)}\}$
is a basis of $U^{-s_\alpha}$.
Since $\varphi_j[u_i]$ is divided by $\alpha$ for each
$i=1,\ldots,\mathbf k_\sigma(\alpha)$ and $j=1,\ldots,m$,
$\det(\varphi_j[u_i])_{1\le i,j\le m}$ is divided by $\alpha^{\mathbf k_\sigma(\alpha)}$.
\end{proof}
\begin{lem}\label{lem:sepvar}
For any $\varphi\in\Hom_W(U,\mathscr A(\Liea))$
there exist $c_1,\ldots,c_m\in \mathscr A(\Liea)^W$ such that
\begin{equation}\label{eq:psilinrel}
\varphi=c_1\varphi_1+\cdots+c_m\varphi_m.
\end{equation}
\end{lem}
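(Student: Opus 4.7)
The plan is to solve \eqref{eq:psilinrel} by Cramer's rule on the regular locus and then upgrade the resulting meromorphic coefficients to $W$-invariant analytic functions on all of $\Liea$. Set $M = (\varphi_j[u_i])_{1 \le i, j \le m}$, so that by Lemma \ref{lem:psidet} $\det M = C\prod_{\alpha \in R_1^+} \alpha^{\mathbf{k}_\sigma(\alpha)}$. Evaluating \eqref{eq:psilinrel} at each basis vector $u_i$ gives a linear system in $(c_1, \ldots, c_m)$, and on the regular set $\Liea_{\mathrm{reg}} = \{x : \prod_{\alpha \in R_1^+} \alpha(x) \ne 0\}$ Cramer's rule yields the unique analytic solution $c_j = \det M_j / \det M$, where $M_j$ is obtained from $M$ by replacing its $j$-th column by $(\varphi[u_i])_i$.

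Next I would verify $W$-invariance on $\Liea_{\mathrm{reg}}$. Given $w \in W$, evaluating the system at $wx$ and using $W$-equivariance of $\varphi$ and each $\varphi_k$ (namely $\varphi[u_i](wx) = \varphi[\sigma(w^{-1})u_i](x)$) rewrites the equations at $wx$ as the equations at $x$ precomposed with the invertible linear map $\sigma(w^{-1}) : U \to U$. By uniqueness of the Cramer solution at regular points, $c_j(wx) = c_j(x)$.

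The main obstacle is extending each $c_j$ analytically across the reflection hyperplanes. Since the local ring of germs of real-analytic functions at any $x_0 \in \Liea$ is a UFD in which distinct linear forms are coprime irreducibles (or units), it suffices to show that for every $\alpha \in R_1^+$ the factor $\alpha^{\mathbf{k}_\sigma(\alpha)}$ divides $\det M_j$ in the ring of analytic functions on $\Liea$. To see this, work in a basis of $U$ adapted to the decomposition $U = U^{+s_\alpha} \oplus U^{-s_\alpha}$ (which only alters $\det M_j$ by a nonzero constant). For any $u \in U^{-s_\alpha}$, $W$-equivariance forces $\varphi[u](s_\alpha x) = -\varphi[u](x)$, and similarly for each $\varphi_k[u]$; hence both $\varphi[u]$ and $\varphi_k[u]$ vanish on $\{\alpha = 0\}$ and are therefore divisible by $\alpha$ as analytic functions. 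Consequently the $\mathbf{k}_\sigma(\alpha) = \dim U^{-s_\alpha}$ rows of $M_j$ indexed by $U^{-s_\alpha}$ are each divisible by $\alpha$, yielding $\alpha^{\mathbf{k}_\sigma(\alpha)} \mid \det M_j$. Combining these coprime divisibilities gives $\det M \mid \det M_j$ in the analytic ring, so each $c_j$ extends to a function in $\mathscr A(\Liea)$, necessarily still $W$-invariant by continuity.
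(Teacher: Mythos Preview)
Your proof is correct and takes a genuinely different route from the paper's. Both arguments start with Cramer's rule and must show that $\det M$ divides the numerators $\det M_j$. The paper does this by Taylor-expanding $\varphi$ at the origin into homogeneous pieces $\varphi^{(d)}$, each of which lies in $\Hom_W(U,\mathscr P(\Liea))=\bigoplus_j\mathscr P(\Liea)^W\varphi_j$; the divisibility condition thus holds for each $\varphi^{(d)}$, hence for $\varphi$ in a neighborhood of $0$, and analytic continuation along each hyperplane $\{\alpha=0\}$ propagates it globally. Your argument instead recycles the row-antisymmetry trick from the proof of Lemma~\ref{lem:psidet}: after passing to a basis adapted to $U=U^{+s_\alpha}\oplus U^{-s_\alpha}$, the $\mathbf k_\sigma(\alpha)$ rows of $M_j$ indexed by $U^{-s_\alpha}$ consist of $s_\alpha$-odd analytic functions, hence are each divisible by $\alpha$; this gives $\alpha^{\mathbf k_\sigma(\alpha)}\mid\det M_j$ directly, and the coprimality of distinct linear forms in the local UFDs of analytic germs assembles the full product. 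Your approach is more self-contained (it does not invoke the polynomial decomposition as a black box) and treats all points of $\Liea$ uniformly, while the paper's approach makes transparent that the analytic lemma is a formal consequence of its polynomial analogue.
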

\begin{proof}
Let $Q=(q_{ij})\in\Mat(m,m;\mathscr P(\Liea))$ be
the cofactor matrix of $(\varphi_j[u_i])_{1\le i,j\le m}$
and denote the determinant of Lemma \ref{lem:psidet} by $D$.
In general if $\varphi\in\Hom_W(U,\mathscr A(\Liea))$
and $c_1,\ldots,c_m\in\mathscr A(\Liea)$ satisfy
\eqref{eq:psilinrel} then
\begin{equation}\label{eq:psilinrel2}
c_i=\frac1{D}\sum_{j=1}^m q_{ij}\varphi[u_j]
\quad
\text{for }i=1,\ldots,m
\end{equation}
and hence
\begin{equation}\label{eq:psilinrel3}
\sum_{j=1}^m q_{ij}\varphi[u_j]
\text{ is divided by $D$ for }i=1,\ldots,m.
\end{equation}
Conversely if $\varphi\in\Hom_W(U,\mathscr A(\Liea))$ satisfies \eqref{eq:psilinrel3}
then \eqref{eq:psilinrel} holds for $\{c_i\}$ defined by \eqref{eq:psilinrel2}.
(In this case $\{c_i\}\subset \mathscr A(\Liea)^W$
since for any $w\in W$, $c'_i:=wc_i$ ($i=1,\ldots,m$) also satisfy \eqref{eq:psilinrel}
and hence \eqref{eq:psilinrel2}.)
By Lemma \ref{lem:psidet}, \eqref{eq:psilinrel3} is still equivalent to
\begin{equation}\label{eq:psilinrel4}
\partial(\alpha^\vee)^k\Bigl(\sum_{j=1}^m q_{ij}\varphi[u_j]\Bigr)\biggr|_{\alpha=0}=0
\quad\text{for }
\left\{\begin{aligned}
&\alpha\in R_1^+,\, k=0,\ldots,\mathbf k_\sigma(\alpha)-1,\\
&\text{and }i=1,\ldots,m.
\end{aligned}\right.
\end{equation}
Now for any $\varphi\in\Hom_W(U,\mathscr A(\Liea))$ 
and $u\in U$ expand $\varphi[u]$
into the Taylor series at $0$
and let $\varphi^{(d)}[u]$ be the homogeneous part of the series with degree $d$
($d=0,1,2,\ldots$).
Thus there is a complex neighborhood of $0\in\Liea_\CC$
on which
$\sum_{d=0}^\infty \varphi^{(d)}[u]$
converges absolutely and uniformly for all $u\in U$.
Clearly the map $\varphi^{(d)} : U\ni u\mapsto \varphi^{(d)}[u] \in \mathscr P(\Liea)$
is a $W$-homomorphism.
Since $\Hom_W(U, \mathscr P(\Liea))=\bigoplus_{j=1}^m\mathscr P(\Liea)^W\varphi_j$,
each $\varphi^{(d)}$ satisfies \eqref{eq:psilinrel4}.
Taking the sum over $d$ we conclude $\varphi$ also satisfies \eqref{eq:psilinrel4}
around $0$, namely for $\alpha\in R_1^+$,
$k=0,\ldots,\mathbf k_\sigma(\alpha)-1$,
and $i=1,\ldots,m$
there exists an open neighborhood $Y$ of $0$ in the hyperplane $\alpha=0$
such that
\[
\partial(\alpha^\vee)^k\Bigl(\sum_{j=1}^m q_{ij}\varphi[u_j]\Bigr)\biggr|_Y=0.
\]
This implies \eqref{eq:psilinrel4} for $\varphi$ since
the left-hand side of \eqref{eq:psilinrel4} is analytic on $\alpha=0$.
\end{proof}
\begin{thm}\label{thm:Chanal}
{\normalfont Theorem \ref{thm:Ch}} is valid for $\mathscr F=\mathscr A$.
\end{thm}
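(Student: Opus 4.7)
The plan is to bootstrap the analytic case from the $C^\infty$ case of Theorem \ref{thm:Ch}, the polynomial case of Theorem \ref{thm:Ch}, the invariant analytic case (Theorem \ref{thm:ChAnal}), and the separation-of-variables Lemma \ref{lem:sepvar}. Items (i)–(iii) are essentially immediate: for any $\Phi\in\Hom_K(V,\mathscr A(G/K))$ each $\Phi[v]$ is analytic on $G/K$, so $\gamma_0(\Phi[v])$ is analytic on $A$; hence $\Gamma_0^V$ does land in $\Hom_W(V^M,\mathscr A(A))$, the identity $\gamma_0\circ\Phi[v]=\Gamma_0^V(\Phi)\circ p^V[v]$ is just a special case of the one already known in the smooth case, and injectivity follows from injectivity in the $C^\infty$ category. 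Also (v) will reduce to (iv) because $V\in\Ksp$ forces $V^M_\double=\{0\}$, so the condition $\varphi[V^M_\double]=\{0\}$ becomes vacuous.

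The heart of the proof is (iv). Given $\varphi\in\Hom_W(V^M,\mathscr A(A))$ with $\varphi[V^M_\double]=\{0\}$, I would restrict to $\varphi|_{V^M_\single}\in\Hom_W(V^M_\single,\mathscr A(A))$ and apply Lemma \ref{lem:sepvar} with $U=V^M_\single$, after transporting from $A$ to $\Liea$ by the analytic diffeomorphism $\exp$. Fixing a basis $\{\varphi_j\}_{j=1}^m$ of $\Hom_W(V^M_\single, H_W(\Liea_\CC^*))$, the lemma gives
\[
\varphi|_{V^M_\single}=\sum_{j=1}^m c_j\,\varphi_j,\qquad c_j\in \mathscr A(A)^W.
\]

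Next I extend each $\varphi_j$ to $\tilde\varphi_j\in\Hom_W(V^M,H_W(\Liea_\CC^*))$ by declaring $\tilde\varphi_j|_{V^M_\double}=0$. Since $H_W(\Liea_\CC^*)\subset\mathscr P(A)$ and $\tilde\varphi_j[V^M_\double]=\{0\}$, Theorem \ref{thm:Ch} (iv) applied to $\mathscr F=\mathscr P$ provides lifts $\Phi_j\in\Hom_K(V,\mathscr P(G/K))\subset\Hom_K(V,\mathscr A(G/K))$ with $\Gamma_0^V(\Phi_j)=\tilde\varphi_j$. Meanwhile Theorem \ref{thm:ChAnal} lifts each $c_j$ to a $K$-invariant analytic function $\tilde c_j\in\mathscr A(G/K)^K$ with $\gamma_0(\tilde c_j)=c_j$. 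I then set
\[
\Phi=\sum_{j=1}^m \tilde c_j\cdot\Phi_j.
\]
Because each $\tilde c_j$ is $K$-invariant and analytic, pointwise multiplication commutes with the $K$-action and preserves analyticity, so $\Phi\in\Hom_K(V,\mathscr A(G/K))$. For any $u\in V^M$ with decomposition $u=u_\single+u_\double$ one then computes
\[
\Gamma_0^V(\Phi)[u]=\sum_{j=1}^m c_j\,\tilde\varphi_j[u]=\sum_{j=1}^m c_j\,\varphi_j[u_\single]=\varphi[u_\single]=\varphi[u],
\]
where the last equality uses $\varphi[V^M_\double]=\{0\}$.

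The main obstacle I anticipate is simply conceptual rather than technical: one has to keep straight the extension-by-zero from $V^M_\single$ to $V^M$ (so that the polynomial case of the Chevalley theorem applies and produces lifts that are compatible with $\varphi$ on the $V^M_\double$ part) and the fact that the coefficients $c_j$ furnished by Lemma \ref{lem:sepvar}, although defined via division by the determinant $D$ from that lemma, are genuinely globally analytic and $W$-invariant on all of $\Liea$—this is exactly what Lemma \ref{lem:sepvar} asserts. Once these two points are in place, the verification $\Gamma_0^V(\Phi)=\varphi$ is a direct one-line computation, and the remaining clauses of Theorem \ref{thm:Ch} follow formally.
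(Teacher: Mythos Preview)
Your proposal is correct and follows essentially the same route as the paper: reduce to the ``if'' direction of (iv), apply Lemma~\ref{lem:sepvar} with $U=V^M_\single$ to write $\varphi|_{V^M_\single}=\sum_j c_j\varphi_j$, lift the $c_j$ by Theorem~\ref{thm:ChAnal} and the extended $\varphi_j$ by the polynomial case of Theorem~\ref{thm:Ch}, and multiply. The only point you leave implicit is the ``only if'' direction of (iv), which the paper dispatches in one line by observing it reduces to the polynomial case already established in \cite{Oda:HC}.
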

\begin{proof}
All except the second assertion of (iv) are trivial.
But the ``only if" part of the assertion reduces to the polynomial case
(see the last part of the proof of \cite[Theorem 3.5]{{Oda:HC}}).
Hence we have only to show for any $V\in \Km$ and
$\varphi\in\Hom_W(V^M,\mathscr A(A))\simeq \Hom_W(V^M,\mathscr A(\Liea))$
with $\varphi\bigl[V^M_\double\bigr]=\{0\}$
there exists $\Phi\in \Hom_W(V^M,\mathscr A(\Lies))$
such that $\Gamma^V_0(\Phi)=\varphi$.
Take a basis $\{\varphi_1,\ldots,\varphi_{m}\}$ of
$\Hom_W(V^M_\single,\mathscr P(\Liea))$
and extend each $\varphi_i$ to an element of $\Hom_W(V^M,\mathscr P(\Liea))$
by letting $\varphi\bigl[V^M_\double\bigr]=\{0\}$.
Applying Lemma \ref{lem:sepvar} to the case 
of $U=V^M_\single$,
we obtain $c_1,\ldots,c_m\in\mathscr A(\Liea)^W$ such that
$\varphi=c_1\varphi_1+\cdots+c_m\varphi_m$.
Now for each $i=1,\ldots,m$
we have
$C_i\in\mathscr A(\Lies)^K$ such that $\gamma_0(C_i)=c_i$
by Theorem \ref{thm:ChAnal} and
$\Phi_i\in \Hom_K(V,\mathscr P(\Lies))$ such that $\Gamma^V_0(\Phi_i)=\varphi_i$
by Theorem \ref{thm:Ch} for $\mathscr F=\mathscr P$.
Hence $\Phi:=\sum_iC_i\Phi_i\in\Hom_K(V,\mathscr A(\Lies))$
satisfies $\Gamma^V_0(\Phi)=\varphi$.
\end{proof}
\begin{cor}\label{cor:AApair}
$\bigl(\mathscr A(G/K), \mathscr A(A)\bigr)$
is a radial pair with radial restriction $\gamma_0$.
\end{cor}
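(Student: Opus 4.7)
The plan is to realize $(\mathscr A(G/K)_\Kf, \mathscr A(A))$ as a subobject of the $C^\infty$ radial pair established in Proposition \ref{prop:R3GK}, inheriting both the $\CCh$-structure and the radial pair axioms with $\gamma_0$ serving as the radial restriction. The preliminary observations are that $\mathscr A(G/K)_\Kf \subset C^\infty(G/K)_\Kf$ is a sub-$(\Lieg_\CC,K)$-module and $\mathscr A(A) \subset C^\infty(A)$ is a sub-$\mathbf H$-module, and that restriction of an analytic function on $G/K$ to $A$ is analytic, so $\gamma_0$ maps $\mathscr A(G/K)_\Kf$ into $\mathscr A(A)$.

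For each $V \in \Km$, I would inherit $\tilde\Gamma_0^V$ and $\Gamma_0^V$ from Definition \ref{defn:22} and Remark \ref{rem:RadR} (now with $\mathscr F = \mathscr A$) and define
\[
\Hom_K^\ttt(V, \mathscr A(G/K)_\Kf) = \bigl\{\Phi \in \Hom_K(V, \mathscr A(G/K)_\Kf) : \Gamma_0^V(\Phi)[V^M_\double]=\{0\}\bigr\}.
\]
The key condition \eqref{cond:Ch}, namely that $\tilde\Gamma_0^V$ restricts to a bijection $\Hom_K^\ttt(V, \mathscr A(G/K)_\Kf) \simarrow \Hom_W(V^M_\single, \mathscr A(A))$, is the only non-formal point, and it is exactly what Theorem \ref{thm:Chanal} provides: injectivity is inherited from the $C^\infty$ case, while for surjectivity, any $\varphi \in \Hom_W(V^M_\single, \mathscr A(A))$, extended to $V^M$ by zero on $V^M_\double$, lifts via Theorem \ref{thm:Chanal} (iv) to some $\Phi \in \Hom_K(V, \mathscr A(G/K))$; such a lift is automatically K-finite because $V$ is finite-dimensional. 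This puts the pair in $\CCh$, and conditions \eqref{cond:rest1}, \eqref{cond:rest2} of Definition \ref{defn:RadRest} hold tautologically, certifying $\gamma_0$ as the radial restriction.

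It then remains to verify the radial pair axioms \eqref{cond:w-rad}, \eqref{cond:rad1}, \eqref{cond:rad2}. These are inherited directly from the ambient radial pair of Proposition \ref{prop:R3GK}: each axiom asserts an equality of morphisms whose target lies in $\mathscr A(A) \subset C^\infty(A)$ (respectively an inclusion into $\Hom_K^\ttt$), and since these identities already hold when viewed inside the ambient $C^\infty$ pair, they persist in the analytic subobject. Equivalently, one can appeal to Proposition \ref{prop:catpro} (ii) once the $\CCh$-subobject structure is established.

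I do not foresee any substantial obstacle, as Theorem \ref{thm:Chanal} supplies the sole analytic input required for \eqref{cond:Ch}; everything else is a formal subobject inheritance argument, running parallel to the way Proposition \ref{prop:R3GK} was deduced from Theorem \ref{thm:Ch} and Theorem \ref{thm:radD}.
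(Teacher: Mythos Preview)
Your proposal is correct and follows essentially the same approach as the paper: use Theorem~\ref{thm:Chanal} to verify \eqref{cond:Ch} and the radial-restriction conditions, then inherit \eqref{cond:w-rad}, \eqref{cond:rad1}, \eqref{cond:rad2} from the ambient $C^\infty$ radial pair via Proposition~\ref{prop:catpro}~(ii). The paper's proof is terser but the logical structure is identical.
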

\begin{proof}
By the theorem the pair is an object of $\CCh$
and $\gamma_0$ is its radial restriction.
Furthermore the pair
is a subobject of $\bigl(C^\infty(G/K),C^\infty(A)\bigr)\in\Crad$.
Hence it belongs to $\Crad$ by Proposition \ref{prop:catpro} (ii).
\end{proof}
In particular the correspondence
\[
\Ximin_0
: \{\mathbf H\text{-submodules of }C^\infty(A)\}\to\{(\Lieg_\CC,K)\text{-submodules of }C^\infty(G/K)_\Kf\}
\]
defined in Definition \ref{defn:Ximin}
restricts to
\begin{equation}\label{eq:Acor}
\Ximin_0
: \{\mathbf H\text{-submodules of }\mathscr A(A)\}\to\{(\Lieg_\CC,K)\text{-submodules of }\mathscr A(G/K)_\Kf\}.
\end{equation}
In \S\ref{sec:Xi} the latter correspondence will be extended to
a functor $\Xi$
sending any $\mathbf H$-module to a $(\Lieg_\CC,K)$-module.

\section{The functor $\Xirad$}\label{sec:Xirad}
In Definition \ref{defn:Xiwrad} we introduced the functor $\Xiwrad$
which sends an $\mathbf H$-module $\mathscr X$
to a $(\Lieg_\CC,K)$-module
\begin{equation*}
\Xiwrad(\mathscr X)
=\bigoplus_{V\in\Km} \bar P_G(V)\otimes \Hom_{\mathbf H}(P_{\mathbf H}(V^M_\single),\mathscr X).
\end{equation*}
The pair $(\Xiwrad(\mathscr X), \mathscr X)$
is a weak radial pair
with the universal property stated in Proposition \ref{prop:Xiwrad}.
In this section we study a functor $\Xirad$
which has similar properties for the category
$\Crad$ of radial pairs.
\begin{defn}[the functor $\Xirad$]
Suppose $\mathscr X\in\mathbf H\Mod$.
Let $\mathscr N_\rad(\mathscr X)$ be the $\CC$-linear subspace
of $\Xiwrad(\mathscr X)$ spanned by
\begin{equation}\label{eq:Nelm}
\bar\Psi(D)\otimes\varphi-D\otimes (\varphi\circ \tilde\Gamma(\Psi))
\quad
\text{with }\begin{cases}
F, V\in \Km,\, 
\Psi\in\Hom_{\Lieg_\CC,K}^\ttt(P_G(V),P_G(F)),\\
D\in \bar P_G(V),\, \varphi\in \Hom_{\mathbf H}(P_{\mathbf H}(F^M_\single),\mathscr X),
\end{cases}
\end{equation}
where $\bar\Psi$ is the image of $\Psi$ under \eqref{eq:gkPbar}.
Note  $\mathscr N_\rad(\mathscr X)$ is stable under the $(\Lieg_\CC,K)$-action.
We put 
\[
\Xirad(\mathscr X)=\Xiwrad(\mathscr X)/\mathscr N_\rad(\mathscr X)
\ \in(\Lieg_\CC,K)\Mod.
\] 
\end{defn}
We give $(\Xirad(\mathscr X),\mathscr X)$
a structure of a weak radial pair first.
Recall the linear maps 
$\tilde\gamma^{\bar P_G(V)} : \bar P_G(V)\to P_{\mathbf H}(V^M_\single)$ ($V\in\Km$)
and $\gamma_\wrad:\Xiwrad(\mathscr X)\to\mathscr X$
defined in Lemma \ref{lem:PGbar} (iv) and before Lemma \ref{lem:gammawrad} respectively.
Suppose an element in $\mathscr N_\rad(\mathscr X)$
is given by \eqref{eq:Nelm}.
Since $\Psi\in\Hom^\ttt$, it follows from the commutativity of \eqref{cd:HCfunct}
that
$\tilde\gamma^{\bar P_G(F)}\circ \bar\Psi = \tilde\Gamma(\Psi)\circ \tilde\gamma^{\bar P_G(V)}$.
Hence we have
\[
\gamma_\wrad\bigl(
\bar\Psi(D)\otimes\varphi-D\otimes (\varphi\circ \tilde\Gamma(\Psi))
\bigr)
=\varphi\bigl(\tilde\gamma^{\bar P_G(F)}(\bar\Psi(D))\bigr)
-(\varphi\circ \tilde\Gamma(\Psi))\bigl(\tilde\gamma^{P_G(V)}(D)\bigr)=0.
\]
Combining this with Lemma \ref{lem:gammawrad},
we can apply Proposition \ref{prop:XinaturalGen} (ii)
to the case where
$(\Xiwrad(\mathscr X),\mathscr X, \gamma_\wrad, \mathscr N_\rad(\mathscr X),\{0\})$ is $(\mathcal M_G,\mathcal M_{\mathbf H},\gamma_{\mathcal M},\mathscr Y,\mathscr X)$
in the proposition.
Hence $(\Xirad(\mathscr X),\mathscr X)=(\Xiwrad(\mathscr X)/\mathscr N_\rad(\mathscr X),\mathscr X/\{0\})\in\Cwrad$
and the induced linear map $\gamma_\rad : \Xirad(\mathscr X)\to\mathscr X$
satisfies Conditions \eqref{cond:rest1'}, \eqref{cond:rest2} and \eqref{cond:rest3}.
In particular, the structure of $(\Xirad(\mathscr X),\mathscr X)$
as a weak radial pair is given by the linear map
\begin{align*}
\tilde\Gamma^V_\rad :\,&
 \Hom_K(V,\Xirad(\mathscr X))
\longrightarrow
\Hom_W(V^M_\single,\mathscr X)\,;\\
&\Phi\longmapsto
\bigl(
V^M_\single \hookrightarrow
V
\xrightarrow{\Phi} \Xirad(\mathscr X)
\xrightarrow{\gamma_\rad} \mathscr X
\bigr)
\end{align*}
and the natural surjective map
\begin{equation}\label{eq:surj22}
\Hom_K^\ttt(V, \Xiwrad(\mathscr X))
\twoheadrightarrow
\Hom_K^\ttt(V, \Xirad(\mathscr X))
\end{equation}
(actually this is a bijective map) defined for each $V\in\Km$.
\begin{thm}
$(\Xirad(\mathscr X), \mathscr X)\in\Crad$.
\end{thm}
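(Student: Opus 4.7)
The plan is to use the work that has already been done: $(\Xirad(\mathscr X), \mathscr X)$ has been shown to belong to $\Cwrad$, and Lemma~\ref{lem:rest3} applied to $\gamma_\rad$ (which satisfies \eqref{cond:rest1'}, \eqref{cond:rest2}, \eqref{cond:rest3}) already gives \eqref{cond:rad1}. So what remains is to verify \eqref{cond:rad2}: given $E, V \in \Km$, $\Phi \in \Hom_{\Lieg_\CC, K}^\ttt(P_G(V), \Xirad(\mathscr X))$, and $\Psi \in \Hom_K^\ttt(E, P_G(V))$ (identified with $\Hom_{\Lieg_\CC, K}^\ttt(P_G(E), P_G(V))$), I must show $\Phi \circ \Psi \in \Hom_K^\ttt(E, \Xirad(\mathscr X))$. (Equivalently, by Lemma~\ref{lem:rest3}, one could upgrade \eqref{cond:rest1'} to \eqref{cond:rest1} for $\gamma_\rad$, but the direct verification is cleaner.)

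My strategy is to lift the problem to $\Xiwrad(\mathscr X)$, where everything is explicit, and then project back using the very definition of $\mathscr N_\rad(\mathscr X)$. Via the surjection \eqref{eq:surj22}, pick a lift $\tilde\Phi \in \Hom_K^\ttt(V, \Xiwrad(\mathscr X))$ of $\Phi$; by the defining formula $\Hom_K^\ttt(V, \Xiwrad(\mathscr X)) = I_V \otimes \Hom_{\mathbf H}(P_{\mathbf H}(V^M_\single), \mathscr X)$, linearity reduces to the case $\tilde\Phi = I_V \otimes \varphi$ for a single $\varphi \in \Hom_{\mathbf H}(P_{\mathbf H}(V^M_\single), \mathscr X)$. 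As a $(\Lieg_\CC, K)$-morphism $P_G(V) \to \Xiwrad(\mathscr X)$, $\tilde\Phi$ sends $D \otimes v$ to $(D \otimes v \bmod Q_G(V)) \otimes \varphi$, so $(\tilde\Phi \circ \Psi)[e] = \bar\Psi[e] \otimes \varphi$ for $e \in E$, with $\bar\Psi \in \Hom_K(E, \bar P_G(V))$ the image of $\Psi$ under \eqref{eq:KPbar}. Using $\tilde\Gamma(\Psi) \in \Hom_{\mathbf H}(P_{\mathbf H}(E^M_\single), P_{\mathbf H}(V^M_\single))$ from Theorem~\ref{thm:HC} (i), I form the element $I_E \otimes (\varphi \circ \tilde\Gamma(\Psi)) \in \Hom_K^\ttt(E, \Xiwrad(\mathscr X))$. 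Evaluating the difference at $e$ yields
\[
\bar\Psi(I_E[e]) \otimes \varphi - I_E[e] \otimes (\varphi \circ \tilde\Gamma(\Psi)),
\]
which is precisely a generator of $\mathscr N_\rad(\mathscr X)$ in the form \eqref{eq:Nelm} (with the triple $(V, F, \Psi)$ of \eqref{eq:Nelm} playing the roles of the present $(E, V, \Psi)$ and $D = I_E[e]$). Hence $\Phi \circ \Psi$ coincides with the projection of $I_E \otimes (\varphi \circ \tilde\Gamma(\Psi))$ through \eqref{eq:surj22} for $E$, which by construction lies in $\Hom_K^\ttt(E, \Xirad(\mathscr X))$.

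The main obstacle is not analytical but notational: one must keep straight the several identifications in play (between $\Hom_K(V, \cdot)$ and $\Hom_{\Lieg_\CC, K}(P_G(V), \cdot)$; between $I_V$ viewed as a $K$-morphism $V \to \bar P_G(V)$ and as the associated $(\Lieg_\CC, K)$-morphism extending it; between $\Psi$ and $\bar\Psi$) so that the difference computed is recognisably a \emph{generator} of $\mathscr N_\rad(\mathscr X)$, not merely an element of it. Once the identifications are unwound, the argument essentially reduces to the tautology that $\mathscr N_\rad(\mathscr X)$ was designed precisely to enforce \eqref{cond:rad2}.
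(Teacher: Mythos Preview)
Your proof is correct and follows essentially the same approach as the paper's: after noting that \eqref{cond:rad1} comes from Lemma~\ref{lem:rest3}, you lift $\Phi$ to $I_V\otimes\varphi$ via \eqref{eq:surj22}, compute $(\Phi\circ\Psi)[e]=\bar\Psi[e]\otimes\varphi$, and use the defining relation \eqref{eq:Nelm} to identify this modulo $\mathscr N_\rad(\mathscr X)$ with $I_E[e]\otimes(\varphi\circ\tilde\Gamma(\Psi))$, landing in $\Hom_K^\ttt(E,\Xirad(\mathscr X))$. The only superfluous step is the ``linearity reduces to a single $\varphi$'' remark: since $\Hom_K^\ttt(V,\Xiwrad(\mathscr X))=I_V\otimes\Hom_{\mathbf H}(P_{\mathbf H}(V^M_\single),\mathscr X)$, every element is already $I_V\otimes\varphi$ for a single $\varphi$.
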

\begin{proof}
By Lemma \ref{lem:rest3}
$(\Xirad(\mathscr X), \mathscr X)$ satisfies \eqref{cond:rad1}.
Let us check \eqref{cond:rad2}.
Suppose $F,V\in\Km$. Recall
\begin{align*}
\Hom_K(V,\Xiwrad(\mathscr X))
&\simeq\bigoplus_{F\in\Km} \Hom_K(V,\bar P_G(F))\otimes
\Hom_{\mathbf H}(P_{\mathbf H}(F^M_\single),\mathscr X),
\\
\Hom_K^\ttt(V,\Xiwrad(\mathscr X))&
=I_V\otimes \Hom_{\mathbf H}(P_{\mathbf H}(V^M_\single),\mathscr X),
\end{align*}
where
$I_V\in\Hom_K(V,\bar P_G(V))$ denotes the map $V\ni v\mapsto 1\otimes v \in \bar P_G(V)$.
By \eqref{eq:surj22}
any $\Phi \in \Hom_K^\ttt(V,\Xirad(\mathscr X))$
is written as $I_V\otimes\varphi$
with $\varphi\in \Hom_{\mathbf H}(P_{\mathbf H}(V^M_\single),\mathscr X)$
(we omit ``$\bmod{\mathscr N_\rad(\mathscr X)}$").
Now for any $\Psi\in\Hom_K^\ttt(E,P_G(V))$ and $e\in E$
\[
(\Phi\circ\Psi)[e]=\bar\Psi[e]\otimes \varphi
=(1\otimes e)\otimes (\varphi\circ\tilde\Gamma(\Psi))
\]
by \eqref{eq:Nelm}.
Thus $\Phi\circ\Psi=I_E\otimes (\varphi\circ\tilde\Gamma(\Psi))
\in \Hom_K^\ttt(E,\Xirad(\mathscr X))$.
This proves \eqref{cond:rad2}.
\end{proof}
\begin{lem}\label{lem:Xirad}
Suppose $\mathcal M=(\mathcal M_G, \mathcal M_{\mathbf H})\in\Crad$
and $(\mathcal I_G,\mathcal I_{\mathbf H}): (\Xiwrad(\mathscr X),\mathscr X)\to \mathcal M$ is a morphism of $\Cwrad$.
Then $\mathscr N_\rad(\mathscr X)\subset\Ker \mathcal I_G$ and
hence the morphism $(\Xirad(\mathscr X),\mathscr X)\to \mathcal M$ of $\Crad$ is naturally induced.
\end{lem}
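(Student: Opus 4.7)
The plan is to show that each spanning element
\[
\bar\Psi(D)\otimes\varphi-D\otimes(\varphi\circ\tilde\Gamma(\Psi))
\]
of $\mathscr N_\rad(\mathscr X)$ is killed by $\mathcal I_G$, and then the induced morphism $(\Xirad(\mathscr X),\mathscr X)\to\mathcal M$ will lie in $\Crad$ automatically (it lies in $\Cwrad$ by functoriality of the quotient construction, and the target $\mathcal M$ is already in $\Crad$, so the morphism inherits the extra conditions).

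First, I would unpack how $\mathcal I_G$ acts on tensor summands. By the proof of Proposition~\ref{prop:Xiwrad}, for each $V\in\Km$ and $\varphi_V\in\Hom_W(V^M_\single,\mathscr X)\simeq\Hom_{\mathbf H}(P_{\mathbf H}(V^M_\single),\mathscr X)$ there is a unique element $\Phi_V\in\Hom_{\Lieg_\CC,K}^\ttt(\bar P_G(V),\mathcal M_G)$ (equivalently in $\Hom_K^\ttt(V,\mathcal M_G)$) with $\tilde\Gamma_{\mathcal M}(\Phi_V)=\mathcal I_{\mathbf H}\circ\varphi_V$, and $\mathcal I_G(D\otimes\varphi_V)=\Phi_V(D)$ for $D\in\bar P_G(V)$. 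Thus applying $\mathcal I_G$ to the generator above gives
\[
\Phi_F(\bar\Psi(D))-\Phi'_V(D),
\]
where $\Phi_F$ corresponds to $\mathcal I_{\mathbf H}\circ\varphi$ and $\Phi'_V$ to $\mathcal I_{\mathbf H}\circ\varphi\circ\tilde\Gamma(\Psi)$.

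The heart of the argument is that $\Phi_F\circ\Psi$ coincides with $\Phi'_V$. I would compute $\tilde\Gamma_{\mathcal M}(\Phi_F\circ\Psi)$ using Condition \eqref{cond:w-rad} for the weak radial pair $\mathcal M$ (applicable because $\Phi_F\in\Hom_{\Lieg_\CC,K}^\ttt$):
\[
\tilde\Gamma_{\mathcal M}(\Phi_F\circ\Psi)=\tilde\Gamma_{\mathcal M}(\Phi_F)\circ\tilde\Gamma(\Psi)=\mathcal I_{\mathbf H}\circ\varphi\circ\tilde\Gamma(\Psi)=\tilde\Gamma_{\mathcal M}(\Phi'_V).
\]
Next I would use Condition \eqref{cond:rad2} for $\mathcal M$, applied to $\Phi_F\in\Hom_{\Lieg_\CC,K}^\ttt$ and $\Psi\in\Hom_{\Lieg_\CC,K}^\ttt$, to conclude $\Phi_F\circ\Psi\in\Hom_{\Lieg_\CC,K}^\ttt(P_G(V),\mathcal M_G)$. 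Since $\Phi'_V$ also lies in $\Hom_{\Lieg_\CC,K}^\ttt$ by construction and $\tilde\Gamma^V_{\mathcal M}$ restricts to a bijection on $\Hom_K^\ttt$ by \eqref{cond:Ch} for $\mathcal M$, the equality of $\tilde\Gamma_{\mathcal M}$-images forces $\Phi_F\circ\Psi=\Phi'_V$. Evaluating on $D$ (after factoring through $\bar P_G(V)$, which is legitimate because all $K$-types of $\mathcal M_G$ lie in $\Km$, cf.\ Lemma~\ref{lem:PGbar}(ii)) yields $\Phi_F(\bar\Psi(D))=\Phi'_V(D)$, so the generator maps to zero.

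The main obstacle — really the crux — is making sure both simultaneous hypotheses required by \eqref{cond:rad2} hold: we need $\Psi\in\Hom^\ttt$ (which is built into the definition of $\mathscr N_\rad(\mathscr X)$) together with $\Phi_F\in\Hom^\ttt$ (which follows from the universal choice dictated by \eqref{cond:Ch}). This is exactly why $\Xirad$ is defined using $\Hom^\ttt$-morphisms in the relation \eqref{eq:Nelm}, and why the target category has to be $\Crad$ rather than merely $\Cwrad$. Once this compatibility is in place, the induced $(\Lieg_\CC,K)$-homomorphism $\Xirad(\mathscr X)\to\mathcal M_G$ combined with $\mathcal I_{\mathbf H}$ is a morphism in $\Cwrad$ by Proposition~\ref{prop:catpro}(iii), and since the target lies in $\Crad$, the morphism is also a morphism in the full subcategory $\Crad$.
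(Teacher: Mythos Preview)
Your proof is correct and follows essentially the same approach as the paper's. Both arguments identify $\mathcal I_G(D\otimes\varphi_V)$ with $\Phi_V(D)$ for the unique $\Phi_V\in\Hom^\ttt$ lifting $\mathcal I_{\mathbf H}\circ\varphi_V$, compute $\tilde\Gamma^V_{\mathcal M}$ of $\Phi_F\circ\Psi$ via \eqref{cond:w-rad}, invoke \eqref{cond:rad2} to place $\Phi_F\circ\Psi$ in $\Hom^\ttt$, and then conclude by the injectivity of $\tilde\Gamma^V_{\mathcal M}$ on $\Hom^\ttt$; the only cosmetic difference is that the paper packages the generator as a $K$-map $\Phi:V\to\mathscr N_\rad(\mathscr X)$ (taking $D=1\otimes v$) and then extends by $U(\Lieg_\CC)$-linearity, whereas you work directly with arbitrary $D$.
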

\begin{proof}
For $F,V\in \Km$, $\Psi\in\Hom_{\Lieg_\CC,K}^\ttt(P_G(V),P_G(F))$
and $\varphi\in \Hom_{\mathbf H}(P_{\mathbf H}(F^M_\single),\mathscr X)$
define $\Phi\in \Hom_K(V,\mathscr N_\rad(\mathscr X))$ by
\[
V\ni v\longmapsto \bar\Psi[v]\otimes\varphi- (1\otimes v)\otimes (\varphi\circ\tilde\Gamma(\Psi))\in \mathscr N_\rad(\mathscr X).
\]
We assert $\mathcal I_G\circ \Phi=0$.
Since $\Phi=\bar\Psi \otimes \varphi
-I_V\otimes (\varphi\circ\tilde\Gamma(\Psi))$ we have
\begin{align*}
\tilde\Gamma_{\mathcal M}^V(\mathcal I_G\circ \Phi)
&=\mathcal I_{\mathbf H}\circ\tilde\Gamma_\wrad^V(\Phi)
&&(\because \eqref{cond:Ch1}\text{ for }\mathcal I_G)\\
&=\mathcal I_{\mathbf H}\circ\bigl(\varphi\circ\tilde\Gamma(\Psi)-\varphi\circ\tilde\Gamma(\Psi)\bigr)
&&(\because \eqref{eq:Gwraddef})\\
&=0.
\end{align*}
Since $I_F\otimes \varphi\in\Hom_K^\ttt(F,\Xiwrad(\mathscr X))$ and $I_V\otimes (\varphi\circ\tilde\Gamma(\Psi))\in\Hom_K^\ttt(V,\Xiwrad(\mathscr X))$ we also have
\begin{align*}
&\mathcal I_G\circ (I_F\otimes \varphi) \in \Hom_K^\ttt(F,\mathcal M_G),
&&(\because \eqref{cond:Ch2}\text{ for }\mathcal I_G)\\
&\mathcal I_G\circ (I_F\otimes \varphi) \circ \Psi \in \Hom_K^\ttt(V,\mathcal M_G),
&&(\because \eqref{cond:rad2}\text{ for }\mathcal M_G)\\
&\mathcal I_G\circ (I_V\otimes (\varphi\circ\tilde\Gamma(\Psi)))\in \Hom_K^\ttt(V,\mathcal M_G),
&&(\because \eqref{cond:Ch2}\text{ for }\mathcal I_G)\\
\therefore\quad&\mathcal I_G\circ \Phi=\mathcal I_G\circ (I_F\otimes \varphi) \circ \Psi
-\mathcal I_G\circ (I_V\otimes (\varphi\circ\tilde\Gamma(\Psi)))\\
&\qquad\qquad\qquad\in \Hom_K^\ttt(V,\mathcal M_G).
\end{align*}
These facts imply $\mathcal I_G\circ \Phi=0$
since $\tilde\Gamma_{\mathcal M}^V$ is injective on $\Hom_K^\ttt(V,\mathcal M_G)$.
Thus we get our assertion, showing $\mathcal I_G$ maps any element given by \eqref{eq:Nelm} to zero.
\end{proof}
From the lemma we can see in particular that
the correspondence $\mathbf H\Mod\ni\mathscr X\to (\Xirad(\mathscr X),\mathscr X)\in\Crad$ is a functor.
The following is an easy corollary of Proposition \ref{prop:Xiwrad}
and Lemma \ref{lem:Xirad}.
\begin{thm}\label{thm:Xirad}
The functor $\mathbf H\Mod\ni\mathscr X\mapsto (\Xirad(\mathscr X),\mathscr X)\in\Crad$
is left adjoint to the functor $\Crad\ni (\mathcal M_G, \mathcal M_{\mathbf H})
\mapsto \mathcal M_{\mathbf H}\in \mathbf H\Mod$.
More precisely, if $\mathcal M=(\mathcal M_G, \mathcal M_{\mathbf H})\in\Crad$
and an $\mathbf H$-homomorphism $\mathcal I_{\mathbf H}:\mathscr X\to \mathcal M_{\mathbf H}$ are given,
then
there exists a unique $(\Lieg_\CC,K)$-homomorphism
$\mathcal I_G:\Xirad(\mathscr X)\to\mathcal M_G$ such that
$(\mathcal I_G,\mathcal I_{\mathbf H}): (\Xirad(\mathscr X),\mathscr X)
\to (\mathcal M_G, \mathcal M_{\mathbf H})$ is a morphism of $\Crad$.
\end{thm}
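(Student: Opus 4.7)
The plan is to deduce Theorem \ref{thm:Xirad} formally from two results already in place: the analogous adjunction for $\Xiwrad$ in the category $\Cwrad$ (Proposition \ref{prop:Xiwrad}) and the factorization statement in Lemma \ref{lem:Xirad}. The strategy is first to lift $\mathcal I_{\mathbf H}$ to a $\Cwrad$-morphism on $\Xiwrad(\mathscr X)$, and then to descend to the quotient $\Xirad(\mathscr X) = \Xiwrad(\mathscr X)/\mathscr N_\rad(\mathscr X)$.

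For existence, given $\mathcal M = (\mathcal M_G, \mathcal M_{\mathbf H}) \in \Crad \subset \Cwrad$ and an $\mathbf H$-homomorphism $\mathcal I_{\mathbf H}: \mathscr X \to \mathcal M_{\mathbf H}$, Proposition \ref{prop:Xiwrad} yields a unique $(\Lieg_\CC, K)$-homomorphism $\tilde{\mathcal I}_G: \Xiwrad(\mathscr X) \to \mathcal M_G$ such that $(\tilde{\mathcal I}_G, \mathcal I_{\mathbf H})$ is a morphism in $\Cwrad$. Since $\mathcal M \in \Crad$, Lemma \ref{lem:Xirad} applies and produces the inclusion $\mathscr N_\rad(\mathscr X) \subset \Ker \tilde{\mathcal I}_G$, so $\tilde{\mathcal I}_G$ factors through a $(\Lieg_\CC, K)$-homomorphism $\mathcal I_G: \Xirad(\mathscr X) \to \mathcal M_G$; the same lemma guarantees that the induced pair $(\mathcal I_G, \mathcal I_{\mathbf H})$ is in fact a morphism in $\Crad$, not merely in $\Cwrad$.

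For uniqueness, let $\pi: \Xiwrad(\mathscr X) \twoheadrightarrow \Xirad(\mathscr X)$ denote the canonical projection. By the construction of $(\Xirad(\mathscr X), \mathscr X)$ as a quotient weak radial pair (cf.~Proposition \ref{prop:catpro} (iii), together with the identification \eqref{eq:surj22}), the pair $(\pi, \id_{\mathscr X})$ is itself a morphism in $\Cwrad$. Hence if $\mathcal I'_G: \Xirad(\mathscr X) \to \mathcal M_G$ is any other homomorphism for which $(\mathcal I'_G, \mathcal I_{\mathbf H})$ is a morphism in $\Crad$, then $(\mathcal I'_G \circ \pi, \mathcal I_{\mathbf H}): (\Xiwrad(\mathscr X), \mathscr X) \to \mathcal M$ is a morphism in $\Cwrad$ extending $\mathcal I_{\mathbf H}$. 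The uniqueness clause of Proposition \ref{prop:Xiwrad} then forces $\mathcal I'_G \circ \pi = \tilde{\mathcal I}_G = \mathcal I_G \circ \pi$, and the surjectivity of $\pi$ yields $\mathcal I'_G = \mathcal I_G$.

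The only non-formal point is verifying that $(\pi, \id_{\mathscr X})$ is genuinely a $\Cwrad$-morphism, which amounts to checking that the natural surjection in \eqref{eq:surj22} respects the $\Hom^\ttt$-subspaces and intertwines the maps $\tilde\Gamma^V$; this is essentially already recorded in the discussion surrounding Proposition \ref{prop:XinaturalGen} (ii) used to set up the weak radial pair structure on $(\Xirad(\mathscr X), \mathscr X)$. Beyond this, the argument is entirely formal, reflecting the paper's own remark that the theorem is an easy corollary of the preceding two results.
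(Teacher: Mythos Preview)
Your proof is correct and follows exactly the approach the paper indicates: the theorem is stated as ``an easy corollary of Proposition \ref{prop:Xiwrad} and Lemma \ref{lem:Xirad},'' and you have simply spelled out the existence-via-lifting and uniqueness-via-surjectivity-of-$\pi$ that this entails. The verification that $(\pi,\id_{\mathscr X})$ is a $\Cwrad$-morphism is indeed immediate from the quotient construction of $(\Xirad(\mathscr X),\mathscr X)$ via Proposition \ref{prop:XinaturalGen} (ii) and the relation $\gamma_\rad\circ\pi=\gamma_\wrad$.
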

\begin{defn}\label{defn:ccic}
Suppose $\lambda\in\Liea_\CC^*$.
We say $\mathscr X\in\mathbf H\Mod$ has a central character $[\lambda]$
if
\[
(\Delta-\Delta(\lambda))x=0
\quad\text{for any }\Delta\in\ S(\Liea_\CC)^W
\text{ and }x\in\mathscr X.
\]
We say $\mathscr X\in\mathbf H\Mod$ has a generalized central character $[\lambda]$
if for any $x\in\mathscr X$ and $\Delta\in\ S(\Liea_\CC)^W$
there exists a positive integer $n$ such that
\[
(\Delta-\Delta(\lambda))^nx=0.
\]

Fix a maximal Abelian subalgebra $\Lieb$ of $\Liem$.
Then $\Lieh=\Lieb+\Liea$ is a Cartan subalgebra of $\Lieg$.
Let $W(\Lieg_\CC, \Lieh_\CC)$ be the Weyl group for $(\Lieg_\CC, \Lieh_\CC)$
and $\Upsilon:U(\Lieg_\CC)^G\simarrow S(\Lieh_\CC)^{W(\Lieg_\CC, \Lieh_\CC)}$
the Harish-Chandra isomorphism for the complex Lie algebra $\Lieg_\CC$.
Suppose $\mu\in\Lieh_\CC^*$.
We say $\mathscr Y\in(\Lieg_\CC,K)\Mod$ has an infinitesimal character $[\mu]$ if
\[
(\Delta-\Upsilon(\Delta)(\mu))y=0
\quad\text{for any }\Delta\in U(\Lieg_\CC)^G
\text{ and }y\in\mathscr Y.
\]
We say $\mathscr Y\in(\Lieg_\CC,K)\Mod$ has a generalized infinitesimal character $[\mu]$
if for any $y\in\mathscr Y$ and $\Delta\in\ U(\Lieg_\CC)^G$
there exists a positive integer $n$ such that
\[
(\Delta-\Upsilon(\Delta)(\mu))^nx=0.
\]
\end{defn}
\begin{thm}\label{thm:charcorr}
Fix a positive system of the root system for $(\Liem_\CC,\Lieb_\CC)$
and let $\rho_\Liem \in \Lieb_\CC^*$ be half the sum of positive roots.
Suppose $\lambda\in\Liea_\CC^*$.
Let $(\rho_\Liem,\lambda)$ denote the element of $\Lieh^*_\CC$
which equals $\rho_\Liem$ on $\Lieb$ and $\lambda$ on $\Liea$.
If $\mathscr X\in\mathbf H\Mod$ has (resp., generalized) central character $[\lambda]$
then $\Xirad(\mathscr X)$ has (resp., generalized) infinitesimal character $[(\rho_\Liem,\lambda)]$.
\end{thm}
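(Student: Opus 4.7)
The plan is to reduce the theorem to the case of an honest central character by exploiting the right-exactness of $\Xirad$ (which is a left adjoint, hence preserves cokernels, by Theorem \ref{thm:Xirad}), and then to deduce the central-character case from the universal property of Theorem \ref{thm:Xirad} combined with a ``morphism-in-$\Crad$'' argument. The key preliminary is the classical Harish--Chandra identity: for $\Delta\in Z(\Lieg_\CC)\subset U(\Lieg_\CC)^K$ one has $\gamma(\Delta)\in S(\Liea_\CC)^W=Z(\mathbf H)$, and
\[
\gamma(\Delta)(\lambda)=\Upsilon(\Delta)((\rho_\Liem,\lambda))\qquad \forall\lambda\in\Liea_\CC^*,
\]
where the positive system for $(\Lieg_\CC,\Lieh_\CC)$ is chosen to extend both $\Sigma^+$ and the fixed positive system for $(\Liem_\CC,\Lieb_\CC)$ (cf.~Helgason, \emph{Groups and Geometric Analysis}, Ch.~II).

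The main technical lemma I would establish is the following ``abstract radial part formula for central elements'': for each $\Delta\in Z(\Lieg_\CC)$ and each $V\in\Km$, the $K$-homomorphism $\Psi_\Delta:V\to P_G(V)$ defined by $\Psi_\Delta[v]=\Delta\otimes v$ lies in $\Hom_K^\oto(V,P_G(V))\cap\Hom_K^\ttt(V,P_G(V))$, and the induced $\mathbf H$-endomorphism $\tilde\Gamma^V_V(\Psi_\Delta)\in\End_{\mathbf H}(P_{\mathbf H}(V^M_\single))$ coincides with multiplication by $\gamma(\Delta)\in Z(\mathbf H)$. For $V=\CC_\triv$ this is exactly Theorem \ref{thm:HC} (iv). For general $V$, my approach would be either by direct computation using the decomposition $\Delta=P_\Delta+E$ of \eqref{eq:HC-homo} with $P_\Delta\in U(\Liea_\CC)$ and $E\in\Lien_\CC U(\Lieg_\CC)+U(\Lieg_\CC)\Liek_\CC$ (where one must show the $E$-contribution to $\gamma^{P_G(V)}(\Delta\otimes v)$ for $v\in V^M_\single$ lies in $P_{\mathbf H}(V^M_\double)$, exploiting the centrality of $\Delta$ together with Lemma \ref{lem:qsp}); or functorially, by noting that $\Psi_\Delta$ is central in $\End_{\Lieg_\CC,K}(P_G(V))$ (again by centrality of $\Delta$) and then applying Proposition \ref{prop:chain} together with Theorem \ref{thm:HC} (ii) to force $\tilde\Gamma(\Psi_\Delta)$ to be a central scalar endomorphism whose value is pinned down by the $V=\CC_\triv$ case via a lifted $\Psi\in\Hom_K(\CC_\triv,V)$ landing in $V^M_\single$.

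Granting this lemma, axioms (rad-1) and (rad-2) of $\Crad$ together with the relation $\Delta\circ\Phi=\tilde\Phi\circ\Psi_\Delta$ (for $\Phi\in\Hom_K(V,\mathcal M_G)$ corresponding to $\tilde\Phi\in\Hom_{\Lieg_\CC,K}(P_G(V),\mathcal M_G)$) show that for every $\mathcal M=(\mathcal M_G,\mathcal M_{\mathbf H})\in\Crad$, the pair $(\Delta\cdot_{\mathcal M_G},\,\gamma(\Delta)\cdot_{\mathcal M_{\mathbf H}}):\mathcal M\to\mathcal M$ is a morphism in $\Crad$, because (Ch-1) becomes
\[
\tilde\Gamma^V_{\mathcal M}(\Delta\circ\Phi)=\tilde\Gamma_{\mathcal M}(\tilde\Phi)\circ\tilde\Gamma^V_V(\Psi_\Delta)=\tilde\Gamma_{\mathcal M}(\tilde\Phi)\circ(\gamma(\Delta)\cdot)=\gamma(\Delta)\circ\tilde\Gamma^V_{\mathcal M}(\Phi)
\]
and (Ch-2) is immediate from $\Psi_\Delta\in\Hom_K^\ttt$ via (rad-2). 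When $\mathscr X$ has central character $[\lambda]$, $\gamma(\Delta)\cdot_{\mathscr X}=\gamma(\Delta)(\lambda)\cdot\id_{\mathscr X}$, so $(\Delta\cdot,\,\gamma(\Delta)(\lambda)\cdot\id)$ and the obvious scalar morphism $(\gamma(\Delta)(\lambda)\cdot\id,\,\gamma(\Delta)(\lambda)\cdot\id)$ are both morphisms in $\Crad$ from $(\Xirad(\mathscr X),\mathscr X)$ to itself with the same $\mathbf H$-component; by the uniqueness in Theorem \ref{thm:Xirad} they coincide, and combined with the classical identity $\gamma(\Delta)(\lambda)=\Upsilon(\Delta)((\rho_\Liem,\lambda))$ this shows $\Xirad(\mathscr X)$ has infinitesimal character $[(\rho_\Liem,\lambda)]$. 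For the generalized case, filter $\mathscr X=\bigcup_k\mathscr X_k$ with $\mathscr X_k:=\{x\in\mathscr X:(\gamma(\Delta)-\gamma(\Delta)(\lambda))^k x=0\}$, so that each quotient $\mathscr X_k/\mathscr X_{k-1}$ has central character $[\lambda]$; the right-exactness of $\Xirad$ and a straightforward induction on $k$, together with the commutativity of $\Delta$ with all $(\Lieg_\CC,K)$-homomorphisms, yield the nilpotent assertion. The hard part of this program is the main lemma of the second paragraph; everything else is essentially formal once that technical input is secured.
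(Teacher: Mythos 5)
Your proof is essentially correct, and the core technical input — the identity $\gamma(\Delta)(\lambda)=\Upsilon(\Delta)((\rho_\Liem,\lambda))$ together with the lemma that $\Psi_\Delta\in\Hom_K^\oto\cap\Hom_K^\ttt$ with $\tilde\Gamma^V_V(\Psi_\Delta)$ equal to multiplication by $\gamma(\Delta)\in S(\Liea_\CC)^W$ — is exactly what the paper uses. (For that lemma only the direct computation you describe works cleanly: for $v\in V^M$, split $\Delta$ as $(\text{term in }\Lien_\CC U(\Lieg_\CC))+\Upsilon'(\Delta)$ with $\Upsilon'(\Delta)\in U(\Liem_\CC)^M\otimes S(\Liea_\CC)$; the first part dies under $\gamma^{P_G(V)}$, and $(U(\Liem_\CC)\Liem_\CC)^M$ annihilates $V^M$ so only the scalar piece $\gamma(\Delta)(\cdot-\rho)\otimes v$ survives. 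Your alternative ``functorial'' route is fuzzier — the center of $\End_{\mathbf H}(P_{\mathbf H}(V^M_\single))$ is not obviously just $S(\Liea_\CC)^W$, and ``a lifted $\Psi\in\Hom_K(\CC_\triv,V)$ landing in $V^M_\single$'' is not clearly available — so stick with the direct decomposition.)

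Where you diverge from the paper is in how you propagate the lemma to $\Xirad(\mathscr X)$. The paper works inside the explicit presentation $\Xirad(\mathscr X)=\Xiwrad(\mathscr X)/\mathscr N_\rad(\mathscr X)$ and simply reads off from the defining relations \eqref{eq:Nelm} that $\Delta\,D\otimes\varphi=\bar\Psi_\Delta(D)\otimes\varphi=D\otimes(\gamma(\Delta)\varphi)$, which yields both the honest and the generalized case in one line (using finite-dimensionality of $V^M_\single$ to get a uniform nilpotency exponent). You instead deduce the honest case from the uniqueness clause of Theorem \ref{thm:Xirad}: $(\Delta\cdot,\gamma(\Delta)\cdot)$ and $(\gamma(\Delta)(\lambda)\id,\gamma(\Delta)(\lambda)\id)$ are both $\Crad$-endomorphisms of $(\Xirad(\mathscr X),\mathscr X)$ with the same $\mathbf H$-component (the verification of (Ch-1)/(Ch-2) via (rad-1)/(rad-2) is exactly as you say), so they coincide. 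This is a genuinely more abstract route and it works. Be a little more careful with the generalized case, though: the filtration $\mathscr X_k$ of $\mathscr X$ does not directly transport to a filtration of $\Xirad(\mathscr X)$ because $\Xirad$ is only right exact, so $\Xirad(\mathscr X_{k-1})\to\Xirad(\mathscr X_k)$ need not be injective and the kernel of $\Xirad(\mathscr X_k)\to\Xirad(\mathscr X_k/\mathscr X_{k-1})$ need not be $\Xirad(\mathscr X_{k-1})$. What saves the argument is that every element of $\Xirad(\mathscr X)$ lies in the image of $\Xirad(\mathscr X')\to\Xirad(\mathscr X)$ for some $\mathbf H$-submodule $\mathscr X'\subset\mathscr X$ on which $(\gamma(\Delta)-\gamma(\Delta)(\lambda))^n$ vanishes honestly (take $\mathscr X'$ generated by the finite-dimensional spaces $\varphi_i[(V_i)^M_\single]$); apply the uniqueness argument to $(\Xirad(\mathscr X'),\mathscr X')$ to get $(\Delta-\gamma(\Delta)(\lambda))^n=0$ there, and push forward using that $\Delta$ commutes with the structural map $\Xirad(\mathscr X')\to\Xirad(\mathscr X)$. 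Phrased that way, your argument is complete.
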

\begin{proof}
First we assert
\begin{equation}\label{eq:Upsilongamma}
\Upsilon(\Delta)((\rho_\Liem,\lambda))=\gamma(\Delta)(\lambda)
\quad\text{for any }\Delta\in U(\Lieg_\CC)^G.
\end{equation}
Since $U(\Lieg_\CC)^{MA}\subset \Lien_\CC U(\Lieg_\CC)\oplus U(\Liem_\CC+\Liea_\CC)^{MA}$,
the projection to the second summand defines a linear map (actually an algebra homomorphism)
\[
\Upsilon':U(\Lieg_\CC)^{MA}\to 
U(\Liem_\CC+\Liea_\CC)^{MA} \simeq U(\Liem_\CC)^M\otimes S(\Liea_\CC).
\]
If we define two more algebra homomorphisms
\begin{align*}
\Upsilon'': U(\Liem_\CC)^M\otimes S(\Liea_\CC)
&\xrightarrow{\text{H.-C.isom.for $\Liem_\CC$ }\otimes\text{ shift by }-\rho} S(\Lieb_\CC)^{W(\Liem_\CC, \Lieb_\CC)} \otimes S(\Liea_\CC)\hookrightarrow S(\Lieh_\CC),\\
\Upsilon''': U(\Liem_\CC)^M\otimes S(\Liea_\CC) 
&=\bigl((U(\Liem_\CC)\Liem_\CC)^M\otimes S(\Liea_\CC)\bigr)
\oplus S(\Liea_\CC)\\
&\qquad\qquad\xrightarrow{\text{projection to the $2$nd summand \& shift by }-\rho} S(\Liea_\CC),
\end{align*}
then one easily sees $\Upsilon=\Upsilon''\circ\Upsilon'|_{U(\Lieg_\CC)^G}$ and
$\gamma|_{U(\Lieg_\CC)^{MA}}=\Upsilon'''\circ\Upsilon'$.
Since the maximal ideal $(U(\Liem_\CC)\Liem_\CC)^M$ of $U(\Liem_\CC)^M$
corresponds to $[\rho_\Liem]$
by the Harish-Chandra isomorphism for $U(\Liem_\CC)^M$,
it holds that
$\Upsilon''(D)((\rho_\Liem,\lambda))=\Upsilon'''(D)(\lambda)$
for any $D\in U(\Liem_\CC)^M\otimes S(\Liea_\CC)$.
Hence we have \eqref{eq:Upsilongamma}.

Secondly
let $V\in\Kqsp$,
$D\in \bar P_G(V)$ and
$\varphi\in\Hom_{\mathbf H}(P_{\mathbf H}(V^M_\single),\mathscr X)$.
Then $\Xirad(\mathscr X)$ is spanned by elements like $D\otimes \varphi$.
For any $\Delta\in U(\Lieg_\CC)^G$ 
define $\Psi_\Delta\in \End_{\Lieg_\CC,K}(P_G(V))\simeq\Hom_K(V,P_G(V))$ by
$\Psi_{\Delta}[v]=\Delta\otimes v$ for $v\in V$.
Then $\tilde\Gamma(\Psi_\Delta) \in\End_{\mathbf H}(P_{\mathbf H}(V^M_\single))$
equals the multiplication by $(\Upsilon'''\circ\Upsilon')(\Delta)=\gamma(\Delta)\in S(\Liea_\CC)^W$.
Thus
we have from \eqref{eq:Nelm}
\[
\Delta D\otimes\varphi=
\bar\Psi_\Delta(D)\otimes\varphi
= D \otimes (\varphi \circ \tilde\Gamma(\Psi_\Delta))
= D \otimes(\gamma(\Delta)\varphi).
\]
Therefore if
there exists a positive integer $n$ such that
\[
(\gamma(\Delta)-\gamma(\Delta)(\lambda))^n\varphi[v]=0
\quad\text{for any }v\in V^M_\single
\]
then
\begin{align*}
\bigl(\Delta-\Upsilon(\Delta)((\rho_\Liem,\lambda))\bigr)^n D\otimes\varphi
&=
\bigl(\Delta-\gamma(\Delta)(\lambda)\bigr)^n D\otimes\varphi\\
&=D\otimes\bigl(\bigl(
\gamma(\Delta)-\gamma(\Delta)(\lambda)
\bigr)^n \varphi \bigr)=0.\qedhere
\end{align*}
\end{proof}
\begin{thm}\label{thm:FD2HC}
If $\mathscr X\in\mathbf H\Mod$ has finite dimension
then\/ $\Xirad(\mathscr X)\in(\Lieg_\CC,K)\Mod$ has finite length.
\end{thm}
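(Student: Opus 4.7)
The plan is to combine the right-exactness of $\Xirad$, which follows from its being a left adjoint (Theorem \ref{thm:Xirad}), with the infinitesimal character result of Theorem \ref{thm:charcorr} and Harish-Chandra's classical finiteness theorem: any $U(\Lieg_\CC)$-finitely generated $(\Lieg_\CC,K)$-module on which $Z(\Lieg_\CC)$ acts through a generalized character is automatically admissible and of finite length.

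First I would reduce to the case where $\mathscr X$ is simple as an $\mathbf H$-module. For any short exact sequence $0\to\mathscr X'\to\mathscr X\to\mathscr X''\to 0$ of finite-dimensional $\mathbf H$-modules, right-exactness gives the exact sequence $\Xirad(\mathscr X')\to\Xirad(\mathscr X)\to\Xirad(\mathscr X'')\to 0$, exhibiting $\Xirad(\mathscr X)$ as an extension of $\Xirad(\mathscr X'')$ by a quotient of $\Xirad(\mathscr X')$. Since finite length is preserved under subquotients and extensions, induction on the (finite) $\mathbf H$-composition length of $\mathscr X$ reduces the theorem to the simple case. For a simple finite-dimensional $\mathscr X$, Schur's lemma applied to the central subalgebra $S(\Liea_\CC)^W\subset\mathbf H$ produces a central character $[\lambda]$ with $\lambda\in\Liea_\CC^*$, and Theorem \ref{thm:charcorr} endows $\Xirad(\mathscr X)$ with the infinitesimal character $[(\rho_\Liem,\lambda)]$. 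By Harish-Chandra's theorem, the proof is then complete once $\Xirad(\mathscr X)$ is shown to be finitely generated over $U(\Lieg_\CC)$.

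For the finite-generation step I would exploit the defining relations \eqref{eq:Nelm}. As a $U(\Lieg_\CC)$-module, $\Xirad(\mathscr X)$ is spanned by the classes of $(1\otimes v)\otimes\varphi$ for $V\in\Km$, $v\in V$, $\varphi\in\Hom_W(V^M_\single,\mathscr X)$. Since $\mathscr X$ is finite-dimensional, only finitely many isomorphism classes of irreducible $W$-modules appear in $\mathscr X$. I would choose a finite collection $\mathcal V\subset\Kqsp$ whose single-petaled parts $V^M_\single$ $(V\in\mathcal V)$ collectively realize all those $W$-types, and aim to rewrite every generator $(1\otimes v')\otimes\varphi'$ (with $V'\in\Km$ and $\varphi'\in\Hom_W((V')^M_\single,\mathscr X)$) as a $U(\Lieg_\CC)$-translate of generators supported on $\mathcal V$, by producing $V\in\mathcal V$, $\Psi\in\Hom_{\Lieg_\CC,K}^\ttt(P_G(V'),P_G(V))$ and $\varphi\in\Hom_W(V^M_\single,\mathscr X)$ with $\varphi'=\varphi\circ\tilde\Gamma(\Psi)$; the relation \eqref{eq:Nelm} then does the job. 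This would make $\Xirad(\mathscr X)$ $U(\Lieg_\CC)$-generated by the finite set $\bigsqcup_{V\in\mathcal V}V\otimes\Hom_W(V^M_\single,\mathscr X)$.

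The hard part will be producing such a morphism $\Psi$. The surjectivity assertion of Theorem \ref{thm:HC}(ii) applies only when the target is $P_G(\CC_\triv)$, so the naive choice $\mathcal V=\{\CC_\triv\}$ would capture only the $W$-invariants of $\mathscr X$. One must therefore extend the argument to allow general $V\in\Kqsp$ as target, either by enlarging $\mathcal V$ and invoking the star operations of \S\ref{sec:star} (which relate morphisms $P_G(E)\to P_G(V)$ to morphisms $P_G(V^\star)\to P_G(E^\star)$ and so convert Theorem \ref{thm:HC}(ii) into its dual form (iii)), or by a more direct analysis of the image of $\tilde\Gamma$ on the $\Hom_{\Lieg_\CC,K}^\ttt$ spaces involved. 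Establishing this density/surjectivity property of the generalized Harish-Chandra homomorphism is the technical heart of the proof.
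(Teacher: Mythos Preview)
Your overall strategy---finite generation plus local $Z(\Lieg_\CC)$-finiteness, then Harish-Chandra's finiteness theorem---is exactly the paper's. You also correctly invoke Theorem \ref{thm:charcorr} for the center action. The discrepancy is in the finite-generation step, which you have made much harder than it is.

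The point you are missing is that $\Kqsp$ is itself a \emph{finite} set; this is \cite[Proposition 3.19]{Oda:HC}, quoted in the paper. Since $V^M_\single=\{0\}$ for $V\in\Km\setminus\Kqsp$, the sum defining $\Xiwrad(\mathscr X)$ (and hence $\Xirad(\mathscr X)$) is really over $\Kqsp$, and $\Xirad(\mathscr X)$ is generated over $U(\Lieg_\CC)$ by $\sum_{V\in\Kqsp}(1\otimes V)\otimes\Hom_W(V^M_\single,\mathscr X)$, which is finite-dimensional because $\Kqsp$ is finite and $\mathscr X$ is finite-dimensional. No reduction to simple $\mathscr X$, no manufacture of $\Psi\in\Hom^\ttt$, and no surjectivity analysis of $\tilde\Gamma$ beyond Theorem \ref{thm:HC}(ii) is needed. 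What you called the ``technical heart'' simply does not arise. Likewise, the reduction to an honest (as opposed to generalized) infinitesimal character via a composition series is unnecessary: local $U(\Lieg_\CC)^G$-finiteness already follows from Theorem \ref{thm:charcorr} applied to each generalized central character block of $\mathscr X$, and that is all Harish-Chandra's criterion requires.
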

\begin{proof}
It suffices to show that $\Xirad(\mathscr X)$ is finitely generated
and locally $U(\Lieg_\CC)^G$-finite (see \cite[Proposition 3.7.1 and Theorem 4.2.6]{Wa}).
Since 
\begin{align*}
\Xirad(\mathscr X)
&=\sum_{V\in\Kqsp} \bar P_G(V)\otimes \Hom_{\mathbf H}(P_{\mathbf H}(V^M)_\single,\mathscr X)\\
&=U(\Lieg_\CC)\!\sum_{V\in\Kqsp} (1\otimes V)\otimes \Hom_{\mathbf H}(P_{\mathbf H}(V^M)_\single,\mathscr X)
\end{align*}
and since there
are only finitely many quasi-single-petaled $K$-types \cite[Proposition 3.19]{Oda:HC},
$\sum_{V\in\Kqsp} (1\otimes V)\otimes \Hom_{\mathbf H}(P_{\mathbf H}(V^M)_\single,\mathscr X)$ is
a finite-dimensional subspace generating $\Xirad(\mathscr X)$.
The local finiteness is immediate from Theorem \ref{thm:charcorr}.
\end{proof}

The functor $\Xirad$ can be constructed in a more conceptual way.
Put 
\[
\mathbf P_G=\bigoplus_{V\in\Km} P_G(V),\ 
\bar{\mathbf P}_G=\bigoplus_{V\in\Km} \bar P_G(V),\ 
\mathbf P_{\mathbf H}=\bigoplus_{V\in\Km} P_{\mathbf H}(V^M_\single)
=\bigoplus_{V\in\Kqsp} P_{\mathbf H}(V^M_\single).
\]
The algebra
\begin{equation*}
\End_{\Lieg_\CC,K}(\mathbf P_G)
=\prod_{E\in\Km}\bigoplus_{V\in\Km} \Hom_{\Lieg_\CC,K}(P_G(E),P_G(V))
\end{equation*}
contains
\[
\mathbf A^\ttt:=
\prod_{E\in\Km}\bigoplus_{V\in\Km} \Hom^\ttt_{\Lieg_\CC,K}(P_G(E),P_G(V))
\]
as a subalgebra.
By Lemma \ref{lem:PGbar} (iii),
$\bar{\,\cdot\,}$ induces the algebra homomorphism
\[
\bar{\,\cdot\,} : \mathbf A^\ttt  \longrightarrow \End_{\Lieg_\CC,K}(\bar{\mathbf P}_G).
\]
By Proposition \ref{prop:chain},
$\tilde\Gamma$ induces the algebra homomorphism
\[
\tilde \Gamma : \mathbf A^\ttt \longrightarrow \End_{\mathbf H}(\mathbf P_{\mathbf H})
=\!\!\bigoplus_{E,V\in\Kqsp} \Hom_{\mathbf H}(P_{\mathbf H}(E^M_\single),P_{\mathbf H}(V^M_\single)).
\]
Let $\mathbf A^\ttt_\op$ denote the opposite algebra of $\mathbf A^\ttt$.
Then $\bar{\mathbf P}_G$ is a right $\mathbf A^\ttt_\op$-module via $\bar{\,\cdot\,}$
and for any $\mathscr X\in\mathbf H\Mod$,
$\Hom_{\mathbf H}(\mathbf P_{\mathbf H},\mathscr X)$
is naturally a left $\mathbf A^\ttt_\op$-module via $\tilde\Gamma$.
\begin{prop}\label{prop:tensorRad}
For any $\mathscr X\in\mathbf H\Mod$,
\begin{equation}\label{eq:XiradC}
\Xirad(\mathscr X)=\bar{\mathbf P}_G \otimes_{\mathbf A^\ttt_\op}
\Hom_{\mathbf H}(\mathbf P_{\mathbf H},\mathscr X).
\end{equation}
In particular, 
$\Xirad$ is right exact thanks to
the projectivity of $\mathbf P_{\mathbf H}$ in $\mathbf H\Mod$.
\end{prop}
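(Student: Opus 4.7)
My plan is to verify \eqref{eq:XiradC} directly by exploiting the orthogonal idempotents $\{e_V := \id_{P_G(V)}\}_{V\in\Km}$ of $\mathbf A^\ttt$, and then to deduce right exactness from the projectivity of $\mathbf P_{\mathbf H}$ over $\mathbf H$. First I would observe that each $e_V$ lies in $\Hom^\ttt_{\Lieg_\CC,K}(P_G(V),P_G(V))$ since $\Gamma(\id)=\id$, and that the family $(e_V)_{V\in\Km}$ is the unit of $\mathbf A^\ttt$ with respect to which both the left action on $\bar{\mathbf P}_G$ via $\bar{\,\cdot\,}$ and the right action on $\Hom_{\mathbf H}(\mathbf P_{\mathbf H},\mathscr X)$ via $\tilde\Gamma$ are unital. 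A direct matrix calculation shows that for $d=\sum_V d_V \in \bar{\mathbf P}_G$ and $\varphi=(\varphi_V)_V \in \Hom_{\mathbf H}(\mathbf P_{\mathbf H},\mathscr X)$, the element $e_V \cdot d$ recovers the component $d_V$ while $\varphi \cdot e_V$ is the family supported only at position $V$ with value $\varphi_V$.

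Next I would consider the $\CC$-bilinear map
\[
\mu: \bar{\mathbf P}_G \times \Hom_{\mathbf H}(\mathbf P_{\mathbf H},\mathscr X) \longrightarrow \Xiwrad(\mathscr X),\quad (d,\varphi) \longmapsto \sum_V d_V \otimes \varphi_V,
\]
which is well-defined since $d$ has finite support. For $\Psi \in \Hom^\ttt_{\Lieg_\CC,K}(P_G(V),P_G(F)) \subset \mathbf A^\ttt$ viewed as a matrix with a single non-zero entry, a short computation yields
\[
\mu(\Psi\cdot d,\varphi) - \mu(d,\varphi\cdot\Psi) = \bar\Psi(d_V)\otimes \varphi_F - d_V \otimes (\varphi_F \circ \tilde\Gamma(\Psi)),
\]
which is precisely a generator of $\mathscr N_\rad(\mathscr X)$. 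By linearity the same holds for general $\Psi \in \mathbf A^\ttt$, so the composition of $\mu$ with the projection $\Xiwrad(\mathscr X) \twoheadrightarrow \Xirad(\mathscr X)$ is $\mathbf A^\ttt_\op$-balanced and descends to a $\CC$-linear map $\bar\mu: \bar{\mathbf P}_G \otimes_{\mathbf A^\ttt_\op} \Hom_{\mathbf H}(\mathbf P_{\mathbf H},\mathscr X) \to \Xirad(\mathscr X)$. Conversely, on each summand of $\Xiwrad(\mathscr X)$ I would define $\nu(d_V \otimes \varphi_V) = \iota_V(d_V) \otimes \iota^V(\varphi_V)$ using the evident inclusions into $\bar{\mathbf P}_G$ and $\Hom_{\mathbf H}(\mathbf P_{\mathbf H},\mathscr X)$; the same balancing computation shows $\nu$ kills $\mathscr N_\rad$ and so descends to $\Xirad(\mathscr X)$. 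A verification on generators that $\bar\mu$ and $\nu$ are mutually inverse completes the identification.

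For right exactness, the tensor functor $\bar{\mathbf P}_G \otimes_{\mathbf A^\ttt_\op}(-)$ is right exact in general, so it suffices that $\Hom_{\mathbf H}(\mathbf P_{\mathbf H}, -)$ be exact, i.e.\ that $\mathbf P_{\mathbf H} = \bigoplus_{V\in\Kqsp} \mathbf H \otimes_{\CC W} V^M_\single$ be projective over $\mathbf H$. Since $\CC W$ is semisimple, each $V^M_\single$ is $\CC W$-projective, hence a $\CC W$-direct summand of some $(\CC W)^n$; since $\mathbf H \simeq S(\Liea_\CC)\otimes \CC W$ is free as a right $\CC W$-module, induction realizes $P_{\mathbf H}(V^M_\single)$ as an $\mathbf H$-direct summand of $\mathbf H^n$. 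The main bookkeeping obstacle will be keeping the matrix conventions in the ``column-finite'' algebra $\mathbf A^\ttt$ straight when checking the balancing relation for arbitrary (not necessarily single-entry) elements $\Psi$ — but once the idempotents $e_V$ are installed, everything reduces to the single calculation above.
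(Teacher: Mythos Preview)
Your argument is correct and uses essentially the same idea as the paper: both exploit the orthogonal idempotents $e_V=I_V=\id_{P_G(V)}\in\mathbf A^\ttt$ to reduce the tensor product to its ``diagonal'' part and then identify the balancing relations with the generators of $\mathscr N_\rad(\mathscr X)$. The paper organizes this slightly more economically---it first uses $I_V$ to kill the off-diagonal tensors $D\otimes\varphi$ (with $D\in\bar P_G(E)$, $\varphi\in\Hom_{\mathbf H}(P_{\mathbf H}(V^M_\single),\mathscr X)$, $E\ne V$) directly inside $\bar{\mathbf P}_G\otimes_{\mathbf A^\ttt_\op}\Hom_{\mathbf H}(\mathbf P_{\mathbf H},\mathscr X)$, and then defines a single surjection $\pi:\Xiwrad(\mathscr X)\to\bar{\mathbf P}_G\otimes_{\mathbf A^\ttt_\op}\Hom_{\mathbf H}(\mathbf P_{\mathbf H},\mathscr X)$ whose kernel is observed to be $\mathscr N_\rad(\mathscr X)$---whereas you build the maps $\bar\mu$ and $\nu$ in both directions and verify they are mutually inverse. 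Your projectivity argument for $\mathbf P_{\mathbf H}$ is also fine; the paper simply asserts it.
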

\begin{proof}
We denote the right-hand side of \eqref{eq:XiradC} by $\mathscr Y$.
First we note
\[
\mathscr Y=\sum_{E\in\Km,V\in\Kqsp}\bar P_G(E) \otimes
\Hom_{\mathbf H}(P_{\mathbf H}(V^M_\single),\mathscr X).
\]
If $D\in \bar P_G(E)$ and $\varphi\in \Hom_{\mathbf H}(P_{\mathbf H}(V^M_\single),\mathscr X)$ then
\[
D\otimes\varphi=D\otimes(\varphi\circ\tilde\Gamma(I_V))
=\bar I_V(D) \otimes \varphi=0
\]
unless $E=V$.
Here $I_V\in\End_{\Lieg_\CC,K}(P_G(V))\subset \mathbf A^\ttt$ denotes the identity.
Thus 
\[
\mathscr Y=\sum_{V\in\Kqsp}\bar P_G(V) \otimes
\Hom_{\mathbf H}(P_{\mathbf H}(V^M_\single),\mathscr X)
\]
and by this expression
a surjective $(\Lieg_\CC,K)$-homomorphism
$\pi : \Xiwrad(\mathscr X)\to\mathscr Y$
is naturally defined.
But it is easy to observe that $\Ker\pi=\mathscr N_\rad(\mathscr X)$.
\end{proof}

\section{The functor $\Ximin$}\label{sec:Ximin}
In this section, we shall define a functor $\Ximin: \mathbf H\Mod \to (\Lieg_\CC,K)\Mod$
so that $(\Ximin(\mathscr X),\mathscr X)$ becomes a radial pair with some canonical
radial restriction $\gammamin$.
This functor will turn out to extend the correspondence
\[
\Ximin : \{\mathbf H\text{-submodules of }P_{\mathbf H}(\CC_\triv)\}\to\{(\Lieg_\CC,K)\text{-submodules of }P_G(\CC_\triv)\}
\]
defined in Definition \ref{defn:Ximin}.
It will also be shown that
when restricted to the category $\FD$ of the finite-dimensional $\mathbf H$-modules,
$\Ximin$ lifts any sesquilinear pairing.

Suppose $\mathscr X\in\mathbf H\Mod$.
Recall 
the linear map $\gamma_\rad:\Xirad(\mathscr X)\to\mathscr X$
used in the beginning of the last section,
which satisfies Conditions \eqref{cond:rest1'}, \eqref{cond:rest2} and \eqref{cond:rest3} for $(\Xirad(\mathscr X),\mathscr X)\in\Crad$.
Hence we can apply the correspondence $\Xi^\natural_{(\Xirad(\mathscr X),\mathscr X)}$ of Proposition \ref{prop:XinaturalGen} to the $\mathbf H$-submodule $\{0\}\subset\mathscr X$.
Put
$\Nmin(\mathscr X)=\Xi^\natural_{(\Xirad(\mathscr X),\mathscr X)}(\{0\})$,
namely
\[
\Nmin(\mathscr X)=\sum\bigl\{
\mathscr V\subset \Xirad(\mathscr X);\, \text{a $K$-stable $\CC$-subspace with }
\gamma_\rad(\mathscr V)=\{0\} 
\bigr\}.
\]
Then from Proposition \ref{prop:XinaturalGen} (iii) we have
\begin{thm}[the functor $\Ximin$]
We put
\[
\Ximin(\mathscr X)=\Xirad(\mathscr X)/\Nmin(\mathscr X)\ \in(\Lieg_\CC,K)\Mod.
\]
Then $(\Ximin(\mathscr X), \mathscr X)\in\Crad$.
A linear map $\gammamin:\Ximin(\mathscr X)\to\mathscr X$ naturally induced from
$\gamma_\rad$ is a radial restriction of\/ $(\Ximin(\mathscr X), \mathscr X)\in\Crad$
in the sense of\/ {\normalfont Definition \ref{defn:RadRest}}.
Furthermore $\gammamin$ satisfies {\normalfont Condition \eqref{cond:rest3}}
in {\normalfont Lemma \ref{lem:rest3}}.
\end{thm}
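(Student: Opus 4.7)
The plan is to deduce the theorem directly from Proposition \ref{prop:XinaturalGen} (iii), applied to the radial pair $\mathcal M := (\Xirad(\mathscr X), \mathscr X)$ equipped with the radial restriction $\gamma_{\mathcal M} := \gamma_\rad$, with the trivial $\mathbf H$-submodule $\{0\} \subset \mathscr X$ playing the role of the ``$\mathscr X$'' in the proposition. With this choice the subspace $\Xi^\natural_{\mathcal M}(\{0\})$ defined in Proposition \ref{prop:XinaturalGen} coincides verbatim with $\Nmin(\mathscr X)$, since both are defined as the sum of all $K$-stable $\CC$-subspaces of $\Xirad(\mathscr X)$ on which $\gamma_\rad$ vanishes. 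Consequently the quotient $\mathcal Q$ appearing in Proposition \ref{prop:XinaturalGen} (iii) is exactly
\[
(\mathcal M_G/\Xi^\natural_{\mathcal M}(\{0\}),\, \mathcal M_{\mathbf H}/\{0\}) = (\Xirad(\mathscr X)/\Nmin(\mathscr X),\, \mathscr X) = (\Ximin(\mathscr X),\, \mathscr X).
\]

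First I would verify that the hypotheses of Proposition \ref{prop:XinaturalGen} are in place. This amounts to recalling from the construction in Section \ref{sec:Xirad} that $(\Xirad(\mathscr X), \mathscr X) \in \Crad \subset \Cwrad$ and that $\gamma_\rad$ satisfies Conditions \eqref{cond:rest1'}, \eqref{cond:rest2}, \eqref{cond:rest3} — these are precisely the conditions demanded by Lemma \ref{lem:rest3} and hence by Proposition \ref{prop:XinaturalGen}. With these inputs, part (iii) of the proposition delivers simultaneously the three assertions of the theorem: membership of $(\Ximin(\mathscr X), \mathscr X)$ in $\Crad$, the fact that the induced map $\gamma_{\mathcal Q}$ is a radial restriction in the sense of Definition \ref{defn:RadRest}, and that $\gamma_{\mathcal Q}$ still satisfies \eqref{cond:rest3}.

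Finally I would note that $\gamma_{\mathcal Q}$, which is defined as the factorization of $\gamma_\rad$ through $\Nmin(\mathscr X)$ induced by the quotient construction, coincides by definition with the map $\gammamin$ introduced in the theorem. There is no genuine obstacle in the argument: all of the substantive content (the behavior of $\Xi^\natural_{\mathcal M}$ on quotients, the verification of \eqref{cond:rest1}, and the transfer of \eqref{cond:rest3} through the quotient) was already handled in the proof of Proposition \ref{prop:XinaturalGen} (iii). If anything acts as the ``main step,'' it is the bookkeeping verification that $\Nmin(\mathscr X) = \Xi^\natural_{(\Xirad(\mathscr X),\mathscr X)}(\{0\})$, which however is immediate from comparing the two defining expressions. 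Thus the proof will reduce to a brief identification of data and a single invocation of Proposition \ref{prop:XinaturalGen} (iii).
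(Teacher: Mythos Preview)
Your proposal is correct and matches the paper's approach exactly: the paper defines $\Nmin(\mathscr X)$ as $\Xi^\natural_{(\Xirad(\mathscr X),\mathscr X)}(\{0\})$ and then states the theorem as an immediate consequence of Proposition~\ref{prop:XinaturalGen}~(iii), with the verification that $\gamma_\rad$ satisfies \eqref{cond:rest1'}, \eqref{cond:rest2}, \eqref{cond:rest3} having been recorded earlier in \S\ref{sec:Xirad}.
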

\begin{lem}\label{lem:Ximin}
Suppose $\mathcal M=(\mathcal M_G, \mathcal M_{\mathbf H})\in\Crad$
has a radial restriction $\gamma_{\mathcal M}$ satisfying \eqref{cond:rest3}.
Suppose moreover $(\mathcal I_G,\mathcal I_{\mathbf H}): (\Xirad(\mathscr X),\mathscr X)\to \mathcal M$ is a morphism of $\Crad$.
Then it holds that
\begin{equation}\label{eq:naturalKer}
\Ker \mathcal I_G=\Xi^\natural_{(\Xirad(\mathscr X),\mathscr X)}(\Ker \mathcal I_{\mathbf H}).
\end{equation}
In particular, $\Nmin(\mathscr X)\subset\Ker \mathcal I_G$ and
hence a morphism $(\Ximin(\mathscr X),\mathscr X)\to \mathcal M$ of\/ $\Crad$ is naturally induced.
\end{lem}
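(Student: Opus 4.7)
\medskip
\noindent\textbf{Plan.}
The plan is to deduce \eqref{eq:naturalKer} from the intermediate commutativity
\begin{equation*}
\gamma_{\mathcal M}\circ\mathcal I_G=\mathcal I_{\mathbf H}\circ\gamma_\rad
\end{equation*}
of linear maps $\Xirad(\mathscr X)\to\mathcal M_{\mathbf H}$, and then to obtain the ``in particular'' clause by specialization to $\{0\}\subset\Ker\mathcal I_{\mathbf H}$.

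Proving the commutativity is the main step. Using the tensor-product description $\Xirad(\mathscr X)=\bar{\mathbf P}_G\otimes_{\mathbf A^\ttt_\op}\Hom_{\mathbf H}(\mathbf P_{\mathbf H},\mathscr X)$ from Proposition \ref{prop:tensorRad}, it is enough to verify the identity on a typical generator $D\otimes\varphi$ with $D\in\bar P_G(V)$ and $\varphi\in\Hom_{\mathbf H}(P_{\mathbf H}(V^M_\single),\mathscr X)$. By construction $\gamma_\rad(D\otimes\varphi)=\varphi(\tilde\gamma^{\bar P_G(V)}(D))$; on the other hand, tracing through the proof of Proposition \ref{prop:Xiwrad} together with Lemma \ref{lem:Xirad} shows $\mathcal I_G(D\otimes\varphi)=\tilde\Phi_V(D)$, where $\tilde\Phi_V\colon\bar P_G(V)\to\mathcal M_G$ is the $(\Lieg_\CC,K)$-extension (Lemma \ref{lem:PGbar} (ii)) of the unique $\Phi_V\in\Hom_K^\ttt(V,\mathcal M_G)$ with $\tilde\Gamma^V_{\mathcal M}(\Phi_V)=\mathcal I_{\mathbf H}\circ\varphi$. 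Since $\gamma_{\mathcal M}$ satisfies \eqref{cond:rest3}, Remark \ref{eq:goodRR} gives $\gamma_{\mathcal M}=\delta^{\mathcal M_G}\circ\gamma^{\mathcal M_G}$ for some $\Liea_\CC$-homomorphism $\delta^{\mathcal M_G}\colon\Gamma(\mathcal M_G)\to\mathcal M_{\mathbf H}$, and functoriality of $\Gamma$ combined with Lemma \ref{lem:PGbar} (i) yields
\[
\gamma_{\mathcal M}\circ\tilde\Phi_V=\tilde\delta\circ\gamma^{\bar P_G(V)},\qquad\tilde\delta:=\delta^{\mathcal M_G}\circ\Gamma(\tilde\Phi_V)\colon P_{\mathbf H}(V^M)\to\mathcal M_{\mathbf H},
\]
an $\Liea_\CC$-homomorphism. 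The crux is to pin down $\tilde\delta$: evaluation on $V^M\subset P_{\mathbf H}(V^M)$ together with \eqref{cond:rest2} for $\gamma_{\mathcal M}$ and the defining property of $\Phi_V$ gives $\tilde\delta|_{V^M_\single}=\mathcal I_{\mathbf H}\circ\varphi|_{V^M_\single}$, while $\Phi_V\in\Hom_K^\ttt$ combined with \eqref{cond:rest1'} forces $\tilde\delta|_{V^M_\double}=0$. Because $P_{\mathbf H}(V^M_\single)$ is $S(\Liea_\CC)$-free on $V^M_\single$, any $\Liea_\CC$-homomorphism out of it is determined by its restriction to $V^M_\single$, so $\tilde\delta$ agrees with $\mathcal I_{\mathbf H}\circ\varphi$ on $P_{\mathbf H}(V^M_\single)$ and vanishes on $P_{\mathbf H}(V^M_\double)$. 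Since $\tilde\gamma^{\bar P_G(V)}$ is $\gamma^{\bar P_G(V)}$ post-composed with the projection $P_{\mathbf H}(V^M)\twoheadrightarrow P_{\mathbf H}(V^M_\single)$ (Lemma \ref{lem:PGbar} (iv)), one concludes $\gamma_{\mathcal M}(\mathcal I_G(D\otimes\varphi))=\mathcal I_{\mathbf H}(\gamma_\rad(D\otimes\varphi))$.

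Once the commutativity is secured, the ``$\subset$'' direction of \eqref{eq:naturalKer} is immediate: for $y\in\Ker\mathcal I_G$, $\mathcal I_{\mathbf H}(\gamma_\rad(y))=\gamma_{\mathcal M}(\mathcal I_G(y))=0$, and as $\Ker\mathcal I_G$ is $(\Lieg_\CC,K)$-stable one gets $\Ker\mathcal I_G\subset\Xi^\natural_{(\Xirad(\mathscr X),\mathscr X)}(\Ker\mathcal I_{\mathbf H})$. For ``$\supset$'', set $\mathscr Y:=\Xi^\natural_{(\Xirad(\mathscr X),\mathscr X)}(\Ker\mathcal I_{\mathbf H})$; commutativity and the defining property of $\mathscr Y$ give $\gamma_{\mathcal M}(\mathcal I_G(\mathscr Y))\subset\mathcal I_{\mathbf H}(\Ker\mathcal I_{\mathbf H})=\{0\}$, so $\mathcal I_G(\mathscr Y)\subset\Xi^\natural_{\mathcal M}(\{0\})$. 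A short observation using \eqref{cond:rest1} for $\gamma_{\mathcal M}$ and the injectivity asserted in \eqref{cond:Ch} shows $\Xi^\natural_{\mathcal M}(\{0\})=\{0\}$: any $K$-stable $\mathscr V\subset\mathcal M_G$ with $\gamma_{\mathcal M}(\mathscr V)=0$ has, for each $V\in\Km$, every $\Phi\in\Hom_K(V,\mathscr V)$ lying in $\Hom_K^\ttt(V,\mathcal M_G)$ with $\tilde\Gamma^V_{\mathcal M}(\Phi)=0$, forcing $\Phi=0$ and hence $\mathscr V=\{0\}$. Finally, the ``in particular'' assertion follows from $\Nmin(\mathscr X)=\Xi^\natural_{(\Xirad(\mathscr X),\mathscr X)}(\{0\})\subset\Xi^\natural_{(\Xirad(\mathscr X),\mathscr X)}(\Ker\mathcal I_{\mathbf H})=\Ker\mathcal I_G$, so $\mathcal I_G$ descends to $\bar{\mathcal I}_G\colon\Ximin(\mathscr X)\to\mathcal M_G$, and $(\bar{\mathcal I}_G,\mathcal I_{\mathbf H})$ is a morphism in $\Crad$ by the universal property of the cokernel in the Abelian category $\Crad$ (Proposition \ref{prop:catpro}).

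The main obstacle is the identification of $\tilde\delta$ in the commutativity step: one must promote the a priori $\Liea_\CC$-linearity produced by functoriality of $\Gamma$ to agreement with the genuinely $\mathbf H$-linear map $\mathcal I_{\mathbf H}\circ\varphi$, and the key leverage is the $S(\Liea_\CC)$-freeness of $P_{\mathbf H}(V^M_\single)$ on $V^M_\single$; the remaining inclusions are then a clean formal consequence.
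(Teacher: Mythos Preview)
Your proof is correct; the overall architecture (establish $\gamma_{\mathcal M}\circ\mathcal I_G=\mathcal I_{\mathbf H}\circ\gamma_\rad$, then deduce \eqref{eq:naturalKer}) matches the paper's, but the execution of the commutativity step differs genuinely.

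The paper proves commutativity purely from the morphism axioms: for $\Phi\in\Hom_K^\ttt(V,\Xirad(\mathscr X))$ it uses \eqref{cond:Ch1} to get agreement on $V^M_\single$, then \eqref{cond:Ch2} and \eqref{cond:rest1} to get vanishing on $V^M_\double$, then \eqref{eq:gammam}, \eqref{eq:gammaH}, \eqref{eq:gammaX} to propagate along $U(\Lien_\CC+\Liea_\CC)$, and finally the spanning property $\Xirad(\mathscr X)=\sum_{V}\sum_{\Phi\in\Hom_K^\ttt}U(\Lien_\CC+\Liea_\CC)\Phi[V]$ to conclude globally. Your route instead identifies $\mathcal I_G$ explicitly on tensor generators (which tacitly invokes the uniqueness in Theorem \ref{thm:Xirad}), then uses functoriality of $\Gamma$ and Remark \ref{eq:goodRR} to produce the $\Liea_\CC$-linear $\tilde\delta$, and determines $\tilde\delta$ by the $S(\Liea_\CC)$-freeness of $P_{\mathbf H}(V^M)$ on $V^M$. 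The paper's argument is slightly more robust in that it never needs the explicit formula for $\mathcal I_G$ and would apply verbatim to any morphism out of a pair with the same spanning property; your argument is more direct once the formula is in hand and makes the role of the $\Liea_\CC$-module structure of $\Gamma(\cdot)$ transparent. For \eqref{eq:naturalKer} itself, the paper runs a single biconditional chain at the level of $\Hom_K(V,\cdot)$ using Remark \ref{rem:RadR}; your two-inclusion argument, with the observation $\Xi^\natural_{\mathcal M}(\{0\})=\{0\}$, is equivalent and uses the same ingredients.
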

\begin{proof}
We first prove
\begin{equation}\label{eq:gammaIIgamma}
\gamma_{\mathcal M}\circ\mathcal I_G=\mathcal I_{\mathbf H}\circ\gamma_\rad.
\end{equation}
For $V\in\Km$ and $\Phi\in\Hom_K^\ttt(V,\,\Xirad(\mathscr X))$ we have
$\tilde\Gamma_{\mathcal M}^V(\mathcal I_G\circ\Phi)=\mathcal I_{\mathbf H}\circ\tilde\Gamma_\rad^V(\Phi)$ by \eqref{cond:Ch1}.
Hence $(\gamma_{\mathcal M}\circ \mathcal I_G\circ\Phi)\bigr|_{V^M_\single}=
(\mathcal I_{\mathbf H}\circ\gamma_\rad\circ\Phi)\bigr|_{V^M_\single}$.
Since $\mathcal I_G\circ\Phi \in \Hom_K^\ttt(V,\mathcal M_G)$ by \eqref{cond:Ch2},
Condition \eqref{cond:rest1} for $\gamma_{\mathcal M}$ and Condition \eqref{cond:rest1'} for $\gamma_\rad$
imply $(\gamma_{\mathcal M}\circ \mathcal I_G\circ\Phi)\bigr|_{V^M_\double}=
(\mathcal I_{\mathbf H}\circ\gamma_\rad\circ\Phi)\bigr|_{V^M_\double}=0$.
Thus $(\gamma_{\mathcal M}\circ \mathcal I_G\circ\Phi)\bigr|_{V^M}=
(\mathcal I_{\mathbf H}\circ\gamma_\rad\circ\Phi)\bigr|_{V^M}$.
But this implies
$\gamma_{\mathcal M}\circ \mathcal I_G\circ\Phi=\mathcal I_{\mathbf H}\circ\gamma_\rad\circ\Phi$
because of \eqref{eq:gammam}
for $\gamma_{\mathcal M}$ and $\gamma_\rad$.
Furthermore from \eqref{eq:gammaH} and \eqref{eq:gammaX} for $\gamma_{\mathcal M}$ and $\gamma_\rad$,
we have for $D\in U(\Lien_\CC+\Liea_\CC)$ and $v\in V$
\begin{align*}
(\gamma_{\mathcal M}\circ \mathcal I_G)\bigl(D\Phi[v]\bigr)
&=\gamma(D)(\gamma_{\mathcal M}\circ \mathcal I_G)\bigl(\Phi[v]\bigr)\\
&=\gamma(D)(\mathcal I_{\mathbf H}\circ\gamma_\rad)\bigl(\Phi[v]\bigr)
=(\mathcal I_{\mathbf H}\circ\gamma_\rad)\bigl(D\Phi[v]\bigr).
\end{align*}
Now \eqref{eq:gammaIIgamma} follows since
\begin{equation}\label{eq:radgen22}
\begin{aligned}
\Xirad(\mathscr X)
&=\sum_{V\in\Km} \bar P_G(V)\otimes \Hom_{\mathbf H}(P_{\mathbf H}(V^M_\single),\mathscr X)\\
&=\sum_{V\in\Km} U(\Lien_\CC+\Liea_\CC)
\bigl((1\otimes V)\otimes \Hom_{\mathbf H}(P_{\mathbf H}(V^M_\single),\mathscr X)\bigr)\\
&=\sum_{V\in\Km} 
\,\sum_{\varphi\in \Hom_{\mathbf H}(P_{\mathbf H}(V^M_\single),\mathscr X)}
U(\Lien_\CC+\Liea_\CC)
\bigl((I_V\otimes \varphi)[V]\bigr)\\
&=\sum_{V\in\Km} 
\,\sum_{\Phi\in \Hom_{K}^\ttt(V,\,\Xirad(\mathscr X))}
U(\Lien_\CC+\Liea_\CC)\Phi[V].
\end{aligned}
\end{equation}
Here $I_V\in\Hom_K(V,\bar P_G(V))$ is the map $V\ni v\mapsto 1\otimes v \in \bar P_G(V)$.

Secondly for any $V\in\Km$ and $\Phi\in\Hom_K(V,\,\Xirad(\mathscr X))$
we have
\begin{align*}
\Phi\in\Hom_K(V,\Ker \mathcal I_G)
&\Longleftrightarrow
\mathcal I_G\circ \Phi=0\ \bigl(\,\in \Hom_K(V,\mathcal M_G) \,\bigr)\\
&\Longleftrightarrow
(\gamma_{\mathcal M}\circ \mathcal I_G\circ \Phi)[V^M]=\{0\}
&&(\because \text{Remark \ref{rem:RadR}})\\
&\Longleftrightarrow
(\mathcal I_{\mathbf H}\circ \gamma_{\rad}\circ \Phi)[V^M]=\{0\}
&&(\because \eqref{eq:gammaIIgamma})\\
&\Longleftrightarrow
\gamma_{\rad}\bigl( \Phi[V^M]\bigr)\subset \Ker \mathcal I_{\mathbf H}\\
&\Longleftrightarrow
\gamma_{\rad}\bigl( \Phi[V]\bigr)\subset \Ker \mathcal I_{\mathbf H}
&&(\because \eqref{eq:gammam}\text{ for }\gamma_\rad)\\
&\Longleftrightarrow
\Phi[V]\subset \Xi^\natural_{(\Xirad(\mathscr X),\mathscr X)}(\Ker \mathcal I_{\mathbf H})\\
&\Longleftrightarrow
\Phi\in\Hom_K\bigl(V,\Xi^\natural_{(\Xirad(\mathscr X),\mathscr X)}(\Ker \mathcal I_{\mathbf H})\bigr).
\end{align*}
This shows \eqref{eq:naturalKer}.
\end{proof}
\begin{thm}\label{thm:Ximin}
Both $\mathbf H\Mod\ni\mathscr X\mapsto \Ximin(\mathscr X)\in(\Lieg_\CC,K)\Mod$
and
$\mathbf H\Mod\ni\mathscr X\mapsto (\Ximin(\mathscr X),\mathscr X)\in\Crad$
are functors.
If $\mathcal M=(\mathcal M_G, \mathcal M_{\mathbf H})\in\Crad$
has a radial restriction $\gamma_{\mathcal M}$ satisfying \eqref{cond:rest3}
and if $\mathcal I_{\mathbf H}:\mathscr X\to \mathcal M_{\mathbf H}$ is an $\mathbf H$-homomorphism,
then there exists a unique $(\Lieg_\CC,K)$-homomorphism
$\mathcal I_G:\Ximin(\mathscr X)\to\mathcal M_G$ such that
$(\mathcal I_G,\mathcal I_{\mathbf H}): (\Ximin(\mathscr X),\mathscr X)
\to (\mathcal M_G, \mathcal M_{\mathbf H})$ is a morphism of $\Crad$.
Here it holds that
\[
\gamma_{\mathcal M}\circ\mathcal I_G =\mathcal I_{\mathbf H}\circ\gammamin.
\]
Furthermore, if\/ $\mathcal I_{\mathbf H}$ is injective then\/
$\Ximin(\mathscr X)$ is isomorphic to\/ $\Ximin_{\mathcal M}(\Image \mathcal I_{\mathbf H})$
by\/ $\mathcal I_G$ and hence $(\Ximin(\mathscr X),\mathscr X)\simeq(\Ximin_{\mathcal M}(\Image \mathcal I_{\mathbf H}),\Image \mathcal I_{\mathbf H})$
as a radial pair.
\end{thm}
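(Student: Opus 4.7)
The plan is to obtain functoriality and the universal property by combining Theorem \ref{thm:Xirad} (the analogous adjunction for $\Xirad$) with Lemma \ref{lem:Ximin} (which factors morphisms from $\Xirad(\mathscr X)$ through $\Ximin(\mathscr X)$ whenever the target has a radial restriction satisfying \eqref{cond:rest3}), and then to extract the injective case from Theorem \ref{thm:liftincl} using the tautology $\Nmin(\mathscr X)=\Xi^\natural_{(\Xirad(\mathscr X),\mathscr X)}(\{0\})$. The crucial observation making the argument self-contained is that, for each $\mathscr X$, the radial restriction $\gammamin$ of $(\Ximin(\mathscr X),\mathscr X)$ itself satisfies \eqref{cond:rest3}, so $(\Ximin(\mathscr X),\mathscr X)$ is a legitimate instance of the target $\mathcal M$ in the universal property we are proving.

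For functoriality, given $f:\mathscr X\to\mathscr Y$ in $\mathbf H\Mod$, apply Theorem \ref{thm:Xirad} with $\mathcal M=(\Ximin(\mathscr Y),\mathscr Y)$ and $\mathcal I_{\mathbf H}=f$ to obtain a morphism $(\tilde\Xi(f),f):(\Xirad(\mathscr X),\mathscr X)\to(\Ximin(\mathscr Y),\mathscr Y)$ of $\Crad$. Since the radial restriction on $(\Ximin(\mathscr Y),\mathscr Y)$ satisfies \eqref{cond:rest3}, Lemma \ref{lem:Ximin} gives $\Nmin(\mathscr X)\subset\Ker\tilde\Xi(f)$, so $\tilde\Xi(f)$ descends to $\Ximin(f):\Ximin(\mathscr X)\to\Ximin(\mathscr Y)$. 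Preservation of identities and composition is automatic from the uniqueness clauses of Theorem \ref{thm:Xirad} and Lemma \ref{lem:Ximin}.

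For the universal property, given $\mathcal M$ and $\mathcal I_{\mathbf H}:\mathscr X\to\mathcal M_{\mathbf H}$, Theorem \ref{thm:Xirad} first produces $\tilde{\mathcal I}_G:\Xirad(\mathscr X)\to\mathcal M_G$ with $(\tilde{\mathcal I}_G,\mathcal I_{\mathbf H})$ a morphism of $\Crad$, and Lemma \ref{lem:Ximin} then factors $\tilde{\mathcal I}_G$ through $\Ximin(\mathscr X)$, yielding the required $\mathcal I_G$. The identity $\gamma_{\mathcal M}\circ\mathcal I_G=\mathcal I_{\mathbf H}\circ\gammamin$ is just \eqref{eq:gammaIIgamma} from the proof of Lemma \ref{lem:Ximin}, transferred from $\Xirad(\mathscr X)$ to its quotient $\Ximin(\mathscr X)$. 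For uniqueness, note that the decomposition \eqref{eq:radgen22} holds verbatim for $\Ximin(\mathscr X)$ since the quotient $\Xirad(\mathscr X)\twoheadrightarrow\Ximin(\mathscr X)$ identifies $\Hom_K^\ttt(V,\Xirad(\mathscr X))$ surjectively with $\Hom_K^\ttt(V,\Ximin(\mathscr X))$; this shows $\Ximin(\mathscr X)$ is generated as a $U(\Lien_\CC+\Liea_\CC)$-module by $\bigcup_{V,\Phi}\Phi[V]$ with $V\in\Km$ and $\Phi\in\Hom_K^\ttt(V,\Ximin(\mathscr X))$, and on each such generator \eqref{cond:Ch1} together with the injectivity of $\tilde\Gamma^V_{\mathcal M}$ on $\Hom_K^\ttt(V,\mathcal M_G)$ (granted by \eqref{cond:Ch2} and \eqref{cond:Ch}) uniquely determines $\mathcal I_G\circ\Phi$.

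For the final assertion, assume $\mathcal I_{\mathbf H}$ is injective. The composite $\Xirad(\mathscr X)\twoheadrightarrow\Ximin(\mathscr X)\xrightarrow{\mathcal I_G}\mathcal M_G$ equals $\tilde{\mathcal I}_G$, and by \eqref{eq:naturalKer} its kernel is $\Xi^\natural_{(\Xirad(\mathscr X),\mathscr X)}(\Ker\mathcal I_{\mathbf H})=\Nmin(\mathscr X)$, which is precisely the kernel of the quotient map; thus $\mathcal I_G$ is injective on $\Ximin(\mathscr X)$. For the image, apply Theorem \ref{thm:liftincl} to $(\mathcal I_G,\mathcal I_{\mathbf H}):(\Ximin(\mathscr X),\mathscr X)\to\mathcal M$: the first equality of \eqref{eq:R3Mimage} gives $\mathcal I_G\bigl(\Ximin_{(\Ximin(\mathscr X),\mathscr X)}(\mathscr X)\bigr)=\Ximin_{\mathcal M}(\mathcal I_{\mathbf H}(\mathscr X))$, and $\Ximin_{(\Ximin(\mathscr X),\mathscr X)}(\mathscr X)=\Ximin(\mathscr X)$ since every $\varphi\in\Hom_W(V^M_\single,\mathscr X)$ comes via the bijection \eqref{cond:Ch} from some $\Phi\in\Hom_K^\ttt(V,\Ximin(\mathscr X))$, and $\bigcup_{V,\Phi}\Phi[V]$ generates $\Ximin(\mathscr X)$ as already noted. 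The one delicate point, on which I would spend most care, is the transfer of \eqref{eq:radgen22} and of Lemma \ref{lem:Ximin} from $\Xirad(\mathscr X)$ to its quotient $\Ximin(\mathscr X)=\Xirad(\mathscr X)/\Nmin(\mathscr X)$; but both transfers are forced by the defining surjection, and no serious obstacle is expected.
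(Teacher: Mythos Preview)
Your proposal is correct and follows essentially the same route as the paper: Theorem \ref{thm:Xirad} and Lemma \ref{lem:Ximin} (with \eqref{eq:gammaIIgamma}) give existence, uniqueness, and the compatibility $\gamma_{\mathcal M}\circ\mathcal I_G=\mathcal I_{\mathbf H}\circ\gammamin$, while the injective case is handled via \eqref{eq:naturalKer} for injectivity and \eqref{eq:radgen22} together with \eqref{eq:R3Mimage} for the identification of the image. The paper compresses all of this into a few lines, but the ingredients and the logical order are the same as yours.
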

\begin{proof}
All except the last statement easily follow from Theorem \ref{thm:Xirad}, Lemma \ref{lem:Ximin} and \eqref{eq:gammaIIgamma}.
Assume $\mathcal I_{\mathbf H}$ is injective.
Then $\mathcal I_{G}$ is also injective by Lemma \ref{lem:Ximin}.
On the other hand, from \eqref{eq:radgen22} and \eqref{eq:R3Mimage}
we can see $\Ximin_{(\Ximin(\mathscr X),\mathscr X)}(\mathscr X)=\Ximin(\mathscr X)$.
Using \eqref{eq:R3Mimage} again we get
\[
\Image \mathcal I_{G}=\mathcal I_{G}(\Ximin_{(\Ximin(\mathscr X),\mathscr X)}(\mathscr X))
=\Ximin_{\mathcal M}(\Image \mathcal I_{\mathbf H}).
\]
Thus the theorem is proved.
\end{proof}
\begin{cor}\label{cor:Ximinext}
If $\mathcal M=(\mathcal M_G, \mathcal M_{\mathbf H})\in\Crad$
has a radial restriction $\gamma_{\mathcal M}$ satisfying \eqref{cond:rest3}
then the functor $\Ximin : \mathbf H\Mod\to(\Lieg_\CC,K)\Mod$ extends the correspondence
\[
\Ximin_{\mathcal M} : \{\mathbf H\text{-submodules of }\mathcal M_{\mathbf H}\}
\to\{(\Lieg_\CC,K)\text{-submodules of }\mathcal M_G\}
\]
defined in {\normalfont Definition \ref{defn:Ximin}}.
\end{cor}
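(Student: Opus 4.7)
The plan is to deduce the corollary directly from the last assertion of Theorem \ref{thm:Ximin}. Fix an $\mathbf H$-submodule $\mathscr X\subset\mathcal M_{\mathbf H}$ and let $\iota_{\mathbf H}:\mathscr X\hookrightarrow\mathcal M_{\mathbf H}$ denote the inclusion, which is of course an injective $\mathbf H$-homomorphism. By hypothesis $\mathcal M$ is a radial pair equipped with a radial restriction $\gamma_{\mathcal M}$ satisfying \eqref{cond:rest3}, so Theorem \ref{thm:Ximin} applies: it furnishes a unique $(\Lieg_\CC,K)$-homomorphism $\iota_G:\Ximin(\mathscr X)\to\mathcal M_G$ such that $(\iota_G,\iota_{\mathbf H}):(\Ximin(\mathscr X),\mathscr X)\to\mathcal M$ is a morphism of $\Crad$. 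Since $\iota_{\mathbf H}$ is injective, the final clause of that theorem identifies $\Ximin(\mathscr X)$ with $\Ximin_{\mathcal M}(\Image\iota_{\mathbf H})=\Ximin_{\mathcal M}(\mathscr X)$ via $\iota_G$, yielding the desired isomorphism $(\Ximin(\mathscr X),\mathscr X)\simeq(\Ximin_{\mathcal M}(\mathscr X),\mathscr X)$ in $\Crad$.

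To confirm that this truly constitutes an \emph{extension} in the functorial sense, I would supplement the above with a naturality check: given two $\mathbf H$-submodules $\mathscr X_1\subset\mathscr X_2\subset\mathcal M_{\mathbf H}$ with inclusion $j_{\mathbf H}:\mathscr X_1\hookrightarrow\mathscr X_2$, the $(\Lieg_\CC,K)$-homomorphism $\Ximin(j_{\mathbf H}):\Ximin(\mathscr X_1)\to\Ximin(\mathscr X_2)$ produced by functoriality must correspond, under the isomorphisms just constructed, to the submodule inclusion $\Ximin_{\mathcal M}(\mathscr X_1)\hookrightarrow\Ximin_{\mathcal M}(\mathscr X_2)$. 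This follows from the uniqueness clause of Theorem \ref{thm:Ximin} applied to the composite morphism $(\Ximin(\mathscr X_1),\mathscr X_1)\to(\Ximin(\mathscr X_2),\mathscr X_2)\to\mathcal M$ in $\Crad$, together with the containment $\iota_G(\Ximin(\mathscr X_1))\subset\iota_G(\Ximin(\mathscr X_2))$ coming from Theorem \ref{thm:liftincl} (specifically \eqref{eq:R3Mimage}).

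No serious obstacle arises: the entire conceptual burden—that $\gamma_\rad$ descends through $\Nmin(\mathscr X)$ to give $\gammamin$, that the resulting pair is a radial pair with radial restriction satisfying \eqref{cond:rest3}, and the universal property characterizing morphisms out of $(\Ximin(\mathscr X),\mathscr X)$—has already been absorbed into Theorem \ref{thm:Ximin}. The only point requiring mild care is keeping straight which $\mathscr X$ plays which role: here $\mathscr X$ appears simultaneously as the $\mathbf H$-module to which the functor $\Ximin$ is applied and as the submodule of $\mathcal M_{\mathbf H}$ to which the correspondence $\Ximin_{\mathcal M}$ is applied, and the map $\iota_{\mathbf H}$ is precisely what bridges these two roles.
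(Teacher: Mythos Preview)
Your proof is correct and is exactly the approach the paper intends: the corollary is meant to follow immediately from the final clause of Theorem~\ref{thm:Ximin} by taking $\mathcal I_{\mathbf H}$ to be the inclusion $\mathscr X\hookrightarrow\mathcal M_{\mathbf H}$. The naturality check you add is a reasonable elaboration, though the paper leaves it implicit.
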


For example $\bigl(P_G(\CC_\triv), P_{\mathbf H}(\CC_\triv) \bigr)$
and $\bigl(P_{G}(\CC_\triv,\bar\lambda), P_{\mathbf H}(\CC_\triv,\bar\lambda)\bigr)$ with $\lambda\in\Liea^*_\CC$
satisfy the assumption of the corollary
(cf.~Example \ref{exmp:rest3} and Proposition \ref{prop:Aduals}).
Since $\Ximin(P_{\mathbf H}(\CC_\triv))=P_G(\CC_\triv)$,
we have $\Ximin_{(P_{G}(\CC_\triv,\bar\lambda), P_{\mathbf H}(\CC_\triv,\bar\lambda))}(P_{\mathbf H}(\CC_\triv,\bar\lambda))
=P_G(\CC_\triv,\bar\lambda)$ by \eqref{eq:R3Mimage}.
Hence
\begin{equation}\label{eq:XiPCl}
\Ximin(P_{\mathbf H}(\CC_\triv,\bar\lambda))=P_G(\CC_\triv,\bar\lambda)\ \,(\lambda\in\Liea^*_\CC).
\end{equation}

From Proposition \ref{prop:tensorRad} and Lemma \ref{lem:Ximin}
we also have the following:
\begin{cor}\label{cor:partialexact}
Suppose
\[0\longrightarrow\mathscr X_1\longrightarrow\mathscr X_2\longrightarrow\mathscr X_3\longrightarrow0\]
is an exact sequence in $\mathbf H\Mod$.
Then in $(\Lieg_\CC,K)\Mod$ both
\[
0\longrightarrow\Ximin(\mathscr X_1)\longrightarrow\Ximin(\mathscr X_2)
\quad\text{and}\quad
\Ximin(\mathscr X_2)\longrightarrow\Ximin(\mathscr X_3)
\longrightarrow0
\]
are exact.
\end{cor}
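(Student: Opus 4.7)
The plan is to deduce both statements from the functoriality established in Theorem \ref{thm:Ximin} combined with the right-exactness of $\Xirad$ from Proposition \ref{prop:tensorRad}. Write $\iota\colon\mathscr X_1\hookrightarrow\mathscr X_2$ and $\pi\colon\mathscr X_2\twoheadrightarrow\mathscr X_3$ for the given maps. First I would apply Theorem \ref{thm:Ximin} to each of $\iota$ and $\pi$, noting that every radial pair $(\Ximin(\mathscr X_i),\mathscr X_i)$ carries the canonical radial restriction $\gammamin$ satisfying \eqref{cond:rest3}, so the hypothesis of that theorem is met. This produces canonical $(\Lieg_\CC,K)$-homomorphisms
\[
\Ximin(\iota)_G\colon\Ximin(\mathscr X_1)\longrightarrow\Ximin(\mathscr X_2),\qquad
\Ximin(\pi)_G\colon\Ximin(\mathscr X_2)\longrightarrow\Ximin(\mathscr X_3).
\]

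Injectivity of $\Ximin(\iota)_G$ is immediate from the final clause of Theorem \ref{thm:Ximin}: when the $\mathbf H$-homomorphism entering the universal property is injective, the induced $(\Lieg_\CC,K)$-homomorphism is injective as well. This takes care of exactness of the first sequence at $\Ximin(\mathscr X_1)$.

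For surjectivity of $\Ximin(\pi)_G$, I would exploit naturality of the canonical quotient $q_{\mathscr X}\colon\Xirad(\mathscr X)\twoheadrightarrow\Ximin(\mathscr X)$ together with right-exactness of $\Xirad$. Consider the diagram
\[
\xymatrix@C=5em{
\Xirad(\mathscr X_2)\ar@{->>}[r]^{\Xirad(\pi)}\ar@{->>}[d]_{q_{\mathscr X_2}} & \Xirad(\mathscr X_3)\ar@{->>}[d]^{q_{\mathscr X_3}}\\
\Ximin(\mathscr X_2)\ar[r]_{\Ximin(\pi)_G} & \Ximin(\mathscr X_3).
}
\]
Both composites $\Xirad(\mathscr X_2)\to\Ximin(\mathscr X_3)$ obtained by going around this square lift $\pi$ to a morphism in $\Crad$ (using Lemma \ref{lem:Ximin} and the observation that $(\Ximin(\mathscr X_3),\mathscr X_3)$ has radial restriction satisfying \eqref{cond:rest3}), so by the uniqueness in the universal property of $\Xirad$ (Theorem \ref{thm:Xirad}) they coincide, and the square commutes. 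The top arrow is surjective by Proposition \ref{prop:tensorRad}, the right vertical arrow is surjective by the construction $\Ximin(\mathscr X_3)=\Xirad(\mathscr X_3)/\Nmin(\mathscr X_3)$, and the left vertical arrow is surjective for the same reason; hence $\Ximin(\pi)_G$ must be surjective.

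The argument is essentially formal and I do not anticipate a substantial obstacle: everything comes down to having the right-exactness of $\Xirad$ already in hand from Proposition \ref{prop:tensorRad}, plus the uniqueness/universal-property bookkeeping that guarantees the two canonical lifts of $\pi$ into $\Ximin(\mathscr X_3)$ agree. The only point worth checking carefully is the commutativity of the square above, which is precisely where the uniqueness assertion in Theorem \ref{thm:Xirad} (respectively Theorem \ref{thm:Ximin}) is used.
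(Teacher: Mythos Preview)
Your argument is correct and follows the paper's approach: the paper's proof is a one-line reference to Proposition~\ref{prop:tensorRad} and Lemma~\ref{lem:Ximin}, and you have simply unpacked those references --- injectivity via the final clause of Theorem~\ref{thm:Ximin} (which is itself proved from Lemma~\ref{lem:Ximin}), and surjectivity via the commuting square whose top row is surjective by the right-exactness of $\Xirad$. The commutativity check using the uniqueness in Theorem~\ref{thm:Xirad} is exactly the right bookkeeping.
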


The functor $\Ximin$ has some better properties when restricted to $\FD$.
Suppose $\mathscr X\in \FD$.
Then $\Ximin(\mathscr X)\in\HC$ by Theorem \ref{thm:FD2HC}.
First we shall give another realization of $\Ximin(\mathscr X)$
which is useful to deduce some properties.
Let $\sigma$ denote the action of\/ $\mathbf H$ on $\mathscr X$.
Since $\mathscr X$ has finite dimension, $\sigma$ can be integrated to
the action of the simply connected Lie group $A$:
\[
A \ni a\longmapsto a^\sigma:=\exp \sigma(\log a)\in\End_\CC\mathscr X.
\]
Let $\Ind_{MAN}^G(\mathscr X)$ be as in Definition \ref{defn:BG}.
Let $\gammaind:\Ind_{MAN}^G(\mathscr X)\to\mathscr X$ be the linear map defined by $F(g)\mapsto F(1)$.
(Note this is different from
the map $\gamma_{\Ind(\mathscr X)}:\Ind_{MAN}^G(\mathscr X)\to\Ind_{S(\Liea_\CC)}^{\mathbf H}(\mathscr X)$ in Definition \ref{defn:restInd}.)
One easily observes this satisfies \eqref{cond:rest3}.
For each $V\in\Km$ we have the natural identification
\begin{equation}\label{eq:Indev}
\begin{gathered}
\Hom_K(V, \Ind_{MAN}^G(\mathscr X))
\simarrow
\Hom_\CC(V^M,\mathscr X);\\
\Phi\longmapsto
\bigl(\varphi:
V^M\hookrightarrow
V
\xrightarrow{\Phi} \Ind_{MAN}^G(\mathscr X)
\xrightarrow{\gammaind} \mathscr X
\bigr)
\end{gathered}
\end{equation}
(cf.~the proof of Theorem \ref{thm:R3B}).
Identify
$V\otimes \Hom_K(V, \Ind_{MAN}^G(\mathscr X))$
with the $V$-isotypic component of $\Ind_{MAN}^G(\mathscr X)$.
We also identify the linear subspace
\[
\bigl\{
\varphi\in \Hom_\CC(V^M,\mathscr X);\,
\varphi\bigl[V^M_\double\bigr]=\{0\}
\bigr\}\subset\Hom_\CC(V^M,\mathscr X)
\]
with $\Hom_\CC(V^M_\single,\mathscr X)$.
Hence we can consider
\begin{align*}
V\otimes\Hom_W(V^M_\single,\mathscr X)
&\subset 
V\otimes\Hom_\CC(V^M_\single,\mathscr X)
\subset 
V\otimes\Hom_\CC(V^M,\mathscr X)
\\
&
\simeq
V\otimes \Hom_K(V, \Ind_{MAN}^G(\mathscr X))\subset
\Ind_{MAN}^G(\mathscr X)_\Kf.
\end{align*}
This induces a natural $(\Lieg_\CC,K)$-homomorphism
\begin{align*}
\mathcal I_G:
\Xiwrad(\mathscr X)
=&\bigoplus_{V\in\Km} \bar P_G(V)\otimes \Hom_W(V^M_\single,\mathscr X)
\longrightarrow
\Ind_{MAN}^G(\mathscr X)_\Kf\,;\\
&D\otimes v\otimes\varphi\longmapsto
D\Phi[v]
\quad\text{for }\left\{\begin{aligned}
&D\in U(\Lieg_\CC),v\in V \text{ and}\\
&\varphi\in\Hom_W(V^M_\single,\mathscr X)\\
&\quad\text{corresponding to }\\
&\qquad\Phi\in \Hom_K(V, \Ind_{MAN}^G(\mathscr X)).
\end{aligned}\right.
\end{align*}
We put $\Xiind(\mathscr X)=\Image \mathcal I_G$.
Note that $\Xiind(\mathscr X)$ is the submodule of $\Ind_{MAN}^G(\mathscr X)$
generated by
\[
\bigcup_{V\in\Km }V\otimes\Hom_W(V^M_\single,\mathscr X).
\]
\begin{thm}\label{thm:minind}
For any $\mathscr X\in\FD$,
$(\Xiind(\mathscr X),\mathscr X)$ is a radial pair and the linear map
$\gammaind$ is its radial restriction satisfying \eqref{cond:rest3}.
Furthermore $(\Xiind(\mathscr X),\mathscr X) \simeq (\Ximin(\mathscr X),\mathscr X)$
as a radial pair.
\end{thm}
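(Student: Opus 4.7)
The plan is to construct a $(\Lieg_\CC,K)$-module isomorphism $\mathcal J:\Ximin(\mathscr X)\simarrow\Xiind(\mathscr X)$ intertwining $\gammamin$ with $\gammaind$. Transporting the radial-pair structure of $(\Ximin(\mathscr X),\mathscr X)$ along $\mathcal J$ then simultaneously establishes all four assertions of the theorem, and \eqref{cond:rest3} for $\gammaind$ is inherited from \eqref{cond:rest3} for $\gammamin$, which was noted in Theorem~\ref{thm:Ximin}.

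First I would verify that $\gammaind:F\mapsto F(1)$ satisfies \eqref{cond:rest3}. Using the defining transformation $F(gman)=a^{-\sigma-\rho}F(g)$: for $H\in\Liea_\CC$, the element $\exp(-tH)$ is itself its own Iwasawa $a$-component, which gives $(HF)(1)=(\sigma(H)+\rho(H))F(1)$; for $X\in\Lien_\CC$, $\exp(-tX)\in N$, so $F(\exp(-tX))=F(1)$ and $(XF)(1)=0$; for $Y\in\Liem_\CC$, $\exp(-tY)\in M$, so $F(\exp(-tY))=F(1)$ and $(YF)(1)=0$. Hence \eqref{eq:gammaH}--\eqref{eq:gammam} hold.

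Next, the $(\Lieg_\CC,K)$-surjection $\iota:\Xiwrad(\mathscr X)\twoheadrightarrow\Xiind(\mathscr X)$ already defined in the excerpt satisfies $\gammaind\circ\iota=\gamma_\wrad$, as one checks on generators $D\otimes v\otimes\varphi$ using the previous paragraph together with the $\rho$-shift in the identification \eqref{eq:PHVid}. Because $\gamma_\wrad$ annihilates $\mathscr N_\rad(\mathscr X)$ (noted right after the definition of $\Xirad$) and $\mathscr N_\rad(\mathscr X)$ is $(\Lieg_\CC,K)$-stable, for every $x\in\mathscr N_\rad(\mathscr X)$ and $D\in U(\Lieg_\CC)$ we have $(D\,\iota(x))(1)=\gammaind(\iota(Dx))=\gamma_\wrad(Dx)=0$, so every Taylor coefficient of $\iota(x)$ at $1$ vanishes. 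Since $\iota(x)\in\Ind_{MAN}^G(\mathscr X)_\Kf$ is real-analytic on $G$ (as a $K$-finite vector in an admissible smooth $G$-module), this forces $\iota(x)=0$. Hence $\iota$ descends to $\bar\iota:\Xirad(\mathscr X)\to\Xiind(\mathscr X)$ with $\gammaind\circ\bar\iota=\gamma_\rad$; repeating the analyticity argument for $\Nmin(\mathscr X)$, which is $(\Lieg_\CC,K)$-stable by Proposition~\ref{prop:XinaturalGen}\,(i), then produces a surjection $\mathcal J:\Ximin(\mathscr X)\twoheadrightarrow\Xiind(\mathscr X)$ with $\gammaind\circ\mathcal J=\gammamin$.

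For the injectivity of $\mathcal J$, the kernel $\ker\mathcal J$ is a $(\Lieg_\CC,K)$-submodule of $\Ximin(\mathscr X)$ annihilated by $\gammamin$. For any $V\in\Km$ and $\Phi\in\Hom_K(V,\ker\mathcal J)$, the vanishing $\gammamin\circ\Phi=0$ forces both $\Phi\in\Hom_K^\ttt(V,\Ximin(\mathscr X))$ and $\tilde\Gamma^V_{\Ximin(\mathscr X)}(\Phi)=0$, so the bijection in \eqref{cond:Ch} for $(\Ximin(\mathscr X),\mathscr X)\in\Crad$ yields $\Phi=0$. Since all $K$-types of $\Ximin(\mathscr X)$ lie in $\Km$ by the axioms of $\CCh$, we conclude $\ker\mathcal J=0$ and $\mathcal J$ is an isomorphism; transporting the radial-pair structure along $\mathcal J$ proves the theorem. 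The main obstacle is the analyticity step in the preceding paragraph: it depends on real-analyticity of $K$-finite vectors in $\Ind_{MAN}^G(\mathscr X)$ and on sufficient connectedness of $G$, both implicit in the paper's standing hypotheses.
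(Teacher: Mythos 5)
Your proof is correct but routes through a genuinely different argument than the paper's. The paper directly equips $(\Xiind(\mathscr X),\mathscr X)$ with a radial-pair structure: it sets $\Hom_K^\ttt$ and $\tGammaind^V$ by \eqref{cond:indrest1}--\eqref{cond:indrest2}, shows the bijection \eqref{cond:Ch} using the surjectivity of $\mathcal I_G\circ\cdot$ together with $\gamma_\wrad=\gammaind\circ\mathcal I_G$, verifies \eqref{cond:rest1}--\eqref{cond:rest3} for $\gammaind$, concludes $(\Xiind(\mathscr X),\mathscr X)\in\Crad$ via Lemma~\ref{lem:rest3}, observes $\Ximin_{(\Xiind(\mathscr X),\mathscr X)}(\mathscr X)=\Xiind(\mathscr X)$, and finally invokes the universal property of $\Ximin$ (Theorem~\ref{thm:Ximin}, with $\mathcal I_{\mathbf H}=\id$) to produce the isomorphism $(\Ximin(\mathscr X),\mathscr X)\simeq(\Xiind(\mathscr X),\mathscr X)$. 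You instead build the isomorphism from the top down: you show the surjection $\Xiwrad(\mathscr X)\to\Ind_{MAN}^G(\mathscr X)_\Kf$ kills $\mathscr N_\rad(\mathscr X)$ and then $\Nmin(\mathscr X)$ by a real-analyticity argument (all $\ell(U(\Lieg_\CC))$-derivatives of the image vanish at $1$ and hence, by analyticity of $K$-finite vectors of the smooth induced module and the decomposition $G=G_0K$, the function vanishes identically), and then prove injectivity of the resulting map using $\gammaind\circ\mathcal J=\gammamin$ together with the injectivity of $\Gamma^V_{\mathcal M}$ from Remark~\ref{rem:RadR}. Both arguments work; the trade-off is that yours is shorter on category-theoretic bookkeeping but imports two nontrivial external facts (elliptic regularity for $K$-finite vectors in $\Ind_{MAN}^G(\mathscr X)$ and the fact $G=G_0K$, plus $K$-stability of $\Nmin$ to propagate the vanishing), whereas the paper's argument stays entirely inside the already-established machinery of $\Crad$, $\Xiwrad$, $\Ximin$ and Lemma~\ref{lem:rest3}, which is more robust and matches the design of the section. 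You should make the appeal to real-analyticity and the reduction $G=G_0K$ explicit rather than flagging them as potential obstacles; as the paper's standing hypotheses guarantee $G=NAK$ with $N,A$ connected and $\mathscr X$ finite-dimensional, both facts do hold, so your argument is sound once those two sentences are written out.
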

\begin{proof}
First we assert $\gamma_\wrad : \Xiwrad(\mathscr X)\to \mathscr X$
coincides with $\gammaind\circ\mathcal I_G$.
Indeed $\Xiwrad(\mathscr X)$ is spanned by
\[
\bigl\{Df(\lambda)\otimes v\otimes \varphi ;\, D\in U(\Lien_\CC+\Liea_\CC),
v\in V, \varphi\in \Hom_W(V^M_\single,\mathscr X)\bigr\}
\]
over $\CC$ and we have
\begin{align*}
\gammaind\bigl(\mathcal I_G(D\otimes v\otimes \varphi)\bigr)
&=\gammaind(D\Phi[v])
\quad\text{with }\Phi\in \Hom_K(V, \Ind_{MAN}^G(\mathscr X))\\
&\phantom{=\gammaind((Df(\lambda)\Phi[v])
\quad\text{with}}\qquad
\text{corresponding to }\varphi\\
&=\gamma(D)(\Phi[v](1))\\
&=\gamma(D)\varphi\bigl[p^V(v)\bigr]\\
&=\gamma_\wrad(D\otimes v\otimes \varphi)
\end{align*}
where $p^V$ denotes the orthogonal projection $V\to V^M$.

Now for $V\in\Km$ put
\begin{equation}\label{cond:indrest1}
\Hom_K^{\ttt}(V, \Xiind(\mathscr X))
=\bigl\{
\Phi\in \Hom_K(V, \Xiind(\mathscr X));\,
(\gammaind\circ\Phi)\bigl[ V^M_\double\bigr]=\{0\}
\bigr\}
\end{equation}
and define the linear map 
$\tGammaind^V : 
\Hom_K(V,\Xiind(\mathscr X))\to
\Hom_\CC(V^M_\single,\mathscr X)$ by
\begin{equation}\label{cond:indrest2}
\Phi\longmapsto
\bigl(
V^M_\single\hookrightarrow
V
\xrightarrow{\Phi} \Xiind(\mathscr X)
\xrightarrow{\gammaind} \mathscr X
\bigr).
\end{equation}
Let us prove $(\Xiind(\mathscr X),\mathscr X)\in\CCh$ by these data.
First it follows from the first assertion proved above
and \eqref{cond:rest2} for $\gamma_\wrad$
that
$\tilde\Gamma_\wrad^V=\tGammaind^V\circ(\mathcal I_G\circ\cdot)$.
Hence by the surjectivity of\/ $\mathcal I_G\circ\cdot : \Hom_K(V,\Xiwrad(\mathscr X)) \to \Hom_K(V,\Xiind(\mathscr X))$ we have
\[
\Image \tGammaind^V=
\Image \tilde\Gamma_\wrad^V=\Hom_W(V^M_\single,\mathscr X).
\]
Hence the restriction of \eqref{eq:Indev}
to $\Hom_K^{\ttt}(V, \Xiind(\mathscr X))$ reduces to
\begin{equation}\label{eq:Gamma22}
\tGammaind^V : \Hom_K^{\ttt}(V, \Xiind(\mathscr X))
\simarrow
\Hom_W(V^M_\single,\mathscr X).
\end{equation}
Thus $(\Xiind(\mathscr X),\mathscr X)\in\CCh$.

Now by \eqref{cond:indrest1} and \eqref{cond:indrest2},
$\gammaind$ satisfies \eqref{cond:rest1} and \eqref{cond:rest2}
in addition to \eqref{cond:rest3}.
Hence it follows from Lemma \ref{lem:rest3} that
$(\Xiind(\mathscr X),\mathscr X)\in\Crad$.
Finally since it is clear by \eqref{eq:Gamma22} and the definition of $\Xiind(\mathscr X)$ that
$\Ximin_{(\Xiind(\mathscr X),\mathscr X)}(\mathscr X)=\Xiind(\mathscr X)$,
it follows from Theorem \ref{thm:Ximin} that $(\Xiind(\mathscr X),\mathscr X) \simeq (\Ximin(\mathscr X),\mathscr X)$.
\end{proof}
From this realization we can deduce a double induction type property
of $\Ximin$, which however we hope to discuss in a subsequent paper.
Another application is lifting of a sesquilinear pairing in $\FD$
by $\Ximin$.
Suppose $\mathscr X\in\FD$.
Let $\mathscr X^\star$
be the linear space of antilinear functionals on $\mathscr X$
and $(\cdot,\cdot)_{\mathscr X}$
the canonical sesquilinear form 
on $\mathscr X^\star\times \mathscr X$.
Then $\mathscr X^\star$ is naturally an $\mathbf H$-module by
\[
(hx^\star, x)_{\mathscr X}
=(x^\star, h^\star x)_{\mathscr X}
\quad\text{for }h\in \mathbf H,
x^\star\in\mathscr X^\star\text{ and }x\in\mathscr X.
\]
In particular
$\Liea_\CC\subset\mathbf H$ acts on $\mathscr X^\star$.
But we strongly remark that this action is different from
the $\Liea_\CC$-action defined in the beginning of \S\ref{sec:sps}.
To distinguish them, we use the symbol $\mathscr X^\star_{\Liea_\CC}$
which stands for the linear space $\mathscr X^\star$ with the $\Liea_\CC$-module structure given by
\[
(\xi x^\star, x)_{\mathscr X}
=-(x^\star, \xi x)_{\mathscr X}
\quad\text{for }\xi\in \Liea,
x^\star\in\mathscr X^\star_{\Liea_\CC}\text{ and }x\in\mathscr X.
\]
Let $\threeset{\mathscr X}{\cdot,\cdot}{\mathbf H}{\mathscr X^\star_{\Liea_\CC}}$
be the sesquilinear form on $\Ind_{S(\Liea_\CC)}^{\mathbf H}(\mathscr X)\times \Ind_{S(\Liea_\CC)}^{\mathbf H}(\mathscr X^\star_{\Liea_\CC})$ defined in Definition \ref{defn:BH}
and let us consider the injective $\mathbf H$-homomorphism
\begin{equation*}
\iota:\,
\mathscr X\ni x\longmapsto
\bigl(
\mathbf H\ni h \longmapsto
\trans h x \in \mathscr X
\bigr) \in \Ind_{S(\Liea_\CC)}^{\mathbf H}(\mathscr X).
\end{equation*}
Then by Proposition \ref{prop:sesquiHinv}
there exists a surjective $\mathbf H$-homomorphism
$\iota^\star: \Ind_{S(\Liea_\CC)}^{\mathbf H}(\mathscr X^\star_{\Liea_\CC}) \to \mathscr X^\star$ such that
\begin{equation}\label{eq:iotaadj}
\threeset{\mathscr X}{\iota(x),F}{\mathbf H}{\mathscr X^\star_{\Liea_\CC}}
=
(x,\iota^\star(F))_{\mathscr X^\star}
\quad
\text{for }x\in\mathscr X\text{ and }F\in\Ind_{S(\Liea_\CC)}^{\mathbf H}(\mathscr X^\star_{\Liea_\CC})
\end{equation}
where $(x,x^\star)_{\mathscr X^\star}=\overline{(x^\star,x)}_{\mathscr X}$.
Applying $\Xiind$ to these morphisms we obtain
\begin{gather*}
\Xiind(\iota):\,\Xiind(\mathscr X)\hookrightarrow
\Xiind\bigl(\Ind_{S(\Liea_\CC)}^{\mathbf H}(\mathscr X)\bigr),\\
\Xiind(\iota^\star):\,\Xiind\bigl(
\Ind_{S(\Liea_\CC)}^{\mathbf H}(\mathscr X^\star_{\Liea_\CC})
\bigr)\twoheadrightarrow \Xiind(\mathscr X^\star).
\end{gather*}
Note $\Xiind(\iota^\star)$ is surjective by Corollary \ref{cor:partialexact}.
On the other hand, the $\Liea_\CC$-homomorphisms
\begin{gather*}
\ev:\,
\Ind_{S(\Liea_\CC)}^{\mathbf H}(\mathscr X)\ni F(h)\longmapsto F(1)\in\mathscr X,\\
\ev':\,
\Ind_{S(\Liea_\CC)}^{\mathbf H}(\mathscr X^\star_{\Liea_\CC})\ni F(h)\longmapsto F(1)\in\mathscr X^\star_{\Liea_\CC}
\end{gather*}
induce $(\Lieg_\CC,K)$-homomorphisms
\begin{gather*}
\beta:\,
\Xiind\bigl(
\Ind_{S(\Liea_\CC)}^{\mathbf H}(\mathscr X)
\bigr)
\hookrightarrow
\Ind_{MAN}^G\bigl(
\Ind_{S(\Liea_\CC)}^{\mathbf H}(\mathscr X)
\bigr)_\Kf
\longrightarrow
\Ind_{MAN}^G(\mathscr X)_\Kf,\\
\beta':\,
\Xiind\bigl(
\Ind_{S(\Liea_\CC)}^{\mathbf H}(\mathscr X^\star_{\Liea_\CC})
\bigr)
\hookrightarrow
\Ind_{MAN}^G\bigl(
\Ind_{S(\Liea_\CC)}^{\mathbf H}(\mathscr X^\star_{\Liea_\CC})
\bigr)_\Kf
\longrightarrow
\Ind_{MAN}^G\bigl(
\mathscr X^\star_{\Liea_\CC}
\bigr)_\Kf.
\end{gather*}
Let $\threeset{\mathscr X}{\cdot,\cdot}{G}{\mathscr X^\star_{\Liea_\CC}}$
be the sesquilinear form on $\Ind_{MAN}^{G}(\mathscr X)\times \Ind_{MAN}^{G}(\mathscr X^\star_{\Liea_\CC})$ defined in Definition \ref{defn:BG}.
Now we define the invariant sesquilinear form
$(\cdot,\cdot)'$
on $\Xiind(\mathscr X) \times \Xiind\bigl(
\Ind_{S(\Liea_\CC)}^{\mathbf H}(\mathscr X^\star_{\Liea_\CC})
\bigr)$ by
\[
(F_1,F_2)'
=
\bthreeset{\mathscr X}{
(\beta \circ \Xiind(\iota))(F_1)
,\,
\beta'(F_2)
}{G}{\mathscr X^\star_{\Liea_\CC}}.
\]
\begin{lem}\label{lem:sesquilift}
It holds that
\begin{equation}\label{eq:lemmaperp}
\begin{aligned}
\bigl\{F_2\in\Xiind\bigl(
\Ind_{S(\Liea_\CC)}^{\mathbf H}(\mathscr X^\star_{\Liea_\CC})
\bigr)&;\,
(F_1,F_2)'=0\text{ for any }F_1\in \Xiind(\mathscr X)
\bigr\}\\
&=\Xiind(\iota^\star)^{-1}
\bigl(\Ximax_{(\Xiind(\mathscr X^\star),\mathscr X^\star)}(\{0\})\bigr)
\supset \Ker \Xiind(\iota^\star).
\end{aligned}
\end{equation}
Hence $(\cdot,\cdot)'$ induces an invariant sesquilinear form $(\cdot,\cdot)$
on $\Xiind(\mathscr X) \times\Xiind(\mathscr X^\star)$.
Furthermore the pair of $(\cdot,\cdot)$ and $(\cdot,\cdot)_{\mathscr X^\star}$
is compatible with restriction in the sense of
{\normalfont Definition \ref{defn:compRM}}.
\end{lem}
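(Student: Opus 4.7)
The plan is to reduce the pairing $(\cdot,\cdot)'$ on $\Xiind(\mathscr X)\times\Xiind(\Ind^{\mathbf H}_{S(\Liea_\CC)}(\mathscr X^\star_{\Liea_\CC}))$ to the canonical sesquilinear form $(\cdot,\cdot)_{\mathscr X^\star}$ on $\mathscr X\times\mathscr X^\star$ via two bridges: the compatibility of the $G$-invariant form with restriction (Proposition \ref{prop:Blcomp}) and the adjoint relation \eqref{eq:iotaadj}. Two preliminary identifications are needed. First, because $\ev\circ\iota=\id_{\mathscr X}$, the composition $\Ind(\ev)\circ\Ind(\iota)$ is the identity on $\Ind^G_{MAN}(\mathscr X)$, so that $\beta\circ\Xiind(\iota):\Xiind(\mathscr X)\to\Ind^G_{MAN}(\mathscr X)_\Kf$ is just the natural inclusion $i$. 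Second, the pair $(i,\iota)$ forms a morphism of $\Crad$ from $(\Xiind(\mathscr X),\mathscr X)$ to $(\Ind^G_{MAN}(\mathscr X)_\Kf,\Ind^{\mathbf H}_{S(\Liea_\CC)}(\mathscr X))$; this is checked on a $K$-generator $\Phi[v']$ (with $\Phi[v'](kan)=a^{-\sigma-\rho}\varphi[p^V(k^{-1}v')]$) via the identity $\Phi[v'](\bar w)=w^{-1}\varphi[p^V(v')]=\iota(\varphi[p^V(v')])(w)$, so that $\gamma_{\Ind(\mathscr X)}\circ i=\iota\circ\gammaind$ on generators, and then extends to all of $\Xiind(\mathscr X)$ by \eqref{eq:gammaIIgamma}.

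Next I would establish that for $F_2\in\Xiind(\Ind^{\mathbf H}_{S(\Liea_\CC)}(\mathscr X^\star_{\Liea_\CC}))$ the two maps $F_2\mapsto\gamma_{\Ind(\mathscr X^\star_{\Liea_\CC})}(\beta'(F_2))$ and $F_2\mapsto F_2(1)$ into $\Ind^{\mathbf H}_{S(\Liea_\CC)}(\mathscr X^\star_{\Liea_\CC})$ coincide. On a $K$-generator $F_2=\Phi[v]$ this is a direct unwinding: $F_2(\bar w)(1)=(w^{-1}\varphi[p^V(v)])(1)=\varphi[p^V(v)](w)=F_2(1)(w)$. Both maps satisfy \eqref{cond:rest3} relative to the $\mathbf H$-structure on the target, so their agreement on $K$-generators propagates through the PBW decomposition $U(\Lieg_\CC)=U(\Lien_\CC)S(\Liea_\CC)U(\Liek_\CC)$ to all of $\Xiind$.

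Combining the above with Proposition \ref{prop:Blcomp} applied on each $K$-isotypic summand and the adjoint relation \eqref{eq:iotaadj}, one arrives at the key identity
\begin{equation*}
(F_1,F_2)'=(\gammaind(F_1),\,\gammaind(\Xiind(\iota^\star)(F_2)))_{\mathscr X^\star}
\end{equation*}
for $F_1\in\Xiind(\mathscr X)$ and $F_2\in\Xiind(\Ind^{\mathbf H}_{S(\Liea_\CC)}(\mathscr X^\star_{\Liea_\CC}))$. Since $\gammaind:\Xiind(\mathscr X)\to\mathscr X$ is surjective (each element of $\mathscr X$ being $\gammaind(\Phi[v])$ for some $W$-equivariant $\varphi:V^M_\single\to\mathscr X$ obtained from a suitable quasi-single-petaled $V$) and $(\cdot,\cdot)_{\mathscr X^\star}$ is non-degenerate, the orthogonality $(F_1,F_2)'=0\ \forall F_1$ is equivalent to $\gammaind(\Xiind(\iota^\star)(F_2))=0$. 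The left-hand side of \eqref{eq:lemmaperp} is $(\Lieg_\CC,K)$-stable by invariance of $(\cdot,\cdot)'$, so $\Xiind(\iota^\star)(\text{LHS})$ is a $(\Lieg_\CC,K)$-submodule of $\Xiind(\mathscr X^\star)$ on which $\gammaind$ vanishes identically; any such submodule $\mathscr Y$ has $\tilde\Gamma^V=0$ on $\Hom_K(V,\mathscr Y)$ for every $V$, hence is contained in $\Ximax_{(\Xiind(\mathscr X^\star),\mathscr X^\star)}(\{0\})$, giving one inclusion in \eqref{eq:lemmaperp}.

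The main obstacle will be the reverse inclusion, which requires verifying that $\gammaind$ vanishes on the whole of $\Ximax(\{0\})$ and not merely on the $V^M_\single$-images controlled by the defining condition $\tilde\Gamma^V=0$. My strategy will be to exploit the fact that $\Xiind(\mathscr X^\star)$ is generated as a $(\Lieg_\CC,K)$-module by $W$-equivariant maps $V^M_\single\to\mathscr X^\star$, so for any submodule $\mathscr Y$ with $\tilde\Gamma^V=0$ on $\Hom_K(V,\mathscr Y)$ the $V^M_\double$-content of $\gammaind\circ\Phi$ is also forced to vanish, using the injectivity of $\Gamma^V$ (Remark \ref{rem:RadR}) together with the $\Lieg_\CC$-transport relations supplied by \eqref{cond:rest3}. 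Once \eqref{eq:lemmaperp} is established, the inclusion $\Ker\Xiind(\iota^\star)\subset$ LHS ensures that $(\cdot,\cdot)'$ descends to a well-defined invariant sesquilinear form $(\cdot,\cdot)$ on $\Xiind(\mathscr X)\times\Xiind(\mathscr X^\star)$, and its compatibility with $(\cdot,\cdot)_{\mathscr X^\star}$ in the sense of Definition \ref{defn:compRM} is then read off directly from the per-$K$-type identity established in the third paragraph.
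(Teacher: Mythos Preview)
Your central ``key identity'' $(F_1,F_2)' = (\gammaind(F_1),\,\gammaind(\Xiind(\iota^\star)(F_2)))_{\mathscr X^\star}$ is false for individual elements. The form $(\cdot,\cdot)'$ is defined by an integral over $K$, and for $F_1=\Phi_1[v]$ with $v\in(V^M)^\perp$ one has $\gammaind(F_1)=\Phi_1[v](1)=\Phi_1[p^V(v)](1)=0$ on the right while the left side need not vanish. Proposition~\ref{prop:Blcomp} only yields a \emph{basis-summed} identity over a full basis of a $K$-type, not a pointwise one; moreover, reducing that sum further to one over $V^M_\single$ and invoking \eqref{eq:iotaadj} requires one of $\Phi_1,\Phi_2$ to lie in $\Hom_K^\ttt$, so that either $(\gammaind\circ\Phi_1)|_{V^M_\double}=0$ or the analogous vanishing holds on the other side and the surviving terms are $W$-equivariant for the $W$-averaging. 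What is actually true is the paper's formula \eqref{eq:compatiprime}: under the hypothesis $\Phi_1\in\Hom_K^\ttt$ or $\Phi_2\in\Hom_K^\ttt$,
\[
\sum_{i=1}^n\bigl(\Phi_1[v_i],\Phi_2[v_i^\star]\bigr)'=\sum_{i=1}^{m'}\bigl(\tGammaind^V(\Phi_1)[v_i],\,\tGammaind'^{V^\star}(\Xiind(\iota^\star)\circ\Phi_2)[v^\star_i]\bigr)_{\mathscr X^\star}.
\]

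Your reverse-inclusion strategy is also broken, and in fact your false pointwise identity would contradict \eqref{eq:lemmaperp} rather than prove it. The definition of $\Ximax_{(\Xiind(\mathscr X^\star),\mathscr X^\star)}(\{0\})$ only forces $\tilde\Gamma^V=(\gammaind\circ\Phi)|_{V^M_\single}$ to vanish; it imposes nothing on $(\gammaind\circ\Phi)|_{V^M_\double}$, so $\gammaind$ does \emph{not} vanish on $\Ximax(\{0\})$ in general. Your appeal to Remark~\ref{rem:RadR} and \eqref{cond:rest3} does not repair this: injectivity of $\Gamma^V$ says only that $\Gamma^V(\Phi)=0\Rightarrow\Phi=0$, which is useless when $\tilde\Gamma^V(\Phi)=0$ but $(\gammaind\circ\Phi)|_{V^M_\double}\ne0$.

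The paper's approach sidesteps both problems by never leaving the $\Hom^\ttt$ regime. One writes every element of $\Hom_K(E,\Xiind(\mathscr X))$ as a sum of compositions $\Phi_1\circ\Psi$ with $\Phi_1\in\Hom_K^\ttt(V,\Xiind(\mathscr X))$ and $\Psi\in\Hom_K(E,P_G(V))$, applies Corollary~\ref{cor:sesqui} to move $\Psi$ across the pairing as $\Psi^\star$, and only then uses \eqref{eq:compatiprime}. This produces a chain of equivalences characterizing $\Phi\in\Hom_K(E^\star,\Xiind(\mathscr X)^\perp)$ directly as $\tGammaind'^{V^\star}(\Xiind(\iota^\star)\circ\Phi\circ\Psi^\star)=0$ for all $V,\Psi$, which is \emph{exactly} the condition $\Xiind(\iota^\star)\circ\Phi\in\Hom_K(E^\star,\Ximax(\{0\}))$. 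Both inclusions of \eqref{eq:lemmaperp} thus fall out simultaneously, with no separate reverse-inclusion argument needed, and the compatibility with restriction is then read off from \eqref{eq:compatiprime} together with the surjectivity of $\Xiind(\iota^\star)$ on the $\Hom_K$ and $\Hom_K^\ttt$ levels.
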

\begin{proof}
Suppose $V\in\Km$
and 
let $\{v_1,\ldots,v_{m'}\}$, $\{v_{m'+1},\ldots,v_m\}$ and $\{v_{m+1},\ldots,v_n\}$
be bases of\/ $V^M_\single$, $V^M_\double$ and $(V^M)^\perp$.
Let $\{v_i^\star\}\subset V^\star$ be as in Definition \ref{defn:compRM}.
Let $\gammaind$, $\gammaind'$, $\gammaind''$ and $\gammaind'''$ be the
canonical restriction maps for
$(\Xiind(\mathscr X),\mathscr X)$,
$(\Xiind(\mathscr X^\star),\mathscr X^\star)$,
$\bigl(\Xiind\bigl(
\Ind_{S(\Liea_\CC)}^{\mathbf H}(\mathscr X)
\bigr),
\Ind_{S(\Liea_\CC)}^{\mathbf H}(\mathscr X)
\bigr)$
and
$\bigl(\Xiind\bigl(
\Ind_{S(\Liea_\CC)}^{\mathbf H}(\mathscr X^\star_{\Liea_\CC})
\bigr),
\Ind_{S(\Liea_\CC)}^{\mathbf H}(\mathscr X^\star_{\Liea_\CC})
\bigr)$ respectively.
Suppose
$\Phi_1\in\Hom_K(V,\Xiind(\mathscr X))$,
$\Phi_2\in\Hom_K\bigl(V^\star,
\Xiind\bigl(
\Ind_{S(\Liea_\CC)}^{\mathbf H}(\mathscr X^\star_{\Liea_\CC})
\bigr)\bigr)$.
Since the diagram
\[
\xymatrix{
\Xiind(\mathscr X)
\ar ^-{\Xiind(\iota)} [r]
\ar _-{\gammaind} [d]
&
\Xiind\bigl(\Ind_{S(\Liea_\CC)}^{\mathbf H}(\mathscr X)\bigr)
\ar ^-{\beta} [r]
\ar _-{\gammaind''} [d]
&
\Ind_{MAN}^G(\mathscr X)_\Kf
\ar ^-{\text{evaluation at }1} [d]\\
\mathscr X
\ar _-{\iota} [r]
&
\Ind_{S(\Liea_\CC)}^{\mathbf H}(\mathscr X)
\ar _-{\ev}[r]
&
\mathscr X
}
\]
commutes,
\[
(\beta \circ \Xiind(\iota)\circ\Phi_1)[v](1)
=(\ev \circ \iota \circ\gammaind\circ\Phi_1)[v]
=(\iota \circ\gammaind\circ\Phi_1)[v](1)
\quad
\text{for }v\in V.
\]
Likewise we have
\[
(\beta' \circ\Phi_2)[v^\star](1)
=(\ev' \circ \gammaind'''\circ\Phi_2)[v^\star]
=(\gammaind'''\circ\Phi_2)[v^\star](1)
\quad
\text{for }v^\star\in V^\star.
\]
Hence 
in a similar way to the proof of Proposition \ref{prop:Blcomp}
we calculate
\begin{align*}
\sum_{i=1}^n\bigl( \Phi_1[v_i],\, \Phi_2[v_i^\star] \bigr)'
&=
\sum_{i=1}^n
\bthreeset{\mathscr X}{
(\beta \circ \Xiind(\iota)\circ\Phi_1)[v_i]
,\,
(\beta'\circ\Phi_2)[v_i^\star]
}{G}{\mathscr X^\star_{\Liea_\CC}}\\
&=
\sum_{i=1}^n
\int_K
\bigl(
(\beta \circ \Xiind(\iota)\circ\Phi_1)[v_i](k)
,\,
(\beta'\circ\Phi_2)[v_i^\star](k)
\bigr)
_{\mathscr X^\star}
\,dk\\
&=
\sum_{i=1}^n
\bigl(
(\beta \circ \Xiind(\iota)\circ\Phi_1)[v_i](1)
,\,
(\beta'\circ\Phi_2)[v_i^\star](1)
\bigr)
_{\mathscr X^\star}\\
&=
\sum_{i=1}^m
\bigl(
(\iota \circ\gammaind\circ\Phi_1)[v_i](1)
,\,
(\gammaind'''\circ\Phi_2)[v^\star_i](1)
\bigr)
_{\mathscr X^\star}\\
&=
\sum_{i=1}^m\frac1{|W|}\sum_{w\in W}
\bigl(
(\iota \circ\gammaind\circ\Phi_1)[w^{-1}v_i](1)
,\,
(\gammaind'''\circ\Phi_2)[w^{-1}v^\star_i](1)
\bigr)
_{\mathscr X^\star}.
\end{align*}
Now suppose $\Phi_1\in\Hom^\ttt_K$ or $\Phi_2\in\Hom^\ttt_K$.
Then
$(\gammaind\circ\Phi_1)\bigr|_{V^M_\double}=0$ or
$(\gammaind'''\circ\Phi_2)\bigr|_{V^M_\double}=0$.
Since $\tGammaind^V(\Phi_1)=(\gammaind\circ\Phi_1)\bigr|_{V^M_\single}\in\Hom_W$
and $(\gammaind'''\circ\Phi_2)\bigr|_{(V^\star)^M_\single}\in\Hom_W$,
the last expression above still equals
\begin{align*}
\sum_{i=1}^{m'}\frac1{|W|}&\sum_{w\in W}
\bigl(
(\iota \circ\gammaind\circ\Phi_1)[w^{-1}v_i](1)
,\,
(\gammaind'''\circ\Phi_2)[w^{-1}v^\star_i](1)
\bigr)
_{\mathscr X^\star}\\
&=
\sum_{i=1}^{m'}\frac1{|W|}\sum_{w\in W}
\bigl(
(\iota \circ\tGammaind^V(\Phi_1))[v_i](w)
,\,
(\gammaind'''\circ\Phi_2)[v^\star_i](w)
\bigr)
_{\mathscr X^\star}\\
&=
\sum_{i=1}^{m'}
\bthreeset{\mathscr X}{
(\iota \circ\tGammaind^V(\Phi_1))[v_i]
,\,
(\gammaind'''\circ\Phi_2)[v^\star_i]
}{\mathbf H}{\mathscr X^\star_{\Liea_\CC}}\\
&=
\sum_{i=1}^{m'}
\bigl(
\tGammaind^V(\Phi_1)[v_i]
,\,
(\iota^\star \circ\gammaind'''\circ\Phi_2)[v^\star_i]
\bigr)
_{\mathscr X^\star}
&(\because \eqref{eq:iotaadj})
\\
&=
\sum_{i=1}^{m'}
\bigl(
\tGammaind^V(\Phi_1)[v_i]
,\,
(\gammaind'\circ\Xiind(\iota^\star) \circ\Phi_2)[v^\star_i]
\bigr)
_{\mathscr X^\star}
\\
&=
\sum_{i=1}^{m'}
\bigl(
\tGammaind^V(\Phi_1)[v_i]
,\,
\tGammaind'^{V^\star}
\bigl(\Xiind(\iota^\star) \circ\Phi_2\bigr)[v^\star_i]
\bigr)
_{\mathscr X^\star}
\end{align*}
where $\tGammaind'^{V^\star}(\Phi):=(\gammaind'\circ\Phi)\bigr|_{(V^\star)^M_\single}$
for $\Phi\in\Hom_K(V^\star, \Xiind(\mathscr X^\star))$.
Thus if $\Phi_1\in\Hom^\ttt_K$ or $\Phi_2\in\Hom^\ttt_K$ then
\begin{equation}\label{eq:compatiprime}
\sum_{i=1}^n\bigl( \Phi_1[v_i],\, \Phi_2[v_i^\star] \bigr)'
=
\sum_{i=1}^{m'}
\bigl(
\tGammaind^V(\Phi_1)[v_i]
,\,
\tGammaind'^{V^\star}
\bigl(\Xiind(\iota^\star) \circ\Phi_2\bigr)[v^\star_i]
\bigr)
_{\mathscr X^\star}.
\end{equation}

Now denote the leftmost part of \eqref{eq:lemmaperp} by $\Xiind(\mathscr X)^\perp$.
Suppose $E\in\Km$
and take bases $\{e_1,\ldots,e_\nu\}\subset E$ and $\{e_j^\star\}\subset E^\star$
as in Proposition \ref{prop:starMor}.
Since $\Hom_K(E,\,\Xiind(\mathscr X))$ is spanned over $\CC$ by elements of the form
\[
\Phi_1\circ\Psi\quad\text{with }
V\in\Km,\Phi_1\in\Hom_{\Lieg_\CC,K}^\ttt(P_G(V),\, \Xiind(\mathscr X))\text{ and }
\Psi\in\Hom_K(E,P_G(V)),
\]
we have for $\Phi\in\Hom_K\bigl(E^\star,
\Xiind\bigl(
\Ind_{S(\Liea_\CC)}^{\mathbf H}(\mathscr X^\star_{\Liea_\CC})
\bigr)\bigr)$
\begin{align*}
&\Phi
\in\Hom_K\bigl(E^\star,\,\Xiind(\mathscr X)^\perp\bigr)\\
&\Longleftrightarrow
\sum_{j=1}^\nu\bigl(
(\Phi_1\circ\Psi)[e_j],\,\Phi[e_j^\star]
\bigr)'=0\quad \forall V, \forall\Phi_1, \forall\Psi\\
&\Longleftrightarrow
\sum_{i=1}^n\bigl(
\Phi_1[v_i],\,(\Phi\circ\Psi^\star)[v_i^\star]
\bigr)'=0\quad \forall V, \forall\Phi_1, \forall\Psi
\qquad(\because \text{Corollary \ref{cor:sesqui}})\\
&\Longleftrightarrow
\sum_{i=1}^{m'}
\bigl(
\tGammaind^V(\Phi_1)[v_i]
,\,
\tGammaind'^{V^\star}
\bigl(\Xiind(\iota^\star) \circ\Phi\circ\Psi^\star\bigr)[v^\star_i]
\bigr)_{\mathscr X^\star}=0\quad \forall V, \forall\Phi_1, \forall\Psi
\quad(\because \eqref{eq:compatiprime})\\
&\Longleftrightarrow
\sum_{i=1}^{m'}
\bigl(
\varphi_1[v_i]
,\,
\tGammaind'^{V^\star}
\bigl(\Xiind(\iota^\star) \circ\Phi\circ\Psi^\star\bigr)[v^\star_i]
\bigr)_{\mathscr X^\star}=0\quad \forall V, \forall\Psi, \forall\varphi_1\in 
\Hom_W(V^M_\single, \mathscr X)\\
&\Longleftrightarrow
\tGammaind'^{V^\star}
\bigl(\Xiind(\iota^\star) \circ\Phi\circ\Psi^\star\bigr)=0\quad \forall V, \forall\Psi\\
&\Longleftrightarrow
\tGammaind'^{V^\star}(\Phi_2)=0\quad
\forall V, \forall \Phi_2\in
\Hom_K\bigl(V^\star,\, U(\Lieg_\CC)(\Xiind(\iota^\star)\circ\Phi)[E^\star]\bigr)\\
&\Longleftrightarrow
U(\Lieg_\CC)(\Xiind(\iota^\star)\circ\Phi)[E^\star] \subset 
\Ximax_{(\Xiind(\mathscr X^\star),\mathscr X^\star)}(\{0\})\\
&\Longleftrightarrow
\Xiind(\iota^\star)\circ \Phi
\in\Hom_K\bigl(E^\star,\,
\Ximax_{(\Xiind(\mathscr X^\star),\mathscr X^\star)}(\{0\})
\bigr).
\end{align*}
Thus we get \eqref{eq:lemmaperp} and the induced form $(\cdot,\cdot)$
on $\Xiind(\mathscr X) \times\Xiind(\mathscr X^\star)$.

The compatibility with restriction of the pair of $(\cdot,\cdot)$
and $(\cdot,\cdot)_{\mathscr X^\star}$ follows from
\eqref{eq:compatiprime}
and the surjectivity of the following maps:
\begin{gather*}
\Hom_K\bigl(V^\star,
\Xiind\bigl(
\Ind_{S(\Liea_\CC)}^{\mathbf H}(\mathscr X^\star_{\Liea_\CC})
\bigr)\bigr)
\xrightarrow{\Xiind(\iota^\star)\circ\cdot}
\Hom_K(V^\star,
\Xiind(\mathscr X^\star)),\\
\Hom_K^\ttt\bigl(V^\star,
\Xiind\bigl(
\Ind_{S(\Liea_\CC)}^{\mathbf H}(\mathscr X^\star_{\Liea_\CC})
\bigr)\bigr)
\xrightarrow{\Xiind(\iota^\star)\circ\cdot}
\Hom_K^\ttt(V^\star,
\Xiind(\mathscr X^\star)).\qedhere
\end{gather*}
\end{proof}
\begin{thm}\label{thm:sesquilift}
Suppose $\mathscr X_1,\mathscr X_2\in\FD$ and 
let $(\cdot,\cdot)^{\mathbf H}$ be an invariant sesquilinear form
on $\mathscr X_1\times\mathscr X_2$.
Then there exists a unique invariant sesquilinear form $(\cdot,\cdot)^{G}$
on $\Ximin(\mathscr X_1)\times\Ximin(\mathscr X_2)$
such that the pair $(\cdot,\cdot)^{G}$ and $(\cdot,\cdot)^{\mathbf H}$
is compatible with restriction in the sense of
{\normalfont Definition \ref{defn:compRM}}.
\end{thm}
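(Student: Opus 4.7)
The plan is to reduce the general case to Lemma \ref{lem:sesquilift} via the standard dictionary between sesquilinear pairings and $\mathbf H$-homomorphisms into the antidual. Given $(\cdot,\cdot)^{\mathbf H}$ on $\mathscr X_1\times\mathscr X_2$, I would first encode it as the $\mathbf H$-homomorphism $\phi:\mathscr X_2\to\mathscr X_1^\star$ defined by $\phi(x_2)(x_1)=\overline{(x_1,x_2)^{\mathbf H}}$. Sesquilinearity makes $\phi$ complex linear in $x_2$, and the invariance relation $(hx_1,x_2)^{\mathbf H}=(x_1,h^\star x_2)^{\mathbf H}$ translates immediately into $\phi(hx_2)=h\,\phi(x_2)$. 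Applying the functor $\Ximin$ then produces a $(\Lieg_\CC,K)$-homomorphism $\Ximin(\phi):\Ximin(\mathscr X_2)\to\Ximin(\mathscr X_1^\star)$, and $(\Ximin(\phi),\phi)$ is automatically a morphism in $\Crad$ by Theorem \ref{thm:Ximin}.

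Next, Lemma \ref{lem:sesquilift} applied to $\mathscr X=\mathscr X_1$ supplies an invariant sesquilinear form $(\cdot,\cdot)^{(1)}$ on $\Ximin(\mathscr X_1)\times\Ximin(\mathscr X_1^\star)$ whose pair with $(\cdot,\cdot)_{\mathscr X_1^\star}$ is compatible with restriction. I would then define the desired form by
\[
(y_1,y_2)^{G}
:=\bigl(y_1,\,\Ximin(\phi)(y_2)\bigr)^{\!(1)}
\qquad\bigl(y_1\in\Ximin(\mathscr X_1),\ y_2\in\Ximin(\mathscr X_2)\bigr).
\]
Invariance is immediate since $\Ximin(\phi)$ is a $(\Lieg_\CC,K)$-homomorphism and $(\cdot,\cdot)^{(1)}$ is itself invariant. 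For the compatibility with restriction, I would fix $V\in\Kqsp$, bases $\{v_i\},\{v_i^\star\}$ as in Definition \ref{defn:compRM}, and $(\Phi_1,\Phi_2)\in\Hom_K^\ttt(V,\Ximin(\mathscr X_1))\times\Hom_K(V^\star,\Ximin(\mathscr X_2))\cup\Hom_K(V,\Ximin(\mathscr X_1))\times\Hom_K^\ttt(V^\star,\Ximin(\mathscr X_2))$. Using condition \eqref{cond:Ch2} for the morphism $(\Ximin(\phi),\phi)$ to ensure that $\Ximin(\phi)\circ\Phi_2$ stays in $\Hom_K^\ttt$ whenever $\Phi_2$ does, Lemma \ref{lem:sesquilift} converts $\sum_i(\Phi_1[v_i],\Ximin(\phi)\!\circ\!\Phi_2[v_i^\star])^{(1)}$ into $\sum_{i=1}^{m'}\bigl(\tilde\Gamma^V(\Phi_1)[v_i],\,\tilde\Gamma^{V^\star}(\Ximin(\phi)\!\circ\!\Phi_2)[v_i^\star]\bigr)_{\mathscr X_1^\star}$. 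Condition \eqref{cond:Ch1} then replaces $\tilde\Gamma^{V^\star}(\Ximin(\phi)\!\circ\!\Phi_2)$ by $\phi\circ\tilde\Gamma^{V^\star}(\Phi_2)$, and by construction $\bigl(x_1,\phi(x_2)\bigr)_{\mathscr X_1^\star}=(x_1,x_2)^{\mathbf H}$, producing precisely the desired identity.

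For uniqueness I would invoke Corollary \ref{cor:compRM} with $\mathcal M^1=(\Ximin(\mathscr X_1),\mathscr X_1)$ and $\mathcal M^2=(\Ximin(\mathscr X_2),\mathscr X_2)$; its hypothesis $\Ximin_{\mathcal M^1}(\mathscr X_1)=\Ximin(\mathscr X_1)$ was established inside the proof of Theorem \ref{thm:Ximin} via \eqref{eq:radgen22}. The main obstacle is not any single step but the careful bookkeeping of the three involutions ($\overline{\cdot}$, $\cdot^\star$ on $\mathbf H$, and the conjugate-linearity built into $\mathscr X_1^\star$) when transporting the pairing through $\phi$ and through $\Ximin$; one has in particular to verify that the identification $(x_1,x_2)^{\mathbf H}=(x_1,\phi(x_2))_{\mathscr X_1^\star}$ is consistent with the sign conventions fixed in Definitions \ref{defn:sesquiHinv} and \ref{defn:BH}. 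Once this bookkeeping is handled, the theorem is just a composition of functorial statements already at our disposal.
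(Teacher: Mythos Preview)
Your proposal is correct and follows essentially the same approach as the paper: encode $(\cdot,\cdot)^{\mathbf H}$ as an $\mathbf H$-homomorphism $\mathscr X_2\to\mathscr X_1^\star$, apply $\Ximin$, and pull back the canonical lifted form on $\Ximin(\mathscr X_1)\times\Ximin(\mathscr X_1^\star)$ supplied by Lemma~\ref{lem:sesquilift}, with uniqueness by Corollary~\ref{cor:compRM}. The paper's proof is terser (it simply asserts the compatibility is ``clear''), while you spell out the verification via \eqref{cond:Ch1} and \eqref{cond:Ch2}; your $\phi$ coincides with the paper's $\mathcal I_{\mathbf H}$.
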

\begin{proof}
Let $\mathscr X_1^\star\in\FD$ be as before.
Then the canonical sesquilinear form $(\cdot,\cdot)_{\mathscr X_1^\star}$
on $\mathscr X_1\times\mathscr X^\star_1$ can be lifted to
a sesquilinear form $(\cdot,\cdot)$ on $\Ximin(\mathscr X_1)\times \Ximin(\mathscr X_1^\star)$
by Lemma \ref{lem:sesquilift}.
Now there exists a unique $\mathbf H$-homomorphism
$\mathcal I_{\mathbf H}:\mathscr X_2\to \mathscr X_1^\star$ such that
\[
(x_1,x_2)^{\mathbf H}=(x_1,\mathcal I_{\mathbf H}(x_2))_{\mathscr X_1^\star}
\quad\text{for }x_1\in\mathscr X_1\text{ and }x_2\in\mathscr X_2.
\]
Using $\Ximin(\mathcal I_{\mathbf H}):\Ximin(\mathscr X_2)\to\Ximin(\mathscr X_1^\star)$ we define
\[
(y_1,y_2)^{G}=(y_1,\,\Ximin(\mathcal I_{\mathbf H})(y_2))
\quad\text{for }y_1\in\Ximin(\mathscr X_1)\text{ and }y_2\in\Ximin(\mathscr X_2).
\]
This is clearly an invariant sesquilinear form on $\Ximin(\mathscr X_1)\times\Ximin(\mathscr X_2)$
which, together with $(\cdot,\cdot)^{\mathbf H}$,
is compatible with restriction.
Such a sesquilinear form is unique by Corollary \ref{cor:compRM}.
\end{proof}

\section{The functor $\Xi$}\label{sec:Xi}
If $\mathcal M=(\mathcal M_G,\mathcal M_{\mathbf H})\in\Crad$ and
$\mathscr Y\in(\Lieg_\CC,K)\Mod$ are given then we can make a new radial pair
$\mathcal M'=(\mathcal M_G\times\mathscr Y,\mathcal M_{\mathbf H})$ 
by putting for each $V\in\Km$
\begin{gather*}
\Hom_K^\ttt(V,\mathcal M_G\times\mathscr Y)=\Hom_K^\ttt(V,\mathcal M_G)\times\{0\},\\
\tilde\Gamma^V_{\mathcal M'}=\tilde\Gamma^V_{\mathcal M}\circ
\bigl(\Hom_K(V,\mathcal M_G\times\mathscr Y)\xrightarrow{\text{projection}}
\Hom_K(V,\mathcal M_G)\bigr).
\end{gather*}
Here we note $\Ximin_{\mathcal M'}(\mathcal M_{\mathbf H})\subset \mathcal M_G\times\{0\}$
and $\Ximax_{\mathcal M'}(\{0\})\supset\{0\}\times\mathscr Y$.
This example shows a radial pair $(\mathcal M_G,\mathcal M_{\mathbf H})$
may contain a redundant part which gives no link between $\mathcal M_G$
and $\mathcal M_{\mathbf H}$.
\begin{defn}
We say a radial pair $\mathcal M=(\mathcal M_G,\mathcal M_{\mathbf H})$ is \emph{reduced}
if $\Ximin_{\mathcal M}(\mathcal M_{\mathbf H})=\mathcal M_G$
and $\Ximax_{\mathcal M}(\{0\})=\{0\}$.
\end{defn}
\begin{prop}
For any $\mathbf H$-submodule $\mathscr X$ of $\mathscr A(A)$,
$(\Ximin_0(\mathscr X), \mathscr X)$ is a reduced radial pair.
Hence by {\normalfont Theorem \ref{thm:Xpair}}
$\bigl(X_G(\lambda), X_{\mathbf H}(\lambda)\bigr)$
is reduced for any $\lambda\in\Liea_\CC^*$.
\end{prop}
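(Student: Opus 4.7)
My plan is to verify the two defining conditions for reducedness separately. The first, $\Ximin_{(\Ximin_0(\mathscr X),\mathscr X)}(\mathscr X) = \Ximin_0(\mathscr X)$, is formal. For the second, $\Ximax_{(\Ximin_0(\mathscr X),\mathscr X)}(\{0\}) = \{0\}$, I would reduce to showing $\Ximax_{(\mathscr A(G/K)_\Kf, \mathscr A(A))}(\{0\}) = \{0\}$ for the ambient radial pair (Corollary \ref{cor:AApair}).

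For the first condition, the subpair $(\Ximin_0(\mathscr X),\mathscr X)$ inherits $\gamma_0$ as radial restriction and, by Theorem \ref{thm:multcor} (ii), satisfies
\[
\Hom^{\ttt}_K(V, \Ximin_0(\mathscr X)) = \Hom^{\ttt}_K(V, \mathscr A(G/K)_\Kf) \cap \Hom_K(V, \Ximin_0(\mathscr X))
\]
with the $\tilde\Gamma^V$-maps agreeing on the intersection. Thus for each $\varphi \in \Hom_W(V^M_\single, \mathscr X)$, its $\tilde\Gamma^V$-lift in the subpair coincides with the ambient $\tilde\Gamma^V_0$-lift in $\mathscr A(G/K)_\Kf$. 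Applying Definition \ref{defn:Ximin} to the subpair, $\Ximin_{(\Ximin_0(\mathscr X),\mathscr X)}(\mathscr X)$ is the $U(\Lieg_\CC)$-span of these lifts, which equals $\Ximin_0(\mathscr X)$ by the definition of $\Ximin_0$ within the ambient pair.

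For the second condition, the same agreement of $\tilde\Gamma^V$-maps gives the intersection formula
\[
\Ximax_{(\Ximin_0(\mathscr X),\mathscr X)}(\{0\}) = \Ximax_{(\mathscr A(G/K)_\Kf, \mathscr A(A))}(\{0\}) \cap \Ximin_0(\mathscr X),
\]
so it suffices to show the ambient $\Ximax$ vanishes. Let $\mathscr Y \subset \mathscr A(G/K)_\Kf$ be any nonzero $(\Lieg_\CC, K)$-submodule with $\tilde\Gamma^V_0(\Hom_K(V, \mathscr Y)) = \{0\}$ for every $V \in \Km$; I aim for a contradiction. Viewing the inclusion $\mathscr Y \hookrightarrow C^\infty(G/K)_\Kf = (\Ind_K^G \CC_\triv)_\Kf$ as a nonzero $(\Lieg_\CC, K)$-homomorphism, Frobenius reciprocity yields a nonzero $K$-linear map $\mathscr Y \to \CC_\triv$, and complete reducibility of $K$-representations forces $\mathscr Y^K \ne \{0\}$. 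Pick $f \in \mathscr Y^K \setminus \{0\}$ and let $\Phi \in \Hom_K(\CC_\triv, \mathscr Y)$ satisfy $\Phi[1] = f$. Since $V = \CC_\triv \in \Ksp$ with $V^M_\single = \CC_\triv$, the hypothesis forces $\gamma_0(f) = f|_A = 0$. But $f$ is both left- and right-$K$-invariant, so the Cartan decomposition $G = KAK$ gives $f(kak') = f(a) = 0$ for all $(k, a, k') \in K \times A \times K$, whence $f = 0$, contradicting $f \ne 0$.

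The main obstacle is identifying the reduction to the ambient pair and combining Frobenius reciprocity with the Cartan decomposition; once in hand, the last assertion about $(X_G(\lambda), X_{\mathbf H}(\lambda))$ follows immediately by applying the main statement to the $\mathbf H$-submodule $X_{\mathbf H}(\lambda) \subset \mathscr A(A, \lambda) \subset \mathscr A(A)$ and invoking the identification $\Ximin_0(X_{\mathbf H}(\lambda)) = X_G(\lambda)$ from Theorem \ref{thm:Xpair}.
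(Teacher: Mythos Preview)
Your overall structure matches the paper's: verify the two reducedness conditions separately, with the second reduced to showing that any nonzero $(\Lieg_\CC,K)$-submodule $\mathscr Y\subset\mathscr A(G/K)_\Kf$ has $\mathscr Y^K\ne\{0\}$, then contradict via the $KAK$ decomposition. The first condition and the reduction step are fine (the paper handles both more compactly via \eqref{eq:R3Mimage} and works directly with $\mathscr N=\Ximax_{(\Ximin_0(\mathscr X),\mathscr X)}(\{0\})$ rather than passing to the ambient pair, but the content is the same). Your concluding $KAK$ argument is exactly what the paper's ``contrary to \eqref{eq:CN0}'' encodes.

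The gap is your Frobenius step. The map you obtain is $\ev_e|_{\mathscr Y}$ (evaluation at $eK$), which is indeed $K$-equivariant since functions in $C^\infty(G/K)$ are right $K$-invariant; but your claim that it is \emph{nonzero} is precisely what needs proof. There is no Frobenius reciprocity isomorphism between $\Hom_{(\Lieg_\CC,K)}(\mathscr Y,C^\infty(G/K)_\Kf)$ and $\Hom_K(\mathscr Y,\CC_\triv)$ for $(\Lieg_\CC,K)$-modules---that form of Frobenius is for $G$-modules---and on $C^\infty(G/K)_\Kf$ the map $\Phi\mapsto\ev_e\circ\Phi$ is not injective (a smooth function can have every derivative at $e$ vanish without being zero). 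What makes $\ev_e|_{\mathscr Y}$ nonzero is precisely analyticity: if $0\ne f\in\mathscr Y$ and $(\ell(D)f)(e)=0$ for all $D\in U(\Lieg_\CC)$, then every derivative of $f$ at $e$ vanishes, forcing $f=0$. The paper packages exactly this as the non-degeneracy of $(\cdot,\cdot)^G_r$ on $\mathscr A(G/K)_\Kf\times P_G(\CC_\triv)$ (see \eqref{eq:sesquiGr}): choose $D$ with $(f,D\otimes v_\triv)^G_r\ne 0$, so $D^\star f\in\mathscr N$ has nonzero value at $1$; invariance of the form then shows the $K$-average of $D^\star f$ is a nonzero element of $\mathscr N^K$. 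Once you insert this analyticity argument in place of the bare Frobenius appeal, your proof is complete and agrees with the paper's.
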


\begin{proof}
First, let $(\mathcal I_G,\mathcal I_{\mathbf H}): (\Ximin_0(\mathscr X), \mathscr X)\to\bigl(\mathscr A(G/K)_\Kf, \mathscr A(A)\bigr)$
be the pair of inclusions (cf.~Corollary \ref{cor:AApair}).
Then by \eqref{eq:R3Mimage} we have
\[
\Ximin_{(\Ximin_0(\mathscr X), \mathscr X)}(\mathscr X)
=
\mathcal I_G\bigl(
\Ximin_{(\Ximin_0(\mathscr X), \mathscr X)}(\mathscr X)
\bigr)
=
\Ximin_0(\mathcal I_{\mathbf H}(\mathscr X))
=
\Ximin_0(\mathscr X).
\]

Secondly, put 
$\mathscr N=\Ximax_{(\Ximin_0(\mathscr X), \mathscr X)}(\{0\})$.
Then from Theorem \ref{thm:multcor} (ii) we have
\begin{equation}\label{eq:CN0}
\tilde\Gamma^{\CC_\triv}_0
\bigl(\Hom_K(\CC_\triv,\mathscr N)\bigr)=
\Hom_W(\CC_\triv,\{0\})=\{0\}.
\end{equation}
Assume now $\mathscr N\ni f\ne0$.
Since the sesquilinear from $(\cdot,\cdot)^G_{r}$
defined by \eqref{eq:sesquiGr}
is non-degenerate on $\mathscr A(G/K)_\Kf\times P_G(\CC_\triv)$,
there exists some $D\in U(\Lieg_\CC)$ such that
$(f,D\otimes v_\triv)^G_{r}\ne0$.
Thus $(D^\star f,1\otimes v_\triv)^G_{r}\ne0$.
This means $\mathscr N$ contains a non-zero $K$-invariant element,
contrary to \eqref{eq:CN0}.
Hence $\mathscr N=\{0\}$.
\end{proof}
Suppose $\mathcal M=(\mathcal M_G,\mathcal M_{\mathbf H})\in\Crad$
and put
$\mathcal M'=\bigl(\Ximin_{\mathcal M}(\mathcal M_{\mathbf H}),\,
\mathcal M_{\mathbf H}\bigr)$.
Then it follows from Theorem \ref{thm:multcor} (ii)
that $\mathcal M'$ and $\bigl(\Ximax_{\mathcal M'}(\{0\}), \{0\}\bigr)$
are radial pairs.
Moreover as the cokernel of
$\bigl(\Ximax_{\mathcal M'}(\{0\}), \{0\}\bigr)\hookrightarrow
\mathcal M'$ we have
$\bigl(\Ximin_{\mathcal M}(\mathcal M_{\mathbf H})\!
\bigm/\! \Ximax_{\mathcal M'}(\{0\})
,\,\mathcal M_{\mathbf H}\bigr)\in\Crad
$ by Proposition \ref{prop:catpro}
and this is reduced by Theorem \ref{thm:liftincl}.
Furthermore for each $V\in\Km$ it naturally holds that
\[
\Hom_K^\ttt(V, \mathcal M_G)
\simeq
\Hom_K^\ttt\bigl(V,\, \Ximin_{\mathcal M}(\mathcal M_{\mathbf H})\bigr)
\simeq
\Hom_K^\ttt\bigl(V,\, \Ximin_{\mathcal M}(\mathcal M_{\mathbf H})
\!\bigm/\! \Ximax_{\mathcal M'}(\{0\})).
\]
In this way we can always extract the reduced part from any radial pair.

Let us now construct a functor $\Xi:\mathbf H\Mod\to(\Lieg_\CC,K)\Mod$
such that $\mathcal M_G=\Xi(\mathcal M_{\mathbf H})$ 
for any reduced radial pair $(\mathcal M_G,\mathcal M_{\mathbf H})$.
Throughout the paper we have given many examples of radial pairs.
If we extract the reduced part $(\mathcal M_G,\mathcal M_{\mathbf H})$
from any such pair by the above method,
then $\mathcal M_G=\Xi(\mathcal M_{\mathbf H})$.
\begin{defn}[the functor $\Xi$]
For any $\mathscr X\in\mathbf H\Mod$ we put
\[
\Xi(\mathscr X)=\Xirad(\mathscr X)\!\bigm/\!\Ximax_{(\Xirad(\mathscr X),\mathscr X)}(\{0\}).
\]
Then $(\Xi(\mathscr X), \mathscr X)$ is a reduced radial pair.
\end{defn}
\begin{thm}\label{thm:Xiunivprop}
{\normalfont (i)}
If a reduced radial pair $\mathcal M=(\mathcal M_G,\mathcal M_{\mathbf H})$ and 
an $\mathbf H$-homomorphism $\mathcal I_{\mathbf H}: \mathscr X\to \mathcal M_{\mathbf H}$ are given, then
there exists a unique $(\Lieg_\CC,K)$-homomorphism
$\mathcal I_G:\Xi(\mathscr X)\to\mathcal M_G$ such that
$(\mathcal I_G,\mathcal I_{\mathbf H}): (\Xi(\mathscr X),\mathscr X)
\to (\mathcal M_G, \mathcal M_{\mathbf H})$ is a morphism of $\Crad$.
If $\mathcal I_{\mathbf H}$ is injective, so is $\mathcal I_G$.
If $\mathcal I_{\mathbf H}$ is surjective, so is $\mathcal I_G$.

\noindent
{\normalfont (ii)}
Suppose a radial pair $\mathcal M=(\mathcal M_G,\mathcal M_{\mathbf H})$
satisfies $\Ximin_{\mathcal M}(\mathcal M_{\mathbf H})=\mathcal M_G$
and \/
$\mathcal I_{\mathbf H}: \mathcal M_{\mathbf H} \to \mathscr X$ is 
an $\mathbf H$-homomorphism.
Then there exists a unique $(\Lieg_\CC,K)$-homomorphism
$\mathcal I_G:\mathcal M_G\to\Xi(\mathscr X)$ such that
$(\mathcal I_G,\mathcal I_{\mathbf H}): (\mathcal M_G, \mathcal M_{\mathbf H})
\to(\Xi(\mathscr X),\mathscr X)$ is a morphism of $\Crad$.
If $\mathcal I_{\mathbf H}$ is surjective, so is $\mathcal I_G$.
\end{thm}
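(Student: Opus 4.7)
The plan is to derive both parts from the universal property of $\Xirad$ (Theorem \ref{thm:Xirad}) and the two-sided inequalities of Theorem \ref{thm:liftincl}, using the reduced hypotheses on $\mathcal M$ precisely to collapse one end of each such inequality into an equality. I will take as known, from the proof of Theorem \ref{thm:Ximin} via the generating description \eqref{eq:radgen22}, that $\Ximin_{(\Xirad(\mathscr Y),\mathscr Y)}(\mathscr Y)=\Xirad(\mathscr Y)$ for any $\mathscr Y\in\mathbf H\Mod$; this will be needed several times.

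For part (i), I would first apply Theorem \ref{thm:Xirad} to $\mathcal I_{\mathbf H}$ to obtain a unique lift $(\tilde{\mathcal I}_G,\mathcal I_{\mathbf H}):(\Xirad(\mathscr X),\mathscr X)\to\mathcal M$ in $\Crad$. The key step is to show that $\tilde{\mathcal I}_G$ annihilates $\Ximax_{(\Xirad(\mathscr X),\mathscr X)}(\{0\})$ so that it descends through the quotient defining $\Xi(\mathscr X)$; this is immediate from \eqref{eq:R3Mimage} applied to the trivial submodule $\{0\}\subset\mathscr X$, combined with $\Ximax_{\mathcal M}(\{0\})=\{0\}$ from reducedness of $\mathcal M$. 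Uniqueness of $\mathcal I_G$ is inherited from the uniqueness clause of Theorem \ref{thm:Xirad}. Injectivity when $\mathcal I_{\mathbf H}$ is injective comes from \eqref{eq:R3MWpreimage}, which traps $\Ker\tilde{\mathcal I}_G$ inside $\Ximax_{(\Xirad(\mathscr X),\mathscr X)}(\{0\})$; surjectivity when $\mathcal I_{\mathbf H}$ is surjective comes from \eqref{eq:R3MWimage}, which forces $\Image\tilde{\mathcal I}_G\supset\Ximin_{\mathcal M}(\mathcal M_{\mathbf H})=\mathcal M_G$ by the other half of the reduced assumption.

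For part (ii), I would dualise this idea. Applying Theorem \ref{thm:Xirad} to $\id_{\mathcal M_{\mathbf H}}$ yields a canonical morphism $(\mathcal K_G,\id_{\mathcal M_{\mathbf H}}):(\Xirad(\mathcal M_{\mathbf H}),\mathcal M_{\mathbf H})\to\mathcal M$ whose $G$-part is surjective by \eqref{eq:R3Mimage} together with the hypothesis $\Ximin_{\mathcal M}(\mathcal M_{\mathbf H})=\mathcal M_G$. On the other hand, functoriality of $\Xirad$ followed by the natural quotient onto $\Xi(\mathscr X)$ produces a morphism $(\Xirad(\mathcal M_{\mathbf H}),\mathcal M_{\mathbf H})\to(\Xi(\mathscr X),\mathscr X)$ realising $\mathcal I_{\mathbf H}$. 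To factor this through $(\mathcal K_G,\id)$, I need its $G$-part to kill $\Ker\mathcal K_G$, and chaining \eqref{eq:R3MWpreimage} (for $\mathcal K_G$) with \eqref{eq:R3Mimage} (for $\Xirad(\mathcal I_{\mathbf H})$) successively pushes $\Ker\mathcal K_G$ into $\Ximax_{(\Xirad(\mathcal M_{\mathbf H}),\mathcal M_{\mathbf H})}(\{0\})$ and then into $\Ximax_{(\Xirad(\mathscr X),\mathscr X)}(\{0\})$, which is annihilated by the quotient. Uniqueness of the induced $\mathcal I_G$ follows from reducedness of $(\Xi(\mathscr X),\mathscr X)$ by a single application of \eqref{eq:R3Mimage} to the difference of two candidate lifts (whose $\mathbf H$-part is zero); surjectivity under the extra hypothesis on $\mathcal I_{\mathbf H}$ comes from \eqref{eq:R3Mimage} again, this time forcing $\Xirad(\mathcal I_{\mathbf H})$ itself to be surjective and so making $\mathcal I_G\circ\mathcal K_G$, hence $\mathcal I_G$, surjective. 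The whole argument is a diagram chase in $\Crad$ where each clause of Theorem \ref{thm:liftincl} is called upon at a specific step, so I do not anticipate any genuine obstacle beyond carefully bookkeeping which submodule is being lifted at each stage.
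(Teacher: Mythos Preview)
Your proof of part~(i) is essentially the paper's argument: both lift via Theorem~\ref{thm:Xirad} and then use Theorem~\ref{thm:liftincl} together with reducedness to descend to $\Xi(\mathscr X)$ and to read off injectivity and surjectivity.

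For part~(ii) your route is genuinely different from the paper's, and it works. The paper first forms the quotient pair $\mathcal M'=\bigl(\mathcal M_G/\Ximax_{\mathcal M}(\Ker\mathcal I_{\mathbf H}),\,\Coim\mathcal I_{\mathbf H}\bigr)$, verifies directly that $\mathcal M'$ is reduced, invokes part~(i) to identify $\mathcal M'\simeq(\Xi(\Coim\mathcal I_{\mathbf H}),\Coim\mathcal I_{\mathbf H})$, and then applies part~(i) a second time to the injection $\Coim\mathcal I_{\mathbf H}\hookrightarrow\mathscr X$; uniqueness is obtained by showing via \eqref{eq:R3Mpreimage} that any candidate $\mathcal I_G''$ has $\Ker\mathcal I_G''=\Ximax_{\mathcal M}(\Ker\mathcal I_{\mathbf H})$, hence factors through $\mathcal M'$. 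Your argument instead sandwiches $\mathcal M$ between $\Xirad(\mathcal M_{\mathbf H})$ and $\Xi(\mathscr X)$: the epimorphism $(\mathcal K_G,\id)$ onto $\mathcal M$ comes from Theorem~\ref{thm:Xirad}, and the factorisation through it is the cokernel property in the Abelian category $\Crad$ once you have pushed $\Ker\mathcal K_G$ into $\Ximax_{(\Xirad(\mathscr X),\mathscr X)}(\{0\})$ by two applications of Theorem~\ref{thm:liftincl}. Your uniqueness argument---subtracting two candidate lifts and using $\Ximax_{(\Xi(\mathscr X),\mathscr X)}(\{0\})=\{0\}$ via \eqref{eq:R3MWimage}---is shorter than the paper's. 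The paper's approach has the payoff of explicitly exhibiting the intermediate object as $\Xi(\Coim\mathcal I_{\mathbf H})$, while yours is more uniform with part~(i) and avoids constructing that quotient; for surjectivity you could also simply cite right-exactness of $\Xirad$ (Proposition~\ref{prop:tensorRad}) or apply \eqref{eq:R3MWimage} directly to $(\mathcal I_G,\mathcal I_{\mathbf H})$ and the reducedness of $(\Xi(\mathscr X),\mathscr X)$, which would spare you the detour through $\Xirad(\mathcal I_{\mathbf H})$.
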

\begin{proof}
Let $\mathcal M$ and $\mathcal I_{\mathbf H}$ be as in (i).
The existence and uniqueness of $\mathcal I_G:\Xi(\mathscr X)\to\mathcal M_G$ follow from Theorem \ref{thm:Xirad} and \eqref{eq:R3Mpreimage}.
The injectivity of $\mathcal I_{\mathbf H}$ implies that of $\mathcal I_G$ by \eqref{eq:R3Mpreimage}.
The surjectivity of $\mathcal I_{\mathbf H}$ implies that of $\mathcal I_G$ by \eqref{eq:R3Mimage}.

Secondly let $\mathcal M$ and $\mathcal I_{\mathbf H}$ be as in (ii).
Then we have the exact sequence
\[
0 \to 
\bigl(\Ximax_{\mathcal M}(\Ker\mathcal I_{\mathbf H}),
\Ker\mathcal I_{\mathbf H}\bigr)
\xrightarrow{\iota}
(\mathcal M_G,\mathcal M_{\mathbf H} )
\xrightarrow{\pi}
\bigl(
\mathcal M_G\!\bigm/\! \Ximax_{\mathcal M}(\Ker\mathcal I_{\mathbf H}),\,
\Coim\mathcal I_{\mathbf H}
\bigr) \to 0.
\]
We assert $\mathcal M':=\bigl(
\mathcal M_G\!\bigm/\! \Ximax_{\mathcal M}(\Ker\mathcal I_{\mathbf H}),\,
\Coim\mathcal I_{\mathbf H}
\bigr)$ is reduced.
Indeed $\Ximin_{\mathcal M'}\bigl(
\Coim\mathcal I_{\mathbf H}
\bigr)=\mathcal M_G\!\bigm/\! \Ximax_{\mathcal M}(\Ker\mathcal I_{\mathbf H})$
because of \eqref{eq:R3Mimage}.
In addition, since $\pi=(\pi_G,\pi_{\mathbf H})$ is epi we have
\begin{align*}
\Ximax_{\mathcal M'}(\{0\})
&=\pi_G\bigl( \pi_G^{-1} \bigl(\Ximax_{\mathcal M'}(\{0\})\bigr)\bigr)\\
&=\pi_G\bigl( \Ximax_{\mathcal M}(\Ker\pi_{\mathbf H})\bigr)
&(\because \eqref{eq:R3Mpreimage})\\
&=\pi_G\bigl( \Ximax_{\mathcal M}(\Ker\mathcal I_{\mathbf H})\bigr)=\{0\}.
\end{align*}
Hence $\mathcal M'\simeq (\Xi(\Coim\mathcal I_{\mathbf H}),\Coim\mathcal I_{\mathbf H})$
as a radial pair by (i).
Using (i) again we can uniquely lift
the $\mathbf H$-homomorphism
$\mathcal I_{\mathbf H}':\Coim\mathcal I_{\mathbf H}\to\mathscr X$
with $\mathcal I_{\mathbf H}=\mathcal I_{\mathbf H}'\circ\pi_{\mathbf H}$
to a morphism
$\mathcal I'=(\mathcal I_{\mathbf G}',\mathcal I_{\mathbf H}'):
\mathcal M'\to (\Xi(\mathscr X),\mathscr X)$ of $\Crad$.
Hence if we put $\mathcal I_{\mathbf G}=\mathcal I_{\mathbf G}'\circ\pi_G$
then $(\mathcal I_G,\mathcal I_{\mathbf H}): (\mathcal M_G, \mathcal M_{\mathbf H})
\to(\Xi(\mathscr X),\mathscr X)$ is a morphism.
We note $\mathcal I_{\mathbf G}'$ is surjective if $\mathcal I_{\mathbf H}'$
is surjective.
What remains to be shown is the uniqueness of $\mathcal I_G$.
Assume $(\mathcal I_G'',\mathcal I_{\mathbf H}): (\mathcal M_G, \mathcal M_{\mathbf H})
\to(\Xi(\mathscr X),\mathscr X)$ is also a morphism.
Then by \eqref{eq:R3Mpreimage} we have
\[
\Ker \mathcal I_G''
=\mathcal I_G''^{-1}(\{0\})
=\mathcal I_G''^{-1}(\Ximax_{(\Xi(\mathscr X),\mathscr X)}(\{0\}))
=\Ximax_{\mathcal M}(\mathcal I_{\mathbf H}^{-1}(\{0\}))
=\Ximax_{\mathcal M}(\Ker\mathcal I_{\mathbf H}).
\]
Hence $(\mathcal I_G'',\mathcal I_{\mathbf H})$
factors through $\pi$. From the uniqueness of $\mathcal I'_G$ we conclude
$\mathcal I_G''=\mathcal I'_G\circ\pi_G=\mathcal I_G$.
\end{proof}
\begin{cor}\label{cor:Xi}
{\normalfont (i)}
For any $\mathscr X\in\mathbf H\Mod$
\[
\Xi(\mathscr X)=\Ximin(\mathscr X)\!\bigm/\!\Ximax_{(\Ximin(\mathscr X),\mathscr X)}(\{0\}).
\]

\noindent
{\normalfont (ii)}
Suppose a radial pair $\mathcal M=(\mathcal M_G,\mathcal M_{\mathbf H})$
satisfies $\Ximin_{\mathcal M}(\mathcal M_{\mathbf H})=\mathcal M_G$.
Then the identity morphism on $\mathcal M_{\mathbf H}$ naturally induces
two consecutive epimorphisms
\[
(\Xirad(\mathcal M_{\mathbf H}),\mathcal M_{\mathbf H})\twoheadrightarrow
(\mathcal M_G,\mathcal M_{\mathbf H})\twoheadrightarrow
(\Xi(\mathcal M_{\mathbf H}),\mathcal M_{\mathbf H})
\]
in $\Crad$.

\noindent
{\normalfont (iii)}
The functor $\Xi$ extends the correspondence \eqref{eq:Acor}.
In particular, $\Xi(X_{\mathbf H}(\lambda))=X_{G}(\lambda)$
for any $\lambda\in\Liea_\CC^*$.

\end{cor}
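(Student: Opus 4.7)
\medskip
\noindent
\textbf{Proof proposal.}
For part (i), the plan is to compare the two quotient descriptions of $\Xi(\mathscr X)$ by using the epimorphism
$\mathcal I_G:\Xirad(\mathscr X)\twoheadrightarrow\Ximin(\mathscr X)=\Xirad(\mathscr X)/\Nmin(\mathscr X)$.
The pair $(\mathcal I_G,\id_{\mathscr X})$ is a morphism in $\Crad$ from $(\Xirad(\mathscr X),\mathscr X)$ to $(\Ximin(\mathscr X),\mathscr X)$, so \eqref{eq:R3Mpreimage} applied with $\mathscr X'=\{0\}\subset\mathscr X$ gives
\[
\Ximax_{(\Xirad(\mathscr X),\mathscr X)}(\{0\})
=\mathcal I_G^{-1}\bigl(\Ximax_{(\Ximin(\mathscr X),\mathscr X)}(\{0\})\bigr).
\]
Since $\Nmin(\mathscr X)\subset\Ximax_{(\Xirad(\mathscr X),\mathscr X)}(\{0\})$ (both are contained in the kernel of the radial restriction), taking the quotient by $\Nmin(\mathscr X)$ yields
\[
\Xi(\mathscr X)
=\Xirad(\mathscr X)\bigm/\Ximax_{(\Xirad(\mathscr X),\mathscr X)}(\{0\})
\simeq \Ximin(\mathscr X)\bigm/\Ximax_{(\Ximin(\mathscr X),\mathscr X)}(\{0\}),
\]
proving (i).

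For part (ii), Theorem \ref{thm:Xirad} lifts $\id_{\mathcal M_{\mathbf H}}$ uniquely to a morphism $(\Xirad(\mathcal M_{\mathbf H}),\mathcal M_{\mathbf H})\to(\mathcal M_G,\mathcal M_{\mathbf H})$; by \eqref{eq:R3Mimage} the image of its $G$-component equals $\Ximin_{\mathcal M}(\mathcal M_{\mathbf H})=\mathcal M_G$, so this morphism is epi. For the second arrow, since $\Ximin_{\mathcal M}(\mathcal M_{\mathbf H})=\mathcal M_G$ by hypothesis, Theorem \ref{thm:Xiunivprop} (ii) applied to the identity $\mathcal M_{\mathbf H}\to\mathcal M_{\mathbf H}$ (now viewed as the target) produces a unique morphism $(\mathcal M_G,\mathcal M_{\mathbf H})\to(\Xi(\mathcal M_{\mathbf H}),\mathcal M_{\mathbf H})$ in $\Crad$, and the same theorem asserts its $G$-component is surjective because the $\mathbf H$-component is.

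For part (iii), fix an $\mathbf H$-submodule $\mathscr X$ of $\mathscr A(A)$. The pair $(\Ximin_0(\mathscr X),\mathscr X)$ is a reduced radial pair by the proposition at the top of \S\ref{sec:Xi}, and a fortiori $\Ximin_{(\Ximin_0(\mathscr X),\mathscr X)}(\mathscr X)=\Ximin_0(\mathscr X)$. Applying Theorem \ref{thm:Xiunivprop} (i) with $\mathcal M=(\Ximin_0(\mathscr X),\mathscr X)$ and $\mathcal I_{\mathbf H}=\id_{\mathscr X}$ yields a surjection $\mathcal I_G:\Xi(\mathscr X)\twoheadrightarrow\Ximin_0(\mathscr X)$, while Theorem \ref{thm:Xiunivprop} (ii) with the same $\mathcal M$ yields a surjection $\mathcal J_G:\Ximin_0(\mathscr X)\twoheadrightarrow\Xi(\mathscr X)$. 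Both $\mathcal J_G\circ\mathcal I_G$ and $\mathcal I_G\circ\mathcal J_G$ are lifts of $\id_{\mathscr X}$ to endomorphisms of reduced radial pairs, hence must coincide with the identities by the uniqueness clause of Theorem \ref{thm:Xiunivprop} (i). Therefore $\Xi(\mathscr X)\simeq\Ximin_0(\mathscr X)$, showing $\Xi$ extends \eqref{eq:Acor}; specializing to $\mathscr X=X_{\mathbf H}(\lambda)$ and invoking Theorem \ref{thm:Xpair} gives $\Xi(X_{\mathbf H}(\lambda))=X_G(\lambda)$.

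The only genuinely delicate point is the identification in part (i); it hinges on the compatibility of $\Ximax$ with preimages under a morphism in $\Crad$, which is already packaged in Theorem \ref{thm:liftincl}. Parts (ii) and (iii) are essentially bookkeeping with the universal properties established in Theorems \ref{thm:Xirad} and \ref{thm:Xiunivprop}, so I do not anticipate any substantive obstacle beyond citing those results correctly.
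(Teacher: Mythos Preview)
Your proof is correct and follows essentially the same route the paper intends: the corollary is stated without proof as an immediate consequence of Theorem~\ref{thm:Xiunivprop} and Theorem~\ref{thm:liftincl}, and you have filled in those details appropriately. One small simplification in part~(iii): since $(\Ximin_0(\mathscr X),\mathscr X)$ is reduced and $\id_{\mathscr X}$ is bijective, Theorem~\ref{thm:Xiunivprop}~(i) alone already gives that $\mathcal I_G:\Xi(\mathscr X)\to\Ximin_0(\mathscr X)$ is bijective (both injective and surjective), so you do not need the second map $\mathcal J_G$ and the uniqueness argument.
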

In the below we shall see the functor $\Xi$ commutes with conjugate dual operations.
For any $\mathscr X\in\FD$ let $\mathscr X^\star$ and $(\cdot,\cdot)_{\mathscr X^\star}$ be as in the last section.
In general for $\mathscr Y\in\HC$ we define $\mathscr Y^\star\in\HC$ as follows:
For each $V\in\widehat K$
the $V$-isotypic component $\mathscr Y_V$ of $\mathscr Y$ has finite dimension;
Put $\mathscr Y^\star=\bigoplus_{V\in\widehat K}\mathscr Y_V^\star$
where $\mathscr Y_V^\star$ is the space of antilinear functionals on $\mathscr Y_V$;
Using a natural non-degenerate sesquilinear form $(\cdot,\cdot)_{\mathscr Y}$
on $\mathscr Y^\star\times \mathscr Y$, we define the $(\Lieg_\CC,K)$-module structure of $\mathscr Y^\star$ by
\[
(Dy^\star,y)_{\mathscr Y}=(y^\star,D^\star y)_{\mathscr Y},\quad
(ky^\star,y)_{\mathscr Y}=(y^\star,k^{-1} y)_{\mathscr Y}
\]
for $D\in U(\Lieg_\CC)$, $k\in K$,
$y^\star\in\mathscr Y^\star$ and  $y\in\mathscr Y$;
Then it is easy to see $\mathscr Y^\star\in\HC$.
Since $\Ximin(\mathscr X)\in\HC$ by Theorem \ref{thm:FD2HC},
$\Ximin(\mathscr X)^\star\in\HC$.
\begin{thm}\label{thm:Xiindform}
In the setting above
there exists a unique invariant sesquilinear form $(\cdot,\cdot)^{G}$
on $\Xi(\mathscr X)\times\Xi(\mathscr X^\star)$
such that the pair $(\cdot,\cdot)^{G}$ and $(\cdot,\cdot)_{\mathscr X^\star}$
is compatible with restriction in the sense of
{\normalfont Definition \ref{defn:compRM}}.
The form $(\cdot,\cdot)^{G}$ is non-degenerate.
In particular $\Xi(\mathscr X^\star)\simeq\Xi(\mathscr X)^\star$ as a $(\Lieg_\CC,K)$-module.
\end{thm}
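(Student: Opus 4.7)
The plan is to obtain $(\cdot,\cdot)^G$ as the descent through the canonical quotient $\Ximin\twoheadrightarrow\Xi$ of the restriction-compatible form produced by Theorem~\ref{thm:sesquilift}, and to identify both its radicals with $\Ximax(\{0\})$-subspaces so that the descended form on $\Xi(\mathscr X)\times\Xi(\mathscr X^\star)$ is non-degenerate by construction. First I would apply Theorem~\ref{thm:sesquilift} with $\mathscr X_1=\mathscr X$, $\mathscr X_2=\mathscr X^\star$ and the canonical pairing $(\cdot,\cdot)_{\mathscr X^\star}$, producing an invariant sesquilinear form $(\cdot,\cdot)$ on $\Ximin(\mathscr X)\times\Ximin(\mathscr X^\star)$ that is compatible with restriction in the sense of Definition~\ref{defn:compRM}. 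Equation~\eqref{eq:lemmaperp} in Lemma~\ref{lem:sesquilift}, transported across the identification $\Xiind\simeq\Ximin$, identifies the right radical of $(\cdot,\cdot)$ with $\Ximax_{(\Ximin(\mathscr X^\star),\mathscr X^\star)}(\{0\})$.

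For the left radical, I would introduce the conjugate-swapped pairing $(y_2,y_1)'=\overline{(y_1,y_2)}$ on $\Ximin(\mathscr X^\star)\times\Ximin(\mathscr X)$; this is manifestly invariant and sesquilinear, and lifts the canonical pairing $(\cdot,\cdot)_{\mathscr X}$ on $\mathscr X^\star\times\mathscr X$. Complex conjugation of the compatibility identity for $(\cdot,\cdot)$ shows $(\cdot,\cdot)'$ itself is compatible with restriction with respect to $(\cdot,\cdot)_{\mathscr X}$. Since $\Ximin_{(\Ximin(\mathscr X^\star),\mathscr X^\star)}(\mathscr X^\star)=\Ximin(\mathscr X^\star)$ by the construction of $\Ximin$ (as in \eqref{eq:radgen22} for the corresponding quotient), Corollary~\ref{cor:compRM} forces $(\cdot,\cdot)'$ to coincide with the lift that Theorem~\ref{thm:sesquilift} produces for $\mathscr X_1=\mathscr X^\star$, $\mathscr X_2=\mathscr X$. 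A second application of Lemma~\ref{lem:sesquilift} gives that the right radical of $(\cdot,\cdot)'$ equals $\Ximax_{(\Ximin(\mathscr X),\mathscr X)}(\{0\})$, which is exactly the left radical of $(\cdot,\cdot)$.

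By Corollary~\ref{cor:Xi}(i) we have $\Xi(\mathscr X)=\Ximin(\mathscr X)/\Ximax_{(\Ximin(\mathscr X),\mathscr X)}(\{0\})$ and similarly for $\mathscr X^\star$, so $(\cdot,\cdot)$ descends to a non-degenerate invariant sesquilinear form $(\cdot,\cdot)^G$ on $\Xi(\mathscr X)\times\Xi(\mathscr X^\star)$. Compatibility with restriction survives the descent because the canonical quotient morphism $(\Ximin(\mathscr X),\mathscr X)\to(\Xi(\mathscr X),\mathscr X)$ in $\Crad$ surjects $\Hom_K^\ttt$ onto $\Hom_K^\ttt$ (Proposition~\ref{prop:catpro}(iii)) and intertwines the $\tilde\Gamma^V$'s; this lets one evaluate the compatibility identity on the $\Xi$-side by lifting to the $\Ximin$-side. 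Uniqueness of $(\cdot,\cdot)^G$ then reduces to Corollary~\ref{cor:compRM} applied to the reduced pair $(\Xi(\mathscr X),\mathscr X)$, for which $\Ximin_{(\Xi(\mathscr X),\mathscr X)}(\mathscr X)=\Xi(\mathscr X)$ by definition.

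Finally, for the isomorphism $\Xi(\mathscr X^\star)\simeq\Xi(\mathscr X)^\star$, the non-degeneracy of $(\cdot,\cdot)^G$ produces a $(\Lieg_\CC,K)$-monomorphism $\alpha:\Xi(\mathscr X^\star)\hookrightarrow\Xi(\mathscr X)^\star$, and running the whole construction with $\mathscr X$ and $\mathscr X^\star$ interchanged yields a companion monomorphism $\beta:\Xi(\mathscr X)\hookrightarrow\Xi(\mathscr X^\star)^\star$. Theorem~\ref{thm:FD2HC} says $\Xirad(\mathscr X)$ has finite length, so every $K$-isotypic component of the quotient $\Xi(\mathscr X)$, and of $\Xi(\mathscr X^\star)$, is finite-dimensional; since for any $V\in\widehat K$ the $V^\star$-isotypic component of $\Xi(\mathscr X)^\star$ has the same dimension as the $V$-isotypic component of $\Xi(\mathscr X)$, the existence of both injections forces dimensional equalities that make $\alpha$ and $\beta$ bijective on each $K$-isotypic and hence isomorphisms. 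The hardest step will be the left-radical identification: Lemma~\ref{lem:sesquilift} is proved asymmetrically via evaluation at the identity of $G$, so the right-radical description falls out of the construction but the left-radical does not; checking that conjugate-swapping genuinely produces a restriction-compatible lift of $(\cdot,\cdot)_{\mathscr X}$ (as opposed to some twisted variant) is where the care is needed, after which the uniqueness from Corollary~\ref{cor:compRM} closes the gap.
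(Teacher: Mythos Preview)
Your proposal is correct and follows essentially the same route as the paper: start from the form of Lemma~\ref{lem:sesquilift} on $\Xiind(\mathscr X)\times\Xiind(\mathscr X^\star)$ (equivalently $\Ximin$ via Theorem~\ref{thm:minind}), identify the right radical with $\Ximax_{(\Xiind(\mathscr X^\star),\mathscr X^\star)}(\{0\})$ via \eqref{eq:lemmaperp}, obtain the left-radical identification by swapping the roles of $\mathscr X$ and $\mathscr X^\star$ and invoking the uniqueness in Corollary~\ref{cor:compRM}, and then descend. The paper compresses your conjugate-swapped argument for the left radical into the single phrase ``we can interchange the roles of $\mathscr X$ and $\mathscr X^\star$ by Corollary~\ref{cor:compRM}'', and leaves the final isomorphism $\Xi(\mathscr X^\star)\simeq\Xi(\mathscr X)^\star$ as an implicit consequence of non-degeneracy together with the admissibility coming from Theorem~\ref{thm:FD2HC}; your explicit dimension-counting on $K$-isotypic components is exactly what makes that ``In particular'' rigorous.
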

\begin{proof}
Let $(\cdot,\cdot)$
be the sesquilinear form on $\Xiind(\mathscr X) \times \Xiind(\mathscr X^\star)$
in Lemma \ref{lem:sesquilift}.
We note \eqref{eq:lemmaperp} can be rewritten as
\[
\bigl\{F_2\in\Xiind(\mathscr X^\star);\,
(F_1,F_2)=0\text{ for any }F_1\in \Xiind(\mathscr X)
\bigr\}
=\Ximax_{(\Xiind(\mathscr X^\star),\mathscr X^\star)}(\{0\}).
\]
We can interchange the roles of $\mathscr X$ and $\mathscr X^\star$
by Corollary \ref{cor:compRM} to deduce
\[
\bigl\{F_1\in\Xiind(\mathscr X);\,
(F_1,F_2)=0\text{ for any }F_2\in \Xiind(\mathscr X^\star)
\bigr\}
=\Ximax_{(\Xiind(\mathscr X),\mathscr X)}(\{0\}).
\]
Since 
\[
\Xi(\mathscr X)=\Xiind(\mathscr X)\!\bigm/\!\Ximax_{(\Xiind(\mathscr X),\mathscr X)}(\{0\}),\quad
\Xi(\mathscr X^\star)=\Xiind(\mathscr X^\star)\!\bigm/\!\Ximax_{(\Xiind(\mathscr X^\star),\mathscr X^\star)}(\{0\}),
\]
$(\cdot,\cdot)$ induces an invariant non-degenerate sesquilinear form $(\cdot,\cdot)^{G}$
on $\Xi(\mathscr X)\times\Xi(\mathscr X^\star)$
which, together with $(\cdot,\cdot)_{\mathscr X^\star}$,
is compatible with restriction.
The uniqueness follows from Corollary \ref{cor:compRM}.
\end{proof}
From this theorem 
one can
deduce the following in the same way as Theorem \ref{thm:sesquilift}:
\begin{cor}
If $\mathscr X\in\FD$
has a non-degenerate invariant Hermitian form $(\cdot,\cdot)^{\mathbf H}$
then there exists a unique non-degenerate invariant Hermitian form $(\cdot,\cdot)^G$
on $\Xi(\mathscr X)$ such that the pair of
$(\cdot,\cdot)^G$ and $(\cdot,\cdot)^{\mathbf H}$ is compatible with restriction.
\end{cor}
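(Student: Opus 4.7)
\medskip
\noindent\textbf{Proof plan.}
The plan is to reduce to Theorem \ref{thm:Xiindform} by turning the Hermitian form into an $\mathbf H$-isomorphism $\mathcal I_{\mathbf H}\colon\mathscr X\simarrow \mathscr X^\star$. Concretely, set $\mathcal I_{\mathbf H}(x_2)=(x_2,\cdot)^{\mathbf H}\in\mathscr X^\star$; this is an antilinear functional on $\mathscr X$, and it is linear in $x_2$ since $(\cdot,\cdot)^{\mathbf H}$ is linear in the first slot. The invariance $(hx_2,x_1)^{\mathbf H}=(x_2,h^\star x_1)^{\mathbf H}$ translates exactly to $\mathcal I_{\mathbf H}(hx_2)(x_1)=\mathcal I_{\mathbf H}(x_2)(h^\star x_1)=h\mathcal I_{\mathbf H}(x_2)(x_1)$, so $\mathcal I_{\mathbf H}$ is an $\mathbf H$-homomorphism, and non-degeneracy of $(\cdot,\cdot)^{\mathbf H}$ together with $\dim\mathscr X=\dim\mathscr X^\star$ makes it an isomorphism. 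Applying the functor $\Xi$ (and using Theorem \ref{thm:Xiunivprop} (i) to see it is still an isomorphism) gives a $(\Lieg_\CC,K)$-isomorphism $\Xi(\mathcal I_{\mathbf H})\colon\Xi(\mathscr X)\simarrow\Xi(\mathscr X^\star)$.

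Next I invoke Theorem \ref{thm:Xiindform} to get a unique non-degenerate invariant sesquilinear form $(\cdot,\cdot)^G_0$ on $\Xi(\mathscr X)\times\Xi(\mathscr X^\star)$ that is compatible with the canonical $(\cdot,\cdot)_{\mathscr X^\star}$, and define
\[
(y_1,y_2)^G:=(y_1,\Xi(\mathcal I_{\mathbf H})(y_2))^G_0
\quad\text{on } \Xi(\mathscr X)\times\Xi(\mathscr X).
\]
Invariance and non-degeneracy are inherited from $(\cdot,\cdot)^G_0$ and the bijectivity of $\Xi(\mathcal I_{\mathbf H})$. For compatibility, observe that $(x_1,\mathcal I_{\mathbf H}(x_2))_{\mathscr X^\star}=\overline{\mathcal I_{\mathbf H}(x_2)(x_1)}=(x_1,x_2)^{\mathbf H}$, and that for $\Phi_2\in\Hom_K^\ttt(V^\star,\Xi(\mathscr X))$ Condition \eqref{cond:Ch2} applied to the morphism $(\Xi(\mathcal I_{\mathbf H}),\mathcal I_{\mathbf H})$ gives $\Xi(\mathcal I_{\mathbf H})\circ\Phi_2\in\Hom_K^\ttt(V^\star,\Xi(\mathscr X^\star))$, while Condition \eqref{cond:Ch1} gives $\tilde\Gamma^{V^\star}(\Xi(\mathcal I_{\mathbf H})\circ\Phi_2)=\mathcal I_{\mathbf H}\circ\tilde\Gamma^{V^\star}(\Phi_2)$. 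Substituting into the compatibility identity of $(\cdot,\cdot)^G_0$ then yields the compatibility of $(\cdot,\cdot)^G$ with $(\cdot,\cdot)^{\mathbf H}$.

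The main (but small) obstacle is Hermitianity. For this I introduce $(y_1,y_2)^\dagger:=\overline{(y_2,y_1)^G}$; this is again an invariant sesquilinear form on $\Xi(\mathscr X)\times\Xi(\mathscr X)$. Using the duality $v_i\leftrightarrow v_i^\star$ under $(V^\star)^\star=V$, which identifies the subspaces $V^M_\single$ and $(V^\star)^M_\single$ via the chosen bases, and using the Hermitianity $(x_2,x_1)^{\mathbf H}=\overline{(x_1,x_2)^{\mathbf H}}$, one verifies by a direct conjugate-swap in the defining sum that $(\cdot,\cdot)^\dagger$ is also compatible with $(\cdot,\cdot)^{\mathbf H}$. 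Since $(\Xi(\mathscr X),\mathscr X)$ is reduced (so $\Ximin_{(\Xi(\mathscr X),\mathscr X)}(\mathscr X)=\Xi(\mathscr X)$), Corollary \ref{cor:compRM} applies and forces $(\cdot,\cdot)^G=(\cdot,\cdot)^\dagger$, i.e.\ $(\cdot,\cdot)^G$ is Hermitian. Uniqueness of $(\cdot,\cdot)^G$ among (Hermitian or merely sesquilinear) invariant forms compatible with $(\cdot,\cdot)^{\mathbf H}$ follows from Corollary \ref{cor:compRM} as well.
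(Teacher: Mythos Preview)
Your proof is correct and follows essentially the same approach the paper indicates: the paper says the corollary is deduced from Theorem~\ref{thm:Xiindform} ``in the same way as Theorem~\ref{thm:sesquilift}'', i.e., convert the given form into an $\mathbf H$-map $\mathcal I_{\mathbf H}\colon\mathscr X\to\mathscr X^\star$, pull the canonical non-degenerate form on $\Xi(\mathscr X)\times\Xi(\mathscr X^\star)$ back through $\Xi(\mathcal I_{\mathbf H})$, and invoke Corollary~\ref{cor:compRM} for uniqueness. You carry this out precisely, and in addition you spell out the Hermitianity step (via the auxiliary form $(\cdot,\cdot)^\dagger$ and another appeal to Corollary~\ref{cor:compRM}), which the paper leaves implicit.
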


\section{Examples for $G=SL(2,\RR)$}\label{sec:SL2R}
In this section we assume
\[
G=SL(2,\RR),\quad
K=\left\{\begin{pmatrix}
\cos\varphi & -\sin\varphi\\
\sin\varphi & \cos\varphi
\end{pmatrix};\,\varphi\in\RR
\right\},\quad
\Lies=\left\{\begin{pmatrix}
a & b\\
b & -a
\end{pmatrix};\,a,b\in\RR
\right\}
\]
and put
\[
e_0=\begin{pmatrix}
0 & i \\ -i & 0
\end{pmatrix},\quad
e_\pm=\frac12\begin{pmatrix}
1 & \mp i \\ \mp i & -1\end{pmatrix}.
\]
Then the $K$-module $(\Ad,\Lies_\CC)$
has a unique irreducible decomposition 
\[
\Lies_\CC=\Lies_+\oplus\Lies_-
\quad\text{with}\quad
\Lies_\pm=\CC e_\pm.
\]
If we identify $\widehat K$ with $\ZZ$
by 
\[
\left(K\ni \begin{pmatrix}
\cos\varphi & -\sin\varphi\\
\sin\varphi & \cos\varphi
\end{pmatrix}
\mapsto e^{in\varphi}\in\CC^\times \right)
\longleftrightarrow
n
\]
then $\Km=2\ZZ$ and $\Lies_\pm\leftrightarrow\pm2$.
Furthermore, since $G$ has real rank $1$,
it follows from \cite[Corollary 2.9]{Oda:HC}
that $\Kqsp=\Ksp=\{\CC_\triv,\Lies_+,\Lies_-\}=\{0,\pm2\}$.
Let $\alpha\in\Liea^*$ be such that $\Sigma^+=\{\alpha\}$
(and hence $R^+=\{2\alpha\}$).

The classification of the irreducible $(\Lieg_\CC,K)$-modules
is classical (cf.~\cite{Kn}).
Here we fix some notation.
For $n=1,2,\ldots$ we put
\begin{align*}
E_G^n&=\text{the irreducible representation with dimension }n,\\
D_G^{n,+}&=\text{the discrete series representation with $K$-types }
\{n+1,n+3,\ldots\},\\
D_G^{n,-}&=\text{the discrete series representation with $K$-types }
\{-n-1,-n-3,\ldots\}.
\end{align*}
If all the $K$-types of a given irreducible $(\Lieg_\CC,K)$-module
belong to $\Km$, then
this module is equivalent to exactly one of the following:
\[
\left\{
\begin{aligned}
&B_G(\lambda)_\Kf\simeq B_G(-\lambda)_\Kf \text{ with }\lambda(\alpha^\vee)\notin
\{\pm1,\pm3,\ldots\},\\
&E_G^n \text{ with }n=1,3,\ldots,\\
&D_G^{n,+} \text{ with }n=1,3,\ldots,\\
&D_G^{n,-} \text{ with }n=1,3,\ldots.
\end{aligned}\right.
\]
It is also well known that for $n=1,3,\ldots$, $B_G(\pm n\rho)_\Kf$ are indecomposable and
\begin{equation}\label{SL2RGM0}
\begin{aligned}
&E_G^n\subset B_G(n\rho)_\Kf,
&B_G(n\rho)_\Kf/E_G^n\simeq D_G^{n,+}\oplus D_G^{n,-},\\
&D_G^{n,+}\oplus D_G^{n,-}\subset B_G(-n\rho)_\Kf,
&B_G(-n\rho)_\Kf/(D_G^{n,+}\oplus D_G^{n,-})\simeq E_G^n.
\end{aligned}
\end{equation}
Hence from the bijectivity condition of $\mathcal P_G^{\lambda}$ 
stated in Remark \ref{rem:PoissonBij} (ii) we have
\begin{equation}\label{SL2RGM1}
X_G(\lambda)\simeq
\begin{cases}
B_G(\lambda)_\Kf\simeq\mathscr A(G/K,\lambda)_\Kf
& \text{if }\lambda(\alpha^\vee)\notin\{\pm1,\pm3,\ldots\},\\
E_G^{|\lambda(\alpha^\vee)|}
& \text{if }\lambda(\alpha^\vee)\in\{\pm1,\pm3,\ldots\}.
\end{cases}
\end{equation}
In addition,
from Proposition \ref{prop:sesquiGinv}
and Proposition \ref{prop:modstrAGl} (ii) we have
\begin{equation}\label{SL2RGM2}
P_G(\CC_\triv,\lambda)\simeq B_G(-\lambda)_\Kf
\quad
\text{for }\lambda(\alpha^\vee)\notin\{-1,-3,\ldots\}.
\end{equation}

Now let us look over the $\mathbf H$ side.
First $W=\{1,s_\alpha\}$ has only two irreducible modules,
namely $\CC_\triv=\CC v_\triv$ and $\CC_\sign=\CC v_\sign$.
If we define the $S(\Liea_\CC)$-action on them by
\[
\xi v_\triv=-\rho(\xi) v_\triv,\quad
\xi v_\sign=\rho(\xi) v_\sign\quad
\text{for }\xi\in\Liea_\CC,
\]
then by \eqref{eq:Hrel}
they become one-dimensional $\mathbf H$-modules,
which are respectively called the \emph{trivial} module
and the \emph{Steinberg} module.
We denote them by $E_{\mathbf H}^1$ and $D_{\mathbf H}^1$.
\begin{prop}\label{prop:HIrrep}
{\normalfont (i)}
An irreducible $\mathbf H$-module is equivalent to exactly one of the following:
\begin{equation}
\left\{
\begin{aligned}\label{eq:sl2Hlist}
&B_{\mathbf H}(\lambda)\simeq B_{\mathbf H}(-\lambda)
\text{ with }\lambda\ne \pm\rho,\\
&E_{\mathbf H}^1,\\
&D_{\mathbf H}^1.
\end{aligned}
\right.
\end{equation}

\noindent
{\normalfont (ii)}
$B_{\mathbf H}(\pm\rho)$ are indecomposable and
\[
\begin{aligned}
&E_{\mathbf H}^1\subset B_{\mathbf H}(\rho),
&B_{\mathbf H}(\rho)/E_{\mathbf H}^1\simeq D_{\mathbf H}^1,\\
&D_{\mathbf H}^1\subset B_{\mathbf H}(-\rho),
&B_{\mathbf H}(-\rho)/D_{\mathbf H}^1\simeq E_{\mathbf H}^1.
\end{aligned}
\]

\noindent
{\normalfont (iii)}
\[
X_{\mathbf H}(\lambda)\simeq
\begin{cases}
B_{\mathbf H}(\lambda)\simeq\mathscr A(A,\lambda)
& \text{if }\lambda\ne\pm\rho,\\
E_{\mathbf H}^1 & \text{if }\lambda=\pm\rho.
\end{cases}
\]

\noindent
{\normalfont (iv)}
For $\lambda\ne-\rho$
\[
P_{\mathbf H}(\CC_\triv,\lambda)\simeq B_{\mathbf H}(-\lambda).
\]

\end{prop}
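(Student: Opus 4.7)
My approach is to first pin down the structure of $B_\mathbf H(\pm\rho)$ in (ii) by an explicit rank-one computation, then deduce the classification in (i) by central-character considerations, and finally derive (iii) and (iv) from (i)--(ii) together with Proposition~\ref{prop:PoiBij} and the universal property of $P_\mathbf H(\CC_\triv,\lambda)$.

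For (ii), realize $B_\mathbf H(\lambda)\simeq(\CC W)^*$ as a $W$-module (Def.~\ref{defn:BH}) with basis $\{\delta_1,\delta_{s_\alpha}\}$, and use the defining relation $F(h\,\trans\eta)=-\lambda(\eta)F(h)$ together with \eqref{eq:Hrel}, noting $\mathbf m_1(2\alpha)=\tfrac12$ so that $\trans\xi=-\xi-\alpha(\xi)s_\alpha$ in $\mathbf H$. A short calculation gives the matrix of $\xi\in\Liea_\CC$ in this basis as lower triangular with diagonal entries $-\lambda(\xi)$ and $-\lambda(\xi)+\lambda(\alpha^\vee)\alpha(\xi)$. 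Specializing to $\lambda=\rho$ (so $\lambda(\alpha^\vee)=1$), the off-diagonal correction on the $W$-invariant $\mathbf 1_\mathbf H^\rho=\delta_1+\delta_{s_\alpha}$ vanishes and $\mathbf 1_\mathbf H^\rho$ becomes a joint $\xi$-eigenvector of eigenvalue $-\rho(\xi)$. Thus $E_\mathbf H^1\hookrightarrow B_\mathbf H(\rho)$, and the one-dimensional quotient has $W$-sign character with $\xi$-eigenvalue $+\rho(\xi)$, so it is $D_\mathbf H^1$. The analogous analysis at $\lambda=-\rho$ identifies the $W$-sign vector $\delta_1-\delta_{s_\alpha}$ as the generator of $D_\mathbf H^1\hookrightarrow B_\mathbf H(-\rho)$ with quotient $E_\mathbf H^1$.

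For (i), note that $\mathbf H$ is free of rank $|W|^2$ over its center $S(\Liea_\CC)^W$, so every irreducible $\mathbf H$-module has finite dimension and admits a central character $[\lambda]\in\Liea_\CC^*/W$. Given such an irreducible $\mathscr X$, any $S(\Liea_\CC)$-eigenvector $v\in\mathscr X$ has eigenvalue $\mu\in[-\lambda]$, and Frobenius reciprocity furnishes a surjection $B_\mathbf H(-\mu)\twoheadrightarrow\mathscr X$. The irreducibility criterion \eqref{eq:BHirr} established in the proof of Proposition~\ref{prop:AH}(iii) reads in rank one as $\lambda(\alpha^\vee)\ne\pm1$, i.e., $\lambda\ne\pm\rho$. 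When $B_\mathbf H(\lambda)$ is irreducible it is the unique irreducible with character $[\lambda]$ (so $B_\mathbf H(\lambda)\simeq B_\mathbf H(-\lambda)$), while at the characters $[\pm\rho]$ part (ii) exhibits exactly the two irreducibles $E_\mathbf H^1$ and $D_\mathbf H^1$; together these exhaust \eqref{eq:sl2Hlist}.

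For (iii), Proposition~\ref{prop:PoiBij} gives $\mathcal P_\mathbf H^\lambda:B_\mathbf H(\lambda)\simarrow\mathscr A(A,\lambda)$ whenever $\lambda\ne-\rho$, so for $\lambda\notin\{\pm\rho\}$ both sides are already irreducible and equal $X_\mathbf H(\lambda)\simeq B_\mathbf H(\lambda)$; at $\lambda=\rho$ the unique irreducible submodule of $\mathscr A(A,\rho)\simeq B_\mathbf H(\rho)$ is $E_\mathbf H^1$ by (ii); and at $\lambda=-\rho$ the $W$-invariant $\gamma_0(\phi_{-\rho})\in\mathscr A(A,-\rho)$ cannot lie in a copy of the $W$-sign module $D_\mathbf H^1$, forcing $X_\mathbf H(-\rho)=E_\mathbf H^1$. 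For (iv), the universal property of $P_\mathbf H(\CC_\triv,\lambda)$ identifies $\mathbf H$-maps from it into a module $\mathscr Y$ with $W$-invariant vectors annihilated by $\{\Delta-\Delta(\lambda):\Delta\in S(\Liea_\CC)^W\}$. Since $[\lambda]=[-\lambda]$, the vector $\mathbf 1_\mathbf H^{-\lambda}\in B_\mathbf H(-\lambda)$ qualifies and yields a map $P_\mathbf H(\CC_\triv,\lambda)\to B_\mathbf H(-\lambda)$ between two-dimensional modules; bijectivity is equivalent to $\mathbf 1_\mathbf H^{-\lambda}$ generating $B_\mathbf H(-\lambda)$, which by (ii) fails precisely when $-\lambda=\rho$, i.e., when $\lambda=-\rho$. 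The main obstacle is the concrete rank-one computation in (ii): correctly unwinding the twisted induction through $\trans$ and \eqref{eq:Hrel} is where the special value $\lambda(\alpha^\vee)=1$ first appears and produces the required one-dimensional submodule, after which the remaining parts reduce to dimension-counting and central-character bookkeeping.
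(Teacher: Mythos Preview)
Your approach is correct and genuinely different from the paper's. The paper proves (iv) first, using the bijectivity of $\mathcal P_{\mathbf H}^\lambda$ for $\lambda\ne-\rho$ together with the duality between $\mathscr A(A,\lambda)$ and $P_{\mathbf H}(\CC_\triv,\bar\lambda)$ (Proposition~\ref{prop:sesquiHinv} and Theorem~\ref{thm:modstr}(ii)); it then deduces (i) by asking whether a given irreducible contains $\CC_\triv$ or $\CC_\sign$ as a $W$-submodule and applying (iv) and Theorem~\ref{thm:modstr}(v), and finally reads off (ii) from the exact sequence supplied by $\mathcal P_{\mathbf H}^{-\rho}$. You instead begin with the explicit rank-one matrix computation for (ii), which is more elementary and bypasses the duality machinery entirely; (i), (iii), (iv) then follow by central-character bookkeeping. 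The paper's route illustrates how the general theory feeds back into the rank-one case, while yours is self-contained linear algebra.

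Two small points. In your argument for (i), the surjection you obtain from an $S(\Liea_\CC)$-eigenvector $v$ of eigenvalue $\mu$ is $I_\mu\simeq B_{\mathbf H}(\mu)\twoheadrightarrow\mathscr X$ (since $B_{\mathbf H}(\lambda)\simeq I_{-w_0\lambda}=I_\lambda$ in rank one), not $B_{\mathbf H}(-\mu)$; this does not affect the argument because you only need a surjection from some principal series at the correct central character. Second, you should make explicit why your computation in (ii) gives indecomposability: the only one-dimensional $\mathbf H$-submodule of $B_{\mathbf H}(\rho)$ is $\CC\mathbf 1_{\mathbf H}^\rho\simeq E_{\mathbf H}^1$ (the other $S(\Liea_\CC)$-eigenspace $\CC\delta_{s_\alpha}$ is not $W$-stable), so no $\mathbf H$-complement exists; similarly for $B_{\mathbf H}(-\rho)$.
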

\begin{proof}
The bijectivity condition for $\mathcal P_{\mathbf H}^\lambda$ in 
Proposition \ref{prop:PoiBij}
reduces to
\[
\lambda\ne -\rho
\]
in the current case.
Hence (iv) follows from Proposition \ref{prop:sesquiHinv}
and Theorem \ref{thm:modstr} (ii).

Since the irreducibility condition \eqref{eq:BHirr} for $B_{\mathbf H}(\lambda)$ reduces to $\lambda \ne \pm\rho$,
we have
\begin{equation}\label{eq:sl2H1}
B_{\mathbf H}(\lambda)\simeq \mathscr A(A,\lambda)=X_{\mathbf H}(\lambda)
=\mathscr A(A,-\lambda)\simeq B_{\mathbf H}(-\lambda)
\quad\text{ for }\lambda \ne \pm\rho.
\end{equation}
Observe  $B_{\mathbf H}(\pm\lambda)$ has central character
$[\lambda]=[-\lambda]$.
Hence $B_{\mathbf H}(\lambda)\not\simeq B_{\mathbf H}(\mu)$
if $\lambda\ne\pm\mu$.
Since $B_{\mathbf H}(\lambda)\simeq \CC W$ as a $W$-module,
one sees \eqref{eq:sl2Hlist} is a list of inequivalent irreducible $\mathbf H$-modules.
In order to show this list is complete,
suppose $\mathscr X$ is any irreducible $\mathbf H$-module.
Then $\mathscr X$ has a central character,
say $[\lambda]$ $(\lambda\in\Liea_\CC^*)$.
Furthermore $\mathscr X$ must contain an irreducible $W$-submodule $F$
which is equivalent to $\CC_\triv$ or $\CC_\sign$.
First, we assume $F\simeq \CC_\triv$.
Then there exists a surjective $\mathbf H$-homomorphism
\[
P_{\mathbf H}(\CC_\triv,\lambda)=P_{\mathbf H}(\CC_\triv,-\lambda)\twoheadrightarrow \mathscr X.
\]
If $\lambda\ne\pm\rho$ then by (iv)
we have $\mathscr X \simeq B_{\mathbf H}(-\lambda)$,
which is listed in \eqref{eq:sl2Hlist}.
If $\lambda=\pm\rho$ then by (iv) again
$\mathscr X$ is a quotient of $B_{\mathbf H}(-\rho)$.
On the other hand, 
since
$\mathcal P_{\mathbf H}^{-\rho} \mathbf 1_{\mathbf H}^{-\rho}=\gamma(\phi_{-\rho})
=\gamma(\phi_{\rho})\in X_{\mathbf H}(\rho)$,
we have the exact sequence
\begin{equation}\label{eq:sl2H2}
0\to \Ker \mathcal P_{\mathbf H}^{-\rho}
\to B_{\mathbf H}(-\rho) \to
X_{\mathbf H}(\rho) \to 0
\end{equation}
where $\Ker \mathcal P_{\mathbf H}^{-\rho}\simeq \CC_\sign$
and $X_{\mathbf H}(\rho)\simeq \CC_\triv$ as $W$-modules.
This implies $\mathscr X\simeq X_{\mathbf H}(\rho)$.
By considering the special case of $\mathscr X=E_{\mathbf H}^1$
we get
\begin{equation}\label{eq:sl2H3}
E_{\mathbf H}^1\simeq X_{\mathbf H}(\rho).
\end{equation}
Secondly, assume $F\simeq\CC_\sign$.
Then there exists a surjective $\mathbf H$-homomorphism
\[
P_{\mathbf H}(\CC_\sign,\lambda)=P_{\mathbf H}(\CC_\sign,-\lambda)\twoheadrightarrow \mathscr X.
\]
If $\lambda\ne\pm\rho$ then 
from \eqref{eq:sl2H1} and Theorem \ref{thm:modstr} (v)
we have $B_{\mathbf H}(\lambda)\simeq \mathscr X$.
If $\lambda=\pm\rho$ then
$\mathscr X$ is a quotient of $\mathscr A(A,\rho)$
by Theorem \ref{thm:modstr} (v).
Since $\mathscr A(A,\rho)$ contains $X_{\mathbf H}(\rho)$ as
a unique irreducible subspace by Theorem \ref{thm:modstr} (iii) 
and $\dim X_{\mathbf H}(\rho)=1$,
we have $\mathscr X\simeq \mathscr A(A,\rho)/X_{\mathbf H}(\rho)$.
By considering two special cases of $\mathscr X=D_{\mathbf H}^1$
and $\mathscr X=\Ker \mathcal P_{\mathbf H}^{-\rho}$
we get
\begin{equation}\label{eq:sl2H4}
D_{\mathbf H}^1\simeq \Ker \mathcal P_{\mathbf H}^{-\rho}
\simeq \mathscr A(A,\rho)/X_{\mathbf H}(\rho).
\end{equation}
Hence in either case $\mathscr X$ is equivalent to one of \eqref{eq:sl2Hlist}.

Now $B_{\mathbf H}(\rho)\simeq\mathscr A(A,\rho)$ is clearly indecomposable and
we have from \eqref{eq:sl2H3} and \eqref{eq:sl2H4} the exact sequence
\[
0\to E_{\mathbf H}^1 \to B_{\mathbf H}(\rho) \to  D_{\mathbf H}^1 \to 0.
\]
Since $B_{\mathbf H}(-\rho)\simeq P_{\mathbf H}(\CC_\triv,\rho)$
is generated by the unique one-dimensional $W$-invariant subspace,
$B_{\mathbf H}(-\rho)$
is also indecomposable.
In addition, from \eqref{eq:sl2H2}, \eqref{eq:sl2H3}
and \eqref{eq:sl2H4}  we have the exact sequence
\[
0\to D_{\mathbf H}^1 \to B_{\mathbf H}(-\rho) \to  E_{\mathbf H}^1 \to 0.
\]
Thus (ii) is proved.

Finally
(iii) follows from \eqref{eq:sl2H1} and \eqref{eq:sl2H3}.
\end{proof}

Since $G=SL(2,\RR)$ is split,
$\Lieb=\{0\}$ and $\rho_\Liem=0$ in Theorem \ref{thm:charcorr}.
Let $(\Lieg_\CC,K)\Mod^\fl_{M}$ be the full subcategory of
$(\Lieg_\CC,K)\Mod^\fl$ which consists of the 
$(\Lieg_\CC,K)$-modules of finite length
having all $K$-types in $\Km$.
For $\lambda\in\Liea_\CC^*$
let $(\Lieg_\CC,K)\Mod^\fl_{M,[\lambda]}$ denote
the full subcategory of $(\Lieg_\CC,K)\Mod^\fl_M$ consisting of
the objects with generalized infinitesimal character $[\lambda]$
and
let $\mathbf H\Mod^\fd_{[\lambda]}$
denote the full subcategory of $\FD$ consisting of
the finite-dimensional $\mathbf H$-modules with generalized central character $[\lambda]$.
Then we have
\[
(\Lieg_\CC,K)\Mod^\fl_{M}
=\bigoplus_{[\lambda]} (\Lieg_\CC,K)\Mod^\fl_{M,[\lambda]},
\qquad
\FD=\bigoplus_{[\lambda]} \mathbf H\Mod^\fd_{[\lambda]}
\]
and
it follows from Theorems \ref{thm:charcorr} and \ref{thm:FD2HC}
that for each $[\lambda]$ three functors $\Xirad$, $\Ximin$ and $\Xi$
send an object in $\mathbf H\Mod^\fd_{[\lambda]}$
into $(\Lieg_\CC,K)\Mod^\fl_{M,[\lambda]}$.

\begin{thm}\label{thm:SL2Rfunctors}
{\normalfont (i)}
On $\FD$
two functors $\Xirad$ and $\Ximin$ are exact 
and coincide with each other.
For $\lambda$ with $\lambda(\alpha^\vee)\notin\{\pm3,\pm5,\ldots\}$,
three functors $\Xirad$, $\Ximin$ and $\Xi$ coincide
on $\mathbf H\Mod^\fd_{[\lambda]}$ (and hence are exact there).

\noindent
{\normalfont (ii)}
\begin{align*}
\Xirad(B_{\mathbf H}(\lambda))=\Ximin(B_{\mathbf H}(\lambda))=\Xi(B_{\mathbf H}(\lambda))&=B_{G}(\lambda)_\Kf\quad
\text{if }\lambda(\alpha^\vee)\notin\{\pm3,\pm5,\ldots\},\\
\Xirad(E_{\mathbf H}^1)=\Ximin(E_{\mathbf H}^1)=\Xi(E_{\mathbf H}^1)&=E_{G}^1,\\
\Xirad(D_{\mathbf H}^1)=\Ximin(D_{\mathbf H}^1)=\Xi(D_{\mathbf H}^1)
&=D_{G}^{1,+}\oplus D_{G}^{1,-}.
\end{align*}

\noindent
{\normalfont (iii)}
For $n=3,5,\ldots$
\begin{align*}
\Xirad(B_{\mathbf H}(-n\rho))=\Ximin(B_{\mathbf H}(-n\rho))&=B_G(-n\rho)_\Kf,\\
\Xi(B_{\mathbf H}(-n\rho))&=E_G^n.
\end{align*}
(Note $B_{\mathbf H}(n\rho)=B_{\mathbf H}(-n\rho)$ in this case.)

\noindent
{\normalfont (iv)}
For $n=3,5,\ldots$
the functor $\Xi$ is not exact on $\mathbf H\Mod^\fd_{[n\rho]}$.
\end{thm}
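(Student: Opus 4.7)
My plan is to exhibit a short exact sequence in $\mathbf H\Mod^\fd_{[n\rho]}$ (with $n \geq 3$ odd) whose image under $\Xi$ fails to be exact. Since the unique simple object in this category is $B_{\mathbf H}(-n\rho)$ (Proposition \ref{prop:HIrrep}), a natural candidate is the non-split self-extension
\[
\tilde P_{\mathbf H} := P_{\mathbf H}(\CC_\triv)/\mathbf H\cdot \mathfrak{m}_{[n\rho]}^2 \otimes v_\triv,
\]
where $\mathfrak{m}_{[n\rho]} \subset S(\Liea_\CC)^W$ is the maximal ideal at $[n\rho]$ and $v_\triv$ generates $\CC_\triv$. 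Via the isomorphism $P_{\mathbf H}(\CC_\triv,n\rho) \simeq B_{\mathbf H}(-n\rho)$ of Proposition \ref{prop:HIrrep} (iv), together with $\mathfrak{m}_{[n\rho]}\cdot\tilde P_{\mathbf H} \simeq P_{\mathbf H}(\CC_\triv,n\rho)$, one gets the non-split short exact sequence
\[
0 \to B_{\mathbf H}(-n\rho) \xrightarrow{\iota} \tilde P_{\mathbf H} \xrightarrow{\pi} B_{\mathbf H}(-n\rho) \to 0 \quad (\ast)
\]
in $\mathbf H\Mod^\fd_{[n\rho]}$ (non-splitness being immediate from $(\xi^2-n^2\rho(\xi)^2)v_\triv \neq 0$ in $\tilde P_{\mathbf H}$).

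\textbf{Strategy.} Suppose for contradiction that $\Xi$ is exact on $(\ast)$. Since $\Xi(B_{\mathbf H}(-n\rho)) = E_G^n$ by (iii), applying $\Xi$ would produce an exact sequence $0 \to E_G^n \to \Xi(\tilde P_{\mathbf H}) \to E_G^n \to 0$ of finite-dimensional $\mathfrak{sl}_2(\CC)$-modules; by Weyl's complete reducibility this would split, forcing $\Xi(\tilde P_{\mathbf H}) \simeq E_G^n \oplus E_G^n$. In particular, the $K$-type $n+1$ would occur with multiplicity zero in $\Xi(\tilde P_{\mathbf H})$. I would then derive a contradiction by computing this multiplicity as positive. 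By part (i), $\Ximin$ is exact on $\FD$, so $\Ximin(\tilde P_{\mathbf H})$ is a length-six extension $0 \to Y_1 \to \Ximin(\tilde P_{\mathbf H}) \to Y_2 \to 0$ with $Y_1, Y_2 \simeq B_G(-n\rho)_\Kf$, and each composition factor $E_G^n$, $D_G^{n,+}$, $D_G^{n,-}$ appears with multiplicity two; in particular $K$-type $n+1$ has multiplicity two. By Corollary \ref{cor:Xi} (i), $\Xi(\tilde P_{\mathbf H}) = \Ximin(\tilde P_{\mathbf H})/\Ximax_{(\Ximin(\tilde P_{\mathbf H}), \tilde P_{\mathbf H})}(\{0\})$, and this kernel plainly contains the socle $D_G^{n,+} \oplus D_G^{n,-} \subset Y_1$: for $n \geq 3$ these discrete-series factors have no $K$-type in $\Kqsp = \{\CC_\triv, \Lies_\pm\}$, so their radial restrictions vanish trivially.

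\textbf{Main obstacle.} The crux---and in my estimation the main obstacle---is showing that $\Ximax(\{0\})$ contains \emph{no further} $D$-submodules; equivalently, that in the long exact sequence for $\Hom_{\Lieg_\CC,K}(D_G^{n,\pm}, \cdot)$ applied to $0 \to Y_1 \to \Ximin(\tilde P_{\mathbf H}) \to Y_2 \to 0$, the connecting map $\delta : \Hom_{\Lieg_\CC,K}(D_G^{n,\pm}, Y_2) \to \Ext^1_{\Lieg_\CC,K}(D_G^{n,\pm}, Y_1)$ is nonzero. Intuitively this should follow from non-splitness of $(\ast)$ propagating through $\Ximin$: $\Ximin(\tilde P_{\mathbf H})$ ought not admit a section of the $D$-quotient of $Y_2$. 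To make this rigorous I would exploit the explicit realization $\Ximin(P_{\mathbf H}(\CC_\triv)) = P_G(\CC_\triv)$ from \eqref{eq:XiPCl}, combined with the central-character correspondence of Theorem \ref{thm:charcorr}, to identify $\Ximin(\tilde P_{\mathbf H})$ as the quotient of $P_G(\CC_\triv)$ by the $U(\Lieg_\CC)$-submodule generated by $\mathfrak{m}_G^2 \otimes v_\triv$, where $\mathfrak{m}_G \subset U(\Lieg_\CC)^G$ is the maximal ideal at $[n\rho]$; within this concrete model, the non-splitness at the $D$-level can be verified directly by tracing the action of a lowering operator from the $K$-type $n+1$ vector of $Y_2$ into $Y_1$. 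Once this is established, $\Xi(\tilde P_{\mathbf H})$ retains a $D_G^{n,+}$ composition factor inherited from $Y_2$, yielding $K$-type $n+1$ of multiplicity $\geq 1$---the sought contradiction.
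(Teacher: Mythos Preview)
Your approach is correct and uses the same self-extension as the paper (your $\tilde P_{\mathbf H}$ is precisely the paper's $\mathscr X_2 = \mathbf H \otimes_{\CC[\gamma(C)]\otimes\CC W} \mathscr U_2$, and your identification of $\Ximin(\tilde P_{\mathbf H})$ with $P_G(\CC_\triv)/\mathfrak m_G^2\, P_G(\CC_\triv)$ matches the paper's $\mathscr Y_2$). Both arguments establish non-exactness by showing $\Xi(\mathscr X_2)$ contains the $K$-type $n+1$, which cannot occur in any extension of $E_G^n$ by $E_G^n$.

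The difference lies in how this is shown. You aim to pin down $\Ximax(\{0\})$ as exactly the socle $D^{n,+}_{Y_1}\oplus D^{n,-}_{Y_1}$, reducing to the non-vanishing of a connecting map $\delta$ in an $\Ext$-sequence, which you then propose to verify via the lowering-operator computation. The paper shortcuts this: it does not compute $\Ximax(\{0\})$, but simply checks that the single vector $e_+^{(n+1)/2}\otimes 1$ lies outside it. The point is that
\[
e_-\cdot\bigl(e_+^{(n+1)/2}\otimes 1\bigr)
= e_+^{(n-1)/2}\otimes 2\bigl(\gamma(C)-\gamma(C)(n\rho)\bigr)
\]
is a nonzero vector of $K$-type $n-1$; were $e_+^{(n+1)/2}\otimes 1$ in $\mathscr N=\Ximax(\{0\})$, so would this be, forcing $E_G^n$ into the composition series of $\mathscr N$ (it being the unique factor carrying $K$-type $n-1$)---but $E_G^n$ contains $\CC_\triv\in\Ksp$, a contradiction. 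This is exactly the lowering-operator calculation you allude to, deployed directly rather than as input to an $\Ext$-argument. Your appeal to Weyl's theorem is correct but unnecessary: the $K$-type count alone contradicts exactness, regardless of whether the resulting extension of $E_G^n$ by $E_G^n$ would split.
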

\begin{proof}

Suppose $\lambda(\alpha^\vee)\notin\{\pm3,\pm5,\ldots\}$.
Then each irreducible object in $(\Lieg_\CC,K)\Mod^\fl_{M,[\lambda]}$
contains at least one single-petaled $K$-type
and conversely any single-petaled $K$-type is
contained in exactly one irreducible object with multiplicity $1$:
\begin{align*}
\lambda(\alpha^\vee)\notin\{\pm1,\pm3,\ldots\} &:\ 
B_G(\lambda)_\Kf \supset \CC_\triv,\, \Lies_\pm,\\
\lambda=\pm\rho &:\ 
E_G \supset \CC_\triv,\, D_G^{1,+}\supset \Lies_+,\, D_G^{1,-}\supset \Lies_-. 
\end{align*}
Now for $\mathscr X\in \mathbf H\Mod^\fd_{[\lambda]}$ we have
consecutive surjective $(\Lieg_\CC,K)$-homomorphisms
\begin{equation}\label{eq:cons2surj}
\Xirad(\mathscr X)
\twoheadrightarrow
\Ximin(\mathscr X)
\twoheadrightarrow
\Xi(\mathscr X).
\end{equation}
They are in fact natural transforms.
We assert 
if $\mathscr I\in(\Lieg_\CC,K)\Mod^\fl_{M,[\lambda]}$ is an irreducible object
then it appears with the same multiplicity
in the three composition series for 
$\Xirad(\mathscr X)$, $\Ximin(\mathscr X)$ and $\Xi(\mathscr X)$.
Indeed, if $\mathscr I$ contains $V\in \Ksp$ then we have from \eqref{eq:genspcor}
\[
\Hom_K(V,\Xirad(\mathscr X))
\simeq
\Hom_K(V,\Ximin(\mathscr X))
\simeq
\Hom_K(V,\Xi(\mathscr X))
\simeq
\Hom_W(V^M,\mathscr X).
\]
Hence the multiplicity of $\mathscr I$ in each series
equals $\dim \Hom_W(V^M,\mathscr X)$.
Thus two homomorphisms in
\eqref{eq:cons2surj} are bijective
and $\Xirad=\Ximin=\Xi$ on $\mathbf H\Mod^\fd_{[\lambda]}$.
The exactness of these functors follows from Proposition \ref{prop:tensorRad}
and Corollary \ref{cor:partialexact}.

If $\lambda(\alpha^\vee)\notin\{1,3,\ldots\}$
then 
by Proposition \ref{prop:HIrrep} (iv), \eqref{eq:XiPCl} and \eqref{SL2RGM2}
\begin{equation}\label{eq:BPPB}
\Ximin(B_{\mathbf H}(\lambda))=\Ximin(P_{\mathbf H}(\CC_\triv,-\lambda))=
P_G(\CC_\triv,-\lambda)=B_{G}(\lambda)_\Kf.
\end{equation}
Using this result for $\lambda=-\rho$,
Proposition \ref{prop:sesquiHinv},
Theorem \ref{thm:Xiindform} and
Proposition \ref{prop:sesquiGinv},
we obtain
\[
\Xi(B_{\mathbf H}(\rho))=\Xi(B_{\mathbf H}(-\rho)^\star)
=\Xi(B_{\mathbf H}(-\rho))^\star=(B_{G}(-\rho)_\Kf)^\star=B_{G}(\rho)_\Kf.
\]
Thus the first assertion of (ii) is proved.

Now it follows from Proposition \ref{prop:HIrrep} (iii), Corollary \ref{cor:Xi} (iii)
and \eqref{SL2RGM1} that
\begin{align*}
\Xi(E_{\mathbf H}^1)&=\Xi(X_{\mathbf H}(\rho))=X_{G}(\rho)=E_{G}^1,\\
\Xi(B_{\mathbf H}(-n\rho))&=\Xi(X_{\mathbf H}(-n\rho))=X_{G}(-n\rho)=E_{G}^n
\quad\text{for }n=3,5,\ldots.
\end{align*}
This proves the second assertion of (ii) and the second assertion of (iii).
The third assertion of (ii) follows from \eqref{SL2RGM0}, Proposition \ref{prop:HIrrep} (ii)
and the exactness of the three functors on $\mathbf H\Mod^\fd_{[\rho]}$.
Because of \eqref{eq:BPPB}
the first assertion of (iii) follows if we can show the coincidence of $\Xirad$ and $\Ximin$
on $\mathbf H\Mod^\fd_{[n\rho]}$ for $n=3,5,\ldots$.

Suppose $n=3,5,\ldots$. Let
$C=(e_0^2+2e_+e_-+2e_-e_+)/8\in U(\Lieg_\CC)^G$ be the Casimir element.
Then $\gamma(C)=((\alpha^\vee)^2-1)/8$ and $\CC[\gamma(C)]=S(\Liea_\CC)^W$.
Hence
$\CC[\gamma(C)]\otimes \CC W$ is a subalgebra of $\mathbf H=S(\Liea_\CC)\otimes \CC W=\CC[\alpha^\vee]\otimes\CC W$.
Now, for any $\mathscr X\in \mathbf H\Mod^\fd_{[n\rho]}$
the subspace $\mathscr X^W$ of $W$-fixed elements is stable under
the action of $W$ and $\gamma(C)$.
Hence an $\mathbf H$-homomorphism
\begin{equation}\label{anotherXmin}
\mathbf H \otimes_{\CC[\gamma(C)]\otimes\CC W} \mathscr X^W
\longrightarrow
\mathscr X.
\end{equation}
is naturally defined.
From the decomposition
\[
\mathbf H=\bigl(\CC \oplus \CC(\alpha^\vee+1)\bigr) \otimes \CC[\gamma(C)]\otimes \CC W
\]
we have
\begin{equation}\label{eq:HCXW}
\mathbf H \otimes_{\CC[\gamma(C)]\otimes\CC W} \mathscr X^W = 
1\otimes \mathscr X^W\,
\oplus\, (\alpha^\vee+1)\otimes \mathscr X^W.
\end{equation}
It is easy to check by \eqref{eq:Hrel}
this is the decomposition into
the $\CC_\triv$- and $\CC_\sign$-isotypic components as a $W$-module.
Note also that $\mathbf H \otimes_{\CC[\gamma(C)]\otimes\CC W} \mathscr X^W\in \mathbf H\Mod^\fd_{[n\rho]}$.
We assert \eqref{anotherXmin} is bijective.
In fact, 
since the first summand of \eqref{eq:HCXW}
is bijectively mapped to $\mathscr X^W$ by \eqref{anotherXmin},
the kernel and the cokernel of \eqref{anotherXmin}
do not have any non-zero $W$-fixed vector.
But since
the unique irreducible object $B_{\mathbf H}(-n\rho)$ in $\mathbf H\Mod^\fd_{[n\rho]}$
has a non-zero $W$-fixed vector,
both the kernel and the cokernel must be zero.
Now by \cite{KR} the following decomposition holds:
\begin{equation}\label{eq:Usl2rdcp}
U(\Lieg_\CC)=
\bigl(\CC\oplus \bigoplus_{\nu\ge1}(\CC e_+^\nu \oplus \CC e_-^\nu)\bigr)\otimes
\CC[C]\otimes U(\Liek_\CC).
\end{equation}
In particular, $\CC[C]\otimes U(\Liek_\CC)$ is a subalgebra of $U(\Lieg_\CC)$.
Regarding $\mathscr X^W$ as a $(\CC[C]\otimes U(\Liek_\CC), K)$-module
by the trivial $K$-action and the $\CC[C]$-action given by
$Cx=\gamma(C)x$,
we define the induced $(\Lieg_\CC,K)$-modules
\[
\Tilde{\mathscr Y}:=U(\Lieg_\CC)\otimes_{U(\Liek_\CC)} \mathscr X^W,
\qquad
\mathscr Y:=U(\Lieg_\CC)\otimes_{\CC[C]\otimes U(\Liek_\CC)} \mathscr X^W.
\]
If we put
\[
\kappa: \Tilde{\mathscr Y} \ni D\otimes x
\longmapsto DC\otimes x-D\otimes \gamma(C)x \in \Tilde{\mathscr Y}
\]
then we have the exact sequence
\[
\Tilde{\mathscr Y} \xrightarrow{\kappa} \Tilde{\mathscr Y}\to\mathscr Y \to 0.
\]
Applying the right exact functor $\Gamma$ in \S\ref{sec:HC} to this,
we get
\begin{align*}
\Gamma(\Tilde{\mathscr Y})
 &\xrightarrow{\Gamma(\kappa)} \Gamma(\Tilde{\mathscr Y})\to
 \Gamma(\mathscr Y) \to 0,\quad\text{(exact)}\\
\Gamma(\Tilde{\mathscr Y})
&=\mathbf H\otimes_{\CC W} \mathscr X^W,\\
\Gamma(\kappa): \mathbf H\otimes_{\CC W} \mathscr X^W \ni h\otimes x
&\longmapsto h\gamma(C)\otimes x-h\otimes \gamma(C)x
\in \mathbf H\otimes_{\CC W} \mathscr X^W,\\
\therefore\quad\Gamma(\mathscr Y)&=\mathbf H \otimes_{\CC[\gamma(C)]\otimes\CC W} \mathscr X^W
=\mathscr X.
\end{align*}
Note the linear map $\gamma^{\mathscr Y}: \mathscr Y\to\mathscr X$ reduces to
$D\otimes x\mapsto \gamma(D)\otimes x$.
Let us prove $(\mathscr Y,\mathscr X)$ is a radial pair 
and $\gamma^{\mathscr Y}$ is its radial restriction
satisfying \eqref{cond:rest3}.
First, by \ref{eq:Usl2rdcp} we can
decompose $\mathscr Y$ into
the $K$-isotypic components:
\[
\mathscr Y =
(1\otimes \mathscr X^W)
\oplus
(e_+ \otimes \mathscr X^W)
\oplus
(e_- \otimes \mathscr X^W)
\oplus \bigoplus_{\nu\ge2}
\bigl((e_+^\nu\otimes \mathscr X^W)
\oplus(e_-^\nu\otimes \mathscr X^W)\bigr).
\]
Thus all $K$-types of $\mathscr Y$ belong to $\Km$.
The first three summands in the decomposition
correspond to $\CC_\triv$, $\Lies_+$ and $\Lies_-$,
respectively.
Take $m\in\ZZ_{\ge0}$ so that $(\gamma(C)-\gamma(C)(n\rho))^m \mathscr X^W=\{0\}$.
Suppose $\nu\in\ZZ_{\ge0}$.
Then one can directly calculate
\[
\gamma(e_\pm^\nu)=\frac1{2^\nu}(\alpha^\vee+1)(\alpha^\vee+3)\cdots(\alpha^\vee+2\nu-1).
\]
Hence there exists some $f_\nu\in S(\Liea_\CC)=\CC[\alpha^\vee]$ such that
\[
f_\nu \gamma(e_\pm^\nu) -(\alpha^\vee+n) \in S(\Liea_\CC)(\gamma(C)-\gamma(C)(n\rho))^m.
\]
The restriction of $\gamma^{\mathscr Y}$ to 
each $K$-isotypic component $e_\pm^\nu\otimes \mathscr X^W$ is injective
since the composition of this map with multiplication by $f_\nu$ reduces to 
\[\xymatrix{
e_\pm^\nu\otimes \mathscr X^W \ni 
e_\pm^\nu\otimes x \ar @{|->} ^-{\gamma^{\mathscr Y}} [r]
&\gamma(e_\pm^\nu) \otimes x
\ar @{|->} ^-{f_\nu\cdot} [r]
&f_\nu \gamma(e_\pm^\nu) \otimes x=(\alpha^\vee+n) \otimes x
\in \mathscr X.
}\]
If $\nu=0,\pm1$ we also have
\[
\gamma^{\mathscr Y}(1\otimes \mathscr X^W)=1\otimes \mathscr X^W,
\quad
\gamma^{\mathscr Y}(e_+ \otimes \mathscr X^W)=
\gamma^{\mathscr Y}(e_- \otimes \mathscr X^W)=
(\alpha^\vee+1)\otimes \mathscr X^W.
\]
Thus $\gamma^{\mathscr Y}$ is a radial restriction defining
a structure of $(\mathscr Y,\mathscr X)$
as an object of $\CCh$.
Since $\gamma^{\mathscr Y}$ satisfies \eqref{cond:rest3} by Remark \ref{eq:goodRR},
Lemma \ref{lem:rest3} implies $(\mathscr Y,\mathscr X)\in\Crad$.
Now $\Ximin_{((\mathscr Y,\mathscr X))}(\mathscr X)=\mathscr Y$
since $\mathscr Y$ is generated by $1\otimes \mathscr X^W$.
Hence by Theorem \ref{thm:Ximin} we have
$\Ximin(\mathscr X)=\mathscr Y$ and
the functor $\Ximin$ restricted to
$\mathbf H\Mod^\fd_{[n\rho]}$ coincides with the functor
\[
\mathscr X \longmapsto
U(\Lieg_\CC)\otimes_{\CC[C]\otimes U(\Liek_\CC)} \mathscr X^W.
\]
This is exact by \eqref{eq:Usl2rdcp}.
We must prove this is also equal to $\Xirad$.
To do so recall the linear map 
$\gamma_\rad : \Xirad(\mathscr X)\to\mathscr X$ used in \S\ref{sec:Xirad}
satisfies Conditions
\eqref{cond:rest2} and \eqref{cond:rest3}.
Thus $\gamma_\rad$ induces the linear bijection
\[
\gamma_\rad : \Xirad(\mathscr X)^K \simarrow \mathscr X^W
\]
by \eqref{cond:rest2} and satisfies
\[
\gamma_\rad(Cy)=\gamma(C)\gamma_\rad(y) \quad\text{for }y\in \Xirad(\mathscr X)^K
\]
by \eqref{cond:rest3}.
Hence we can define a $(\Lieg_\CC,K)$-homomorphism
$\mathcal I: U(\Lieg_\CC)\otimes_{\CC[C]\otimes U(\Liek_\CC)} \mathscr X^W \to \Xirad(\mathscr X)$ by
\[
D\otimes x \longmapsto D y\text{ with }y\in \Xirad(\mathscr X)^K
\text{ such that }\gamma_\rad(y)=x.
\]
If $\mathscr I\in (\Lieg_\CC,K)\Mod^\fl_{M,[n\rho]}$ is
a composition factor of $\Coker\mathcal I$,
then it does not contain the trivial $K$-type.
This means $\mathscr I=D_G^{n,+}$ or $D_G^{n,-}$.
But since each of $D_G^{n,\pm}$ contains neither $\Lies_+$ nor $\Lies_-$
and since $\Xirad(\mathscr X)$ is generated by the sum of
those isotypic components for $\CC_\triv$, $\Lies_+$ and $\Lies_-$,
we conclude $\Coker\mathcal I=\{0\}$ and $\mathcal I$ is surjective.
Hence the natural surjective homomorphism $\Xirad(\mathscr X)\to\Ximin(\mathscr X)$
must be a bijection,
proving $\Xirad=\Ximin$ on $\mathbf H\Mod^\fd_{[n\rho]}$.

Finally in order to prove (iv), suppose $n=3,5,\ldots$ as in the last paragraph
and put
\[
\mathscr U_j=\CC[\gamma(C)]\bigm/
\CC[\gamma(C)](\gamma(C)-\gamma(C)(n\rho))^j
\quad(j=1,2).
\]
This is a $\CC[\gamma(C)]\otimes \CC W$-module
by the trivial $W$-action and is also
a $(\CC[C]\otimes U(\Liek_\CC),K)$-module
by the trivial $K$-action and the $\CC[C]$-action given by $Cu=\gamma(C)u$.
Put
\[
\mathscr X_j=\mathbf H \otimes_{\CC[\gamma(C)]\otimes\CC W} \mathscr U_j,
\quad
\mathscr Y_j=U(\Lieg_\CC) \otimes_{\CC[C]\otimes U(\Liek_\CC)} \mathscr U_j
\quad(j=1,2).
\]
Then by the above argument we have
$\Xirad(\mathscr X_j)=\mathscr Y_j$
and the exact sequence
\[
0\longrightarrow \mathscr U_1 \xrightarrow{\text{multiplication by }\gamma(C)-\gamma(C)(n\rho)}
\mathscr U_2 \xrightarrow{\text{quotient map}}
\mathscr U_1\longrightarrow 0
\]
induces exact sequences
\[
0\to \mathscr X_1 \to \mathscr X_2 \to \mathscr X_1 \to 0,\quad
0\to \mathscr Y_1 \to \mathscr Y_2 \to \mathscr Y_1 \to 0.
\]
We shall prove
\begin{equation}\label{eq:nonexact}
0\to \Xi(\mathscr X_1) \to \Xi(\mathscr X_2) \to \Xi(\mathscr X_1) \to 0
\end{equation}
is not exact.
Since $\dim\mathscr X_1=2$, we see
$\mathscr X_1=B_{\mathbf H}(-n\rho)$ by Proposition \ref{prop:HIrrep} (i).
Hence $\Xi(\mathscr X_1)=E^n_G$ by (iii) of the theorem.
Since $E^n_G$ does not contain the $K$-type corresponding to $n+1$,
it suffices to show $\Xi(\mathscr X_2)$ has this $K$-type.
By definition we have
$\Xi(\mathscr X_2) = \mathscr Y_2/\mathscr N$ with
\[
\mathscr N = \sum\bigl\{\mathscr Y\subset \mathscr Y_2;\,
\text{a }(\Lieg_\CC,K)\text{-submodule containing no single-petaled $K$-type}
\bigr\}.
\]
We assert $e_+^{\frac{n+1}2}\otimes 1\in\mathscr Y_2$ does not belong to $\mathscr N$.
Indeed, if it does, then since
\[
e_-e_+^{\frac{n+1}2}=e_+^{\frac{n-1}2}\biggl(
2(C-\gamma(C)(n\rho))-\frac{n}2e_0-\frac14e_0^2
\biggr)
\quad\text{in }U(\Lieg_\CC),
\]
$\mathscr N$ must contain
\[
\CC e_-\bigl(e_+^{\frac{n+1}2}\otimes 1\bigr)
=
\CC e_+^{\frac{n-1}2}\otimes 2(\gamma(C)-\gamma(C)(n\rho)),
\]
a $K$-type corresponding to $n-1$;
Hence in the composition series of $\mathscr N$
there appears the irreducible object $E^n_G$, whose $K$-types are
\[
n-1,n-3,\ldots,2,0,-2,\ldots,-n+3,-n+1.
\]
This contradicts the definition of $\mathscr N$.
Thus $\Xi(\mathscr X_2)$ contains
\[
\CC \bigl(e_+^{\frac{n+1}2}\otimes 1\bmod \mathscr N\bigr),
\]
a $K$-type corresponding to $n+1$.
Hence \eqref{eq:nonexact} is not exact.
\end{proof}

\appendix

\section{Non-symmetric hypergeometric functions}\label{apndx:NSHG}
Let $\mathbf k : W\backslash R\to \CC$ and $\mathscr T_{\mathbf k}$
be as in Definition \ref{defn:Chered}.
Let $\lambda\in\Liea_\CC^*$.
Opdam's non-symmetric hypergeometric function $\mathbf G(\lambda,\mathbf k,a)$
is an analytic function on $A$ satisfying
\begin{equation*}
\left\{\begin{aligned}
&\mathscr T_{\mathbf k}(\xi)\mathbf G(\lambda,\mathbf k,a)=\lambda(\xi)\mathbf G(\lambda,\mathbf k,a)\quad\text{for }\xi\in\Liea_\CC,\\
&\mathbf G(\lambda,\mathbf k,1)=1.\end{aligned}\right.
\end{equation*}
Opdam shows in \cite[\S3]{Op:Cherednik} that
there uniquely exists such a function for a generic $\mathbf k$.
This is the case when $\mathbf k=\mathbf m$
and $\mathbf G(\lambda, a):=\mathbf G(\lambda,\mathbf m,a)$
plays central roles in \S\S\ref{sec:modstr}--\ref{sec:F}.
The purpose of the appendix is to prove the following:
\begin{thm}\label{thm:Ganal}
Suppose $\varphi(a)\in C^\infty(A)$ satisfies
\begin{equation}\label{eq:Tkl}
\mathscr T_{\mathbf k}(\xi)\varphi(a)=\lambda(\xi)\varphi(a)\quad\text{for }\xi\in\Liea_\CC.
\end{equation}
Then $\varphi(a)\in\mathscr A(A)$.
\end{thm}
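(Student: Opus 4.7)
My plan is first to establish real-analyticity on the regular locus $A_{\reg}$ by reducing \eqref{eq:Tkl} to an integrable linear Pfaffian system on a single Weyl chamber, and then to extend analyticity to the wall points by exploiting the finite-dimensionality of the Cherednik solution space.

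For the reduction, let $A_+=\exp\Liea_+$ be the fundamental chamber (where $\alpha(H)>0$ for all $\alpha\in R^+$) and define $\Phi: A_+\to\CC^{|W|}$ by $\Phi_w(a)=\varphi(wa)$ for $w\in W$, using the natural $W$-action on $A\simeq\Liea$. The chain-rule identity $(\der(\xi)\varphi)(wa)=(\der(w^{-1}\xi)\Phi_w)(a)$ together with $\varphi(s_\beta wa)=\Phi_{s_\beta w}(a)$ converts \eqref{eq:Tkl}, evaluated at $wa$ for $a\in A_+$, into a closed first-order linear PDE system
\[
\der(\eta)\Phi_w(a)=\sum_{w'\in W}M^\eta_{ww'}(a)\,\Phi_{w'}(a),\qquad \eta\in\Liea,\ w\in W,
\]
whose coefficient matrices $M^\eta(a)$ are real-analytic on $A_+$ because each denominator $1-e^{-\gamma(H)}$ with $\gamma\in R$ is non-vanishing on $\Liea_+$. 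Since the existence of $\Phi$ forces complete integrability, the classical Frobenius theorem for Pfaffian systems with real-analytic coefficients shows that any $C^1$ solution is real-analytic. Applied to $\Phi$, this yields $\Phi\in\mathscr A(A_+)^{|W|}$, and hence $\varphi$ is real-analytic on every translate $wA_+$, i.e.\ on the whole of $A_{\reg}$.

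To extend analyticity to a wall point $a_0\in A\setminus A_{\reg}$, I fix a connected neighbourhood $U$ of $a_0$ and use that the local $C^\infty$ solution space of the Cherednik system has dimension at most $|W|$. For parameters $(\mathbf k',\lambda')$ generic enough that Opdam's non-symmetric hypergeometric functions $\mathbf G(w\lambda',\mathbf k',\cdot)$ with $w\in W$ are all defined and globally real-analytic on $A$ (Theorem \ref{thm:G}), they span the solution space of the perturbed system on each connected component of $A_{\reg}$. The plan is to approximate $(\mathbf k,\lambda)$ by a sequence of such generic $(\mathbf k_n,\lambda_n)$, express the analytic restriction $\varphi|_{A_+}$ in the limit as a linear combination $\sum_{w}c_{w}\mathbf G(w\lambda,\mathbf k,\cdot)$ using holomorphic dependence of the hypergeometric functions on their parameters, and conclude that $\varphi$ coincides with this analytic combination near $a_0$ by matching both sides as $C^\infty$ solutions of \eqref{eq:Tkl}.

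The decisive difficulty, and the main obstacle I anticipate, is precisely this wall-extension step: the non-connectedness of $A_{\reg}$ means that the representation determined on $A_+$ is not automatically the one produced by the analytic continuation of $\varphi$ on an adjacent chamber, so one must genuinely exploit the Cherednik reflection terms and the $C^\infty$ regularity at $a_0$ to reconcile the chamberwise descriptions and to control the limit $n\to\infty$. An alternative but equally delicate route is to invoke regular-singularity theory for the complexified holonomic system on $A_\CC$, in which the walls are regular singular hypersurfaces and the $C^\infty$-regularity of $\varphi$ on the real torus excludes the non-analytic formal solutions, forcing genuine analyticity. Either route reduces the theorem to a careful local analysis at a wall, which is the real content of the statement.
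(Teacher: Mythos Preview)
Your setup on the regular locus is essentially the same as the paper's: you package the $W$-translates of $\varphi$ into a vector-valued function and observe that it solves an integrable first-order linear system with analytic coefficients on each chamber (the paper phrases this via the Knizhnik--Zamolodchikov connection), so analyticity on $A_\reg$ is fine.

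The gap is at the wall. Your Approach~A (generic approximation) is not just delicate but incomplete as stated: expressing $\varphi|_{A_+}$ as a limit of combinations $\sum_w c_w^{(n)}\mathbf G(w\lambda_n,\mathbf k_n,\cdot)$ on the open chamber does not by itself tell you anything about $\varphi$ \emph{at} the wall point $a_0$, because the topology in which you take the limit lives on $A_+$, not on a neighbourhood of $a_0$. To conclude that the analytic function you build agrees with $\varphi$ near $a_0$ you would need to know that the chamberwise analytic continuations of $\varphi$ patch together consistently---which is precisely the statement you are trying to prove. You diagnose this yourself (``the representation determined on $A_+$ is not automatically the one produced \ldots on an adjacent chamber'') but do not resolve it.

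Your Approach~B is the right one, and is exactly what the paper does, but it requires a concrete local argument that you do not supply. The paper complexifies $\Phi$ to a holomorphic solution $\tilde\Phi$ on $(\Liea+iU)_\reg$, then proves \emph{single-valuedness} across a wall by restricting to a transversal complex line, where the KZ system becomes a regular singular ODE at the wall. The key elementary lemma (the paper's Lemma~A.2) says: a $C^\infty$ function on $(-\epsilon,\epsilon)$ which on each side of $0$ is a finite sum $\sum x^{\mu}(\log x)^j f_{\mu,j}(x)$ with analytic $f_{\mu,j}$ must in fact be analytic at $0$; the $C^\infty$-regularity kills the non-integer exponents and the logarithms. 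This is the ``$C^\infty$-regularity excludes the non-analytic formal solutions'' step you allude to, and it is the actual content of the theorem. Once single-valuedness is established, $\tilde\Phi$ extends across the codimension-one wall strata by a removable-singularity argument along the wall (Lemma~A.4), and across the remaining codimension~$\ge 2$ locus by Hartogs. Your proposal stops exactly at the point where this work begins.
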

Recall Lemma \ref{lem:Glem} is founded on this result.
Let us start with an elementary lemma.

\begin{lem}\label{lem:fregu}
Suppose $\epsilon>0$ and a $C^\infty$ function $F(x)$ on $(-\epsilon,\epsilon)$
satisfies the following conditions:\smallskip

\noindent{\normalfont (i)}
$F(x)$ is analytic on $(-\epsilon,0)\sqcup(0,\epsilon)$;

\noindent{\normalfont (ii)}
there exist $f_{\lambda,j} \in\mathscr A((-\epsilon,\epsilon))$
with a finite index set $\Lambda\subset \CC\times \ZZ_{\ge0}$
such that
\[
F(x)=\sum_{(\lambda,j)\in\Lambda} x^\lambda (\log x)^j f_{\lambda,j}(x)
\quad\text{on }(0,\epsilon);
\]

\noindent{\normalfont (iii)}
there exist $g_{\lambda,j} \in\mathscr A((-\epsilon,\epsilon))$
with a finite index set $\Lambda'\subset \CC\times \ZZ_{\ge0}$
such that
\[
F(x)=\sum_{(\lambda,j)\in\Lambda'} (-x)^\lambda (\log (-x))^j g_{\lambda,j}(x)
\quad\text{on }(-\epsilon,0)
\]
Then $F(x)$ is analytic on $(-\epsilon,\epsilon)$.
\end{lem}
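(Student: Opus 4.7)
The plan is to use the $C^\infty$ regularity of $F$ at $0$ to constrain the asymptotic expansions furnished by (ii) and (iii), showing that only non-negative integer exponents with no logarithm can actually appear; the two sides then glue analytically.

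First I would reformulate (ii) as a convergent generalized power series. Expanding each $f_{\lambda,j}$ as a Taylor series $\sum_n c_{\lambda,j,n}\,x^n$ (convergent uniformly on compact subsets of $(-\epsilon,\epsilon)$) and interchanging sums on $(0,\epsilon)$, the hypothesis becomes
\[
F(x)=\sum_{(\lambda,j)\in\Lambda}\sum_{n=0}^{\infty} c_{\lambda,j,n}\,x^{\lambda+n}(\log x)^{j}\quad(0<x<\epsilon).
\]
A uniqueness statement for such expansions will be the basic tool: if a finite sum of terms $x^{\mu_i}(\log x)^{k_i}$ with distinct $(\mu_i,k_i)$ is $o(x^N)$ as $x\to0^+$ for every $N$, then the individual coefficients vanish. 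This is standard and follows from a Vandermonde-type argument in $x^{\mu}$ modulo $\mathbb Z$ and the powers of $\log x$.

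Next, because $F$ is $C^\infty$ at $0$, Taylor's theorem gives $F(x)-\sum_{m=0}^{N}\tfrac{F^{(m)}(0)}{m!}x^{m}=o(x^{N})$ as $x\to 0^+$ for every $N$. I would pick, among all indices $(\lambda,j,n)$ with $c_{\lambda,j,n}\neq 0$ and $(\lambda+n,j)\notin \mathbb Z_{\ge 0}\times\{0\}$, one with minimal real part of $\lambda+n$ (and highest $j$ among those), subtract the admissible polynomial part of $F$ and all other terms of strictly smaller real-part order (of which there are only finitely many by finiteness of $\Lambda$), and derive a contradiction with the $o(x^{N})$ estimate via the uniqueness step. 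This will force every $c_{\lambda,j,n}$ with $j\ne 0$ or $\lambda+n\notin\mathbb Z_{\ge 0}$ to be zero. Regrouping the surviving terms yields
\[
F(x)=\sum_{(\lambda,0)\in\Lambda}x^{\lambda}\widetilde f_{\lambda}(x)\quad\text{on }(0,\epsilon),
\]
with each $\lambda\in\mathbb Z_{\ge 0}$ and each $\widetilde f_{\lambda}\in\mathscr A((-\epsilon,\epsilon))$, so $F|_{[0,\epsilon)}$ extends analytically to a function $F_{+}\in\mathscr A((-\epsilon,\epsilon))$. The identical argument applied to (iii) produces an analytic extension $F_{-}\in\mathscr A((-\epsilon,\epsilon))$ of $F|_{(-\epsilon,0]}$.

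Finally I would glue: $F_{+}$ and $F_{-}$ are analytic on $(-\epsilon,\epsilon)$, and both have Taylor series at $0$ equal to the formal Taylor series of the $C^\infty$ function $F$ (computed from the right and from the left respectively, but agreeing because $F$ is $C^\infty$ through $0$). By the identity theorem $F_{+}=F_{-}$ on $(-\epsilon,\epsilon)$, and this common analytic function coincides with $F$ on $(-\epsilon,0)\cup(0,\epsilon)$, hence everywhere by continuity. The main obstacle is the bookkeeping in the middle step: because each $f_{\lambda,j}$ contributes infinitely many terms $x^{\lambda+n}(\log x)^{j}$ with $n\to\infty$, one must rule out the possibility that a forbidden exponent is ``cancelled'' by higher-order contributions of the same exponent class. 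Finiteness of $\Lambda$, together with the fact that for any fixed real number $R$ only finitely many triples $(\lambda,j,n)$ with $(\lambda,j)\in\Lambda$ satisfy $\Real(\lambda+n)\le R$, is exactly what makes the peeling-off induction on $\Real(\lambda+n)$ work.
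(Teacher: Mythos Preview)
Your argument is correct and reaches the same conclusion as the paper, but the execution of the key step---eliminating the non-polynomial terms---differs. You expand each $f_{\lambda,j}$ into its Taylor series, compare the resulting generalized asymptotic expansion of $F$ against the $C^\infty$ Taylor expansion $F(x)=T_N(x)+o(x^N)$, and invoke linear independence of the monomials $x^\mu(\log x)^j$ to kill all bad combined coefficients at once. The paper instead keeps the $f_{\lambda,j}$ intact, normalizes so that $f_{\lambda,j}(0)\ne 0$, and differentiates enough times that a surviving bad exponent would force $|F^{(k)}(x)|\to\infty$ as $x\downarrow 0$, contradicting smoothness; it then repeats the trick to kill the logarithms. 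Your route is more systematic but needs the bookkeeping you flag at the end: one must group exponents by $\lambda\bmod\ZZ$ and observe that a function of the form $\sum_\lambda x^\lambda f_\lambda(x)$ (finite sum, $f_\lambda$ analytic) with identically vanishing generalized series is itself identically zero---this is what justifies your ``regrouping'' sentence, which as written slightly overstates the conclusion (it is the combined coefficient at each $(\mu,j)$ that vanishes, not each individual $c_{\lambda,j,n}$). The paper's route avoids this because the normalization $f_{\lambda,j}(0)\ne 0$ lets one read off the dominant singular term directly without ever passing to coefficients. The final gluing step, matching the one-sided analytic extensions via their common Taylor series at $0$, is identical in both arguments.
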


\begin{proof}
Without loss of generality we may assume in Condition (ii)
\begin{equation*}
f_{\lambda,j}(0)\ne 0
\quad\text{for all }(\lambda,j)\in\Lambda.
\end{equation*}
We then assert $\lambda\in\ZZ_{\ge0}$ for any $(\lambda,j)\in\Lambda$.
To prove this, suppose $\lambda\notin\ZZ_{\ge0}$ for some $(\lambda,j)\in\Lambda$.
Then the derivative $F^{(k)}$ of $F$ with a sufficiently high order $k$
has an expression
\[
F^{(k)}(x)
=\sum_{(\lambda,j)\in\tilde\Lambda} x^\lambda (\log x)^j h_{\lambda,j}(x)
\quad\text{on }(0,\epsilon)
\]
with
$h_{\lambda,j}(0)\ne0$
for all $(\lambda,j)\in\tilde\Lambda$
and $\Real\lambda<0$ for some $(\lambda,j)\in\tilde\Lambda$.
Putting
\begin{align*}
\lambda^\circ&=\min\{\Real\lambda;\,(\lambda,j)\in\tilde\Lambda\},\\
j^\circ&=\max\{j;\,(\lambda,j)\in\tilde\Lambda\text{ with }\Real\lambda=\lambda^\circ\},\\
\tilde\Lambda^\circ&=\{(\lambda,j)\in\tilde\Lambda;\,\Real\lambda=\lambda^\circ,j=j^\circ\},
\end{align*}
we have for $x\in(0,\min\{\epsilon,1\})$
\begin{multline*}
|F^{(k)}(x)|=x^{\lambda^\circ}|\log x|^{j^\circ}
\Biggl|
\sum_{(\lambda,j)\in \tilde\Lambda^\circ} x^{\lambda-\lambda^\circ}h_{\lambda,j}(0)
+
\sum_{(\lambda,j)\in \tilde\Lambda^\circ} x^{\lambda-\lambda^\circ+1}
\frac{h_{\lambda,j}(x)-h_{\lambda,j}(0)}x\\
+\sum_{(\lambda,j)\in \tilde\Lambda\setminus \tilde\Lambda^\circ}
x^{\lambda-\lambda^\circ}(\log x)^{j-j^\circ}h_{\lambda,j}(x)
\Biggr|
\end{multline*}
But since
\begin{gather*}
\limsup_{x\downarrow 0}\Biggl|
\sum_{(\lambda,j)\in \tilde\Lambda^\circ} x^{\lambda-\lambda^\circ}h_{\lambda,j}(0)
\Biggr|\ge
\Biggl(
\sum_{(\lambda,j)\in \tilde\Lambda^\circ} |h_{\lambda,j}(0)|^2
\Biggr)^{1/2},
\quad(\because\text{\cite[Ch.\,I, Exercise D5]{Hel4}})\\
\lim_{x\downarrow 0}\Biggl|
\sum_{(\lambda,j)\in \tilde\Lambda^\circ} x^{\lambda-\lambda^\circ+1}
\frac{h_{\lambda,j}(x)-h_{\lambda,j}(0)}x
\Biggr|=
\lim_{x\downarrow 0}\Biggl|
\sum_{(\lambda,j)\in \tilde\Lambda\setminus \tilde\Lambda^\circ}
x^{\lambda-\lambda^\circ}(\log x)^{j-j^\circ}h_{\lambda,j}(x)
\Biggr|=0,
\end{gather*}
$\limsup_{x\downarrow 0}|F^{(k)}(x)|=\infty$, contradicting the fact
$F(x)\in C^\infty((-\epsilon,\epsilon))$.
Thus our assertion is proved and 
we may assume $F$ has an expression
\[
F(x)=\sum_{j\in J}x^{n_j}(\log x)^jf_j(x)\qquad\text{on }(0,\epsilon)
\]
where
$J\subset \ZZ_{\ge0}$ is a finite set
and for each $j\in J$
\[
n_j\in\ZZ_{\ge0},
\quad
f_j\text{ is analytic on }(-\epsilon,\epsilon)\text{ with }
f_j(0)\ne0.
\]
Now let us assume $J$ contains some $j>0$ and lead a contradiction.
Put
\[
n'=\min\{n_j;\,j\in J\setminus\{0\}\},\qquad
j'=\max\{j;\,j\in J\setminus\{0\}\text{ with }n_j=n'\}.
\]
Then there exist $h_j\in\mathscr A((-\epsilon,\epsilon))$ $(j=0,1,\ldots,\max J)$ such that
\[
F^{(n'+1)}(x)=\sum_{j} x^{-1} (\log x)^j h_j(x),\quad
h_{j'-1}(0)\ne0,\quad
h_{j}(0)=0\text{ for }j\ge j'.
\]
This implies
\[
\lim_{x\downarrow0}\frac{F^{(n'+1)}(x)}{x^{-1} (\log x)^{j'-1}}=h_{j'-1}(0)\ne0,
\]
a contradiction.
Hence $J=\emptyset$ or $\{0\}$
and $F|_{(0,\epsilon)}$ extends to an analytic function $F_1$ on $(-\epsilon,\epsilon)$.
Similarly one can prove $F|_{(-\epsilon,0)}$ extends to an analytic function $F_2$ on $(-\epsilon,\epsilon)$.
Since $F^{(k)}_1(0)=F^{(k)}_2(0)=F^{(k)}(0)$ for any $k$,
we conclude $F_1=F_2=F$.
\end{proof}
Put
\begin{align*}
\Liea_\reg&=\{H\in\Liea;\,\alpha(H)\ne 0\text{ for any }\alpha\in R\},\\
\Liea_+&=\{H\in\Liea;\,\alpha(H)>0\text{ for any }\alpha\in\Pi\},\\
U&=\{H\in\Liea;\,|\alpha(H)|<2\pi \text{ for any }\alpha\in R\},\\
U_+&=U\cap\Liea_+,\\
(\Liea+iU)_\reg&=\{H\in \Liea+iU;\, \alpha(H)\ne 0\text{ for any }\alpha\in R\}.
\end{align*}

\begin{defn}[the Knizhnik-Zamolodchikov connection \cite{Ma}]
Let $E=(\Liea+iU)\times \CC W$ be the trivial vector bundle with fiber $\CC W$
over the complex manifold $\Liea+iU$.
The \emph{Knizhnik-Zamolodchikov connection} $\nabla=\nabla(\lambda,\mathbf k)$
is a connection on $(\Liea+iU)_\reg\times \CC W\subset E$
whose covariant derivative along $\xi\in\Liea_\CC$ is given by
\begin{equation}\label{eq:KZ}
\begin{aligned}
\nabla_\xi\Psi
=\sum_{w\in W}
\biggl(
\biggl(
\partial(\xi)
-
\lambda(w^{-1}\xi)
+
\frac12\sum_{\alpha\in R^+}\mathbf k(\alpha)&\alpha(\xi)
\frac{1+e^{-\alpha}}{1-e^{-\alpha}}
\biggr)\Psi_w\\
-
&\sum_{\alpha \in wR^+}\mathbf k(\alpha)\alpha(\xi)
\frac{e^{-\alpha}}{1-e^{-\alpha}}
\Psi_{s_\alpha w}
\biggr)w
\end{aligned}
\end{equation}
for any section $\Psi:H\mapsto \Psi(H)=\sum_{w\in W}\Psi_w(H)w\in\CC W$.
\end{defn}
\begin{rem}
It is known that the KZ connection is integrable (cf.~\cite[Proposition 3.3.1]{Ma}), although we do not need this fact here.
\end{rem}
Now suppose $\varphi(a)\in C^\infty(A)$ satisfies \eqref{eq:Tkl}
and define a $\CC W$-valued $C^\infty$ function
\[\Phi(H)=\sum_{w\in W}\varphi(\exp (w^{-1}H))w\]
on $\Liea$.
Then it follows from \cite[Lemma 3.2]{Op:Cherednik} that
\begin{equation}\label{eq:KZeq}
\nabla_\xi \Phi(H)=0
\quad\text{for any }\xi\in\Liea\text{ and }H\in\Liea_\reg.
\end{equation}
Hence it suffices to deduce from \eqref{eq:KZeq}
the analyticity of $\Phi$ on $\Liea$.
If $\{\xi_1,\cdots,\xi_\ell\}$ is a basis of $\Liea$, then
the system $\nabla_\xi\Psi=0$ $(\xi\in\Liea)$ 
for a holomorphic section $\Psi$ of $(\Liea+iU)_\reg\times \CC W\subset E$ 
can be written as
\begin{equation}\label{eq:dpsiA}
\partial(\xi_j)\Psi=A_j\Psi\quad(j=1,\ldots,\ell)
\end{equation}
where $A_j: (\Liea+iU)_\reg\to\End_\CC(\CC W)$ are holomorphic functions.
Hence $\Phi$ is analytic on $\Liea_\reg=\bigsqcup_{w\in W}w\Liea_+$ and
$\Phi|_{\Liea_+}$ extends to a global (possibly multi-valued) holomorphic solution
$\tilde\Phi$ of \eqref{eq:dpsiA} on the whole $(\Liea+iU)_\reg$
(cf.~\cite[Appendix B, \S2]{Kn}).

\begin{lem}\label{lem:singval}
The global solution $\tilde\Phi$ is single-valued on $(\Liea+iU)_\reg$
and $\tilde\Phi|_{\Liea_\reg}=\Phi|_{\Liea_\reg}$.
\end{lem}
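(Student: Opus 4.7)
The plan is to exploit the $W$-equivariance of both the connection $\nabla$ and the function $\Phi$ to build, chamber by chamber, single-valued local holomorphic extensions of $\Phi$, and then to use the $C^\infty$-regularity of $\Phi$ across real walls to glue these pieces into one global single-valued solution of the KZ system that must coincide with $\tilde\Phi$.

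First I would verify by direct substitution into \eqref{eq:KZ} that $\nabla$ is $W$-equivariant in the following sense: whenever $\Psi$ is a holomorphic solution of $\nabla_\xi\Psi=0$ on an open $\Omega\subset(\Liea+iU)_\reg$, then $\Psi^t(H):=t\Psi(t^{-1}H)$ solves $\nabla_\xi\Psi^t=0$ on $t\Omega$ for every $t\in W$. The verification uses $W$-invariance of $R$ and of $\mathbf k$, the identity $\tfrac{1+e^{\alpha}}{1-e^{\alpha}}=-\tfrac{1+e^{-\alpha}}{1-e^{-\alpha}}$ to absorb the sign change when $t$ sends $R^+$ to $tR^+$, and the relabeling $s_\alpha\leftrightarrow t^{-1}s_{t\alpha}t$. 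Since the identity $\Phi(tH)=t\Phi(H)$ holds on $\Liea$ directly from the definition of $\Phi$, setting $\tilde\Phi^t(H):=t\tilde\Phi(t^{-1}H)$ yields, for each $t\in W$, a single-valued holomorphic solution of the KZ system on the complex tubular neighborhood $\Omega_t:=t\Omega_1$ of the chamber $t\Liea_+$, with $\tilde\Phi^t|_{t\Liea_+}=\Phi|_{t\Liea_+}$. Choosing $\Omega_1$ small enough that the $\Omega_t$ are pairwise disjoint in $(\Liea+iU)_\reg$, one assembles a single-valued holomorphic KZ solution $\Psi$ on $V:=\bigsqcup_{t\in W}\Omega_t$ satisfying $\Psi|_{\Liea_\reg}=\Phi|_{\Liea_\reg}$.

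The remaining step is to extend $\Psi$ single-valuedly from $V$ to the connected domain $(\Liea+iU)_\reg$; once this is done, uniqueness of analytic continuation from $\Omega_1\supset\Liea_+$ (where $\Psi$ equals $\tilde\Phi$ by construction) forces the extension to coincide with $\tilde\Phi$, proving both assertions at once. To perform the extension, pick a generic point $H_\ast$ on a real wall $\alpha=0$; the KZ system has a regular singularity along the complex hyperplane $\{\alpha=0\}$, and local holomorphic KZ solutions on a small punctured transverse disk admit a Frobenius decomposition into $\alpha^\mu(\log\alpha)^j$-pieces. The $C^\infty$-regularity of $\Phi$ across the real wall forces the restrictions of $\Psi$ coming from the two adjacent chamber-neighborhoods $\Omega_t$ and $\Omega_{s_\alpha t}$ to lie in the Frobenius subspace consisting of non-negative integer exponents with no logarithms --- precisely the subspace of single-valued holomorphic functions on the entire (non-punctured) transverse complex disk --- so the two one-sided pieces agree with a common holomorphic function and $\Psi$ extends holomorphically across the complex hyperplane. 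The main obstacle will be making this Frobenius/regularity analysis rigorous uniformly at every wall and propagating the extension consistently around codimension-two singular loci; concretely one may invoke Lemma \ref{lem:fregu} in the transverse direction as the explicit bridge from real $C^\infty$-regularity to single-valued complex analyticity, so that the same mechanism used later in the proof of Theorem \ref{thm:Ganal} handles this step.
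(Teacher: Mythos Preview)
Your core analytical mechanism --- the transverse one-variable regular-singular analysis at a generic real wall point, fed into Lemma~\ref{lem:fregu} to convert $C^\infty$-regularity of $\Phi$ into holomorphic extendability across the wall --- is exactly what the paper uses. The difference lies in how the global gluing is organized, and that difference is precisely what handles the obstacle you leave open.

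The $W$-equivariance detour is correct but unnecessary: $\Phi$ is already a $C^\infty$ solution of \eqref{eq:KZeq} on every real chamber $w\Liea_+$, so each $\Phi|_{w\Liea_+}$ extends directly to the simply connected tube $w\Liea_+ + iU$ without invoking the symmetry. More importantly, by taking your chamber-neighborhoods $\Omega_t$ \emph{disjoint}, you leave yourself with the task of checking consistency around codimension-two strata, which you flag but do not carry out.

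The paper sidesteps this by a different covering: alongside the real-chamber tubes $w\Liea_+ + iU$ it uses the imaginary-chamber slabs $\Liea + itU_+$, all simply connected with simply connected pairwise overlaps $w\Liea_+ + itU_+$. For adjacent $w_1$, $w_2 = w_1 s_\alpha$ and any $t$, one takes a real wall point $H_0$ and the regular transverse direction $t\xi_0$ (with $\xi_0 \in \Liea_+$), applies Lemma~\ref{lem:fregu} to $x \mapsto \Phi(H_0 + xt\xi_0)$ to obtain a holomorphic extension on a full disk, and observes that the upper half of this disk lies inside $\Liea + itU_+$; uniqueness of solutions on that simply connected slab then forces the two chamber-extensions to agree there. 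Transitivity over the Weyl group finishes the gluing with no separate codimension-two argument.
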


\begin{proof}
Note
\[
(\Liea+iU)_\reg=\bigsqcup_{w\in W} (w\Liea_++iU)\,\cup\,
\bigsqcup_{t\in W} (\Liea+itU_+)
\]
and for any $w,t\in W$
\[
w\Liea_++iU,\quad
\Liea+itU_+,\quad
(w\Liea_++iU)\cap(\Liea+itU_+)=w\Liea_++itU_+
\]
are all simply connected.
Hence it suffices to prove that
for any $w_1,w_2,t\in W$
\[
\Phi|_{w_1\Liea_+} \xrightarrow{\text{extension}}
\Phi_1\text{ on }w_1\Liea_++iU \xrightarrow{\text{restriction}}
\Phi_1|_{w_1\Liea_++itU_+} \xrightarrow{\text{extension}}
\tilde\Phi_1\text{ on }\Liea+itU_+
\]
and
\[
\Phi|_{w_2\Liea_+} \xrightarrow{\text{extension}}
\Phi_2\text{ on }w_2\Liea_++iU \xrightarrow{\text{restriction}}
\Phi_2|_{w_2\Liea_++itU_+} \xrightarrow{\text{extension}}
\tilde\Phi_2\text{ on }\Liea+itU_+
\]
give the same result.
Clearly we may assume $w_2=w_1 s_\alpha $ for some $\alpha\in \Pi$.
Fix an arbitrary $\xi_0\in \Liea_+$
and put $H_0=w_1\xi_0+w_2\xi_0$.
Then
\[
(w_1\alpha)(H_0)=(w_2\alpha)(H_0)=0,\quad
(w_1\beta)(H_0),(w_2\beta)(H_0)> 0
\text{ for any }\beta\in R_1^+\setminus\{\alpha\}.
\]
Take a sufficiently small $\epsilon>0$
so that
\[
\{H_0+zt\xi_0;\,z\in\CC\text{ with }0<|z|<\epsilon\}\subset (\Liea+iU)_\reg.
\]
Then from \eqref{eq:KZ}
one sees the covariant derivative $\nabla_{t\xi_0}$ 
on $\{H=H_0+zt\xi_0;\,0<|z|<\epsilon\}$
is written as 
\[
\nabla_{t\xi_0}\Psi
=
\Bigl(\frac{d}{dz}-\frac{B(z)}z\Bigr)\Psi
\]
where $B$ is an $\End_\CC(\CC W)$-valued holomorphic function on $\{z\in\CC;\,|z|<\epsilon\}$.
Now let us consider the $\CC W$-valued $C^\infty$ function
\[
\Psi(x)=\sum_{w\in W}\Psi_w(x)w:=\Phi(H_0+xt\xi_0)
\]
on $(-\epsilon,\epsilon)$.
Since $\Psi$ is a solution of the first-order ordinary linear system
\[\frac{d}{dx}\Psi=\frac{B(x)}x\Psi\]
with a regular singular point $x=0$, each $\Psi_w$ $(w\in W)$
satisfies the assumption of Lemma \ref{lem:fregu}.
Hence $\Psi$ is analytic at $x=0$ and extends to a holomorphic function
$\tilde\Psi$ on $\{z\in\CC;\,|z|<\epsilon\}$.
Now suppose $(w_1\alpha)(t\xi_0)>0$.
(If $(w_1\alpha)(t\xi_0)<0$ we swap $w_1$ and $w_2$.)
Identifying $\{z\in\CC;\,|z|<\epsilon\}$ with a subset of $\Liea+iU$
by $H=H_0+zt\xi_0$
we have
\begin{align*}
D_+:=\{z\in\CC;\,|z|<\epsilon,\Imaginary z>0\}&\subset \Liea+itU_+,\qquad\\
\{z\in\CC;\,|z|<\epsilon,\Real z>0\}&\subset w_1\Liea_++iU,\qquad\\
\{z\in\CC;\,|z|<\epsilon,\Real z<0\}&\subset w_2\Liea_++iU.
\end{align*}
Thus $\tilde\Phi_1|_{D_+}=\tilde\Phi_2|_{D_+}=\tilde\Psi|_{D_+}$.
Since a solution of \eqref{eq:dpsiA} on $\Liea+itU_+$
is determined by the value at any one point,
we conclude $\tilde\Phi_1=\tilde\Phi_2$.
\end{proof}

\begin{lem}\label{lem:rnvsing}
Suppose $\alpha \in R_1^+$ and $H_1\in \Liea+iU$ satisfy
\[
\alpha(H_1)=0,\quad
\beta(H_1)\ne 0\text{ for any }\beta\in R_1^+\setminus\{\alpha\}.
\]
Then there exists an open neighborhood $\Omega$ of $H_1$
such that $\tilde\Phi$ extends to 
a single-valued holomorphic function on $(\Liea+iU)_\reg\cup\Omega$.
\end{lem}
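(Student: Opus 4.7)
The plan is to exhibit the Fuchsian structure of the KZ connection along the hyperplane $\{\alpha=0\}$ near $H_1$, then transfer the removable-singularity property from a real point of that hyperplane (where it was already obtained in the proof of Lemma~\ref{lem:singval}) to the complex point $H_1$ by a connectedness argument. Write $V_\alpha = \{H\in\Liea+iU : \alpha(H)=0,\ \beta(H)\neq0\ \forall\beta\in R_1^+\setminus\{\alpha\}\}$; this is the complement of finitely many complex hyperplanes inside the convex complex hyperplane $(\Liea+iU)\cap\{\alpha=0\}$, hence is connected, and by hypothesis contains $H_1$ as well as at least one real point.

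First I would analyze \eqref{eq:KZ} near an arbitrary point $H_*\in V_\alpha$. The only summands on the right-hand side of \eqref{eq:KZ} that are singular at $H_*$ are the ones with $\alpha'=\alpha$, all others being holomorphic on a polydisc around $H_*$. Using the expansions $\frac{1+e^{-\alpha}}{1-e^{-\alpha}}=\frac{2}{\alpha}+O(\alpha)$ and $\frac{e^{-\alpha}}{1-e^{-\alpha}}=\frac{1}{\alpha}+O(1)$, choose coordinates $(z,t)=(\alpha,t_1,\dots,t_{\ell-1})$ on a polydisc $\Omega\subset\Liea+iU$ around $H_*$. Then $\nabla$ has the normal form $dz(\partial_z - R_\alpha/z - P(z,t)) + \sum_j dt_j(\partial_{t_j} - B_j(z,t))$ where $R_\alpha\in\End_{\CC}(\CC W)$ is a \emph{constant} endomorphism (computable from the $\alpha'=\alpha$ residues, independent of the base point $H_*$) and $P, B_j$ are holomorphic on $\Omega$. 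By standard Levelt theory for regular-singular systems (applied to the $z$-variable, with the $t$-variables as holomorphic parameters), there is a holomorphic invertible $\End_{\CC}(\CC W)$-valued function $G(z,t)$ on a smaller polydisc $\Omega'\subset\Omega$ such that every flat section of $\nabla$ on $\Omega'\setminus\{z=0\}$ has the form $\Psi(z,t)=G(z,t)\cdot z^{R_\alpha}c(t)$; single-valuedness forces $c(t)\in\ker(e^{2\pi i R_\alpha}-I)$, and decomposing $z^{R_\alpha}c(t)$ into generalized $R_\alpha$-eigenspace pieces produces an expansion $\tilde\Phi(z,t)=\sum_{\lambda,k}z^{\lambda}(\log z)^k\Psi_{\lambda,k}(z,t)$ where $\lambda$ runs over eigenvalues of $R_\alpha$ and $\Psi_{\lambda,k}$ are holomorphic on $\Omega'$. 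The desired holomorphic extension of $\tilde\Phi$ across $\{z=0\}$ at $H_*$ is equivalent to the vanishing of all $\Psi_{\lambda,k}|_{z=0}$ for $\Real\lambda<0$ or $k\geq1$.

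Next, I would pick a reference point $H_0\in V_\alpha\cap\Liea$ (which exists because the real hyperplane $\alpha=0$ in $\Liea$ has a nonempty open subset on which no other $\beta\in R_1^+$ vanishes). By the argument of the proof of Lemma~\ref{lem:singval}, which applies Lemma~\ref{lem:fregu} to each coefficient of $w\in W$ of $\Phi$ along a real line through $H_0$ transverse to $\{\alpha=0\}$ and upgrades the resulting analyticity to holomorphic extension via the uniqueness of solutions to the linear ODE with regular singular point, $\tilde\Phi$ is holomorphic on a full complex polydisc around $H_0$. In the Levelt expansion at $H_0$ this forces $\Psi_{\lambda,k}|_{z=0}\equiv 0$ on an open subset of the $t$-direction near $H_0$, for every $(\lambda,k)$ with $\Real\lambda<0$ or $k\geq1$. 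The point is now that, because $R_\alpha$ is the \emph{same} constant endomorphism at every base point of $V_\alpha$, the spectral-projection components of the leading terms are intrinsic invariants of $\tilde\Phi$: the maps $H_*\mapsto \Psi_{\lambda,k}|_{z=0}(H_*)$ can be assembled into holomorphic sections of a trivial bundle over $V_\alpha$. By connectedness of $V_\alpha$ and analytic continuation, the "bad" components vanish throughout $V_\alpha$, in particular at $H_1$. The good components (with $\lambda\in\ZZ_{\geq0}$, $k=0$) produce a genuine holomorphic extension of $\tilde\Phi$ to a polydisc $\Omega$ around $H_1$, which is the required $\Omega$.

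The main obstacle will be the assembly in the last paragraph: verifying that the pointwise Levelt coefficients $\Psi_{\lambda,k}|_{z=0}$ really do glue, across overlapping polydiscs covering $V_\alpha$, into holomorphic functions on $V_\alpha$. The cleanest route is to choose the Levelt normalization intrinsically (e.g.\ using the spectral projectors of $R_\alpha$, which are globally defined as $R_\alpha$ is a fixed matrix), so that each $\Psi_{\lambda,k}|_{z=0}$ is obtained from $\tilde\Phi$ by a universal differential operator in the $z$-direction combined with a constant matrix projection, and hence is manifestly a single-valued holomorphic function on the connected set $V_\alpha$; the rest is then identity-principle analytic continuation from a neighborhood of $H_0$.
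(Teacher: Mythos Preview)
Your overall plan --- anchor at a real point of $V_\alpha$ where removability is already known from Lemma~\ref{lem:singval}, then propagate along the connected hypersurface $V_\alpha$ --- matches the paper's strategy, but your execution through Levelt normal forms is considerably heavier than what the paper does, and the gluing step you flag as the main obstacle is genuinely problematic as stated. When eigenvalues of $R_\alpha$ differ by nonzero integers the Levelt gauge $G$ is not canonical even after imposing $G(0,t)=I$, so your ``intrinsic normalization via spectral projectors'' does not pin down the individual $\Psi_{\lambda,k}|_{z=0}$; and $V_\alpha$, being a hyperplane complement, is in general not simply connected, so globally defined holomorphic sections are not automatic. This can be repaired by replacing the global assembly with a local open--closed argument on the set $S\subset V_\alpha$ of points near which $\tilde\Phi$ extends holomorphically, using your Levelt expansion only on one polydisc at a time.

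The paper avoids all of this with one structural observation about \eqref{eq:KZ} that you did not exploit: every term singular along $\{\alpha=0\}$ carries the scalar factor $\alpha(\xi)$ (and likewise $(2\alpha)(\xi)$ if $2\alpha\in R^+$), so if $\xi$ is \emph{tangent} to the hyperplane $\{\alpha=0\}$ the connection matrix in direction $\xi$ is \emph{holomorphic} across that hyperplane, not merely Fuchsian. Taking coordinates with $z=\alpha$ transverse and $H$ ranging over $V_\alpha$, the paper simply solves the nonsingular linear system $\partial_H\Psi=A(H+z\xi_1)\Psi$ in the $H$-directions, with initial data the already-obtained holomorphic disc $\tilde\Psi(z)$ at the real anchor $H_0$. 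Since the coefficients are holomorphic jointly in $(z,H)$, so is the solution on $D\times\omega$ for any connected $\omega\subset V_\alpha$ containing $H_0$ and $H_1$, and for $z\neq0$ it agrees with $\tilde\Phi(H+z\xi_1)$. No residue computation, no Levelt gauge, no gluing: the propagation is by an ordinary holomorphic ODE in the tangential direction.
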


\begin{proof}
We may assume $\Pi=\{\alpha_1,\ldots,\alpha_\ell\}$
and $\alpha=w\alpha_1$ for some $w\in W$.
Let $\{\xi_1,\ldots,\xi_\ell\}\subset\Liea$ be the dual basis of
$\{w\alpha_1,\ldots,w\alpha_\ell\}\subset\Liea^*$.
Take $\xi_0\in\Liea_+$ and put $H_0=w\xi_0+ws_{\alpha_1}\xi_0$.
Then in the same way as the proof of the previous lemma
one can prove for a sufficiently small $\epsilon>0$
\[
\{H_0+z\xi_1;\,z\in\CC\text{ with }0<|z|<\epsilon\}\subset (\Liea+iU)_\reg,
\]
the function $\Phi(H_0+x\xi_1)$ of $x\in(-\epsilon,\epsilon)$
extends to a holomorphic function $\tilde\Psi(z)$ on
$D:=\{z\in\CC;\,|z|<\epsilon\}$, and
$\tilde\Phi(H_0+z\xi_1)=\tilde\Psi(z)$ for any $z\in D\setminus\{0\}$.

Now put
\[
Z=\{H\in \Liea+iU;\,
\alpha(H)=0,\
\beta(H)\ne 0\text{ for any }\beta\in R_1^+\setminus\{\alpha\}\}.
\]
Then $Z$ is a complex submanifold of $\Liea+iU$
containing $H_0$ and $H_1$.
Since $Z$ is pathwise connected,
by shrinking $D$ if necessary,
we can find a connected open subset $\omega\subset Z$ 
containing $H_0$ and $H_1$ so that
\[
\Omega:=\{H+z\xi_1;\,z\in D, H\in\omega \} \subset
\{H\in \Liea+iU;\,
\beta(H)\ne 0\text{ for any }\beta\in R_1^+\setminus\{\alpha\}\}.
\]
Then
\[
\Omega \cap (\Liea+iU)_\reg=
\{H+z\xi_1;\,z\in D\setminus\{0\}, H\in\omega\}.
\]
Observe from \eqref{eq:KZ} that
$A_j$ in \eqref{eq:dpsiA} for the current $\xi_j$ 
($j=2,\ldots,\ell$)
is holomorphic on 
$\Omega$.
Now for each fixed $z\in D$
let us consider the system
\begin{equation}\label{eq:restsys}
\partial(\xi_j)\Psi(z,H)=A_j(H+z\xi_1)\Psi(z,H)
\quad(j=2,\ldots,\ell)
\end{equation}
for a $\CC W$-valued holomorphic function
$\Psi(z,\cdot)$ on $\omega$
with the initial condition
\begin{equation}\label{eq:inicon}
\Psi(z,H_0)=\tilde\Psi(z).
\end{equation}
It is clear that
$\Psi(z,H):=\tilde\Phi(H+z\xi_1)$ 
is the solution when $z\ne0$.
This $\Psi(z,H)$ is holomorphic on $(D\setminus\{0\})\times \omega$
as a function in $z$ and $H$.
Using \eqref{eq:restsys} and \eqref{eq:inicon}
we can extend $\Psi(z,H)$ to a holomorphic function on
$D \times \omega$ (this is possible without the integrability of the system).
Thus $\tilde\Phi$ extends to a holomorphic function on $(\Liea+iU)_\reg\cup\Omega$
by letting
\[
\tilde\Phi(H+z\xi_1)=\Psi(z,H)\quad\text{for }(z,H)\in D \times \omega.
\qedhere
\]
\end{proof}
Now put
\[
X=\{H\in \Liea_\CC;\,\alpha(H)=\beta(H)=0
\text{ for some two distinct }\alpha,\beta\in R_1^+\}.
\]
By Lemma \ref{lem:rnvsing}, $\tilde\Phi$ extends to a holomorphic function on
$(\Liea+iU)\setminus X$.
But since $X$ is a finite union of linear subspaces of $\Liea_\CC$
with codimension $\ge2$,
$\tilde\Phi$ still extends to a holomorphic function on the whole $\Liea+iU$.
Hence by Lemma \ref{lem:singval} we have $\Phi=\tilde\Phi|_\Liea$
and the analyticity of $\Phi$ on $\Liea$.
This completes the proof of Theorem \ref{thm:Ganal}.

\end{document}